\newcommand{\C}{\mathbb{C}}
\newcommand{\Z}{\mathbb{Z}}
\newcommand{\V}{\mathbb{V}}
\newcommand{\D}{\mathbb{D}}
\newcommand{\J}{\mathbb{J}}
\newcommand{\N}{\mathbb{N}}
\newcommand{\K}{\mathbb{K}}
\newcommand{\F}{\mathbb{F}}
\newcommand{\G}{\mathbb{G}}
\newcommand{\B}{\mathbb{B}}
\newcommand{\M}{\mathbb{M}}
\newcommand{\U}{\mathbb{U}}
\newcommand{\E}{\mathbb{E}}
\newcommand{\ket}{\rangle}
\newcommand{\bra}{\langle}
\newcommand{\T}{\mathbb{T}}
\newcommand{\Pb}{\mathbb{P}}
\newcommand{\Sb}{\mathbb{S}}
\newcommand{\Li}{\mathcal{L}}
\newcommand{\Ni}{\mathcal{N}}
\newcommand{\Ci}{\mathcal{C}}
\newcommand{\Ai}{\mathcal{A}}
\newcommand{\Bi}{\mathcal{B}}
\newcommand{\Hi}{\mathcal{H}}
\newcommand{\Mi}{\mathcal{M}}
\newcommand{\Ki}{\mathcal{K}}
\newcommand{\Ei}{\mathcal{E}}
\newcommand{\Fi}{\mathcal{F}}
\newcommand{\Oi}{\mathcal{O}}
\newcommand{\Si}{\mathcal{S}}
\newcommand{\Ti}{\mathcal{T}}
\newcommand{\Ii}{\mathcal{I}}
\newcommand{\Gi}{\mathcal{G}}
\newcommand{\sing}{\operatorname{sing}}
\newcommand{\GH}{\mathfrak{H}}
\newcommand{\GE}{\mathfrak{E}}
\newcommand{\GF}{\mathfrak{F}}
\newcommand{\GG}{\mathfrak{G}}
\newcommand{\GK}{\mathfrak{K}}
\newcommand{\GI}{\mathfrak{I}}
\newcommand{\Gm}{\mathfrak{m}}
\newcommand{\id}{\operatorname{id}}
\newcommand{\Aut}{\operatorname{Aut}}
\newcommand{\Tr}{\operatorname{Tr}}
\newcommand{\tr}{\operatorname{tr}}
\newcommand{\SU}{\operatorname{SU}}
\newcommand{\Un}{\operatorname{U}}
\newcommand{\Spec}{\operatorname{Spec}}
\newcommand{\Mn}{\operatorname{M}}
\newcommand{\bone}{\mathbf{1}}
\newtheorem{thm}{Theorem}[section]
\newtheorem{Lemma}[thm]{Lemma}
\newtheorem{prop}[thm]{Proposition}
\newtheorem{cor}[thm]{Corollary}
\numberwithin{equation}{section}
\theoremstyle{definition}
\newtheorem{dfn}[thm]{Definition}
\newtheorem{Remark}[thm]{Remark}
\newtheorem{Example}[thm]{Example}
\newtheorem{Conjecture}[thm]{Conjecture}
\newtheorem{Question}[thm]{Question}
\newtheorem*{Ack}{Acknowledgment}
\newcommand{\Ker}{\operatorname{Ker}}
\newcommand{\Ran}{\operatorname{Ran}}
\newcommand{\GM}{\mathfrak{M}}
\newcommand{\GeL}{\operatorname{GL}}
\newcommand{\End}{\operatorname{End}}
\newcommand{\pd}{\partial}
\newcommand{\rank}{\operatorname{rank}}
\newcommand{\bvee}{\mbox{\Large $\vee$}}
\newcommand{\vol}{\operatorname{vol}}
\newcommand{\Gr}{\operatorname{Gr}}
\newcommand{\FS}{\operatorname{FS}}
\newcommand{\CD}{\operatorname{CD}}
\newcommand{\gr}{\operatorname{gr}}
\newcommand{\qgr}{\operatorname{qgr}}
\newcommand{\coh}{\operatorname{coh}}
\newcommand{\Proj}{\operatorname{Proj}}
\newcommand{\cspan}{\overline{\operatorname{span}}}
\newcommand{\Irrep}{\operatorname{Irrep}}
\title{Quantization of Yang--Mills metrics on holomorphic vector bundles
}
\author{Andreas Andersson}
\affil{\small Email: andane@chalmers.se}
\affil{\footnotesize Chalmers University of Technology, Mathematical Sciences, Maskingränd 2, 412 58 Gothenburg, Sweden
}
\affil{\footnotesize Mathematics Subject Classification 2010 -- Primary: 47L80
; Secondary: 47B10
, 32L10  
}
\begin{document}
\maketitle
\abstract
We investigate quantization properties of Hermitian metrics on holomorphic vector bundles over homogeneous compact Kähler manifolds. This allows us to study operators on Hilbert function spaces using vector bundles in a new way. We show that Yang--Mills metrics can be quantized in a strong sense and for equivariant vector bundles we deduce a strong stability property which supersedes Gieseker-stability. We obtain interesting examples of generalized notions of contractive, isometric, and subnormal operator tuples which have geometric interpretations related to holomorphic vector bundles over coadjoint orbits. 

\tableofcontents

\section{Introduction}

\subsection{Quantization}
It is well established that every smooth projective variety $\M\subset\C\Pb^{n-1}$ can be quantized in the sense that there is a sequence $\GH_\bullet=(\GH_m)_{m\in\N_0}$ of finite-dimensional Hilbert spaces such that the matrix algebra $\Bi(\GH_m)$ of all bounded operators on $\GH_m$ becomes arbitrarily close, as a $C^*$-algebra, to the $C^*$-algebra $C^0(\M)$ of continuous functions on $\M$ when $m$ goes to infinity. As a vector space one has $\GH_m=H^0(\M;\Li^m):=$ the space of global holomorphic sections of the $m$th power of a positive line bundle $\Li$ over $\M$. If we fix a Kähler form $\omega$ on $\M$ in the class $c_1(\Li)$ then the choice of inner product on $\GH_m$ is a choice of Hermitian metric $h_m$ on $\Li$ and the relation between $h_m$ and $\omega$ is not arbitrary as $m$ gets large. If $\omega$ has constant scalar curvature then the quantization is slightly more well-behaved than in general. The strongest possible quantization (a ``regular'' quantization) can obtained only when $\M$ is a coadjoint orbit $\G/\K$ under some Lie group $\G$ and $\omega$ is $\G$-invariant. Here in precise terms a regular quantization means a quantization where two kinds of natural positive maps (Toeplitz and covariant symbol maps) are both unital. In \cite{An6} we took an operator-theoretic and operator-algebraic approach to quantization. The physical motivation for this is outlined in \cite{An4, An5}.

In this paper we shall study the quantization of Hermitian metrics on holomorphic vector bundles over a coadjoint orbit $\M=\G/\K\subset\C\Pb^{n-1}$. The case $\M=\C\Pb^{n-1}$ is already interesting. Inspired by noncommutative geometry, and in particular \cite{Hawk1, Hawk2}, a quantization of a vector bundle $\Ei$ will be a sequence of modules $\Bi(\GH_m,\GE_m)$ over the matrix algebras $\Bi(\GH_m)$ such that when $m$ is getting large $\Bi(\GH_m,\GE_m)$ becomes arbitrarily close to the $C^0(\M)$-module $\Gamma^0(\M;\Ei)$ of global continuous sections of $\Ei$. Actually there is more to it. When we quantized $C^0(\M)$ we had inner products whose associated norms approximated the norm on $C^0(\M)$. A Hermitian metric $h^E$ on the vector bundle $\Ei$ is the same thing as a $C^0(\M)$-valued inner product on $\Gamma^0(\M;\Ei)$; when the $C^0(\M)$-module $\Gamma^0(\M;\Ei)$ is endowed with such a $C^0(\M)$-valued inner product it is called a Hilbert $C^*$-module and we denote it by $\Gamma^0(\M;\Ei,h^E)$. Therefore we would like to have a Hilbert $C^*$-module structure on $\Bi(\GH_m,\GE_m)$ (or, what is the same, an inner product on the Hilbert space $\GE_m$) which approximates $\Gamma^0(\M;\Ei,h^E)$ in the limit $m\to\infty$. Observe that we have fixed the inner product on $\GH_m$ and therefore the $C^*$-structure on $\Bi(\GH_m)$ and the Hilbert $C^*$-module structure on $\Bi(\GH_m,\GE_m)$, so that we are looking for the possibility to quantize the Hermitian metric $h^E$ \emph{with respect to} the given quantization $\GH_\bullet$ of the manifold $\M$. 

In \cite{An6} we used a particular choice of quantization $\GH_\bullet$ of $\M$ which gives $C^0(\M)$ as a kind of inductive limit of the matrix algebras $\Bi(\GH_m)$ provided by unital completely positive maps $\iota_{m,m+1}:\Bi(\GH_m)\to\Bi(\GH_{m+1})$. On the Hilbert space $\GH_\N:=\bigoplus_{m\in\N_0}$ acts a $C^*$-algebra $\Ti_\GH^{(0)}$ of generalized Toeplitz operators with symbol in $C^0(\M)$, and one has a short exact sequence of $C^*$-algebras
\begin{equation}\label{Toeplexaxt}
0\to\bigoplus_{m\in\N_0}\Bi(\GH_m)\to\Ti_\GH^{(0)}\overset{\varsigma}{\to}C^0(\M)\to 0
\end{equation}
which is split by a positive linear Toeplitz-type map $\breve{\varsigma}:C^0(\M)\to\Ti_\GH^{(0)}$. The map $\varsigma$ is the adjoint of the Toeplitz map and called the covariant Berezin \textbf{symbol} map. The operators in $\Ti_\GH^{(0)}$ preserve the $\N_0$-grading on $\GH_\N$ and its elements can therefore be regarded as sequences $(A_m)_{m\in\N_0}$ with $A_m\in\Bi(\GH_m)$. The ideal $\bigoplus_{m\in\N_0}\Bi(\GH_m)$ consists of those compact operators on $\GH_\N$ which preserve the grading. 

This is the quantization of the manifold which we will fix, and we will describe the quantization of vector bundles, as defined above, as follows. For any $N\in\N$ we can extend $\breve{\varsigma}$ to a map on the algebra $C^0(\M)\otimes\Mn_N(\C)$ of $N\times N$-matrices with entries in $C^0(\M)$ by applying $\breve{\varsigma}$ to each entry; we use the same notation $\breve{\varsigma}$ for all $N$. If $P^E$ is an idempotent in the algebra $C^0(\M)\otimes\Mn_N(\C)$ for some $N$, in which case we say that $P^E$ is an idempotent \textbf{over} $C^0(\M)$, then $P^E$ defines a continuous vector bundle $\Ei$ over $\M$. Indeed, since $P^E$ is continuous the rank of $P^E(x)$, say $r$, is the same for all $x\in\M$, and we can view $P^E$ as a topological embedding of $\M$ in the Grassmannian manifold $\Gr_r(\C^N)$ of rank-$r$ projections acting on $\C^N$. We then obtain a vector bundle $\Ei$ over $\M$ by pulling back the universal vector bundle over $\Gr_r(\C^N)$ via $P^E$. Another way of viewing is that $P^E$ defines a projective $C^0(\M)$-module
$$
\Gamma^0(\M;\Ei,P^E):=P^E(C^0(\M)\otimes\C^N)
$$
which by Swan's theorem determines a $C^0$ vector bundle $\Ei$ uniquely up to isomorphism. Having presented a vector bundle $\Ei$ using an idempotent $P^E$, a quantization of $\Ei$ is an idempotent $P_E$ over $\Ti_\GH^{(0)}$ which \textbf{lifts} $P^E$ in the sense that
$$
\varsigma(P_E)=P^E.
$$
Since $\breve{\varsigma}$ splits the Toeplitz short exact sequence \eqref{Toeplexaxt}, this means that there is a compact operator $K_E$ such that
$$
P_E=\breve{\varsigma}(P^E)+K_E.
$$
If we let $P_{E,m}$ be the component of $P_E$ in $\Bi(\GH_m)\subset\Bi(\GH_\N)$ then we can indeed define vector spaces $\GE_m$ via the identification
\begin{equation}\label{PEmide}
P_{E,m}(\Bi(\GH_m)\otimes\C^N)=\Bi(\GH_m,\GE_m)
\end{equation}
of left $\Bi(\GH_m)$-modules. By a purely algebraic argument we can show that the $C^0(\M)$-module is obtained as the set-theoretical inductive limit of the $\Bi(\GH_m)$-modules $\Bi(\GH_m,\GE_m)$. Such a lift $P_E$ always exists (see \S\ref{liftsec}). Since we have fixed the $C^*$-structure on $\Bi(\GH_m)$, the quantization is really with respect to $\GH_\bullet$. If we take $P^E$ to be a projection (meaning a selfadjoint idempotent) then $\breve{\varsigma}(P^E)$ is a positive operator, $P_E$ must be a projection and $\GE_m$ becomes a Hilbert space under the canonical inner product obtained from the identification \eqref{PEmide}. And an inner product on the vector space $\GE_m$ is the same datum as a $\Bi(\GH_m)$-valued inner product on $\Bi(\GH_m,\GE_m)$.
The standard $C^0(\M)$-valued inner product on $C^0(\M)\otimes\C^N$ gives a $C^0(\M)$-valued inner product on the module $\Gamma^0(\M;\Ei,P^E)$ (i.e. a Hermitian metric on $\Ei$) when $P^E$ is a projection. Moreover, up to isomorphism every Hermitian vector bundle $\Ei$ is obtained like this for some projection $P^E$. Therefore, we shall typically only consider a vector bundle $\Ei$ up to smooth or $C^0$ isomorphism and refer to a projection $P^E$ with $\Gamma^0(\M;\Ei)\cong P^E(C^0(\M)\otimes\C^N)$ as a metric on $\Ei$.

However, the $\Bi(\GH_m)$-valued inner product on $\Bi(\GH_m,\GE_m)$ may not approximate the given Hermitian metric $P^E$ on $\Ei$. 
We would like to have some better compatibility between the Hilbert $C^*$-modules $\Bi(\GH_m,\GE_m)$ for different $m$'s or, what is the same, a better behavior of the sequence $\GE_\bullet$ with respect to the sequence $\GH_\bullet$. That $\breve{\varsigma}(P^E)$ itself is a projection, so that we could take $P_E$ to be just $\breve{\varsigma}(P^E)$, happens only if all Chern classes of $\Ei$ vanish. So this is a too strong requirement. As mentioned, we shall in this paper assume that $\M$ is a homogeneous space $\G/\K$ under some Lie group $\G$. This ensures that the Toeplitz map $\breve{\varsigma}$ is unital. The Toeplitz operators are then precisely the fixed points under an explicit unital completely positive map $\Psi:\Bi(\GH_\N)\to\Bi(\GH_\N)$. Therefore the Toeplitz operators will also be referred to as $\Psi$-\textbf{harmonic} operators (cf. the theory of noncommutative Poisson boundaries \cite{Iz1, INT1}). So we have
$$
\Psi(\breve{\varsigma}(P^E))=\breve{\varsigma}(P^E),
$$
and we said that it is too strong to assume that the lift $P_E$ is fixed by $\Psi$. As a second best thing we could look for a lift $P_E$ which is \textbf{superharmonic} under $\Psi$,
\begin{equation}\label{PEsupharmon}
\Psi(P_E)\leq P_E.
\end{equation}
We shall investigate the geometric meaning of such a lift. Let us first look at its meaning in operator theory. This is in fact very easy to describe. The Hilbert space $\GH_\N$ is a reproducing kernel Hilbert space of analytic functions on a complex-analytic submanifold $\B$ of the unit ball $\B^n$ in $\C^n$. More precisely, if $\V\subset\C^n$ denotes the spectrum of the graded algebra $\Ai:=\bigoplus_{m\in\N_0}H^0(\M;\Li^m)$, so that $(\V\setminus\{0\})/\C^\times=\M$, then
$$
\B:=\V\cap\B^n.
$$
The reproducing kernel of $\GH_\N$ has the complete Nevanlinna--Pick property. Let us give two important characterizations of this property. The first is that $\GH_\N$ can be identified with a subspace of the Drury--Arveson space $H^2_n$, whose tuple of multiplication operators by the coordinate functions $z_1,\dots,z_n$ on $\B^n$ is the universal model for pure row contractions. This means that the multiplication tuple $S=(S_1,\dots,S_n)$ on $\GH_\N$, defined by
$$
(S_\alpha\psi)(w):=w_\alpha\psi(w),\qquad\forall\psi\in\GH_\N,\ w\in\B,
$$
satisfies the row contraction property $\sum^n_{\alpha=1}S_\alpha S_\alpha^*=\bone-p_0$, where $p_0$ is the projection onto the constant functions, and is pure in the sense that $\mathrm{SOT-}\lim_{p\to\infty}\Phi^p(\bone)=0$, where $\Phi$ is the contractive completely positive map $\Phi(X):=\sum^n_{\alpha=1}S_\alpha XS_\alpha^*$ on $\Bi(\GH_\N)$. When restricted to the algebra $\prod_m\Bi(\GH_m)$ of grading-preserving operators the map $\Phi$ is the adjoint of $\Psi$ with respect to the state $\sum_m\phi_m$, where $\phi_m$ is the tracial state on $\Bi(\GH_\N)$. Explicitly,
$$
\Psi(X)=\sum^n_{\alpha=1}T_\alpha^*XT_\alpha,\qquad\forall X\in\Bi(\GH_\N)
$$
with $T=(T_1,\dots,T_n)$ acting just as $S$ but weighted to ensure $\Psi(\bone)=\bone$. The operator tuples $S$ and $T$ have the same set of invariant graded subspaces and the same set of coinvariant graded subspaces. The $\Psi$-superharmonicity condition \eqref{PEsupharmon} means precisely that the range $\GE_\N$ of $P_E$ as an operator on $\GH_\N\otimes\C^N$ is invariant under the operators $S_1^*,\dots,S_n^*$. This leads us to the second characteristic property of the complete Nevanlinna--Pick space $\GH_\N$. Namely, that \eqref{PEsupharmon} is equivalent to the existence of a matrix $\Theta_E$ of analytic multipliers of $\GH_\N$ such that
\begin{equation}\label{introBeurl}
P_E=\bone-M_{\Theta_E}M_{\Theta_E}^*,
\end{equation}
where $M_{\Theta_E}$ is the operator of multiplication by $\Theta_E$. Note that $M_{\Theta_E}$ must then be a partial isometry. We know from \cite[Thm. 4.3]{GRS2} or \cite[Thm. 6.1]{BhSa1} that $\Theta_E$ has an $L^\infty$ extension to the boundary $\Sb$ of $\B$
such that $\Theta_E(\zeta)$ is a partial isometry for almost every $\zeta\in\Sb$. Therefore \eqref{introBeurl} is regarded as a multivariable analogue of the Beurling representation for model spaces on the unit disk $\D$, although in the one-variable setting the graded situation is trivial. A multivariable Beurling representation holds also for not necessarily grading-preserving projections $P_E\in\Bi(\GH_\N\otimes\C^N)$ with \eqref{PEsupharmon} but we will focus on the graded ones here.

We shall see that \eqref{PEsupharmon} guarantees that $P_E$ has entries in a von Neumann algebra of Toeplitz operators with $L^\infty$ symbols. We will also show that if $P_E$ has entries in the Toeplitz $C^*$-algebra $\Ti_\GH^{(0)}$ then the real-analytic matrix-valued function
$$
\varsigma_\B(P_E)(v):=\bone-\Theta_E(v)\Theta_E(v)^*,\qquad\forall v\in\B
$$
has a continuous extension to a projection-valued function on $\Sb$, which is equivariant under the action on $\Sb$ by the circle group $\Un(1)$ and hence descends to the projective variety $\M=\Sb/\Un(1)$. We shall see that the latter is precisely the symbol $\varsigma(P_E)$ of the $\Psi$-superharmonic projection $P_E$. Thus, starting from a $\Psi$-superharmonic projection $P_E$ over $\Ti_\GH^{(0)}$ we obtain in a canonical way a continuous vector bundle over $\M$, namely the vector bundle $\Ei$ defined by the projection $\varsigma(P_E)$ over $C^0(\M)$. The compression of the shift on $\GH_\N\otimes\C^N$ to the range of $P_E$ is again a pure row contraction $S_E$. The rank of the vector bundle $\Ei$ is precisely the so-called Arveson curvature of $S_E$ which is studied e.g. in \cite{Arv7a, Arv7b, GRS1, GRS2}. In the present paper we shall investigate further the geometric properties of $\Ei$ and their relation to the operator tuple $S_E$.

There is a more well-studied way of associating a vector bundle to an operator tuple such as $S_E$, provided that one can show that $S_E$ satisfies certain conditions. Cowen and Douglas invented an approach to classify operator tuples with uncountably many eigenvalues \cite{CoDo1, CoDo2}. If $\Omega$ is a subset of $\C^n$ and $r\geq 1$ is an integer, a tuple $W=(W_1,\dots,W_n)$ of commuting operators on a Hilbert space $\Hi$ is said to belong to the \textbf{Cowen--Douglas class} $B_r(\Omega)$ if
\begin{enumerate}[(i)]
\item{$\Ran(W-w\bone)$ is a closed subspace of $\Hi$ for all $w\in\Omega$,}
\item{$\dim\Ker(W-w\bone)=r$ for all $w\in\Omega$, where $\Ker(W-w\bone):=\bigcap_{\alpha=1}^n\Ker(W_\alpha-w_\alpha\bone)$, and}
\item{$\bvee_{w\in\Omega}\Ker(W-w\bone)=\Hi$.}
\end{enumerate}

If $W$ is in class $B_r(\Omega)$ then the family of Hilbert subspaces $\Ker(W-w\bone)$ parameterized by $w\in\Omega$ can be given the structure of form an Hermitian holomorphic vector bundle over $\Omega$. 
It is not hard to show that when $S$ is the shift on $\GH_\N$ as before then the backward shift $S^*$ is in class $B_1(\B)$, with the domain $\B:=\B^n\cap\Spec\Ai$ mentioned above. The associated holomorphic line bundle over $\B$ is isomorphic to the trivial one but its Hermitian metric is still quite interesting. 

In view of our quantization problem it is natural to look at graded subspaces $\GE_\N$ of $\GH_\N\otimes\GE_0$ for some finite-dimensional Hilbert space $\GE_0$. Compressing the shift $S$ on $\GH_\N\otimes\GE_0$ down to the subspace $\GE_\N$ we obtain a commutative operator tuple $S_E$ which we also refer to as the shift on $\GE_\N$. When the projection $P_E$ onto $\GE_\N$ is $\Psi$-superharmonic, which happens iff $\GE_\N$ is invariant under $S^*$, we call $\GE_\N$ a graded \textbf{quotient module}. We will prove:
\begin{thm}[Theorem \ref{bigCDversuslocallyfree}]
Let $\GE_\N\subset\GH_\N\otimes\GE_0$ be a graded quotient module and suppose that the projection $P_E$ onto $\GE_\N$ has entries in the Toeplitz $C^*$-algebra $\Ti_\GH^{(0)}$. Then the operator tuple $S_E$ is in class $B_{r_E}(\B\setminus\{0\})$, where
$$
r_E=\lim_{m\to\infty}\frac{\dim\GE_m}{\dim\GH_m}
$$
is the Arveson curvature of the pure row contraction $S_E$.
\end{thm}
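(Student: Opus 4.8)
The plan is to work with the adjoint tuple $S_E^{*}=(S^{*}\otimes\bone)\big|_{\GE_\N}$ — a genuine restriction, since $\GE_\N$ is coinvariant — and to exhibit its eigenvectors explicitly, exactly as one does for the backward shift $S^{*}$ on $\GH_\N$. By the Beurling-type representation established earlier we have $P_E=\bone-M_{\Theta_E}M_{\Theta_E}^{*}$ with $M_{\Theta_E}$ a partial isometry, so $\GE_\N=\Ker M_{\Theta_E}^{*}$. First I would record the multiplier reproducing identity $M_{\Theta_E}^{*}(\gamma_w\otimes\xi)=\gamma_w\otimes\Theta_E(w)^{*}\xi$, where $\gamma_w$ is the reproducing kernel of $\GH_\N$ at $w\in\B$ and spans $\Ker(S^{*}-\bar w\bone)$ because $S^{*}\in B_1(\B)$. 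Combined with $\Ker(S^{*}\otimes\bone-\bar w\bone)=\gamma_w\otimes\GE_0$ for $w\neq 0$, this gives $\Ker(S_E^{*}-\bar w\bone)=\gamma_w\otimes\Ker\Theta_E(w)^{*}$, and reduces the theorem to three statements about the analytic function $\Theta_E$ on $\B\setminus\{0\}$: that $\dim\Ker\Theta_E(w)^{*}=r_E$ (condition (ii)); that $S_E^{*}-\bar w\bone$ has closed range (condition (i)); and that the subspaces $\gamma_w\otimes\Ker\Theta_E(w)^{*}$ span $\GE_\N$ (condition (iii)).

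The heart of the argument is the constancy of $\rank\Theta_E(w)$. Since $\GE_\N$ is a \emph{graded} quotient module, $M_{\Theta_E}$ shifts the $\N_0$-grading by a fixed amount $k$, so the entries of $\Theta_E$ are homogeneous polynomials of degree $k$ and $\Theta_E(\lambda w)=\lambda^{k}\Theta_E(w)$ for $\lambda\in\C^{\times}$; in particular $\rank\Theta_E$ is constant along every $\C^{\times}$-orbit in $\V\setminus\{0\}$. The Toeplitz hypothesis $P_E\in\Ti_\GH^{(0)}$ is exactly what lets us invoke the earlier result that $\varsigma_\B(P_E)=\bone-\Theta_E\Theta_E^{*}$ extends continuously to a projection-valued function on $\Sb$ which descends to the rank-$r_E$ bundle $\Ei=\varsigma(P_E)$ on $\M$; hence $\Theta_E(\zeta)\Theta_E(\zeta)^{*}$ is an orthogonal projection of rank $N-r_E$ for \emph{every} $\zeta\in\Sb$, so $\rank\Theta_E(\zeta)=N-r_E$ there. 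As every $\C^{\times}$-orbit in $\V\setminus\{0\}$ meets $\Sb$, homogeneity propagates this to all of $\B\setminus\{0\}$, giving $\dim\Ker\Theta_E(w)^{*}=N-\rank\Theta_E(w)=r_E$ and settling (ii); this simultaneously identifies the Cowen–Douglas eigenbundle $w\mapsto\Ker(S_E^{*}-\bar w\bone)$ with the pullback of $\Ei$ to $\B\setminus\{0\}$, consistently with the Arveson-curvature identity $r_E=\lim_m\dim\GE_m/\dim\GH_m=\rank\Ei$ from the discussion preceding the theorem.

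For condition (i), the constancy of $\rank\Theta_E$ on $\B\setminus\{0\}$ makes $\Ker\Theta_E^{*}$ a locally trivial subbundle, and a compactness argument then shows $S_E^{*}-\bar w\bone$ is bounded below on the orthogonal complement of its kernel, uniformly on compact subsets of $\B\setminus\{0\}$, which yields closed range; alternatively one may quote that a pure row contraction whose defect $\bone-\sum_\alpha (S_E)_\alpha (S_E)_\alpha^{*}$ has the trace asymptotics forced by $P_E\in\Ti_\GH^{(0)}$ lands automatically in a Cowen–Douglas class. For condition (iii) I would start from the fact that the reproducing kernels $P_E(\gamma_w\otimes\xi)$, $w\in\B$, $\xi\in\GE_0$, span $\GE_\N$, note that the point $0$ may be dropped since kernels at $0$ are limits of kernels at nearby points, and exploit that for $\xi\in\Ker\Theta_E(w)^{*}$ the kernel $P_E(\gamma_w\otimes\xi)$ coincides with the eigenvector $\gamma_w\otimes\xi$, so that after holomorphic deformation in $w$ the eigenvectors exhaust the closed span of all kernels. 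The hard part will be precisely condition (iii): here the Toeplitz hypothesis must genuinely be used to control the grading-finite part of $\GE_\N$, and the inputs that make the spanning work are exactly the homogeneity of $\Theta_E$ and the constancy of $\rank\Theta_E(w)$ on $\B\setminus\{0\}$ obtained in the previous step.
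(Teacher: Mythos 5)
Your argument for the core condition (ii) — the constancy of $\dim\Ker(S_E^*-\bar w\bone)$ on $\B\setminus\{0\}$ — is essentially sound, but it takes a genuinely different route from the paper. The paper never touches the characteristic function at this point: it works with the graded covariant symbols, showing from coinvariance that $\varsigma^{(m)}(P_{E,m})=P^E\oplus C^E_m$ with $P^EC^E_m=0$, using the Toeplitz hypothesis to get $\|C^E_m\|<1$ for $m\gg0$, so that $\lim_p\varsigma^{(m)}(P_{E,m})^p=P^E$; the alternating-projection identities (Proposition \ref{altprojprop} and \eqref{bignormaltproj}) then identify this limit with the fiber projection $\Pi^E$ of the Cowen--Douglas sheaf and force $\Pi^E(r\zeta)=P^E([\zeta])$, whence constant rank $r_E$ by \eqref{Arvcurvlim}. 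Your route instead goes through the Beurling multiplier $\Theta_E$, homogeneity, and the boundary partial-isometry property; this is legitimate and arguably more direct, but two details need repair. First, $\Theta_E$ is \emph{not} homogeneous of a single degree $k$: by Lemma \ref{PEinNilemma}(a) its columns are homogeneous of varying degrees $m_j$, so one has $\Theta_E(\lambda w)=\Theta_E(w)D(\lambda)$ with $D(\lambda)=\diag(\lambda^{m_j})$ injective with dense range, which still gives $\Ran\Theta_E(\lambda w)=\Ran\Theta_E(w)$ (the range is finite-dimensional), and that is what you actually need. Second, at a boundary point $\zeta\in\Sb$ the operator $\Theta_E(\zeta)$ is a priori only densely defined, so the statement ``$\Theta_E(\zeta)\Theta_E(\zeta)^*$ is a projection of rank $N-r_E$'' should be run through the monotone limit $\Theta_E(r\zeta)\Theta_E(r\zeta)^*=\Theta_E(\zeta)D(r)^2\Theta_E(\zeta)^*\nearrow\bone-P^E([\zeta])$ together with the observation that $\Ran\Theta_E(r\zeta)$ is independent of $r$, so the limit projection's range coincides with this common range; both the boundary limit and its continuity/rank are indeed available from Lemma \ref{AbelsumLemma}, Proposition \ref{limitPiEcor} and the remark on the characteristic function being a partial isometry at every point of $\Sb$.

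The genuine gap is conditions (i) and, above all, (iii), which you yourself flag as ``the hard part'' and then only gesture at (``after holomorphic deformation in $w$ the eigenvectors exhaust the closed span of all kernels'' is not an argument, and the vectors $P_E(k_w\otimes\xi)$ with $\xi\notin\Ker\Theta_E(w)^*$ are not eigenvectors and are not obviously approximable by eigenvectors). Note that spanning is not automatic for graded quotient modules: for $N=1$, $\GH_\N=H^2_2$ and $\GE_\N$ the orthocomplement of the closed ideal generated by $z_1^2$, the only eigenvectors of $S_E^*$ are the kernels $k_v$ with $v_1=0$, and $z_1\in\GE_\N$ is orthogonal to all of them — so some input beyond gradedness (here the example violates $P_E\in\Ti_\GH^{(0)}$, since $\varsigma(P_E)=0$ while $P_E$ has infinite rank) is truly required, and your proposal does not supply it; the closed-range condition (i) is likewise only waved at via an unspecified ``compactness argument.'' For what it is worth, the paper itself disposes of (i) and (iii) very quickly in the setup of \S3.1.2 (eigenvectors of $S_E^*$ lie in $\C k_{\bar v}\otimes\C^N$, ranges closed, kernels span) and concentrates its entire proof on (ii); so relative to the paper you have replaced the part it proves by a different correct mechanism, but left open precisely the part you identified as hard, and that part must use the Toeplitz hypothesis in an essential way.
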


Typically $\Ei_{\rm CD}$ does not extend to a vector bundle over all of $\B$, i.e. it is necessary to remove the origin.
By \cite[Thm. 3.3]{DKKS2}, a quotient module $\GE_\N=\Ker\Theta^*_E$ of $\GH_\N\otimes\C^N$ is in the Cowen--Douglas class $B_r^*(\B)$ for some $r$ if $M_{\Theta_E}$ has a left inverse (see also \cite[Cor. 4.4]{DFS1}). From Theorem \ref{bigCDversuslocallyfree} one can then deduce obstructions to the existence of a left inverse $M_{\Theta_E}$: Such a left inverse cannot exist if the ``Serre sheaf'' of the graded $\Ai$-module $\GE_\N$ (see \S\ref{alggradmodsec}) is not locally free on all of $\B$. The left-invertibility of $M_{\Theta_E}$ is a kind of corona condition (see \cite{Doug1}). 




\subsection{Yang--Mills metrics}

We just discussed the vector bundles associated with a $\Psi$-superharmonic projection $P_E$ over $\Ti_\GH^{(0)}$. If we go back to our starting point, with an arbitrary projection $P^E$ over $C^0(\M)$, it turns out that there does not exist a $\Psi$-superharmonic lift of $P^E$. But we shall prove:
\begin{thm}[Lemma \ref{coinvYMlemma}, Theorem \ref{YMCDthm}]\label{intrYMthm}
Let $P^E$ be a projection over $C^\infty(\M)$ defining a Hermitian Yang--Mills vector bundle $\Ei$ over $\M$. Then the $\Psi$-superharmonic projection $\Ran\breve{\varsigma}(P^E)$ is a lift of $P^E$:
$$
\varsigma(\Ran\breve{\varsigma}(P^E))=P^E.  
$$
Moreover, the Cowen--Douglas sheaf $\Ei_{\rm CD}$ of $\GH^E_\N:=\Ran\breve{\varsigma}(P^E)$ is analytically isomorphic over $\B\setminus\{0\}$ to the pullback $\Ei_{\B\setminus\{0\}}$ of $\Ei$; as Hermitian holomorphic vector bundles we have
\begin{equation}\label{Hermfactor}
\Ei_{\rm CD}=\Oi_{\rm CD}\otimes\Ei_{\B\setminus\{0\}}
\end{equation}
where $\Ei_{\B\setminus\{0\}}$ is endowed with the Hermitian metric given by pullback of $P^E$ to $\B\setminus\{0\}$.
\end{thm}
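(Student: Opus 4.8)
The plan is to treat the two halves of the statement separately, reducing Lemma~\ref{coinvYMlemma} to a spectral‑gap estimate for Berezin--Toeplitz operators and Theorem~\ref{YMCDthm} to a computation of the reproducing kernel of $\GH^E_\N$.

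For the lifting identity, write $A:=\breve\varsigma(P^E)\in\Ti_\GH^{(0)}\otimes\Mn_N(\C)$; it is positive, satisfies $\varsigma(A)=P^E$, and, being a Toeplitz operator, is $\Psi$-harmonic, $\Psi(A)=A$. First I would show that $Q_E:=\Ran A$ is $\Psi$-superharmonic directly: applying the operator Jensen inequality to the operator‑concave functions $f_k(t)=kt/(1+kt)$ (with $f_k(0)=0$, $0\le f_k\le 1$, $f_k\nearrow\chi_{(0,\infty)}$) gives $\Psi(f_k(A))\le f_k(\Psi(A))=f_k(A)\le Q_E$, and letting $k\to\infty$ strongly, using normality of $\Psi$, yields $\Psi(Q_E)\le Q_E$. (Alternatively, $\bone-Q_E=\Ker A$ is the span of the graded subspaces $H^0(\M;\Li^m\otimes\Ker P^E)$, which form an $S$-invariant submodule, so $\bone-Q_E$ is $\Psi$-subharmonic.) Since $\Ker\varsigma=\bigoplus_m\Bi(\GH_m)$ and $\varsigma(A)=P^E$, proving $\varsigma(Q_E)=P^E$ is the same as $Q_E-A\in\bigoplus_m\Bi(\GH_m)$; as $Q_E$ and $A$ are both grading‑preserving, this is the quantitative assertion $\|Q_{E,m}-A_m\|\to 0$, where $A_m=\Pi_m\circ M_{P^E}\circ\Pi_m$ is the level‑$m$ Berezin--Toeplitz operator. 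Because $A_m\ge 0$, $\|Q_{E,m}-A_m\|=1-\mu_m$ with $\mu_m$ the least nonzero eigenvalue of $A_m$, so everything comes down to: \emph{every nonzero eigenvalue of $A_m$ tends to $1$ as $m\to\infty$.}

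To prove that I would first identify the kernel: positivity gives $A_m s=0\iff\langle s,M_{P^E}s\rangle_{L^2}=0\iff P^E s\equiv 0$, so $\Ker A_m=H^0(\M;\Li^m\otimes\Ki)$, where $\Ki:=\Ker P^E$ is the holomorphic complementary subbundle (recall $P^E$ presents $\Ei$ as the holomorphic quotient $\underline{\C}^N/\Ki$ with the metric coming from $P^E$). For $m\gg 0$, Serre vanishing $H^1(\M;\Li^m\otimes\Ki)=0$ identifies $(\Ker A_m)^\perp$ with $H^0(\M;\Li^m\otimes\Ei)$ via the quotient map, so $\rank A_m=\dim H^0(\M;\Li^m\otimes\Ei)$. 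It then remains to show that $A_m$ restricted to $(\Ker A_m)^\perp$ converges to the identity in norm. I expect to obtain this either (i) from Bergman‑kernel asymptotics: an orthogonal lift $s^\perp$ of $t\in H^0(\M;\Li^m\otimes\Ei)$ has $P^E s^\perp$ equal to $t$ viewed in $\Li^m\otimes\Ki^{\perp}$, holomorphic up to the second‑fundamental‑form term, and off‑diagonal decay of the $\Li^m$-Bergman kernel — the point where the Hermitian--Yang--Mills equation enters, to keep the subleading curvature contribution harmless — gives $\|\Pi_m(P^E s^\perp)-s^\perp\|=O(m^{-1/2})\|s^\perp\|$; or (ii), using homogeneity, by taking $P^E$ to be $\G$-invariant, decomposing $H^0(\M;\Li^m)\otimes\C^N$ into $\G$-isotypic pieces via Borel--Weil and Bott's theorem (applied to $\Ki$ and $\Ei$), noting that the $\G$-equivariant $A_m$ acts on each piece as $\bone_{V_\sigma}\otimes A^m_\sigma$, and checking that the finitely many families $A^m_\sigma$ present on $(\Ker A_m)^\perp$ have eigenvalues equal to ratios of Weyl‑dimension polynomials, each tending to $1$. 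Either way this is the main obstacle: the elementary trace identity $\Tr A_m=(\rank\Ei)\dim\GH_m$ only forces the \emph{average} nonzero eigenvalue to $1$, so the uniform convergence genuinely uses the Yang--Mills hypothesis (without it one expects a fixed spectral gap below $1$).

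For the Cowen--Douglas statement, Theorem~\ref{bigCDversuslocallyfree} applies to the graded quotient module $\GH^E_\N=Q_E(\GH_\N\otimes\C^N)$ and gives $S_E\in B_r(\B\setminus\{0\})$ with $r=\lim_m\dim H^0(\M;\Li^m\otimes\Ei)/\dim\GH_m=\rank\Ei$. Viewing $\GH^E_\N$ as a reproducing‑kernel space of $\C^N$-valued holomorphic functions on $\B$ with $\End(\C^N)$-valued kernel $K^E$, one has $\Ei_{\rm CD}|_v=\Ker(S_E^*-\bar v\bone)=\{K^E(\cdot,v)\xi:\xi\in\C^N\}$, of dimension $\rank K^E(v,v)=r$ for $v\ne 0$ (at $v=0$ the rank jumps to $\rank(\int_\M P^E\,d\mu)$, which forces deletion of the origin), with Hermitian metric $\langle\xi,K^E(v,v)\eta\rangle$. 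I would then (a) build the analytic isomorphism $\Ei_{\rm CD}\cong\Ei_{\B\setminus\{0\}}$ by checking that the quotient map $\underline{\C}^N\twoheadrightarrow\Ei$, applied to sections, intertwines $S_E$ with coordinate multiplication on $\bigoplus_m H^0(\M;\Li^m\otimes\Ei)$ and carries the frame $v\mapsto K^E(\cdot,v)\xi$ to the evaluation functionals of the graded section module of $\pi^*\Ei$ over the affine cone $\V$, which are holomorphic in the Cowen--Douglas structure; and (b) obtain the Hermitian factorization by computing the diagonal kernel: using the first part to replace $Q_{E,m}$ by $A_m$ up to a grading‑compact (hence curvature‑invisible over $\B\setminus\{0\}$) correction, and the on‑diagonal Bergman expansion $B_m(x,x)P^E(x)+O(m^{d-1})$ (with $d=\dim_\C\M$) for the $\Li^m\otimes\Ei$-summand, resummation yields $K^E(v,v)=K_\GH(v,v)\,P^E(v)$, which is exactly $\Ei_{\rm CD}=\Oi_{\rm CD}\otimes\Ei_{\B\setminus\{0\}}$ — equivalently $\Theta_{\Ei_{\rm CD}}=\Theta_{\Oi_{\rm CD}}\otimes\bone+\bone\otimes\pi^*\Theta_{(\Ei,P^E)}$ at the level of curvatures, with $\Oi_{\rm CD}$ carrying the metric of potential $\log K_\GH(v,v)$. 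The delicate point here is justifying that the grading‑compact discrepancy between $Q_E$ and $\breve\varsigma(P^E)$ leaves the holomorphic structure and curvature of the Cowen--Douglas bundle on $\B\setminus\{0\}$ unchanged, so that $\Ei_{\rm CD}$ may legitimately be computed from $\breve\varsigma(P^E)$ rather than from $Q_E$.
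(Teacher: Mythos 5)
Your reduction of the first half to the claim that every nonzero eigenvalue of $A_m:=\breve{\varsigma}^{(m)}(P^E)$ tends to $1$ is sound, but that claim is precisely the hard content and you never actually prove it. It is, in operator-norm form, the statement the paper imports from Wang's theorem (Lemma \ref{Wanglemma}): the proof of Lemma \ref{coinvYMlemma} takes balanced metrics $B^E_m\to P^E$ in $C^\infty$, uses that balance means $\breve{\varsigma}^{(m)}(B^E_m)=c_{E,m}P_{E,m}$ \emph{exactly}, transfers the convergence through the contractivity of $\breve{\varsigma}^{(m)}$, and then concludes with the uniqueness of superharmonic lifts (Proposition \ref{uniqeliftprop}). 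Your route (ii) via Borel--Weil--Bott requires $P^E$ to be $\G$-invariant, i.e.\ $\Ei$ equivariant, which is a special case (handled separately in Proposition \ref{balancedeqprop}); one cannot ``take'' $P^E$ invariant for a general Yang--Mills bundle. Your route (i) asserts the crucial estimate ($\|\Pi_m(P^Es^\perp)-s^\perp\|=O(m^{-1/2})\|s^\perp\|$, with the Yang--Mills equation keeping the curvature term ``harmless'') without any argument; as written this restates the problem rather than solving it, and in effect amounts to re-deriving the Catlin/Wang asymptotics that Lemma \ref{Wanglemma} packages. There is also a setup gap: the identification $\Ker A_m=H^0(\M;\Li^m\otimes\Ker P^E)$, and hence $\rank A_m=\chi(\Ei(m))$, presupposes that $\Ker P^E$ is a \emph{holomorphic} subbundle of the trivial bundle. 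The hypothesis only gives a smooth projection presenting the metric, whose kernel need not be holomorphic; whether $\dim\Ran\breve{\varsigma}^{(m)}(P^E)=\chi(\Ei(m))$ in general is posed as an open question in \S\ref{dimransec} and is proved in the paper only under extra hypotheses (Proposition \ref{propofrealalg}) or via the balanced-metric route.

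For the second half, once $\varsigma(P_E)=P^E$ with $P^E$ continuous is known, the paper obtains \eqref{Hermfactor} directly from Theorem \ref{bigCDversuslocallyfree}, which already gives $\Pi^E(r\zeta)=P^E([\zeta])$, i.e.\ $\CD(\GE_\N)=\CD(\GH_\N)\otimes\pi^*P^E$ on $\B\setminus\{0\}$. Your step (b) instead rests on the interior identity $K^E(v,v)=K(v,v)P^E(v)$, which is false in general: $K^E(v,v)/K(v,v)=\varsigma_\B(P_E)(v)=\bone-\Theta_E(v)\Theta_E(v)^*$ is the core function, which is not projection-valued at interior points (only its boundary limit, equivalently the limit of its powers $\Pi^E$, is), and the proposed resummation of the on-diagonal Bergman expansion cannot yield an exact interior identity because the $O(m^{d-1})$ errors resum to a term only one power of $(1-r^2)^{-1}$ below the leading one — so one recovers boundary asymptotics, which is exactly what Lemma \ref{AbelsumLemma} and Theorem \ref{bigCDversuslocallyfree} already encode. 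The Hermitian factorization should therefore be routed through Theorem \ref{bigCDversuslocallyfree} as in the paper, and the genuinely missing ingredient in your proposal is either Wang's theorem or an actual proof of your spectral-gap claim.
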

The Yang--Mills equation under consideration is
$$
\tr_\omega\Theta^E=\mu(\Ei)\bone_E,
$$
where $\omega$ is the Kähler form in class $c_1(\Li)$ associated with the $\G$-invariant volume form, $\Theta^E$ is the Chern curvature of $P^E$ and the given holomorphic structure on $\Ei$, and $\tr_\omega:\Ai^2(\M)\otimes\End\Gamma^\infty(\M;\Ei)\to\End\Gamma^\infty(\M;\Ei)$ is the operator of taking trace against $\omega$ (with $\Ai^2(\M)$ denoting the space of differential 2-forms on $\M$).


The Yang--Mills condition depends on the Kähler metric $\omega$, which for our coadjoint orbit $\M=\G/\K$ is encoded in the quantization $\GH_\bullet$. The difference between the Toeplitz operator $\breve{\varsigma}(P^E)$ and its range projection $P_E$ is a measure on how well $P^E$ can be quantized with respect to the chosen quantization $\GH_\bullet$ of the manifold. We have said that $\breve{\varsigma}^{(m)}(P^E)$ cannot itself be expected to be a projection for all $m$ unless the vector bundle $\Ei$ defined by $P^E$ has trivial Chern character. The next best thing would be that $\breve{\varsigma}^{(m)}(P^E)$ is a scalar multiple of its range projection $P_{E,m}$ for each $m$. This holds for very special metrics:
\begin{prop}[Proposition \ref{balancedeqprop}]
Suppose that $P^E$ is a projection over $C^\infty(\M)$ defining an irreducible $\G$-equivariant Hermitian vector bundle $\Ei$ over $\M=\G/\K$. Then for all $m\gg 0$ we have
\begin{equation}\label{balsinglePE}
\breve{\varsigma}^{(m)}(P^E)=c_{E,m}P_{E,m}
\end{equation}
with the scalar
$$
c_{E,m}:=\frac{n_m\rank\Ei}{\chi(\Ei(m))}
$$
where $\chi(\Ei(m))$ is the Euler characteristic of the vector bundle $\Ei(m):=\Li^m\otimes\Ei$.
\end{prop}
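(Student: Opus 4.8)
The plan is to use $\G$-equivariance to reduce the statement to Schur's lemma together with a trace computation, once the range of $\breve\varsigma^{(m)}(P^E)$ has been identified for $m\gg0$.

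First the standing reductions. Since $\Ei$ is irreducible $\G$-equivariant, $\Ei=\G\times_\K V_\lambda$ for an irreducible unitary $\K$-representation $V_\lambda$, its invariant Hermitian metric is unique up to scale, and one may take $P^E$ to be a $\G$-equivariant projection over $C^\infty(\M)$: $\C^N$ carries a unitary $\G$-representation $\rho$ with $P^E(gx)=\rho(g)P^E(x)\rho(g)^{-1}$. (Incidentally $\tr_\omega\Theta^E$, being a $\G$-equivariant endomorphism of the irreducible $\Ei$, is a constant multiple of $\bone_E$, so $\Ei$ is automatically Hermitian--Yang--Mills and Theorem \ref{intrYMthm} applies: the $m$-th component $P_{E,m}$ of the lift $P_E=\Ran\breve\varsigma(P^E)$ is then simply the range projection of $\breve\varsigma^{(m)}(P^E)$, which is the projection appearing in \eqref{balsinglePE}.) I would next observe that $\breve\varsigma^{(m)}(P^E)$ is a $\G$-equivariant operator on $\GH_m\otimes\C^N$ for the representation $U_m\otimes\rho$, where $U_m$ is the $\G$-action on $\GH_m=H^0(\M;\Li^m)$: from the explicit description $\breve\varsigma^{(m)}(P^E)=n_m\int_\M\bigl(|\xi_x^{(m)}\rangle\langle\xi_x^{(m)}|\otimes P^E(x)\bigr)\,\ud\vol(x)$ one has $U_g|\xi_x^{(m)}\rangle\langle\xi_x^{(m)}|U_g^{*}=|\xi_{gx}^{(m)}\rangle\langle\xi_{gx}^{(m)}|$, $P^E$ is equivariant, $\vol$ is $\G$-invariant, and transitivity of $\G$ on $\M$ reabsorbs the substitution $x\mapsto gx$.

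The main step is to show that for $m\gg0$ the range of $\breve\varsigma^{(m)}(P^E)$ is an irreducible $\G$-submodule of $\GH_m\otimes\C^N$ isomorphic to $H^0(\M;\Ei(m))$. Expanding $P^E(x)=\sum_a|\eta_a(x)\rangle\langle\eta_a(x)|$ over a local orthonormal frame $(\eta_a)$ of $\Ei$ gives $\breve\varsigma^{(m)}(P^E)=n_m\sum_a\int_\M|\xi_x^{(m)}\otimes\eta_a(x)\rangle\langle\xi_x^{(m)}\otimes\eta_a(x)|\,\ud\vol(x)$, so the range is the closed span of the fibres of the sub-bundle $\mathcal L\otimes\Ei\subset\underline{\GH_m\otimes\C^N}$, where $\mathcal L_x=\C\,\xi_x^{(m)}$. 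Identifying $\mathcal L$, via the Kodaira embedding $\M\hookrightarrow\Pb(\GH_m^{*})$ associated with $|\Li^m|$, with the pullback of the tautological line bundle, one recognises this span as the image of the natural evaluation map out of $H^0(\M;\Ei(m))$; for $m\gg0$ the line bundle $\Li^m$ is very ample, $\Ei(m)$ is globally generated and $H^{>0}(\M;\Ei(m))=0$, so the span is a $\G$-submodule of dimension $\chi(\Ei(m))$, and by Bott's theorem $H^0(\M;\Ei(m))$ is the irreducible $\G$-representation attached to the dominant weight assembled from $\lambda$ and $m$ times the weight of $\Li$. (Alternatively one stays inside representation theory: decompose $\GH_m\otimes\C^N$ into $\G$-isotypic components and check via Frobenius reciprocity and the coherent-state formula that $\breve\varsigma^{(m)}(P^E)$ is supported precisely on the component corresponding to $H^0(\M;\Ei(m))$, which has multiplicity one for $m\gg0$.) The delicate point, and the one I expect to be the real obstacle, is the bookkeeping of the (anti-)holomorphic identifications in the Kodaira/coherent-state picture that pins the range down to $H^0(\M;\Ei(m))$ rather than to a different twist; here one genuinely uses the positivity built into the quantization $\GH_\bullet$ and the earlier results of the paper on $\Ran\breve\varsigma(P^E)$.

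Granting this, the conclusion is immediate. By equivariance, $\breve\varsigma^{(m)}(P^E)$ restricts to a positive $\G$-equivariant endomorphism of the irreducible module $\Ran\breve\varsigma^{(m)}(P^E)$, so by Schur's lemma it equals $c_{E,m}\,\id$ there for some $c_{E,m}>0$; being self-adjoint it vanishes on the kernel $(\Ran\breve\varsigma^{(m)}(P^E))^{\perp}$, whence $\breve\varsigma^{(m)}(P^E)=c_{E,m}P_{E,m}$. To compute the scalar, take operator traces. The functional $f\mapsto\Tr\breve\varsigma^{(m)}(f)$ on $C(\M)$ is positive and $\G$-invariant, hence a multiple of $\int_\M(\cdot)\,\ud\vol$, and $\Tr\breve\varsigma^{(m)}(1)=\Tr\bone_{\GH_m}=n_m$ fixes the multiple as $n_m$; applying this entrywise, $\Tr\breve\varsigma^{(m)}(P^E)=\sum_i\Tr\breve\varsigma^{(m)}(P^E_{ii})=n_m\int_\M\tr_{\C^N}P^E\,\ud\vol=n_m\rank\Ei$, using $\tr_{\C^N}P^E\equiv\rank\Ei$ and $\vol(\M)=1$. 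On the other hand $\Tr(c_{E,m}P_{E,m})=c_{E,m}\dim H^0(\M;\Ei(m))=c_{E,m}\chi(\Ei(m))$ by the main step and the vanishing of higher cohomology for $m\gg0$. Comparing the two expressions gives $c_{E,m}=n_m\rank\Ei/\chi(\Ei(m))$.
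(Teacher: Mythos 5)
Your argument is correct, but it is organized differently from the paper's. The paper proves \eqref{balsinglePE} through the balanced-metric formalism: Proposition \ref{geomintprop} gives $\FS(\GH_m)\otimes P^E=\FS(\GE_m)$, and then the Haar orthogonality relations of \S\ref{Harrelsec}, applied to the Gram matrix $\omega\big((\psi_\mu|\psi_\nu)_{P^E}\big)$ of an orthonormal basis of the irreducible $\G$-module $\GE_m$, yield in one stroke both the proportionality $\breve{\varsigma}^{(m)}(P^E)\propto P_{E,m}$ and the constant $1/\dim\GE_m$, i.e.\ the metric $\FS(\GH_m)\otimes P^E$ is $\omega$-balanced, which is exactly \eqref{balanceeqToepl}. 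You instead work on the operator side: equivariance of $\breve{\varsigma}^{(m)}(P^E)$, identification of its range with the embedded copy of $H^0(\M;\Ei(m))$ (irreducible by Bott), Schur's lemma, and a separate trace computation $\Tr\breve{\varsigma}^{(m)}(P^E)=n_m\rank\Ei$ versus $c_{E,m}\chi(\Ei(m))$ to fix the constant. The two routes rest on the same representation-theoretic engine (Schur orthogonality versus Schur's lemma), but yours bypasses the frame/balance language and makes the normalization transparent, while the paper's route simultaneously delivers the balance statement and the identification of the quotient module $\Ran\breve{\varsigma}(P^E)$ that it needs afterwards. Two cautions: the real work in your proof is precisely the range identification $\Ran\breve{\varsigma}^{(m)}(P^E)\cong H^0(\M;\Ei(m))$ with its (anti)holomorphic bookkeeping, which you only sketch --- this is what Proposition \ref{geomintprop} and the frame argument in the proof of Theorem \ref{backtoHardythmgen} carry out in the paper, so your sketch should be replaced by an appeal to (or a reproduction of) that argument; and the parenthetical use of Theorem \ref{intrYMthm} to identify $P_{E,m}$ should be dropped, since in the paper that theorem is proved via Lemma \ref{coinvYMlemma} and Wang's theorem on balanced metrics, whose equivariant instance is the content of the present proposition --- fortunately your main argument never actually needs it.
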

Recall that for $m$ large enough, $\chi(\Ei(m))$ is just the dimension of the vector space $H^0(\M;\Ei(m))$ of global holomorphic sections of $\Ei(m)$. 

\subsection{Balanced metrics}
For more general vector bundles $\Ei$, which are not $\G$-equivariant, there does not exist a metric $P^E$ satisfying  \eqref{balsinglePE} for each $m\gg 0$. But suppose that we have a sequence $(B^E_m)_{m\gg 0}$ of projections over $C^\infty(\M)$ which all define vector bundles smoothly isomorphic to $\Ei$ and such that
\begin{equation}\label{balsinglePE}
\breve{\varsigma}^{(m)}(B^E_m)=c_{E,m}P_{E,m},
\end{equation}
where $P_{E,m}$ is a projection in $\Bi(\GH_m)\otimes\Mn_N(\C)$ for some $N$. If we want the $B^E_m$'s to give a quantization of $\Ei$ then we need to require that the $B^E_m$'s for different $m$ are related in some way. One way would be to require the projection $P_E:=\sum_mP_{E,m}$ to belong to the $C^*$-algebra $\Ti_\GH^{(0)}\otimes\Mn_N(\C)$, so that the symbol $\varsigma(P_E)$ is continuous. Let us instead assume that $\Ei$ admits a holomorphic structure. Then the vector space
$$
E_\N:=\bigoplus_{m\in\N_0}H^0(\M;\Ei(m))
$$
is a graded module over the graded algebra $\Ai:=\bigoplus_{m\in\N_0}H^0(\M;\Li^m)$. Up to finite-dimensional vector spaces we may assume that $E_\N$ for some $N$ is the quotient of $\Ai\otimes\C^N$ by some graded submodule $I_\N$. Then a natural condition on the $B^E_m$'s is to require that $P_E$ is the projection of $\GH_\N\otimes\C^N$ onto the orthogonal complement $\GE_\N$ of $I_\N$. Indeed, the graded $\Ai$-module structure on $\GE_\N$ defined by the compressed shift $S_E$ is isomorphic to that on $E_\N$. The subspace $\GE_\N\subset\GH_\N\otimes\C^N$ is invariant under $S^*$, so $P_E$ is $\Psi$-superharmonic.  

The Toeplitz map $\breve{\varsigma}^{(m)}$ has an explicit expression in terms of $\omega$ and the Fubini--Study metric $\FS(\GH_m)$ of the line bundle $\Li^m$. To find the meaning of \eqref{balsinglePE} we will make use of frame theory for Hilbert $C^*$-modules (see \cite{FrLa3} for background). A frame for a Hilbert $C^*$-module will be referred to as a \textbf{$C^*$-frame} in order to distinguish it from the usual frames for Hilbert spaces, which we will also need (see \cite{Chri1, HaLa1} for background on these). 
As will be discussed in more detail in \S\ref{balasec}, the condition \eqref{balsinglePE} says that there is a Parseval $C^*$-frame for the Hilbert $C^0(\M)$-module $\Gamma^0(\M;\Ei(m),\FS(\GH_m)\otimes B^E_m)$ which is at the same time an orthonormal basis for the vector space $H^0(\M;\Ei(m))$ endowed with the $L^2$-inner product of $\omega$ and the metric $\FS(\GH_m)\otimes B^E_m$ on $\Ei(m)$. Briefly, \eqref{balsinglePE} says that $\FS(\GH_m)\otimes B^E_m$ is a \textbf{balanced} metric on $\Ei(m)$ in the sense of \cite{Wang1}. One can view the balance of $\FS(\GH_m)\otimes B^E_m$ as saying that the Hilbert $C^*$-struture on $\Gamma^0(\M;\Ei(m),\FS(\GH_m)\otimes B^E_m)$ (i.e. the metric $\FS(\GH_m)\otimes B^E_m$) is completely encoded in the finite-dimensional Hilbert space $H^0(\M,\omega;\Ei(m),\FS(\GH_m)\otimes B^E_m)$. 

Balanced metrics are related to Yang--Mills metrics via Wang's theorem, here reformulated using the Toeplitz maps:
\begin{Lemma}[{\cite{Wang2}}]\label{Wanglemma}
Let $P^E$ be a metric on a holomorphic vector bundle $\Ei$ over $\M$. Then $P^E$ is Yang--Mills if and only if there exists a sequence $(B^E_m)_{m\gg 0}$ of metrics on $\Ei$ such that the metric $\FS(\GH_m)\otimes B^E_m$ on $\Ei(m)$ is balanced and
$$
\lim_{m\to\infty}B^E_m=P^E
$$
in the topology of $C^\infty(\M)$.
\end{Lemma}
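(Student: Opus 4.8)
The plan is to derive the lemma directly from the original statement of Wang's theorem, so the work splits into two parts: first, to verify that our Toeplitz-operator form of the condition ``$\FS(\GH_m)\otimes B^E_m$ is balanced'', namely the identity $\breve{\varsigma}^{(m)}(B^E_m)=c_{E,m}P_{E,m}$, agrees with the balance condition of \cite{Wang1}; and second, to invoke \cite{Wang2}.

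For the first part, recall that $\breve{\varsigma}^{(m)}$ applied to a metric $B^E_m$ on $\Ei$ (presented as an idempotent over $C^\infty(\M)$) produces the operator on $\GH_m\otimes\C^N$ assembled from the reproducing kernel of $H^0(\M;\Ei(m))$ for the $L^2$-inner product determined by $\omega$ and the metric $\FS(\GH_m)\otimes B^E_m$ on $\Ei(m)=\Li^m\otimes\Ei$; this is the explicit expression referred to in \S\ref{balasec}. Unwinding it through frame theory for Hilbert $C^*$-modules: if $\{s_i\}$ is an $L^2$-orthonormal basis of the finite-dimensional Hilbert space $H^0(\M,\omega;\Ei(m),\FS(\GH_m)\otimes B^E_m)$, then the Bergman endomorphism $\rho_m:=\sum_i s_i\otimes s_i^*\in\End\Gamma^\infty(\M;\Ei(m))$ measures exactly the failure of $\{s_i\}$ to be at the same time a Parseval $C^*$-frame for the Hilbert $C^0(\M)$-module $\Gamma^0(\M;\Ei(m),\FS(\GH_m)\otimes B^E_m)$. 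Consequently $\breve{\varsigma}^{(m)}(B^E_m)=c_{E,m}P_{E,m}$ holds precisely when $\rho_m$ is a scalar multiple of $\bone_{\Ei(m)}$, which is the balance condition for the metric $\FS(\GH_m)\otimes B^E_m$. Integrating the pointwise trace of $\rho_m$ over $\M$ gives $\dim H^0(\M;\Ei(m))=\chi(\Ei(m))$ for $m\gg0$, and together with $\tr\bone_{\Ei(m)}=\rank\Ei$ this fixes the proportionality constant as $c_{E,m}=n_m\rank\Ei/\chi(\Ei(m))$, consistently with Proposition \ref{balancedeqprop} in the equivariant case.

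With this dictionary in hand the lemma is \cite{Wang2}, but I would still spell out the two implications. For ``$\Leftarrow$'': substituting the Bergman-kernel asymptotic expansion for $\Ei(m)$ equipped with $\FS(\GH_m)\otimes B^E_m$ --- valid uniformly since $\tfrac{1}{m}$ times the curvature of $\FS(\GH_m)$ converges to $\omega$ and the family $\{B^E_m\}$ is $C^\infty$-bounded --- into the balance equation $\rho_m=c_{E,m}\bone$, and using that $\omega$ has constant scalar curvature (the leading scalar term and the scalar-curvature correction being absorbed into $c_{E,m}$), forces $\tr_\omega\Theta^{B^E_m}$ to approach a constant; passing to the $C^\infty$-limit this reads $\tr_\omega\Theta^E=\mu(\Ei)\bone_E$, i.e. $P^E=\lim_m B^E_m$ is Yang--Mills. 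For ``$\Rightarrow$'': given a Yang--Mills metric $P^E$ one uses that a Hermitian vector bundle admitting a Yang--Mills metric is polystable, which reduces the question to its stable --- hence simple --- summands; for a simple bundle \cite{Wang2} produces balanced metrics on $\Ei(m)$ for $m\gg0$ by an implicit-function-theorem argument around the approximate solution built from $P^E$, the linearization of the balance operator being invertible by simplicity, and the same analysis delivers the convergence $B^E_m\to P^E$ in $C^\infty(\M)$.

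The substantive obstacle is the ``$\Rightarrow$'' direction of the second part: producing balanced metrics that actually converge to a prescribed Yang--Mills metric is the genuine analytic content of \cite{Wang2} (invertibility of the linearized balance operator, estimates uniform in $m$ for the fixed-point iteration, and the polystable reduction). The first part is essentially bookkeeping once the formula for $\breve{\varsigma}^{(m)}$ and the $C^*$-frame description of \S\ref{balasec} are on the table; there the only delicate point is tracking the normalizing scalar $c_{E,m}$ through the passage from the $L^2$-inner product on $H^0(\M;\Ei(m))$ to the $\Bi(\GH_m)$-valued inner product on $\Bi(\GH_m,\GE_m)$.
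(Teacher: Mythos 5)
Your proposal is correct and follows essentially the same route as the paper, which offers no independent proof of this statement: it attributes the equivalence to Wang's theorem in \cite{Wang2} and justifies the reformulation exactly by the dictionary you set up, namely that the frame-theoretic/Toeplitz condition $\breve{\varsigma}^{(m)}(B^E_m)=c_{E,m}P_{E,m}$ of \S\ref{balasec} (an $L^2$-orthonormal basis that is simultaneously a Parseval $C^*$-frame) coincides with Wang's balance condition, with the scalar $c_{E,m}$ fixed by the normalization \eqref{GKEinnerprod}. Your additional sketch of the two implications of Wang's theorem itself (Bergman-kernel asymptotics for one direction, the polystable reduction and perturbation argument for the other) goes beyond what the paper records but is consistent with it and correctly identifies where the real analytic content lies.
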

The connection between the Yang--Mills condition and the existence of a $\Psi$-superharmonic lift stated in Theorem \ref{intrYMthm} can now be better understood: If $P^E$ is a Yang--Mills then 
$P^E$ is the limit of some projections $B^E_m$ satisfying \eqref{balsinglePE}. We shall see that $P_{E,m}$ for $m\gg 0$ coincides with the range projection of $\breve{\varsigma}^{(m)}(P^E)$ and therefore the convergences $\lim_{m\to\infty}B^E_m=P^E$ and $\lim_{m\to\infty}c^E_m=1$ give that $\breve{\varsigma}(P^E)$ differs from its range projection modulo compacts.

\subsection{Into Hardy space}
Given a metric $P^E$ on a holomorphic vector bundle $\Ei(m)$ we can also look at the Hardy space $H^0(\Sb,\omega;P^E)$ of the pullback of $P^E$ to the circle bundle $\Sb=\pd\B$. It is natural to ask for a relation between $H^0(\Sb,\omega;P^E)$ and the quotient module $\GE_\N:=\Ran\breve{\varsigma}(P^E)$. 
First observe that the Hardy space $H^0(\Sb,\omega;P^E)$ is invariant under the tuple of operator on $L^2(\Sb,\omega;P^E)$ acting by multiplication with the coordinate functions $Z=(Z_1,\dots,Z_n)$. The multiplication tuple $V_E$ on $H^0(\Sb,\omega;P^E)$ is therefore subnormal. In fact, $V_E$ is a spherical isometry in the sense that 
$$
V_E^*V_E:=\sum_{\alpha=1}^nV_{E,\alpha}^*V_{E,\alpha}=\bone,
$$ 
and this algebraic relation directly implies subnormality by \cite[Prop. 2]{Atha3}. In contrast, the shift $S_E$ on $\GE_\N$ is not subnormal, i.e. it does not have a normal extension. Even if we take the spherical isometry $T$ on $\GH_\N\otimes\C^N$ and consider its compression $T_E$ to the coinvariant subspace $\GE_\N$, the operator $T_E^*T_E$ is typically not the identity on $\GE_\N$. But while $S_E$ belongs to something that one could call a multivariable analogue of the class $C_{\cdot 0}$ of contractions, $T_E$ is a contraction of class $C_{1\cdot}$ if we assume that the projection $P_E$ has entries in the Toeplitz algebra $\Ti_\GH^{(0)}$. And for contractions $T_E$ of class $C_{1\cdot}$ there is a canonical way of turning $T_E$ into an isometry by applying a similarity transformation, namely the Fredholm operator
$$
A_E:=\lim_{m\to\infty}\sum_{|\mathbf{k}|=m}T_{E,\mathbf{k}}^*T_{E,\mathbf{k}},
$$
where $T_{E,\mathbf{k}}:=T_{E,k_1}\cdots T_{E,k_m}$. The inverse of $A_E^{-1}$ of $A_E$ is well-defined outside a finite-dimensional subspace, and we may ignore this subspace for present purposes. The tuple $A_E^{1/2}T_EA_E^{-1/2}$ is a spherical isometry and in particular subnormal. 
\begin{thm}[Theorem \ref{backtoHardythmgen}]\label{introbktoHardy}
The operator tuple $A_E^{1/2}T_EA_E^{-1/2}$ on $\GE_\N$ is unitarily equivalent to the multiplication tuple $V_E$ on $H^0(\Sb,\omega;P^E)$. 
\end{thm}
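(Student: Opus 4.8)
The plan is to use that $\tilde S_E := A_E^{1/2}T_E A_E^{-1/2}$ and $V_E$ are both spherical isometries, hence subnormal operator tuples with \emph{unique} minimal normal extensions; it then suffices to match those extensions together with the way $\GE_\N$ and $H^0(\Sb,\omega;P^E)$ sit inside them. On the Hardy side this is straightforward: since multiplication by the $\Un(1)$-invariant projection-valued symbol $P^E$ commutes with the coordinate multiplications, the coordinate tuple $Z$ on $L^2(\Sb,\omega;P^E)=P^E\big(L^2(\Sb,\omega)\otimes\C^N\big)$ is a normal tuple with joint spectrum in $\Sb$ that extends $V_E$, and a Stone--Weierstrass argument on $\Sb$ shows $H^0(\Sb,\omega;P^E)$ is cyclic for $\{Z,Z^*\}$, so $Z$ is \emph{the} minimal normal extension of $V_E$.

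Next I would produce a concrete intertwiner from the quotient module into the Hardy space. Writing $P_E=\bone-M_{\Theta_E}M_{\Theta_E}^*$ (so $\GE_\N=\Ker M_{\Theta_E}^*$ and $\GE_\N^\perp=\overline{\Ran M_{\Theta_E}}$) and using $P_E\in\Ti_\GH^{(0)}$, the results of \cite[Thm.~4.3]{GRS2}/\cite[Thm.~6.1]{BhSa1} give an $L^\infty$ boundary value $\Theta_E(\zeta)$ which is a partial isometry for a.e.\ $\zeta\in\Sb$ with $\bone-\Theta_E(\zeta)\Theta_E(\zeta)^*=\varsigma(P_E)(\zeta)=P^E(\zeta)$; in particular $P^E(\zeta)\Theta_E(\zeta)=0$ a.e. Hence the map sending $g\in\GE_\N\subset H^2(\Sb,\omega)\otimes\C^N$ to the holomorphic projection of the boundary section $P^E\,g$ on $\Sb$ annihilates $\GE_\N^\perp$, defines a bounded map $\GE_\N\to H^0(\Sb,\omega;P^E)$ with dense range, and intertwines $T_E$ with $V_E$; a symmetric construction gives a dense-range intertwiner the other way, so $T_E$ and $V_E$ are quasisimilar, and therefore so are the spherical isometry $\tilde S_E$ (the isometric asymptote of the $C_{1\cdot}$ tuple $T_E$) and $V_E$.

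The crux, and the step I expect to be the main obstacle, is to upgrade this quasisimilarity to a unitary equivalence by computing $A_E=\lim_m\sum_{|\mathbf k|=m}T_{E,\mathbf k}^*T_{E,\mathbf k}$ exactly and recognizing it as the Gram operator of the intertwiner above. Since $T$ is the spherical isometry of coordinate multiplications on $H^2(\Sb,\omega)\otimes\C^N$ one has $\sum_{|\mathbf k|=m}T_\mathbf k^*T_\mathbf k=\bone$, and coinvariance of $\GE_\N$ gives $\sum_{|\mathbf k|=m}T_{E,\mathbf k}^*T_{E,\mathbf k}=\big(\bone-\sum_{|\mathbf k|=m}T_\mathbf k^*M_{\Theta_E}M_{\Theta_E}^*T_\mathbf k\big)\big|_{\GE_\N}$; transporting the subtracted term to the boundary and using the $L^\infty$ bound on $\Theta_E(\zeta)$ to justify dominated convergence on $\Sb$, the limit should be identified with the compression to $\GE_\N$ of multiplication by $P^E$ taken in the \emph{boundary} $L^2$-metric and followed by the Szegő projection of the Hermitian bundle $(\Ei,P^E)$ rather than of the trivial bundle. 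This is precisely where the Yang--Mills hypothesis enters, via Theorem \ref{intrYMthm} and Wang's Lemma \ref{Wanglemma} on balanced metrics, which force the $m$-dependent Fubini--Study/Szegő normalizations to line up, and it is the only place where the continuity assumption on $P_E$ is genuinely needed. Granting this identification, $A_E^{1/2}$ converts the intertwiner into a unitary $U\colon\GE_\N\to H^0(\Sb,\omega;P^E)$ with $U\tilde S_E=V_EU$, whose range is the $\{Z,Z^*\}$-cyclic subspace generated by $P^E\big(H^2(\Sb,\omega)\otimes\C^N\big)$, namely $H^0(\Sb,\omega;P^E)$; uniqueness of the minimal normal extension then finishes the proof. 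One last technical point: $A_E$ fails to be invertible only on a finite-dimensional subspace, which should match the finitely many degrees $m$ in which $H^0(\M;\Ei(m))$ differs from the module piece $E_m$, and one checks this contributes nothing to the normal extension on either side.
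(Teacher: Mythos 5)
Your closing mechanism — recognize $A_E$ as the Gram operator relating the Fock inner product on $\GE_\N$ to the boundary $L^2$ inner product, so that $A_E^{\pm 1/2}$ turns the identity map on the underlying vector space into the desired unitary — is exactly the mechanism of the paper's proof. But the step you yourself call the crux is precisely the step you do not prove, and the route you propose for it would not work. What is needed is an exact, fixed-$m$ identity: $c_{E,m}^{-1}\breve{\varsigma}^{(m)}(P^E)$, restricted to $\GE_m$, is the Gram matrix, in the inner product of $H^0(\M,\omega;\Ei(m),\FS(\GH_m)\otimes P^E)$, of a Parseval frame of the Hilbert space $\GE_m$. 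The paper obtains this by pure frame bookkeeping: the Parseval $C^*$-frame of $\FS(\GH_m)\otimes P^E$ is simultaneously a Parseval frame of $\GE_m$ (so $\FS(\GH_m)\otimes P^E=\FS(\GE_m)$), and the explicit formula $\breve{\varsigma}^{(m)}(f)=n_m(\omega\otimes\id)(\FS(\GH_m)(f\otimes p_m))$ then exhibits $\breve{\varsigma}^{(m)}(P^E)$ as that Gram matrix; nothing asymptotic enters. Your proposal instead invokes the Yang--Mills hypothesis, Theorem \ref{intrYMthm} and Wang's Lemma \ref{Wanglemma} to ``force the normalizations to line up''. That is the wrong tool on three counts: the theorem does not assume $P^E$ is Yang--Mills (its hypothesis is only that $\varsigma(\Ran\breve{\varsigma}(P^E))=P^E$, together with continuity, and these are used solely to make $A_E$ Fredholm so the similarity is defined off a finite-dimensional subspace); balanced-metric statements are asymptotic in $m$ and cannot yield the exact equality of inner products that a unitary equivalence requires; and within the paper the Yang--Mills results (Theorem \ref{YMCDthm}, hence Theorem \ref{intrYMthm}) are proved \emph{using} Theorem \ref{backtoHardythmgen}, so appealing to them here is circular.

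There is also a gap in your explicit intertwiner. Sending $g\in\GE_\N$ to the bundle Szeg\"o projection of the boundary section $P^E g$ does not obviously intertwine the tuples: $P^E g$ need not be a holomorphic section of $\Ei$, and after composing with the Szeg\"o projection $P_{\mathrm{Hardy}}$ of $(\Ei,P^E)$ the intertwining computation picks up the term $P_{\mathrm{Hardy}}Z_\alpha(\bone-P_{\mathrm{Hardy}})P^Eg$, which has no reason to vanish, since the orthocomplement of the bundle Hardy space is not invariant under multiplication by $Z_\alpha$. The paper sidesteps this entirely: it never applies $P^E$ pointwise and projects, but identifies the underlying graded $\Ai$-module of $\GE_\N$ with $\bigoplus_m H^0(\M;\Ei(m))$ through the frame identity $\FS(\GH_m)\otimes P^E=\FS(\GE_m)$, so the module actions literally coincide, and only then compares the two inner products via the Gram matrix above. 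Once that is done your quasisimilarity detour and the uniqueness of minimal normal extensions are unnecessary, and by themselves they cannot close the argument — as you note, one must also match the embeddings, and that matching is exactly the unproven Gram identity.
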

Another way of viewing this is to say that the positive invertible operator $A_E$ can be used to change the inner product on $\GE_\N$ to that of $H^0(\Sb,\omega;P^E)$. Now observe that $A_E$ is just the restriction to $\GE_\N$ of the Toeplitz operator $\breve{\varsigma}(P^E)$. Let us discuss the geometric interpretation of $\breve{\varsigma}(P^E)$. We have already mentioned that $\breve{\varsigma}^{(m)}(P^E)$ equals $P_{E,m}$ only if there is a Parseval frame for the Hilbert $C^0(\M)$-module $\Gamma^0(\M;\Ei(m),\FS(\GH_m)\otimes P^E)$ which is also a Parseval frame for the Hilbert space $H^0(\M,\omega;\Ei(m),\FS(\GH_m)\otimes P^E)$. In fact $\breve{\varsigma}^{(m)}(P^E)$ can be identified with the frame operator of a Parseval frame for the Hilbert $C^0(\M)$-module $\Gamma^0(\M;\Ei(m),\FS(\GH_m)\otimes P^E)$ regarded as a frame for the Hilbert space $H^0(\M;\Ei(m),\FS(\GH_m)\otimes P^E)$. For this reason it is interesting to note that the diagonal of the \textbf{Szegö kernel} (aka Bergman kernel), studied e.g. in \cite{Catl1, MaMa3}, is in the present context the frame operator $\Sigma^{E(m)}$ of a Parseval frame for the Hilbert space $H^0(\M;\Ei(m),\FS(\GH_m)\otimes P^E)$ regarded as a frame for the Hilbert $C^0(\M)$-module $\Gamma^0(\M;\Ei(m),\FS(\GH_m)\otimes P^E)$. Thus finding out how $\breve{\varsigma}^{(m)}(P^E)$ differs from the identity operator on $\GE_m\cong H^0(\M,\omega;\Ei(m),\FS(\GH_m)\otimes P^E)$ is analogous to finding the error between $\Sigma^{E(m)}$ and the identity endomorphism. The latter is given in the Szegö expansion \cite[Thm. 5.2]{Wang2} 
$$
\frac{\chi(\Ei(m))}{\vol(\M,\Li)\rank\Ei}\Sigma^{E(m)}=m^d\bone_E+m^{d-1}(\tr_\omega\Theta^E-s_\omega/2)\bone_E+O(m^{d-2}),
$$
where $s_\omega$ is the scalar curvature of the Kähler metric $\omega$. For $\breve{\varsigma}^{(m)}(P^E)$ the expansion would be one of operators on the Hilbert space $\GE_m$. Since $A_E$ is the limit of $\Psi^p(P_E)$ as $p$ goes to infinity, a first approximation to the compact operator $P_E-A_E$ is given by
\begin{align*}
(\id-\Psi)(P_E)&=\sum^n_{\alpha=1}[P_E,T_\alpha]^*[P_E,T_\alpha]
\\&=\sum^n_{\alpha=1}[T_{E,\alpha}^*,T_{E,\alpha}]+\sum^n_{\alpha=1}[T_\alpha^*,T_\alpha],
\end{align*}
which may be viewed as an operator ``second fundamental form'', while $\sum^n_{\alpha=1}[T_{E,\alpha}^*,T_{E,\alpha}]$ and $\sum^n_{\alpha=1}[T_\alpha^*,T_\alpha]$ are operator analogues of the mean curvature $\tr_\omega\Theta^E$ and the scalar curvature $s_\omega$ respectively. For $P_E-A_E$ itself we have:
\begin{thm}[Theorem \ref{VEsymbprop}]\label{hiddSzegintrothm}
In the setting of Theorem \ref{introbktoHardy} there is an injective completely positive map $\varsigma_{V_E}^{(m)}$ the space of operators on $\GE_m$ into $\End\Gamma^0(\M;\Ei)$ such that
$$
\varsigma^{(m)}_{V_E}(P_{E,m}-A_{E,m})=m^{-1}(\tr_\omega\Theta^E-\mu(\Ei))\bone_E+O(m^{-2}).
$$
\end{thm}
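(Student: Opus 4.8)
\begin{sop}
The strategy is to compute $\varsigma_{V_E}^{(m)}$ on $P_{E,m}$ and on $A_{E,m}$ separately, reading off the first two terms of each asymptotic expansion from the Szegő expansion of \cite[Thm.~5.2]{Wang2} and the Riemann--Roch expansion of $\chi(\Ei(m))$, and then to subtract. I begin with two reductions. Since $\GE_m=\Ran\breve{\varsigma}^{(m)}(P^E)$, the projection $P_{E,m}$ onto $\GE_m$ is the range projection of $\breve{\varsigma}^{(m)}(P^E)$; and, as recalled before Theorem~\ref{introbktoHardy}, $A_E$ is the restriction of $\breve{\varsigma}(P^E)$ to $\GE_\N$, so $A_{E,m}=\breve{\varsigma}^{(m)}(P^E)\big|_{\GE_m}$. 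Hence $P_{E,m}-A_{E,m}=\bone_{\GE_m}-\breve{\varsigma}^{(m)}(P^E)\big|_{\GE_m}$, so the object of interest measures exactly the defect of $\breve{\varsigma}^{(m)}(P^E)$ from being idempotent on its range. Secondly, $\varsigma_{V_E}^{(m)}$ is the covariant symbol map attached to the degree-$m$ summand of the Hardy module $H^0(\Sb,\omega;P^E)$, transported to $\GE_m$ by the unitary of Theorem~\ref{introbktoHardy} and transferred to $\End\Gamma^0(\M;\Ei)$ via the canonical isomorphism $\End\Gamma^0(\M;\Ei(m))\cong\End\Gamma^0(\M;\Ei)$; being a covariant symbol map it is unital, so $\varsigma_{V_E}^{(m)}(P_{E,m})=\bone_E$, and the passage between the $L^2(\M,\omega)$-normalisation of $H^0(\M;\Ei(m),\FS(\GH_m)\otimes P^E)$ and the Hardy one carries a degree-dependent scalar which by Riemann--Roch expands as $1+m^{-1}\mu(\Ei)+O(m^{-2})$.

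It then remains to expand $\varsigma_{V_E}^{(m)}(A_{E,m})=\varsigma_{V_E}^{(m)}\!\big(\breve{\varsigma}^{(m)}(P^E)\big|_{\GE_m}\big)$. Writing $\breve{\varsigma}^{(m)}(P^E)$ as the Toeplitz operator with matrix symbol $P^E$ on $\GH_m\otimes\C^N=H^0(\M;\Li^m\otimes\underline{\C}^N)$, whose Szegő kernel is the constant $\tfrac{n_m}{\vol(\M,\Li)}\bone_N$ for the homogeneous orbit $\M=\G/\K$, and sandwiching between the bundle-normalised reproducing-kernel vectors of $\GE_m$ — which are the $\GE_m$-components of the ambient ones because $\breve{\varsigma}^{(m)}(P^E)=P_{E,m}\breve{\varsigma}^{(m)}(P^E)P_{E,m}$ — one is reduced to the second-order Berezin asymptotics of $P^E$ on the ambient space together with the two-term expansion of the Szegő kernel of $\Ei(m)$. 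The structure equations $\partial_\alpha P^E=(1-P^E)(\partial_\alpha P^E)P^E$, $\partial_{\bar\beta}P^E=P^E(\partial_{\bar\beta}P^E)(1-P^E)$ — which hold for the orthogonal projection defining a holomorphic subbundle and, via the compression effected by $P_{E,m}$, control also the failure of $\Ei\hookrightarrow\underline{\C}^N$ to be holomorphic — combined with the Griffiths curvature formula let the scalar-curvature contributions cancel and give
$$
\varsigma_{V_E}^{(m)}(A_{E,m})=\big(1+m^{-1}\mu(\Ei)\big)\big(\bone_E-m^{-1}\tr_\omega\Theta^E\big)+O(m^{-2})=\bone_E+m^{-1}\big(\mu(\Ei)\bone_E-\tr_\omega\Theta^E\big)+O(m^{-2}).
$$
Subtracting $\varsigma_{V_E}^{(m)}(P_{E,m})=\bone_E$ yields
$$
\varsigma_{V_E}^{(m)}(P_{E,m}-A_{E,m})=m^{-1}\big(\tr_\omega\Theta^E-\mu(\Ei)\bone_E\big)+O(m^{-2}),
$$
which is the assertion; injectivity and complete positivity of $\varsigma_{V_E}^{(m)}$ are part of its construction. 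As a check, integrating against $\omega^d/d!$ and using $\tfrac1{\vol(\M,\Li)}\int_\M\tr\tr_\omega\Theta^E\,\tfrac{\omega^d}{d!}=\rank\Ei\,\mu(\Ei)$ recovers $\Tr(P_{E,m}-A_{E,m})=\chi(\Ei(m))-n_m\rank\Ei$ to leading order, and the right-hand side is $O(m^{-2})$ precisely when $\tr_\omega\Theta^E=\mu(\Ei)\bone_E$, i.e. when $P^E$ is Yang--Mills, consistently with Theorem~\ref{intrYMthm} and Lemma~\ref{Wanglemma}. One may alternatively start from the telescoping $P_E-A_E=\sum_{p\ge0}\Psi^p\big((\id-\Psi)(P_E)\big)$ and the identity $(\id-\Psi)(P_E)=\sum_\alpha[T_{E,\alpha}^*,T_{E,\alpha}]+\sum_\alpha[T_\alpha^*,T_\alpha]$ recorded above, but then the geometric content is dispersed over the resummation.

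The main obstacle is the expansion of $\varsigma_{V_E}^{(m)}(A_{E,m})$ to order $m^{-1}$ with a uniform remainder. This requires: (i) a genuinely second-order Berezin--Toeplitz expansion on $\M=\G/\K$, uniform in the base point and valid across the finite-dimensional subspace on which $A_E^{-1}$ fails to be bounded (cf. the discussion preceding Theorem~\ref{introbktoHardy}), where the $\G$-equivariance and the hypothesis that $P_E$ has entries in $\Ti_\GH^{(0)}$ must be used; (ii) the identification of the $m^{-1}$-coefficient with the mean curvature of $\Ei$, accounting for the fact that $\GE_m$ is only abstractly isomorphic to $H^0(\M;\Ei(m))$ — the embedding $\Ei\hookrightarrow\underline{\C}^N$ given by $P^E$ being merely smooth — so that the structure equations enter through the compression $P_{E,m}$ rather than through a naive evaluation map; and (iii) the bookkeeping showing that the scalar term naively produced by the ambient scalar curvature (the $-\tfrac12 s_\omega$ of the Szegő expansion, equivalently the operator analogue $\sum_\alpha[T_\alpha^*,T_\alpha]$) is absorbed into the Riemann--Roch factor and replaced by the topological invariant $\mu(\Ei)$. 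It is this last cancellation that makes the leading obstruction to quantisability of $P^E$ equal to the Yang--Mills tensor $\tr_\omega\Theta^E-\mu(\Ei)\bone_E$ rather than a less intrinsic expression.
\end{sop}
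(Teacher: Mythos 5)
Your final formula is the paper's, and you are right that the only analytic inputs needed are Wang's Szeg\"o expansion and Riemann--Roch bookkeeping; but the way you distribute the two evaluations is backwards, and this is not cosmetic. For the map you describe (the degree-$m$ covariant symbol map of the Hardy module transported by the unitary of Theorem \ref{introbktoHardy}, i.e.\ the adjoint of the Toeplitz map $\breve{\varsigma}_{V_E}^{(m)}$ with respect to $\phi_m^E$ and $\omega_E$), it is the \emph{Toeplitz} map that is unital, $\breve{\varsigma}_{V_E}^{(m)}(\bone_E)=P_{E,m}$, not the symbol map. Unitality of $\varsigma_{V_E}^{(m)}$ would say (up to the scalar $c_{E,m}$) that the Szeg\"o endomorphism of $H^0(\M,\omega;\Ei(m),\FS(\GH_m)\otimes P^E)$ is the identity, i.e.\ that $\FS(\GH_m)\otimes P^E$ is $\omega$-balanced --- which is exactly what fails at order $m^{-1}$ and is what the theorem quantifies. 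Indeed $\varsigma_{V_E}^{(m)}(P_{E,m})=\varsigma_{V_E}^{(m)}\bigl(\breve{\varsigma}_{V_E}^{(m)}(\bone_E)\bigr)$ is, up to $c_{E,m}$, precisely Wang's Szeg\"o kernel, so it is this term --- not $\varsigma_{V_E}^{(m)}(A_{E,m})$ --- that carries the correction $m^{-1}(\tr_\omega\Theta^E-\mu(\Ei)\bone_E)$ via \cite[Thm.~5.2]{Wang2}. Asserting $\varsigma_{V_E}^{(m)}(P_{E,m})=\bone_E$ ``because covariant symbol maps are unital'' therefore assumes away the very quantity being computed (and if you instead force unitality by normalizing with coherent states, you change the map and the rest of your bookkeeping no longer applies to it).

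Correspondingly, the step you single out as the main obstacle --- an order-$m^{-1}$ curvature expansion of $\varsigma_{V_E}^{(m)}(A_{E,m})$ via second-order Berezin asymptotics, structure equations and the Griffiths formula --- is both unproven and unnecessary, and its claimed outcome $\bone_E+m^{-1}(\mu(\Ei)\bone_E-\tr_\omega\Theta^E)+O(m^{-2})$ is not correct for this map: in the paper that term is evaluated \emph{exactly}, with no asymptotics, as a multiple of $\bone_E$ (in the paper's normalization, $\varsigma_{V_E}^{(m)}(A_{E,m})=P^E$), so it contributes no curvature at all. The two identities that make this possible, and which are absent from your argument, are (i) the conjugation formula $\breve{\varsigma}_{V_E}^{(m)}(f)=A_{E,m}^{-1/2}\breve{\varsigma}^{(m)}(f)A_{E,m}^{-1/2}$ linking the Hardy-space Toeplitz calculus to the Fock-space one, whence $\varsigma_{V_E}^{(m)}(A_{E,m})=c_{E,m}\varsigma^{(m)}(P_{E,m})$ and $\varsigma_{V_E}^{(m)}(P_{E,m})=c_{E,m}\varsigma^{(m)}(A_{E,m}^{-1})$; and (ii) the frame identity $\FS(\GH_m)\otimes P^E=\FS(\GE_m)$, i.e.\ every Fock-orthonormal basis of $\GE_m$ is a Parseval $C^*$-frame for $\FS(\GH_m)\otimes P^E$, which forces the frame operator occurring in $\varsigma_{V_E}^{(m)}(A_{E,m})$ to be the identity endomorphism while identifying $c_{E,m}^{-1}\varsigma_{V_E}^{(m)}(P_{E,m})$ with the Szeg\"o kernel of the Hardy space. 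Because your two intermediate expansions err in compensating ways, the subtraction reproduces the stated formula, but neither ingredient is established; moreover the ``degree-dependent scalar $1+m^{-1}\mu(\Ei)$'' you insert to mediate between the $L^2$ and Hardy normalizations is exactly the $c_{E,m}$-bookkeeping that must be tracked precisely at order $m^{-1}$ (it is where $-\mu(\Ei)\bone_E$ and the cancellation of the constant $s_\omega$ come from), so it cannot enter as an unexamined multiplicative factor.
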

Here the curvature $\Theta^E$ is that of the Chern connection of $P^E$ and the holomorphic structure on $\Ei$ coming from the graded $\Ai$-module underlying $\GE_\N$, which by Theorem \ref{introbktoHardy} is the same as the holomorphic structure on $\Ei$ that we started with if we take $P^E$ to be real-analytic. 


\subsection{The nature of $\varsigma(P_E)$}

So for a projection $P^E$ which does not define a Yang--Mills metric, what is the meaning of the symbol $\varsigma(P_E)$ of the superharmonic projection $P_E:=\Ran\breve{\varsigma}(P^E)$? The Uhlenbeck--Yau theorem says that a holomorphic vector bundle $\Ei$ admits a Yang--Mills metric if and only if $\Ei$ is slope-stable \cite{Koba1, UhYa1}. Any holomorphic vector bundle $\Ei$ has a filtration by slope-semistable subsheaves, and each of these slope-semistable subsheaves has a filtration by slope-stable subsheaves. Taking successive quotients of the members of these filtrations and summing up one obtains a torsionfree sheaf $\Gr(\Ei)$ which is a direct sum of slope-stable vector bundles (see \cite[\S2.1]{Jaco2} for details). Each of these slope-stable summand of $\Gr(\Ei)$ admits a Yang--Mills metric, and the direct sum of these metrics will be referred to as the Yang--Mills metric on $\Gr(\Ei)$. We prove:
\begin{thm}[Theorem \ref{GrvarsigmaPEthm}]
Let $\Ei$ be a holomorphic vector bundle over $\M$ and suppose that $\Gr(\Ei)$ is locally free. Let $\GE_\N$ be the quotient module obtained by completing $E_\N:=\bigoplus_{m\in\N_0}H^0(\M;\Ei(m))$ in some embedding into $\GH_\N\otimes\C^N$, and let $P_E$ be the projection onto $\GE_\N$. Then the projection $\varsigma(P_E)$ defines $\Gr(\Ei)$ as smooth vector bundle, and $\varsigma(P_E)$ is the Yang--Mills metric on $\Gr(\Ei)$.
\end{thm}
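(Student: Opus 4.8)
The plan is to reduce the statement to three ingredients: (a) Wang's theorem (Lemma \ref{Wanglemma}) applied to each slope-stable summand of $\Gr(\Ei)$, which identifies the Yang--Mills metric with a limit of balanced metrics; (b) the filtration-by-subsheaves structure of $E_\N$ coming from the Harder--Narasimhan--Seshadri (HNS) filtration of $\Ei$; and (c) the already-established symbol computation for $\Ran\breve{\varsigma}$ of a projection, specialized along the filtration. First I would observe that since $\Gr(\Ei)$ is locally free, the HNS filtration of $\Ei$ induces a filtration of the graded $\Ai$-module $E_\N$ by saturated submodules whose successive quotients are, for $m \gg 0$, exactly the section modules $H^0(\M;\Gr_i(\Ei)(m))$ of the slope-stable pieces $\Gr_i(\Ei)$. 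Passing to the completions inside $\GH_\N \otimes \C^N$, the quotient module $\GE_\N$ acquires a filtration by graded quotient modules whose subquotients are the quotient modules $\GE_\N^{(i)}$ built from the slope-stable bundles $\Gr_i(\Ei)$, all with projections in $\Ti_\GH^{(0)}$ (using that locally free $\Gr(\Ei)$ implies the Serre sheaf is locally free on $\B \setminus \{0\}$, hence Theorem \ref{bigCDversuslocallyfree} applies, as does the continuity-of-symbol statement).

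The second step is to show $\varsigma(P_E) = \bigoplus_i \varsigma(P_{E^{(i)}})$ as projections over $C^0(\M)$. This should follow because the symbol map $\varsigma$ is multiplicative modulo compacts (it is a $*$-homomorphism) and the filtration of $\GE_\N$ splits \emph{after} applying $\varsigma$: the extension classes among the pieces of the filtration live in the kernel of $\varsigma$, i.e. among the compact operators, since by Theorem \ref{bigCDversuslocallyfree} the Cowen--Douglas sheaf of each piece is locally free of the expected rank and the total rank adds up ($r_E = \sum_i r_{E^{(i)}}$ because Arveson curvature is additive in short exact sequences of quotient modules with Toeplitz projections). Concretely, $\varsigma(P_E)$ is a projection of rank $\sum_i \rank \Gr_i(\Ei) = \rank \Gr(\Ei)$ defining a vector bundle that is, as a $C^0$ bundle, filtered with subquotients $\Gr_i(\Ei)$; since $C^0$ filtrations of vector bundles always split, $\varsigma(P_E)$ defines $\Gr(\Ei) = \bigoplus_i \Gr_i(\Ei)$ as a smooth vector bundle. (Smoothness, rather than mere continuity, comes from the real-analyticity of $\varsigma(P_E)$ on $\B$ established in the earlier discussion of $\varsigma_\B$, descended to $\M$.)

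The third and decisive step is to identify the metric. For each slope-stable summand $\Gr_i(\Ei)$, which admits a Yang--Mills metric by Uhlenbeck--Yau, I would invoke Theorem \ref{intrYMthm}: the quotient module $\GH^{E^{(i)}}_\N = \Ran \breve{\varsigma}(P^{E_i})$ built from a Yang--Mills projection $P^{E_i}$ has $\varsigma(\Ran\breve{\varsigma}(P^{E_i})) = P^{E_i}$, i.e. its symbol \emph{is} the Yang--Mills metric. But the quotient module $\GE_\N^{(i)}$ built abstractly from $H^0(\M;\Gr_i(\Ei)(m))$ is isomorphic, as a graded $\Ai$-module, to the one coming from $P^{E_i}$ (both compute the section module of $\Gr_i(\Ei)$), and by Theorem \ref{introbktoHardy} / the Hardy-space identification the inner products are intertwined by the positive operator $A_{E^{(i)}}$, which is a similarity that does not change the symbol (the symbol sees only the image in the Calkin-type quotient, and $A_{E^{(i)}}$ differs from $\bone$ by a compact when the relevant metric is Yang--Mills, as noted after Lemma \ref{Wanglemma}). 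Hence $\varsigma(P_{E^{(i)}})$ is the Yang--Mills metric on $\Gr_i(\Ei)$, and summing gives that $\varsigma(P_E)$ is the Yang--Mills metric on $\Gr(\Ei)$.

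The main obstacle I expect is step two: controlling the \emph{extension} classes in the filtration of $\GE_\N$ and proving they vanish under $\varsigma$. A priori the completion $\GE_\N$ of $E_\N$ in a chosen embedding need not respect the orthogonal splitting of the filtration, and one must show the off-diagonal blocks of $P_E$ relative to the filtration by $\GE_\N^{(i)}$ are compact — equivalently, that the Toeplitz-algebra-valued "gluing" data is trivial mod compacts. I would handle this by noting that $\varsigma(P_E)$ and $\bigoplus_i \varsigma(P_{E^{(i)}})$ are both projections over $C^0(\M)$ defining $C^0$-isomorphic bundles and both arise as symbols of $\Psi$-superharmonic lifts of, respectively, the filtered metric and its associated graded; the uniqueness part of the Yang--Mills/balanced-metric correspondence (Wang's theorem, since $\Gr(\Ei)$ being polystable has a \emph{unique} Yang--Mills metric up to the obvious scalars) then forces the two to coincide. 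A secondary subtlety is that $\breve{\varsigma}(P^E)$ for the original filtered $P^E$ need not have a $\Psi$-superharmonic range lifting $P^E$ itself (this fails precisely because $\Ei$ is not polystable), which is why one must route through the abstract quotient module $\GE_\N$ of the section module rather than through $\Ran\breve{\varsigma}(P^E)$ directly; keeping these two constructions distinct and only comparing them at the level of symbols is the conceptual crux.
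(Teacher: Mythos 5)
Your overall skeleton (decompose along the Harder--Narasimhan--Seshadri filtration, use additivity of $\varsigma$, identify each piece with a Yang--Mills metric via balanced metrics, and sum) is the same as the paper's, but the decisive step is missing and your substitute for it does not work. Inside $\GE_\N$ the filtration gives an \emph{exact} orthogonal decomposition $P_E=I_G+P_F$ (no compact corrections and no ``gluing data'' to kill), so the real difficulty is not off-diagonal blocks mod compacts; it is that $I_G$ is the projection onto the completed \emph{submodule} $[G_\N]\subset\GE_\N$, an $S_E$-invariant (not coinvariant) subspace, to which Lemma \ref{coinvYMlemma} / Theorem \ref{intrYMthm} simply do not apply. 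Your third step tries to bridge this by saying that $[G_\N]$ and the abstract quotient-module completion of $G_\N$ are isomorphic as graded $\Ai$-modules and that the intertwining ``similarity does not change the symbol''; but a graded-module isomorphism is not a unitary equivalence of Hilbert modules, the symbol is not invariant under arbitrary similarities, and nothing in your argument shows the intertwiner is of the form identity plus compact. The paper needs a separate lemma here (Lemma \ref{invYMlemma}, the $\Psi$-\emph{sub}harmonic lift): using the balanced metrics $B^G_m$ for the embedded holomorphic structure $G_\N\subset E_\N$ one gets $\breve{\varsigma}^{(m)}(B^G_m)=c_{G,m}I_{G,m}$, whence $I_G$ agrees with $\Ran\breve{\varsigma}(P^G)$ modulo compacts and $\varsigma(I_G)=P^G$ is the Yang--Mills metric on $\Gi$. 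Without this (or an equivalent argument for invariant/semi-invariant pieces) your identification of the metric on the filtration pieces is unsupported.

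Your fallback argument is also circular and partly incorrect. To invoke uniqueness of the Yang--Mills (or balanced-limit) metric on the polystable bundle $\Gr(\Ei)$ you must already know that $\varsigma(P_E)$ is continuous and is a Yang--Mills metric on $\Gr(\Ei)$ --- which is precisely the conclusion of the theorem; and $\varsigma(P_E)$ is \emph{not} ``the symbol of a $\Psi$-superharmonic lift of the filtered metric'', since for non-polystable $\Ei$ the metric $P^E$ admits no superharmonic lift and $\varsigma(P_E)\ne P^E$. Likewise, the claim that the relevant extension data ``lives among the compact operators'' because ranks (Arveson curvatures) add up does not follow: the kernel of $\varsigma$ on $\Ni$ is much larger than the ideal of compacts, and rank additivity says nothing about compactness. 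The route that works is the paper's: split $P_E$ orthogonally along the filtration inside $\GE_\N$, treat the coinvariant piece with Lemma \ref{coinvYMlemma} (uniqueness of superharmonic lifts) and the invariant piece with Lemma \ref{invYMlemma}, then iterate over the filtration and use Proposition \ref{dirsumYMprop} for the sums.
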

We conjecture that in general, when $\Gr(\Ei)$ is merely torsionfree, $\varsigma(P_E)$ still gives a singular Yang--Mills metric on $\Gr(\Ei)$. To prove that one would need a generalization of Lemma \ref{Wanglemma} to singular Yang--Mills metrics and balanced metrics on torsionfree sheaves. 

When $\Gr(\Ei)$ is locally free the assumptions of Theorem \ref{hiddSzegintrothm} thus holds. The term $m^{-1}(\tr_\omega\Theta^E-\mu(\Ei))\bone_E$ in the statement of Theorem \ref{hiddSzegintrothm} need not be zero even though $\varsigma(P_E)$ is a Yang--Mills on $\Gr(\Ei)$, because $\Theta^E$ is the Chern connection for $\varsigma(P_E)$ and the holomorphic structure on $\Ei$, and not the holomorphic structure on $\Gr(\Ei)$. 


\subsection{Guo-stability}

A slightly weaker notion of stability than slope-stability is Gieseker-stability. Also this notion of stability has been characterized by Wang using balanced metrics \cite{Wang1}. Namely, a holomorphic vector bundle $\Ei$ is Gieseker-stable if and only if for all $m\gg 0$ there exists a balanced metric on $\Ei(m)$. The distinction between Gieseker- and slope-stability is thus the convergence of the balanced metrics. We will give an operator-theoretic proof of half of Wang's theorem (see Theorem \ref{HalfWangthm}) and we will consider the failure of the convergence of the balanced metrics in Proposition \ref{GiesSOTlimprop}.   

Recall that a holomorphic vector bundle $\Ei$ over $\M$ is called \textbf{Gieseker-semistable} if for all proper analytic quotients $\Ei\to\Fi\to 0$ one has
\begin{equation}\label{usualGieseker}
\frac{\chi(\Fi(m))}{\rank\Fi}\geq\frac{\chi(\Ei(m))}{\rank\Ei},\qquad\forall m\gg 0.
\end{equation}
If we observe that $\rank\Fi/\rank\Ei=\lim_{l\to\infty}\chi(\Fi(l))/\chi(\Ei(l))$ then we can rewrite \eqref{usualGieseker} as
$$
\frac{\chi(\Fi(m))}{\chi(\Ei(m))}\geq\lim_{l\to\infty}\frac{\chi(\Fi(l))}{\chi(\Ei(l))},\qquad\forall m\gg 0.
$$
It is known \cite[Remark 2.2]{ACKi1} that \eqref{usualGieseker} is equivalent to 
\begin{equation}\label{usualGiesekerasGuo}
\frac{\chi(\Fi(m))}{\chi(\Ei(m))}\geq\frac{\chi(\Fi(l))}{\chi(\Ei(l))},\qquad\forall l\gg m\gg 0.
\end{equation}
We say that a holomorphic vector bundle $\Ei$ is \textbf{Guo-semistable} for each proper analytic quotient sheaf $\Fi$ the condition \eqref{usualGiesekerasGuo} holds for \emph{all} $l\geq 0$ (not just for $l$ sufficiently large compared to $m$):
\begin{equation}\label{Guointro}
\frac{\chi(\Fi(m))}{\chi(\Ei(m))}\geq\frac{\chi(\Fi(l))}{\chi(\Ei(l))},\qquad\forall l\geq m\gg 0.
\end{equation}
As usual we replace semistable by \textbf{stable} when strict inequality holds in \eqref{Guointro} for all $\Fi$. We use this terminology because it was shown in \cite[Prop. 2.3]{Guo3} that the trivial line bundle on $\C\Pb^{n-1}$ is Guo-stable in this sense. We shall prove:
\begin{thm}[Theorem \ref{finitelevelGies}]
Let $\Ei$ be an irreducible $\G$-equivariant vector bundle over the coadjoint orbit $\M=\G/\K$. Then $\Ei$ is Guo-stable. 
\end{thm}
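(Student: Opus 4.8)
The plan is to read off from Proposition~\ref{balancedeqprop} a precise scalar identity for the compressed shift on the quotient module attached to $\Ei$, and then to run a trace/rank estimate graded piece by graded piece.

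Fix the $\G$-invariant metric $P^E$ on the irreducible $\G$-equivariant bundle $\Ei$ and form, as in \S\ref{alggradmodsec}, the graded quotient module $\GE_\N\subset\GH_\N\otimes\C^N$ underlying $E_\N=\bigoplus_mH^0(\M;\Ei(m))$, with compressed shift $S_E=(S_{E,1},\dots,S_{E,n})$; from $\breve\varsigma^{(m)}(P^E)=c_{E,m}P_{E,m}$ and $\tr\breve\varsigma^{(m)}(P^E)=n_m\rank\Ei$ one gets $\dim\GE_m=\chi(\Ei(m))$ for $m\gg0$. The first step is to establish
\begin{equation}\tag{$\star$}
\sum_{\alpha=1}^nS_{E,\alpha}^*S_{E,\alpha}\Big|_{\GE_m}=\frac{\chi(\Ei(m+1))}{\chi(\Ei(m))}\,\bone_{\GE_m},\qquad m\gg0 .
\end{equation}
To prove $(\star)$, observe that $\breve\varsigma(P^E)$ is $\Psi$-harmonic while $P_E$ is $\Psi$-superharmonic (as $\GE_\N$ is coinvariant); applying $\Psi$ to $\breve\varsigma^{(m)}(P^E)=c_{E,m}P_{E,m}$ and reading off the degree-$m$ component gives $\Psi(P_E)_m=(c_{E,m}/c_{E,m+1})P_{E,m}$, hence $\sum_\alpha T_{E,\alpha}^*T_{E,\alpha}|_{\GE_m}=(c_{E,m}/c_{E,m+1})\bone_{\GE_m}$, where $T_E$ is the compression of the weighted shift $T$. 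Since each $\GH_m=H^0(\M;\Li^m)$ is an irreducible $\G$-module and the tuple $S$ transforms in the ambient representation $\C^n$, Schur's lemma makes $\sum_\alpha S_\alpha^*S_\alpha|_{\GH_m}$ a scalar, and the identity $\sum_\alpha S_\alpha S_\alpha^*=\bone-p_0$ together with a trace computation forces it to be $n_{m+1}/n_m$; equivariance then pins down $T_\alpha|_{\GH_m}=(n_m/n_{m+1})^{1/2}S_\alpha|_{\GH_m}$, hence $T_{E,\alpha}|_{\GE_m}=(n_m/n_{m+1})^{1/2}S_{E,\alpha}|_{\GE_m}$. Substituting $c_{E,m}=n_m\rank\Ei/\chi(\Ei(m))$ and simplifying yields $(\star)$.

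Now let $\Fi$ be an analytic quotient of $\Ei$ with $\Fi\neq0$ and $\Gi:=\ker(\Ei\to\Fi)\neq0$, and let $\GG_\N\subset\GE_\N$ be the submodule completing $\bigoplus_mH^0(\M;\Gi(m))$. For $m\gg0$ we have $\dim\GG_m=\chi(\Gi(m))$, $\chi(\Fi(m))=\chi(\Ei(m))-\chi(\Gi(m))$, and — since $\Ai$ is generated in degree one and $\GG_\N$ is finitely generated — $\GG_{m+1}=\sum_\alpha S_{E,\alpha}\GG_m$. Consider the surjection $R\colon\C^n\otimes\GG_m\to\GG_{m+1}$, $e_\alpha\otimes\psi\mapsto S_{E,\alpha}\psi$. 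Using $(\star)$, $\tr(R^*R)=\sum_i\langle\psi_i,\sum_\alpha S_{E,\alpha}^*S_{E,\alpha}\psi_i\rangle=\frac{\chi(\Ei(m+1))}{\chi(\Ei(m))}\dim\GG_m$ for any orthonormal basis $\{\psi_i\}$ of $\GG_m$; moreover $RR^*=\sum_\alpha S_{E,\alpha}P_{\GG_m}S_{E,\alpha}^*|_{\GG_{m+1}}\le\sum_\alpha S_{E,\alpha}S_{E,\alpha}^*\le\bone$, so $\|R\|^2\le1$, while $\rank(R^*R)=\dim\GG_{m+1}$. The elementary inequality $\tr(R^*R)\le\|R\|^2\rank(R^*R)$ gives $\dim\GG_{m+1}\ge\frac{\chi(\Ei(m+1))}{\chi(\Ei(m))}\dim\GG_m$, i.e.\ $\chi(\Gi(m+1))/\chi(\Ei(m+1))\ge\chi(\Gi(m))/\chi(\Ei(m))$ for all $m\gg0$; iterating and using $\chi(\Fi(\cdot))=\chi(\Ei(\cdot))-\chi(\Gi(\cdot))$ yields $\chi(\Fi(m))/\chi(\Ei(m))\ge\chi(\Fi(l))/\chi(\Ei(l))$ for all $l\ge m\gg0$, which is Guo-semistability.

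For stability one must rule out equality. If $\chi(\Fi(l))/\chi(\Ei(l))$ is constant for all $l\ge m$ (some $m\gg0$), then at each such level $\tr(R^*R)=\|R\|^2\rank(R^*R)$ with $\|R\|^2=1$, so $R^*R$ is a projection and $RR^*=\bone_{\GG_{l+1}}$; together with $\sum_\alpha S_{E,\alpha}S_{E,\alpha}^*\le\bone$ this forces $\sum_\alpha S_{E,\alpha}P_{\GG_l^\perp}S_{E,\alpha}^*|_{\GG_{l+1}}=0$, hence $S_{E,\alpha}^*(\GG_{l+1})\subseteq\GG_l$ for every $\alpha$. Thus $\bigoplus_{l\ge m}\GG_l$ is a reducing subspace for $S_E$ on $\bigoplus_{l\ge m}\GE_l$, so this truncated module decomposes as a direct sum of graded $\Ai$-modules, and passing to Serre sheaves exhibits $\Ei$ as a nontrivial direct sum. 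But $\Ei$ carries the invariant Yang--Mills metric and is irreducible, so it is slope-stable (the polystable decomposition of Uhlenbeck--Yau \cite{Koba1, UhYa1} being $\G$-invariant), hence simple, hence indecomposable — a contradiction. (When $\Fi$ is a torsion quotient the inequality \eqref{Guointro} is strict for $m\gg0$ for trivial degree reasons.) Hence \eqref{Guointro} is strict for every proper $\Fi$, and $\Ei$ is Guo-stable. I expect the main obstacle to be the first step: converting the balanced equation, which lives at the level of the Toeplitz operators and the maps $\breve\varsigma^{(m)},\Psi$, into the clean operator identity $(\star)$ — this needs care with the grading weight relating $T$ to $S$ and with the $\G$-representation theory of $\GH_\N$ — together with, for the strictness, the input that irreducible homogeneous bundles are simple.
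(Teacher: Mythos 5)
Your overall route is the same as the paper's: you first convert equivariance into the scalar identity $(\star)$ for $S_E^*S_E$ on each graded piece (this is exactly Corollary \ref{SEsquarecor}; your derivation via Proposition \ref{balancedeqprop} plus $\Psi$-harmonicity of $\breve{\varsigma}(P^E)$ is a legitimate variant of the paper's derivation through unitality of $\jmath^E_{m,m+1}$, Proposition \ref{unitajmathEprop}), and you then encode a quotient sheaf as a graded $S_E$-invariant subspace and compare dimensions degree by degree. Where the paper uses the unital, trace-compatible maps $\jmath^E_{l,m}$ and $\phi^E_m\circ\jmath^E_{l,m}=\phi^E_l$ applied to the coinvariant projection $P_F$, you use the elementary trace--rank bound for the multiplication map $R:\C^n\otimes\GG_m\to\GG_{m+1}$; these are equivalent mechanisms, and your semistability half (including $RR^*\leq\bone$, surjectivity of $R$ for $m\gg0$ from finite generation, and additivity of $\chi$) is correct.

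The strictness half has two genuine gaps. First, a logical mismatch: Guo-stability fails already if equality holds at a \emph{single} pair $l>m$ with $m$ beyond every threshold, whereas your reducing-subspace argument needs the ratio $\chi(\Gi(l))/\chi(\Ei(l))$ to be constant for \emph{all} $l\geq m$. As written you only exclude eventual constancy. This is fixable: since $\chi(\Gi(\cdot))$ and $\chi(\Ei(\cdot))$ are polynomials for large argument, the step difference of the ratio is a rational function, so equality at infinitely many steps forces eventual constancy, and finitely many flat steps are harmless after raising the threshold; but this bridging observation (or an argument that a single flat step already propagates) is missing, and the paper's own phrase ``with equality iff $\GF_{\geq m}$ is reducing'' glosses over the same point. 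Second, and more seriously, the concluding step ``irreducible $\Rightarrow$ slope-stable $\Rightarrow$ simple $\Rightarrow$ indecomposable'' is not established by what you wrote: $\G$-invariance of the isotypic pieces of the Uhlenbeck--Yau polystable decomposition (for connected $\G$) only forces a single isotypic component, i.e.\ $\Ei\cong\Fi^{\oplus k}$ with $\Fi$ stable, and nothing in your argument excludes $k\geq 2$ (a non-equivariant splitting does not contradict irreducibility of the fibre representation, nor of the $\G$-modules $H^0(\M;\Ei(m))$). What is actually needed here is the classical fact that an irreducible homogeneous bundle is simple (hence indecomposable, hence stable once one has the invariant Hermitian--Einstein metric); this should be quoted from the literature (Ramanan \cite{Rama1}, Kobayashi \cite{Koba4}) rather than rederived via the sketch you give. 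Note that the paper's Theorem~\ref{finitelevelGies} in the form stated in the Introduction implicitly relies on the same external input, since its in-text version only concludes polystability together with the ``equality iff direct summand'' clause; so your proposal is on par with the paper's proof except for these two unjustified steps.
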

We thereby see that the operator-theoretic result as it is stated in \cite[Prop. 2.3]{Guo3} extends to a much wider range of reproducing kernel Hilbert spaces.

In the end of the paper we will discuss some natural questions and open problems building on this work that would be interesting to investigate in the future.  


\begin{Ack}
I thank Robert Berman, Daniel Persson, Magnus Goffeng, Erlend Fornæss-Wold, Tuyen Troung, Håkan Samuelsson Kalm, Mats Andersson, Bo Berndtsson, Martin Sera, Ramiz Reza, Dennis Eriksson, and Ulrik Enstad for discussions on the topic of the paper.
This work was initiated when I was a postdoc at the University of Oslo, supported by ERC (grant 307663-NCGQG).
\end{Ack}

\subsection{Notation}
If $\Ei$ is a holomorphic vector bundle on a smooth projective variety $\M$ then we denote by $H^0(\M;\Ei)$ the vector space of global holomorphic sections of $\Ei$. 
If $P^{E(m)}$ is a Hermitian metric on $\Ei(m)$ then we denote by $H^0(\M,\omega;\Ei(m),P^{E(m)})$, or just $H^0(\omega,P^{E(m)})$, the vector space $H^0(\M;\Ei)$ endowed with the inner product
\begin{equation}\label{GKEinnerprod}
\bra\phi|\psi\ket_{\omega,P^{E(m)}}:=\frac{\chi(\Ei(m))}{\rank\Ei}\omega((\phi|\psi)_{P^{E(m)}}),\qquad\forall\phi,\psi\in\Gamma^0(\M;\Ei(m)).
\end{equation}

The Euler characteristic of a coherent analytic sheaf $\Ei$ over $\M$ is denoted by $\chi(\Ei)$ and defined as the integer
$$
\chi(\Ei):=\sum^d_{p=0}(-1)^p\dim H^p(\M;\Ei),
$$
here $H^p(\M;\Ei)$ is the $p$th sheaf cohomology group of the $\Oi_\M$-module $\Ei$. If $\Ei$ is a holomorphic vector bundle then $\chi(\Ei(m))$ depends only on the isomorphism class of the topological vector bundle underlying $\Ei$. Indeed, the Hirzebruch--Riemann--Roch theorem says that $\chi(\Ei)$ is the pairing of the fundamental class of $\M$ with the Chern character of $\Ei$ wedged with the Todd class of the tangent bundle of $\M$.
We can take the latter formula as the definition of $\chi(\Ei)$ for an arbitrary smooth vector bundle $\Ei$ which need not admit any holomorphic structure.

With $\Li=\Oi_\M(m)$ a fixed very ample line bundle on $\M$, we set 
$$
\Ei(m):=\Li^m\otimes\Ei
$$
for each $m\in\N_0$. The \textbf{Hilbert polynomial} of a smooth vector bundle $\Ei$ is the polynomial $\N_0\ni m\to\chi(\Ei(m))$. 
While $\chi(\Ei(m))$ for a given $m$ only depends on the topological structure of $\Ei(m)$, the choice of line bundle $\Li$ is affected by the holomorphic structure of $\M$ since we want $\Li$ to be very ample. 

We will often write $c_{E,m}$ for the constant
$$
c_{E,m}:=\frac{n_m\rank\Ei}{\chi(\Ei(m))}
$$
where as always $n_m:=\dim\GH_m$ is the Hilbert polynomial of the trivial line bundle $\Oi_\M$.  

Let $\omega:C^0(\M)\to\C$ be a state, i.e. a functional with $\omega(\bone)=1$. If $P^E$ is a projection in $C^0(\M)\otimes\Mn_N(\C)$ then we denote by $L^2(\omega,P^E)$ the completion of $\Gamma^0(\M;P^E):=P^E(C^0(\M)\otimes\C^N)$ in the $L^2$-inner product of $P^E$ and $\omega$,
$$
\bra\phi|\psi\ket_{L^2(\omega,P^E)}:=\frac{\chi(\Ei(m))}{\rank\Ei}\omega((\phi|\psi)_{\Gamma^0(\M;P^E)}),\qquad\forall\phi,\psi\in\Gamma^0(\M;P^E).
$$


\begin{Remark}[$\omega$ on matrices]\label{frameomegaremark}
For an element $B$ of $L^\infty(\M,\omega)\otimes\Bi(\Hi)$ for some separable Hilbert space $\Hi$ we can canonically define an operator $\omega(B)\in\Bi(\Hi)$ by defining $\omega(f\otimes X):=\omega(f)X$ on simple tensors $f\otimes X$ and extending $\omega$ by $\C$-linearity. A choice of basis (or more generally a countable Parseval frame) for $\Hi$ gives a representation of $B$ as an $L^\infty(\M,\omega)$-valued matrix (see \cite{Bala1}), and the above definition just means that we apply $\omega$ to each entry in such a matrix. So even if the Parseval frame has many more element than the dimension of $\Hi$, applying $\omega$ to each entry in the frame matrix of $B$ gives the same as if we apply $\omega$ to each entry in a matrix of size $\dim\Hi$ representing the action of $B$ in an orthonormal basis for $\Hi$.

In particular, $\omega$ commutes with taking the trace over $\Hi$,
$$
\omega(\Tr_\Hi(B))=\Tr_\Hi\omega(B),\qquad\forall B\in L^\infty(\M,\omega)\otimes\Bi(\Hi).
$$
Sometimes we write $(\omega\otimes\id)(B)$ for $\omega(B)$ for clarity when $B$ is in $L^\infty(\M,\omega)\otimes\Bi(\Hi)$, with $\id$ standing for the identity map on $\Bi(\Hi)$.
\end{Remark}

\section{Multivariable operator theory of $\G/\K$}\label{multsection}

\subsection{Preliminaries}
Let $n\geq 2$ be an integer. In this section we recall from \cite[\S6]{An6} how to associate a graded quotient $\GH_\N$ of $H^2_n$ to a compact matrix group
$$
\G\subset\Un(n).
$$

\subsubsection{The first-row algebra}

Throughout the rest of the paper, $\G$ is a compact matrix group, i.e. $\G$ is a closed subgroup of the group $\Un(\GH)$ of unitray transformations of some finite-dimensional Hilbert space $\GH$. The $C^*$-algebra $C^0(\G)$ of continuous functions on $\G$ is generated by the matrix coefficients of a unitary matrix $u\in\Bi(\GH)\otimes C^0(\G)$. Set $n:=\dim(\GH)$ and fix an orthonormal basis $e_1,\dots,e_n$ of $\GH$ so that $\GH\cong\C^n$, and let $u_{\alpha,\beta}$ be the matrix coefficients of $u$ in this basis.  

\begin{dfn} The \textbf{first-row algebra} of $\G$ is the $C^*$-algebra $C^0(\Sb)$ generated by the first row $Z_1:=u_{1,1},\dots,Z_n:=u_{1,n}$. This defines the homogeneous space $\Sb$. 
\end{dfn}
Since $\G$ is a Lie group, $\Sb$ is a smooth manifold. 

There is a $\Z$-grading on $C^0(\Sb)$ obtained by letting the $Z_\alpha$'s have degree $1$ while their adjoints are given degree $-1$. We write the decomposition into spectral subspaces for the corresponding $\Un(1)$-action as
$$
C^0(\Sb)=\overline{\bigoplus_{k\in\Z}C^0(\Sb)^{(k)}}^{\|\cdot\|}.
$$
\begin{dfn} 
We define the homogeneous space $\G/\K$ as the manifold corresponding to the $C^*$-subalgebra of fixed points in $C^0(\Sb)$ for the $\Un(1)$-action:
$$
C^0(\G/\K):=C^0(\Sb)^{(0)}.
$$
\end{dfn}
It is clear that $C^0(\G/\K)$ is generated by the $n^2$ elements $\{Z_\alpha^*Z_\beta\}_{\alpha,\beta=1}^n$.

\begin{Example}
If $\G=\Un(n)$ is the whole unitary group then $\Sb$ is the unit sphere $\Sb^{2n-1}$ in $\C^n$ while
 $\G/\K$ is the complex projective space $\C\Pb^{n-1}$. Here we obtain $\K=\Un(1)\times\SU(n-1)$. 
\end{Example}
By the above example we see that, in general, $\Sb\subset\Sb^{2n-1}$ and
$$
\G/\K\subset\C\Pb^{n-1}.
$$
Since $\G$ is a Lie group, the space $\Sb$, and hence also $\G/\K=\Sb/\Un(1)$, is a smooth manifold and the action of $\G$ on $C^0(\G/\K)$ restricts to an action on the subalgebra $C^\infty(\G/\K)$ of smooth functions. 
The space $\Sb$ is a smooth principal $\Un(1)$-bundle over the smooth manifold $\G/\K$.

We denote by $\Irrep\G$ the set of equivalence classes of irreducible unitray representations of $\G$. We choose a representative $\GH_\lambda$ for each $\lambda\in\Irrep\G$, i.e. $\GH_\lambda$ is a (necessarily finite-dimensional) Hilbert space which carries an irreducible representation of $\G$ in class $\lambda$. 

An important part in the theory of compact groups is the \textbf{Peter--Weyl decomposition} \cite[Cor. 9.14]{Seg1} 
\begin{equation}\label{PeterforCG}
C^0(\G)=\bigoplus_{\lambda\in\Irrep\G}\Bi(\GH_\lambda)^*
\end{equation}
of the vector space underlying the $C^*$-algebra $C^0(\G)$, where $\Bi(\GH_\lambda)^*$ denotes the dual of $\Bi(\GH_\lambda)$ with respect to the trace. Recall also that $C^0(\G)$ has a unique state $\omega$ (the \textbf{Haar state}) which is invariant under the left and right translation action of $\G$ on $C^0(\G)$. The completion $L^2(\G)$ of $C^0(\G)$ in the inner product defined by the Haar state decomposes into irreducibles as well, because of \eqref{PeterforCG} and the $\G$-invariance of $\omega$.

\begin{Lemma}{\cite[Lemma 6.18]{An6}}\label{ergonfirstrow}
The first-row algebra $C^0(\Sb)$ carries an ergodic action of $\G$ which contains every irreducible representation of $\G$ with multiplicity one. The unique $\G$-invariant state on $C^0(\Sb)$ is the restriction to $C^0(\Sb)$ of the Haar state $\omega$ on $C^0(\G)$. 
\end{Lemma}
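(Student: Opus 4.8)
The plan is to realise $C^0(\Sb)$ concretely as the function algebra of a single $\G$-orbit and then read off all three assertions from transitivity together with Frobenius reciprocity. First I would fix the action: let $\G$ act on $C^0(\G)$ by right translation, $(h\cdot f)(g):=f(gh)$. From the identity $u_{\alpha\beta}(gh)=\sum_\gamma u_{\alpha\gamma}(g)u_{\gamma\beta}(h)$ one gets $h\cdot Z_\beta=\sum_\gamma u_{\gamma\beta}(h)\,Z_\gamma$, so $\Span\{Z_1,\dots,Z_n\}$ is $\G$-invariant (it carries a quotient of the defining representation $\G\hookrightarrow\Un(n)$), and hence so is the $C^*$-algebra $C^0(\Sb)$ that it generates; this is the action meant in the statement. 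Writing $Z_\alpha(g)=\langle e_1,ge_\alpha\rangle=\langle g^*e_1,e_\alpha\rangle$, one checks that $Z_\alpha(hg)=Z_\alpha(g)$ for all $g$ and all $\alpha$ precisely when $he_1=e_1$, so with $\mathsf{H}:=\{h\in\G:he_1=e_1\}$ every element of $C^0(\Sb)$ is invariant under left translation by $\mathsf{H}$, i.e.\ $C^0(\Sb)\subseteq C^0(\mathsf{H}\backslash\G)$. Conversely the tuple $(Z_1(g),\dots,Z_n(g))$ recovers $g^*e_1$ and hence the coset $\mathsf{H}g$, so the $Z_\alpha$ separate the points of the compact space $\mathsf{H}\backslash\G$; since $\{Z_\alpha,Z_\alpha^*\}$ is self-adjoint and $\sum_\alpha Z_\alpha^*Z_\alpha=\bone$ (because $\sum_\alpha|\langle e_1,ge_\alpha\rangle|^2=\|g^*e_1\|^2=1$), Stone--Weierstrass gives $C^0(\Sb)=C^0(\mathsf{H}\backslash\G)$. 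Equivalently, $\Sb$ is the $\G$-orbit of $e_1$ inside $\Sb^{2n-1}$, with $\G$ acting by translation.

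Ergodicity and the identification of the invariant state are then immediate. The right-translation action on $\mathsf{H}\backslash\G$ is transitive, so the only invariant continuous functions are the constants, whence $C^0(\Sb)^\G=\C\bone$. The Haar state $\omega$ on $C^0(\G)$ is right-invariant, so $\omega|_{C^0(\Sb)}$ is a $\G$-invariant state. For uniqueness, let $\psi$ be any $\G$-invariant state on $C^0(\Sb)$; for $f\in C^0(\Sb)$ the norm-convergent average $\int_\G(h\cdot f)\,\ud h$ over normalised Haar measure is $\G$-invariant, hence a scalar by ergodicity, and applying $\omega$ identifies it as $\omega(f)\bone$. Applying $\psi$ then gives $\psi(f)=\omega(f)$, so $\psi=\omega|_{C^0(\Sb)}$.

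What remains — and where the real work sits — is the multiplicity count. By \eqref{PeterforCG} one has $C^0(\G)\cong\bigoplus_{\mu\in\Irrep\G}\GH_\mu^*\otimes\GH_\mu$, with left translation acting on the first factor and right translation on the second, so the left $\mathsf{H}$-invariant subalgebra $C^0(\mathsf{H}\backslash\G)$ equals $\bigoplus_\mu(\GH_\mu^*)^{\mathsf{H}}\otimes\GH_\mu$; hence under right translation $\GH_\mu$ occurs in $C^0(\Sb)$ with multiplicity $\dim(\GH_\mu^*)^{\mathsf{H}}=\dim\GH_\mu^{\mathsf{H}}$, and the occurring irreducibles are exactly those with $\GH_\mu^{\mathsf{H}}\neq 0$. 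The lemma thus amounts to the statement that $(\G,\mathsf{H})$ is a Gelfand pair — each $\GH_\mu^{\mathsf{H}}$ at most one-dimensional. This is the genuine obstacle: the first two paragraphs are formal and valid for any compact matrix group, but multiplicity one really uses the structure of the groups at hand and can fail in general (e.g.\ if $\G$ is finite non-abelian acting irreducibly on $\C^n$ with a vector of trivial stabiliser, then $\mathsf{H}=\{e\}$, $C^0(\Sb)=C^0(\G)$, and multiplicities $\dim\GH_\mu>1$ occur). In the cases of interest — $\G$ connected and $e_1$ a highest-weight vector of the representation $\G\hookrightarrow\Un(n)$ polarising the coadjoint orbit — $\Sb$ is a ``complex sphere'' over $\G/\K$ (for $\G=\Un(n)$ it is $\Sb^{2n-1}$) and $\mathsf{H}$ is the stabiliser of a highest-weight vector, for which $(\G,\mathsf{H})$ is a classical Gelfand pair; I would establish this either by checking directly that the convolution algebra of $\mathsf{H}$-biinvariant functions on $\G$ is commutative, or via the branching rules for such vector stabilisers. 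Pinning down $\mathsf{H}$ for the representation actually in play and verifying this Gelfand-pair property is the part of the argument I expect to require real input.
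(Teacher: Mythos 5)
The paper does not actually prove this lemma --- it is imported verbatim from \cite[Lemma 6.18]{An6} --- so there is no in-paper argument to compare yours with; I can only judge the proposal on its own terms. Your first two paragraphs are correct and complete: the identification $C^0(\Sb)\cong C^0(\mathsf{H}\backslash\G)$ with $\mathsf{H}$ the stabiliser of $e_1$ (via the sphere relation $\sum_\alpha Z_\alpha^*Z_\alpha=\bone$, separation of points and Stone--Weierstrass), ergodicity from transitivity, and the averaging argument identifying the unique $\G$-invariant state with $\omega|_{C^0(\Sb)}$ are exactly the standard route and are sound.

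The gap is the one you flag yourself: the multiplicity clause is never proved, only reduced via Frobenius reciprocity to ``$(\G,\mathsf{H})$ is a Gelfand pair with every irreducible spherical'', and your proposed way of closing it (stabilisers of highest-weight vectors of connected groups always give Gelfand pairs) is itself not correct. Concretely: for $\G=\SU(2)\subset\Un(2)$ --- a legitimate instance of the paper's setup, with $\Sb=\Sb^3$ and $\M=\C\Pb^1$ --- the stabiliser of $e_1$ is trivial, so $C^0(\Sb)=C^0(\SU(2))$ and the spin-$j$ representation occurs with multiplicity $2j+1$; and even in the paper's own Example $\G=\Un(n)$, where $\mathsf{H}\cong\Un(n-1)$ and the action \emph{is} multiplicity-free, the ``contains every irreducible representation'' clause fails, since only highest weights of the form $(p,0,\dots,0,-q)$ admit $\Un(n-1)$-fixed vectors (the determinant character never occurs in $C^0(\Sb^{2n-1})$). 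So the statement cannot be established at the level of generality at which it is quoted here; the missing hypotheses (or the intended weaker reading) have to be taken from \cite{An6} itself rather than supplied by a Gelfand-pair argument. For what it is worth, everything the present paper actually uses downstream --- ergodicity, the identification of the invariant state with the restricted Haar state, and irreducibility of the degree-$m$ subspaces $\GH_m$ entering Lemma \ref{Haarfirstrolemma}, which for a coadjoint orbit is Borel--Weil --- is already delivered by the part you did prove together with standard highest-weight theory; the full ``every irreducible with multiplicity one'' clause is not what those applications require.
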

We write $\omega$ also for the $\G$-invariant states on $C^0(\Sb)$ and $C^0(\G/\K)$.

We have already seen that the topological space $\G/\K$ is contained in $\C\Pb^{n-1}$. Now observe that the algebra 
$$
\Ai:=\mathrm{Alg}(Z_1,\dots,Z_n)
$$
generated by $Z_1,\dots,Z_n$ is the quotient of the polynomial algebra $\C[z_1,\dots,z_n]$ by some homogeneous ideal. Therefore $\Ai$ is the homogeneous coordinate ring of some projective variety 
$$
\M:=\Proj(\Ai)\subset\C\Pb^{n-1}.
$$
That is, if $\Li=\Oi_\M(1)$ denotes the restriction to $\M$ of the hyperplane bundle on $\C\Pb^{n-1}$ then
$$
\Ai=\bigoplus_{m\in\N_0}H^0(\M;\Li^m).
$$
The completion $\GH_\N$ of $\Ai$ in the inner product of the symmetric Fock space $\GH^{\vee\N}$ is a graded quotient module. The elements of $\GH_\N$ are analytic functions on the manifold 
$$
\B:=\B^n\cap\V,\qquad\text{where }\V:=\Spec\Ai.
$$
It is shown in \cite{An6} that the $C^*$-algebra $C^0(\M)$ of continuous functions on $\M$ is the inductive limit of the finite-dimensional matrix algebras $\Bi(\GH_m)$ as $m$ goes to infinity, and that $C^0(\M)$ and $C^0(\G/\K)$ coincide in such a way that the generators $Z_1,\dots,Z_n$ of $C^0(\Sb)$ become the homogeneous coordinates on $\M$,
$$
\M=\G/\K.
$$
Thus $\G/\K$ is given the structure of a complex projective variety. 

The state $\omega:C^0(\M)\to\C$ defines a unique Kähler 2-form, also denoted by $\omega$, in the cohomology class $c_1(\Li)$ via
$$
\omega(f)=\frac{1}{\vol(\M,\Li)}\int_\M f(x) e^{\omega(x)},\qquad\forall f\in C^0(\M),
$$
where $\vol(\M,\Li)=\int_\M e^{\omega(x)}=\lim_{m\to\infty}\dim\GH_m/m^{\dim\M}$ is the volume of $(\M,\Li)$.

\subsubsection{Haar orthogonality relations}\label{Harrelsec}

Recall that the \textbf{Haar orthogonality relations} \cite[Thm. 9.7(iii)]{Seg1} say that if $\GK$ is an irreducible representation of $\G$ and $e_1,\dots,e_{n_\GK}$ is an orthonormal basis for $\GK$ then
$$
\delta_{\alpha\beta}=\bra e_\alpha|e_\beta\ket_{\GK}=(\dim\GK)\omega(f_\alpha^*f_\beta),
$$
where $f_\alpha\in C^0(\G)$ is the function $f_\alpha(a):=\bra e_\alpha|a\cdot e_\alpha\ket$. 
That is, in addition to the $C^0$ Peter--Weyl decomposition 
$$
C^0(\G)=\bigoplus_{\kappa\in\operatorname{Irrep}\G}\Bi(\GK_\kappa)
$$
(as vector spaces) one has that the Haar state $\omega$ restricts to the normalized trace on $\Bi(\GK_\kappa)$, viz. the $L^2$ Peter--Weyl decomposition
$$
L^2(\G,\omega)=\bigoplus_{\kappa\in\operatorname{Irrep}\G}\Bi(\GK_\kappa)
$$
(as Hilbert spaces). The first-row algebra takes the form
$$
C^0(\Sb)=\bigoplus_{\kappa\in\operatorname{Irrep}\G}\GK_\kappa,
$$ 
where the irreducible representations $\GH_m$ appear as special cases of the $\GK_\kappa$'s, namely as the subspaces spanned by the products of $m$ elemens of the generating set $\{Z_1,\dots,Z_n\}$. To obtain an arbitrary irreducible representation $\GK_\kappa$ one has to use also products with elements of $\{Z_1^*,\dots,Z_n^*\}$. 

For the special case of $\GK_\kappa=\GH_m$ for $m\in\N_0$, the Haar orthogonality relations give the following:
\begin{Lemma}\label{Haarfirstrolemma}
Let $e_1,\dots,e_n$ be an orthonormal basis for $\GH_1$. 
Then for all $m\in\N_0$ and all $\mathbf{j},\mathbf{k}\in\F_n^+$ with $|\mathbf{j}|=m=|\mathbf{k}|$ we have
$$
\omega(Z_\mathbf{j}Z_\mathbf{k}^*)=\frac{\bra e_\mathbf{j}|p_me_\mathbf{k}\ket}{\dim\GH_m}.
$$
\end{Lemma}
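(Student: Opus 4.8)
The plan is to recognise the left-hand side $\omega(Z_\mathbf{j}Z_\mathbf{k}^*)$ as a scalar multiple of a $\G$-invariant inner product on the representation $\GH_m$ evaluated on the Fock-space vectors $p_m e_\mathbf{j}$ and $p_m e_\mathbf{k}$, and then to compute the scalar using Schur's lemma (which applies because $\GH_m$ is a single irreducible $\G$-representation, occurring in $C^0(\Sb)$ with multiplicity one by Lemma \ref{ergonfirstrow}) together with a one-line normalisation.

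I would begin by making precise how $\GH_m$ sits inside the $m$-fold tensor power $\GH_1^{\otimes m}$, of which $\{e_\mathbf{j}:|\mathbf{j}|=m\}$ is an orthonormal basis. The assignment $e_\mathbf{j}\mapsto Z_\mathbf{j}$ extends to a $\G$-equivariant surjection $\GH_1^{\otimes m}\to\Ai_m$ onto the span of the degree-$m$ monomials in the generators $Z_1,\dots,Z_n$; since these generators commute and $\Ai$ is the quotient of $\C[z_1,\dots,z_n]$ by a homogeneous ideal, the orthogonal complement of the kernel of this map is exactly the copy of $\GH_m$ inside $\GH_1^{\otimes m}$ coming from the realisation of $\GH_\N$ as a graded submodule of $H^2_n$ (see \cite{An6}), and $p_m$ is the orthogonal projection of $\GH_1^{\otimes m}$ onto that subspace. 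In particular $Z_\mathbf{j}$ is the image of $p_m e_\mathbf{j}$ under the induced $\G$-equivariant isomorphism $\Phi\colon\GH_m\xrightarrow{\sim}\Ai_m\subset C^0(\Sb)$.

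I would then transport the $L^2(\Sb,\omega)$-inner product on $\Ai_m$ back along $\Phi$; this produces a second $\G$-invariant inner product on the irreducible $\GH_m$, which by Schur's lemma equals $\lambda^2$ times the one inherited from $\GH_1^{\otimes m}$, for some $\lambda^2>0$. Since $p_m^*=p_m=p_m^2$, this gives
$$
\omega(Z_\mathbf{j}Z_\mathbf{k}^*)=\bra Z_\mathbf{j}|Z_\mathbf{k}\ket_{L^2(\Sb,\omega)}=\lambda^2\,\bra p_m e_\mathbf{j}|p_m e_\mathbf{k}\ket=\lambda^2\,\bra e_\mathbf{j}|p_m e_\mathbf{k}\ket .
$$
To find $\lambda^2$ I would sum over the diagonal $\mathbf{j}=\mathbf{k}$ with $|\mathbf{j}|=m$. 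The right-hand side becomes $\lambda^2\Tr(p_m)=\lambda^2\dim\GH_m$ because $\{e_\mathbf{j}:|\mathbf{j}|=m\}$ is orthonormal. For the left-hand side: the first row of the generating unitary $u$ satisfies $\sum_{\alpha=1}^n Z_\alpha^*Z_\alpha=\bone$, and $C^0(\Sb)$ is commutative, so $\sum_{|\mathbf{j}|=m}Z_\mathbf{j}Z_\mathbf{j}^*=\big(\sum_{\alpha=1}^n Z_\alpha^*Z_\alpha\big)^m=\bone$, and the sum equals $\omega(\bone)=1$. Hence $\lambda^2=1/\dim\GH_m$, which is the asserted formula.

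The only delicate point is the identification in the second paragraph: checking that $p_m e_\mathbf{j}$ is genuinely the Fock-space avatar of the monomial $Z_\mathbf{j}$, so that the $L^2$-inner product of $Z_\mathbf{j}$ and $Z_\mathbf{k}$ is $\lambda^2\bra e_\mathbf{j}|p_m e_\mathbf{k}\ket$ rather than something involving a different invariant inner product. I should also stress that irreducibility of $\GH_m$ is essential — without it the formula would fail, since distinct isotypic components could acquire different scale factors — so Schur's lemma is doing real work. Everything after the identification is the normalisation above, which uses only unitarity of $u$ and commutativity of $C^0(\Sb)$.
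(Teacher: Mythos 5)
Your proof is correct, and it supplies the details the paper leaves implicit: the paper obtains Lemma \ref{Haarfirstrolemma} by directly quoting the Haar (Schur) orthogonality relations for the irreducible representation $\GH_m\subset C^0(\Sb)$, i.e.\ by reading the monomials $Z_\mathbf{j}$ as matrix coefficients of $\GH_m$, whereas you first use Schur's lemma to see that the $L^2(\Sb,\omega)$-inner product, pulled back along the $\G$-equivariant isomorphism $p_me_\mathbf{j}\mapsto Z_\mathbf{j}$, is a scalar multiple $\lambda^2$ of the Fock inner product, and then determine $\lambda^2=1/\dim\GH_m$ by summing over the diagonal, using $\sum_{\alpha}Z_\alpha Z_\alpha^*=\bone$ (unitarity of $u$ together with commutativity of $C^0(\Sb)$) on one side and $\Tr(p_m)=\dim\GH_m$ on the other. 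This repackaging buys something concrete: the matrix-coefficient route has to exhibit a normalized reference vector in $\GH_m$ implementing the ``first row'' (e.g.\ one must worry whether $e_1^{\otimes m}$ lies in $p_m\GH_1^{\otimes m}$ and about cross terms coming from the kernel of $e_\mathbf{j}\mapsto Z_\mathbf{j}$), while your trace normalization needs none of that. Two small remarks. First, the irreducibility of $\Ai_m\cong\GH_m$, which your Schur step genuinely requires, does not follow from Lemma \ref{ergonfirstrow} alone (multiplicity one in $C^0(\Sb)$ does not say that the span of the degree-$m$ monomials is irreducible); it is part of the standing coadjoint-orbit setup imported from \cite{An6} and recorded in \S\ref{Harrelsec}, so that is the right thing to cite. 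Second, with the convention $\bra f|g\ket_{L^2}=\omega(f^*g)$ your argument literally yields $\omega(Z_\mathbf{j}Z_\mathbf{k}^*)=\bra e_\mathbf{k}|p_me_\mathbf{j}\ket/\dim\GH_m$, which matches the stated formula up to the Hermitian symmetry of the Gram matrix; this harmless conjugation is glossed over by the paper as well, and nothing downstream (e.g.\ Lemma \ref{squareofSlemma}, which only uses the proportionality of the two inner products) is affected.
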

Here $\F_n^+$ denotes the set of multi-indices $\mathbf{k}=k_1\cdots k_m$ of finite length $|\mathbf{k}|:=m\in\N_0$, and $p_m:\GH^{\otimes m}\to\GH_m$ is the orthogonal projection. 

\subsection{Subnormality and spherical expansivity}\label{spexpsec}
We are given our coadjoint orbit $\M=\G/\K\subset\C\Pb^{n-1}$ and the associated graded quotient $\GH_\N$ of the symmetric Fock space $\GH^{\vee\N}$. The shift on $\GH^{\vee\N}$ can be compressed to the subspace $\GH_\N$ to give a tuple $S=(S_1,\dots,S_n)$ of mutually commuting operators. If $e_1,\dots,e_n$ denotes an orthonormal basis for $\GH_1$, this means
$$
S_\alpha\psi=p_m(e_\alpha\otimes\psi),\qquad\forall \psi\in\GH_\N,\ \alpha\in\{1,\dots,n\},
$$
where $p_m:\GH^{\otimes m}\to\GH_m$ is the orthogonal projection, and if the vectors in $\GH_\N$ are identified with analytic functions on $\B$, then each $S_\alpha$ becomes a multiplication operator:
$$
(S_\alpha\psi)(w):=w_\alpha\psi(w),\qquad\forall\psi\in\GH_\N,\ w\in\B.
$$
We shall also refer to $\GH_\N$ as the Fock space and call $S$ the \textbf{shift} on $\GH_\N$. 

It can be helpful to view the shift $S$ as a quantization of the generating tuple $Z=(Z_1,\dots,Z_n)$ of the $C^*$-algebra $C^0(\Sb)$. Indeed, the graded algebra $\Ai\subset C^0(\Sb)$ generated by $Z_1,\dots,Z_n$ is isomorphic to the graded algebra generated by $S_1,\dots,S_n$. While the $Z_\alpha$'s commute with their adjoints, $[S_\alpha^*,S_\beta]$ is nonzero. 
One has
$$
SS^*:=\sum_{\alpha=1}^nS_\alpha S_\alpha^*=\bone-p_0,
$$
where $p_0\in\Bi(\GH_\N)$ denotes the projection onto the 1-dimensional subspace $\GH_0$ spanned by the constant functions. This is a remnant of the sphere condition $Z^*Z=ZZ^*=\bone$ satisfied by $Z$. 
So what about the operator $S^*S:=\sum_{\alpha=1}^nS_\alpha^*S_\alpha$? 
\begin{Lemma}\label{squareofSlemma}
 The shifts $S_1,\dots,S_n$ satisfy
\begin{equation}\label{formulaforSmodulus}
\sum_{\alpha=1}^nS_\alpha^*S_\alpha=\sum_{m\in\N_0}\frac{\dim\GH_{m+1}}{\dim\GH_m}p_m.
\end{equation}
\end{Lemma}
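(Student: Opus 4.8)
The plan is to show that the grading-preserving operator $\sum_{\alpha=1}^n S_\alpha^* S_\alpha$ acts by a scalar on each homogeneous component $\GH_m$, and then to pin down that scalar by a trace computation using the identity $\sum_{\alpha=1}^n S_\alpha S_\alpha^* = \bone - p_0$ recorded above. Since each $S_\alpha$ raises the $\N_0$-grading by one, the sum $\sum_\alpha S_\alpha^* S_\alpha$ preserves every $\GH_m$, so it suffices to identify its restriction $N_m \in \Bi(\GH_m)$.

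First I would check that $N_m$ is $\G$-equivariant. The defining representation of $\G \subset \Un(n)$ on $\GH$ induces a unitary representation $U$ on the Fock space $\GH_\N$ which commutes with the projections onto the homogeneous components $\GH_m$, and the description of $S_\alpha$ as a compressed creation operator gives $U(g) S_\alpha U(g)^* = \sum_\beta u_{\beta\alpha}(g) S_\beta$, where $u_{\beta\alpha}(g) := \bra e_\beta | u(g) e_\alpha \ket$; that is, the tuple $S$ transforms as a vector under $\G$. Consequently
$$
U(g)\Big(\sum_\alpha S_\alpha^* S_\alpha\Big) U(g)^* = \sum_{\beta,\gamma}\Big(\sum_\alpha \overline{u_{\beta\alpha}(g)}\, u_{\gamma\alpha}(g)\Big) S_\beta^* S_\gamma = \sum_\beta S_\beta^* S_\beta
$$
because $u(g)$ is unitary, so $N_m$ commutes with the $\G$-action on $\GH_m$. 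Now $\GH_m$ is an irreducible $\G$-representation — it occurs in $C^0(\Sb)$ with multiplicity one, by Lemma \ref{ergonfirstrow} and the Peter--Weyl discussion of \S\ref{multsection} — so Schur's lemma forces $N_m = c_m \id_{\GH_m}$ for some scalar $c_m \geq 0$.

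It remains to compute $c_m$, which I would do by taking traces. On the one hand $\Tr_{\GH_m}(N_m) = c_m \dim \GH_m$. On the other hand, applying $\Tr(A^*A) = \Tr(AA^*)$ with $A = S_\alpha|_{\GH_m} \colon \GH_m \to \GH_{m+1}$,
$$
\Tr_{\GH_m}(N_m) = \sum_{\alpha=1}^n \Tr_{\GH_m}(S_\alpha^* S_\alpha) = \sum_{\alpha=1}^n \Tr_{\GH_{m+1}}(S_\alpha S_\alpha^*) = \Tr_{\GH_{m+1}}(\bone - p_0).
$$
Since $m \geq 0$ the component $\GH_{m+1}$ is orthogonal to the constants $\GH_0$, so $p_0$ vanishes on it and the right-hand side equals $\dim \GH_{m+1}$. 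Hence $c_m = \dim\GH_{m+1}/\dim\GH_m$, and summing over $m$ yields $\sum_\alpha S_\alpha^* S_\alpha = \sum_{m \in \N_0} \frac{\dim \GH_{m+1}}{\dim \GH_m}\, p_m$.

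The only non-formal inputs are the equivariance of the shift tuple under the Fock-space representation and the irreducibility of each $\GH_m$, both of which are provided by the construction of \S\ref{multsection}; I therefore do not expect a real obstacle. If one preferred to avoid Schur's lemma, one could instead evaluate $\bra S_\alpha^* S_\alpha \psi | \psi \ket = \| p_{m+1}(e_\alpha \otimes \psi) \|^2$ and sum over $\alpha$ directly, but bookkeeping with the projections onto the quotient components $\GH_m$ makes that route more cumbersome, so I expect the representation-theoretic argument above to be the cleanest.
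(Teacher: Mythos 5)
Your proof is correct, but it takes a genuinely different route from the paper's. The paper proves \eqref{formulaforSmodulus} by passing to the Hardy space $H^0(\Sb,\omega)$: the multiplication tuple $T$ there satisfies $\sum_k T_k^*T_k=\bone$ because $\sum_\alpha Z_\alpha^*Z_\alpha=\bone$ in $C^0(\Sb)$, and the Haar orthogonality relations (Lemma \ref{Haarfirstrolemma}) show that on degree-$m$ polynomials the Fock inner product is $n_m$ times the $L^2(\Sb,\omega)$ inner product, so $S$ is unitarily equivalent to the weighted tuple $\sqrt{n_{m+1}/n_m}\,\tilde T_\alpha$ and the weights give the formula. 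You instead exploit equivariance: the covariance relation $U(g)S_\alpha U(g)^*=\sum_\beta u_{\beta\alpha}(g)S_\beta$, the resulting $\G$-invariance of $\sum_\alpha S_\alpha^*S_\alpha$, irreducibility of each $\GH_m$ plus Schur's lemma to get a scalar $c_m$, and the trace identity $\Tr(A^*A)=\Tr(AA^*)$ together with $\sum_\alpha S_\alpha S_\alpha^*=\bone-p_0$ to evaluate $c_m=n_{m+1}/n_m$; all steps check out. What each approach buys: the paper's argument simultaneously constructs the unitary intertwining the rescaled shift with the Hardy-space multiplication tuple, which is reused later (e.g.\ in Lemma \ref{LinftyToeplwhole} and the subnormality discussion of \S\ref{spexpsec}), whereas your argument is shorter, stays entirely on the Fock space, and isolates exactly where homogeneity of $\M=\G/\K$ enters (Schur's lemma), but it produces only the weights and not the Hardy-space model. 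Two small points you should make explicit: the covariance formula for $S_\alpha$ uses that the projections $p_m:\GH^{\otimes m}\to\GH_m$ commute with the tensor-power action $U(g)$, which holds because the homogeneous ideal cutting out $\M$ is $\G$-invariant; and the irreducibility you import from \S\ref{multsection} is stated for the translation action on $\Ai_m\subset C^0(\Sb)$, so one should note that the monomial map $p_me_{\mathbf{k}}\mapsto Z_{\mathbf{k}}$ intertwines it with the Fock-space action before applying Schur's lemma.
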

\begin{proof}
Let $\omega$ be the restriction of the Haar state on $C^0(\G)$ to the subalgebra $C^0(\Sb)$ and let $L^2(\Sb,\omega)$ be the GNS Hilbert space of $\omega$. Let $H^0(\Sb,\omega)$ be the closure of $\Ai$ in $L^2(\Sb,\omega)$. The operators of multiplication by $Z_1,\dots,Z_n$ on $L^2(\Sb,\omega)$ leave the subspace $H^0(\Sb,\omega)$ invariant. Denote by $T_1,\dots,T_n$ the restrictions of the multiplication operators $Z_1,\dots,Z_n$ to $H^0(\Sb,\omega)$ and let $P$ be the orthogonal projection of $L^2(\Sb,\omega)$ onto $H^0(\Sb,\omega)$. Since the representation of $C^0(\G)$ is a $*$-homomorphism, the multiplication operators satisfy $\sum^n_{\alpha=1}Z_\alpha^*Z_\alpha=\sum^n_{\alpha=1}Z_\alpha Z_\alpha^*=\bone$ and hence
\begin{equation}\label{Tisoneisom}
\sum^n_{k=1}T_k^*T_k=P\sum^n_{k=1}Z_k^*Z_k\big|_{H^0(\Sb,\omega)}=\bone.
\end{equation}
By Lemma \ref{Haarfirstrolemma} the inner product $\bra\cdot|\cdot\ket$ on Fock space $\GH_\N$ is a simple scaling of that of $L^2(\Sb,\omega)$,
$$
\bra\phi|\psi\ket_{L^2(\Sb,\omega)}=\frac{1}{n_m}\bra\phi|\psi\ket,\qquad \forall\phi,\psi\in\Ai_m=\mathrm{span}\{Z_\mathbf{k}|\ \mathbf{k}\in\F_n^+,\ |\mathbf{k}|=m\}.
$$
It follows that the tuple $T_1,\dots,T_n$ is unitarily equivalent to the operator tuple $\tilde{T}_1,\dots,\tilde{T}_n$ on the Fock space $\GH_\N$ defined by
$$
\tilde{T}_\alpha|_{\GH_m}:=\sqrt{\frac{n_m}{n_{m+1}}}S_\alpha|_{\GH_m},\qquad\forall m\in\N_0,\ \alpha,\beta\in\{1,\dots,n\}.
$$
if $S_1,\dots,S_n$ are the standard shifts on $\GH_\N$. From \eqref{Tisoneisom} we get
$$
S_\alpha=|S|\tilde{T}_\alpha,\qquad\forall \alpha\in\{1,\dots,n\}.
$$
with $|S|:=\sqrt{\sum_{k=1}^nS_k^*S_k}$. The formula \eqref{formulaforSmodulus} then follows from \eqref{Tisoneisom} and the definition of $\tilde{T}_1,\dots,\tilde{T}_n$.
\end{proof}
We thus see that the Haar orthogonality relations ensure that the tuple $S=(S_1,\dots,S_n)$ is a simple quasi-affine transform of the spherical isometry $T=(T_1,\dots,T_n)$ acting on the Hardy-type space $H^0(\Sb,\omega)$. Recall that $T$ being a spherical isometry means $T^*T:=\sum^n_{\alpha=1}T_\alpha^*T_\alpha=\bone$, and that by \cite[Prop. 2]{Atha3} this is equivalent to saying that $T$ is subnormal with normal extension having joint spectrum in $\Sb^{2n-1}$ (in the present case the normal spectrum is $\Sb\subset\Sb^{2n-1}$).  

In the rest of the paper we will not distinguish between $\tilde{T}$ and $T$, so $T$ will sometimes be regarded as an operator tuple acting on $\GH_\N$. 

\begin{cor}\label{commisposcor}
The operator tuple $S=(S_1,\dots,S_n)$ is a \textbf{spherical expansion}, i.e. $\sum^n_{\alpha=1}S_\alpha^*S_\alpha\geq\bone$. Equivalently (since $\sum^n_{\alpha=1}S_\alpha S_\alpha^*=\bone-p_0$), the operator
$$
[S^*,S]:=\sum^n_{\alpha=1}[S^*_\alpha,S_\alpha]
$$
is positive.
\end{cor}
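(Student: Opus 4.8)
The plan is to read off both formulations from the diagonal formula of Lemma~\ref{squareofSlemma}. That lemma gives $\sum_{\alpha=1}^n S_\alpha^* S_\alpha = \sum_{m\in\N_0}(n_{m+1}/n_m)\,p_m$ with $n_m=\dim\GH_m$, so the spherical-expansion inequality $\sum_\alpha S_\alpha^* S_\alpha \ge \bone$ is \emph{literally equivalent} to the family of scalar inequalities $n_{m+1}\ge n_m$, one for each $m\in\N_0$. The second (``equivalently'') assertion is then purely formal: since $\sum_\alpha S_\alpha S_\alpha^* = \bone - p_0 \le \bone$ we have $[S^*,S] = \sum_\alpha S_\alpha^* S_\alpha - (\bone - p_0)$, so $\sum_\alpha S_\alpha^* S_\alpha \ge \bone$ forces $[S^*,S]\ge 0$; conversely $[S^*,S]\ge 0$ gives $\sum_\alpha S_\alpha^* S_\alpha \ge \bone - p_0$, and on $\bigoplus_{m\ge 1}\GH_m$ one has $\bone-p_0=\bone$, so there $\sum_\alpha S_\alpha^* S_\alpha\ge\bone$, while on $\GH_0$ the left-hand side equals $n_1p_0$ (again by Lemma~\ref{squareofSlemma}), which is $\ge p_0$ as soon as $n_1\ge 1$; and $n_1\ge 1$ holds because the generators $Z_1,\dots,Z_n$ of $C^0(\Sb)$ cannot all vanish. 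Thus the whole corollary reduces to the single statement that the Hilbert function $m\mapsto n_m$ of $\Ai$ is non-decreasing.

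To prove $n_{m+1}\ge n_m$ for all $m$, I would use that $n_m=\dim H^0(\M;\Li^m)$ by the description $\Ai=\bigoplus_m H^0(\M;\Li^m)$ recalled in \S\ref{multsection}, that $\M=\G/\K$ is an irreducible projective variety (being a connected homogeneous space), and that $\Li$ is very ample so that $H^0(\M;\Li)\ne 0$. Fix any nonzero $s\in H^0(\M;\Li)$. Multiplication by $s$ is a linear map $H^0(\M;\Li^m)\to H^0(\M;\Li^{m+1})$, and it is injective: if $st=0$ then $t$ vanishes on the complement of the zero locus of $s$, which is a dense open subset of the irreducible $\M$, and a section of the line bundle $\Li^m$ that vanishes on a dense open set vanishes identically. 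Hence $n_{m+1}\ge n_m$ for every $m\ge 0$; in particular $n_1\ge n_0=1$, the fact used in the first paragraph. Operator-theoretically this injectivity is just that $S_\alpha|_{\GH_m}\colon\GH_m\to\GH_{m+1}$ is injective for any $\alpha$ with $Z_\alpha\ne 0$: in the function model $(S_\alpha\psi)(w)=w_\alpha\psi(w)$ on $\B$, and if $w_\alpha\psi\equiv 0$ with $w_\alpha\not\equiv 0$ on the irreducible analytic set $\B$ then $\psi$ vanishes on a dense open subset, hence $\psi=0$.

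The only genuinely non-formal ingredient is therefore the integrality of $\M$, i.e. that a nonzero section of $\Li$ is not a zero divisor on $\Ai$; this is automatic from the standing assumption that $\M=\G/\K$ is a coadjoint orbit, so I do not expect a real obstacle. If one wanted to allow $\G$ disconnected, so that $\M$ could be a disjoint union of several irreducible varieties, the argument is unchanged after choosing $s\in H^0(\M;\Li)$ not vanishing identically on any component — such $s$ exists because $\Li$ is very ample — and noting that such an $s$ is a non-zero-divisor of the reduced ring $\Ai\subset C^0(\Sb)$.
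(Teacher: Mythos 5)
Your proof is correct and follows essentially the paper's own route: the corollary is read off from the diagonal formula of Lemma \ref{squareofSlemma}, so everything reduces to the monotonicity $n_{m+1}\ge n_m$ of the Hilbert function, which the paper treats as immediate. Your explicit justification of that monotonicity (multiplication by a nonzero section of $\Li$ is injective on $H^0(\M;\Li^m)$ since $\M$ is reduced and its components are irreducible) and your careful handling of the $\GH_0$ piece in the ``equivalently'' direction are exactly the details the paper leaves implicit, and they are right.
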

\begin{Question}
We thus have $[S^*,S]\geq 0$ when the quotient module $\GH_\N$ comes from a coadjoint orbit. Does that hold for a general quotient module? One could also ask whether each of the operators $S_\alpha$ is hyponormal, i.e. whether $[S_\alpha^*,S_\alpha]\geq 0$ holds for all $\alpha\in\{1,\dots,n\}$. The hyponormality of each $S_\alpha$ appears a bit optimistic even for coadjoint orbits (although it is true for $\GH_\N=H^2_n$ \cite[\S5]{Arv6c}). 
\end{Question}

The scalar curvature of the Kähler metric associated with the state $\omega$ on the coadjoint orbit $\M=\G/\K$ is a constant function $s_\omega=\underline{s_\omega}\bone$. For any quotient module $\GH_\N$ of $H^2_n$, i.e. for any projective variety $\M\subset\C\Pb^{n-1}$ and any Kähler metric in the class $c_1(\Li)$ of the line bundle $\Li=\Oi_\M(1)$, the average scalar curvature $\underline{s_\omega}:=\omega(s_\omega)$ appears, by Lemma \ref{squareofSlemma} and Hirzebruch--Riemann--Roch, as a contribution to the traces $\Tr([S^*,S]p_m)$,
$$
\phi_m([S^*,S])=\frac{n_{m+1}-n_m}{n_m}=m^{-1}\underline{s_\omega}/2+O(m^{-1}).
$$
From this behavior of the traces one may expect that $[S^*,S]$ is a quantization of the scalar curvature $s_\omega$ in some sense.

\subsection{Schatten-class membership}
In \cite{An6} it was shown that the commutators $[S_\alpha^*,S_\beta]$ of the shift operators are compact. Here we give a simpler proof in our special case of coadjoint orbits, and we also obtain sharp estimates for membership in the Schatten classes $\Li^p$:
\begin{thm}
Let $n\in\N$, let $\GH_\N=\bigoplus_{m\in\N_0}\GH_m$ be the graded quotient module of the Drury--Arveson space $H^2_n$ associated with a coadjoint orbit $\M=\G/\K\subset\C\Pb^{n-1}$, let $d:=\dim_\C\M$, and let $S_1,\dots,S_n$ be the compressions to $\GH_\N$ of the shift operators on $H^2_n$. Then for all $\alpha,\beta\in\{1,\dots,n\}$ we have
$$
[S_\alpha^*,S_\beta]\in\Li^p\iff p>d+1.
$$ 
\end{thm}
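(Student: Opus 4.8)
The plan is to reduce from the tuple $S$ to the spherical isometry $\tilde T$ of Lemma \ref{squareofSlemma}, where the Szeg\H{o}/Hardy projection gives a sharp operator-norm bound; the reverse implication is obtained from a Hilbert--Schmidt \emph{lower} bound computed via the Haar orthogonality relations. Throughout, $n_m:=\dim\GH_m$; by Hirzebruch--Riemann--Roch $n_m$ is eventually a polynomial in $m$ of degree $d$ with positive leading coefficient, so $\tfrac{n_{m+1}}{n_m}\to1$ and $\lambda_m:=\tfrac{n_{m+1}-n_m}{n_m}\sim\tfrac dm$, $\mu_m:=\tfrac{n_m-n_{m-1}}{n_m}\sim\tfrac dm$; also $d\ge1$ since the orbit is positive-dimensional. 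Since $S_\alpha^*S_\beta$ and $S_\beta S_\alpha^*$ both preserve $\GH_m$, each $[S_\alpha^*,S_\beta]$ is grading-preserving, so membership in $\Li^p$ is controlled by the blocks $A_m:=[S_\alpha^*,S_\beta]p_m\in\Bi(\GH_m)$.

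Writing $S_\gamma=\tilde T_\gamma|S|$ with $|S|^2=\sum_kS_k^*S_k=\sum_m\tfrac{n_{m+1}}{n_m}p_m$ (Lemma \ref{squareofSlemma}), one gets
\[
A_m=\tfrac{n_{m+1}}{n_m}\,[\tilde T_\alpha^*,\tilde T_\beta]p_m+\Big(\tfrac{n_{m+1}}{n_m}-\tfrac{n_m}{n_{m-1}}\Big)\tilde T_\beta\tilde T_\alpha^*p_m ,
\]
where the coefficient of the second term is $O(m^{-2})$ and $\|\tilde T_\beta\tilde T_\alpha^*p_m\|\le1$, $\rank(\tilde T_\beta\tilde T_\alpha^*p_m)\le n_m$, so that term is in $\Li^q$ for every $q>(d+1)/2$ and is harmless. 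Let $P$ be the Szeg\H{o} projection of $L^2(\Sb,\omega)$ onto $H^0(\Sb,\omega)\cong\GH_\N$ and $H_\gamma:=(\bone-P)M_{Z_\gamma}^*P$. Since the $M_{Z_\gamma}$ mutually commute and commute with the $M_{Z_\delta}^*$ on $L^2(\Sb,\omega)$ and preserve $H^0(\Sb,\omega)$, one checks the Hankel factorization $[\tilde T_\alpha^*,\tilde T_\beta]=H_\beta^*H_\alpha$; in particular $\sum_\gamma H_\gamma^*H_\gamma=\sum_\gamma[\tilde T_\gamma^*,\tilde T_\gamma]=[\tilde T^*,\tilde T]$, which by Lemma \ref{squareofSlemma} and Corollary \ref{commisposcor} equals $p_0+\sum_{m\ge1}\mu_mp_m$. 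Hence $0\le H_\gamma^*H_\gamma\le[\tilde T^*,\tilde T]$ gives $\|H_\gamma p_m\|^2\le\mu_m$, and since $M_{Z_\gamma}^*$ lowers the $\Un(1)$-degree by one, $H_\alpha p_m$ lands in the degree-$(m-1)$ part of $(\bone-P)L^2(\Sb,\omega)$, on which $H_\beta^*$ acts with norm $\|H_\beta p_m\|$; therefore $\|[\tilde T_\alpha^*,\tilde T_\beta]p_m\|\le\|H_\beta p_m\|\,\|H_\alpha p_m\|\le\mu_m$, so $\|A_m\|=O(m^{-1})$. As $\rank A_m\le n_m\le c\,m^d$,
\[
\big\|[S_\alpha^*,S_\beta]\big\|_{\Li^p}^p=\sum_m\|A_m\|_{\Li^p}^p\le\sum_m n_m\|A_m\|^p\lesssim\sum_m m^{\,d-p}<\infty\qquad(p>d+1),
\]
which is the implication ``$\Leftarrow$''.

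For ``$\Rightarrow$'' fix $p\le d+1$. I will combine the upper bound $\|A_m\|\le C/m$ and $\rank A_m\le c\,m^d$ just obtained with a lower bound $\|A_m\|_{HS}^2\ge c'\,m^{\,d-2}$ (justified below): for $p\le2$ the inequality $s_k^p\ge s_k^2\|A_m\|^{p-2}$ yields $\|A_m\|_{\Li^p}^p\ge\|A_m\|_{HS}^2\|A_m\|^{p-2}\gtrsim m^{\,d-p}$, while for $p\ge2$ the power-mean inequality gives $\|A_m\|_{\Li^p}^p\ge\|A_m\|_{HS}^{p}(\rank A_m)^{1-p/2}\gtrsim m^{\,d-p}$; in either case $\sum_m\|A_m\|_{\Li^p}^p\gtrsim\sum_m m^{\,d-p}=\infty$ for $p\le d+1$, so $[S_\alpha^*,S_\beta]\notin\Li^p$. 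It remains to establish the Hilbert--Schmidt lower bound. When $\alpha=\beta$ one uses that, by the coadjoint-orbit hypothesis, the action of $\G$ on $\GH_1\cong\C^n$ is an irreducible unitary representation $\sigma$ and the shift is covariant, $\pi(g)S_\alpha\pi(g)^*=\sum_\beta\sigma(g)_{\alpha\beta}S_\beta$ with $\pi(g)p_m\pi(g)^*=p_m$; averaging $[S_\alpha^*,S_\alpha]p_m$ over $\G$ and invoking Schur orthogonality for $\sigma$ gives $\int_\G\pi(g)([S_\alpha^*,S_\alpha]p_m)\pi(g)^*\,dg=\tfrac1n[S^*,S]p_m=\tfrac{\lambda_m}{n}p_m$, hence $\Tr([S_\alpha^*,S_\alpha]p_m)=\tfrac{\lambda_m n_m}{n}\sim\tfrac dn m^{d-1}$, and by Cauchy--Schwarz $\|[S_\alpha^*,S_\alpha]p_m\|_{HS}^2\ge\Tr([S_\alpha^*,S_\alpha]p_m)^2/\rank\ge c'm^{\,d-2}$.

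The remaining case $\alpha\ne\beta$ is the genuine obstacle: here $\Tr(A_m)=0$ (equivalently the HS inner product $\langle S_\alpha^{(m)},S_\beta^{(m)}\rangle$ vanishes by $\G$-covariance), so the soft argument above fails and one must compute $\|A_m\|_{HS}^2=\Tr\big([S_\beta^*,S_\alpha][S_\alpha^*,S_\beta]p_m\big)$ head-on. Expanding the product turns it into four traces of the form $\Tr(S_?^*S_?S_?^*S_?\,p_m)$, each of which Lemma \ref{Haarfirstrolemma} reduces to a Haar integral, i.e.\ to a matrix coefficient of one of the projections $p_{m'}$; doing the bookkeeping (equivalently, computing $\|R^{(m)}\|_{HS}^2=n_{m+1}$, $\|L^{(m)}\|_{HS}^2=n_m^2/n_{m-1}$ and the cross term for the block operators $R^{(m)}=\big((S_\alpha^{(m)})^*S_\beta^{(m)}\big)_{\alpha\beta}$ and $L^{(m)}=\big(S_\beta^{(m-1)}(S_\alpha^{(m-1)})^*\big)_{\alpha\beta}$ on $\GH_m\otimes\C^n$, where $R^{(m)}$ is a rank-$n_{m+1}$ projection) produces leading asymptotics $\|A_m\|_{HS}^2=(\text{positive constant})\cdot m^{\,d-2}+O(m^{\,d-3})$, again giving $\|A_m\|_{HS}^2\ge c'm^{\,d-2}$ and completing the proof. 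I expect this off-diagonal Haar computation to be the main technical work; everything else is either a soft reduction or a consequence of the sharp norm bound. Note in particular that the individual $[S_\alpha^*,S_\alpha]$ need \emph{not} be positive (unlike $[\tilde T_\alpha^*,\tilde T_\alpha]=H_\alpha^*H_\alpha$ and unlike $[S^*,S]$), so even in the diagonal case the naive estimate $\|A_m\|_{HS}^2\le\|A_m\|\cdot\Tr A_m$ does not give the lower bound, which is why the trace identity is used instead.
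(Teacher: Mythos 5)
Your sufficiency direction and your reduction of the converse to a Hilbert--Schmidt lower bound are correct, but they follow a genuinely different route from the paper, and one piece of your route is not actually carried out. The paper disposes of the whole theorem in a few lines: it invokes \cite[Thm.\ 4.3]{Arv8}, which says that $[S_\alpha^*,S_\beta]\in\Li^p$ for \emph{all} $\alpha,\beta$ if and only if the single positive operator $[S^*,S]=\sum_\alpha[S_\alpha^*,S_\alpha]$ is in $\Li^p$, and then uses Lemma \ref{squareofSlemma}, by which $[S^*,S]$ acts on $\GH_m$ ($m\ge1$) as the scalar $(n_{m+1}-n_m)/n_m\sim d/m$ with multiplicity $n_m\sim m^d$, so that $\Tr([S^*,S]^p)\sim\sum_m m^{d-p}$ is finite exactly when $p>d+1$. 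Your forward bound -- the decomposition $S_\gamma=\tilde T_\gamma|S|$, the Hankel factorization $[\tilde T_\alpha^*,\tilde T_\beta]=H_\beta^*H_\alpha$ over the Hardy space, the graded norm bound $\|[\tilde T_\alpha^*,\tilde T_\beta]p_m\|\le\mu_m$, and the rank count -- is a correct, self-contained replacement for the citation, and your diagonal-case converse via covariance, Schur orthogonality and Cauchy--Schwarz on the trace is also correct (the irreducibility of the $\G$-action on $\GH_1$ needed there is supplied by the paper's Peter--Weyl discussion in \S\ref{Harrelsec}).

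The genuine gap is the off-diagonal converse, which you yourself flag: for $\alpha\ne\beta$ you need $\|[S_\alpha^*,S_\beta]p_m\|_{HS}^2\ge c'\,m^{d-2}$, and you only outline the Haar-integral bookkeeping and \emph{assert} that the leading coefficient is a positive constant. Nothing in the proposal rules out extra cancellation for a particular orbit $\G/\K$ and a particular pair $(\alpha,\beta)$ -- after all, the trace of $[S_\alpha^*,S_\beta]p_m$ does vanish identically in this case, so positivity of the next invariant is exactly the point that needs proof, and it is precisely the work that the paper's citation of \cite[Thm.\ 4.3]{Arv8} is designed to avoid. Two remarks on how to repair this. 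First, if the theorem is read collectively (``all $\alpha,\beta$ are in $\Li^p$ iff $p>d+1$''), your own machinery already closes the argument without any off-diagonal computation: if every commutator were in $\Li^p$, then so would be the sum $[S^*,S]$, which by Lemma \ref{squareofSlemma} and Corollary \ref{commisposcor} is the explicit positive diagonal operator above and lies in $\Li^p$ only for $p>d+1$; you should say this, since it makes your proof complete in the same sense as the paper's. Second, if you insist on the per-pair off-diagonal statement (which goes beyond what the paper's quoted argument directly yields), you must actually perform the asymptotic evaluation of $\Tr\big([S_\beta^*,S_\alpha][S_\alpha^*,S_\beta]p_m\big)$ via Lemma \ref{Haarfirstrolemma}, or find a structural substitute; as it stands this step is a conjecture, not a proof.
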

\begin{proof}
Recall that Corollary \ref{commisposcor} says that the operator $[S^*,S]:=\sum^n_{\alpha=1}[S^*_\alpha,S_\alpha]$ is positive. 
We shall use the fact that $[S_\alpha^*,S_\beta]$ is in $\Li^p$ for all $\alpha,\beta\in\{1,\dots,n\}$ if and only if the operator $[S^*,S]$ is in $\Li^p$ (see \cite[Thm. 4.3]{Arv8} for a proof).

Since $\Tr(p_m)=\dim\GH_m$, for $m\geq 1$ we have from Lemma \ref{squareofSlemma} that
$$
\Tr([S^*,S]p_m)=\dim\GH_{m+1}-\dim\GH_m,
$$
and $\N_0\ni m\to \dim\GH_{m+1}-\dim\GH_m$ is a polynomial of degree $d-1$ (because $\N_0\ni m\to\dim\GH_m\in\N$ is a polynomial of degree $d$). So for the normalized trace $\phi_m(\cdot):=\Tr(\ \cdot\  p_m)/\Tr(p_m)$ we have
$$
\phi_m([S^*,S])=O(m^{-1}).
$$
Thus the largest (and only) eigenvalue of $[S^*,S]p_m$ grows as $O(m^{-1})$. The eigenvalue of $[S^*,S]^pp_m$ then grows as $O(m^{-p})$. So we have $\Tr([S^*,S]^p)\sim\sum_{m\in\N}(\dim\GH_m)/m^p<\infty$ if and only if 
$$
\Tr([S^*,S]^p)\sim\sum_{m\in\N}\frac{1}{m^{p-d}}<\infty,
$$
which is the case if and only if $p>d+1$.
\end{proof}

\subsection{$(d+1)$-isometries}

\subsubsection{Background}\label{disombacksec}

For the moment let $S=(S_1,\dots,S_n)$ be an arbitrary commutating tuple of opertors on a Hilbert space $\Hi$ and consider the map $\Phi_*:\Bi(\Hi)\to\Bi(\Hi)$ defined by $\Phi_*(X):=\sum^n_{\alpha=1}S_\alpha^*XS_\alpha$. For each $p\in\N_0$ we define 
$$
B_p(S):=(\id-\Phi_*)^p(\bone).
$$
With the convention ${m\choose p}:=0$ for $p>m$ we have \cite[Lemma 2.2]{GlRi1} 
\begin{equation}\label{PhimtoBp}
\Phi_*^m(\bone)
=\sum_{p\in\N_0}(-1)^p{m\choose p}B_p(S).
\end{equation}
\begin{dfn}[{\cite[\S2]{GlRi1}}]
Let $q\in\N$. A commuting operator tuple $S=(S_1,\dots,S_n)$ is a $q$-\textbf{isometry} if
$$
B_q(S)=0.
$$
A $q$-isometry $S$ is \textbf{strict} if $B_{q-1}(S)\ne 0$. 
\end{dfn}
Using the tautological relation $B_p(S)=B_{p-1}(S)-\Phi_*(B_{p-1}(S))$ we can equivalently say that a tuple $S$ is a $q$-isometry if the operator $B_{q-1}(S)$ is a fixed point of $\Phi_*$,
$$
\Phi_*(B_{q-1}(S))=B_{q-1}(S).
$$

By \eqref{PhimtoBp} (see also \cite[Thm. 3.1]{HoMa1}), $S$ is a $q$-isometry if and only if there exists a degree-$(q-1)$ polynomial $\chi_S(m)=C_{q-1}m^{q-1}+C_{q-2}m^{q-2}+\cdots+C_0$ with operator coefficients $C_p\in\Bi(\Hi)$ such that
$$
\chi_S(m)=\Phi_*^m(\bone),\qquad\forall m\in\N_0.
$$
A 1-isometry is what is usually called a \textbf{spherical isometry}.

\subsubsection{Result}



\begin{thm}\label{Cauchydualthm}
The $n$-tuple $S=(S_1,\dots,S_n)$ on $\GH_\N$ is a $(d+1)$-isometry. 
\end{thm}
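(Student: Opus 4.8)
The plan is to compute the iterates $\Phi_*^m(\bone)$ of the map $\Phi_*(X)=\sum_{\alpha=1}^n S_\alpha^*XS_\alpha$ explicitly on the graded pieces of $\GH_\N$, to observe that $m\mapsto\Phi_*^m(\bone)$ is a polynomial of degree $\le d$ with bounded operator coefficients, and then to invoke the characterisation of $q$-isometries recalled in \S\ref{disombacksec}.

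First, since each $S_\alpha$ maps $\GH_k$ into $\GH_{k+1}$, the map $\Phi_*$ preserves the $*$-algebra of grading-preserving operators, and for such an $X$ one has $\Phi_*(X)|_{\GH_k}=\sum_{\alpha=1}^n(S_\alpha|_{\GH_k})^*(X|_{\GH_{k+1}})(S_\alpha|_{\GH_k})$. If $X$ acts on $\GH_{k+1}$ as multiplication by a scalar $c$, then by Lemma \ref{squareofSlemma} we get $\Phi_*(X)|_{\GH_k}=c\sum_{\alpha=1}^n(S_\alpha|_{\GH_k})^*(S_\alpha|_{\GH_k})=c\,(n_{k+1}/n_k)\,\bone_{\GH_k}$, where $n_l:=\dim\GH_l$. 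Since $\bone$ acts as the scalar $1$ on every $\GH_k$, a straightforward induction on $m$ yields
\begin{equation*}
\Phi_*^m(\bone)\big|_{\GH_k}=\frac{n_{k+m}}{n_k}\,\bone_{\GH_k},\qquad k,m\in\N_0 .
\end{equation*}

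The key input is that, because $\M=\G/\K$ is a coadjoint orbit, the function $l\mapsto n_l=\dim H^0(\M;\Li^l)$ agrees with a single polynomial $\pi$ of degree $d$ on all of $\N_0$, not merely for $l\gg0$: by Borel--Weil--Bott one has $H^i(\M;\Li^l)=0$ for $i>0$ and $l\ge1$, while $H^i(\M;\Oi_\M)=0$ for $i>0$ by rationality of $\M$, so $n_l=\chi(\Li^l)=\pi(l)$ for every $l\ge 0$ (cf. \cite{An6}). Hence, for fixed $k$, $m\mapsto \Phi_*^m(\bone)|_{\GH_k}=\big(\pi(k+m)/n_k\big)\bone_{\GH_k}$ is a polynomial in $m$ of degree $\le d$; expanding $\pi(k+m)=\sum_{j=0}^d\frac{\pi^{(j)}(k)}{j!}m^j$ shows that $\Phi_*^m(\bone)=\sum_{j=0}^d m^j C_j$, where $C_j$ is the grading-preserving operator acting on $\GH_k$ by the scalar $\pi^{(j)}(k)/(j!\,n_k)$; since $\deg\pi^{(j)}\le d-j$ and $n_k\sim\pi(k)$ has degree $d$, each $C_j$ is bounded. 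The characterisation in \S\ref{disombacksec} then gives $B_{d+1}(S)=(\id-\Phi_*)^{d+1}(\bone)=0$ directly; equivalently, $B_{d+1}(S)|_{\GH_k}=\sum_{j=0}^{d+1}(-1)^j\binom{d+1}{j}(n_{k+j}/n_k)\,\bone_{\GH_k}=\frac{(-1)^{d+1}}{n_k}(\Delta^{d+1}\pi)(k)\,\bone_{\GH_k}=0$ since $\pi$ has degree $d$, and as $B_{d+1}(S)$ is block-diagonal with respect to the grading this shows it is the zero operator.

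I expect the only step with genuine mathematical content to be the exact polynomiality of $l\mapsto\dim\GH_l$ on all of $\N_0$ — this is precisely where the coadjoint-orbit hypothesis is used, a general projective subvariety of $\C\Pb^{n-1}$ yielding only eventual polynomiality and hence, on the low-degree graded pieces, a potentially nonzero $B_{d+1}(S)$. Everything else is the inductive identity for $\Phi_*^m(\bone)$ together with a finite-difference computation.
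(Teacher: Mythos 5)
Your proof is correct and is essentially the paper's own argument: compute $\Phi_*^r(\bone)$ on the graded pieces to get $\Phi_*^r(\bone)p_m=(n_{m+r}/n_m)p_m$, and conclude $B_{d+1}(S)=0$ because the $(d+1)$-st finite difference of the degree-$d$ polynomial $m\mapsto n_m$ vanishes. Your explicit justification (via Borel--Weil--Bott) that $m\mapsto n_m$ is exactly polynomial on all of $\N_0$, rather than only eventually, makes precise a point the paper leaves implicit in the coadjoint-orbit hypothesis.
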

\begin{proof}

Recall that by Lemma \ref{squareofSlemma}, $\sqrt{S^*S}$ is the central operator $\sum_mw_mp_m$ determined by the weight sequence $(w_m)_{m\in\N_0}$ given by
$$
w_m:=\sqrt{\frac{n_{m+1}}{n_m}}.
$$
Recall also that $n_m$ is a polynomial in $m\in\N_0$ of degree $d:=\dim_\C\M$. Motivated by the proof of \cite[Prop. 3.2]{BMN2}, \cite[Thm. 1]{AbLe1} we can deduce that $S$ is a $(d+1)$-isometry.

First recall that to check the $(d+1)$-isometric property we need to consider the powers $(\Phi^r_*(\bone))_{r\in\N_0}$ of the map $\Phi_*$ applied to the identity. We have
$$
\Phi^r_*(\bone)p_m=\frac{n_{m+r}}{n_m}p_m. 
$$
Since
$$
B_p(S)=\sum^p_{r=0}(-1)^r{p\choose r}\Phi^r_*(\bone),
$$
the tuple $S$ is a $q$-isometry iff
$$
0=\sum^q_{r=0}(-1)^r{q\choose r}\Phi^r_*(\bone).
$$
This is to say $0=\sum^q_{r=0}(-1)^r{q\choose r}\frac{n_{m+r}}{n_m}p_m$ for all $m\in\N_0$, which is equivalent to 
$$
0=\sum^q_{r=0}(-1)^r{q\choose r}n_{m+r},\qquad\forall m\in\N_0
$$
and holds iff $n_m$ is a polynomial in $m$ of degree $\leq q-1$.
\end{proof}

There are other properties of $S$ encoded in the operators $B_p(S)$. We expect that when $\GH_\N$ is a coadjoint orbit, as assumed here, $S$ is a complete hyperexpansion, i.e.
$$
B_p(S)\leq 0,\qquad\forall p\in\N,
$$
but we do not know. When we replace $S$ by the 1-isometry $T$ we have:
\begin{prop}
Let $S=|S|T$ be the polar decomposition of the shift compressed to $\GH_\N$. Then $T$ is completely hypercontractive, i.e.
$$
B_p(T)\geq 0,\qquad\forall p\in\N.
$$
\textbf{But $B_p(T)=0$ for all $p$ since $T$ is a 1-isometry!!}
\end{prop}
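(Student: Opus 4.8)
The plan is to observe that the statement, as phrased, is an immediate consequence of the fact that $T$ is a $1$-isometry, which is nothing but the spherical-isometry identity established around Lemma~\ref{squareofSlemma}.

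First I would recall that, by Lemma~\ref{squareofSlemma} and the discussion following it, the polar part $T$ of $S=|S|T$ satisfies $\sum_{\alpha=1}^n T_\alpha^*T_\alpha=\bone$, i.e. $\Phi_*(\bone)=\bone$ where $\Phi_*(X):=\sum_\alpha T_\alpha^*XT_\alpha$. Equivalently, in the notation of \S\ref{disombacksec}, $B_1(T)=(\id-\Phi_*)(\bone)=0$, so $T$ is a $1$-isometry (strict, since $B_0(T)=\bone\ne 0$ as long as $\GH_\N\ne 0$). This is precisely the content of the unitary equivalence of $T$ with the Hardy-type multiplication tuple on $H^0(\Sb,\omega)$, which is a spherical isometry because $C^0(\Sb)$ acts by a $*$-homomorphism.

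Next, using the tautological recursion $B_p(T)=B_{p-1}(T)-\Phi_*(B_{p-1}(T))$ from \S\ref{disombacksec}, an immediate induction on $p$ starting from $B_1(T)=0$ gives $B_p(T)=0$ for every $p\in\N$. In particular $B_p(T)\ge 0$ for all $p$, so $T$ is completely hypercontractive; by the same token $B_p(T)\le 0$ for all $p$, so $T$ is also a complete hyperexpansion, both conditions holding vacuously because all the higher operators $B_p(T)$ vanish identically. (One may also phrase this as: $T$ is a $q$-isometry for every $q\ge 1$.)

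The only ``obstacle'' is to notice that there is nothing to prove beyond the $1$-isometry property: the positivity inequalities $B_p(T)\ge 0$ are not saturated in any nontrivial way — they hold with equality. This stands in sharp contrast with the shift $S$ itself, whose higher operators $B_p(S)$ do \emph{not} vanish for $p\le d$ (only $B_{d+1}(S)=0$, by Theorem~\ref{Cauchydualthm}), and which we expect to be a genuine complete hyperexpansion; passing to the polar part $T$ collapses all of this structure.
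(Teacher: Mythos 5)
Your proof is correct, but it takes a genuinely different (and more elementary) route than the paper. The paper argues: $T$ is a $1$-isometry, hence by Athavale's theorem subnormal with spectrum in $\mathrm{cl}\,\B\subset\mathrm{cl}\,\B^n$, and then the complete hypercontractivity $B_p(T)\geq 0$ is quoted from an external result on subnormal spherical contractions ([ChCu1, Prop.~3.4], a citation that in fact does not even appear in the bibliography). You instead bypass subnormality entirely: from $\sum_\alpha T_\alpha^*T_\alpha=\bone$ you get $B_1(T)=0$, and the tautological recursion $B_p(T)=B_{p-1}(T)-\Psi(B_{p-1}(T))$ gives $B_p(T)=0$ for all $p\geq 1$ by induction, so the asserted positivity holds with equality and the proposition is vacuous — which is exactly what the author's own boldfaced remark in the statement concedes. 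Your argument is self-contained and sharper in this specific situation; the paper's route buys the genuinely nontrivial general fact that subnormal spherical contractions (not just isometries) are completely hypercontractive, which would matter if one replaced $T$ by a non-isometric tuple, but here it is overkill. One small notational point: you wrote $\Phi_*(X)=\sum_\alpha T_\alpha^*XT_\alpha$, whereas in the paper $\Phi_*$ is built from $S$ and the map built from $T$ is $\Psi$; since $B_p(T)$ is by definition formed from $T$'s own completely positive map this does not affect the argument, but you should keep the symbols consistent with the paper's conventions.
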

\begin{proof}
By definition $T$ is a 1-isometry, hence subnormal with spectrum in $\mathrm{cl}\B\subset\mathrm{cl}\B^n$. The result is now given by \cite[Prop. 3.4]{ChCu1}. 
\end{proof}
As a special case of Proposition \ref{zerotraceprop} later in the paper we have for each $p\in\N_0$ that
$$
\Tr(B_p(S)p_m)
=\sum^p_{r=0}(-1)^r{p\choose r}n_{m+r},
$$
and in particular
$$
\Tr(B_1(S)p_m)=n_{m+1}-n_m
$$
For $p\geq d+1$ we have, from the fact that $n_m$ is a polynomial of degree $d$, that
$$
\Tr(B_p(S))=0.
$$
Since $B_1(S)p_m$ is a scalar we obtain recursively from the relation $B_p(S)=(\id-\Phi_*)(B_{p-1}(S))$ that $B_p(S)p_m$ is a scalar also for $p\geq 1$, for each $m\in\N_0$. This gives another proof of the vanishing $B_p(S)=0$ for $p\geq d+1$.





\subsection{SOT-Toeplitz operators}

\subsubsection{The $L^\infty$ Toeplitz algebra $C^*(\breve{\varsigma}(L^\infty(\Sb)))$}
Again we consider the polar decomposition $S=|S|T$ of the shift. By Lemma \ref{formulaforSmodulus} the tuples $S$ and $T$ have the same invariant graded subspaces. The fact that $T$ is a spherical isometry says that the completely positive map $\Psi:\Bi(\GH_\N)\to\Bi(\GH_\N)$ defined by
$$
\Psi(X):=\sum^n_{\alpha=1}T_\alpha^*XT_\alpha,\qquad\forall X\in\Bi(\GH_\N)
$$
is unital.
Let $\Bi(\GH_\N)^\Psi$ be the fixed-point set of the map $\Psi$. We say that an operator in $\Bi(\GH_\N)^\Psi$, i.e. an operator $X$ with $\Psi(X)=X$, is $\Psi$-\textbf{harmonic}. And if $X\in\Bi(\GH_\N)$ satisfies $\Psi(X)\leq X$ then we say that $X$ is \textbf{$\Psi$-superharmonic}. If $X$ is $\Psi$-superharmonic and $\mathrm{SOT-}\lim_{p\to\infty}\Psi^p(X)=0$ then $X$ is called a $\Psi$-\textbf{potential} (or \textbf{pure} $\Psi$-superharmonic). 

By \cite[Thm. 3.3]{GaKu1} or \cite[Thm. 3.1]{Pop7}, if $X$ is $\Psi$-superharmonic then it has a \textbf{Riesz decomposition} into the sum of a unique $\Psi$-harmonic operator $X_1$ and a unique $\Psi$-potential $X_2$,
$$
X=X_1+X_2.
$$

Consider the von Neumann algebra generated by the operators in $\Bi(\GH_\N)^\Psi$ (the $L^\infty$ \textbf{Toeplitz algebra})
$$
\Li:=C^*(\Bi(\GH_\N)^\Psi). 
$$
The following lemma can then be deduced directly from \cite[Thm. 1.2]{Prun1}:
\begin{Lemma}\label{LinftyToeplwhole}
There is a short exact sequence of $C^*$-algebras
\begin{equation}\label{vNSES}
0\to\Si\Ci(\Li)\to\Li\overset{\varsigma}{\to}L^\infty(\Sb)\to 0
\end{equation}
with a unital completely positive splitting 
$$
\breve{\varsigma}:L^\infty(\Sb)\to\Li
$$
whose image equals $\Bi(\GH_\N)^\Psi$. Here $\Si\Ci(\Li)=\Ker\varsigma$ is the semicommutator ideal in $\Li$, i.e. the two-sided ideal generated by the operators $[\breve{\varsigma}(f),\breve{\varsigma}(g)):=\breve{\varsigma}(f)\breve{\varsigma}(g)-\breve{\varsigma}(fg)$ with $f,g\in L^\infty(\M)$. So we have a direct sum of vector spaces
$$
\Li=\breve{\varsigma}(L^\infty(\Sb))+\Si\Ci(\Li)=\Bi(\GH_\N)^\Psi+\Si\Ci(\Li).
$$
Endowed with the Choi--Effros multiplication the operator system $\Bi(\GH_\N)^\Psi$ becomes a von Neumann algebra isomorphic via $\breve{\varsigma}$ to $L^\infty(\Sb)$,
$$
\mathrm{SOT-}\lim_{m\to\infty}\Psi^m(\breve{\varsigma}(f)\breve{\varsigma}(g))=\breve{\varsigma}(fg),\qquad\forall f,g\in L^\infty(\Sb).
$$
For every $X\in\Li$ there is a unique $\Psi$-harmonic operator $\breve{\varsigma}(f_X)\in\Bi(\GH_\N)^\Psi$ such that
\begin{equation}\label{SOTToeplinLi}
\mathrm{SOT-}\lim_{m\to\infty}\Psi^m(X)=\breve{\varsigma}(f_X)
\end{equation}
and the map $\varsigma$ can be described as
$$
\varsigma(X)=f_X.
$$
If $U:\GH_\N\to H^0(\Sb,\omega)$ is the unitary which intertwines the multiplication tuple $T$ on the Hardy space $H^0(\Sb,\omega)$ with the weighted shift $\tilde{T}=U^{-1}TU$ on $\GH_\N$ as in the proof of Lemma \ref{squareofSlemma} then 
$$
\breve{\varsigma}(f)=U^{-1}(Pf|_{H^0(\Sb,\omega)})U,\qquad \forall f\in L^\infty(\Sb)
$$
where $P$ is the orthogonal projection of $L^2(\Sb,\omega)$ onto $H^0(\Sb,\omega)$ and we identify $L^\infty(\Sb)$ in its $*$-representation on $L^2(\Sb,\omega)$. 
\end{Lemma}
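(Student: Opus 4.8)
The plan is to deduce the lemma from \cite[Thm. 1.2]{Prun1} by putting the map $\Psi$ into the framework of that reference. First I would use the unitary $U:\GH_\N\to H^0(\Sb,\omega)$ produced in the proof of Lemma \ref{squareofSlemma} to transport $\Psi$ to the map $X\mapsto\sum_{\alpha=1}^nT_\alpha^*XT_\alpha$ on $\Bi(H^0(\Sb,\omega))$, where $T_\alpha$ is the restriction to $H^0(\Sb,\omega)$ of multiplication by $Z_\alpha$ on $L^2(\Sb,\omega)$. By \eqref{Tisoneisom} the tuple $T=(T_1,\dots,T_n)$ is a spherical isometry, hence by \cite[Prop. 2]{Atha3} subnormal, and its minimal normal extension is the tuple $N=(M_{Z_1},\dots,M_{Z_n})$ of coordinate multiplications on $L^2(\Sb,\omega)$, with joint (normal) spectrum $\Sb\subset\Sb^{2n-1}$; since the constant function $\bone$ is cyclic for $L^\infty(\Sb)$ acting on $L^2(\Sb,\omega)$, the scalar spectral measure of $N$ is the Haar state $\omega$, which has full support by Lemma \ref{ergonfirstrow}. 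Thus the ``$L^\infty$ of the spectrum'' that enters \cite[Thm. 1.2]{Prun1} is precisely $L^\infty(\Sb)=L^\infty(\Sb,\omega)$ represented on $L^2(\Sb,\omega)$, and the generalized Toeplitz operators there are $T_f:=PM_f|_{H^0(\Sb,\omega)}$, $f\in L^\infty(\Sb)$, with $P$ the orthogonal projection onto $H^0(\Sb,\omega)$.

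With these identifications in place, \cite[Thm. 1.2]{Prun1} supplies all the assertions at once: the fixed-point operator system of $X\mapsto\sum_\alpha T_\alpha^*XT_\alpha$ is exactly $\{T_f:f\in L^\infty(\Sb)\}$; the assignment $f\mapsto T_f$ is unital completely positive and, being split by the $*$-homomorphism $\varsigma$, a complete order embedding, so that under $U$ its image is $\Bi(\GH_\N)^\Psi$ and $\breve{\varsigma}(f)=U^{-1}(Pf|_{H^0(\Sb,\omega)})U$; the von Neumann algebra $\Li$ it generates fits in the short exact sequence \eqref{vNSES} with semicommutator ideal $\Si\Ci(\Li)=\Ker\varsigma$ and completely positive splitting $\breve{\varsigma}$; the iterates $\Psi^m$ converge strongly on all of $\Li$ to the unital completely positive projection $E:=\breve{\varsigma}\circ\varsigma$ onto $\Bi(\GH_\N)^\Psi$, whence for $X\in\Li$ the element $\varsigma(X)$ is the unique $f_X\in L^\infty(\Sb)$ with $\mathrm{SOT-}\lim_m\Psi^m(X)=\breve{\varsigma}(f_X)$; and the Choi--Effros product $a\bullet b:=E(ab)$ makes $\Bi(\GH_\N)^\Psi$ a von Neumann algebra. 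The identity $\breve{\varsigma}(f)\bullet\breve{\varsigma}(g)=\breve{\varsigma}(fg)$ is then immediate from $\breve{\varsigma}(f)\breve{\varsigma}(g)-\breve{\varsigma}(fg)\in\Si\Ci(\Li)=\Ker\varsigma$ together with $E=\breve{\varsigma}\circ\varsigma$, and the inclusion $\Si\Ci(\Li)\subseteq\Ker\varsigma$ is clear since $\varsigma$ annihilates every semicommutator, the reverse inclusion being part of the cited theorem. Transporting everything back through $U$ gives Lemma \ref{LinftyToeplwhole}.

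The one place where I would do something beyond transcription is confirming that the symbol algebra in \cite[Thm. 1.2]{Prun1} really is all of $L^\infty(\Sb)$, i.e.\ that $\breve{\varsigma}$ is injective. For this I would note that for $\mathbf j,\mathbf k\in\F_n^+$ one has $\bra Z_{\mathbf k}\,|\,T_fZ_{\mathbf j}\ket_{L^2(\Sb,\omega)}=\omega(Z_{\mathbf k}^*fZ_{\mathbf j})=\omega(f\,Z_{\mathbf j}Z_{\mathbf k}^*)$, using commutativity of $C^0(\Sb)$; since the span of the products $Z_{\mathbf j}Z_{\mathbf k}^*$ is dense in $C^0(\Sb)$ (as recalled after Lemma \ref{Haarfirstrolemma}) and $\omega$ is a faithful state on $C^0(\Sb)$ — being the restriction to a $C^*$-subalgebra of the faithful Haar state of $C^0(\G)$ — the vanishing $T_f=0$ forces $\omega(|f|^2)=0$ and hence $f=0$.

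The main obstacle is not computational but one of matching hypotheses: one must be sure that \cite[Thm. 1.2]{Prun1} is stated for the fixed-point space of a map of the form $X\mapsto\sum_\alpha T_\alpha^*XT_\alpha$ attached to a subnormal spherical isometry with minimal normal extension of the above type, and — this is what the last two displayed formulas of the statement rely on — that the cited result delivers convergence of the iterates $\Psi^m$ themselves on all of $\Li$, to the unital completely positive projection onto $\Bi(\GH_\N)^\Psi$, and not merely convergence of their Cesàro averages or along a subnet. If only the weaker ergodic statement were available one would have to supplement the argument with the fact that $X-\breve{\varsigma}(\varsigma(X))$ lies in $\Si\Ci(\Li)$ for $X\in\Li$ and that $\Psi^m$ kills $\Si\Ci(\Li)$ strongly; granting the stronger statement of \cite[Thm. 1.2]{Prun1}, the lemma is that theorem transcribed through the dictionary of Lemma \ref{squareofSlemma}.
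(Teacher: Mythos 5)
Your proposal is correct and follows essentially the same route as the paper, whose entire proof is the observation that the lemma "can be deduced directly from [Prun1, Thm. 1.2]" after the dictionary provided by the unitary $U$ of Lemma \ref{squareofSlemma}; your additional checks (that the minimal normal extension is the coordinate multiplication tuple on $L^2(\Sb,\omega)$, that cyclicity of $\bone$ makes the symbol algebra all of $L^\infty(\Sb)$, and that faithfulness of the Haar state gives injectivity of $\breve{\varsigma}$) are exactly the hypothesis-matching the paper leaves implicit. The convergence caveat you raise is handled the way you suggest (the $\Psi$-iterates kill the semicommutator part strongly), so no genuine gap remains.
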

By analogy with \cite{BaHa1, Fein1} we say that an operator $X\in\Bi(\GH_\N)$ is \textbf{SOT-asymptotic Toeplitz} if the limit $\mathrm{SOT-}\lim_{m\to\infty}\Psi^m(X)$ exists. In this case $\mathrm{SOT-}\lim_{m\to\infty}\Psi^m(X)$ is fixed by $\Psi$ so it must be of the form $\breve{\varsigma}(f_X)$ for some $f_X\in L^\infty(\Sb)$. Then $f_X$ is the \textbf{symbol} of $X$.
Lemma \ref{LinftyToeplwhole} says that every element of $\Li$ is SOT-asymptotic Toeplitz. 
\begin{prop}\label{SOTvsprop}
The SOT-asymptotic Toeplitz symbol map $\varsigma^{\rm SOT}$ coincides with $\varsigma$ when restricted to $\Li$. 
\end{prop}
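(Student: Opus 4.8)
The plan is to match, term by term, the construction of the SOT-asymptotic Toeplitz symbol against the description of $\varsigma$ provided by Lemma \ref{LinftyToeplwhole}. First I would note that the proposition even makes sense: Lemma \ref{LinftyToeplwhole} asserts that for every $X\in\Li$ the limit $\mathrm{SOT-}\lim_{m\to\infty}\Psi^m(X)$ exists, so $\Li$ lies in the domain of $\varsigma^{\rm SOT}$, and it remains to prove $\varsigma^{\rm SOT}(X)=\varsigma(X)$ for each $X\in\Li$.

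Fix $X\in\Li$ and set $Y:=\mathrm{SOT-}\lim_{m\to\infty}\Psi^m(X)$. Since $\Psi$ is unital and completely positive, the sequence $(\Psi^m(X))_{m}$ is norm-bounded, and $\Psi(Z)=\sum_{\alpha=1}^nT_\alpha^*ZT_\alpha$ is SOT-continuous on norm-bounded sets because each $T_\alpha$ is bounded; hence $\Psi(Y)=\mathrm{SOT-}\lim_{m\to\infty}\Psi^{m+1}(X)=Y$, i.e. $Y$ is $\Psi$-harmonic. By Lemma \ref{LinftyToeplwhole} the $\Psi$-harmonic operators are exactly $\breve{\varsigma}(L^\infty(\Sb))$, and $\breve{\varsigma}$ is injective since $\varsigma\circ\breve{\varsigma}=\id$ on $L^\infty(\Sb)$; therefore there is a unique $f\in L^\infty(\Sb)$ with $Y=\breve{\varsigma}(f)$, and by definition $\varsigma^{\rm SOT}(X)=f$.

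On the other hand, Lemma \ref{LinftyToeplwhole} computes the very same limit: it produces a (unique $\Psi$-harmonic) operator $\breve{\varsigma}(f_X)$ with $Y=\mathrm{SOT-}\lim_{m\to\infty}\Psi^m(X)=\breve{\varsigma}(f_X)$ and states $\varsigma(X)=f_X$. Comparing $Y=\breve{\varsigma}(f)=\breve{\varsigma}(f_X)$ and using the injectivity of $\breve{\varsigma}$ yields $f=f_X$, that is $\varsigma^{\rm SOT}(X)=\varsigma(X)$, as claimed.

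I do not anticipate a real obstacle: the argument is essentially an unwinding of definitions, and the only points deserving a line of justification are the SOT-continuity of $\Psi$ on bounded sets (so that $Y$ is honestly $\Psi$-harmonic and thus lands in $\breve{\varsigma}(L^\infty(\Sb))$ rather than merely in its SOT-closure) and the injectivity of the section $\breve{\varsigma}$, both of which are immediate. If desired, the harmonicity discussion can be compressed by simply quoting from Lemma \ref{LinftyToeplwhole} that $\mathrm{SOT-}\lim_{m\to\infty}\Psi^m(X)=\breve{\varsigma}(\varsigma(X))$ for $X\in\Li$, which is exactly the identity that the definition of $\varsigma^{\rm SOT}$ demands.
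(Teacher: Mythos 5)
Your proof is correct and follows essentially the same route as the paper: both arguments are a direct unwinding of Lemma \ref{LinftyToeplwhole}, which already states that for $X\in\Li$ the limit $\mathrm{SOT-}\lim_{m\to\infty}\Psi^m(X)$ equals $\breve{\varsigma}(\varsigma(X))$ (the paper phrases this via the decomposition $\Li=\breve{\varsigma}(L^\infty(\Sb))+\Ker\varsigma$ and the fact that the SOT-limit kills $\Ker\varsigma$). Your extra checks of harmonicity of the limit and injectivity of $\breve{\varsigma}$ are harmless fill-ins of the same argument.
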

\begin{proof}
We have $\Li=\breve{\varsigma}(L^\infty(\Sb))+\Ker\varsigma$ and the conditional expectation onto $\breve{\varsigma}(L^\infty(\Sb))$ just kills $\Ker\varsigma$. Thus $\Ker\varsigma$ is the set of elements $C$ of $\Li$ with $\varsigma^{\rm SOT}(C)=0$.
\end{proof}
\begin{cor}
Suppose that $\breve{\varsigma}(f)$ is a Toeplitz operator with SOT-Toeplitz symbol zero,
$$
\mathrm{SOT-}\lim_{m\to\infty}\Psi^m(\breve{\varsigma}(f))=0.
$$
Then $f=0$. 
\end{cor}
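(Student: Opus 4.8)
The statement is an immediate consequence of Lemma~\ref{LinftyToeplwhole}, and the plan is simply to unwind the definitions. The key observation is that a ``Toeplitz operator'' $\breve{\varsigma}(f)$ lies, by construction, in the image of $\breve{\varsigma}$, which by Lemma~\ref{LinftyToeplwhole} equals the fixed-point set $\Bi(\GH_\N)^\Psi$ of the unital completely positive map $\Psi$. Hence $\Psi(\breve{\varsigma}(f))=\breve{\varsigma}(f)$, and therefore by induction $\Psi^m(\breve{\varsigma}(f))=\breve{\varsigma}(f)$ for every $m\in\N_0$.

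First I would record that, because the sequence $(\Psi^m(\breve{\varsigma}(f)))_{m\in\N_0}$ is the constant sequence equal to $\breve{\varsigma}(f)$, its SOT-limit exists trivially and equals $\breve{\varsigma}(f)$ itself. Thus the hypothesis $\mathrm{SOT-}\lim_{m\to\infty}\Psi^m(\breve{\varsigma}(f))=0$ forces $\breve{\varsigma}(f)=0$ as an operator on $\GH_\N$.

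Next I would apply the symbol map $\varsigma$. Since $\breve{\varsigma}$ is a splitting of the short exact sequence \eqref{vNSES}, one has $\varsigma\circ\breve{\varsigma}=\id_{L^\infty(\Sb)}$, so $f=\varsigma(\breve{\varsigma}(f))=\varsigma(0)=0$. This completes the argument. (Equivalently, one may invoke Proposition~\ref{SOTvsprop}: the SOT-asymptotic Toeplitz symbol of $\breve{\varsigma}(f)$ coincides with $\varsigma(\breve{\varsigma}(f))=f$, and the hypothesis says this symbol is $0$.)

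There is no genuine obstacle here; the only point worth flagging is the conceptual one that $\Psi$-harmonicity of $\breve{\varsigma}(f)$ makes the asymptotic-Toeplitz limit reduce to the operator itself, so that ``SOT-Toeplitz symbol zero'' is already equivalent to ``the operator is zero'' for operators in $\Bi(\GH_\N)^\Psi$, and then injectivity of $\breve{\varsigma}$ transfers this to $f$.
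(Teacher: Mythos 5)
Your argument is correct and is essentially the paper's: the corollary is an immediate consequence of Lemma \ref{LinftyToeplwhole} (the image of $\breve{\varsigma}$ is exactly the $\Psi$-fixed-point set, so the SOT limit of the constant sequence $\Psi^m(\breve{\varsigma}(f))$ is $\breve{\varsigma}(f)$ itself) together with $\varsigma\circ\breve{\varsigma}=\id$, which is precisely the content packaged in Proposition \ref{SOTvsprop} that the paper relies on. No gaps.
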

Note that $\Ker\varsigma^{\rm SOT}$ is not contained in $\Li$ however, and in particular there are SOT-asymptotic Toeplitz operators on $\GH_\N$ which do not belong to $\Li$. 

\begin{prop}\label{LiBeurlingprop}
Let $\Mi_\GH$ the WOT-closed algebra generated by $\bone,S_1,\dots,S_n$ (this is the multiplier algebra of the Hilbert function space $\GH_\N$) and let $\Mi_\GH\Mi_\GH^*$ be the set of operators of the form $\xi\eta^*$ with $\xi,\eta\in\Mi_\GH$. Then  
\begin{equation}\label{factorablevssuperharm}
\overline{\operatorname{span}}^{\rm WOT}_\C\Mi_\GH\Mi_\GH^*=\operatorname{span}_\C\{X\in\Bi(\GH_\N)|\ X\geq 0,\ \Phi(X)\leq X\}
\end{equation}
and this is a von Neumann algebra equal to $\Li$,
\begin{equation}\label{LiBeurling}
\Li=\overline{\operatorname{span}}^{\rm WOT}_\C\Mi_\GH\Mi_\GH^*.
\end{equation}
\end{prop}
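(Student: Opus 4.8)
The plan is to establish the middle identity \eqref{factorablevssuperharm} by a two‑sided Beurling‑type argument, and then to identify the common object with $\Li$ using Lemma~\ref{LinftyToeplwhole}; the identification simultaneously exhibits the von Neumann algebra structure.

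For one inclusion in \eqref{factorablevssuperharm}, polarization $\xi\eta^*=\tfrac14\sum_{k=0}^3i^k(\xi+i^k\eta)(\xi+i^k\eta)^*$ reduces matters to showing $\zeta\zeta^*\in\{X\ge0:\Phi(X)\le X\}$ for each $\zeta\in\Mi_\GH$. Every $\zeta\in\Mi_\GH$ commutes with $S_1,\dots,S_n$ (true on the generators, preserved under WOT‑limits), so the relation $\sum_\alpha S_\alpha S_\alpha^*=\bone-p_0$ gives
$$
\Phi(\zeta\zeta^*)=\sum_{\alpha=1}^nS_\alpha\zeta\zeta^*S_\alpha^*=\zeta\Bigl(\sum_{\alpha=1}^nS_\alpha S_\alpha^*\Bigr)\zeta^*=\zeta\zeta^*-\zeta p_0\zeta^*\le\zeta\zeta^*,
$$
so $\zeta\zeta^*$ is positive and $\Phi$‑superharmonic, whence $\Mi_\GH\Mi_\GH^*\subseteq\operatorname{span}_\C\{X\ge0:\Phi(X)\le X\}$. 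Conversely, given $X\ge0$ with $\Phi(X)\le X$, I would invoke the operator‑valued Beurling factorization available on the complete Nevanlinna--Pick space $\GH_\N$ (the mechanism behind \eqref{introBeurl}, cf.\ \cite[Thm.\ 4.3]{GRS2}, \cite[Thm.\ 6.1]{BhSa1}): after normalizing $\|X\|\le1$ there is a column multiplier $\Theta$ of $\GH_\N$ with values in $\GH_\N\otimes\Ki$, $\|M_\Theta\|\le1$, and $X=M_\Theta M_\Theta^*=\sum_jM_{\theta_j}M_{\theta_j}^*$ with $\theta_j\in\Mi_\GH$ and SOT‑convergent (bounded increasing) partial sums, so $X\in\overline{\operatorname{span}}^{\rm WOT}_\C\Mi_\GH\Mi_\GH^*$. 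These two inclusions give \eqref{factorablevssuperharm} up to a WOT‑closure on the right, which is removed using the Riesz representation $X=\sum_{m}\Phi^m(X-\Phi(X))$ of a positive $\Phi$‑superharmonic $X$ (here $X-\Phi(X)=\zeta p_0\zeta^*\ge0$ in the factored case) together with the description of $\Li$ established next.

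To identify $\Mi:=\overline{\operatorname{span}}^{\rm WOT}_\C\Mi_\GH\Mi_\GH^*$ with $\Li$: for $\Mi\subseteq\Li$, Lemma~\ref{squareofSlemma} gives $S_\alpha=\tilde T_\alpha|S|$ with $\tilde T_\alpha=\breve{\varsigma}(Z_\alpha)\in\Bi(\GH_\N)^\Psi\subseteq\Li$ and $|S|=\sum_mw_mp_m$; since $p_0=\bone-\sum_\alpha S_\alpha S_\alpha^*$ and $p_m=\Phi^m(p_0)$ lie in $\Li$ (in fact in $\Si\Ci(\Li)$), so do $|S|$ and $|S|^{-1}$, hence $\Mi_\GH\subseteq\Li$ and, $\Li$ being an algebra, $\Mi\subseteq\Li$. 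For $\Li\subseteq\Mi$, by Lemma~\ref{LinftyToeplwhole} $\Li$ is generated by $\breve{\varsigma}(L^\infty(\Sb))$; its natural generators $\tilde T_\alpha=S_\alpha|S|^{-1}$, $\tilde T_\alpha^*$ and the semicommutators $[\breve{\varsigma}(Z_\alpha),\breve{\varsigma}(Z_\beta^*))$ already lie in $\Mi$ (using $|S|^{-1}=\sum_mw_m^{-1}p_m\in\Mi$, since $p_m=\Phi^m(p_0)\in\operatorname{span}_\C\Mi_\GH\Mi_\GH^*$ and $\Phi$ preserves that span), so once $\Mi$ is known to be multiplicatively closed it is a WOT‑closed unital $*$‑algebra containing the generators of $\Li$, giving $\Li\subseteq\Mi$ and hence $\Li=\Mi$ a von Neumann algebra.

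The main obstacle is to prove that $\Mi$ is closed under multiplication. Via $(\xi_1\eta_1^*)(\xi_2\eta_2^*)=\xi_1(\eta_1^*\xi_2)\eta_2^*$ and WOT‑continuity of $X\mapsto\xi_1X\eta_2^*$, this reduces to the inclusion $\Mi_\GH^*\Mi_\GH\subseteq\Mi$; for that one writes $S_\alpha^*S_\beta=S_\beta S_\alpha^*+[S_\alpha^*,S_\beta]$ with $S_\beta S_\alpha^*\in\Mi_\GH\Mi_\GH^*\subseteq\Mi$ and $[S_\alpha^*,S_\beta]$ a grading‑preserving compact operator (from \cite{An6}), and uses that $\Mi$ contains $\bigoplus_m\Bi(\GH_m)$ (the matrix units $S_\mathbf{j}p_0S_\mathbf{k}^*$ lie in $\operatorname{span}_\C\Mi_\GH\Mi_\GH^*$) and absorbs left and right multiplication by the $S_\gamma,S_\gamma^*$ on such operators. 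This last point is the technical heart of the argument and is where the special geometry of $\M=\G/\K$ — through the compactness (indeed Schatten‑class membership) of the commutators $[S_\alpha^*,S_\beta]$ — enters.
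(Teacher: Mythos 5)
Your treatment of \eqref{factorablevssuperharm} follows essentially the same mechanism as the paper: the direct computation $\Phi(\zeta\zeta^*)=\zeta\zeta^*-\zeta p_0\zeta^*$ for one inclusion, and the factorization of a positive $\Phi$-superharmonic operator as $M_\Theta M_\Theta^*$ (the Arveson--Popescu ``factorable $=$ superharmonic'' theorem, which is what the paper cites via \cite[Prop.\ 1.6]{Arv7c}, \cite[Cor.\ 3.10]{Pop7}; your Nevanlinna--Pick references point at the same circle of results) for the other. For \eqref{LiBeurling} you take a genuinely different route: the paper deduces membership and multiplicativity abstractly from Lemma \ref{LinftyToeplwhole} (commutativity of $\Li/\Ker\varsigma$ gives normal ordering modulo $\Ker\varsigma$, and $\Mi_\GH^*\Mi_\GH=\Bi(\GH_\N)^\Psi$), whereas you prove multiplicativity of $\Mi:=\overline{\operatorname{span}}^{\rm WOT}_\C\Mi_\GH\Mi_\GH^*$ by hand, using compactness of $[S_\alpha^*,S_\beta]$, the matrix units $S_{\mathbf j}p_0S_{\mathbf k}^*$, and separate WOT-continuity of multiplication, and then compare generators. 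That computational route is workable and arguably more transparent than the paper's, and it isolates where the geometry of $\G/\K$ enters (through Lemma \ref{squareofSlemma} and the compact commutators).

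Three points need repair, however. First, your argument that $\Mi\subseteq\Li$ is circular as written: you justify $p_0,p_m\in\Li$ by $p_0=\bone-\sum_\alpha S_\alpha S_\alpha^*$ and $p_m=\Phi^m(p_0)$, but both formulas presuppose $S_\alpha\in\Li$, which is exactly what you are trying to establish at that stage. The fix is to produce $p_0$ intrinsically, e.g.\ as the spectral projection at the isolated eigenvalue $1$ of $\bone-\sum_\alpha\tilde T_\alpha\tilde T_\alpha^*\in\Si\Ci(\Li)$, and then $p_m$ as a positive multiple of $\sum_{|\mathbf j|=m}\tilde T_{\mathbf j}\,p_0\,\tilde T_{\mathbf j}^*$, all built from $\tilde T_\alpha=\breve{\varsigma}(Z_\alpha)\in\Li$; after that $|S|$, $|S|^{-1}$ and $S_\alpha=|S|\tilde T_\alpha$ (note the order: $|S|$ does not commute with $\tilde T_\alpha$) lie in $\Li$ as you intend. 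Second, for $\Li\subseteq\Mi$ it is not enough to place $\tilde T_\alpha$, $\tilde T_\alpha^*$ and a few semicommutators in $\Mi$: by definition $\Li$ is the von Neumann algebra generated by all of $\breve{\varsigma}(L^\infty(\Sb))=\Bi(\GH_\N)^\Psi$, and since $\breve{\varsigma}$ is not multiplicative these elements are not algebraically generated by the $\tilde T_\alpha$; you must argue separately that $\Bi(\GH_\N)^\Psi\subseteq\Mi$, e.g.\ via $\breve{\varsigma}(f)=\mathrm{SOT}\text{-}\lim_m\Psi^m(X)$ with $X$ a polynomial in $\tilde T,\tilde T^*$ and normality of this limit inside the von Neumann algebra $\Mi$. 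Third, the removal of the WOT-closure on the right-hand side of \eqref{factorablevssuperharm} is asserted but not proved: the Riesz decomposition applies to operators already known to be superharmonic, and a positive WOT-limit of combinations of factorable operators is not automatically superharmonic, so this step needs an actual argument (the paper's own proof is equally thin exactly here, so this is a gap you inherit rather than create, but it should not be waved away).
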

\begin{proof}
Since $\Phi$ is pure we have (by \cite[Prop. 1.6]{Arv7c} or \cite[Cor. 3.10]{Pop7}) that an operator $X\in\Bi(\GH_\N)$ satisfies $X\geq 0$ and $\Phi(X)\leq X$ if and only if $X$ is ``factorable" in the sense that 
$$
X=LL^*
$$
with some $\Ai$-module map $L$ from $\GH_\N\otimes\GM$ and $\GH_\N$, for some separable Hilbert space $\GM$. Suppose then that $X$ is factorable like this. The fact that $L$ is a module map means precisely that if $(e_j)_{j\in\J}$ is an orthonormal basis for $\GM$ then the vectors
$$
\xi_j:=L(\bone\otimes e_j),\qquad\forall j\in\J
$$ 
belong to $\Mi_\GH$. Moreover, we have
\begin{equation}\label{BeurlingX}
X=\mathrm{SOT-}\sum_{j\in\J}\xi_j\xi_j^*
\end{equation}
so that $X$ belongs to $\overline{\operatorname{span}}^{\rm WOT}_\C\Mi_\GH\Mi_\GH^*$. Conversely, for $X\in\overline{\operatorname{span}}^{\rm WOT}_\C\Mi_\GH\Mi_\GH^*$ with $X\geq 0$ we have an expansion \eqref{BeurlingX} and this gives $\Phi(X)\leq X$ since $\Phi(\bone)\leq\bone$ gives
$$
\Phi\Big(\sum_{j\in\J}\xi_j\xi_j^*\Big)=\sum_{\alpha=1}^n\sum_{j\in\J}S_\alpha\xi_j\xi_j^*S_\alpha^*=\sum_{j\in\J}\xi_j\Phi(\bone)\xi_j^*\leq\sum_{j\in\J}\xi_j\xi_j^*.
$$
So Equality \eqref{factorablevssuperharm} holds.

We next prove that \eqref{LiBeurling} holds. The quotient $\Li/\Ker\varsigma$ is isomorphic to $L^\infty(\Sb)$, which is commutative. Therefore every element of $\Li$ can be normally ordered modulo $\Ker\varsigma$, i.e. every element of $\Li$ belongs to $\overline{\operatorname{span}}^{\rm WOT}_\C\Mi_\GH\Mi_\GH^*$ up to some term in $\Ker\varsigma$.  Now observe that the set $\Mi_\GH^*\Mi_\GH$ of antinormally ordered products of multipliers is precisely the set of fixed points under $\Psi$, since  $\Psi(\bone)=\bone$. 
Therefore 
$$
\Mi_\GH^*\Mi_\GH=\Bi(\GH_\N)^\Psi=\Ran\breve{\varsigma}=(\Ker\varsigma)^\perp,
$$
and so $\Ker\varsigma$ is contained in $\overline{\operatorname{span}}^{\rm WOT}_\C\Mi_\GH\Mi_\GH^*$. Therefore $\overline{\operatorname{span}}^{\rm WOT}_\C\Mi_\GH\Mi_\GH^*$ is an algebra, and thus a von Neumann algebra which contains $\Bi(\GH_\N)^\Psi$, whence it must equal $\Li=C^*(\Bi(\GH_\N)^\Psi)$.
\end{proof}

\begin{Remark}[Uniform Toeplitz operators]
Again by analogy with \cite{Fein1} we say that an operator $X\in\Bi(\GH_\N)$ is \textbf{uniform asymptotic Toeplitz} if the limit $\lim_{m\to\infty}\Psi^m(X)$ exists in the norm topology on $\Bi(\GH_\N)$. As a generalization of pre-existing proofs in the literature (cf. \cite[Lemma 3.1]{CuLe1}) we can deduce that every compact operator $K$ is uniform asymptotic Toeplitz with symbol
$$
f_K=0.
$$
First observe that, since the polynomials are dense in $\GH_\N$ and since every compact operator is a norm-limit of finite-rank operators, it sufficies to show that
$$
\lim_{m\to\infty}\|\Psi^m(|\xi\ket\bra\eta|)\|=0.
$$
for any rank-1 operator $K=|\xi\ket\bra\eta|$ where $\xi,\eta\in\Ai\subset\GH_\N$ are polynomials. Next, $\Psi^m(K)$ is a finite sum of operators of the form $T_\mathbf{k}^*KT_\mathbf{k}$ for multi-indices $\mathbf{k}\in\F_n^+$ with $|\mathbf{k}|=m$. Now
$$
T_\mathbf{k}^*|\xi\ket\bra\eta|T_\mathbf{k}=|T_\mathbf{k}^*\xi\ket\bra T_\mathbf{k}^*\eta|.
$$
If $\deg\xi=m_0$ then $T_\mathbf{k}^*\xi=0$ for all $\mathbf{k}\in\F_n^+$ with $|\mathbf{k}|>m_0$. Thus, for all $m\geq\min\{\deg\xi,\deg\eta\}$ we obtain $\Psi^m(|\xi\ket\bra\eta|)=0$, as desired.
\end{Remark}
From \cite[Thm. 1.2]{Prun1} and \cite{An6} we also obtain a short exact sequence of $C^*$-algebras
\begin{equation}\label{vNSES}
0\to\Ki(\GH_\N)\to\Ti_\GH\overset{\varsigma}{\to}C^0(\Sb)\to 0
\end{equation}
where $\Ki(\GH_\N)$ is the ideal of compact operators on the Fock space $\GH_\N$, and this sequence is also linearly split by the Toeplitz map $\breve{\varsigma}$. That is, every element of $\Ti_\GH$ is of the form $\breve{\varsigma}(f)+K$ with $f\in C^0(\Sb)$ and $K$ compact. This shows that every element of $\Ti_\GH$ is uniform asymptotic Toeplitz. 
\begin{Remark}[Fixed points]
As mentioned, the $(d+1)$-isometric property of $S$ can be expressed as $\Phi_*(B_d(S))=B_d(S)$. Each operator $B_p(S)$ is compact. While $\Psi$ has no compact fixed points, $\Phi_*$ thus has. 
\end{Remark}


If $X$ is an operator on $\GH_\N$ that preserves the grading then one has $X=\sum_{m\in\N_0}X_mp_m$ with $X_m\in\Bi(\GH_m)$ for each $m$. The algebra $\Gamma_b$ of grading-preserving bounded operators on $\GH_\N$ can therefore be identified as 
$$
\Gamma_b=\prod_{m\in\N_0}\Bi(\GH_m).
$$
It is a von Neumann algebra admitting a finite trace $\sum_{m\in\N_0}\phi_m$. The grading on $\GH_\N$ gives rise to an action of $\Un(1)$ on $\Bi(\GH_\N)$ whose fixed-point subalgebra is given by $\Gamma_b$. 

In \cite{An6} we only discussed the $\Un(1)$-invariant part 
$$
\Bi^\infty:=\Bi(\GH_\N)^\Psi\cap\Gamma_b
$$
of the operator system $\Bi(\GH_\N)^\Psi$, and we showed that $\Bi^\infty$ is completely isometrically isomorphic to $L^\infty(\M)$ via an explictly Toeplitz-type map 
$$
\breve{\varsigma}:L^\infty(\M)\to\Bi^\infty.
$$
The notation for the map $\breve{\varsigma}:L^\infty(\Sb)\to\Bi(\GH_\N)$ appearing in Lemma \ref{SOTToeplinLi} was chosen because its defining property \eqref{SOTToeplinLi} shows that it restricts to the Toeplitz map $\breve{\varsigma}:L^\infty(\M)\to\Gamma_b$. Therefore there is also no confusion if we refer to $\breve{\varsigma}:L^\infty(\Sb)\to\Bi(\GH_\N)$ as the Toeplitz map. This is in accordance with \cite{Prun1, Prun2} and the single-variable theory of ``generalized Toeplitz operator". 

Let $\Ni:=C^*(\Bi^\infty)$ be the $C^*$-algebra generated by the operators in $\Bi^\infty$. Then $\Ni$ is a von Neumann subalgebra of $\Li$ which we call the \textbf{$L^\infty$ Toeplitz core}. 
Taking the ``$\Un(1)$-invariant part'' of Proposition \ref{LiBeurlingprop} we obtain a presentation of $\Ni$ in terms of factorable or superharmonic elements,
$$
\Ni=\overline{\operatorname{span}}^{\rm WOT}_\C(\Mi_\GH\Mi_\GH^*)\cap\Gamma_b=\operatorname{span}_\C\{X\in\Gamma_b|\ X\geq 0,\ \Phi(X)\leq X\}.
$$
With these extra facts the following is implicit in \cite{An6}:
\begin{cor}\label{LinftyToepl}
The von Neumann algebra $\Ni:=C^*(\Bi^\infty)$ fits into the short exact sequence of $C^*$-algebras
\begin{equation}\label{vNSEScore}
0\to[\Bi^\infty,\Bi^\infty)\to\Ni\overset{\varsigma}{\to}L^\infty(\M)\to 0
\end{equation}
where the kernel $[\Bi^\infty,\Bi^\infty)=\Ker(\varsigma)$ is the semicommutator ideal in $\Ni$. The Toeplitz map $\breve{\varsigma}:L^\infty(\M)\to\Ni$ gives a completely positive splitting, so that
$$
\Ni=\Bi^\infty+[\Bi^\infty,\Bi^\infty),
$$
and $\varsigma$ is the adjoint of the Toeplitz map $\breve{\varsigma}:L^\infty(\M)\to\Gamma_b$ with respect to the inner product of $L^2(\M,\omega)$ and the inner product on $\Gamma_b$ defined by the state $\sum_m\phi_m$.
\end{cor}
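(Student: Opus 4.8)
The plan is to restrict the short exact sequence of Lemma~\ref{LinftyToeplwhole} to the fixed points of the $\Un(1)$-action on $\Bi(\GH_\N)$ given by $\Ad u_t$, where $u_t$ acts on each $\GH_m$ as the scalar $t^m$; the fixed-point algebra of this action is precisely $\Gamma_b$. First I would record the equivariance of the maps involved. From $u_tS_\alpha u_t^*=tS_\alpha$ and $u_tT_\alpha u_t^*=tT_\alpha$ one gets that the normal completely positive maps $\Phi$ and $\Psi$ commute with every $\Ad u_t$, hence also with the normal conditional expectation $E(X):=\int_{\Un(1)}u_tXu_t^*\,\mathrm{d}t$ onto $\Gamma_b$; since $\Li$ is $\Un(1)$-invariant, $E$ restricts to a conditional expectation $\Li\to\Li\cap\Gamma_b$. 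The ideal $\Si\Ci(\Li)=\Ker\varsigma$ is $\Un(1)$-invariant because its generators $[\breve\varsigma(f),\breve\varsigma(g))$ transform covariantly, so $\varsigma$ is $\Un(1)$-equivariant; and $\breve\varsigma$ is equivariant, so using $L^\infty(\M)=L^\infty(\Sb)^{\Un(1)}$ one obtains $\breve\varsigma(L^\infty(\M))=\breve\varsigma(L^\infty(\Sb))^{\Un(1)}=\Bi(\GH_\N)^\Psi\cap\Gamma_b=\Bi^\infty$ as well as $E\circ\breve\varsigma=\breve\varsigma\circ E_{\Sb}$, with $E_{\Sb}$ the averaging on $L^\infty(\Sb)$.

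Next I would identify $\Ni$ with $\Li\cap\Gamma_b$. This is immediate from the presentation $\Li=\overline{\operatorname{span}}^{\rm WOT}_\C\Mi_\GH\Mi_\GH^*$ of Proposition~\ref{LiBeurlingprop}, since intersecting with $\Gamma_b$ gives exactly the description of $\Ni$ recorded just before the statement. Applying $E$ to the vector-space splitting $\Li=\breve\varsigma(L^\infty(\Sb))+\Si\Ci(\Li)$ of Lemma~\ref{LinftyToeplwhole} then yields $\Ni=\Bi^\infty+\Gi$ where $\Gi:=\Si\Ci(\Li)\cap\Ni=\Ker(\varsigma|_\Ni)$, and the sum is direct because $\varsigma\breve\varsigma=\id$ forces $\Bi^\infty\cap\Ker\varsigma=0$. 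The map $\varsigma\colon\Ni\to L^\infty(\M)$ is onto since $\varsigma\breve\varsigma(f)=f$ for every $f\in L^\infty(\M)\subseteq\Ni$, while equivariance of $\varsigma$ gives $\varsigma(\Ni)\subseteq L^\infty(\M)$; the splitting is $\breve\varsigma|_{L^\infty(\M)}$, which is unital completely positive and has image $\Bi^\infty$.

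It remains to check that $\Gi$ equals the semicommutator ideal $[\Bi^\infty,\Bi^\infty)$ of $\Ni$, that is, the closed two-sided ideal generated by the semicommutators $[\breve\varsigma(f),\breve\varsigma(g))=\breve\varsigma(f)\breve\varsigma(g)-\breve\varsigma(fg)$ with $f,g\in L^\infty(\M)$. The inclusion $[\Bi^\infty,\Bi^\infty)\subseteq\Gi$ is clear since $\varsigma$ is a $*$-homomorphism. For the reverse, note that $\breve\varsigma$ is positive, hence $*$-preserving, and becomes multiplicative after composition with the quotient map $\pi\colon\Ni\to\Ni/[\Bi^\infty,\Bi^\infty)$, so $\pi\circ\breve\varsigma\colon L^\infty(\M)\to\Ni/[\Bi^\infty,\Bi^\infty)$ is a $*$-homomorphism; its range is a $C^*$-subalgebra containing $\pi(\Bi^\infty)$, which generates $\Ni/[\Bi^\infty,\Bi^\infty)$ because $\Ni=C^*(\Bi^\infty)$, so $\pi\circ\breve\varsigma$ is onto. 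Composing further with the quotient $\Ni/[\Bi^\infty,\Bi^\infty)\to\Ni/\Gi\cong L^\infty(\M)$ recovers $\varsigma\breve\varsigma=\id$, so $\pi\circ\breve\varsigma$ is also injective, hence an isomorphism, which forces $\Gi=[\Bi^\infty,\Bi^\infty)$; this gives the exactness in \eqref{vNSEScore}. Finally, the claim that $\varsigma$ is the adjoint of the Toeplitz map $\breve\varsigma\colon L^\infty(\M)\to\Gamma_b$ for the $L^2(\M,\omega)$-inner product and the inner product on $\Gamma_b$ defined by $\sum_m\phi_m$ is the characterization of the covariant symbol map established in \cite{An6}, combined with Proposition~\ref{SOTvsprop}, which identifies $\varsigma|_\Ni$ with the SOT-asymptotic Toeplitz symbol. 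I expect the only real obstacle to be the first paragraph: once one has checked the equivariance of $\Psi$, $\Phi$, $\varsigma$ and $\breve\varsigma$ and the identification $\Ni=\Li\cap\Gamma_b$, everything else is the elementary direct-sum argument above.
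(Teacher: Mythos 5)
Your proposal is correct and follows essentially the route the paper intends: the paper gives no separate argument, declaring the corollary ``implicit'' once one takes the $\Un(1)$-invariant (grading-preserving) part of Lemma \ref{LinftyToeplwhole} and Proposition \ref{LiBeurlingprop}, and your averaging by the conditional expectation $E$ onto $\Gamma_b$, together with the quotient argument identifying $\Ker(\varsigma|_{\Ni})$ with the semicommutator ideal and the citation of \cite{An6} for the adjointness of $\varsigma$ and $\breve{\varsigma}$, is exactly that strategy spelled out in detail. The small continuity points you leave implicit (e.g.\ $E\circ\breve{\varsigma}=\breve{\varsigma}\circ E_{\Sb}$ and $E(\Ker\varsigma)\subseteq\Ker\varsigma$) are routine consequences of the SOT-limit characterization $\breve{\varsigma}(f_X)=\mathrm{SOT}\text{-}\lim_m\Psi^m(X)$ in Lemma \ref{LinftyToeplwhole}, so no essential gap remains beyond the norm- versus weak-closure looseness already present in the paper's own definition of $\Ni$.
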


\subsubsection{Projections onto vector-valued quotient modules}
We now look at graded subspaces $\GH^E_\N$ of the graded Hilbert space $\GH_\N\otimes\C^N$ for some integer $N\in\N$. For ease of notation we write
$$
S_\alpha:=S_\alpha\otimes\bone_N
$$
for the diagonal representation of the shift operators on $\GH_\N\otimes\C^N$. We shall characterize coinvariance of $\GH^E_\N$ in terms of the orthogonal projection $P_E$ of $\GH_\N\otimes\C^N$ onto $\GH^E_\N$. It will often be convenient to regard $P_E$ and other operators on $\GH_\N\otimes\C^N$ as $N\times N$-matrices with entries in $\Bi(\GH_\N)$. Then we can apply the maps $\Phi$ and $\Psi$ entrywise to such operators. Thus we define
$$
\Phi(X):=(\Phi\otimes\id_N)(X),\qquad\forall X\in\Bi(\GH_\N)\otimes\Mn_N(\C)=\Bi(\GH_\N\otimes\C^N)
$$
and so on. Recall that $\Gamma_b\subset\Bi(\GH_\N)$ is the von Neumann algebra of grading-preserving operators on $\GH_\N$.

If $\phi$ is a bounded multiplier of $\GH_\N$ then we denote by $M_\phi$ the associated bounded operator on $\GH_\N$,
$$
(M_\phi\psi)(v):=\phi(v)\psi(v),\qquad\forall\psi\in\GH_\N,\ v\in\B.
$$
\begin{Lemma}\label{PEinNilemma}
For a graded projection $P_E$ acting on $\GH_\N\otimes\C^N$, the following are equivalent:
\begin{enumerate}[(a)]
\item{$\bone-P_E$ has a Beurling factorization, i.e. there is a multiplier $\Theta_E:\GH_\N\otimes\ell^2(\N_0)\to\GH_\N\otimes\C^N$ such that
$$
\bone-P_E=M_{\Theta_E}M_{\Theta_E}^*=\mathrm{SOT-}\sum_{j\in\N}M_{\phi_j}M_{\phi_j}^*
$$
with $\phi_j\in\Ai_{m_j}\otimes\C^N$ a homogeneous polynomial for each $j\in\N$ (where $\phi_j=0$ is allowed).}
\item{$\Phi(\bone-P_E)\leq\bone- P_E$, in which case $\mathrm{SOT-}\lim_{m\to\infty}\Phi^m(\bone-P_E)=0$.}
\item{The image $\GH^E_\N$ of $P_E$ is invariant under $S^*$.}
\item{$\Psi(P_E)\leq P_E$.}
\item{$P_E=\breve{\varsigma}(P^E)+C_E$ where $P^E$ is a projection over $L^\infty(\M)$ and $C_E$ is a $\Psi$-potential.}
\end{enumerate}
In this case $C_E$ has entries in $\Ni$ (since $P_E$ has, by (a)).
\end{Lemma}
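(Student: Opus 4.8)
The plan is to prove the five conditions equivalent by a short cycle, establish $(\mathrm{d})\Leftrightarrow(\mathrm{e})$ separately, and then read off the closing sentence from $(\mathrm{a})$. The unifying observation is that $\bone-P_E$ is the orthogonal projection onto $\Mi:=(\GH^E_\N)^\perp$, and each of $(\mathrm{a})$--$(\mathrm{d})$ is a different way of saying that $\Mi$ is a \emph{graded} $S$-invariant subspace of $\GH_\N\otimes\C^N$.

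First I would dispatch $(\mathrm{c})\Leftrightarrow(\mathrm{d})$ and $(\mathrm{b})\Leftrightarrow(\mathrm{c})$ by elementary projection computations. By Lemma \ref{squareofSlemma} the polar decomposition gives $S_\alpha=|S|\tilde T_\alpha$ with $|S|$ positive, invertible, and scalar on each graded summand, so a graded subspace is $S$-invariant iff it is $T$-invariant, and it suffices to relate $(\mathrm{c})$ to $(\mathrm{d})$ through $T$. For the spherical isometry $T$ (so $\Psi(\bone)=\bone$) and any projection $Q$ one has $\langle\Psi(Q)\xi,\xi\rangle=\sum_\alpha\|QT_\alpha\xi\|^2$; evaluating at $\xi\in\Ran(\bone-P_E)$ shows $\Psi(P_E)\leq P_E$ forces $T_\alpha\Ran(\bone-P_E)\subseteq\Ran(\bone-P_E)$, while $T$-invariance of $\Ran(\bone-P_E)$ conversely yields $P_ET_\alpha=P_ET_\alpha P_E$, hence $\Psi(P_E)=P_E\Psi(P_E)P_E\leq P_E$. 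The same kind of computation with $\Phi(X)=\sum_\alpha S_\alpha XS_\alpha^*$ and the adjoint relation $QS_\alpha^*=QS_\alpha^*Q$ gives $(\mathrm{b})\Leftrightarrow(\mathrm{c})$, and the supplementary SOT-limit in $(\mathrm{b})$ is automatic because $S$ is pure: $0\leq\Phi^m(\bone-P_E)\leq\Phi^m(\bone)\to0$ strongly.

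For $(\mathrm{a})\Leftrightarrow(\mathrm{b})$ I would invoke the ``factorable'' description already used in the proof of Proposition \ref{LiBeurlingprop}: a positive operator $X$ with $\Phi(X)\leq X$ is exactly one of the form $X=\sum_j M_{\phi_j}M_{\phi_j}^*$ (strongly convergent) for multipliers $\phi_j$ (\cite[Prop.\ 1.6]{Arv7c}, \cite[Cor.\ 3.10]{Pop7}), applied after amplifying by $\Mn_N(\C)$ to $X=\bone-P_E$; the reverse implication is the short computation $\Phi\big(\sum_j M_{\phi_j}M_{\phi_j}^*\big)=\sum_j M_{\phi_j}(\bone-p_0)M_{\phi_j}^*\leq\sum_j M_{\phi_j}M_{\phi_j}^*$ from that same proof. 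The one genuinely new point, which I expect to be the main obstacle, is the refinement that when $P_E$ is graded the $\phi_j$ may be taken to be homogeneous polynomials in $\Ai_{m_j}\otimes\C^N$: one decomposes $\Mi=\Ran(\bone-P_E)$ into graded pieces $\Mi_m$, picks for each $m$ an orthonormal basis of the wandering space $\Mi_m\ominus\overline{\sum_\alpha S_\alpha\Mi_{m-1}}$, and checks that the homogeneous generators so obtained assemble into a partially isometric multiplier $\Theta_E$ with $M_{\Theta_E}M_{\Theta_E}^*=\bone-P_E$. This is the $\Un(1)$-equivariant form of the multivariable Beurling theorem (cf.\ \cite[Thm.\ 4.3]{GRS2}, \cite[Thm.\ 6.1]{BhSa1}); the delicate part is verifying that the graded wandering-subspace construction reproduces $\bone-P_E$ exactly and does not overshoot to a proper subprojection.

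Finally I would handle $(\mathrm{d})\Leftrightarrow(\mathrm{e})$ and the closing sentence. If $P_E$ is $\Psi$-superharmonic, the Riesz decomposition (\cite[Thm.\ 3.3]{GaKu1}, \cite[Thm.\ 3.1]{Pop7}) writes $P_E=X_1+C_E$ uniquely with $X_1$ $\Psi$-harmonic and $C_E$ a $\Psi$-potential; since $\Psi$ commutes with the gauge action and with $X\mapsto X^*$, uniqueness forces $X_1$ and $C_E$ to be graded and self-adjoint. A graded $\Psi$-harmonic operator lies in $\Bi^\infty\otimes\Mn_N(\C)=\breve{\varsigma}(L^\infty(\M))\otimes\Mn_N(\C)$ by Lemma \ref{LinftyToeplwhole}, so $X_1=\breve{\varsigma}(P^E)$ with $P^E$ self-adjoint over $L^\infty(\M)$; and since $(\mathrm{a})$ puts $P_E$ in $\Li\otimes\Mn_N(\C)$, on which the $*$-homomorphism $\varsigma$ annihilates the potential $C_E$, one gets $P^E=\varsigma(P_E)=\varsigma(P_E)^2=(P^E)^2$, so $P^E$ is a projection. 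The implication $(\mathrm{e})\Rightarrow(\mathrm{d})$ is immediate from $\Psi(\breve{\varsigma}(P^E))=\breve{\varsigma}(P^E)$ and $\Psi(C_E)\leq C_E$. For the last claim, $(\mathrm{a})$ exhibits $\bone-P_E$ as a strongly convergent sum of grading-preserving operators $M_{\phi_j}M_{\phi_j}^*$ whose entries lie in $\Mi_\GH\Mi_\GH^*$, so $\bone-P_E$, and hence $P_E$, belongs to $\overline{\operatorname{span}}^{\mathrm{WOT}}_\C(\Mi_\GH\Mi_\GH^*)\cap(\Gamma_b\otimes\Mn_N(\C))=\Ni\otimes\Mn_N(\C)$ by the presentation of $\Ni$ recorded with Corollary \ref{LinftyToepl}; subtracting $\breve{\varsigma}(P^E)\in\Bi^\infty\otimes\Mn_N(\C)\subseteq\Ni\otimes\Mn_N(\C)$ shows $C_E$ has entries in $\Ni$.
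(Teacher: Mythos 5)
Your handling of the equivalences (b)$\iff$(c)$\iff$(d), of (d)$\iff$(e) via the Riesz decomposition, and of the closing claim that $C_E$ is over $\Ni$ is correct, and in places more self-contained than the paper: where the paper cites \cite[Lemma 4.1, Cor. 4.4]{Pop7} and \cite[Eq. (2.4)]{Arv7c}, you give the short direct computations $\langle\Psi(P_E)\xi,\xi\rangle=\sum_\alpha\|P_ET_\alpha\xi\|^2$ and $P_E T_\alpha=P_ET_\alpha P_E\Rightarrow\Psi(P_E)=P_E\Psi(P_E)P_E\leq P_E$, together with the observation that $|S|$ is scalar on each graded piece so that $S$- and $T$-(co)invariance agree for graded subspaces; this is fine. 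Your argument that $P^E=\varsigma(P_E)$ is a projection (multiplicativity of $\varsigma$ on $\Li$, $\varsigma(C_E)=0$) and that gradedness passes to the Riesz decomposition also matches the paper's intent, which simply uses $\Ni=\Bi^\infty+\Ker\Psi^\infty$.

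The gap is exactly where you suspected it: the refinement in (a) that the $\phi_j$ can be taken homogeneous. Your sketched construction — take an orthonormal basis of each graded wandering space $\Mi_m\ominus\overline{\sum_\alpha S_\alpha\Mi_{m-1}}$, where $\Mi=\Ran(\bone-P_E)$, and let the resulting homogeneous vectors be the columns of $\Theta_E$ — does not reproduce $\bone-P_E$ in general. Already for $\GH_\N=H^2_2$ and $\Mi=\overline{z_1H^2_2}$ the wandering space is $\C z_1$ (all higher graded wandering spaces vanish), but $M_{z_1}M_{z_1}^*=S_1S_1^*$ is not a projection, since $S_1^*S_1\neq\bone$ in the Drury--Arveson norm; so the single-column candidate fails, and in fact any correct homogeneous factorization of this $P_\Mi$ needs more columns than the wandering-space dimensions (which is why $\Theta_E$ is allowed the coefficient space $\ell^2(\N_0)$). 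In other words, for graded submodules the wandering spaces do generate the module, but the orthonormal-basis normalization is wrong: generation fixes the range of $M_{\Theta_E}$, not the identity $M_{\Theta_E}M_{\Theta_E}^*=\bone-P_E$. The paper settles this step by citation to \cite[Lemma 2.2]{Zhao1} (and \cite[Cor. 2 to Prop. 8.13]{Arv7b} for $N=1$), where the homogeneous columns are extracted from the graded structure of the Arveson--Popescu factorization itself — i.e. from a graded choice of the module map $L$ with $\bone-P_E=LL^*$, equivalently from the graded defect $(\bone-P_E)-\Phi(\bone-P_E)$ — rather than from orthonormal bases of the wandering spaces. (Your citations to \cite[Thm. 4.3]{GRS2} and \cite[Thm. 6.1]{BhSa1} concern the boundary behaviour of $\Theta_E$, not homogeneity.) If you repair this one step along those lines, the rest of your argument goes through as written.
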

\begin{proof}
(a)$\iff$(b) is shown in \cite[Prop. 1.6]{Arv7c} or \cite[Cor. 3.10]{Pop7}, using $\mathrm{SOT-}\lim_{m\to\infty}\Phi^m(\bone)=0$. More precisely, that the $\phi_j$'s can be taken to be homogeneous polynomials is in \cite[Lemma 2.2]{Zhao1} (the proof there generalizes immediately to arbitrary $N$; see also \cite[Cor. 2 to Prop. 8.13]{Arv7b} for $N=1$). 

If $\Phi(\bone-P_E)\leq\bone- P_E$ then $\GH^E_\N$ is invariant under $S^*$ by \cite[Lemma 4.1]{Pop7} (using $\Phi(\bone)\leq\bone$), so (c)$\implies$(b). 

Conversely, if $\GH^E_\N$ is coinvariant then we have $\Phi(\bone-P_E)\leq\bone- P_E$ (see \cite[Eq. (2.4]{Arv7c}). 
So (b)$\implies$(c). 


Since $\Psi(\bone)=\bone$ we have the equivalence of (c) and (d) by \cite[Cor. 4.4]{Pop7}. 

Combining, we have shown that (a)--(c) is equivalent to (d). Assuming that this holds, i.e. that $\Psi(P_E)\leq P_E$, we obtain a Riesz decomposition $P_E=\breve{\varsigma}(P^E)+C_E$ where $\breve{\varsigma}(P^E)$ is in $\Bi^\infty$ and $C_E$ is a $\Psi$-potential. By (a) we have $P_E$ over $\Ni$ and since $\Ni=\Bi^\infty+\Ker\Psi^\infty$ we get that both $\breve{\varsigma}(P^E)$ and $C_E$ are over $\Ni$ as well. Thus $P^E$ is over $L^\infty(\M)$. Conversely, if (e) holds then $\Psi(P_E)\leq P_E$ and so we are done.
\end{proof}
\begin{Remark}
For the equivalence of (a) and (c), see also \cite[Cor. 3.3]{Sark3}. 
\end{Remark}

In the one-variable setting ($n=1$), every submodule of $H^2_1=H^0(\T)$ is the range of an inner multiplier, by the Beurling theorem. There is a Beurling decomposition of every submodule of $H^2_n$ also for $n\geq 2$. There are no nontrivial graded submodules of $H^0(\T)$, so there is no surprise that when we look at graded submodules we encounter new phenomena, namely that the multipliers in the Beurling decomposition can be chosen to be homogeneous polynomials. 

In the same fashion we obtain the ungraded version of Lemma \ref{PEinNilemma}:
\begin{Lemma}\label{PEinLilemma}
For a projection $P_E$ on $\GH_\N\otimes\C^N$, the following are equivalent:
\begin{enumerate}[(a)]
\item{$\bone-P_E$ has a Beurling factorization, i.e.
$$
\bone-P_E=\mathrm{SOT-}\sum_{j\in\N}\phi_j\phi_j^*
$$
with $\phi_j\in\Mi_\GH$ a multiplier for each $j\in\N$ (where $\phi_j=0$ is allowed).}
\item{$\Phi(\bone-P_E)\leq\bone- P_E$, in which case $\mathrm{SOT-}\lim_{m\to\infty}\Phi^m(\bone-P_E)=0$.}
\item{The image of $P_E$ is invariant under $S^*$.}
\item{$\Psi(P_E)\leq P_E$.}
\item{$P_E=\breve{\varsigma}(P^E)+C_E$ where $P^E$ is a projection over $L^\infty(\Sb)$ and $C_E$ is a $\Psi$-potential.}
\end{enumerate}
In this case $C_E$ has entries in $\Li$ (since $P_E$ has, by (a)).
\end{Lemma}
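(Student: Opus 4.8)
The plan is to run exactly the chain of equivalences used in the proof of Lemma \ref{PEinNilemma}, with the grading-preserving objects $\Bi^\infty,\Ni$ replaced by their ungraded counterparts $\Bi(\GH_\N)^\Psi,\Li$ and the Toeplitz map $\breve{\varsigma}:L^\infty(\M)\to\Bi^\infty$ replaced by the map $\breve{\varsigma}:L^\infty(\Sb)\to\Bi(\GH_\N)$ of Lemma \ref{LinftyToeplwhole}. None of the cited operator-theoretic facts use the grading, so the argument transfers essentially verbatim; the only genuine change is that in the Beurling factorization one gets general multipliers in $\Mi_\GH$ rather than homogeneous polynomials. For (a)$\iff$(b) I would invoke the characterization of positive $\Phi$-superharmonic operators as ``factorable'' operators from \cite[Prop. 1.6]{Arv7c} or \cite[Cor. 3.10]{Pop7} (purity $\mathrm{SOT-}\lim_m\Phi^m(\bone)=0$ enters here), exactly as in the proof of Proposition \ref{LiBeurlingprop}: $\Phi(\bone-P_E)\le\bone-P_E$ holds iff $\bone-P_E=LL^*$ for an $\Ai$-module map $L:\GH_\N\otimes\GM\to\GH_\N\otimes\C^N$ with $\GM$ separable, and putting $\phi_j:=L(\bone\otimes e_j)$ for an orthonormal basis $(e_j)_{j\in\N}$ of $\GM$ exhibits the columns of $L$ as multipliers $\phi_j\in\Mi_\GH$, which is precisely a Beurling factorization of $\bone-P_E$. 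The side claim $\mathrm{SOT-}\lim_m\Phi^m(\bone-P_E)=0$ under (b) is then immediate from $0\le\Phi^m(\bone-P_E)\le\Phi^m(\bone)\to 0$.

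Next I would establish (b)$\iff$(c) and (c)$\iff$(d) as in the graded case: $\Phi(\bone-P_E)\le\bone-P_E$ forces the range of $P_E$ to be $S^*$-invariant by \cite[Lemma 4.1]{Pop7} (using $\Phi(\bone)\le\bone$), while conversely $S^*$-coinvariance of the range gives $\Phi(\bone-P_E)\le\bone-P_E$ by \cite[Eq.~(2.4)]{Arv7c}; and, since $\Psi(\bone)=\bone$, \cite[Cor. 4.4]{Pop7} identifies $S^*$-invariance of the range of $P_E$ with $\Psi(P_E)\le P_E$. This closes the loop (a)$\iff$(b)$\iff$(c)$\iff$(d).

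To bring in (e), I would assume (d) and apply the Riesz decomposition for $\Psi$-superharmonic operators (\cite[Thm. 3.3]{GaKu1} or \cite[Thm. 3.1]{Pop7}): $P_E=X_1+C_E$ with $X_1$ $\Psi$-harmonic and $C_E$ a $\Psi$-potential; uniqueness of this decomposition together with $P_E=P_E^*$ forces $X_1=X_1^*$ and $C_E=C_E^*$ (apply the decomposition to $P_E^*$ and use that $\Psi$ is $*$-preserving). Since $X_1\in\Bi(\GH_\N)^\Psi\subset\Li$, Lemma \ref{LinftyToeplwhole} gives $X_1=\breve{\varsigma}(P^E)$ with $P^E:=\varsigma(X_1)\in L^\infty(\Sb)$; and by (a) together with Proposition \ref{LiBeurlingprop} we have $\bone-P_E\in\Li$, hence $P_E\in\Li$ and hence $C_E=P_E-\breve{\varsigma}(P^E)\in\Li$, which already yields the final assertion about the entries of $C_E$. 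Applying the $*$-homomorphism $\varsigma$ to $P_E=P_E^2$ and using $\varsigma(\breve{\varsigma}(P^E))=P^E$ and $\varsigma(C_E)=0$ — the latter because a $\Psi$-potential lying in $\Li$ is in $\Ker\varsigma$ by Proposition \ref{SOTvsprop} — shows $(P^E)^2=P^E=(P^E)^*$, so $P^E$ is a projection over $L^\infty(\Sb)$ and (e) holds. Conversely, if (e) holds then $\Psi(P_E)=\breve{\varsigma}(P^E)+\Psi(C_E)\le\breve{\varsigma}(P^E)+C_E=P_E$, using that $\breve{\varsigma}(P^E)$ is $\Psi$-harmonic and $\Psi(C_E)\le C_E$, which gives (d).

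I do not expect a real obstacle: the statement is the ungraded twin of Lemma \ref{PEinNilemma} and every ingredient is already available. The one point worth a careful sentence is the step in (a)$\iff$(b) passing from ``$\Ai$-module map $L$'' to ``the columns $\phi_j$ of $L$ are multipliers of $\GH_\N$''; this is the standard identification of left module maps $\GH_\N\otimes\GM\to\GH_\N\otimes\C^N$ with bounded operator-valued multipliers of the complete Nevanlinna--Pick space $\GH_\N$, and it is precisely the computation already carried out in the proof of Proposition \ref{LiBeurlingprop}, so it can be cited there rather than repeated.
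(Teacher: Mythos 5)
Your proposal is correct and is essentially the paper's own argument: the paper proves the graded version (Lemma \ref{PEinNilemma}) by exactly this chain of citations and then states Lemma \ref{PEinLilemma} with the remark that it follows ``in the same fashion,'' which is precisely the transfer you carry out. Your extra care in (d)$\implies$(e) — using $P_E\in\Li$ via Proposition \ref{LiBeurlingprop}, $\varsigma(C_E)=0$ via Proposition \ref{SOTvsprop}, and the $*$-homomorphism property of $\varsigma$ on $\Li$ to see that $P^E$ is genuinely a projection — is a welcome sharpening of a point the paper leaves implicit, but it does not change the route.
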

\begin{Remark}
For the Fock space $\GH_\N$ associated with an arbitrary projective variety $\M$ one has $\Psi(\bone)\leq\bone$ iff $\Psi(\bone)=\bone$. So in case $\Psi$ is not unital, the superharmonicity property $\Psi(P_E)\leq P_E$ is not equivalent to the coinvariance property $\Phi(\bone-P_E)\geq\bone-P_E$. 
\end{Remark}

If we let $P_E$ be a $\Psi$-superharmonic projection and write its Riesz decomposition as
$$
P_E=\breve{\varsigma}(P^E)+C_E
$$
then $C_E$ is a potential. The ``charge'' of the potential $C_E$ is, in the terminology of \cite[\S3]{GaKu1}, given by
$$
X_E:=(\id-\Psi)(C_E)=(\id-\Psi)(P_E)\geq 0,
$$
i.e.
$$
C_E=\mathrm{SOT-}\sum_{m\in\N_0}\Psi^m(X_E)=\mathrm{SOT-}\sum_{m\in\N_0}(\Psi^m-\Psi^{m+1})(P_E).
$$
We can see $(\Psi^m-\Psi^{m+1})(P_E)$ as the $(m+1)$th order obstruction of $P_E$ from being harmonic. So $C_E$ contains not only the first-order obstruction $X_E=(\id-\Psi)(P_E)$, although this is the leading term. 

The potential $C_E$ need not have the same range projection as its charge $X_E$. However, there is always a $\Psi$-summable element $\tilde{X}_E$ whose range projection is equal to that of $C_E$ \cite[Lemma 4.1]{GaKu1}.


\subsubsection{Vector-valued essential normality}

Following Barría--Halmos \cite{BaHa1} we observe that since both $\sum^n_{\alpha=1}S_\alpha^*S_\alpha-\bone$ and $\sum^n_{\alpha=1}S_\alpha^*S_\alpha-\bone$ are compact (i.e. $S$ is essentially a spherical unitary), the essential commutant of $S$ can be described as
$$
\{S\}^{\rm ec}=\{X\in\Bi(\GH_\N)|\ (\id-\Psi)(X)\in\Ki(\GH_\N)\}.
$$
Since the essential commutant is a $C^*$-algebra which contains the set $\Bi(\GH_\N)^\Psi$ of all $\Psi$-harmonic elements, it must contain $\Li=C^*(\Bi(\GH_\N)^\Psi)$,
$$
\Li\subset\{S\}^{\rm ec}.
$$
This observation leads to:
\begin{thm}\label{essvectthm}
Suppose that $P_E$ is a projection acting on $\GH_\N\otimes\C^N$ with $\Psi(P_E)\leq P_E$ and preserving the $\N_0$-grading. Then the shift tuple $S_E$ on the graded quotient module $\GH^E_\N:=\Ran P_E$ is essentially normal.
\end{thm}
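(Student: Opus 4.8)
The plan is to use the hypothesis $\Psi(P_E)\le P_E$ to force the entries of $P_E$ into the $L^\infty$ Toeplitz algebra $\Li$, and then to invoke the inclusion $\Li\subset\{S\}^{\rm ec}$ recorded just above the statement. Concretely, the conceptual content is that $\pi(P_E)$ is, in the Calkin algebra, a projection commuting with the commutative (essentially spherically unitary) image of $S_1\otimes\bone_N,\dots,S_n\otimes\bone_N$, so that the corner $\pi(P_E)\,\mathrm{Calk}(\GH_\N\otimes\C^N)\,\pi(P_E)\cong\Bi(\GH^E_\N)/\Ki(\GH^E_\N)$ receives a commuting $n$-tuple of normal elements; everything else is bookkeeping modulo compacts.

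\emph{Step 1: $P_E$ has entries in the essential commutant of $S$.} First I would note that since $P_E$ is a graded projection with $\Psi(P_E)\le P_E$, the equivalence (a)$\Leftrightarrow$(d) of Lemma~\ref{PEinNilemma} shows that $\bone-P_E$ admits a Beurling factorization $\bone-P_E=\mathrm{SOT-}\sum_{j}M_{\phi_j}M_{\phi_j}^*$ with $M_{\phi_j}\in\Mi_\GH$; by Proposition~\ref{LiBeurlingprop} the entries of $\bone-P_E$, and hence of $P_E$, therefore lie in $\Li=\overline{\operatorname{span}}^{\rm WOT}_\C\Mi_\GH\Mi_\GH^*$ (this is also the last assertion of Lemma~\ref{PEinNilemma}, namely that $P_E$ is over $\Ni\subset\Li$). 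Since every element of $\Li$ lies in $\{S\}^{\rm ec}$, each entry $(P_E)_{ij}$ essentially commutes with every $S_\alpha$. Writing $A_\alpha:=S_\alpha\otimes\bone_N$ and regarding $[P_E,A_\alpha]$ as the $N\times N$ matrix over $\Bi(\GH_\N)$ with entries $[(P_E)_{ij},S_\alpha]\in\Ki(\GH_\N)$, I conclude that $[P_E,A_\alpha]\in\Ki(\GH_\N\otimes\C^N)$ for every $\alpha$, and hence also $[P_E,A_\alpha^*]\in\Ki(\GH_\N\otimes\C^N)$.

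\emph{Step 2: the compressed commutators are compact.} Next I would identify an operator on $\GH^E_\N=\Ran P_E$ with its extension by $0$ to $\GH_\N\otimes\C^N$; under this identification $S_{E,\alpha}=P_EA_\alpha P_E$ and $S_{E,\alpha}^*=P_EA_\alpha^*P_E$, and an operator of this form is compact on $\GH^E_\N$ exactly when it is compact on $\GH_\N\otimes\C^N$. Using $P_E^2=P_E$ together with $[P_E,A_\alpha],[P_E,A_\alpha^*]\in\Ki$ to replace, modulo compacts, $P_EA_\beta P_E$ by $A_\beta P_E$ and $P_EA_\alpha^*P_E$ by $A_\alpha^*P_E$ inside the two products, one gets
\begin{equation*}
[S_{E,\alpha}^*,S_{E,\beta}]\equiv P_E[A_\alpha^*,A_\beta]P_E=P_E\big([S_\alpha^*,S_\beta]\otimes\bone_N\big)P_E\pmod{\Ki(\GH_\N\otimes\C^N)}.
\end{equation*}
Since $[S_\alpha^*,S_\beta]$ is compact on $\GH_\N$ (shown in \cite{An6}), the operator $[S_\alpha^*,S_\beta]\otimes\bone_N$ is compact, hence so is the right-hand side, hence $[S_{E,\alpha}^*,S_{E,\beta}]$ is compact on $\GH^E_\N$ for all $\alpha,\beta$; as $\GH^E_\N$ is coinvariant the tuple $S_E$ is commuting, so this is precisely essential normality of $S_E$.

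The only substantive point — and the step I expect to be the crux — is Step~1: that $\Psi$-superharmonicity of $P_E$ is exactly what lands $P_E$ inside the essential commutant of $S$, which rests on the Beurling/factorization description of $\Li$ in Proposition~\ref{LiBeurlingprop} and on the Barr\'ia--Halmos-type identity $\{S\}^{\rm ec}=\{X:(\id-\Psi)(X)\in\Ki\}$ (see \cite{BaHa1}). Once that inclusion is in hand the rest is the routine mod-compact manipulation above.
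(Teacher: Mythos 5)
Your proof is correct, and it shares the paper's crux: from the grading and $\Psi(P_E)\le P_E$ you place the entries of $P_E$ in $\Ni\subset\Li\subset\{S\}^{\rm ec}$ (Lemma \ref{PEinNilemma}, Proposition \ref{LiBeurlingprop}), which is exactly the paper's first move. Where you diverge is in how essential normality is extracted from this. The paper stays with the map $\Psi$: it computes $(\id-\Psi)(P_E)|_{\GH^E_\N}=\bone-\Lambda^{-1}S_E^*S_E$ with $\Lambda=\sum_m(n_{m+1}/n_m)p_m$, uses the Barría--Halmos description $\{S\}^{\rm ec}=\{X:\ (\id-\Psi)(X)\in\Ki(\GH_\N)\}$ to conclude $S_E^*S_E-\Lambda\in\Gamma_0\otimes\Mn_N(\C)$, and compares this with $S_ES_E^*=\bone-P_{E,0}$ to get compactness of the total self-commutator $\sum_\alpha[S_{E,\alpha}^*,S_{E,\alpha}]$ (with Schatten-type precision; passing from that to the individual cross-commutators rests on the Arveson-type equivalence invoked earlier in the paper, \cite{Arv8}). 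You instead use the essential commutant entrywise: $[P_E,S_\alpha\otimes\bone_N]\in\Ki(\GH_\N\otimes\C^N)$ lets you replace $[S_{E,\alpha}^*,S_{E,\beta}]$ by $P_E([S_\alpha^*,S_\beta]\otimes\bone_N)P_E$ modulo compacts and then quote compactness of the ambient cross-commutators (from \cite{An6}, reproved with sharp Schatten estimates earlier in this paper). Your route is more direct in that it delivers \emph{all} cross-commutators at once, with no $\Lambda$ and no appeal to the sum-versus-individual-commutators equivalence; the paper's route buys the finer structural fact $S_E^*S_E-\Lambda\in\Gamma_0\otimes\Mn_N(\C)$, which resonates with the surrounding results on $(d+1)$-isometries and Schatten membership. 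Both arguments hinge on the inclusion $\Li\subset\{S\}^{\rm ec}$ recorded just before the theorem, so neither is self-contained on that point, and your mod-compact bookkeeping in Step 2 (using coinvariance to see that $S_E$ is commuting and that $P_EXP_E$ is compact iff its restriction to $\GH^E_\N$ is) is sound.
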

\begin{proof}
Write
$$
(\id-\Psi)(P_E)|_{\GH^E_\N}
=\bone-\sum_{m\in\N_0}\frac{n_m}{n_{m+1}}\sum^n_{\alpha=1}S_{E,\alpha}^*S_{E,\alpha} p_m=\bone-\Lambda^{-1} S^*_ES_E
$$
where $\Lambda:=\sum_{m\in\N_0}\frac{n_{m+1}}{n_m}p_m$ does not depend on the projection $P_E$. 
By Proposition \ref{LiBeurlingprop}, the assumptions of the theorem imply that $P_E$ is a projection in $\Ni\otimes\Mn_N(\C)$. The entries of $(\id-\Psi)(P_E)|_{\GH^E_\N}$ are compact since $P_E$ has entries in $\Ni\subset\{S\}^{\rm ec}$. Since $\Gamma_0:=\Ki(\GH_\N)\cap\Gamma_b$ is an ideal in $\Gamma_b$ and $\Lambda$ is bounded, we have
$$
S^*_ES_E-\Lambda=\Lambda(\id-\Psi(P_E)|_{\GH^E_\N}\in\Gamma_0\otimes\Mn_N(\C).
$$
Since $S_ES_E^*=\bone-P_{E,0}$ is a finite-rank perturbation of the identity operator 
and $n_m$ is a polynomial of degree $d$ we easily get that  $S_ES_E^*-\Lambda$ is in $\Li^p$ iff $p>d+1$, so in particular $S_ES_E^*-\Lambda$ is compact. This gives
$$
S^*_ES_E-S_ES^*_E=(S^*_ES_E-\Lambda)-(S_ES_E^*-\Lambda)\in\Gamma_0\otimes\Mn_N(\C)
$$
as desired. 
\end{proof}
The proof relies on the assumption that $P_E$ is graded, since only then does $P_E$ commute with $p_m$ for all $m$. And indeed there are ungraded counterexamples to essential normality \cite[\S4.1]{GRS1}.



\begin{Example}
 If $N=1$ then graded submodules of $\GH_\N$ are in bijection with homogeneous ideals in the coordinate ring $\Ai$. The quotient modules $\GH^E_\N$ of $\GH_\N$, which  are also quotients of $H^2_n$, are thus orthogonal complements of ideals and equal the completions of the coordinate rings of analytic subvarieties $\M_E\subset\M$. If we write the projection $P_E$ of $\GH_\N$ onto such a quotient module $\GH^E_\N$ as $P_E=\breve{\varsigma}(P^E)+C_E$ then we get
$$
\omega(P^E)=\lim_{m\to\infty}\phi_m(P_{E,m})=\lim_{m\to\infty}\frac{\dim\GH^E_m}{n_m}=0
$$ 
  unless $\dim\M_E=\dim\M$ (i.e. unless $\M_E=\M$) since $\dim\GH^E_m$ is a polynomial in $m$ of degree $\dim\M_E$. Thus, if $\M_E$ is not all of $\M$, 
  $$
 \breve{\varsigma}(P^E)=0.
 $$
In other words $P_E$ is in the kernel of the symbol map $\varsigma$. 
 We see that the kernel of $\varsigma$ is much larger than the ideal of compact operators. We also see that the projection $P^I$ onto the orthogonal complement $\GH^I_\N$ of $\GH^E_\N$ has symbol $\varsigma(P_I)=\varsigma(\bone-P_E)=\bone$. So the projection $\varsigma(P_I)$ does not recover the ideal sheaf $\Ii$ of $\M_E$, even if $\Ii$ is locally free. This is not surprising since, even if $\Ii$ is locally free, the subsheaf $\Ii\subset\Oi_\M$ is not a subbundle. 
 
 
\end{Example}

\begin{Remark}[Generalization to general $\M$]
Submodules of $\GH_\N\otimes\C^N$ are semi-invariant under the backward Drury--Arveson shift $M^*$: they are submodules of a quotient module. The projection onto such a subspace is of the form $P_E-P_F$ where both $P_E$ and $P_F\leq P_E$ are coinvariant. Since Theorem \ref{essvectthm} says that $M$-coinvariant projections are essentially normal we get that all submodules of vector-valued complete Nevanlinna--Pick-modules are essentially normal. So for essential normality one gets nothing new by dropping the assumption about having a coadjoint orbit. 
\end{Remark}

If $\Psi(X)\leq X$ and we write the Riesz decomposition of $X$ as $X=\breve{\varsigma}(f_X)+C_X$ then we get
$$
(\id-\Psi)(X)=(\id-\Psi)(C_X)\geq 0.
$$
Hence, if $X$ is over $\Ni$, the pure $\Psi$-superharmonic part always satisfies
$$
(\id-\Psi)(C_X)\in\Gamma_0.
$$
But even in the case of a graded projection $X=P_E$ it need not be that $C_X$ itself is compact. 

\section{Cowen--Douglas bundles of quotient modules}

\subsection{Setup}

\subsubsection{The reference Hermitian line bundle}\label{reflinesec}
Recall that $\B\subset\B^n$ is defined as the zero set in the unit ball of the ideal in $\C[z_1,\dots,z_n]$ corresponding to the embedding $\M\hookrightarrow\C\Pb^{n-1}$. In other words, if $\Ai$ is the homogeneous coordinate ring of $\M$ then $\B=\Spec(\Ai)\cap\B^n$. The Fock space $\GH_\N$ is a graded completion of $\Ai$ and its elements are analytic functions on $\B$.

In the following we write
$$
\Ker(S^*-v\bone):=\bigcap_{\alpha=1}\Ker(S_\alpha^*-v_\alpha\bone)\subset\GH_\N
$$
for the subspace of joint eigenvectors of $S_1^*,\dots,S_n^*$ with joint eigenvalue $v=(v_1,\dots,v_n)\in\B$. We know that $\Ker(S^*-v\bone)$ is nonzero for each $v\in\B$,
$$
\sigma_p(S^*)=\B.
$$
Indeed, 
$\Ker(S^*-v\bone)$ is 1-dimensional and spanned by the reproducing vector $K_{\bar{v}}$ (or its normalized version $k_{\bar{v}}:=K_{\bar{v}}/\|K_{\bar{v}}\|$) at $\bar{v}$,
$$
\Ker(S^*-v\bone)=\C k_{\bar{v}}.
$$
Since $\dim\Ker(S^*-v\bone)=1$ for all $v\in\B$ and the map $\B\ni v\to k_{\bar{v}}$ is holomorphic, the vector spaces
$$
\Oi_{\rm CD}(v):=\Ker(S^*-v\bone)
$$
form a holomorphic line bundle over $\B$ which we denote by $\Oi_{\rm CD}$ and call the \textbf{Cowen--Douglas bundle} of the Hilbert module $\GH_\N$ (or of the operator tuple $S$). 
We have a global holomorphic section $v\to k_{\bar{v}}$ of $\Oi_{\rm CD}$ which vanishes nowhere. So $\Oi_{\rm CD}$ is a trivial holomorphic line bundle (it is not algebraic though, since $k_{\bar{v}}$ is not algebraic):
$$
\Oi_{\rm CD}\cong\Oi_\B.
$$
Sometimes we will only need the restriction of $\Oi_{\rm CD}$ to $\B\setminus\{0\}$, and we denote this restriction again by $\Oi_{\rm CD}$.
\begin{dfn}
The \textbf{Cowen--Douglas projection} of $\GH_\N$ is the projection 
$$
\CD(\GH_\N)\in C^0(\B\setminus\{0\})\otimes\Bi(\GH_\N)
$$
which defines the line bundle $\Oi_{\rm CD}$, i.e. 
$$
\CD(\GH_\N)(v):=|k_{\bar{v}}\ket\bra k_{\bar{v}}|=\text{projection onto }\Ker(S^*-v\bone).
$$
\end{dfn}

For $\psi\in\GH_\N$ and $v\in\B$ we have $\bra K_{\bar{v}}|\psi\ket=\overline{\psi(\bar{v})}$ and hence
$$
(\CD(\GH_\N)\psi)(v)=\CD(\GH_\N)(v)\psi=\|K_{\bar{v}}\|^{-1}\overline{\psi(\bar{v})}k_{\bar{v}}.
$$
Since $\|K_{\bar{v}}\|^{-1}\overline{\psi(\bar{v})}k_{\bar{v}}$ belongs to the subspace $\Ker(S^*-v\bone)$ of $\GH_\N$ for each $v\in\B$, the function $\B\ni v\to\CD(\GH_\N)(v)\psi\in\GH_\N$ is a section
$$
\CD(\GH_\N)\psi\in\Gamma^0(\B;\Oi_{\rm CD})
$$
of the line bundle $\Oi_{\rm CD}$. Moreover, the section $\CD(\GH_\N)\psi$ is holomorphic because both $v\to k_{\bar{v}}$ and $\psi$ are holomorphic. 

If $\CD(\GH_\N)\psi$ is the zero section then $\psi$ belongs to $\Big(\bvee_{v\in\B}\Ker(S^*-v\bone)\Big)^\perp=\{0\}$. 
Thus $\GH_\N$ embeds into the space of global holomorphic sections of $\Oi_{\rm CD}$. 



If we identify the projective space $\Pb[\GH_\N]$ with the manifold of rank-1 projections acting on $\GH_\N$ then we can regard $\CD(\GH_\N)$ as a mapping
$$
\CD(\GH_\N):\B\to\Pb[\GH_\N],
$$
which is a holomorphic embedding since $k_v$ is never zero. If $(\psi_j)_{j\in\N}$ is any orthonormal basis for $\GH_\N$ then from $\bra \psi_j|K_v\ket=\overline{\psi_j(v)}$ we see that one can express the coherent vectors as
$$
k_v=\|K_v\|^{-1}\sum_{j\in\N}\overline{\psi_j(v)}\psi_j,\qquad\forall v\in\B.
$$
Therefore, whatever orthonormal basis $(\psi_j)_{j\in\N}$ for $\GH_\N$ chosen for the identification of $\Pb[\GH_\N]$ with the infinite-dimensional projective space $\C\Pb^\infty$, the embedding $\CD(\GH_\N)$ can therefore be described as 
$$
\CD(\GH_\N)(v)=[\overline{\psi_1(\bar{v})}:\overline{\psi_2(\bar{v})}:\cdots],
$$
where we denote by $[z_1:z_2:\cdots]$ the homogeneous coordinates on $\C\Pb^\infty$. 

In the literature on Kähler geometry it is more common to consider the slightly different holomorphic embedding 
$$
\FS(\GH_\N):\B\to\Pb[\GH_\N],\qquad \FS(\GH_\N)(v):=[\psi_1(v):\psi_2(v):\cdots].
$$
The notation $\FS(\GH_\N)$ makes sense since $\FS(\GH_\N)$ depends only on the Hilbert space and not the choice of orthonormal basis; $\FS$ stands for ``Fubini--Study''. We shall not need to distinguish between $\FS(\GH_\N)(v)$ and $\CD(\GH_\N)(v)$ and we identify them as the same projection acting on $\GH_\N$.

More generally if $\Hi$ is the Hilbert space of global holomorphic sections of some globally generated vector bundle then we denote by $\FS(\Hi)$ the embedding of the base manifold into the Grassmannian defined in the same way as above by an orthonormal basis for $\Hi$. 

As a special case, the embedding $\M\subset\C\Pb^{n-1}$ is correspond to the projection $\FS(\GH_1)$, after we have the identification $\GH_1=\C^n$ so that $\Pb[\GH_1]=\C\Pb^{n-1}$. 
We are also interested in the embeddings $\FS(\GH_m)$ of $\M$ into $\Pb[\GH_m]$ where $\GH_m$ as before is the vector space $H^0(\M;\Li^m)$ of global holomorphic sections of the line bundle $\Li^m$ endowed with the inner product of the tensor product $(\GH_1)^{\otimes m}$. 
We can regard $\FS(\GH_m)$ as a projection in the $C^*$-algebra $C^0(\M)\otimes\Bi(\GH_m)$. 
Any choice of  Parseval frame $(\psi_j)_{j\in\J}$ for $\GH_m$ gives an expansion of $\FS(\GE_m)$ as
$$
\FS(\GH_m)=\sum^{n_m}_{j,k\in\J}\FS(\GH_m)_{j,k}|\psi_j\ket\bra\psi_k|
$$
with coefficients $\FS(\GH_m)_{j,k}\in C^0(\M)$ given by
$$
\FS(\GH_m)_{j,k}(x)=\psi_j(x)\overline{\psi_k(x)},\qquad\forall x\in\M.
$$
For the particular choice $(\psi_j)_{j\in\J}=(Z_\mathbf{k})_{|\mathbf{k}|=m}$ we obtain
\begin{equation}\label{FSframe}
\FS(\GH_m)=\sum_{|\mathbf{j}|=m=|\mathbf{k}|}Z_\mathbf{j}Z_\mathbf{k}^*\otimes S_\mathbf{j} S_\mathbf{k}^*p_m
\end{equation}
where $S_\mathbf{j}:=S_{j_1}\cdots S_{j_m}$ for a multi-index $\mathbf{j}=j_1\cdots j_m$. 
The $m$-fold tensor product of the projection $\FS(\GH_1)$ is given by
$$
\FS(\GH_1)^{\otimes m}=\sum_{|\mathbf{j}|=m=|\mathbf{k}|}Z_\mathbf{j}Z_\mathbf{k}^*\otimes|e_\mathbf{j}\ket\bra e_\mathbf{k}|,
$$
where $e_1,\dots,e_n$ is an orthonormal basis for $\GH_m$ and $e_\mathbf{j}:=e_{j_1}\otimes\cdots\otimes e_{j_m}$. 
Since the $Z_\alpha$'s satisfy the relations of the ideal defining $\M$, we can express this as
$$
\FS(\GH_1)^{\otimes m}=\sum_{|\mathbf{j}|=m=|\mathbf{k}|}Z_\mathbf{j}Z_\mathbf{k}^*\otimes |p_me_\mathbf{j}\ket\bra p_me_\mathbf{k}|.
$$
and be regarded as a function on $\M$ with values in the subalgebra $\Bi(\GH_m)\subset\Bi(\GH^{\otimes m})$. Now $|p_me_\mathbf{j}\ket\bra p_me_\mathbf{k}|=S_\mathbf{j} S_\mathbf{k}^*p_m$. Thus the Fock inner product on $\GH_m$ is precisely the one such that $\FS(\GH_1)^{\otimes m}$ is naturally identified with $\FS(\GH_m)$. 

If we write $v=r\zeta\in\B$ with $r\in(0,1)$ and $\zeta\in\Sb$ then we have
$$
k_v=(1-|v|^2)^{1/2}\sum_{\mathbf{j}\in\F_n^+}\bar{v}_\mathbf{j}S_\mathbf{j}\Omega=(1-r^2)^{1/2}\sum_{m\in\N_0}r^m\sum_{|\mathbf{j}|=m}\bar{\zeta}_\mathbf{j}S_\mathbf{j}\Omega.
$$
Thus
  $$
\CD(\GH_\N)(r\zeta)=|k_{r\bar{\zeta}}\ket\bra k_{r\bar{\zeta}}|=(1-r^2)\sum_{m\in\N_0}r^{2m}\sum_{|\mathbf{j}|=m=|\mathbf{k}|}\zeta_\mathbf{j}\bar{\zeta}_\mathbf{k}S_\mathbf{j} S_\mathbf{k}^*.
$$
While $\GH_m\cap\C k_v=\{0\}$, the projection $p_mk_v$ of $k_v$ onto $\GH_m$ spans a 1-dimensional subspace of $\GH_m$; the projection onto this subspace is 
$$
\CD(\GH_m)(v):=\frac{|p_m k_{\bar{v}}\ket\bra p_mk_{\bar{v}}|}{\|p_m k_v\|^2}.
$$ 
It depends only on the class $[v]$ of $v$ in the quotient $\M=(\B\setminus\{0\})/\D^\times$. We write $k_{[v]}^{(m)}:=p_mk_v/\|p_mk_v\|$, so that $\CD(\GH_m)(x)=|k_{\bar{x}}^{(m)}\ket\bra k_{\bar{x}}^{(m)}|$ for all $x\in\M$. We have $\|p_m k_v\|^2=(1-|v|^2)^{-1}|v|^{2m}$. For all $\psi\in\GH_m$ the reproducing property of $K_v$ gives
$$
\bra k_x^{(m)}|\psi\ket=\psi(x),
$$
where $x\to\psi(x)$ is the function on $\M$ induced by the homogeneous degree-$m$ polynomial $\psi$. 
Therefore, for any Parseval frame $(\psi_j)_{j\in\J}$ for $\GH_m$ we can expand the $\Bi(\GH_m)$-factor of $\CD(\GH_m)$ (cf. \cite{Bala1}) to obtain
$$
\CD(\GH_m)=\sum_{j,k\in\J}\CD(\GH_m)_{j,k}|\psi_j\ket\bra \psi_k|
$$
with coefficients $\CD(\GH_m)_{j,k}\in C^0(\M)$ given by
$$
\CD(\GH_m)_{j,k}(x)=\bra\psi_j|k_{\bar{x}}^{(m)}\ket\bra k_{\bar{x}}^{(m)}|\psi_k\ket=\overline{\psi_j(\bar{x})}\psi_k(\bar{x}),\qquad\forall x\in\M.
$$
Thus $\CD(\GH_m)$ coincides with $\FS(\GH_m)$. If we take the Parseval frame for $\GH_m$ given by $(\psi_j)_{j\in\J}=(e_\mathbf{k})_{|\mathbf{k}|=m}$ where $e_1,\dots,e_n$ is the standard basis for $\GH_1=\C^n$ then we see that
 $$
\CD(\GH_\N)(r\zeta)=\sum_{m\in\N_0}r^{2m}\sum_{|\mathbf{j}|=m=|\mathbf{k}|}\FS(\GH_m)([\zeta]).
$$


\subsubsection{Higher-rank Cowen--Douglas bundles}

Throughout this section, $\GH^E_\N$ is a graded quotient module of $\GH_\N\otimes\C^N$ and $P_E$ is the projection onto $\GH^E_\N$. The backward shift $S^*$ on $\GH_\N\otimes\C^N$ restricts to a row contraction $S_E^*$ on the Hilbert subspace $\GH^E_\N$. We shall study the commuting operator tuple $S_E^*$ with the Cowen--Douglas approach. This amounts to looking at the family of eigenspaces of $S^*_E$,
$$
\Ei_{\rm CD}(v):=\Ker(S_E^*-v\bone),\qquad\forall v\in\B,
$$
and how they vary with $v$. If $S_E^*$ were in the Cowen--Douglas class $\Bi_r(\B)$ for some $r\geq 1$ then the $\Ei_{\rm CD}(v)$'s would be the fibers of a holomorphic vector bundle on $\B$. However, we shall see that this is too strong an assumption if we want to use operator theory to study vector bundles over $\M$. The ``correct'' condition for projective geometry is instead to ask for membership in the Cowen--Douglas class $\Bi_r(\B\setminus\{0\})$, i.e. we have to allow $\Ei_{\rm CD}$ to be singular at $0$. 


The eigenvectors of the backward shift $S^*$ on $\GH_\N\otimes\C^N$ are of the form $k_v\otimes\xi$ with $v\in\B$ and $\xi\in\C^N$,
$$
S^*(k_v\otimes\xi)=\bar{v}k_v\otimes\xi.
$$
The joint spectrum of the tuple $S^*$ is equal to the joint point spectrum, which is $\sigma(S^*)=\sigma_p(S^*)=\B$. The multiplicity of each eigenvalue is equal to $N$. 

Since $S_E^*$ is just the restriction of $S^*$, the vector spaces $\Ker(S_E^*-v\bone)=\Ran(S_E-\bar{v}\bone)^\perp$ is a subspace of $k_{\bar{v}}\otimes\C^N$ for each $v\in\B$, and so finite-dimensional. In particular, $\Ran(S_E-{\bar{v}}\bone)$ is a closed subspace of $\GH^E_\N$. Clearly the eigenvectors of $S_E^*$ span $\GH^E_\N$. So the condition that $S_E^*$ is in class $\Bi_r(\B\setminus\{0\})$ is the same as asking for the consancy of the function $\dim\Ker(S_E^*-v\bone)$ in $v\in\B\setminus\{0\}$.

Since $\GH^E_\N$ is an $S^*$-invariant subspace, it is of the form
$$
\GH^E_\N=\overline{\mathrm{span}}\{k_v\otimes\xi|\ (v,\xi)\in\E\}
$$
for some closed subset $\E\subset \B\times\C^N$. But since the coherent vectors $k_v$ are not orthogonal, there are many such sets $\E$. We will will adopt a special notation for the largest possible choice of $\E$, namely
$$
\E_{\rm CD}:=\{(v,\xi)\in\B\times\C^N|\ S_E^*(k_{\bar{v}}\otimes\xi)=vk_{\bar{v}}\otimes\xi\}.
$$ 
Define
$$
\E_{\rm CD}(v):=\{\xi\in\C^N|,\ (v,\xi)\in\E_{\rm CD}\},
$$
so that $\E_{\rm CD}=\bigsqcup_{v\in\B}\E_{\rm CD}(v)$. 
Since $\Ker(S_E-v\bone)\subset\GH^E_\N$ we must thus have $\Ei_{\rm CD}(v)=\mathrm{span}\{k_{\bar{v}}\otimes\xi|\ \xi\in\E_{\rm CD}(v)\}$. Let us record this fact:
\begin{prop}\label{tensorprop}
For each $v\in\B$ there is a vector space $\E_{\rm CD}(v)\subset\C^N$ such that
$$
\Ei_{\rm CD}(v)=\C k_{\bar{v}}\otimes\E_{\rm CD}(v).
$$
\end{prop}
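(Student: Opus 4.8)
The plan is to reduce the statement to the single observation that $\GH^E_\N$ is $S^*$-invariant, so that $S_E^*$ is \emph{literally} the restriction of $S^*=S^*\otimes\bone_N$ to $\GH^E_\N$ (no compression by $P_E$ occurs). Granting this, for every $v\in\B$ one has
$$
\Ei_{\rm CD}(v)=\Ker(S_E^*-v\bone)=\Ker(S^*-v\bone)\cap\GH^E_\N,
$$
where the kernel on the right is taken inside the ambient space $\GH_\N\otimes\C^N$; so the whole proposition becomes a computation of that ambient kernel together with a routine tensor-product bookkeeping.

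First I would compute $\Ker(S^*-v\bone)$ inside $\GH_\N\otimes\C^N$. For each coordinate $\alpha$ one has $\Ker(S_\alpha^*\otimes\bone_N-v_\alpha\bone)=\Ker(S_\alpha^*-v_\alpha\bone)\otimes\C^N$, and intersecting over $\alpha$ — using that $(W_1\otimes\C^N)\cap(W_2\otimes\C^N)=(W_1\cap W_2)\otimes\C^N$ for subspaces $W_i\subset\GH_\N$, which is immediate on a basis of $\C^N$ — gives
$$
\Ker(S^*-v\bone)=\Big(\bigcap_{\alpha=1}^n\Ker(S_\alpha^*-v_\alpha\bone)\Big)\otimes\C^N=\C k_{\bar v}\otimes\C^N,
$$
the last equality being the one-variable fact $\Ker(S^*-v\bone)=\C k_{\bar v}$ recalled in \S\ref{reflinesec}. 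Hence $\Ei_{\rm CD}(v)=(\C k_{\bar v}\otimes\C^N)\cap\GH^E_\N$.

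Next I would identify the set $\E_{\rm CD}(v)$ defined above with $\{\xi\in\C^N\ :\ k_{\bar v}\otimes\xi\in\GH^E_\N\}$. Indeed $S^*(k_{\bar v}\otimes\xi)=v\,k_{\bar v}\otimes\xi$ holds for \emph{every} $\xi\in\C^N$ by the eigenvector formula for the backward shift, so the defining condition $S_E^*(k_{\bar v}\otimes\xi)=v\,k_{\bar v}\otimes\xi$ is satisfied precisely when $k_{\bar v}\otimes\xi$ lies in the domain $\GH^E_\N$ of $S_E^*$. In particular $\E_{\rm CD}(v)$ is a linear subspace of $\C^N$, being the preimage of the subspace $\GH^E_\N$ under the linear injection $\C^N\to\GH_\N\otimes\C^N$, $\xi\mapsto k_{\bar v}\otimes\xi$. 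Finally I would verify the two inclusions in $(\C k_{\bar v}\otimes\C^N)\cap\GH^E_\N=\C k_{\bar v}\otimes\E_{\rm CD}(v)$: any element of the left-hand side has the form $k_{\bar v}\otimes\xi$ with $k_{\bar v}\otimes\xi\in\GH^E_\N$, i.e. $\xi\in\E_{\rm CD}(v)$; conversely $\C k_{\bar v}\otimes\E_{\rm CD}(v)\subseteq\GH^E_\N$ by definition of $\E_{\rm CD}(v)$, and it sits in $\C k_{\bar v}\otimes\C^N$ trivially. Combining the three displays yields $\Ei_{\rm CD}(v)=\C k_{\bar v}\otimes\E_{\rm CD}(v)$.

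There is no substantive obstacle: the argument is entirely linear-algebraic once one notes that $S^*$-invariance turns $S_E^*$ into an honest restriction. The only place calling for a word of care is the interchange of the joint kernel with tensoring by $\C^N$, which is harmless because $\C^N$ is finite-dimensional.
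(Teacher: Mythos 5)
Your proposal is correct and follows essentially the same route as the paper, which records the proposition from the preceding discussion: since $\GH^E_\N$ is $S^*$-invariant, $S_E^*$ is literally the restriction of $S^*$, so $\Ker(S_E^*-v\bone)$ sits inside the ambient joint kernel $\C k_{\bar v}\otimes\C^N$ and intersecting with $\GH^E_\N$ yields $\C k_{\bar v}\otimes\E_{\rm CD}(v)$. Your explicit tensor-product bookkeeping for the ambient kernel is just a spelled-out version of the eigenvector formula $S^*(k_v\otimes\xi)=\bar v k_v\otimes\xi$ that the paper asserts directly.
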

 Let $\Theta_E:\B\times\ell^2(\N_0)\to\B\times\C^N$ be any multiplier such that the associated multiplication operator $M_{\Theta_E}\in\Bi(\GH_\N\otimes\C^N)$ has range equal to the orthogonal complement of $\GH^E_\N$. 
The adjoint operator $M_{\Theta_E}^*$ acts on coherent vectors as
\begin{equation}\label{MThetastareq}
M_{\Theta_E}^*(k_v\otimes\xi)=k_v\otimes\Theta_E(v)^*\xi.
\end{equation}
So we have $M_{\Theta_E}^*(k_v\otimes\xi)=0$ if and only if $\Theta_E(v)^*\xi=0$. On the other hand, the relation $\GE_\N=\Ker M_{\Theta_E}^*$ gives
$$
k_{\bar{v}}\otimes\xi\in\Ker M_\Theta^*\iff (v,\xi)\in\E_{\rm CD},
$$
Thus \eqref{MThetastareq} gives
$$
\E_{\rm CD}(v)=\Ker\Theta_E^*(\bar{v}),
$$
and, for each $v\in\B$,
$$
\Ker(S_E^*-v\bone)=\C k_{\bar{v}}\otimes\Ker\Theta_E^*(\bar{v})
$$ 
(cf. \cite[Remark 1.2]{KwTr1}).

The analyticity of $\Theta_E$ and the finite-dimensionality of $\E_{\rm CD}(v)$ for each $v$ ensure that the $\E_{\rm CD}(v)$'s form the holmorphic linear space of a coherent analytic sheaf over $\B$ in the sense of \cite[\S1.6]{Fisc1}. Therefore also the $\Ei_{\rm CD}(v)$'s form a coherent analytic sheaf $\Ei_{\rm CD}$ over $\B$. We call $\Ei_{\rm CD}$ the \textbf{Cowen--Douglas sheaf} of the quotient module $\GH^E_\N$.

\subsection{Algebraic aspects}

\subsubsection{Graded $\Ai$-modules and coherent sheaves}\label{alggradmodsec}

Let $\Ai=\bigoplus_{m\in\N_0}H^0(\M;\Li^m)$ be the graded coordinate ring of the embedded variety $\M\subset\C\Pb^{n-1}$. The Fock space $\GH_\N$ is the completion of $\Ai$ in the inner product of $\GH^{\vee\N}$ and $H^0(\M;\Li^m)$ is the vector space underlying the Hilbert space $\GH_m$ for each $m\in\N_0$. 


Given a quotient module $\GH^E_\N$ of $\GH_\N\otimes\C^N$ as before, define
$$
E_\N:=\GH^E_\N\cap(\Ai\otimes\C^N).
$$
Then $E_\N$ is the graded $\Ai$-module whose completion in the inner product of $\GH_\N\otimes\C^N$ is equal to $\GH^E_\N$. 
Note that $E_\N$ is isomorphic to a quotient of $\Ai\otimes\C^N$ and thus finitely generated. 

Recall that for each $\Ai$-module one can associate in a canonical fashion an algebraic sheaf on $\V:=\Spec\Ai$ \cite{Serr2}: 
\begin{dfn}
Let $E$ be an $\Ai$-module. The \textbf{Serre sheaf} of $E$ is the $\Oi_\V$-module
$$
\Ei_\V:=E\otimes_\Ai\Oi_\V.
$$
If $E=E_\N$ is a graded $\Ai$-module we can also define an $\Oi_\M$-module $\Ei$ (also referred to as the Serre sheaf of $E_\N$) by
$$
\Ei:=E_\N\otimes_\Ai\Oi_\M.
$$
We shall denote by $\Ei_{\V\setminus\{0\}}$ the restriction of $\Ei_\V$ to $\V\setminus\{0\}$. 
\end{dfn}

If we for $\alpha\in\{1,\dots,n\}$ let $\U_\alpha\subset\V$ be the open set where the coordinate function $Z_\alpha\in\Ai$ is nonzero then $\Oi_\V(\U_\alpha)=\Ai_{Z_\alpha}$ is the localization of the ring $\Ai$ at $Z_\alpha$. So 
$$
\Ei_\V(\U_\alpha)=E\otimes_\Ai\Oi_\V(\U_\alpha)=E\otimes_\Ai\Ai_{Z_\alpha}=E_{Z_\alpha}
$$
is the the module of fractions of $E$ with denominator $Z_\alpha$. If we denote by $\U_\alpha\subset\M$ also the projectivization of $\U_\alpha\subset\V\setminus\{0\}$ then $\Oi_\M(\U_\alpha)=\Ai_{(Z_\alpha)}$ is the homogeneous localization of the ring $\Ai$ at $Z_\alpha$. So the Serre sheaf $\Ei$ on $\M$ of a graded module $E_\N$ can be described as
$$
\Ei(\U_\alpha)=E_\N\otimes_\Ai\Oi_\M(\U_\alpha)=E_\N\otimes_\Ai\Ai_{(Z_\alpha)}=(E_\N)_{(Z_\alpha)},
$$
i.e. by taking homogeneous localizations of the graded module $E_\N$. 

Let $E_\N$ be a graded $\Ai$-module with Serre sheaves $\Ei$ and $\Ei_{\V\setminus\{0\}}$ on $\M$ and $\V\setminus\{0\}$ respectively, and consider the quotient map $\pi:\V\setminus\{0\}\to\M$. Evidently 
$$
\pi^{-1}(E_\N\otimes_\Ai\Oi_\M)\otimes_{\pi^{-1}\Oi_\M}\Oi_{\V\setminus\{0\}}=E_\N\otimes_\Ai\Oi_{\V\setminus\{0\}}.
$$
That is,
$$
\pi^*\Ei=\Ei_{\V\setminus\{0\}}.
$$

\begin{Lemma}[{\cite{Serr2}}]\label{Serrelemma}
The Serre sheaf of a (graded) $\Ai$-module $E$ is a coherent as $\Oi_\V$-module (or $\Oi_\M$-module) if and only if $E$ is finitely generated. The module $E$ identifies with the module of global holomorphic sections of its Serre sheaf, 
$$
H^0(\V;\Ei_\V)=E.
$$
If $E_\N$ is a graded $\Ai$-module then modulo finite-dimensional $\Ai$-modules we have
$$
H^0(\V\setminus\{0\};\Ei_{\V\setminus\{0\}})=\bigoplus_{m\in\N_0}H^0(\M;\Ei(m))\cong E_\N
$$
as graded $\Ai$-modules. Conversely, if we start with a coherent $\Oi_\M$-module $\Ei$ then the Serre sheaf of graded $\Ai$-module $E_\N:=\bigoplus_{m\in\N_0}H^0(\M;\Ei(m))$ is isomorphic to $\Ei$ as $\Oi_\M$-module.
\end{Lemma}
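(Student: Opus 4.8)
This is Serre's classical dictionary \cite{Serr2} between graded modules over the homogeneous coordinate ring $\Ai$ and coherent sheaves, transported across the affine cone $\V=\Spec\Ai$, its puncture $\V\setminus\{0\}$, and the projective quotient $\M=\Proj\Ai$; the plan is to isolate the ingredients and assemble them, only one step being substantive. \emph{The affine statements.} Since $\Ai$ is a finitely generated $\C$-algebra it is Noetherian, so $E\mapsto\Ei_\V=E\otimes_\Ai\Oi_\V$ is the usual quasi-coherent sheaf functor on $\Spec\Ai$. Two standard facts give the first two assertions: $\Gamma(\V;\Ei_\V)=E$ for every $\Ai$-module $E$ (sections of a quasi-coherent sheaf on an affine scheme recover the module), and $\Ei_\V$ is coherent iff $E$ is finitely generated (one direction from a finite presentation, the converse because a coherent sheaf on a Noetherian affine scheme has finitely generated global sections). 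For a graded module $E_\N$ and the sheaf $\Ei$ on $\M$ the analogous facts are again \cite{Serr2}; the only thing I would re-derive is the local description $\Ei(\U_\alpha)=(E_\N)_{(Z_\alpha)}$ already recorded in \S\ref{alggradmodsec}, from which coherence ($E_\N$ finitely generated $\Rightarrow$ each $(E_\N)_{(Z_\alpha)}$ finitely generated over $\Ai_{(Z_\alpha)}$) and the identity $\pi^*\Ei=\Ei_{\V\setminus\{0\}}$ are immediate.

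\emph{Going through the punctured cone.} The map $\pi\colon\V\setminus\{0\}\to\M$ is affine: $\V\setminus\{0\}$ is the complement of the zero section in the total space of $\Oi_\M(-1)$, and $\pi_*\Oi_{\V\setminus\{0\}}=\bigoplus_{m\in\Z}\Oi_\M(m)$ as a sheaf of $\Un(1)$-graded $\Oi_\M$-algebras. By the projection formula and vanishing of higher direct images, for $\Ei_{\V\setminus\{0\}}=\pi^*\Ei$ one obtains, with the grading coming from the $\Un(1)$-action,
$$
H^0(\V\setminus\{0\};\Ei_{\V\setminus\{0\}})=\bigoplus_{m\in\Z}H^0(\M;\Ei(m)).
$$
On the other hand, restricting the quasi-coherent sheaf $\Ei_\V$ from $\V$ to the open complement of the cone point $0$ (cut out by $\Gm:=\Ai_+$) gives the local-cohomology exact sequence
$$
0\to H^0_\Gm(E_\N)\to E_\N\to H^0(\V\setminus\{0\};\Ei_{\V\setminus\{0\}})\to H^1_\Gm(E_\N)\to 0 .
$$
Combining the two, the natural graded map $E_\N\to\bigoplus_{m\in\Z}H^0(\M;\Ei(m))$ has kernel $H^0_\Gm(E_\N)$ and cokernel $H^1_\Gm(E_\N)$, so the asserted isomorphism ``modulo finite-dimensional $\Ai$-modules'' is exactly the claim that these two local cohomology modules are finite-dimensional over $\C$. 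This also explains the sum being over $\N_0$ rather than $\Z$: in the situation of the paper $E_\N$ is a submodule of the free module $\Ai^{\oplus N}$, hence $\Ei$ is torsion-free, and a torsion-free coherent sheaf on the connected projective variety $\M$ has no global sections after a negative twist. For the final ``converse'' sentence one starts from a coherent $\Oi_\M$-module $\Ei$, sets $E_\N:=\bigoplus_mH^0(\M;\Ei(m))$, and invokes Serre's theorem $\widetilde{E_\N}\cong\Ei$ of \cite{Serr2}, checked on each $\U_\alpha$, where it says the degree-zero part of the localization of $E_\N$ at $Z_\alpha$ recovers $\Ei(\U_\alpha)$.

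\emph{Main obstacle.} Everything except the finiteness of $H^0_\Gm(E_\N)$ and $H^1_\Gm(E_\N)$ is formal manipulation of Serre's dictionary. The $H^0$ part is harmless: $H^0_\Gm(E_\N)$ is the $\Gm$-power-torsion submodule of the finitely generated module $E_\N$, so it is killed by some $\Gm^k$, hence is finitely generated over the Artinian ring $\Ai/\Gm^k$, hence finite-dimensional. The real point is $H^1_\Gm(E_\N)$, which for a general graded module can be infinite-dimensional — it encodes precisely the failure of a graded module to be reconstructible from its sheaf together with all twists — and this is where structural input beyond \cite{Serr2} enters. Here I would use that $E_\N$ embeds in a free module and that $\Ai$, being the homogeneous coordinate ring of a coadjoint orbit, is Cohen--Macaulay, so $\depth_\Gm\Ai\geq 2$ and $H^1_\Gm(\Ai^{\oplus N})=0$; the long exact sequence of $0\to E_\N\to\Ai^{\oplus N}\to Q\to 0$ then realizes $H^1_\Gm(E_\N)$ as a quotient of $H^0_\Gm(Q)$, which is finite-dimensional by the $H^0$ case. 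That finiteness is the only step that is not bookkeeping.
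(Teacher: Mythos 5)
The paper offers no proof of this lemma --- it is quoted from \cite{Serr2} --- so there is no internal argument to compare yours against; what matters is whether your reconstruction is sound. Its skeleton is the standard one and is fine: the affine statements, the local description $\Ei(\U_\alpha)=(E_\N)_{(Z_\alpha)}$, the computation $H^0(\V\setminus\{0\};\pi^*\Ei)=\bigoplus_{m\in\Z}H^0(\M;\Ei(m))$ via the affine map $\pi$, and the four-term local cohomology sequence $0\to H^0_\Gm(E_\N)\to E_\N\to H^0(\V\setminus\{0\};\Ei_{\V\setminus\{0\}})\to H^1_\Gm(E_\N)\to 0$ together with finiteness of $H^0_\Gm(E_\N)$.

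The step you single out as the only substantive one is, however, justified by an assumption that does not hold in the paper's setting. In \S\ref{alggradmodsec} the module $E_\N$ is a \emph{quotient} of $\Ai\otimes\C^N$ (it is only a vector subspace of $\Ai\otimes\C^N$; its $\Ai$-action is the compressed one), not a submodule of a free module, and such quotients can have torsion --- e.g. $E_\N=\Ai/I_\N$ for the ideal of a subvariety $\M_E\subset\M$, which the paper uses explicitly in its $N=1$ example. So the argument ``embed $E_\N$ in $\Ai^{\oplus N}$, use $\depth_\Gm\Ai\geq 2$, realize $H^1_\Gm(E_\N)$ as a quotient of $H^0_\Gm(Q)$'' is not available in the generality of the lemma. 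Fortunately it is also unnecessary for the comparison with the $\N_0$-graded sum: for \emph{any} finitely generated graded module, Serre's theorem gives that $E_m\to H^0(\M;\widetilde{E_\N}(m))$ is an isomorphism for $m\gg 0$, and each graded piece on both sides is finite-dimensional, so the kernel and cokernel of $E_\N\to\bigoplus_{m\in\N_0}H^0(\M;\Ei(m))$ are finite-dimensional with no depth or Cohen--Macaulay input at all. Where a hypothesis genuinely enters is the first equality: $H^0(\V\setminus\{0\};\Ei_{\V\setminus\{0\}})$ is the sum over all $m\in\Z$, and its negative-degree part is infinite-dimensional exactly when $\Ei$ has a subsheaf with zero-dimensional support (try $E_\N=\Ai\oplus\Ai/\Gp$ with $\Gp$ the homogeneous ideal of a point); no property of $\Ai$ repairs this. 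Your torsion-freeness remark is the right patch --- and it does cover the paper's applications, where $\Ei$ is torsion-free or locally free --- but it should be imposed as a hypothesis on $\Ei$ (equivalently on $E_\N$), not derived from the incorrect claim that $E_\N$ sits inside a free module; as literally stated, the lemma is simply loose on this point.
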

Thus replacing $E_\N$ by $\tilde{E}_\N:=\bigoplus_{m\in\N_0}H^0(\M;\Ei(m))$ we get a Serre sheaf $\tilde{\Ei}$ on $\M$ which is isomorphic to $\Ei$. However, $\tilde{\Ei}_\V:=\tilde{E}_\N\otimes_\Ai\Oi_\V$ can differ from $\Ei_\V$ at the origin $0\in\V$ where the finite-dimensional distinction between $E_\N$ and $\tilde{E}_\N$ is still significant.

The Abelian category $\coh\M$ of coherent algebraic sheaves on $\M$ can be identified with the quotient category 
$$
\qgr(\Ai)=\gr(\Ai)/\operatorname{tors}(\Ai),
$$
where $\gr(\Ai)$ is the category of of finitely generated graded $\Ai$-modules and $\operatorname{tors}(\Ai)$ is the subcategory of  modules 
which are finite-dimensional as vector spaces over $\C$ \cite[\S59]{Serr2}. 
The quotient functor $\gr(\Ai)\to\qgr(\Ai)$ is exact, while the global section functor $\qgr(\Ai)\ni\Ei\to\bigoplus_mH^0(\M;\Ei(m))\in\gr(\Ai)$ is only left-exact. Thus if $0\to\Ii\to\Ei\to\Fi\to 0$ is a short exact sequence of quasicoherent sheaves then we get an exact sequence $0\to I_\N\to E_\N\to F_\N$ of graded $\Ai$-modules by applying the global section functor. So even if $\Fi$ is globally generated, i.e. if we have a surjection $\Oi_\M\otimes\C^N\to\Fi\to 0$ for some $N$, we cannot conclude that $F_\N:=\bigoplus_mH^0(\M;\Fi(m))$ is a graded quotient of $\Ai\otimes\C^N$. There is however a graded $\Ai$-module quotient $\Ai\otimes\C^N\to\tilde{F}_\N\to 0$ with $\tilde{F}_m=F_m$ for $m\gg 0$ and the Serre sheaf of $\tilde{F}_\N$ equals $\Fi$. 

If $\GE_\N$ is a graded quotient module of $\GH_\N\otimes\C^N$ then the shift $S_E$ on $\GE_\N$ satisfies
$$
S_ES_E^*:=\sum^n_{\alpha=1}S_{E,\alpha}S_{E,\alpha}^*=P_E\sum^n_{\alpha=1}S_\alpha S_\alpha^*\big|_{\GE_\N}=P_E(\bone-p_0\otimes\bone_N)|_{\GE_\N}
$$
so that $S_ES_E^*$ restricted to $\GE_m$ equals the identity operator on $\GE_m$ for all $m\ne 0$. As in \cite{An6} (where $N=1$) one can use this fact to construct explicit isometric embeddings (cf. Eq. \eqref{iotaE})
\begin{equation}\label{modsubprod}
\GE_l\hookrightarrow\GE_m\otimes\GH_{l-m},\qquad\forall l\geq m\geq 0.
\end{equation}
 The ``subproduct'' structure \eqref{modsubprod} is a generalization of the subproduct property of $\GH_\bullet$, which reads
 $$
 \GH_l\hookrightarrow\GH_m\otimes\GH_{l-m},\qquad\forall l\geq m\geq 0.
 $$
 
For an arbitrary coherent $\Oi_\M$-module $\Ei$ it is not necessarily the case that $\bigoplus_{m\in\N_0}H^0(\M;\Ei(m))$ is a graded quotient of $\Ai\otimes\C^N$; in this case from \eqref{modsubprod} we have canonical embeddings of vector spaces
\begin{equation}\label{zeroregular}
H^0(\M;\Ei(l))\hookrightarrow H^0(\M;\Ei(m))\otimes H^0(\M;\Li^{l-m}),\qquad\forall l\geq m\in\N_0,
\end{equation}
and this is a characteristic of sheaves $\Ei$ which are regular in the sense of Castelnuovo--Mumford \cite[Thm. 1.8.3]{Laza1}. These are in particular globally generated. 

\subsubsection{Serre sheaf versus Cowen--Douglas sheaf}

Let $\GE_\N$ be a graded quotient of the standard Hilbert module $\GH_\N\otimes\GE_0$ for some finite-dimensional Hilbert space $\GE_0$. Let $S_E$ be the shift on $\GE_\N$. As mentioned, the vector spaces $\Ker(S_E^*-v\bone)$ form a coherent analytic sheaf $\Ei_{\rm CD}$. Let $\Gm_v$ be the ideal of functions in $\Ai$ vanishing at $v$. 
The vector space $\Ker(S_E^*-v\bone)$ is linearly isomorphic to the annihilator of $\Ran(S_E-v\bone)$ in the dual space of $\GE_\N$, and the latter is linearly isomorphic to $(\GE_\N/\Gm_v\GE_\N)^*$, so
$$
\Ker(S_E^*-v\bone)\cong(\GE_\N/\Gm_v\GE_\N)^*.
$$

Let $E_\N$ be the graded $\Ai$-module whose completion equals $\GE_\N$, i.e. $E_\N=\GE_\N\cap(\Ai\otimes\GE_0)$. The fibers of the Serre sheaf $\Ei_\B$ are given by 
$$
\Ei_\B(v)=E_\N\otimes_\Ai\C_v\cong E_\N/\Gm_vE_\N.
$$
In general $\GE_\N/\Gm_v\GE_\N$ and $E_\N/\Gm_vE_\N$ may not be isomorphic. 
In this section we compare the Cowen--Douglas sheaf of $\GE_\N$ with the Serre sheaf of $E_\N$. 

Let $\GI_\N$ be the orthogonal complement of $\GE_\N$; thus $\GI_\N$ is a graded submodule of $\GH_\N\otimes\GE_0$ and equals the completion of a graded $\Ai$-submodule $I_\N$ of $\Ai\otimes\GE_0$. 

\begin{Lemma}[{cf. \cite[p. 1690]{Fang4}}]
Define the \textbf{fiber space} over $v\in\B$ of the Hilbert module $\GI_\N$ to be the vector space 
$$
\GI_\N(v):=\{\psi(v)\in\GE_0|\ \psi\in\GI_\N\}.
$$
Then the map
\begin{equation}\label{eviso}
\GI_\N/(\GI_\N\cap(\Gm_v\GH_\N\otimes\GE_0))\to\GI_\N(v)
\end{equation}
induced by evaluation of functions at $v$ is an isomorphism, and 
we have a short exact sequence 
\begin{equation}\label{FangSESifGle}
0\to\GI_\N(v)^*\to\Oi_{\rm CD}(v)\otimes\GE_0\to\Ei_{\rm CD}(v)\to 0
\end{equation}
of vector spaces. 
\end{Lemma}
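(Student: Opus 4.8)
\emph{Strategy.} I would prove the two assertions separately: the isomorphism \eqref{eviso} by a reproducing-kernel computation, and the exact sequence \eqref{FangSESifGle} by an orthogonal-decomposition argument carried out inside the ambient eigenspace $\Oi_{\rm CD}(v)\otimes\GE_0=\C k_{\bar v}\otimes\GE_0$.

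\emph{The isomorphism \eqref{eviso}.} Consider the evaluation map $\mathrm{ev}_v\colon\GH_\N\otimes\GE_0\to\GE_0$, $\psi\mapsto\psi(v)$; it is bounded by the reproducing property and its restriction to $\GI_\N$ has image $\GI_\N(v)$ by the very definition of $\GI_\N(v)$. Hence it suffices to show $\Ker(\mathrm{ev}_v|_{\GH_\N\otimes\GE_0})=\Gm_v\GH_\N\otimes\GE_0$, for then $\Ker(\mathrm{ev}_v|_{\GI_\N})=\GI_\N\cap(\Gm_v\GH_\N\otimes\GE_0)$ and \eqref{eviso} is the first isomorphism theorem. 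Since $\GE_0$ is finite-dimensional this reduces to the scalar case, where $\{\psi\in\GH_\N:\psi(v)=0\}$ is the closed hyperplane $\{k_{\bar v}\}^\perp$. On the other hand the maximal ideal $\Gm_v$ of $\Ai$ is generated by $Z_1-v_1,\dots,Z_n-v_n$, so $\Gm_v\GH_\N=\sum_\alpha(S_\alpha-v_\alpha)\GH_\N$ is the range of the row operator $(S_1-v_1\bone,\dots,S_n-v_n\bone)$; this range is closed because $S^*\in B_1(\B)$ (condition (i); equivalently, because $S$ is essentially normal, Theorem \ref{essvectthm}), and its orthogonal complement is $\Ker(S^*-v\bone)=\C k_{\bar v}$. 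Therefore $\Gm_v\GH_\N=\{k_{\bar v}\}^\perp=\{\psi:\psi(v)=0\}$, as required.

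\emph{The exact sequence \eqref{FangSESifGle}.} Put $V_v:=\Oi_{\rm CD}(v)\otimes\GE_0=\C k_{\bar v}\otimes\GE_0=\Ker\big((S\otimes\bone)^*-v\bone\big)$, a subspace of $\GH_\N\otimes\GE_0$ of dimension $N=\dim\GE_0$. Since $\GH^E_\N$ is coinvariant (Lemma \ref{PEinNilemma}), $S_E^*$ is the restriction of $(S\otimes\bone)^*$ to $\GH^E_\N$, so $\Ei_{\rm CD}(v)=\Ker(S_E^*-v\bone)=V_v\cap\GH^E_\N$, which by Proposition \ref{tensorprop} equals $\C k_{\bar v}\otimes\E_{\rm CD}(v)$. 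Writing the orthogonal splitting $\GH_\N\otimes\GE_0=\GI_\N\oplus\GH^E_\N$ and using the elementary identity $(U\cap W)^\perp\cap U=P_U(W^\perp)$, valid for a finite-dimensional subspace $U$ (applied with $U=V_v$, $W=\GH^E_\N$, $W^\perp=\GI_\N$), the orthogonal complement of $\Ei_{\rm CD}(v)$ inside $V_v$ equals $P_{V_v}(\GI_\N)$, where $P_{V_v}=|k_{\bar v}\ket\bra k_{\bar v}|\otimes\bone$ is the orthogonal projection onto $V_v$. This already gives a short exact sequence $0\to P_{V_v}(\GI_\N)\to V_v\to\Ei_{\rm CD}(v)\to 0$, the first arrow being the inclusion of the orthocomplement and the last the orthogonal projection onto $\Ei_{\rm CD}(v)$. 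It remains to identify $P_{V_v}(\GI_\N)$ canonically with $\GI_\N(v)^*$: the restriction $P_{V_v}|_{\GI_\N}$ has kernel $\GI_\N\cap V_v^\perp=\GI_\N\cap(\{k_{\bar v}\}^\perp\otimes\GE_0)$, so by the first part $P_{V_v}(\GI_\N)$ is a copy of the fiber $\GI_\N(v)$ paired with it through the inner product of $\GE_0$; equivalently, using the Beurling multiplier $\Theta_E$ of Lemma \ref{PEinNilemma} (so $\bone-P_E=M_{\Theta_E}M_{\Theta_E}^*$ and $\GI_\N=\Ran M_{\Theta_E}$) together with \eqref{MThetastareq}, one has $\E_{\rm CD}(v)=\Ker\Theta_E(\bar v)^*$, whence the kernel term is $\C k_{\bar v}\otimes\overline{\Ran\Theta_E(\bar v)}$, while evaluating $\GI_\N$ at $v$ gives $\GI_\N(v)=\Ran\Theta_E(v)$, and the $\GE_0$-inner product furnishes the nondegenerate pairing identifying these with $\GI_\N(v)^*$. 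Splicing this identification into the sequence above yields \eqref{FangSESifGle}.

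\emph{Main obstacle.} The algebraic content is light — after the identifications are fixed, \eqref{FangSESifGle} is simply the orthogonal decomposition of the $N$-dimensional space $V_v$. The genuinely delicate point is the \emph{naturality} of those identifications: the bookkeeping that relates evaluation at $v$ to the eigenvector $k_{\bar v}$ (which has eigenvalue $v$), and hence to $\Theta_E(\bar v)$, and that pins the kernel term down to $\GI_\N(v)^*$ rather than to a conjugate or dual variant. Tied to this is the necessity of knowing that $\Gm_v\GH_\N$ is actually closed, so that the fibers that appear are the geometric fibers and $\mathrm{ev}_v$ is a topological quotient map; this is precisely where the Cowen--Douglas / essential-normality hypothesis is used.
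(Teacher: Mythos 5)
Your proof is correct, and the first half is essentially the paper's own argument: both of you compute that the kernel of the evaluation $\mathrm{ev}_v$ on $\GI_\N$ is $\GI_\N\cap(\Gm_v\GH_\N\otimes\GE_0)$ via the reproducing property, the only difference being that the paper simply asserts the identity $\Ker(S^*-\bar v\bone)^\perp=\Gm_v\GH_\N$, whereas you justify it through closedness of the range of the row operator $(S_1-v_1\bone,\dots,S_n-v_n\bone)$. For the second half the routes genuinely diverge. The paper dualizes the short exact sequence of module quotients
$$
0\to\GI_\N/(\GI_\N\cap(\Gm_v\GH_\N\otimes\GE_0))\to(\GH_\N/\Gm_v\GH_\N)\otimes\GE_0\to\GE_\N/\Gm_v\GE_\N\to 0
$$
and invokes the canonical identifications $\Oi_{\rm CD}(v)\cong(\GH_\N/\Gm_v\GH_\N)^*$ and $\Ei_{\rm CD}(v)\cong(\GE_\N/\Gm_v\GE_\N)^*$; this is shorter, purely module-theoretic, and fits the surrounding Serre-sheaf discussion, but it produces the arrows with $\Ei_{\rm CD}(v)$ naturally appearing as a subobject, leaving the direction stated in \eqref{FangSESifGle} to the reader. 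You instead work entirely inside the finite-dimensional eigenspace $\C k_{\bar v}\otimes\GE_0$, using the identity $(U\cap W)^\perp\cap U=P_U(W^\perp)$ (correct for $U$ finite-dimensional and $W$ closed) to exhibit the orthocomplement of $\Ei_{\rm CD}(v)$ as $P_{V_v}(\GI_\N)$ and then identify it with $\GI_\N(v)^*$ through the $\GE_0$-pairing; this yields \eqref{FangSESifGle} in exactly the stated direction, makes explicit where the conjugations and duals enter, and isolates the analytic input. Both proofs rest on the same unproved-in-the-paper fact, so you are on no weaker ground than the author.

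One caution: your parenthetical ``equivalently, because $S$ is essentially normal'' overstates the case. Essential normality by itself does not give closed range of $S-v\bone$ at interior points $v\in\B$; what you actually need is closedness of the range of the column operator $S^*-\bar v\bone$ (equivalently, by the closed range theorem, of the row operator you use, equivalently Gleason solvability of $\GH_\N$ at $v$), which is exactly the content of the paper's assertion that $S^*\in B_{1}(\B)$ under the standard column-operator reading of condition (i), and exactly what the paper's own proof hides in the identity $\Ker(S^*-\bar v\bone)^\perp=\Gm_v\GH_\N$. Citing the $B_{1}(\B)$ membership is the right move; drop the essential-normality remark.
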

\begin{proof}
Let $v\in\B$ and let $e_v:\GH_\N\otimes\GE_0\to\GE_0$ be the evaluation at $v$, i.e. $e_v(\psi):=\psi(v)$. Clearly the restriction of $e_v$ to $\GI_\N$ is onto the fiber space $\GI_\N(v)$. Moreover, if $\psi(v)=0$ then $\bra\psi|k_v\otimes\xi\ket_{\GH_\N\otimes\GE_0}=\bra\xi|\psi(v)\ket_{\GE_0}=0$ so $\psi$ belongs to $\Ker(S^*-\bar{v}\bone)^\perp\otimes\GE_0=\Gm_v\GH_\N\otimes\GE_0$. Hence the map \eqref{eviso} is an isomorphism.

The short exact sequence
$$
0\to\GI_\N/(\GI_\N\cap(\Gm_v\GH_\N\otimes\GE_0))\to(\GH_\N/\Gm_v\GH_\N)\otimes\GE_0\to\GE_\N/\Gm_v\GE_\N\to 0
$$
then gives \eqref{FangSESifGle}. 

\end{proof}
Let $\Ii$ be the Serre sheaf of $I_\N$, so that $\rank\Ii=\lim_{m\to\infty}\dim\GI_m/\dim\GH_m$. From \cite[Thm. 1.2]{GRS1} or \cite[Lemma 16]{Fang4} we have, for each $v\in\B$, 
$$
\dim\GI_\N(v)=\rank\Ii\implies \dim(\GI_\N/\Gm_v\GI_\N)=\rank\Ii.
$$

 For $\phi\in\GI_\N$ we have $\phi(v)=0$ if and only if $\phi$ belongs to $\GI_\N\cap\Gm_v(\GH_\N\otimes\GE_0)$, so if and only if we can write 
 $$
 \phi=\sum^n_{\alpha=1}(z_\alpha-v_\alpha)\psi_\alpha
 $$  
 with $\psi_1,\dots,\psi_n\in\GH_\N\otimes\GE_0$. By definition, $\GI_\N$ is \textbf{Gleason solvable} at $v$ if and only if for each $\phi\in\GI_\N\cap\Gm_v(\GH_\N\otimes\GE_0)$ we can take the $\psi_\alpha$'s to belong to $\GI_\N$. Since $\GE_\N$ is not necessarily invariant under $S_\alpha-v_\alpha\bone$, there could at the same time be possible to choose $\psi_\alpha$ to not belong to $\GI_\N$. But we see that $\GI_\N$ is Gleason solvable at $v$ if and only if the inclusion
\begin{equation}\label{Gleasiff}
\Gm_v\GI_\N\subset\GI_\N\cap\Gm_v(\GH_\N\otimes\GE_0).
\end{equation}
is an equality. 
We have $\dim\GI_\N(v)<\rank\Ii$ iff \eqref{Gleasiff} is a proper inclusion.

Let us now look at the algebraic analogues of the above. 
We have a short exact sequence 
  $$
0\to I_\N/(I_\N\cap(\Gm_v\otimes\GE_0))\to\Ai/\Gm_v\otimes\GE_0\to\Ei_\B(v)\to 0.
 $$
The vector space $I_\N/(I_\N\cap(\Gm_v\otimes\GE_0))$ is isomorphic to $I_\N(v):=\{\phi(v)|\ \phi\in I_\N\}$ 
so there is also a short exact sequence 
 $$
0\to I_\N(v)\to\Ai/\Gm_v\otimes\GE_0\to\Ei_\B(v)\to 0.
 $$
 Thus $\Ei_\B$ is locally free at $v$ iff $\dim I_\N(v)$ does not drop from its maximal value $\rank\Ii$. 

\begin{prop}\label{SerrevsCDthm}
Let $\GE_\N$ be a graded quotient of $\GH_\N\otimes\GE_0$ with underlying graded $\Ai$-module $E_\N$, and suppose that the Serre sheaf $\Ei$ of $E_\N$ is locally free. Then the Cowen--Douglas sheaf $\Ei_{\rm CD}$ of $\GE_\N$ is locally free on $\B\setminus\{0\}$ and isomorphic to the dual of the pullback $\Ei_{\B\setminus\{0\}}$ of $\Ei$ to $\B\setminus\{0\}$, 
$$
\Ei_{\rm CD}\cong\Ei_{\B\setminus\{0\}}^*.
$$ 
\end{prop}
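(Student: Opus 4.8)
The plan is to compare, fibrewise over $v\in\B\setminus\{0\}$, the analytic exact sequence \eqref{FangSESifGle} attached to the Hilbert module $\GI_\N$ with the algebraic exact sequence $0\to I_\N(v)\to\Ai/\Gm_v\otimes\GE_0\to\Ei_\B(v)\to 0$, and then to promote the resulting fibrewise duality to an isomorphism of coherent analytic sheaves. Put $N:=\dim\GE_0$ and $r_E:=\rank\Ei$, so that $\rank\Ii=N-r_E$ from the sheaf sequence $0\to\Ii\to\Oi_\M^{\oplus N}\to\Ei\to 0$ obtained by applying the exact Serre functor to $0\to I_\N\to\Ai\otimes\GE_0\to E_\N\to 0$. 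Note at the outset that the origin genuinely has to be excluded: $\Gm_0$ is the irrelevant ideal, so $\Ei_\B(0)=E_\N\otimes_\Ai\C_0$ is the degree-zero part of the affine cone $\Spec\Ai$, of dimension typically larger than $r_E$, and likewise $\Ker(S_E^*)=\GE_\N\cap(\GH_0\otimes\GE_0)$ need not be $r_E$-dimensional.

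The first step is the fibre identity $\GI_\N(v)=I_\N(v)$ for every $v\in\B$. The inclusion $I_\N(v)\subseteq\GI_\N(v)$ is clear from $I_\N\subset\GI_\N$; conversely the evaluation $\GI_\N\ni\psi\mapsto\psi(v)\in\GE_0$ is bounded (it is $\xi\mapsto\bra\psi\,|\,k_{\bar v}\otimes\xi\ket$), $I_\N$ is dense in $\GI_\N$, and $\GE_0$ is finite-dimensional, so $\GI_\N(v)\subseteq\overline{I_\N(v)}=I_\N(v)$. Now if $\Ei$ is locally free on $\M$ then its pullback $\Ei_{\B\setminus\{0\}}$ is locally free of rank $r_E$, so $\dim\Ei_\B(v)=r_E$ for all $v\in\B\setminus\{0\}$, and the algebraic sequence forces $\dim I_\N(v)=N-r_E=\rank\Ii$, the maximal value. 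By the fibre identity $\dim\GI_\N(v)=\rank\Ii$ too, and \cite[Thm.~1.2]{GRS1} (or \cite[Lemma~16]{Fang4}) then gives that $\GI_\N$ is Gleason solvable at $v$ and $\dim(\GI_\N/\Gm_v\GI_\N)=\rank\Ii$; hence \eqref{Gleasiff} is an equality, and \eqref{FangSESifGle} shows $\dim\Ei_{\rm CD}(v)=N-\rank\Ii=r_E$ for every $v\in\B\setminus\{0\}$. Since $\Ei_{\rm CD}$ is a coherent analytic sheaf (as noted after Proposition~\ref{tensorprop}) with locally constant fibre dimension $r_E$ on $\B\setminus\{0\}$, it is locally free of rank $r_E$ there, which is the first assertion.

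For the isomorphism $\Ei_{\rm CD}\cong\Ei_{\B\setminus\{0\}}^{*}$, both sides are now locally free of rank $r_E$ on $\B\setminus\{0\}$, so a holomorphic fibrewise isomorphism suffices. Dualising the algebraic sequence $0\to I_\N(v)\to\Ai/\Gm_v\otimes\GE_0\to\Ei_\B(v)\to 0$ and matching it with the analytic sequence \eqref{FangSESifGle} through the fibre identity $\GI_\N(v)=I_\N(v)$ — the two ambient terms being identified via the trivial line $\Oi_{\rm CD}$ — yields $\Ei_{\rm CD}(v)\cong\Ei_\B(v)^{*}=\Ei_{\B\setminus\{0\}}(v)^{*}$ for each $v\in\B\setminus\{0\}$. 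As the identifications are induced by maps depending holomorphically on $v$, this is an isomorphism of holomorphic vector bundles $\Ei_{\rm CD}\cong\Ei_{\B\setminus\{0\}}^{*}$ on $\B\setminus\{0\}$, and $\Ei_{\B\setminus\{0\}}^{*}$ is the pullback of $\Ei^{*}$ since duals commute with pullback.

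The main obstacle is the third step: making the fibrewise duality into a genuine morphism of holomorphic sheaves. The obvious candidate maps — orthogonal projection of $\GE_0$ away from the fibre space $\GI_\N(v)$, or the inner-product identification $\GE_\N/\Gm_v\GE_\N\cong\Ker(S_E^*-v\bone)^{*}$ — vary only real-analytically in $v$, so the holomorphic comparison has to be built from the Beurling multiplier $\Theta_E$, using that $v\mapsto\Theta_E(\bar v)^{*}$ is holomorphic and hence $\E_{\rm CD}(v)=\Ker\Theta_E(\bar v)^{*}$ defines a holomorphic subbundle of $\Oi_{\B\setminus\{0\}}^{\oplus N}$; one must track the conjugation in the argument $\bar v$, which is precisely what makes the \emph{dual} $\Ei^{*}$, rather than $\Ei$, appear. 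A secondary point is to check that the natural map $E_\N/\Gm_vE_\N\to\GE_\N/\Gm_v\GE_\N$ is bijective at each $v\in\B\setminus\{0\}$ — equivalently $\GE_\N=E_\N+\Gm_v\GE_\N$ and $E_\N\cap\Gm_v\GE_\N=\Gm_vE_\N$ — which uses local freeness of $\Ei$ once more (the vanishing of $\operatorname{Tor}_1^{\Ai}(E_\N,\C_v)$ at $v\neq 0$) together with the Gleason solvability already obtained.
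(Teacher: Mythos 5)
Your proposal is correct and follows essentially the same route as the paper's proof: local freeness of $\Ei$ forces $\dim I_\N(v)=\rank\Ii$ on $\B\setminus\{0\}$, the fibre identity $I_\N(v)=\GI_\N(v)$ together with the Gleason-solvability equivalence turns \eqref{FangSESifGle} into a constant-rank family, and comparison with the (dualized) algebraic sequence yields $\Ei_{\rm CD}\cong\Ei_{\B\setminus\{0\}}^*$. Your density/continuity argument for $\GI_\N(v)=I_\N(v)$ and your explicit attention to the holomorphy of the final identification via $\Theta_E$ are refinements of, not departures from, the paper's argument, which gets the same identifications more tersely through the evaluation maps $e_v$.
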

\begin{proof}
Since $\Ei_{\B\setminus\{0\}}$ is locally free, $\dim I_\N(v)=\rank\Ii$ for all $v\in\B\setminus\{0\}$. The inclusion $I_\N(v)\subset\GI_\N(v)$ is thus an equality for each $v$, so that $\Ei_{\rm CD}$ is locally free on $\B\setminus\{0\}$ as well. The
algebraic vector bundle $\bigcup_{v\in\B\setminus\{0\}}I_\N/\Gm_vI_\N$ is isomorphic to $\bigcup_{v\in\B\setminus\{0\}}I_\N(v)$. Since equality holds in \eqref{Gleasiff} for each $v\in\B\setminus\{0\}$, the vector bundle $\bigcup_{v\in\B\setminus\{0\}}I_\N(v)$ is analytically isomorphic to the holomorphic vector bundle $\bigcup_{v\in\B\setminus\{0\}}\GI_\N/\Gm_v\GI_\N$ via the maps induced by the evaluation homomorphisms $e_v:\GI_\N\to\GE_0$. 
\end{proof}

We shall later see that when $\Ei_{\rm CD}$ is locally free it is $\D^\times$-equivariant (up to a factor of $\Oi_{\rm CD}$), just as $\Ei_{\B\setminus\{0\}}$. When $\Ei_{\rm CD}$ and $\Ei_{\B\setminus\{0\}}$ are isomorphic, this need not be by a $\D^\times$-equivariant isomorphism however. 
\begin{Remark}[Germ model]
Another sheaf associated to a submodule $\GI_\N$ is studied in \cite{BMP1}, namely the sheaf $\Ii_{\rm BMP}$ whose stalk at $v\in\B$ is given by
$$
\Ii_{\text{BMP},v}:=\Big\{\sum_{j=1}^k(\phi_j|_{\{v\}})\psi_j\Big|\ \phi_1,\dots,\phi_k\in\GI_\N,\ \psi_1,\dots,\psi_k\in\Oi_v,\ k\in\N\Big\}.
$$
The sheaf $\Ii_{\rm BMP}$ coincides with the ``germ model'' of $\GI_\N$ in the sense of Cheng--Fang \cite[\S4]{ChFa1}. The Cowen--Douglas sheaf $\GE_\N\otimes_\Ai\Oi_\B$ is instead the restriction to $\B\subset\V$ of the sheaf model of \cite[\S4]{ChFa1},
$$
\GE_\N\otimes_\Ai\Oi_\V=(\GE_\N\otimes\Oi_\V)/((S_E\otimes\bone-\bone\otimes Z)(\GE_\N\otimes\Oi_\V))=\Oi_\V(\GE_\N)/(S_E-Z\bone)\Oi_\V(\GE_\N).
$$

By definition $\Ii_{\rm BMP}$ is a subsheaf of $\Oi_\B\otimes\GE_0$. It coinides with the image under the map (cf. \cite[Eq. (1.1)]{BMP1})
$$
\Ii_{\rm CD}=\GI_\N\otimes_{\Oi(\B)}\Oi_\B\to(\GH_\N\otimes\GE_0)\otimes_{\Oi(\B)}\Oi_\B=\Oi_\B\otimes\GE_0.
$$
As in \cite[Eq. (1.3)]{BMP1} this gives a surjection of analytic sheaves 
$$
\Ii_{\rm CD}\to\Ii_{\rm BMP}\to 0
$$
and surjections on fibers
$$
\Ker(S_I^*-v\bone)\to\Ii_{\rm BMP}(v)\to 0.
$$
Thus, as soon as the dimensions of the fibers coincide the two sheaves will be analytically isomorphic. 
Since $\dim\Ker(S_I^*-v\bone)\geq\dim\Ii_{\rm BMP}(v)\geq\rank\Ii$, the sheaves $\Ii_{\rm CD}$ and $\Ii_{\rm BMP}$ are thus analytically isomorphic over $\B\setminus\sing_{\rm C}(\GI_\N)$. 


\end{Remark}

\begin{Remark}[Spanning eigenvectors]
For an arbitrary graded submodule $\GI_\N$ we have $\Gm_v\GI_\N\subset\GI_\N\cap\Gm_v(\GH_\N\otimes\GE_0)$. This implies
\begin{equation}\label{spanningGMv}
\bigcap_{v\in\U}\Gm_v\GI_\N=\{0\}
\end{equation}
for all open subsets $\U\subset\B$. Indeed, for $\phi\in\bigcap_{v\in\U}\Gm_v\GI_\N$ we have $\phi(v)=0$ for all $v\in\U$ and so $\phi$ is zero on all of $\B$ by the identity theorem of holomorphic functions \cite[\S 0.6]{Kaup1}. 

We can rewrite \eqref{spanningGMv} as
\begin{equation}\label{spanningeigen}
\bvee_{v\in\U}\Ker(S_I^*-v\bone)=\GI_\N,
\end{equation}
which says that the eigenvectors of $S_I^*$ span the whole space $\GI_\N$. This does not imply that $\GI_\N$ is coinvariant, since typically no eigenvectors of $S_I^*$ are of the form $k_v\otimes\xi$ for some $(v,\xi)\in\B\times\GE_0$.  


\end{Remark}

\begin{Remark}[Reducing subspaces and subbundles]\label{reducrem}
Let $S$ be the shift on $\GH_\N\otimes\GE_0$ and let $S_I$ and $S_E$ be its compression to $\GI_\N$ and $\GE_\N$. 
The equality 
 \begin{equation}\label{redusubb}
\Ker(S^*-w\bone)=\Ker(S_I^*-w\bone)\oplus\Ker(S_E^*-w\bone),\qquad\forall w\in\B\setminus\{0\}.
\end{equation} 
can fail dramatically. For instance, if $\GI_\N\subset\M$ is the closure of an ideal in $\Ai$ defining an analytic subvariety $\E$ of $\V$ then for $v\in\E$ we have $\dim\Ker(S^*-v\bone)=1=\Ker(S^*_E-v\bone)$ while \cite[Cor. 2.12]{BMP1}
$$
\dim\Ker(S^*-v\bone)=n-\dim_\C\E.
$$
This comes from the failure of $\GI_\N$ to be Gleason solvable at $v$. We claim that $\Ker(S_I^*-v\bone)$ is a subspace of  $\Ker(S^*-v\bone)$ if and only if $\GI_\N$ is reducing. To see this, define the matrix-valued kernels $\mathbf{K}^I$ and $\mathbf{K}^E$ by
$$
\mathbf{K}^I(v,w)\xi:=(P_I(k_w\otimes\xi))(v),\qquad \mathbf{K}^E(v,w)\xi:=(P_E(k_w\otimes\xi))(v).
$$
For all $\xi$ in the subspace $\E(w)\subset\GE_0$ we have
 $$
S^*\mathbf{K}^E(\cdot,w)\xi=\bar{w}\mathbf{K}(\cdot,w)\xi.
$$
On the other hand,
\begin{equation}\label{KInoteigen}
S^*_{\alpha}\mathbf{K}^I(\cdot,w)\xi=(\bar{w}_\alpha+[S^*_{\alpha},P_I])\mathbf{K}^I(\cdot,w)\xi.
\end{equation}
Therefore, while $P_I(k_w\otimes\xi)$ is in the kernel of $S_I^*-\bar{w}\bone$ for all $\xi\in\GE_0$, it is not necessarily in the kernel of $S^*-\bar{w}\bone$. 
Indeed, \eqref{KInoteigen} shows that $\Ker(S_I^*-w\bone)\subset\Ker(S^*-w\bone)$ only if $[S^*_{\alpha},P_I]=0$ for all $\alpha\in\{1,\dots,n\}$. The latter is to say that $\GI_\N$ is a reducing subspace under $S$. So \eqref{redusubb}, which says that we have a holomorphic direct sum $\Oi_{\rm CD}\otimes\GE_0=\Ii_{\rm CD}\oplus\Ei_{\rm CD}$, holds if and only if $\GI_\N$ is reducing. 
\end{Remark}


\subsection{Extension and boundary values}\label{bvaluessec}

\subsubsection{Abel convergence: $\varsigma$ versus $\varsigma_\B$}\label{Abelsumsec}

Since $\GH_\N$ has a reproducing vector $k_v$ at each $v\in\B$ we can associate in a standard fashion \cite{Bere2} to each operator $A\in\Bi(\GH_\N)$ a function
$$
\varsigma_\B(A)(v):=\bra k_v|Ak_v\ket,\qquad\forall v\in\B,
$$
which we call the \textbf{$\B$-Berezin symbol} of $A$. It can extended to a function on $\bar{\B}\times\B$ which is holomorphic in the first variable and antiholomorphic in the second variable,
$$
\varsigma_\B(A)(v,w)=\frac{\bra k_v|Ak_w\ket}{\bra k_v|k_w\ket},\qquad\forall v,w\in\B,
$$
but we will usually just consider the diagonal values. More generally, for $A\in\Bi(\GH_\N\otimes\GE_0)$ for some Hilbert space $\GE_0$ we define
$$
\varsigma_\B(A)(v):=(\Tr_{\GH_\N}\otimes\id)(A(|k_v\ket\bra k_v|\otimes\bone_{\GE_0})),\qquad\forall v\in\B,
$$
where $|k_v\ket\bra k_v|$ is the rank-1 projection onto the subspace $\C k_v$. Thus $\varsigma_\B(A)$ is a $\Bi(\GE_0)$-valued function on $\B$.

In \cite{Kara4} the Berezin symbol for the unit disk was used to prove a theorem of Abel, namely that if a sequence $(a_m)_{m\in\N_0}$ of complex numbers is convergent to $a\in\C$ then 
$$
a=\lim_{r\to 1^-}\sum_{m\in\N_0}a_mr^m.
$$
Here we shall use Abel's theorem to show that the Berezin symbol map $\varsigma$ gives the boundary limits of the $\B$-Berezin symbol map $\varsigma_\B$ when restricted to the algebra of grading-preserving Toeplitz operators with continuous symbol.

 We shall use some facts from \cite{An6}, namely that $C^0(\M)$ is the norm closure of a union of subspaces $\varsigma^{(m)}(\Bi(\GH_m))$ where $\varsigma^{(m)}:\Bi(\GH_m)\to C^0(\M)\subset L^2(\M,\omega)$ is the adjoint of the Toeplitz map $\breve{\varsigma}^{(m)}:L^\infty(\M)\to\Bi(\GH_m)$. The characteristic property of $\varsigma^{(m)}$ is that it maps normally ordered products of shift operators directly to their classical limits,
  $$
 \varsigma^{(m)}(p_mS_\mathbf{j}S_\mathbf{k}^*|_{\GH_m})=Z_\mathbf{j}Z_\mathbf{k}^*,
 $$
 whenever $|\mathbf{j}|=m=|\mathbf{k}|$. So if we express an operator $A\in\Bi(\GH_m)$ as a matrix $(A_{\mathbf{j},\mathbf{k}})_{|\mathbf{j}|=m=|\mathbf{k}|}$ in the frame $(p_me_\mathbf{k})_{|\mathbf{k}|=m}$, which is to say that we write $A=\sum_{|\mathbf{j}|=m=|\mathbf{k}|}A_{\mathbf{j},\mathbf{k}}S_\mathbf{j}S_\mathbf{k}^*|_{\GH_m}$, then 
\begin{equation}\label{varsigmachar}
 \varsigma^{(m)}(A)=\sum_{|\mathbf{j}|=m=|\mathbf{k}|}A_{\mathbf{j},\mathbf{k}}Z_\mathbf{j}Z_\mathbf{k}^*.
\end{equation}
We can extend $\varsigma^{(m)}(A)$ to a $\Un(1)$-equivariant function on $\Sb$, which takes the simple form
 \begin{equation}\label{varsigmacharonS}
 \varsigma^{(m)}(A)(\zeta)=\sum_{|\mathbf{j}|=m=|\mathbf{k}|}A_{\mathbf{j},\mathbf{k}}\zeta_\mathbf{j}\zeta_\mathbf{k}^*,\qquad\forall\zeta\in\Sb.
\end{equation}
We shall now compare $\varsigma^{(m)}(A)$ with $\varsigma_\B(A)$. For all $\mathbf{j},\mathbf{k}\in\F_n^+(m)$ and $v\in\B$ we have 
\begin{align*}
\varsigma_\B(S_\mathbf{j}S_\mathbf{k}^*p_m)(v)&=\bra S_\mathbf{j}^*k_v|S_\mathbf{k}^*p_mk_v\ket=(1-|v|^2)\bra\Omega|\Omega\ket v_\mathbf{j}v_\mathbf{k}^*=(1-|v|^2)v_\mathbf{j}v_\mathbf{k}^*
\\&=(1-r^2)r^{2m}\zeta_\mathbf{j}\zeta_\mathbf{k}^*
\end{align*}
where we write $v=r\zeta$ with $r\in(0,1)$ and $\zeta\in\Sb$. So for a finite-rank grading preserving operator $A\in\Bi(\GH_m)\subset\Bi(\GH_\N)$ we get, using the formula \eqref{varsigmachar}, that
\begin{equation}\label{finiterankBere}
\varsigma_\B(A)(r\zeta)=(1-r^2)r^{2m}\varsigma^{(m)}(A)(\zeta),\qquad\forall r\zeta\in\B.
\end{equation}

 
 In the following lemma we are not using the assumption that $\M$ is a coadjoint orbit. 
\begin{Lemma}\label{AbelsumLemma}
Let $A=(A_m)_{m\in\N_0}$ be an operator in the Toeplitz core $\Ti_\GH^{(0)}\subset\prod_{m\in\N_0}\Bi(\GH_m)$ and regard its symbol $\varsigma(A)\in C^0(\M)$ as a $\Un(1)$-equivariant function on $\Sb$. Then 
$$
\varsigma(A)(\zeta)=\lim_{r\to 1^-}\varsigma_\B(A)(r\zeta)=\lim_{r\to 1^-}(1-r^2)\sum_{m\in\N_0}r^{2m}\varsigma^{(m)}(A_m)(\zeta),\qquad\forall\zeta\in\Sb.
$$
In the special case of a Toeplitz operator $A=\breve{\varsigma}(f)$ with $f\in C^0(\M)$ we have
$$
f_\Sb(\zeta)=\varsigma(\breve{\varsigma}(f))(\zeta)=\lim_{r\to 1^-}(1-r^2)\sum_{m\in\N_0}r^{2m}n_m\sum_{|\mathbf{j}|=m=|\mathbf{k}|}\omega(Z_\mathbf{j}^*Z_\mathbf{k}f)\zeta_\mathbf{j}\zeta_\mathbf{k}^*,\qquad\forall\zeta\in\Sb.
$$
\end{Lemma}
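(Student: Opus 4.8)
\emph{Proof proposal.} The plan is to reduce the statement to the finite-rank identity \eqref{finiterankBere} combined with a classical Abel summation argument, the only non-elementary input being that $\varsigma^{(m)}(A_m)$ converges to $\varsigma(A)$ in the norm of $C^0(\M)=C^0(\Sb)^{(0)}$.

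First I would establish, for each fixed $r\in(0,1)$ and $\zeta\in\Sb$, the numerical identity
\begin{equation*}
\varsigma_\B(A)(r\zeta)=(1-r^2)\sum_{m\in\N_0}r^{2m}\varsigma^{(m)}(A_m)(\zeta),
\end{equation*}
which at once identifies the second and third expressions in the statement. To get it, write $v=r\zeta$ and decompose the normalized reproducing vector as $k_v=\sum_{m\in\N_0}p_mk_v$, an orthogonal sum with respect to the $\N_0$-grading. Since $A=(A_m)_m$ preserves the grading, all cross terms vanish and
$\varsigma_\B(A)(v)=\bra k_v|Ak_v\ket=\sum_m\bra p_mk_v|A_mp_mk_v\ket=\sum_m\varsigma_\B(A_mp_m)(v)$,
the series converging absolutely because $\sum_m\|p_mk_v\|^2=1$ and $\|A_m\|\le\|A\|$. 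Applying \eqref{finiterankBere} to each finite-rank operator $A_m\in\Bi(\GH_m)$ gives $\varsigma_\B(A_mp_m)(r\zeta)=(1-r^2)r^{2m}\varsigma^{(m)}(A_m)(\zeta)$, and summing over $m$ yields the displayed identity; the resulting series in $m$ converges absolutely since $\|\varsigma^{(m)}(A_m)\|_\infty\le\|A\|$ and $r<1$.

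Next I would use the facts from \cite{An6} recalled above: membership $A\in\Ti_\GH^{(0)}$ means $A=\breve{\varsigma}(\varsigma(A))+K$ with $K=(K_m)_m$ a grading-preserving compact, so $\|K_m\|\to 0$ and $A_m=\breve{\varsigma}^{(m)}(\varsigma(A))+K_m$. As $\varsigma^{(m)}$ is completely positive and uniformly bounded, $\|\varsigma^{(m)}(K_m)\|_\infty\to 0$, while $\varsigma^{(m)}(\breve{\varsigma}^{(m)}(f))\to f$ in $C^0(\M)$; hence $\varsigma^{(m)}(A_m)\to\varsigma(A)$ uniformly on $\M$, equivalently uniformly on $\Sb$ after the $\Un(1)$-equivariant extension. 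Now fix $\zeta$, put $a_m:=\varsigma^{(m)}(A_m)(\zeta)$ and $a:=\varsigma(A)(\zeta)$, so $a_m\to a$. By the Abel limit theorem in the form used in \cite{Kara4} — apply the classical statement after the substitution $s=r^2$, writing $(1-s)\sum_m s^m=1$ and splitting the tail where $|a_m-a|<\ep$ from a finite head — one gets $\lim_{r\to 1^-}(1-r^2)\sum_m a_mr^{2m}=a$. Together with the identity of the previous paragraph this proves all three equalities in the first display of the statement.

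Finally, in the special case $A=\breve{\varsigma}(f)$ one has $A_m=\breve{\varsigma}^{(m)}(f)$, and it remains to compute $\varsigma^{(m)}(\breve{\varsigma}^{(m)}(f))$. Using that $\breve{\varsigma}^{(m)}$ is the adjoint of $\varsigma^{(m)}$ together with the orthogonality relations of Lemma \ref{Haarfirstrolemma} (which pin down the matrix coefficients of $\breve{\varsigma}^{(m)}(f)$ in the frame $(p_me_\mathbf{k})_{|\mathbf{k}|=m}$), one obtains $\breve{\varsigma}^{(m)}(f)=n_m\sum_{|\mathbf{j}|=m=|\mathbf{k}|}\omega(Z_\mathbf{j}^*Z_\mathbf{k}f)\,S_\mathbf{j}S_\mathbf{k}^*|_{\GH_m}$, and then \eqref{varsigmacharonS} gives $\varsigma^{(m)}(\breve{\varsigma}^{(m)}(f))(\zeta)=n_m\sum_{|\mathbf{j}|=m=|\mathbf{k}|}\omega(Z_\mathbf{j}^*Z_\mathbf{k}f)\,\zeta_\mathbf{j}\zeta_\mathbf{k}^*$. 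Substituting this into the general formula just proved yields the second display. The step I expect to require the most care is the norm convergence $\varsigma^{(m)}(A_m)\to\varsigma(A)$ — specifically, extracting the precise Berezin-transform convergence statement and the uniform boundedness of the $\varsigma^{(m)}$ from \cite{An6} — since everything downstream is a soft Abel argument; pinning down the exact normalization ($n_m$ and the $L^2$-inner product) in the frame expansion of $\breve{\varsigma}^{(m)}(f)$ is a secondary bookkeeping point.
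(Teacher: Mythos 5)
Your proposal is correct and follows essentially the same route as the paper: the pointwise application of Abel's theorem to the sequence $\varsigma^{(m)}(A_m)(\zeta)$, the norm convergence $\varsigma^{(m)}(A_m)\to\varsigma(A)$ in $C^0(\M)$ from the split Toeplitz sequence, and the finite-rank identity \eqref{finiterankBere} summed over the grading. You merely make explicit some steps the paper leaves implicit (the orthogonal decomposition of $k_v$ killing cross terms, and the frame computation of $\breve{\varsigma}^{(m)}(f)$ for the special case), which is fine.
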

\begin{proof}
Let $\zeta\in\Sb$ be given. Since the sequence $(\varsigma^{(m)}(A_m))_{m\in\N_0}$ converges in the norm of $C^0(\M)$ to the function $\varsigma(A)\in C^0(\M)$, the sequence $(\varsigma^{(m)}(A_m)(\zeta))_{m\in\N_0}\in c_b(\N_0)$ converges to the complex number $\varsigma(A)(\zeta)$. By Abel's theorem, we get
$$
\varsigma(A)(\zeta)=\lim_{m\to\infty}\varsigma^{(m)}(A_m)(\zeta)=\lim_{r\to 1^-}(1-r^2)\sum_{m\in\N_0}r^{2m}\varsigma^{(m)}(A_m)(\zeta).
$$
The proof is complete by the formula \eqref{finiterankBere}.

\end{proof}
Using the expression \eqref{FSframe} for $\FS(\GH_m)$ we can rewrite the formula \eqref{varsigmachar} for the symbol $\varsigma^{(m)}(A)$ of an operator $A\in\Bi(\GH_m)$ as
$$
\varsigma^{(m)}(A)(x)=\Tr(\FS(\GH_m)(x)A),\qquad\forall x\in\M.
$$
Since the Toeplitz map $\breve{\varsigma}^{(m)}$ is the adjoint of $\varsigma^{(m)}$ with respect to $\omega$ and $\phi_m$, we obtain for each $f\in C^0(\M)$ the formula
\begin{equation}\label{Toeplexplic}
\breve{\varsigma}^{(m)}(f)=n_m(\omega\otimes\id)(\FS(\GH_m)(f\otimes p_m)),
\end{equation}
which will be useful later in the paper. 
\begin{Remark}
Let $\Hi$ be an reproducing kernel Hilbert space of functions on a set $\B$, and suppose that $\B$ sits inside a topological space and has nonempty topological boundary $\Sb=\pd\B$. Denote by $k_v$ the normalized reproducing kernel of $\Hi$. Then $\Hi$ is \textbf{standard} if the sequence $(k_{v_m})_{m\in\N}$ converges weakly to zero for every sequence $(v_m)_{m\in\N}$ of points in $\B$ converging to a point in $\Sb$. For the space $\Hi=\GH_\N$ we have
$$
\bra k_v|f\ket=(1-|v|^2)^{1/2}f(v),\qquad\forall f\in\GH_\N,
$$
so it is clear that $\lim_{|v|\to 1^-}\bra k_v|f\ket=0$ for all $f\in\Ai$. 
Since $\Ai$ is dense in $\GH_\N$ we get $\lim_{|v|\to 1^-}\bra k_v|\psi\ket=0$ for all $\psi\in\GH_\N$.
Thus the reproducing kernel Hilbert space $\GH_\N$ is standard. 
It follows that every compact operator $C$ on $\GH_\N$ has $\B$-Berezin symbol vanishing on the boundary,
$$
\lim_{r\to 1^-}\varsigma_\B(C)(r\zeta)=0,\qquad\forall\zeta\in\Sb.
$$
For an operator $A$ in $\Ti_\GH$ we have $A=\breve{\varsigma}(\varsigma(A))+C$ with $C\in\Gamma_0$, and hence 
$$
\varsigma(A)=\lim_{m\to\infty}\varsigma^{(m)}(Ap_m)=\lim_{m\to\infty}\varsigma^{(m)}(\breve{\varsigma}^{(m)}(\varsigma(A))).
$$
since $\varsigma\circ\breve{\varsigma}=\id$. 
Therefore
$$
\lim_{r\to 1^-}\varsigma_\B(A)(r\zeta)=\lim_{r\to 1^-}\varsigma_\B(\breve{\varsigma}(\varsigma(A)))(r\zeta),\qquad\forall\zeta\in\Sb.
$$
\end{Remark}


Recall that the unique $\G$-invariant state $\omega$ on $C^0(\M)$ coincides with the limit $\omega=\lim_{m\to\infty}\phi_m$ of the normalized traces $\phi_m:\Bi(\GH_m)\to\C$. Moreover, $\omega$ extends to the unique $\G$-invariant state $\omega_\Sb$ on $C^0(\Sb)$, which coincides with the normalized surface measure when $\Sb$ is regarded as the boundary of a domain $\B$ in $\C^n$. Using these facts we have an $L^\infty$ version of Lemma \ref{AbelsumLemma}:
\begin{prop}\label{SOTAbelsumLemma}
Let $A=(A_m)_{m\in\N_0}$ be an operator in the $L^\infty$ Toeplitz core $\Ni\subset\prod_{m\in\N_0}\Bi(\GH_m)$ and regard its symbol $\varsigma(A)\in L^\infty(\M)$ as a $\Un(1)$-equivariant function on $\Sb$. Then 
$$
\varsigma(A)(\zeta)=\lim_{r\to 1^-}\varsigma_\B(A)(r\zeta)=\lim_{r\to 1^-}(1-r^2)\sum_{m\in\N_0}r^{2m}\varsigma^{(m)}(A_m)(\zeta)
$$
for $\omega_\Sb$-almost all $\zeta\in\Sb$. 
\end{prop}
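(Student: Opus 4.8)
The plan is to run the proof of Lemma~\ref{AbelsumLemma} with $L^\infty$ in place of $C^0$ throughout; the one genuinely new point is the passage from convergence at \emph{every} boundary point (what one gets for continuous symbols) to convergence at \emph{almost every} boundary point.

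The second displayed equality is purely formal and identical to the $C^0$ case. Since $A=(A_m)_{m\in\N_0}$ preserves the grading, $\varsigma_\B(A)(r\zeta)=\sum_m\bra p_mk_{r\zeta}|A_mp_mk_{r\zeta}\ket$, the series converging absolutely because $\sum_m\|p_mk_{r\zeta}\|^2=1$; applying \eqref{finiterankBere} to the finite-rank operator $A_mp_m$ gives $\bra p_mk_{r\zeta}|A_mp_mk_{r\zeta}\ket=(1-r^2)r^{2m}\varsigma^{(m)}(A_m)(\zeta)$, and summing yields the Abel-type identity. Only boundedness and grading-invariance are used, not $A\in\Ni$.

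It remains to show $\varsigma(A)(\zeta)=\lim_{r\to1^-}\varsigma_\B(A)(r\zeta)$ for $\omega_\Sb$-almost every $\zeta$. By Corollary~\ref{LinftyToepl} I would split $A=\breve\varsigma(f)+C$ with $f:=\varsigma(A)\in L^\infty(\M)$ and $C$ in the semicommutator ideal $\Ker(\varsigma|_\Ni)=[\Bi^\infty,\Bi^\infty)$, and treat the two summands separately. For the Toeplitz part, Lemma~\ref{LinftyToeplwhole} identifies $\breve\varsigma(f)$ with the compression of $M_f$ to $H^0(\Sb,\omega)$, so $\varsigma_\B(\breve\varsigma(f))(v)=\int_\Sb f\,d\mu_v$ with $d\mu_v:=|Uk_v|^2\,d\omega_\Sb$ a probability measure on $\Sb$ — this is the Poisson--Szegő transform of $f$. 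For $f\in C^0(\M)$ the integral tends to $f(\zeta)$ for every $\zeta$ (by the Remark following Lemma~\ref{AbelsumLemma} together with $\varsigma(\breve\varsigma(f))=f$); for $f\in L^\infty(\M)\subset L^1(\M,\omega)$ I would run the standard density argument — write $f=g+h$ with $g$ continuous and $\|h\|_{L^1(\omega_\Sb)}$ small, bound $\limsup_r\lvert\int_\Sb f\,d\mu_{r\zeta}-f(\zeta)\rvert$ by $\bigl(\sup_{0<r<1}\int_\Sb|h|\,d\mu_{r\zeta}\bigr)+|h(\zeta)|$, and let $\|h\|_{L^1}\to0$ — the bad sets being controlled once the Berezin maximal operator $h\mapsto\sup_{0<r<1}\int_\Sb|h|\,d\mu_{r\zeta}$ is known to be of weak type $(1,1)$ for $\omega_\Sb$. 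For the summand $C$, I would use that positive elements of $\Li$ are factorable (Proposition~\ref{LiBeurlingprop}), so $C$ is a combination of operators $\sum_j\xi_j\eta_j^*$ with $\xi_j,\eta_j$ multipliers, whence $\varsigma_\B(C)(v)=\sum_j\overline{\xi_j(v)}\,\eta_j(v)$; since multipliers have radial boundary values $\xi_j|_\Sb\in L^\infty(\Sb)$ a.e.\ and $\varsigma(C)=\sum_j\overline{\xi_j|_\Sb}\,\eta_j|_\Sb=0$, for finitely-supported sums one gets $\varsigma_\B(C)(r\zeta)\to\varsigma(C)(\zeta)=0$ at a.e.\ $\zeta$ term by term, and the general case follows by approximating $C$ by such finite sums modulo compact operators — whose $\B$-Berezin symbol vanishes at every boundary point because $\GH_\N$ is standard — again invoking the maximal inequality to pass to the limit. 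Combining the two summands with the first paragraph finishes the proof.

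The hard part is the weak type $(1,1)$ estimate for the Berezin/Poisson--Szegő maximal operator on $(\Sb,\omega_\Sb)$; equivalently, a Hardy--Littlewood-type pointwise majorization of the reproducing kernel of $\GH_\N$ in the Koranyi geometry of $\Sb$ (and, relatedly, the limit-interchange needed to push the non-Toeplitz estimate from finite factorizations to the WOT-closed ideal $\Ker(\varsigma|_\Ni)$). This is classical when $\M=\C\Pb^{n-1}$, where $\B$ is the unit ball; in general I expect it to follow from the transitivity of the $\G$-action on $\Sb$ preserving $\omega_\Sb$ — which makes $\omega_\Sb$ doubling and forces $|Uk_v|^2\,d\omega_\Sb$ to behave like an admissible approximate identity as $|v|\to1$ — but verifying the required kernel bound for the possibly singular variety $\B$ is the one point that needs genuine work beyond the cited literature.
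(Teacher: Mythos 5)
The paper gives you nothing to diverge from here: Proposition \ref{SOTAbelsumLemma} is stated as an ``$L^\infty$ version'' of Lemma \ref{AbelsumLemma} with no proof beyond the preceding remarks on $\omega=\lim_m\phi_m$ and $\omega_\Sb$, so the comparison can only be with the statement itself. Your first paragraph is correct and is exactly the computation behind \eqref{finiterankBere}: for any bounded grading-preserving $A$ the identity $\varsigma_\B(A)(r\zeta)=(1-r^2)\sum_m r^{2m}\varsigma^{(m)}(A_m)(\zeta)$ holds with absolute convergence. You have also correctly diagnosed that the real content is a Fatou-type theorem (a.e.\ radial convergence), which is more than the paper acknowledges: note that the paper's implicit route --- Abel summation applied pointwise as in Lemma \ref{AbelsumLemma} --- would itself need a.e.\ pointwise convergence of $\varsigma^{(m)}(A_m)$ to $\varsigma(A)$, which is not established for general $A\in\Ni$ either.

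However, your proposal does not close the statement, and the gaps sit at the load-bearing points. First, the measure you treat as the Poisson--Szeg\"o transform is not the classical one: because the unitary $U$ of Lemma \ref{squareofSlemma} rescales $\GH_m$ by $\sqrt{n_m}$, the density $|Uk_{r\zeta}|^2$ is a weighted kernel with slower off-diagonal decay; even for $\M=\C\Pb^{n-1}$ it is not the Poisson--Szeg\"o kernel of $H^2(\Sb^{2n-1})$ (the relevant space is $H^2_n$, not the Hardy space), so the weak type $(1,1)$ maximal estimate is not quotable as classical --- in the ball case it follows from a routine Hardy--Littlewood majorization in the Koranyi quasi-metric, but for general $\G/\K$ it requires off-diagonal asymptotics of the kernels of $\GH_m=H^0(\M;\Li^m)$ that neither the paper nor your proposal supplies. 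Second, your treatment of the semicommutator part $C$ rests on two unproved steps: (i) that multipliers of $\GH_\N$ have $\omega_\Sb$-a.e.\ radial boundary values on $\Sb$ with the expected identification of $\varsigma(C)$ --- the classical Fatou theorem on $\B^n$ is of no help, since $\Sb$ is a null set for the surface measure of $\Sb^{2n-1}$ whenever $\M\subsetneq\C\Pb^{n-1}$, so this is a Fatou theorem of the same difficulty as the one you are trying to prove; and (ii) that a.e.\ boundary limits pass through the approximation of a general $C\in\Ker(\varsigma|_\Ni)$ by finite factorizations. That approximation is only in WOT, and your maximal inequality controls harmonic extensions of $L^1$-small \emph{symbols}, not the Berezin symbols of the operator remainders, so the limit interchange is unjustified as stated. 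Until the weighted-kernel maximal estimate, the multiplier Fatou theorem on $\Sb$, and this interchange are supplied, the proposal proves the proposition only for $A$ in the $C^0$ Toeplitz core, i.e.\ it does not go beyond Lemma \ref{AbelsumLemma}.
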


We saw in Proposition \ref{SOTvsprop} that the SOT-asymptotic Toeplitz symbol map $\varsigma^{\rm SOT}$ also coincides with $\varsigma$ when restricted to $\Ni$. So:
\begin{cor}[Asymptotic Toeplitz symbols versus boundary limits]
The map
$$
\Ni\ni X\to \lim_{r\to 1^-}\varsigma_\B(X)(r\ \cdot)\in L^\infty(\M)
$$
coincides with $\varsigma^{\rm SOT}$.
\end{cor}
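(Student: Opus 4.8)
The plan is to obtain the corollary as an immediate combination of the two symbol identifications already proved on $\Ni$, so I expect the argument to be essentially formal. First I would recall that $\Ni$ is a von Neumann subalgebra of $\Li$, so by Lemma \ref{LinftyToeplwhole} every $X\in\Ni$ is SOT-asymptotic Toeplitz; hence the map $X\mapsto\varsigma^{\rm SOT}(X)$ is well defined on all of $\Ni$, taking values in $L^\infty(\M)\subset L^\infty(\Sb)$ (we use throughout the identification of $L^\infty(\M)$ with the $\Un(1)$-equivariant elements of $L^\infty(\Sb)$). Proposition \ref{SOTvsprop}, applied on $\Li\supseteq\Ni$, then gives that $\varsigma^{\rm SOT}$ agrees with the Berezin symbol map $\varsigma$ on $\Ni$; concretely this is because $\Li=\breve{\varsigma}(L^\infty(\Sb))+\Ker\varsigma$, and the conditional expectation onto the $\Psi$-harmonic part $\Bi(\GH_\N)^\Psi$ (which is precisely the map $X\mapsto\mathrm{SOT\text{-}}\lim_m\Psi^m(X)$ computing $\varsigma^{\rm SOT}$) annihilates $\Ker\varsigma$, exactly as $\varsigma$ does.

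Next I would invoke Proposition \ref{SOTAbelsumLemma}: for $X=(X_m)_{m\in\N_0}\in\Ni$, the $\Un(1)$-equivariant representative of $\varsigma(X)\in L^\infty(\M)$ satisfies
$$
\varsigma(X)(\zeta)=\lim_{r\to 1^-}\varsigma_\B(X)(r\zeta)=\lim_{r\to 1^-}(1-r^2)\sum_{m\in\N_0}r^{2m}\varsigma^{(m)}(X_m)(\zeta)
$$
for $\omega_\Sb$-almost every $\zeta\in\Sb$. In other words, as an element of $L^\infty(\M)$ the radial boundary-limit $\lim_{r\to 1^-}\varsigma_\B(X)(r\,\cdot)$ coincides with $\varsigma(X)$. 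Putting the two identifications together, both $\varsigma^{\rm SOT}$ and the boundary-limit map $X\mapsto\lim_{r\to1^-}\varsigma_\B(X)(r\,\cdot)$ equal $\varsigma|_\Ni$, hence they coincide with each other, which is the assertion of the corollary.

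I expect no genuine obstacle; the only points requiring a little care are bookkeeping. One must make sure that the almost-everywhere equality furnished by Proposition \ref{SOTAbelsumLemma} is precisely equality in $L^\infty(\M)$, so that no null set spoils the identification with $\varsigma^{\rm SOT}(X)$; that the inclusion $\Ni\subseteq\Li$ is indeed the one for which Proposition \ref{SOTvsprop} was stated (so that $\varsigma^{\rm SOT}$ restricted to $\Ni$ is literally $\varsigma|_\Ni$); and that the $\Un(1)$-equivariant-function model of $L^\infty(\M)$ used implicitly in the statement of the corollary matches the one used in both cited propositions. None of these is serious, and no new ideas beyond Propositions \ref{SOTvsprop} and \ref{SOTAbelsumLemma} are needed.
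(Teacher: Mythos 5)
Your proposal is correct and follows exactly the route the paper takes: the corollary is obtained by combining Proposition \ref{SOTvsprop} (which gives $\varsigma^{\rm SOT}=\varsigma$ on $\Li$, hence on $\Ni$) with Proposition \ref{SOTAbelsumLemma} (which identifies the radial boundary limit of $\varsigma_\B(X)$ with $\varsigma(X)$ almost everywhere, i.e.\ as an element of $L^\infty(\M)$). Your bookkeeping remarks about the almost-everywhere identification and the $\Un(1)$-equivariant model of $L^\infty(\M)$ are consistent with how the paper treats these points.
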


\subsubsection{Extension of vector bundles}\label{extvectsec}

Denote by $\C^\times=\GeL(1,\C)$ the multiplicative group of nonzero complex numbers and consider the semigroup $\D^\times:=\{\lambda\in\C^\times|\ 0<|\lambda|<1\}$. As before we denote by $\Un(1)$ the circle group (the unitary group of dimension 1), identified with the subgroup of $\C^\times$ consisting of complex numbers of modulus 1. 

In this subsection we work with an arbitrary smooth projective variety $\M\subset\C\Pb^{n-1}$. Let $\pi:\C^n\setminus\{0\}\to\C\Pb^{n-1}$ be the natural surjection associated with the $\C^\times$-action on $\C^n\setminus\{0\}$, and set 
$$
\V:=\pi^{-1}(\M)\cup\{0\},\qquad \B:=\V\cap\B^n,\qquad \Sb:=\pd\B=\V\cap\Sb^{2n-1},
$$
where $\B^n\subset\C^n$ is the unit ball and $\Sb^{2n-1}=\pd\B^n$ is the unit sphere. The manifold $\M$ can be obtained by quoting out actions on the manifolds $\V\setminus\{0\}$, $\B\setminus\{0\}$, and $\Sb$:
$$
\M=(\V\setminus\{0\})/\C^\times=(\B\setminus\{0\})/\D^\times=\Sb/\Un(1).
$$

If $\Ei$ is an $\Oi_\M$-module then its inverse image under $\pi$, denoted by $\pi^{-1}\Ei$ and defined on open subsets $\U\subset\V$ by 
$$
(\pi^{-1}\Ei)(\U):=\Ei(\pi(\U)),
$$
is a $\pi^{-1}\Oi_\M$-module.  The sheaf $\pi^*\Ei$ defined on $\V\setminus\{0\}$ by
$$
\pi^*\Ei:=\pi^{-1}\Ei\otimes_{\pi^{-1}\Oi_\M}\Oi_{\V\setminus\{0\}}
$$
is an $\Oi_{\V\setminus\{0\}}$-module called the pullback (or analytic inverse image) of $\Ei$ under $\pi$. 

The space of global holomorphic sections of the structure sheaf $\Oi_{\V\setminus\{0\}}$ is isomorphic to the coordinate ring $\Ai$ of $\M$. The subsheaf 
$$
\pi^{-1}\Oi_\M\subset\pi^*\Oi_\M=\Oi_{\V\setminus\{0\}}
$$
has no nonconstant global holomorphic sections. Denoting by $(\Oi_{\V\setminus\{0\}})^{\C^\times}$ the $\C^\times$-invariant part of the structure sheaf $\Oi_{\V\setminus\{0\}}$ we have 
$$
(\Oi_{\V\setminus\{0\}})^{\C^\times}=\pi^{-1}\Oi_\M.
$$
So for any $\Oi_\M$-module $\Ei$ we have
$$
(\pi^*\Ei)^{\C^\times}=(\pi^{-1}\Ei\otimes_{\pi^{-1}\Oi_\M}\Oi_{\V\setminus\{0\}})^{\C^\times}=\pi^{-1}\Ei.
$$
This gives
$$
(\pi_*\pi^*\Ei)^{\C^\times}:=\pi_*(\pi^*\Ei)^{\C^\times}=\Ei.
$$
\begin{dfn}
An $\Oi_{\V\setminus\{0\}}$-module $\Ei_{\V\setminus\{0\}}$ is $\C^\times$-\textbf{equivariant} if $\C^\times$ acts on $\Ei_{\V\setminus\{0\}}$ compatibly with the $\Oi_{\V\setminus\{0\}}$-module structure on $\Ei_{\V\setminus\{0\}}$ and the $\C^\times$-action on $\Oi_{\V\setminus\{0\}}$. 
\end{dfn}
Since $\C^\times$ acts freely on $\V\setminus\{0\}$, an $\Oi_{\V\setminus\{0\}}$-module $\Ei_{\V\setminus\{0\}}$ is $\C^\times$-equivariant if and only if $\Ei_{\V\setminus\{0\}}$ equals $\pi^*\Ei$ for some $\Oi_\M$-module $\Ei$ \cite[Prop. 4.2]{KKT1}. 


\begin{Lemma}\label{vectextlemma}
A coherent analytic sheaf $\Ei_{\B\setminus\{0\}}$ on $\B\setminus\{0\}$ is $\Un(1)$-equivariant if and only if there exists a coherent analytic sheaf $\Ei$ on $\M$ such that
$$
\Ei_{\B\setminus\{0\}}=\pi^*\Ei|_{\B\setminus\{0\}}.
$$
In this case $\Ei$ is unique, and $\Ei$ is locally free iff $\Ei_{\B\setminus\{0\}}$ is locally free.
\end{Lemma}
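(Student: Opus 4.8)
The plan is to deduce this from the already-established $\C^\times$-equivariant case, namely that an $\Oi_{\V\setminus\{0\}}$-module is $\C^\times$-equivariant if and only if it is of the form $\pi^*\Ei$, together with the uniqueness of $\Ei$ noted above. The \emph{if} direction is immediate: if $\Ei$ is coherent analytic on $\M$ then $\pi^*\Ei$ is $\C^\times$-equivariant on $\V\setminus\{0\}$, and restricting the group to the subgroup $\Un(1)\subset\C^\times$ and the base to the $\Un(1)$-invariant open set $\B\setminus\{0\}$ yields a $\Un(1)$-equivariant structure on $\pi^*\Ei|_{\B\setminus\{0\}}$. Moreover $\Ei$ is recovered as $(\pi|_{\B\setminus\{0\}})_*\bigl((\pi^*\Ei|_{\B\setminus\{0\}})^{\Un(1)}\bigr)$ by the same fibrewise Laurent-expansion computation that gives $(\pi_*\pi^*\Ei)^{\C^\times}=\Ei$; this also yields uniqueness in the \emph{only if} direction.

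For the \emph{only if} direction the key point is the following extension statement: every $\Un(1)$-equivariant coherent analytic sheaf $\Ei_{\B\setminus\{0\}}$ on $\B\setminus\{0\}$ extends to a $\C^\times$-equivariant coherent analytic sheaf on $\V\setminus\{0\}$. Granting this, the $\C^\times$-equivariant case produces a unique coherent analytic $\Ei$ on $\M$ with $\pi^*\Ei$ equal to the extension, and restricting back gives $\Ei_{\B\setminus\{0\}}=\pi^*\Ei|_{\B\setminus\{0\}}$.

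To prove the extension statement I would argue locally over $\M$. Since $\pi\colon\V\setminus\{0\}\to\M$ is a holomorphic principal $\C^\times$-bundle (the complement of the zero section in the total space of $\Oi_\M(-1)$), cover $\M$ by charts $U$ over which $\pi$ trivializes, $\pi^{-1}(U)\cong U\times\C^\times$; then $\B\setminus\{0\}$ corresponds over $U$ to a punctured-disc bundle $\{(x,\lambda)\in U\times\C^\times : 0<|\lambda|<\rho(x)\}$ for a positive real-analytic function $\rho$, and after shrinking $U$ we may assume $\rho>1$. On such a chart $\Ei_{\B\setminus\{0\}}$ is a coherent analytic sheaf carrying a fibrewise $\Un(1)$-action. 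Choosing a finite presentation $\Oi^{\oplus a}\to\Oi^{\oplus b}\to\Ei_{\B\setminus\{0\}}\to 0$ and expanding the presenting matrix and the equivariance data into Fourier modes for the fibrewise $\Un(1)$-action, the cocycle condition becomes, mode by mode, a continuous $1$-cocycle of the compact group $\Un(1)$ with values in a module over the weight-zero functions; such a cocycle is a coboundary, and Cauchy estimates on the Laurent coefficients show that the coboundary is again holomorphic on the punctured-disc bundle. Hence, after changing the trivialization by an equivariant automorphism, $\Ei_{\B\setminus\{0\}}$ is, $\Un(1)$-equivariantly, the pullback $p_U^*\Fi_U$ of a coherent analytic sheaf $\Fi_U$ on $U$ with a weight-graded equivariant structure. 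Such a sheaf visibly extends to all of $U\times\C^\times$ with the evident $\C^\times$-equivariance; since $\Fi_U$ is intrinsically $(p_U)_*$ of the (twisted) $\Un(1)$-invariants of $\Ei_{\B\setminus\{0\}}$, the local extensions are canonical, agree on overlaps, and glue to the desired $\C^\times$-equivariant sheaf on $\V\setminus\{0\}$.

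Finally, local freeness is a local condition on $\M$, and $\pi$ admits local holomorphic sections, so $\Ei$ is, locally, a pullback of $\pi^*\Ei$; hence local freeness of $\pi^*\Ei$ implies that of $\Ei$, and the converse is automatic, so $\Ei$ is locally free precisely when $\pi^*\Ei$ is, precisely when its open restriction $\Ei_{\B\setminus\{0\}}$ is. The main obstacle is the local step inside the extension statement: showing that a $\Un(1)$-equivariant coherent analytic sheaf on a punctured-disc bundle is pulled back from the base with weight-graded equivariance — i.e. upgrading real-circle equivariance to complex-torus equivariance. Everything else is formal descent and the already-cited $\C^\times$-equivariant case.
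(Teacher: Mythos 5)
Your overall architecture coincides with the paper's: establish the statement by extending the $\Un(1)$-equivariant sheaf on $\B\setminus\{0\}$ to a $\C^\times$-equivariant coherent sheaf on $\V\setminus\{0\}$ and then invoking the free-action descent $(\V\setminus\{0\})/\C^\times=\M$ (which the paper quotes from Knop--Kraft--Vust just before the lemma); your treatment of the ``if'' direction, of uniqueness via invariants of the pushforward, and of local freeness via local sections of $\pi$ is fine. The difference is that the paper obtains the extension step, its uniqueness, and the locally-free statement in one stroke by citing Hausen--Heinzner [HaHe1], viewing $\V\setminus\{0\}=\C^\times\cdot\Sb$ as the complexification of the $\Un(1)$-space $\Sb$, whereas you propose to prove the extension by hand in this special geometry (free circle action on a punctured-disc bundle). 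That is a legitimate, more self-contained route, but as written it has a genuine gap.

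The gap is in your local-to-global step. Your canonicity claim is not correct: over a punctured-disc bundle, the weight-$k$ isotypical part of $(p_U)_*\bigl(p_U^*\bigoplus_j\Fi_{U,j}\bigr)$ (with $\Fi_{U,j}$ in weight $j$) is isomorphic to $\bigoplus_j\Fi_{U,j}$ for \emph{every} $k$, because multiplication by $\lambda^{k-j}$ is invertible off the zero section; so ``the twisted $\Un(1)$-invariants of $\Ei_{\B\setminus\{0\}}$'' do not intrinsically single out $\Fi_U$ or its weight grading, and your local models are determined only up to non-canonical choices. To glue you really need a uniqueness statement for the $\C^\times$-equivariant extension itself (for instance that equivariant morphisms defined over the punctured-disc bundle extend uniquely to $U\times\C^\times$), which is exactly the content of what the paper imports from [HaHe1] and which your sketch never establishes. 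In addition, the linearization step is under-argued: ``a continuous $1$-cocycle of $\Un(1)$ is a coboundary'' plus Cauchy estimates does not handle coherent sheaves that are not locally free; the standard argument decomposes the $\Un(1)$-action on the Fr\'echet space of sections over an invariant Stein chart into isotypical components, shows that finitely many isotypical sections generate (Cartan's Theorem A on the invariant Stein chart, after shrinking), and produces an equivariant finite presentation whose matrix entries then necessarily have the form $\lambda^k g(x)$ and hence extend; and since obtaining such a presentation may require shrinking in the fiber direction, you must still verify that the extension so constructed restricts to the original sheaf on the original domain. If you instead simply cite the compact-group extension theorem, as the paper does, the remainder of your argument goes through.
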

\begin{proof}
We use that the complex Lie group $\C^\times$ is the complexification of the compact Lie group $\Un(1)$. The manifold $\V\setminus\{0\}$ is the ``complexification'' $\C^\times\cdot\Sb$ of the $\Un(1)$-space $\Sb$ in the sense of \cite{Hein1}. As a special case of \cite{HaHe1}, we obtain that $\Ei_{\B\setminus\{0\}}$ has a unique extension to a $\C^\times$-equivariant coherent analytic sheaf $\Ei_{\V\setminus\{0\}}$ on $\V\setminus\{0\}$. Since $(\V\setminus\{0\})/\C^\times=\M$, we have $\Ei_{\V\setminus\{0\}}=\pi^*\Ei$ for some coherent analytic sheaf $\Ei$ on $\M$, as asserted. The uniqueness of $\Ei$ follows from the uniqueness of $\Ei_{\V\setminus\{0\}}$. The result about locally free sheaves is also in \cite{HaHe1}. 
\end{proof}
Every $C^0$ vector bundle over $\B$ admits a unique holomorphic structure by the Oka principle. However, this holomorphic structure is not $\D^\times$-equivariant in general, since otherwise every $C^0$ vector bundle over $\M$ would admit a holomorphic structure (which is not true). The above lemma says that $\D^\times$-equivariance of the holomorphic structure is the same as $\Un(1)$-equivariance.

Any $\Un(1)$-equivariant vector bundle $\Ei_\B$ on $\B\setminus\{0\}$ therefore also admits a Hermitian metric which is the pullback of a Hermitian metric on the induced bundle $\Ei$ on $\M$. Note however that $\Ei_\B$ also admits Hermitian metrics which are not $\D^\times$-equivariant (even if they are $\Un(1)$-equivariant). 

The relevance of this discussion for the present paper is that the Cowen--Douglas sheaf $\Ei_{\rm CD}$ of a graded quotients $\GE_\N$ of $\GH_\N\otimes\C^N$ is easily seen to be $\Un(1)$-equivariant and so by Lemma \ref{vectextlemma} it descends to coherent sheaves $\M$. It comes with a Hermitian metric, the Cowen--Douglas metric, which is also $\Un(1)$-equivariant but not always $\D^\times$-equivariant (i.e. not always a pullback of a metric on the induced bundle over $\M$). For the reference space $\GH_\N$, even though $\CD(\GH_\N)$ is not $\D^\times$-equivariant it defines a line bundle $\Oi_{\rm CD}$ which is isomorphic to the pullback of a line bundle on $\M$, viz. $\Oi_{\rm CD}=\Oi_{\B\setminus\{0\}}=\pi^*\Oi_\M|_{\B\setminus\{0\}}$.




\subsubsection{The reproducing kernel}\label{repkernelsec}

Let $K^{H^n}(z,w)=(1-\bra w,z\ket)^{-1}$ be the reproducing kernel for the Drury--Arveson space $H^2_n$ and let $K(z,w)=K_w(z)$ be the reproducing kernel for the quotient module $\GH_\N$. If $P$ is the orthogonal projection of $H^2_n$ onto the subspace $\GH_\N$ then we get
$$
K_w(z)=(PK^{H^n}_w)(z)=\bra K_z|PK^{H^n}_w\ket=\bra K_z^{H^n}|PK^{H^n}_w\ket.
$$
Recall that $\GH_\N=\overline{\operatorname{span}}\{K_w|\ w\in\B\}\subset H^2_n$. So $PK_w^{H^n}=K_w^{H^n}$ for $w\in\B$. 
For $(z,w)\in\B\times\B$ we therefore get
$$
K_w(z)=K^{H^n}_w(z)=(1-\bra w,z\ket)^{-1}.
$$
Since we usually regard $\GH_\N$ as a space of functions on the subset $\B\subset\B^n$, we see that the reproducing kernel for $\GH_\N$ is just the restriction of the kernel for $H^2_n$. The projected kernel $PK^{H^n}_w$ however makes sense for all $w\in\B^n$ as a function on all of $\B^n$.

The orthogonal complement of $\GH_\N$ is not invariant under the backward shifts $S^*_1,\dots,S_n^*$, so it cannot be equal to $\overline{\operatorname{span}}\{K_v|\ v\in\B^n\setminus\B\}$. Therefore $PK_w$ can be nonzero also for $w\in\B^n\setminus\B$, and $PK_w\ne K_w$ always happens for such $w$'s. 

The Beurling factorization $P=\bone-M_\Theta M_\Theta^*$ gives a formula for the extension of $K_w$ from $\B$ to the whole unit ball $\B^n$,
$$
K_w(z)=\frac{1-\Theta(z)\Theta(w)^*}{1-\bra w,z\ket},\qquad\forall z,w\in\B^n.
$$

Consider now the more general case of a vector-valued quotient module $\GH^E_\N$ of $\GH_\N\otimes\C^N$. The Beurling representation of $P_E=\bone-M_{\Theta_E}M_{\Theta_E}^*$ shows that the reproducing kernel $K^E_w(z)$ for $\GH^E_\N$ has the form 
$$
K^E(z,w)=\frac{\bone-\Theta_E(z)\Theta_E(w)^*}{\bone-\bra w,z\ket\bone}\in\Mn_N(\C).
$$
Using $M_{\Theta_E}^*(k_v\otimes\xi)=k_v\otimes\Theta_E(v)^*\xi$ we see that the numerator is precisely the $\B$-Berezin symbol of $P_E$ as defined in \S\ref{Abelsumsec},
$$
\frac{K^E(z,w)}{K(z,w)}=\bone-\Theta_E(v)\Theta_E(w)^*=
\varsigma_\B(P_E)(v,w).
$$
The function $\varsigma_\B(P_E)(v,w)$ has been studied for quotient modules of various reproducing kernel Hilbert spaces under the names ``core function'' and ``defect function''. It was observed in \cite{Arv7b, Cheng1, Fang5, GRS1} that the boundary value of the restriction of the core function to the diagonal exists as an element of $L^\infty(\Sb)\otimes\Mn_N(\C)$ and is idempotent. This boundary function is sometimes called the ``Arveson curvature function'' of the quotient module. 

If $P_E$ is a projection over $\Ti_\GH^{(0)}$ then $P^E:=\varsigma(P_E)$ is a projection over $C^0(\M)$ which thus defines a continuous vector bundle $\Ei$ on $\M$. Applying Lemma \ref{AbelsumLemma} we see that 
$$
\varsigma_\Sb(P_E)(\zeta):=\lim_{r\to 1^-}\varsigma_\B(P_E)(r\zeta),\qquad\forall\zeta\in\Sb
$$
defines a continuous projection-valued function on $\Sb$ which is $\Un(1)$-equivariant and descends to $\M=\Sb/\Un(1)$ as the projection
$$
\varsigma(P_E)=\lim_{m\to\infty}\varsigma^{(m)}(P_{E,m})\in C^0(\M)\otimes\Mn_N(\C).
$$
Note that $\varsigma_\B(P_E)$ is always real-analytic. Only the continuity of its boundary value requires $P_E$ to be over $\Ti_\GH^{(0)}$. But $\varsigma_\B(P_E)$ itself is not a projection in general. 

Note also that $\varsigma^{(m)}(P_{E,m})$ is a matrix over the image of $\Bi(\GH_m)$ under the symbol map $\varsigma^{(m)}$, and therefore $\varsigma^{(m)}(P_{E,m})$ is real-algebraic and in particular $C^0$. This does not rely on $P_E$ having entries in $\Ti_\GH^{(0)}$. 
\begin{cor}
Let $\GE_\N$ be a graded quotient module and let $P_E$ be the projection onto $\GE_\N$. Then the boundary value of the diagonal of the core function (i.e. the Arveson curvature function) of $\GE_\N$ coincides with the covariant symbol $\varsigma(P_E)=\mathrm{SOT-}\lim_m\varsigma^{(m)}(P_{E,m})$ of the projection $P_E$.
 \end{cor}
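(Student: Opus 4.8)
The plan is to derive the corollary directly from Proposition \ref{SOTAbelsumLemma} together with the identification, established in \S\ref{repkernelsec}, of the $\B$-Berezin symbol $\varsigma_\B(P_E)(v,w)$ with the numerator $\bone-\Theta_E(v)\Theta_E(w)^*$ of the reproducing kernel for $\GE_\N$, i.e.\ with the (off-diagonal) core function of the quotient module. First I would record that for a graded quotient module $\GE_\N$ the projection $P_E$ preserves the $\N_0$-grading, so we may write $P_E=(P_{E,m})_{m\in\N_0}$ with $P_{E,m}\in\Bi(\GH_m)\otimes\Mn_N(\C)$, and that by Lemma \ref{PEinNilemma} (the implication (d)$\implies$(a), plus the final assertion of that lemma) the entries of $P_E$ lie in the $L^\infty$ Toeplitz core $\Ni$, because $\Psi(P_E)\le P_E$ holds automatically for a graded quotient module (its range is $S^*$-invariant). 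Hence $P_E$ is an operator over $\Ni$ to which Proposition \ref{SOTAbelsumLemma} applies entrywise (using Remark \ref{frameomegaremark}, or rather the matricial version of Proposition \ref{SOTAbelsumLemma} obtained by applying it to each entry).

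Next, Proposition \ref{SOTAbelsumLemma} gives, for $\omega_\Sb$-almost every $\zeta\in\Sb$,
$$
\varsigma(P_E)(\zeta)=\lim_{r\to 1^-}\varsigma_\B(P_E)(r\zeta),
$$
and simultaneously identifies the right-hand side with the Abel sum $\lim_{r\to 1^-}(1-r^2)\sum_{m}r^{2m}\varsigma^{(m)}(P_{E,m})(\zeta)$, which by Abel's theorem equals $\lim_{m\to\infty}\varsigma^{(m)}(P_{E,m})(\zeta)$ whenever the sequence $(\varsigma^{(m)}(P_{E,m}))_m$ converges; its $\mathrm{SOT}$-limit is by definition $\varsigma(P_E)$. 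So the content to assemble is: (i) the boundary value of the diagonal core function, $\varsigma_\Sb(P_E)(\zeta)=\lim_{r\to 1^-}\varsigma_\B(P_E)(r\zeta)$, which is the object called the Arveson curvature function in the literature, exists a.e.\ by the cited results (\cite{Arv7b, Cheng1, Fang5, GRS1}); (ii) that a.e.\ boundary value coincides with $\varsigma(P_E)=\mathrm{SOT}\text{-}\lim_m\varsigma^{(m)}(P_{E,m})$ as an element of $L^\infty(\M)\otimes\Mn_N(\C)$, which is precisely what Proposition \ref{SOTAbelsumLemma} delivers once we feed in the entrywise membership in $\Ni$; and (iii) if moreover $P_E$ is over $\Ti_\GH^{(0)}$, the stronger Lemma \ref{AbelsumLemma} upgrades the a.e.\ equality to a genuinely continuous $\Un(1)$-equivariant function on $\Sb$ descending to a continuous projection $\varsigma(P_E)$ on $\M$.

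Concretely the write-up would be: apply Lemma \ref{PEinNilemma} to get $P_E$ over $\Ni$; invoke the entrywise form of Proposition \ref{SOTAbelsumLemma} to get $\varsigma(P_E)(\zeta)=\lim_{r\to1^-}\varsigma_\B(P_E)(r\zeta)$ for $\omega_\Sb$-a.e.\ $\zeta$; recall from \S\ref{repkernelsec} that $\varsigma_\B(P_E)(v)=\bone-\Theta_E(v)\Theta_E(v)^*$ is the diagonal of the core function, so its radial boundary value is by definition the Arveson curvature function $\varsigma_\Sb(P_E)$; and note that, by Proposition \ref{SOTvsprop}, the same operator is $\varsigma^{\rm SOT}(P_E)=\mathrm{SOT}\text{-}\lim_m\Psi^m(P_E)$, whose grading components compute to $\mathrm{SOT}\text{-}\lim_m\varsigma^{(m)}(P_{E,m})$ (this last identification of $\varsigma$ with $\lim_m\varsigma^{(m)}\circ(\,\cdot\,|_{\GH_m})$ on $\Ni$ is exactly Corollary \ref{LinftyToepl}). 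The main obstacle I anticipate is purely bookkeeping rather than conceptual: making precise the passage from the scalar Abel-summation statement of Proposition \ref{SOTAbelsumLemma} to its matricial version and checking that the $\omega_\Sb$-a.e.\ convergence of the boundary core function matches (not merely is implied by) the $L^\infty$-symbol $\varsigma(P_E)$ — i.e.\ ruling out that the two a.e.-defined functions differ on a null set. Since both are identified with the same limit $\lim_{r\to1^-}(1-r^2)\sum_m r^{2m}\varsigma^{(m)}(P_{E,m})(\zeta)$ pointwise a.e., this is automatic, but it must be stated carefully; this is the only genuinely load-bearing point and it is handled entirely by Proposition \ref{SOTAbelsumLemma}.
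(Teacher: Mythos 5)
Your proposal is correct and follows essentially the same route as the paper: the paper's justification for this corollary is precisely the preceding discussion, i.e.\ coinvariance of a graded quotient module puts $P_E$ over $\Ni$ (Lemma \ref{PEinNilemma}), the Abel-summation results (Lemma \ref{AbelsumLemma} in the $\Ti_\GH^{(0)}$ case, Proposition \ref{SOTAbelsumLemma} entrywise in the general $L^\infty$ case) identify the radial boundary value of $\varsigma_\B(P_E)$ with $\varsigma(P_E)=\mathrm{SOT\text{-}}\lim_m\varsigma^{(m)}(P_{E,m})$, and the Beurling factorization identifies $\varsigma_\B(P_E)$ with the diagonal core function whose boundary value is the Arveson curvature function. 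Your extra care about the matricial form of Proposition \ref{SOTAbelsumLemma} and the a.e.\ matching of the two boundary objects is exactly the bookkeeping the paper leaves implicit, so there is no gap.
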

 
\begin{Remark}[Arveson curvature]
Let $\Ei$ be the Serre sheaf of the graded quotient module $\GE_\N$. Its rank is by definition the leading coefficient of the Hilbert polynomial $\N\ni m\to\chi(\Ei(m))=\dim\GE_m$ of the globally generated analytic sheaf $\Ei$.
The projection $\varsigma(P_E)$ defines a sheaf of $\Ci_\M^0$-modules with the same rank as $\Ei$. Indeed one gets
\begin{equation}\label{Arvcurvlim}
(\omega\otimes\Tr_N)(\varsigma(P_E))=\lim_{m\to\infty}(\phi_m\otimes\Tr_N)(P_{E,m})=\lim_{m\to\infty}\frac{\dim\GE_m}{n_m}=\rank\Ei,
\end{equation}
The integer $\rank\Ei$ coincides with the ``Arveson curvature'' of the pure row contraction $S_E$; see \cite{Arv7a, Arv7b, GRS2}. 
\end{Remark}

\subsection{The Cowen--Douglas projection}

Let $\GE_\N\subset\GH_\N\otimes\C^N$ be a quotient module. 



 \begin{dfn}\label{masterEdfn}
 The \textbf{Cowen--Douglas projection} of $\GE_\N$ is the projection $\CD(\GE_\N)\in L^\infty(\B\setminus\{0\})\otimes\Bi(\GE_\N)$ defined by
 $$
\CD(\GE_\N)(v):=\text{projection onto }\Ker(S_E^*-v\bone),\qquad\forall v\in\B\setminus\{0\}.
 $$
 \end{dfn}
 The sheaf $\Ei_{\rm CD}$ is thus locally free over $\B\setminus\{0\}$ if and only if $\CD(\GE_\N)$ belongs to the subalgebra $C^0(\B\setminus\{0\})\otimes\Bi(\GE_\N)$, in which case one could regard $\CD(\GE_\N)$ as a continuous (in fact real-analytic) map from $\B\setminus\{0\}$ into the Grassmannian of $r$-planes in $\GE_\N$, where $r:=\rank\Ei_{\rm CD}$. Then $\Ei_{\rm CD}$ is the vector bundle obtained by pulling back the universal rank-$r$ vector bundle over the Grassmannian using the map $\CD(\GE_\N)$, and the Hermitian metric on $\Ei_{\rm CD}$ is obtained by pulling back the universal metric on the universal bundle via $\CD(\GE_\N)$.

Recall that
  $$
\Ker(S_E^*-v\bone)=\C k_v\otimes\E_{\rm CD}(v)
 $$
 where $\E_{\rm CD}(v)$ is a $\rank\Ei_{\rm CD}$-dimensional subspace of $\C^N$ for each $v\in\B\setminus\{0\}$. Therefore we have a factorization
  $$
\CD(\GE_\N)(v)=\CD(\GH_\N)(v)\otimes\Pi^E(v)
 $$
 where $\Pi^E(v)\in\Mn_N(\C)$ is a projection onto the subspace $\E_{\rm CD}(v)\subset\C^N$. The dimension of $\E_{\rm CD}(v)$ is $\geq\rank\Ei_{\rm CD}$. 
 
Denote by $P_E$ the projection of $\GH_\N\otimes\GE_0$ onto $\GE_\N$. Note that $\CD(\GE_\N)$ equals $\CD(\GH_\N\otimes\GE_0)\wedge P_E$, where for two projections $P$ and $Q$ acting on the same Hilbert space we denote by $P\wedge Q$

Similarly, for each $m\in\N_0$ we can define
\begin{equation}\label{FSasINF}
\CD(\GE_m):=\CD(\GH_m\otimes\GE_0)\wedge P_{E,m}.
\end{equation}
In this subsection we shall look at the geometric meaning of $\CD(\GE_m)$ and $\CD(\GE_\N)$ and their relation to the symbols $\varsigma^{(m)}(P_{E,m})$ and $\varsigma_\B(P_E)$.

\subsubsection{Some alternating projections}

Recall \cite[Problem 122]{Halm1} that if $P$ and $Q$ are two projections acting on a Hilbert space then powers of the compression $PQP$ converges to the infimum $P\wedge Q$ of $P$ and $Q$,
$$
P\wedge Q=\lim_{p\to\infty}(PQP)^p.
$$
This is useful for us because then we can compare the infimum $P\wedge Q$ with the compression $PQP$. And for our choices of $P$ and $Q$ the compression $PQP$ is going to equal $\CD(\GH_m)\otimes\varsigma^{(m)}(P_{E,m})$:
\begin{prop}\label{altprojprop}
Define a projection $P^E_m\in L^\infty(\M)\otimes\Mn_N(\C)$ by writing 
$$
\CD(\GE_m)=\CD(\GH_m)\otimes P^E_m.
$$
Then for each $x\in\M$ we have
\begin{equation}\label{normaltproj}
P^E_m(x)=\lim_{p\to\infty}\varsigma^{(m)}(P_{E,m})(x)^p
\end{equation}
in the norm of $\Mn_N(\C)$. 
\end{prop}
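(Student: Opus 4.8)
The plan is to reduce the whole statement to the alternating-projection identity $P\wedge Q=\lim_{p\to\infty}(PQP)^p$ of \cite[Problem 122]{Halm1}, applied with $P=\CD(\GH_m\otimes\GE_0)(x)=\CD(\GH_m)(x)\otimes\bone_N$ and $Q=P_{E,m}$, viewed as projections on the finite-dimensional Hilbert space $\GH_m\otimes\GE_0$. By the definition \eqref{FSasINF} of $\CD(\GE_m)$ we have $\CD(\GE_m)(x)=P\wedge Q$, so it suffices to identify the compression $PQP$ explicitly. Finite-dimensionality of $\GH_m\otimes\GE_0$ ensures that the powers $(PQP)^p$ converge in the operator norm (not merely strongly), which is exactly the norm of $\Mn_N(\C)$ under the identifications below, so no topological subtleties arise.

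The first step is to compute $PQP$. Writing $k:=k^{(m)}_{\bar x}$, so that $\CD(\GH_m)(x)=|k\ket\bra k|$, and regarding $P_{E,m}$ as an $N\times N$ matrix $(A_{ij})_{i,j=1}^N$ with $A_{ij}\in\Bi(\GH_m)$, the $(i,j)$-entry of $PQP$ is $|k\ket\bra k|\,A_{ij}\,|k\ket\bra k|=\bra k|A_{ij}k\ket\,|k\ket\bra k|$, so that $PQP=\CD(\GH_m)(x)\otimes M(x)$ with $M(x)_{ij}:=\bra k^{(m)}_{\bar x}|A_{ij}k^{(m)}_{\bar x}\ket$. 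Now recall from \S\ref{Abelsumsec}, together with the identification $\FS(\GH_m)=\CD(\GH_m)$ of \S\ref{reflinesec}, that $\varsigma^{(m)}(A)(x)=\Tr(\FS(\GH_m)(x)A)=\bra k^{(m)}_{\bar x}|Ak^{(m)}_{\bar x}\ket$ for $A\in\Bi(\GH_m)$; applying this entrywise, in the sense of Remark \ref{frameomegaremark}, gives $M(x)=\varsigma^{(m)}(P_{E,m})(x)$. Hence $PQP=\CD(\GH_m)(x)\otimes\varsigma^{(m)}(P_{E,m})(x)$.

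It remains to pass to the limit. Since $\CD(\GH_m)(x)$ is a projection, $\bigl(\CD(\GH_m)(x)\otimes\varsigma^{(m)}(P_{E,m})(x)\bigr)^p=\CD(\GH_m)(x)\otimes\varsigma^{(m)}(P_{E,m})(x)^p$ for every $p$, so $\CD(\GE_m)(x)=\CD(\GH_m)(x)\otimes\bigl(\lim_{p\to\infty}\varsigma^{(m)}(P_{E,m})(x)^p\bigr)$, the limit taken in norm. Comparing with the defining relation $\CD(\GE_m)=\CD(\GH_m)\otimes P^E_m$ and using that $B\mapsto\CD(\GH_m)(x)\otimes B$ is an isometric embedding of $\Mn_N(\C)$, one concludes $P^E_m(x)=\lim_{p\to\infty}\varsigma^{(m)}(P_{E,m})(x)^p$, which is \eqref{normaltproj}. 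As a consistency check one may also note that $\varsigma^{(m)}(P_{E,m})(x)$ is a positive contraction — for $\xi\in\C^N$, $\bra\xi|\varsigma^{(m)}(P_{E,m})(x)\xi\ket=\bra k^{(m)}_{\bar x}\otimes\xi\,|\,P_{E,m}(k^{(m)}_{\bar x}\otimes\xi)\ket\geq 0$ — so that the limit is precisely the spectral projection of $\varsigma^{(m)}(P_{E,m})(x)$ onto its eigenvalue $1$, in accordance with $P^E_m(x)$ being a projection.

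The only genuine work I anticipate is the bookkeeping in the middle step: making the "matrix of operators" computation of $PQP$ precise and verifying that the entrywise application of $\varsigma^{(m)}$ appearing there agrees with the intrinsic action of $\varsigma^{(m)}$ on $\Bi(\GH_m)\otimes\Mn_N(\C)$, which is exactly the content of Remark \ref{frameomegaremark}. The remaining ingredients — the appeal to \cite[Problem 122]{Halm1}, the splitting of the tensor power past a projection, and the descent from a statement about $\CD(\GE_m)$ to one about matrices over $L^\infty(\M)$ — are formal.
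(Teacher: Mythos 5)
Your proof is correct and follows essentially the same route as the paper: compress $P_{E,m}$ to the $N$-dimensional subspace $\C k_{\bar{x}}^{(m)}\otimes\C^N$, identify the compression with $\CD(\GH_m)(x)\otimes\varsigma^{(m)}(P_{E,m})(x)$, and invoke the convergence of powers of the compression of one projection by another to the infimum $P\wedge Q$ (the paper cites a finite-dimensional norm-convergence lemma of Deutsch where you invoke Halmos's Problem 122 plus finite-dimensionality, which amounts to the same thing). Your entrywise bookkeeping of $PQP$ and the isometry of $B\mapsto\CD(\GH_m)(x)\otimes B$ just make explicit steps the paper leaves implicit.
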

\begin{proof}


Fix $x\in\M$ and let $A:=\CD(\GH_m\otimes\C^N)(x)P_{E,m}\CD(\GH_m\otimes\C^N)$ be the compression of $P_{E,m}$ to the subspace $\C k_x^{(m)}\otimes\C^N$. Note that $\C k_x^{(m)}\otimes\C^N$ has finite dimension $N$. With that in mind we get from \cite[Lemma 9.38]{Deut1} that the positive operators $A^p$ converge in norm to $\CD(\GH_m\otimes\C^N)(x)\wedge P_{E,m}$ as $p$ goes to inifnity. 
Observe that
$$
\CD(\GH_m\otimes\C^N)(x)P_{E,m}\CD(\GH_m\otimes\C^N)(x)=|k_x^{(m)}\ket\bra k_x^{(m)}|\otimes\bra k_x^{(m)}|P_Ek_x^{(m)}\ket=\CD(\GH_m)(x)\otimes\varsigma^{(m)}(P_{E,m})(x).
$$
The proof is then complete, since by definition we have $\CD(\GE_m)=\CD(\GH_m\otimes\C^N)\wedge P_{E,m}$. 
\end{proof}

\begin{Remark}[Another compression]
Instead of the compression $\CD(\GH_m\otimes\GE_0)(x)P_{E,m}\CD(\GH_m\otimes\GE_0)(x)$ we can instead look at $P_{E,m}\CD(\GH_m\otimes\GE_0)(x)P_{E,m}$. The powers of this operator also converge to $\CD(\GE_m)(x)$. For all $r\zeta\in\B$ we have 
\begin{align*}
P_E\CD(\GH_\N)(r\zeta)|_{\GH^E_\N}&=P_E(|k_{r\zeta}\ket\bra k_{r\zeta}|\otimes\bone_N)|_{\GH^E_\N}
\\&=(1-r^2)P_E\sum_{m\in\N_0}r^{2m}\sum_{|\mathbf{j}|=m=|\mathbf{k}|}\zeta_\mathbf{j}\zeta_\mathbf{k}^*(S_\mathbf{j}S_\mathbf{k}^*\otimes\bone_N)\big|_{\GH^E_\N}
\\&=(1-r^2)\sum_{m\in\N_0}r^{2m}\sum_{|\mathbf{j}|=m=|\mathbf{k}|}\zeta_\mathbf{j}\zeta_\mathbf{k}^*S_{E,\mathbf{j}}S_{E,\mathbf{k}}^*
\\&=(1-r^2)\sum_{m\in\N_0}r^{2m}\sum_{|\mathbf{j}|=m=|\mathbf{k}|}\varsigma^{(m)}(S_\mathbf{j}S_\mathbf{k}^*)(\zeta)S_{E,\mathbf{j}}S_{E,\mathbf{k}}^*\in\Bi(\GE_\N).
\end{align*}
Let us apply $\omega$ to elements of $L^\infty(\M)\otimes\Bi(\GH_\N\otimes\C^N)$, producing elements of $\Bi(\GH_\N\otimes\C^N)$. Then clearly
$$
\omega(P_{E,m}\CD(\GH_m\otimes\C^N)P_{E,m})=P_{E,m}\omega(\CD(\GH_m\otimes\C^N))P_{E,m}=\frac{1}{n_m}P_{E,m}.
$$
We have $\breve{\varsigma}^{(m)}\varsigma^{(m)}=\id$ so $(\breve{\varsigma}^{(m)}\varsigma^{(m)})(P_{E,m})=P_{E,m}$ holds trivially. Therefore
\begin{align*}
n_m\omega(\CD(\GH_m\otimes\C^N)P_{E,m}\CD(\GH_m\otimes\C^N))&=n_m\omega(\varsigma^{(m)}(P_{E,m})\otimes\FS(\GH_m))=P_{E,m}
\\&=n_m\omega(P_{E,m}\CD(\GH_m\otimes\C^N)P_{E,m}).
\end{align*}
That is, the two choices of compressions have the same $\omega$-integrals. 
\end{Remark}

\begin{Lemma}\label{PEvsCDlemma}
Let $\GE_\N\subset\GH_\N\otimes\C^N$ be a graded quotient module and suppose that the projection $P_E$ onto $\GE_\N$ has entries in $\Ti_\GH^{(0)}$. Then $\CD(\GE_m)$ is $C^0$ for all $m\gg 0$. That is, $\lim_{p\to\infty}\varsigma^{(m)}(P_{E,m})^p$ exists in $C^0(\M)$ for all $m\gg 0$.
\end{Lemma}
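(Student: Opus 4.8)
The plan is to show that the $L^\infty(\M)$-projection $P^E_m$ with $\CD(\GE_m)=\CD(\GH_m)\otimes P^E_m$ supplied by Proposition~\ref{altprojprop} is in fact norm-continuous once $m$ is large; this is exactly the assertion, since $\CD(\GH_m)$ is a (real-analytic) rank-one projection, so $\CD(\GE_m)$ is $C^0$ iff $P^E_m$ is, iff $\lim_{p\to\infty}\varsigma^{(m)}(P_{E,m})^p$ converges in $C^0(\M)$. Unwinding $\CD(\GE_m)(x)=\CD(\GH_m\otimes\C^N)(x)\wedge P_{E,m}$ and using that $\CD(\GH_m)(x)$ is rank one, the range of $P^E_m(x)$ is the subspace $\E^{(m)}_{\rm CD}(x):=\{\xi\in\C^N : k^{(m)}_{\bar x}\otimes\xi\in\GE_m\}$ of $\C^N$. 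I will show that for $m\gg 0$ this coincides with the fibre $\E_{\rm CD}(v)$ of the ``transverse factor'' of the full Cowen--Douglas projection, which is continuous precisely because $P_E$ has entries in $\Ti_\GH^{(0)}$.

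First I gather what is needed about $\CD(\GE_\N)$. By Proposition~\ref{tensorprop} and the factorization following Definition~\ref{masterEdfn}, on $\B\setminus\{0\}$ one has $\CD(\GE_\N)(v)=\CD(\GH_\N)(v)\otimes\Pi^E(v)$, where $\Pi^E(v)\in\Mn_N(\C)$ is the orthogonal projection onto $\E_{\rm CD}(v)=\Ker\Theta_E^*(\bar v)$; by Lemma~\ref{PEinNilemma} we may take $\bone-P_E=M_{\Theta_E}M_{\Theta_E}^*$ with $\Theta_E$ having homogeneous polynomial columns $\phi_j\in\Ai_{m_j}\otimes\C^N$. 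Homogeneity of the $\phi_j$ makes the subspace $\E_{\rm CD}(\lambda v)$ equal to $\E_{\rm CD}(v)$ for $\lambda\in\D^\times$, so $\Pi^E$ descends to a function on $\M$. Since $P_E$ has entries in $\Ti_\GH^{(0)}$, Theorem~\ref{bigCDversuslocallyfree} gives $S_E\in B_{r_E}(\B\setminus\{0\})$, so $\CD(\GE_\N)$ is a norm-continuous projection-valued function of constant rank $r_E$ on $\B\setminus\{0\}$; extracting the $\Mn_N(\C)$-factor (e.g.\ by the partial trace $\Tr_{\GH_\N}\otimes\id_N$ and $\|k_{\bar v}\|=1$), $\Pi^E$ is then in $C^0(\M)\otimes\Mn_N(\C)$ with constant rank $r_E$.

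Next I prove $\E^{(m)}_{\rm CD}(x)=\E_{\rm CD}(v)$ for all $x\in\M$ and $m\gg 0$, where $v$ is any point of $\B\setminus\{0\}$ over $x$. Write $\GI_\N:=\GE_\N^\perp$, a graded submodule with underlying $\Ai$-module $I_\N$, which is finitely generated; pick homogeneous generators $g_1,\dots,g_s$ of degrees $d_1,\dots,d_s$ and set $d_0:=\max_j d_j$. A vector $k^{(m)}_{\bar x}\otimes\xi$ lies in $\GE_m$ iff it is orthogonal to $I_m=\sum_j\Ai_{m-d_j}g_j$; by the reproducing property and multiplicativity of evaluation at a suitable lift $\tilde x\in\C^n$ of $x$, and since $Z_\alpha^{m-d_j}g_j\in I_m$ for every coordinate $\alpha$, for $m\geq d_0$ this orthogonality reduces to $\langle\xi,g_j(\tilde x)\rangle=0$ for all $j$ (test against $Z_\alpha^{m-d_j}g_j$ with $\tilde x_\alpha\neq 0$; the converse is immediate). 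Hence $\E^{(m)}_{\rm CD}(x)$ is independent of $m$ for $m\geq d_0$ and equals $\bigcap_{l\geq 0}\{\xi : k^{(l)}_{\bar x}\otimes\xi\in\GE_l\}$; since $\GE_\N$ is graded and each $p_l k_{\bar v}$ is a nonzero multiple of $k^{(l)}_{\bar x}$, this intersection equals $\{\xi : k_{\bar v}\otimes\xi\in\GE_\N\}=\E_{\rm CD}(v)$. Therefore $P^E_m=\Pi^E$ for all $m\geq d_0$, so $\CD(\GE_m)=\CD(\GH_m)\otimes\Pi^E$ is $C^0$, as claimed.

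The decisive point is this last comparison. The inclusion $\E_{\rm CD}(v)\subseteq\E^{(m)}_{\rm CD}(x)$ holds for every $m$ just by gradedness, but the reverse — that one sufficiently high graded piece $\GE_m$ already determines the quotient module near $x$ — rests on finite generation of $I_\N$; and the continuity of the resulting projection genuinely uses $P_E\in\Ti_\GH^{(0)}$ through Theorem~\ref{bigCDversuslocallyfree}, since otherwise $\rank\Pi^E$ need not be constant and $\CD(\GE_m)$ genuinely fails to be continuous for any $m$ (for instance when the Serre sheaf of $\GE_\N$ is a high twist of the ideal sheaf of a point, where fibre dimensions jump).
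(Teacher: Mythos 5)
Your reduction to the continuity of $P^E_m$ is fine, and your algebraic stabilization step is correct and genuinely different from anything in the paper: using finite generation of the graded $\Ai$-module $I_\N$ underlying $\GI_\N=\GE_\N^\perp$ and testing orthogonality against $Z_\alpha^{m-d_j}g_j$, you get $\E_m(x)=\E_{\rm CD}(v)$, hence $P^E_m=\Pi^E$, for every $m$ at least the maximal generator degree, with no hypothesis on $P_E$ at all (the paper only records the inclusion $\E(v)\subset\E_m(v)$ in general and obtains $P^E_m=\Pi^E$ a posteriori, under continuity hypotheses). The conjugation bookkeeping you wave at is no looser than the paper's own.

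The genuine gap is at the one step where the hypothesis $P_E\in\Ti_\GH^{(0)}\otimes\Mn_N(\C)$ must do analytic work: the continuity (constancy of rank) of $\Pi^E$ is obtained solely by citing Theorem \ref{bigCDversuslocallyfree}. That theorem is the next result in the paper, its conclusion ($P^E_m=P^E:=\varsigma(P_E)$ for $m\gg 0$, $\Pi^E$ the pullback of $P^E$) already contains the present lemma, and its proof is precisely the kind of argument the paper uses for the lemma itself; so as a proof of this lemma your text is not self-contained, and if the later theorem is withheld nothing in it establishes constant rank. The paper's own route, which would close your gap in a few lines, is: since $P_E=\breve{\varsigma}(P^E)+K$ with $K\in\Gamma_0$, one has $\|\varsigma^{(m)}(P_{E,m})-P^E\|\leq\|K_m\|\to 0$ with $P^E$ continuous; coinvariance gives $\varsigma^{(m)}(P_{E,m})\geq P^E$, which together with $\varsigma^{(m)}(P_{E,m})\leq\bone$ forces the orthogonal decomposition $\varsigma^{(m)}(P_{E,m})=P^E\oplus C^E_m$ with $C^E_m\geq 0$ and $\|C^E_m\|\to 0$; hence for $m\gg 0$ the eigenvalue-one spectral projection $P^E_m=\lim_{p\to\infty}\varsigma^{(m)}(P_{E,m})^p$ equals $P^E$ exactly (equivalently, $\|P^E-P^E_m\|<1$ forces pointwise unitary equivalence and constant rank), and $\|\varsigma^{(m)}(P_{E,m})^p-P^E\|=\|C^E_m\|^p\to 0$ gives convergence in $C^0(\M)$ — the paper phrases this last uniformity via Dini's theorem, a point you silently elide when you identify ``$P^E_m$ continuous'' with ``the limit exists in $C^0(\M)$''. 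If you replace the citation of Theorem \ref{bigCDversuslocallyfree} by this computation, your argument becomes complete and, thanks to the stabilization $P^E_m=\Pi^E$ for $m\geq d_0$, somewhat sharper than the paper's.
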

\begin{proof}
Since $\|\varsigma^{(m)}(P_{E,m})\|\leq\|P_{E,m}\|=1$, we can write $\varsigma^{(m)}(P_{E,m})=P^E_m\oplus C^E_m$ where $C^E_m$ is a positive operator of norm $\|C^E_m\|\leq 1$.

Moreover,
$$
P_E\in\Ti_\GH^{(0)}\otimes\Bi(\GE_0)\iff\lim_{m\to\infty}C^E_m=0,
$$
i.e. we have uniform convergence of the $P^E_m$'s to $\varsigma(P_E)$ iff $P_E$ is over $\Ti_\GH^{(0)}$. 
So assume that $P_E$ is over $\Ti_\GH^{(0)}$. Then for $m\gg 0$ we have $\|\varsigma(P_E)-P_m^E\|<1$. This gives that $\varsigma(P_E)(x)$ and $P^E_m(x)$ are unitarily equivalent for all $x\in\M$ \cite[Prop. 5.2.6]{Ols}; thus the rank of $P^E_m$ is constantly equal to $\rank\Ei$, which is to say that $P^E_m$ is continuous. And $C^E_m$ is $C^0$ iff $P^E_m$ is $C^0$. 
If $C^E_m$ is $C^0$ then we have $\lim_{p\to\infty}(C^E_m)^p=0$ uniformly by Dini's theorem. 
\end{proof}

\subsubsection{The boundary limit of $\Pi^E$}

We shall now investigate how far $\varsigma^{(m)}(P_{E,m})$ is from a projection in the case $P_E$ is $\Psi$-superharmonic.
Define a projection $\Pi^E\in L^\infty(\B\setminus\{0\})\otimes\Mn_N(\C)$ by writing 
$$
\CD(\GE_\N)=\CD(\GH_\N)\otimes\Pi^E.
$$
In the same way as Proposition \ref{altprojprop} one deduces that for each $v\in\B$ we have
\begin{equation}\label{bignormaltproj}
\Pi^E(v)=\lim_{p\to\infty}\varsigma_\B(P_E)(v)^p.
\end{equation}
\begin{prop}\label{limitPiEcor}
For almost every $\zeta\in\Sb$ we have
$$
\lim_{r\to 1^-}\Pi^E(r\zeta)=\lim_{m\to\infty}P^E_m([\zeta])=\lim_{m\to\infty}\varsigma^{(m)}(P_{E,m})([\zeta])=\varsigma(P_E)([\zeta]).
$$
The function $\lim_{r\to 1^-}\Pi^E(r\zeta)$ descends to a projection over $L^\infty(\M)$ which concides 
with $\varsigma(P_E)$. If $P_E$ is over $\Ti_\GH^{(0)}$ then $\lim_{r\to 1^-}\Pi^E(r\ \cdot)$ coincides with $\varsigma(P_E)$ as a projection over $C^0(\M)$. 
\end{prop}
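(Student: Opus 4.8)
The plan is to deduce Proposition~\ref{limitPiEcor} from two already-available identifications of $\varsigma(P_E)$ together with an elementary interchange-of-limits lemma. Since $\GH^E_\N$ is a graded quotient module, the projection $P_E$ is $\Psi$-superharmonic and, by Lemma~\ref{PEinNilemma}, lies over the $L^\infty$ Toeplitz core $\Ni$; hence Proposition~\ref{SOTAbelsumLemma} applies to $A=P_E$ and gives, for $\omega_\Sb$-almost every $\zeta$,
$$
\lim_{r\to1^-}\varsigma_\B(P_E)(r\zeta)=\varsigma(P_E)([\zeta]),
$$
while the corollary of \S\ref{repkernelsec} identifies $\varsigma(P_E)=\mathrm{SOT-}\lim_m\varsigma^{(m)}(P_{E,m})$, so that $\lim_m\varsigma^{(m)}(P_{E,m})([\zeta])=\varsigma(P_E)([\zeta])$ $\omega$-a.e.; and by the boundary-idempotency of the core function recalled in \S\ref{repkernelsec}, together with the integrality of the Arveson curvature \eqref{Arvcurvlim}, $\varsigma(P_E)([\zeta])$ is for a.e. $\zeta$ a projection of rank $r_E$. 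Now $\Pi^E(v)=\lim_{p\to\infty}\varsigma_\B(P_E)(v)^p$ by \eqref{bignormaltproj} and $P^E_m(x)=\lim_{p\to\infty}\varsigma^{(m)}(P_{E,m})(x)^p$ by Proposition~\ref{altprojprop}, so all the asserted equalities follow once we know that inserting the operation $\lim_{p\to\infty}(\,\cdot\,)^p$ does not disturb the limits in $r$ and in $m$.

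The lemma I would isolate is purely about $N\times N$ matrices: if $(A_t)$ is a net of positive contractions on $\C^N$ converging in norm to a projection $A$, and if the spectral projection $\Pi_t:=\lim_{p\to\infty}A_t^p$ of $A_t$ at the eigenvalue $1$ satisfies $\rank\Pi_t=\rank A$ for all $t$, then $\Pi_t\to A$ in norm. Indeed $A_t\to A$ forces the spectrum of $A_t$ into $[0,\epsilon_t]\cup[1-\epsilon_t,1]$ with $\epsilon_t\to0$ and exactly $\rank A$ eigenvalues (with multiplicity) in $[1-\epsilon_t,1]$; the rank hypothesis then forces $\Pi_t$ to coincide, for $t$ large, with the spectral projection of $A_t$ for the interval $[\tfrac12,1]$, and the latter varies norm-continuously with $A_t$ because $\tfrac12$ sits in a spectral gap, hence $\Pi_t$ converges to the spectral projection of $A$ for $[\tfrac12,1]$, which is $A$. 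The rank hypothesis is precisely what prevents eigenvalues of $A_t$ from lingering strictly below $1$ and thereby making $\Pi_t$ drop rank.

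To apply the lemma with $A_t=\varsigma_\B(P_E)(r\zeta)$ and $t=r\to1^-$ one needs $\rank\Pi^E(r\zeta)=\dim\E_{\rm CD}(r\zeta)=r_E$ for $0<r<1$: this holds for \emph{all} $r\zeta\in\B\setminus\{0\}$ when $P_E$ is over $\Ti_\GH^{(0)}$, by Theorem~\ref{bigCDversuslocallyfree}, and for a.e. $\zeta$ in general because the locus where $\dim\Ker\Theta_E(\cdot)^*$ exceeds its generic value $r_E$ is an analytic subset of $\B\setminus\{0\}$ of complex codimension $\geq1$, whose radial image in $\Sb$ is $\omega_\Sb$-null, so for a.e. $\zeta$ the whole ray $\{r\zeta:0<r<1\}$ avoids it. Applying the lemma with $A_t=\varsigma^{(m)}(P_{E,m})([\zeta])$ and $t=m\to\infty$ requires $\rank P^E_m([\zeta])=r_E$ for $m\gg0$ and a.e. $\zeta$, which for $P_E$ over $\Ti_\GH^{(0)}$ is precisely the estimate $\|\varsigma(P_E)-P^E_m\|<1$ from the proof of Lemma~\ref{PEvsCDlemma}, and in general follows again from $\varsigma^{(m)}(P_{E,m})\to\varsigma(P_E)$ a.e. together with the a.e. constancy of the rank of $\varsigma(P_E)$. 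The lemma then yields $\Pi^E(r\zeta)\to\varsigma(P_E)([\zeta])$ and $P^E_m([\zeta])\to\varsigma(P_E)([\zeta])$ a.e., which is the displayed chain (its middle equality being trivial, as both sides are $\varsigma(P_E)([\zeta])$). For the descent: since $P_E$ preserves the grading it is $\Un(1)$-invariant, and the coefficients $\bar\zeta_{\mathbf{j}}\zeta_{\mathbf{k}}$ with $|\mathbf{j}|=|\mathbf{k}|$ occurring in the expansion $\varsigma_\B(P_E)(r\zeta)=(1-r^2)\sum_m r^{2m}\varsigma^{(m)}(P_{E,m})([\zeta])$ are $\Un(1)$-invariant, so $\Pi^E(r\zeta)$ and its boundary limit depend only on $r$ and $[\zeta]\in\M$; thus $[\zeta]\mapsto\lim_{r\to1^-}\Pi^E(r\zeta)$ is a bounded, self-adjoint, idempotent element of $L^\infty(\M)\otimes\Mn_N(\C)$, i.e. a projection over $L^\infty(\M)$, equal to $\varsigma(P_E)$ by the a.e. identity just proved.

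Finally, when $P_E$ is over $\Ti_\GH^{(0)}$, Lemma~\ref{AbelsumLemma} upgrades $\lim_{r\to1^-}\varsigma_\B(P_E)(r\zeta)=\varsigma(P_E)([\zeta])$ to an identity valid for \emph{every} $\zeta$ with continuous limit, while $\dim\E_{\rm CD}\equiv r_E$ on all of $\B\setminus\{0\}$ by Theorem~\ref{bigCDversuslocallyfree} and $\rank\varsigma(P_E)\equiv r_E$ on $\M$; so the matrix lemma applies at every $\zeta$ and produces $\lim_{r\to1^-}\Pi^E(r\,\cdot\,)=\varsigma(P_E)$ as a projection over $C^0(\M)$. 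The main obstacle throughout is exactly this rank bookkeeping that licenses the interchange of $\lim_r$ (and $\lim_m$) with $\lim_p$: a priori the eigenvalues of $\varsigma_\B(P_E)(r\zeta)$ near $1$ need not equal $1$ exactly, and it is the constancy of $\dim\E_{\rm CD}$ from Theorem~\ref{bigCDversuslocallyfree} — and, in the general case, the $\omega_\Sb$-null exceptional-set argument — that has to be made watertight.
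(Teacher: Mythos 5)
Your overall route is the paper's own: combine the a.e.\ existence and idempotency of the radial boundary values of the core function $\varsigma_\B(P_E)$ with the power-limit formulas \eqref{bignormaltproj} and Proposition~\ref{altprojprop}, then descend by $\Un(1)$-invariance. The paper's proof simply asserts $\lim_{r\to1^-}(\Pi^E-\varsigma_\B(P_E))(r\zeta)=0$ once the radial limit of $\varsigma_\B(P_E)$ is a projection; you are right that this interchange of $\lim_{p}$ with $\lim_{r}$ (and with $\lim_m$) is not automatic (e.g.\ $A_r=(1-\epsilon_r)P$ with $\epsilon_r\to0$ has $\lim_pA_r^p=0$), and your finite-dimensional spectral lemma (norm convergence to a projection plus matching rank of the eigenvalue-one spectral projection forces convergence of those spectral projections, via the Riesz projection over a spectral gap) is correct and is exactly the right repair.

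Two points, however, are not watertight. First, your appeal to Theorem~\ref{bigCDversuslocallyfree} for the constancy $\dim\E_{\rm CD}(v)=r_E$ when $P_E$ is over $\Ti_\GH^{(0)}$ (and again for the ``every $\zeta$'' refinement at the end) is circular in the paper's logical order: the proof of Theorem~\ref{bigCDversuslocallyfree} invokes precisely the statement $\lim_{r\to1^-}\Pi^E(r\zeta)=P^E([\zeta])$ of this proposition. Your general a.e.-ray argument (the rank-jump locus of $\Theta_E^*$ is analytic in $\B\setminus\{0\}$, its radial image is $\omega_\Sb$-null) avoids this and should be used in both cases. Second, and more substantively, the rank hypothesis of your lemma is not actually verified: you need $\rank\varsigma(P_E)([\zeta])$ to equal, for a.e.\ $\zeta$, the generic value $r_{\rm gen}$ of $\dim\Ker\Theta_E(v)^*$ on $\B\setminus\{0\}$, and you justify ``rank $=r_E$ a.e.'' only by \eqref{Arvcurvlim}, which controls the $\omega$-average of the rank, not its a.e.\ value; likewise the identification $r_{\rm gen}=r_E$ is asserted rather than proved. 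This matching is the crux licensing the interchange. It can be completed with material already in or cited by the paper: from $\Pi^E(r\zeta)\leq\varsigma_\B(P_E)(r\zeta)$ every subsequential limit of $\Pi^E(r\zeta)$ along a good ray is a rank-$r_{\rm gen}$ subprojection of $\varsigma(P_E)([\zeta])$, so $\rank\varsigma(P_E)([\zeta])\geq r_{\rm gen}$ a.e.; the equality $r_{\rm gen}=r_E$ follows from \eqref{FangSESifGle} together with the fibre-dimension results of \cite{GRS1} and \cite{Fang4} recalled in the paper; and then \eqref{Arvcurvlim} forces $\rank\varsigma(P_E)([\zeta])=r_E$ a.e.\ (alternatively, a.e.\ constancy of $\rank\Theta_E(\zeta)$ follows because the generic-size minors of $\Theta_E$ are multipliers whose boundary values cannot vanish on a set of positive measure). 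Without one of these arguments the hypothesis ``$\rank\Pi_t=\rank A$'' of your lemma is unproved exactly at the boundary, which is the very point where the paper's terse proof is also silent.
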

\begin{proof}
Since $\lim_{r\to 1^-}\varsigma_\B(P_E)(r\zeta)$ is a projection $\varsigma(P_E)([\zeta])$ for almost every $\zeta\in\Sb$ we have from \eqref{bignormaltproj} that
$$
\lim_{r\to 1^-}(\Pi^E-\varsigma_\B(P_E))(r\zeta)=0.
$$
Similarly, since $P^E_m$ is the limit of the powers of $\varsigma^{(m)}(P_{E,m})$ we have that $\lim_{m\to\infty}P^E_m$ exist and equals $\lim_{m\to\infty}\varsigma^{(m)}(P_{E,m})=\varsigma(P_E)$ as element of $L^\infty(\M)\otimes\Mn_N(\C)$.  
\end{proof}

Consider the vector space
$$
\E_m(v):=\{\xi\in\GE_0|\ k_v^{(m)}\otimes\xi\in\GE_m\},
$$
also characterized by
$$
\C k_v^{(m)}\otimes\E_m(v)=\CD(\GE_m)(v)\GE_m=(\C k_v^{(m)}\otimes\GE_0)\cap\GE_m,
$$
i.e. $\E_m(v)=\Ran P^E_m([v])$. The grading on $\GE_\N$ gives $k_v^{(m)}\otimes\E(v)\subset\GE_m$ so we have
$$
\E(v)\subset\E_m(v).
$$
We have equality $\E(v)=\E_m(v)$ iff the dimensions are equal. So when $P^E_m$ is continuous then so is $\Pi^E$, and they coincide. 

Note that $k_0^{(m)}\otimes\xi=p_m(\bone\otimes\xi)=0$ for all $m\ne 0$ and all $\xi\in\GE_0$, so $\E_m(0)=\GE_0$ holds for all $m$. We typically only consider $P^E_m$ for $v\ne 0$ however.

\begin{thm}\label{bigCDversuslocallyfree}
Suppose that $P^E:=\varsigma(P_E)$ is $C^0$. Then $P^E$ is real-analytic and $\Ei_{\rm CD}$ is locally free on $\B\setminus\{0\}$. In fact, for large enough $m$ we have
$$
P^E_m=P^E,
$$
and $\Pi^E$ is the pullback of $P^E$ to $\B\setminus\{0\}$,
$$
\Pi^E(r\zeta)=P^E([\zeta]).
$$
\end{thm}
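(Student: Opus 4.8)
The plan is to show that the hypothesis forces $P^E_m=P^E$ as soon as $m$ is large; the remaining assertions then fall out of the descriptions of $\CD(\GE_m)$ and $\CD(\GE_\N)$ recalled above. Write $\GE_\N=\Ran P_E$, let $\GI_\N$ be its orthogonal complement in $\GH_\N\otimes\GE_0$ (a graded submodule, with homogeneous parts $\GI_m$), and for $v\in\B\setminus\{0\}$ put $\GI_m(v):=\{\phi(v)\mid\phi\in\GI_m\}\subseteq\GE_0$. First I would record two monotonicity facts about the fibre spaces $\E_m(v)=\Ran P^E_m([v])$. From the reproducing identity $\bra k_v^{(m)}|\psi\ket=\psi(v)$ one gets, just as for the evaluation isomorphism \eqref{eviso}, that $\E_m(v)=\GI_m(v)^\perp$; and a joint eigenvector $k_{\bar v}\otimes\xi$ of $S_E^*$ lies in $\GE_\N$ precisely when every graded component $(p_m k_{\bar v})\otimes\xi$ lies in $\GE_m$ --- with $p_m k_{\bar v}\neq0$ for all $m\geq0$ because $\Li^m$ is globally generated and $v\neq0$ --- so by Proposition \ref{tensorprop} we have $\Ran\Pi^E(v)=\E(v)=\bigcap_{m}\E_m(v)$. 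On the other hand $\GE_\N$ is $S^*$-coinvariant, so $\GI_\N$ is $S$-invariant and $S_\alpha\GI_m\subseteq\GI_{m+1}$; since $(S_\alpha\phi)(v)=v_\alpha\phi(v)$ with some $v_\alpha\neq0$ this gives $\GI_m(v)\subseteq\GI_{m+1}(v)$, and hence the chain $\E_m(v)=\GI_m(v)^\perp$ is \emph{decreasing} in $m$ and, living in a finite-dimensional space, eventually equals its intersection $\E(v)$.

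Next I would bring in the hypothesis. By Lemma \ref{PEvsCDlemma} (and the equivalence isolated in its proof) ``$P^E:=\varsigma(P_E)$ is $C^0$'' is the same as $P_E$ having entries in $\Ti_\GH^{(0)}$, and this upgrades the convergence $P^E_m\to P^E$ of Proposition \ref{limitPiEcor} --- and likewise $\varsigma^{(m)}(P_{E,m})\to P^E$, i.e. $\|C^E_m\|\to0$ --- to be \emph{uniform} on $\M$. Uniformity yields a single index $m_1$ with $\rank P^E_m([v])=\rank P^E$ for all $v\in\B\setminus\{0\}$ and all $m\geq m_1$. But $m\mapsto\dim\E_m(v)=\rank P^E_m([v])$ is a decreasing integer sequence by the first paragraph, so being eventually constant it has already stabilised by $m_1$; hence $\E_m(v)=\E_{m_1}(v)=\E(v)$ for all $m\geq m_1$ and all $v\in\B\setminus\{0\}$. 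Passing to the corresponding projections, $P^E_m([v])=\Pi^E(v)$ for $m\geq m_1$, and since the left-hand side also tends to $P^E([v])$ yet is constant in $m$ over that range, $\Pi^E(v)=P^E([v])$ --- i.e. $\Pi^E(r\zeta)=P^E([\zeta])$ --- and $P^E_m=P^E$ for all $m\geq m_1$. In particular $\CD(\GE_\N)(v)=\CD(\GH_\N)(v)\otimes P^E([v])$, so $\Ei_{\rm CD}$ has constant fibre dimension $\rank P^E$ on the connected manifold $\B\setminus\{0\}$ and, being a coherent analytic sheaf, is locally free there. For real-analyticity, enlarge $m_1$ so that $\sup_\M\|C^E_m\|<\tfrac{1}{2}$ for $m\geq m_1$; then the spectrum of the real-algebraic matrix-valued function $\varsigma^{(m)}(P_{E,m})$ lies in $\{1\}\cup[0,\tfrac{1}{2}]$, and by Proposition \ref{altprojprop} the projection $P^E=P^E_m=\lim_p\varsigma^{(m)}(P_{E,m})^p$ equals the Riesz integral $\frac{1}{2\pi i}\oint_{|w-1|=1/2}(w\bone-\varsigma^{(m)}(P_{E,m}))^{-1}\,dw$, which is manifestly real-analytic.

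The step I expect to be the main obstacle is the passage from the bare statement ``$P^E$ is $C^0$'' to the \emph{uniformity} of $P^E_m\to P^E$ (equivalently, to $\Ti_\GH^{(0)}$-membership of $P_E$): without uniformity one only obtains $\E_m(v)=\E(v)$ for $m\geq m_0(v)$ with $m_0$ depending on $v$, which is too weak to conclude local freeness and real-analyticity on all of $\B\setminus\{0\}$ rather than merely off a thin set. This uniformity is exactly what Lemma \ref{PEvsCDlemma} provides, so the hardest analytic input is already in place; the decreasing-chain observation of the first paragraph is the second ingredient, turning a uniform bound on the fibre rank into a uniform bound on the stabilisation index.
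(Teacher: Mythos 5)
Your proposal is correct, and it reaches the conclusions by a somewhat different mechanism than the paper's proof. The paper argues at the level of symbols: coinvariance under the maps $\iota_{m,l}$ gives $\varsigma^{(m)}(P_{E,m})=P^E+C^E_m$ with $C^E_m\geq 0$ and $P^EC^E_m=0=C^E_mP^E$, so once $\|C^E_m\|<1$ the powers of $\varsigma^{(m)}(P_{E,m})$ converge to $P^E$, i.e. $P^E_m=P^E$; the identification $P^E_m=\Pi^E$ is then handled by the terse remark preceding the theorem, the identity $\Pi^E(r\zeta)=P^E([\zeta])$ is obtained from $\D^\times$-dependence together with the boundary limit of Proposition \ref{limitPiEcor}, and real-analyticity is deduced with little detail from that of the $\B$-Berezin symbol. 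You instead work fibrewise: the identity $\E_m(v)=\GI_m(v)^\perp$ and the $S$-invariance of the graded complement $\GI_\N$ make $m\mapsto\E_m(v)$ a decreasing chain whose intersection is $\E(v)$, and the uniform rank stabilization coming from $\|P^E_m-P^E\|<1$ forces $\E_m(v)=\E(v)$ for a single index $m_1$ independent of $v$; this yields $P^E_m=P^E$, $\Pi^E=$ pullback of $P^E$, local freeness, and (via the Riesz integral applied to the real-algebraic $\varsigma^{(m)}(P_{E,m})$) real-analyticity in one stroke, without needing the boundary-limit proposition. What your route buys is a genuinely argued identification $P^E_m=\Pi^E$, which the paper leaves as a one-line remark, and everywhere (rather than a.e.) statements by construction; what the paper's route buys is the structural orthogonality $P^EC^E_m=0$, which you only recover implicitly. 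Indeed your claim that $\sigma(\varsigma^{(m)}(P_{E,m})(x))\subset\{1\}\cup[0,\tfrac12)$ needs exactly this orthogonality, but it does hold at that stage of your argument: once $P^E_m=P^E$ is known, $P^E(x)$ is the eigenvalue-$1$ spectral projection of the selfadjoint positive contraction $\varsigma^{(m)}(P_{E,m})(x)$, hence commutes with it, so the difference is supported on $\Ker P^E(x)$ and has norm $<\tfrac12$; you should make this half-line explicit.

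One caveat, which you share with the paper's own proof: your opening step reads the hypothesis ``$\varsigma(P_E)$ is $C^0$'' as ``$P_E$ has entries in $\Ti_\GH^{(0)}$''. Lemma \ref{PEvsCDlemma} does not state that equivalence; its proof gives ``$P_E$ over $\Ti_\GH^{(0)}$ if and only if the $P^E_m$ converge uniformly'', and Remark \ref{uniqueRemark} explicitly warns that a coinvariant projection can have a continuous symbol without lying over $\Ti_\GH^{(0)}$. Since the Introduction states this theorem with the hypothesis that $P_E$ is over $\Ti_\GH^{(0)}$, and since that uniform convergence is exactly the analytic input both your argument and the paper's use, this is an ambiguity in the theorem's phrasing rather than a gap in your proof; but you should say explicitly that what you prove is the statement under the $\Ti_\GH^{(0)}$ hypothesis, not under mere a.e.-continuity of the symbol.
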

\begin{proof}
The coinvariance property $\iota_{m,l}(P_{E,m})\geq P_{E,l}$ gives that 
$$
\varsigma^{(m)}(P_{E,m})=(\iota_{m,l}(P_{E,m}))_{l\geq m}+\Gamma_0=P^E+C^E_m
$$
for some $C^E_m\in L^\infty(\M)\otimes\Bi(\GE_0)$ with $\|C^E_m\|\leq 1$ and $P^EC^E_m=0=C^E_mP^E$ (here we use that $\|\varsigma^{(m)}(X)\|\leq\|X\|$ for all $X$). By assumption we have norm-convergence $\lim_{m\to\infty}C^E_m=0$ so $\|C^E_m\|$ is strictly less that 1 for $m$ large. So for $m\gg 0$,
$$
P^E_m:=\lim_{p\to\infty}\varsigma^{(m)}(P_{E,m})^p=P^E,
$$
and thus $P^E_m$ is $C^0$.  
As remarked before the lemma, when $P^E_m$ is continuous we have $P^E_m=\Pi^E$. So $\Pi^E$ is $C^0$ and for $v\ne 0$ the projection $\Pi^E(v)$ depends only on the coset $[v]\in\M=(\B\times\{0\})/\D^\times$. Since $\lim_{r\to 1^-}\Pi^E(r\zeta)=P^E([\zeta])$, we obtain $\Pi^E(r\zeta)=P^E([\zeta])$ for all $\zeta$. 
Since $\Pi^E$ is automatically real-analytic when $C^0$ we see that the same is true for $P^E$. 
\end{proof}

\begin{cor}
If $\varsigma(P_E)$ is $C^0$ then we have a factorization of Hermitian vector bundles
$$
\Ei_{\rm CD}=\Oi_{\rm CD}\otimes\Ei_{\B\setminus\{0\}}
$$
where $\Ei_{\B\setminus\{0\}}$ is the Hermitian vector bundle over $\B\setminus\{0\}$ defined by the pullback of $\varsigma(P_E)$ to $\B\setminus\{0\}$. 
\end{cor}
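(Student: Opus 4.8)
The plan is to read the factorization directly off Theorem \ref{bigCDversuslocallyfree} together with the tensor decomposition of the fibres of $\Ei_{\rm CD}$ recorded in Proposition \ref{tensorprop}. Since $P^E:=\varsigma(P_E)$ is assumed $C^0$, Theorem \ref{bigCDversuslocallyfree} gives that $P^E$ is real-analytic, that $\Ei_{\rm CD}$ is locally free on $\B\setminus\{0\}$ --- so that $\CD(\GE_\N)$ is an honest real-analytic projection-valued function and $\Ei_{\rm CD}$ is a genuine Hermitian holomorphic vector bundle, not merely a coherent sheaf --- and, crucially, that the projection $\Pi^E$ defined by $\CD(\GE_\N)=\CD(\GH_\N)\otimes\Pi^E$ satisfies $\Pi^E(r\zeta)=P^E([\zeta])$ for every $r\zeta\in\B\setminus\{0\}$; in other words $\Pi^E=\pi^*P^E$, the pullback of $P^E$ along the quotient map $\pi:\B\setminus\{0\}\to\M$. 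By definition this pullback projection is the defining projection of the Hermitian vector bundle $\Ei_{\B\setminus\{0\}}$ with its pullback metric, so the first step merely identifies $\Pi^E$ with that defining projection.

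Next I would combine the pointwise factorization $\CD(\GE_\N)(v)=\CD(\GH_\N)(v)\otimes\Pi^E(v)$, which encodes $\Ker(S_E^*-v\bone)=\C k_{\bar v}\otimes\E_{\rm CD}(v)$ from Proposition \ref{tensorprop}, with the identification $\Pi^E=\pi^*P^E$ from the first step. This exhibits $\CD(\GE_\N)$ as the tensor product $\CD(\GH_\N)\otimes\pi^*P^E$ of two projection-valued functions on $\B\setminus\{0\}$. By definition of the Cowen--Douglas projection the first factor defines $\Oi_{\rm CD}$ and the second defines $\Ei_{\B\setminus\{0\}}$, so it remains to invoke the elementary fact that a tensor product of projection-valued functions defines the tensor product of the two subbundles they cut out, and that the metric the product projection inherits from the standard inner product on the ambient tensor product space is the tensor product of the two inherited metrics. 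Here the ambient space is $\GH_\N\otimes\C^N$ with its tensor-product inner product, and its restriction to the product subspace $\C k_{\bar v}\otimes\E_{\rm CD}(v)$ splits as (restriction to $\C k_{\bar v}$) $\otimes$ (restriction to $\E_{\rm CD}(v)$); this yields $\Ei_{\rm CD}=\Oi_{\rm CD}\otimes\Ei_{\B\setminus\{0\}}$ as Hermitian vector bundles.

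Finally I would note that the isomorphism respects holomorphic structures as well, recovering the intro statement \eqref{Hermfactor}: the line bundle $\Oi_{\rm CD}$ is holomorphically trivial (the holomorphic section $v\mapsto k_{\bar v}$ vanishes nowhere), so tensoring the Cowen--Douglas holomorphic structure of $\Ei_{\rm CD}$ by $\Oi_{\rm CD}^{-1}$ transports it to a holomorphic structure on $\Ei_{\B\setminus\{0\}}$ for which the factorization holds holomorphically. The one point requiring genuine care --- the ``hard'' part of an otherwise short deduction --- is the metric bookkeeping in the second paragraph: one must verify that the Cowen--Douglas metric on $\Ei_{\rm CD}$, whether defined by pulling back the universal metric via $\CD(\GE_\N)$ or, equivalently, by restricting the ambient inner product of $\GH_\N\otimes\C^N$, is genuinely compatible with the fibrewise tensor factorization, so that no spurious scalar factor intervenes. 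Once the tensor-product-of-Hermitian-bundles formalism is laid out cleanly, this falls out of the tensor structure of the inner product on $\GH_\N\otimes\C^N$.
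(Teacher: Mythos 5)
Your proposal is correct and follows essentially the same route as the paper: the corollary is read off from Theorem \ref{bigCDversuslocallyfree} (which gives $\Pi^E=\pi^*\varsigma(P_E)$ and local freeness) together with the definitional factorization $\CD(\GE_\N)=\CD(\GH_\N)\otimes\Pi^E$ coming from Proposition \ref{tensorprop}. Your extra care with the metric bookkeeping and the transfer of holomorphic structure via the trivial line bundle $\Oi_{\rm CD}$ matches what the paper leaves implicit (and its remark immediately after the corollary).
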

In this corollary, the holomorphic structure on $\Ei_{\rm CD}$ gives a holomorphic structure on $\Ei_{\B\setminus\{0\}}$. 
And since $\varsigma_\B(P_E)$ is $\Un(1)$-equivariant, so is $\Pi^E=\lim_{p\to\infty}\varsigma_\B(P_E)^p$ and hence the holomorphic structure on $\Ei_{\B\setminus\{0\}}$ is $\Un(1)$-equivariant. By Lemma \ref{vectextlemma} this means that there is a unique holomorphic structure on the smooth vector bundle $\Ei$ defined by $\varsigma(P_E)$ which pulls back to that of $\Ei_{\B\setminus\{0\}}$. 

\begin{Remark}[Characteristic function isometric a.e. on $\Sb$]
Recall the Beurling factorization $P_E=\bone-\Theta_E^*\Theta_E$ where $\Theta_E$ is the characteristic function of the pure finite-rank row contraction $S_E$. It is shown in \cite[Thm. 4.3]{GRS2} and \cite[Thm. 6.1]{BhSa1} that $\Theta_E$ becomes a partial isometry a.e. at the boundary $\Sb$. Thus Proposition \ref{limitPiEcor} is not surprising. Indeed, every element of the tensor algebra $\Ai_\GH$ has a continuous extension to $\Sb$ and so it is easy to see that if $P_E$ has entries in $\Ti_\GH=\overline{\operatorname{span}}\Ai_\GH\Ai_\GH^*$ (i.e. when $\varsigma(P_E)$ is continuous) then 
$\Theta_E(r\zeta)$ becomes a partial isometry as $r\to 1^-$ for \emph{every} point $\zeta$ on $\Sb$.
\end{Remark}

Note that $\Ran\varsigma^{(m)}(P_{E,m})$ is not equal to $P^E_m$, and $\lim_{m\to\infty}\Ran\varsigma^{(m)}(P_{E,m})$ need not equal $\varsigma(P_E)$. Similarly, the boundary limit of $\Ran\varsigma_\B(P_E)$ need not equal $\varsigma(P_E)$.

We shall see in later sections that even if $\varsigma(P_E)$ is continuous and even if we assume $\Ei_{\rm CD}$ to be locally free on $\B\setminus\{0\}$, the vector bundle over $\M$ defined by $\varsigma(P_E)$ need not be $C^0$-isomorphic to $\Ei_{\rm CD}$ when pulled back to $\B\setminus\{0\}$.


 \subsubsection{Geometric interpretation of $\GE_m$}\label{versussec}
Let $\Ei$ be a holomorphic vector bundle over $\M$. 
Let $m$ be large enough so that $\Ei(m)$ is globally generated and let $\GE_m$ be $H^0(\M;\Ei(m))$ endowed with some inner product. In \S\ref{reflinesec} we defined a projection $\FS(\GE_m)$ in $C^0(\M)\otimes\Bi(\GE_m)$ geometrically. Let us now give a more algebraic definition. 

Since $\GE_m$ is a Hilbert space we have a standard $C^0(\M)$-valued inner product on the $C^0(\M)$-module $C^0(\M)\otimes\GE_m$, defined on simple tensors by
$$
(f\otimes\xi|g\otimes\eta)_{C^0(\M)\otimes\GE_m}:=f^*g\bra\xi|\eta\ket_{\GE_0},\qquad\forall f,g\in C^0(\M),\ \xi,\eta\in\GE_m.
$$
Equivalently, this means that
\begin{equation}\label{dfnofinn}
(\phi|\psi)_{C^0(\M)\otimes\GE_m}(x)=\bra\phi(x)|\psi(x)\ket_{\GE_m},\qquad\forall \phi,\psi\in C^0(\M)\otimes\GE_m.
\end{equation}
\begin{dfn}
Define $\FS(\GE_m)$ to be the projection acting on $C^0(\M)\otimes\GE_m$ whose range is isomorphic as $C^0(\M)$-module to $\Gamma^0(\M;\Ei(m))$ and the subspace $\FS(\GE_m)(\bone\otimes\GE_m)\subset\Gamma^0(\M;\Ei(m))$ identifies with $H^0(\M;\Ei(m))$. 
\end{dfn}
This uniquely determines $\FS(\GE_m)$. Indeed, we obtain
$$
\psi(x)=\FS(\GE_m)(x)\psi,\qquad\forall \psi\in\GE_m,\ x\in\M.
$$
Therefore, if $(\psi_j)_{j\in\J}$ is any Parseval frame for the Hilbert space $\GE_m$ then we can expand $\FS(\GE_m)(x)$ as
\begin{equation}\label{FSformulaimportantetwo}
\FS(\GE_m)(x)=\sum_{j,k\in\J}\bra\psi_j(x)|\psi_k(x)\ket_{\GE_m}|\psi_j\ket\bra\psi_k|\in\Bi(\GE_m).
\end{equation}
By \eqref{dfnofinn} this is equivalent to saying that any Parseval frame $(\psi_j)_{j\in\J}$ for $\GE_m$ is a Parseval $C^*$-frame for $\FS(\GE_m)$. This property thus characterizes $\FS(\GE_m)$. Note that $\FS(\GE_m)$ exists precisely when $\Ei(m)$ is globally generated (as we assume here) because that is when there exists a holomorphic $C^*$-frame for $\Gamma^0(\M;\Ei(m))$. 
\begin{prop}\label{geomintprop}
Let $\Ei$ be aglobally generated holomorphic vector bundle over $\M$, let $\GE_0$ be an inner product on $H^0(\M;\Ei)$ and let $\FS(\GE_0)$ be the projection over $C^0(\M)$ with image $\Gamma^0(\M;\E)$ and with a Parseval $C^*$-frame given by an orthonormal basis for $\GE_0$. Assume that $m$ is large enough so that $\Ei(m)$ is Castelnuovo--Mumford regular (see \eqref{zeroregular}). Let $\GE_m$ be $H^0(\M;\Ei(m))$ endowed with the inner product of $\GH_m\otimes\GE_0$. Then $\FS(\GH_m)\otimes\FS(\GE_0)\in C^0(\M)\otimes\Bi(\GH_m\otimes\GE_0)$ can be regarded as an element of $C^0(\M)\otimes\Bi(\GE_m)$ and it has a Parseval $C^*$-frame given by an orthonormal basis for $\GE_m$, i.e.
$$
\FS(\GH_m)\otimes\FS(\GE_0)=\FS(\GE_m).
$$
\end{prop}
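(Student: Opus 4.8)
The plan is to verify that the projection $Q:=\FS(\GH_m)\otimes\FS(\GE_0)$ has the three properties that — by the Definition just before the Proposition together with the uniqueness noted there — characterise $\FS(\GE_m)$: it restricts to a projection-valued $C^0$ function on $\GE_m$, its range is $\Gamma^0(\M;\Ei(m))$ as a $C^0(\M)$-module, and the section subspace it produces from $\bone\otimes\GE_m$ is $H^0(\M;\Ei(m))$ under the tautological identification; then $Q|_{\GE_m}=\FS(\GE_m)$. A priori $Q\in C^0(\M)\otimes\Bi(\GH_m\otimes\GE_0)$, and at a point $x$ we have $Q(x)=\FS(\GH_m)(x)\otimes\FS(\GE_0)(x)$, the tensor of the rank-one projection onto $\C k^{(m)}_x$ with the projection onto the fibre $\Ei_x=\Ran\FS(\GE_0)(x)\subset\GE_0$, so $\Ran Q(x)=\C k^{(m)}_x\otimes\Ei_x$.

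The step I would isolate first concerns the embedding $\iota_m\colon\GE_m=H^0(\M;\Ei(m))\hookrightarrow\GH_m\otimes\GE_0$ provided by \eqref{zeroregular}, with respect to which $\GE_m$ carries, by hypothesis, the inner product of $\GH_m\otimes\GE_0$. One identifies $\iota_m$ with the adjoint of the multiplication-of-sections map $\mu_m\colon\GH_m\otimes\GE_0\to H^0(\M;\Ei(m))$, $\phi\otimes\xi\mapsto(x\mapsto\phi(x)\otimes\xi(x))$; the regularity hypothesis makes $\mu_m$ surjective (cf. \eqref{zeroregular} and \cite[Thm.~1.8.3]{Laza1}), hence $\iota_m$ injective, and the inner product inherited on $\GE_m$ is exactly the one making $\iota_m$ isometric — equivalently $\mu_m$ a coisometry, $\mu_m\mu_m^{*}=\bone_{\GE_m}$ — paralleling the relations $S_ES_E^{*}=\bone$ on $\GE_m$ ($m\neq0$) that yield the subproduct embeddings \eqref{modsubprod} in \S\ref{alggradmodsec}. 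Granting this, $\mu_m\iota_m=\bone$ and $\Ran\iota_m=(\Ker\mu_m)^{\perp}$; and since the value at $x$ of the section $\mu_m v$ is $Q(x)v$ under $\Ei(m)_x=\Ran Q(x)$, one gets $\Ker\mu_m=\bigcap_{x\in\M}(\C k^{(m)}_x\otimes\Ei_x)^{\perp}$, so
$$
\Ran\iota_m=(\Ker\mu_m)^{\perp}=\overline{\operatorname{span}}_{\C}\{k^{(m)}_x\otimes\zeta:x\in\M,\ \zeta\in\Ei_x\}.
$$
I expect pinning down this identification $\iota_m=\mu_m^{*}$ — i.e. matching the convention of \eqref{zeroregular} with the coisometric multiplication map so that $\mu_m\iota_m=\bone$ — to be the main obstacle; the rest is formal.

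Given this, $\Ran Q(x)=\C k^{(m)}_x\otimes\Ei_x\subset\Ran\iota_m=\GE_m$ for every $x$, and since $Q(x)$ is a self-adjoint projection its kernel then contains $\GE_m^{\perp}$, so $Q(x)=\iota_m\widetilde Q(x)\iota_m^{*}$ with $\widetilde Q:=\iota_m^{*}Q\iota_m$ a projection-valued $C^0$ function on $\M$ — the promised element of $C^0(\M)\otimes\Bi(\GE_m)$. Its range as a $C^0(\M)$-module is $\Ran Q=\Ran\FS(\GH_m)\otimes_{C^0(\M)}\Ran\FS(\GE_0)=\Gamma^0(\M;\Li^m)\otimes_{C^0(\M)}\Gamma^0(\M;\Ei)=\Gamma^0(\M;\Ei(m))$, using $\Ran(P\otimes P')(x)=\Ran P(x)\otimes\Ran P'(x)$ and $\Ei(m)=\Li^m\otimes\Ei$. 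Finally, for $\psi\in\GE_m$ the section $\widetilde Q(\bone\otimes\psi)$ has value $Q(x)\iota_m\psi$ at $x$, which is the evaluation at $x$ of $\mu_m\iota_m\psi=\psi$; thus $\widetilde Q(\bone\otimes\GE_m)=H^0(\M;\Ei(m))$ with the identification being the identity. By the uniqueness of such a projection, $\widetilde Q=\FS(\GE_m)$, i.e. $\FS(\GH_m)\otimes\FS(\GE_0)=\FS(\GE_m)$. Equivalently, via \eqref{FSformulaimportantetwo}: for orthonormal bases $(e_i)$ of $\GH_m$ and $(f_a)$ of $\GE_0$, the coisometry property makes $(\mu_m(e_i\otimes f_a))=(e_i\cdot f_a)$ a Parseval frame of $\GE_m$, and a short computation shows its $C^*$-frame operator is $\FS(\GH_m)\otimes\FS(\GE_0)$.
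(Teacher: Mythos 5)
Your proposal is correct, and its closing ``equivalently'' remark is in fact the paper's entire proof: the paper multiplies the Parseval $C^*$-frames of $\FS(\GH_m)$ and $\FS(\GE_0)$ together, notes that the resulting sections are holomorphic sections of $\Ei(m)$, and observes that by Castelnuovo--Mumford regularity the multiplication map $\GH_m\otimes\GE_0\to\GE_m$ is surjective and coincides, under the stated identification, with the orthogonal projection $P_{E,m}$, so the image of a Parseval frame of $\GH_m\otimes\GE_0$ is a Parseval frame of $\GE_m$; the characterization of $\FS(\GE_m)$ by Parseval frames from \S\ref{versussec} then finishes it. The main body of your argument instead verifies the defining evaluation property $\psi(x)=\FS(\GE_m)(x)\psi$ directly, via the pointwise range computation $\Ran Q(x)=\C k_x^{(m)}\otimes\Ei_x$ and the identification $\Ker\mu_m=\bigcap_x\Ker Q(x)$; this is a more hands-on route but rests on exactly the same two inputs (regularity giving surjectivity of $\mu_m$, and the coisometry convention), so I would count it as essentially the same proof presented in a different order. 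The point you flag as the ``main obstacle'' --- that $\iota_m=\mu_m^*$, i.e.\ that $\mu_m$ is a coisometry --- is not an obstacle but the paper's convention: ``endowed with the inner product of $\GH_m\otimes\GE_0$'' means $H^0(\M;\Ei(m))$ is identified with $(\Ker\mu_m)^\perp\subset\GH_m\otimes\GE_0$ (the graded piece of the quotient module), which is precisely why the paper can write the multiplication map as $P_{E,m}$; with any other isometric splitting of $\mu_m$ your identity $\Ran\iota_m=(\Ker\mu_m)^\perp$ would indeed fail, so it is right that you made the convention explicit rather than assumed.
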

\begin{proof}
We can dispense with the tensor products in the $C^0(\M)$-factor of $\FS(\GH_m)\otimes\FS(\GE_0)$, and since $\psi Z_\mathbf{k}$ is a holomorphic section of $\Ei(m)$ for all $\mathbf{k}\in\F_n^+(m)$ and $\psi\in\GE_0$ we obtain that $\FS(\GH_m)\otimes\FS(\GE_0)$ is in $C^0(\M)\otimes\Bi(\GE_m)$ and has a Parseval $C^*$-frame $\boldsymbol\psi$ obtain by applying the multiplication map to the tensor product of the Parseval $C^*$-frames for $\FS(\GE_0)$ and $\FS(\GH_m)$. Since $\Ei(m)$ is assumed to be Castelnuovo--Mumford regular we have that $\GE_m$ is the image of the multiplication map $\GH_m\otimes\GE_0\to\GE_m$. If $P_{E,m}$ is the projection of $\GH_m\otimes\GE_0$ onto $\GE_m$ then $\boldsymbol\psi$ is the image under $P_{E,m}$ of the Parseval $C^*$-frame for $\FS(\GH_m)\otimes\FS(\GE_0)$, which is a Parseval frame for $\GH_m\otimes\GE_0$. Hence $\boldsymbol\psi$ is a Parseval frame for $\GE_m$. 

Hence $\FS(\GH_m)\otimes\FS(\GE_0)$ has a Parseval $C^*$-frame given by a Parseval frame for $\GE_m$ (and this gives a natural representation of $\FS(\GH_m)\otimes\FS(\GE_0)$ as a matrix over $C^0(\M)$ of size $Nn^m$). Therefore \emph{any} Parseval frame for $\GE_m$ gives a Parseval $C^*$-frame $\FS(\GH_m)\otimes\FS(\GE_0)$, i.e. $\FS(\GH_m)\otimes\FS(\GE_0)$ coincides with $\FS(\GE_m)$.
\end{proof}
The following gives the geometric meaning of the Fock inner product $\GE_m$ on $H^0(\M;\Ei(m))$:
\begin{cor}
Let $\Ei$ be a holomorphic vector bundle over $\M$ and let $\GE_\N$ be the completion of $\bigoplus_{m\in\N_0}H^0(\M;\Ei(m))$ when represented as a quotient of $\Ai\otimes\C^N$ for some $N$. 
Then for $l\geq m\gg 0$ we have
$$
\FS(\GE_l)=\FS(\GE_m)\otimes\FS(\GH_{l-m}).
$$
\end{cor}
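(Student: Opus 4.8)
The plan is to obtain this as an instance of Proposition \ref{geomintprop} after the substitution of $\Ei(m)$ for $\Ei$, $\GE_m$ for $\GE_0$, and $l-m$ for the twist parameter $m$. Since $\Ei(l)=\Li^{l-m}\otimes\Ei(m)$, the space $H^0(\M;\Ei(l))$ is precisely the space of sections of the $(l-m)$-th twist of $\Ei(m)$ addressed by that proposition, so once the hypotheses are matched it yields immediately
$$
\FS(\GH_{l-m})\otimes\FS(\GE_m)=\FS(\GE_l),
$$
which is the assertion up to the obvious reordering of the tensor factors $\GH_{l-m}$ and $\GE_m$ inside $\Bi(\GH_m\otimes\C^N\otimes\GH_{l-m})$.

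First I would dispose of the routine hypotheses for $m\gg 0$ (hence for all $l\geq m$). Global generation of $\Ei(m)$ holds for $m\gg 0$, so $\FS(\GE_m)$ is defined as the projection over $C^0(\M)$ with image $\Gamma^0(\M;\Ei(m))$ having any orthonormal basis of $\GE_m$ as a Parseval $C^*$-frame; this is exactly the reference projection required by Proposition \ref{geomintprop} for the substituted data. Castelnuovo--Mumford regularity of $\bigl(\Ei(m)\bigr)(l-m)=\Ei(l)$ for every $l\geq m$ follows from regularity of $\Ei(m)$ for $m\gg 0$, since $0$-regularity is preserved under non-negative twists; in particular the multiplication map $H^0(\M;\Ei(m))\otimes H^0(\M;\Li^{l-m})\to H^0(\M;\Ei(l))$ is surjective and the embeddings \eqref{zeroregular} hold.

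The one substantive point --- and the step I expect to require the most care --- is to check the last hypothesis of Proposition \ref{geomintprop}: that $\GE_l$, which here carries the inner product it inherits from the chosen presentation of $\GE_\N$ as a graded quotient of $\Ai\otimes\C^N$ (equivalently, the inner product of $\GH_l\otimes\C^N$), coincides with $H^0(\M;\Ei(l))$ equipped with the inner product of $\GH_{l-m}\otimes\GE_m$. This is governed by the isometric subproduct embedding \eqref{modsubprod}, $\GE_l\hookrightarrow\GE_m\otimes\GH_{l-m}$, which itself comes from the symmetric Fock subproduct structure $\GH_l\hookrightarrow\GH_m\otimes\GH_{l-m}$ together with the fact that $\GE_\N$ is a graded quotient module, so that compression of the shift is compatible with these inclusions. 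Combining this isometry with the surjectivity of the multiplication map established above identifies $\GE_l$ isometrically with the image of $\GE_m\otimes\GH_{l-m}$ under multiplication; hence the two inner products on $H^0(\M;\Ei(l))$ agree, Proposition \ref{geomintprop} applies verbatim, and the displayed identity follows, giving $\FS(\GE_l)=\FS(\GE_m)\otimes\FS(\GH_{l-m})$.
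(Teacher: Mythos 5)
Your proposal is correct and is essentially the intended argument: the paper states this as an immediate consequence of Proposition \ref{geomintprop} applied with $\Ei(m)$, $\GE_m$ and $l-m$ in place of $\Ei$, $\GE_0$ and $m$, with the inner-product compatibility supplied exactly by the isometric subproduct embedding \eqref{modsubprod} (equivalently, multiplication $\GE_m\otimes\GH_{l-m}\to\GE_l$ is the coisometry adjoint to $V^E_{m,l}$, so the quotient inner product agrees with that of $\GH_l\otimes\C^N$), and with surjectivity of the multiplication maps coming from $0$-regularity of $\Ei(m)$ being preserved under non-negative twists.
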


\subsection{Nullstellensatz}
\begin{dfn}
Let $\Ei$ be a coherent analytic sheaf over $\M$. The \textbf{affine linear space} of $\Ei$ is the holomorphic linear space $\E$ of the Serre sheaf $\Ei_\V$ of the $\Ai$-module $E_\N:=\bigoplus_{m\in\N_0}H^0(\M;\Ei(m))$. Typically we consider only the restriction of $\E$ to $\B\subset\V$ and call this also the affine linear space of $\Ei$.
\end{dfn}
Recall that $\E$ and $\Ei_\B$ determine each other up to natural isomorphisms (see \cite{Fisc1}). 
Since every coherent analytic sheaf over $\B$ is globally generated, $\E$ always appears as a holomorphic linear subspace
$$
\E\subset\B\times\C^N
$$
for some integer $N$. If $\Ei$ is generated by holomophic sections globally over $\M$ then the embedding $\E\subset\B\times\C^N$ is $\D^\times$-equivariant outside $0\in\B$ and hence descends to an embedding $\E\subset\M\times\C^N$ where $\E$ now denotes the holomorphic linear space of $\Ei$. 

\begin{Lemma}\label{gradlemma}
Let $\Ei$ be a coherent analytic sheaf over $\M$ and let $\E\subset\B\times\C^N$ be its affine linear space. 
Consider the submodule
$$
J(\E):=\{f\in\Ai\otimes\C^N|\ \bra f(v)|\xi\ket_{\C^N}=0\text{ for all }(v,\xi)\in\E\}
$$
and the quotient Hilbert module
$$
\GE:=\overline{\operatorname{span}}\{k_v\otimes\xi|\ (v,\xi)\in\E\}.
$$
Suppose that $\Ei$ is locally free and moreover that the Cowen--Douglas sheaf $\Ei_{\rm CD}$ of $\GE$ is locally free on $\B\setminus\{0\}$. Then the following are equivalent:
\begin{enumerate}[(a)]
\item{$J(\E)$ is a graded submodule.}
\item{$\GE$ is a graded quotient Hilbert module.}
\item{$\GE\cap(\GH_m\otimes\GE_0)=\operatorname{span}\{k_v^{(m)}\otimes\xi|\ (v,\xi)\in\E\}$ for all $m\in\N_0$.}
\item{$\Ei$ is globally generated (so we can take the embedding $\E\subset\B\times\C^N$ to be $\D^\times$-equivariant).}\label{equivbm}
\end{enumerate}
\end{Lemma}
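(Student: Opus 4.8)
The plan is to establish the cycle of implications $(a)\iff(b)$, $(b)\iff(c)$, and $(c)\iff(d)$, treating $(a)\iff(b)$ as essentially formal and reserving the real work for the graded-versus-globally-generated equivalence. First I would record that $J(\E)$ and $\GE$ are orthogonal complements of each other inside $\Ai\otimes\C^N$ and $\GH_\N\otimes\GE_0$ respectively: by definition of $J(\E)$ a polynomial $f$ lies in $J(\E)$ precisely when $\bra f|k_v\otimes\xi\ket=\overline{\bra f(\bar v)|\xi\ket}=0$ for all $(v,\xi)\in\E$, so $J(\E)=\GE^\perp\cap(\Ai\otimes\C^N)$ and hence $\GE$ is the completion of the $\Ai$-module $E_\N=(\Ai\otimes\C^N)\ominus J(\E)$. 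Since the $\Un(1)$-action is implemented by a unitary on $\GH_\N\otimes\GE_0$ that preserves $\Ai\otimes\GE_0$, a subspace is graded iff its orthogonal complement is graded; this gives $(a)\iff(b)$ immediately. For $(b)\implies(c)$, when $\GE$ is graded we have $\GE=\overline{\bigoplus_m(\GE\cap(\GH_m\otimes\GE_0))}$, and since $\GE=\overline{\operatorname{span}}\{k_v\otimes\xi\}$ with $k_v\otimes\xi=\sum_m r^{2m}\cdots$ decomposing homogeneously (using the expansion of $k_v$ recalled in \S\ref{reflinesec}), applying the projection $p_m$ shows $\GE\cap(\GH_m\otimes\GE_0)\supseteq\operatorname{span}\{p_m(k_v\otimes\xi)\}=\operatorname{span}\{k_v^{(m)}\otimes\xi\}$; the reverse inclusion is automatic from the definition of $\E_m(v)\supseteq\E(v)$ together with gradedness, and $(c)\implies(b)$ is clear since (c) exhibits $\GE$ as the closure of the span of its homogeneous pieces.

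The substantive implications are $(c)\iff(d)$. For $(d)\implies(c)$: if $\Ei$ is globally generated then, as recalled in \S\ref{alggradmodsec} and \S\ref{versussec}, $E_\N:=\bigoplus_m H^0(\M;\Ei(m))$ is a graded quotient $\Ai\otimes\C^N\to E_\N\to 0$ (up to the usual finite-dimensional correction, which for $m\gg0$ disappears), hence its Hilbert-space completion $\GE$ is graded, so (b) holds, hence (c). The more delicate direction is $(c)\implies(d)$, equivalently $\neg(d)\implies\neg(c)$: suppose the tautological embedding $\E\subset\B\times\C^N$ coming from the Serre sheaf of $E_\N$ is \emph{not} $\D^\times$-equivariant on $\B\setminus\{0\}$, i.e. $\Ei$ fails to be globally generated, and we must produce some $m$ for which $\E_m(v)\supsetneq\E(v)$ at some $v$, breaking the equality in (c). Here I would use the hypothesis that both $\Ei$ and $\Ei_{\rm CD}$ are locally free on $\B\setminus\{0\}$: by Proposition \ref{SerrevsCDthm} (in the graded case) the Cowen--Douglas sheaf is $\Ei_{\B\setminus\{0\}}^*$, but if $\Ei$ is not globally generated the embedding $\E\hookrightarrow\B\times\C^N$ obtained from the \emph{un}-graded module $E_\N$ (viewed via $\Spec$, keeping the origin) is the obstruction. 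The key computation is to compare, for each fixed $v\ne0$, the fiber $\E(v)=E_\N/\Gm_vE_\N$ (the Serre-sheaf fiber) with $\E_m(v)=\Ran P^E_m([v])$, and to show that if these agree for all $m$ and all $v\ne0$ then the family $\{\E_m(v)\}$ patches to a $\D^\times$-equivariant sub-bundle of $\B\times\C^N$, which by Lemma \ref{vectextlemma} descends to a globally generated presentation of $\Ei$ on $\M$.

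I expect the main obstacle to be exactly this last patching/descent argument: turning the pointwise equalities $\E(v)=\E_m(v)$ for all $m$ into the global statement that $\E\subset\B\times\C^N$ is $\D^\times$-invariant. The natural route is to observe that gradedness of $\GE$ (which (c) encodes) makes the multiplication maps $\GE_m\otimes\GH_{l-m}\to\GE_l$ surjective in the Castelnuovo--Mumford range, and then to invoke the dictionary of \S\ref{alggradmodsec}: a graded $\Ai$-module structure on $E_\N$ is equivalent to $\Spec$ data on $\V$ that is homogeneous, i.e. $\C^\times$-equivariant, hence the associated linear space $\E$ is $\C^\times$-stable and descends. One must be slightly careful that the local-freeness of $\Ei_{\rm CD}$ on $\B\setminus\{0\}$ is genuinely needed to rule out the pathology, already flagged after Theorem \ref{bigCDversuslocallyfree}, where $\varsigma(P_E)$ can be continuous while the induced bundle on $\M$ is not $C^0$-isomorphic (via a $\D^\times$-equivariant map) to $\Ei_{\rm CD}$; the hypothesis that \emph{both} sheaves are locally free is what forces the evaluation maps $e_v:\GI_\N\to\GE_0$ of the Gleason-solvability discussion to be isomorphisms on fibers for every $v\ne0$, and that rigidity is what upgrades the pointwise matching to $\D^\times$-equivariance. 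Everything else is bookkeeping with the homogeneous decomposition of coherent vectors and the Serre correspondence already set up in \S\ref{alggradmodsec}.
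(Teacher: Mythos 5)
Your treatment of (a)$\iff$(b)$\iff$(c) is essentially the paper's argument (identify $J(\E)$ with $\GE^\perp\cap(\Ai\otimes\C^N)$, use that $(p_m\otimes\bone)(k_v\otimes\xi)$ is a multiple of $k_v^{(m)}\otimes\xi$, and that these vectors over all $m$ span $\GE$ densely), although your justification of the inclusion $\GE\cap(\GH_m\otimes\GE_0)\subset\operatorname{span}\{k_v^{(m)}\otimes\xi\}$ is garbled: citing $\E_m(v)\supseteq\E(v)$ gives the \emph{opposite} containment; what is actually needed is that the homogeneous components of the spanning vectors $k_v\otimes\xi$ already exhaust each graded piece, which is the paper's density argument. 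The genuine gap is in your handling of (d). For (d)$\implies$(c) you argue that global generation makes $E_\N=\bigoplus_m H^0(\M;\Ei(m))$ a graded quotient of $\Ai\otimes\C^N$ "up to a finite-dimensional correction". The paper explicitly warns (in \S\ref{alggradmodsec}, via the left-exactness of the section functor, and again in the sentence immediately following the lemma) that global generation does \emph{not} yield such a graded presentation — having a graded short exact sequence $0\to I_\N\to\Ai\otimes\C^N\to E_\N\to 0$ is strictly stronger than \eqref{equivbm}, and that distinction is the whole point of listing (d) as an equivalent condition. Moreover, even the "$m\gg0$" version of your claim does not settle (c), which is an equality for \emph{all} $m\in\N_0$ and concerns the specific module $\GE$ built from the specific linear space $\E\subset\B\times\C^N$ (whose fiber at $0$ remembers the low-degree structure). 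The paper's route is different: global generation is read as $\D^\times$-equivariance of $\E$, i.e. $\E(v)=\E([v])$; this forces the factor $\Pi^E$ in $\CD(\GE)=\CD(\GH_\N)\otimes\Pi^E$ to satisfy $\Pi^E(v)=\Pi^E([v])$, hence to be the pullback of its boundary value $\varsigma(P_E)$, and this in turn forces the projection $P_E$ onto $\GE$ to preserve the grading, giving (b).

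For the converse direction (c)$\implies$(d) you only gesture at a "patching/descent" argument via Lemma \ref{vectextlemma} and Proposition \ref{SerrevsCDthm} and you yourself flag it as the main obstacle, so as written the proposal does not close the cycle. The intended mechanism (and the reason the lemma carries the hypothesis that both $\Ei$ and $\Ei_{\rm CD}$ are locally free) is that gradedness of $\GE$ makes the Cowen--Douglas data $\Un(1)$-equivariant, so $\Ei_{\rm CD}$ descends by Lemma \ref{vectextlemma}; local freeness of both sheaves then forces the fiberwise inclusion $\E(v)\subset\E_{\rm CD}(v)$ to be an equality of spaces of the same dimension for every $v\ne0$, transferring the $\D^\times$-equivariance of $\E_{\rm CD}$ to $\E$ and hence giving global generation of $\Ei$. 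Your sketch points in this direction but does not carry out the dimension-count step that converts the pointwise comparison into equivariance of $\E$, so the implication (c)$\implies$(d) remains unproved in your proposal, and (d)$\implies$(c) rests on a premise the paper explicitly rules out.
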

We thus see that $\GE$ can be graded even if $E_\N:=\bigoplus_mH^0(\M;\Ei(m))$ does not fit into a short exact sequence $0\to I_\N\to\Ai\otimes\C^N\to E_\N\to 0$ of graded $\Ai$-modules (as the latter is slightly stronger than \eqref{equivbm}). 
\begin{proof}
Let $f\in\Ai\otimes\C^N$ and $(v,\xi)\in\B\times\C^N$. Observe that
$$
\bra f(v)|\xi\ket_{\C^N}=\bra k_v\otimes\xi|f\ket_{\GH_\N\otimes\C^N}.
$$
So we can describe $J(\E)$ as
$$
J(\E)=\{f\in\Ai\otimes\C^N|\ \bra k_v\otimes\xi|f\ket_{\GH_\N\otimes\C^N}=0\text{ for all }(v,\xi)\in\E\}.
$$
In other words,
$$
(\GH_\N\otimes\C^N)\ominus J(\E)=\GE:=\overline{\operatorname{span}}\{k_v\otimes\xi|\ (v,\xi)\in\E\}.
$$
Hence (a) is equivalent to (b). 

The quotient module $\GE$ is graded if and only if
$$
(p_m\otimes\bone_{\GE_0})\GE\subset\GE,\qquad\forall m\in\N_0.
$$
Since $(p_m\otimes\bone_{\GE_0})(k_v\otimes\xi)$ is a scalar multiple of $k_v^{(m)}\otimes\xi$ we can write this as
\begin{equation}\label{kvmsubsetGE}
\{k_v^{(m)}\otimes\xi|\ (v,\xi)\in\E\}\subset\GE,\qquad\forall m\in\N_0.
\end{equation}
Clearly $\cspan\{k_v^{(m)}\otimes\xi|\ (v,\xi)\in\E,\ m\in\N_0\}=\GE$. Therefore \eqref{kvmsubsetGE} and (c) are equivalent.  

Suppose that $\E(v)=\E([v])$. Then the Cowen--Douglas projection $\CD(\GE)=\CD(\GH_\N)\otimes\Pi^E$ satisfies $\Pi^E(v)=\Pi^E([v])$. So $\Pi^E$ is the pullback of its boundary limit $\varsigma(P_E)$ and $P_E$ must be over $\Ni$, i.e. the projection $P_E$ onto $\GE$ must preserve the grading. 
This gives the lemma. 

\end{proof}

\begin{thm}[Nullstellensatz]\label{nullothm}
Let $\Ei$ be a coherent analytic sheaf over $\M=(\B\setminus\{0\})/\D^\times$ and let $\E\subset\B\times\C^N$ be its affine linear space
Let
$$
J(\E):=\{f\in\Ai\otimes\C^N|\ \bra f(v)|\xi\ket_{\C^N}=0\text{ for all }(v,\xi)\in\E\}
$$
and for any subset $J\subset\Ai\otimes\C^N$ define
$$
\V(J):=\{(v,\xi)\in\B\otimes\C^N|\ \bra f(v)|\xi\ket_{\C^N}=0\text{ for all }f\in J\}.
$$
Let $I_\N$ be the kernel of the surjection of $\Ai\otimes\C^N$ onto $E_\N:=\bigoplus_mH^0(\M;\Ei(m))$. 
Finally let $[E_\N]$ be the closure of in the Fock inner product of $\GH_\N\otimes\C^N$, define
$$
\GE:=\overline{\operatorname{span}}\{k_v\otimes\xi|\ (v,\xi)\in\E\}.
$$
and let $\Ei_{\rm CD}$ be the Cowen--Douglas sheaf of $\GE$.
Then for $m\gg 0$ we have
$$
E_m=\GE\cap(\GH_m\otimes\GE_0)\text{ and }I_m=J(\E)\cap(\GH_m\otimes\GE_0),
$$
and the following are equivalent: 
\begin{enumerate}[(a)]
\item{$\Ei_\B\cong\Ei_{\rm CD}$} 
\item{$\V(J(\E))=\E$} 
\end{enumerate}
If $\Ei$ is globally generated then $J(\E)=J_\N(\E)$ is a graded submodule and $\GE=\GE_\N$ is a graded quotient module.
Finally, suppose that the $\Ai$-module maps in the short exact sequence $0\to I_\N\to\Ai\otimes\C^N\to E_\N\to 0$ preserve the grading, and that $\Ei$ is locally free. Then (a) and (b) hold, and we have an identification
$$
[E_\N]=\GE_\N\text{ and }I_\N=J_\N(\E),
$$
of graded $\Ai$-modules.
\end{thm}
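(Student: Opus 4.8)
The plan is to combine the algebraic Serre correspondence of Lemma \ref{Serrelemma} between finitely generated graded $\Ai$-modules and coherent sheaves, the reproducing-kernel description of $\GE$ and of its Cowen--Douglas sheaf $\Ei_{\rm CD}$, and the two structural facts already available for quotient modules of this kind, namely Proposition \ref{SerrevsCDthm} and Lemma \ref{gradlemma}. The recurring source of friction is that $E_\N$, the affine linear space $\E$, the submodule $J(\E)$, and the Hilbert module $\GE$ are all blind to the cone point $0\in\B$; this is exactly why the module-level statements can only be obtained degree by degree for $m\gg0$ in general, and upgrade to genuine identifications of graded $\Ai$-modules only under the extra graded and locally free hypotheses of the final clause. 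Throughout I will treat $E_\N$ as presented by a graded surjection $\Ai\otimes\C^N\to E_\N$ agreeing with $\bigoplus_m H^0(\M;\Ei(m))$ in large degree, as in \S\ref{alggradmodsec}.

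\emph{Step 1 (identification of graded components for $m\gg0$).} Here $I_\N=\Ker(\Ai\otimes\C^N\to E_\N)$ is a graded submodule, while $J(\E)$---the full module of elements of $\Ai\otimes\C^N$ vanishing on $\E$---is its saturation with respect to the irrelevant ideal. By Serre vanishing (Lemma \ref{Serrelemma}) a finitely generated graded module and its saturation have equal homogeneous components in all sufficiently large degrees, so $I_m=J(\E)\cap(\GH_m\otimes\GE_0)$ and $E_m=(\GH_m\otimes\GE_0)/I_m=(\GH_m\otimes\GE_0)/J(\E)_m$ for $m\gg0$. Since $\GE$ is the orthogonal complement of $J(\E)$ in $\GH_\N\otimes\C^N$ (as in the proof of Lemma \ref{gradlemma}), the quotient $(\GH_m\otimes\GE_0)/J(\E)_m$ is identified with $\overline{\operatorname{span}}\{k_v^{(m)}\otimes\xi\mid(v,\xi)\in\E\}$; a scaling argument on the coherent vectors $k_{\lambda v}$, using that $E_\N$ is generated in bounded degrees, then shows this span equals $\GE\cap(\GH_m\otimes\GE_0)$ for $m\gg0$. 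Together with the Castelnuovo--Mumford embedding \eqref{zeroregular} of $H^0(\M;\Ei(m))$ into $\GH_m\otimes\GE_0$ this gives $E_m=\GE\cap(\GH_m\otimes\GE_0)$ for $m\gg0$, which is the first assertion.

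\emph{Step 2 (the equivalence $(a)\iff(b)$).} The sheaf $\Ei_\B$ is the sheaf of fibrewise-linear holomorphic functions on the linear space $\E\subseteq\B\times\C^N$, and I would identify $\Ei_{\rm CD}$ similarly with the sheaf of fibrewise-linear functions on $\V(J(\E))$: using Proposition \ref{tensorprop} and the identity $M_{\Theta_E}^*(k_v\otimes\xi)=k_v\otimes\Theta_E(v)^*\xi$ the fibre space of $\GE$ at $v$ is $\Ker\Theta_E^*(\bar v)$, which is exactly the fibre of $\V(J(\E))$ over $v$, and $\Ei_{\rm CD}(v)=\C k_{\bar v}\otimes\Ker\Theta_E^*(\bar v)$. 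By Fischer's equivalence \cite{Fisc1} between linear spaces and coherent analytic sheaves, an isomorphism $\Ei_\B\cong\Ei_{\rm CD}$ compatible with the ambient trivial bundle is then equivalent to the equality $\E=\V(J(\E))$ of linear subspaces of $\B\times\C^N$. The cone point is handled by first working over $\B\setminus\{0\}$, where $\Un(1)$-equivariance of $\varsigma_\B(P_E)$ and Lemma \ref{vectextlemma} let everything descend to $\M$ and be compared with the global section module as in \S\ref{bvaluessec}.

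\emph{Step 3 (the remaining clauses, and the main obstacle).} If $\Ei$ is globally generated then, by the discussion preceding Lemma \ref{gradlemma}, $\E$ is $\D^\times$-equivariant outside $0$ and so a pullback from $\M$; hence $J(\E)=J_\N(\E)$ is graded and, via Step 1, $\GE=\GE_\N$ is a graded quotient module. Assume finally that the presentation $0\to I_\N\to\Ai\otimes\C^N\to E_\N\to0$ is by graded maps and that $\Ei$ is locally free. Then Proposition \ref{SerrevsCDthm} applies to $\GE_\N$ and gives that $\Ei_{\rm CD}$ is locally free on $\B\setminus\{0\}$, so that Lemma \ref{gradlemma} is available; chasing its equivalences together with Step 2 yields $\V(J(\E))=\E$, i.e.\ $(b)$, and hence $(a)$. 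Lastly, local freeness of $\Ei$ makes its section module saturated, so with the graded presentation the degree-by-degree agreement of Step 1 propagates to all degrees, giving $I_\N=J_\N(\E)$ and $[E_\N]=\GE_\N$ as graded $\Ai$-modules. I expect the crux to be the implication $(a)\Rightarrow(b)$: promoting an abstract analytic isomorphism $\Ei_\B\cong\Ei_{\rm CD}$ to a set-theoretic equality of two linear subspaces of $\B\times\C^N$, while keeping track of the (Hermitian, hence possibly conjugate-linear) identification of fibres and of the degenerate behaviour at $0\in\B$ where the Cowen--Douglas description breaks down.
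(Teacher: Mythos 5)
Your proposal follows essentially the same route as the paper's proof: identify the graded pieces $E_m$ with the spans of the $k_v^{(m)}\otimes\xi$ for $m\gg 0$ via Serre's correspondence and (the proof of) Lemma \ref{gradlemma}, observe that $\V(J(\E))$ is exactly the eigenvector set $\E_{\rm CD}$ whose fibers $\C k_{\bar v}\otimes\E_{\rm CD}(v)$ make up $\Ei_{\rm CD}$, and settle the final clause with Proposition \ref{SerrevsCDthm} and Lemma \ref{gradlemma}. The crux you flag, (a)$\Rightarrow$(b), is resolved just as in the paper by the a priori containment $\E\subseteq\V(J(\E))=\E_{\rm CD}$: an isomorphism $\Ei_\B\cong\Ei_{\rm CD}$ forces equal fiber dimensions at every $v\in\B\setminus\{0\}$, and containment plus equal dimensions gives $\E(v)=\E_{\rm CD}(v)$ pointwise, so no issue of conjugate-linear identifications actually arises.
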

\begin{proof}

We first show that the $\C$-linear span of the $k_x^{(m)}\otimes\xi$'s with $(x,\xi)\in\E$ is precisely the vector space $E_m$ for $m\gg 0$. That would give $[E_\N]=\GE$ (up to finite-dimensional vector spaces). Clearly $k_x^{(m)}\otimes\xi$ for $(x,\xi)\in\E$ is mapped to $E_m$ under the multiplication map $\Ai_m\otimes\C^N\to E_m$. In the proof of Lemma \ref{gradlemma} we saw that $\GE\cap(\GH_m\otimes\GE_0)$ equals $\operatorname{span}\{k_v^{(m)}\otimes\xi|\ (v,\xi)\in\E\}$ for $m\gg 0$. So for $m\gg 0$ the map $\Ai_m\otimes\C^N\to E_m$ is surjective and we have indeed $\GE_m=E_m$. 

We have
$$
\V(J(\E))=\E_{\rm CD}:=\{(v,\xi)\in\B\otimes\C^N|\ S_E^*(k_v\otimes\xi)=\bar{v}k_v\otimes\xi\},
$$
and $\E\subset\E_{\rm CD}$. Note that the fiber of $\Ei_{\rm CD}$ is given by $k_v\otimes\E_{\rm CD}$. So the two conditions $\V(J(\E))=\E$ and $\Ei_\B\cong\Ei_{\rm CD}$ are equivalent. 

That (a) holds when $\Ei$ is locally free follows from Proposition \ref{SerrevsCDthm}. The last statements follow from Lemma \ref{gradlemma}. 
\end{proof}


\begin{Remark}[Grading and $\D^\times$-equivariance]
In general it is not clear if $\E_{\rm CD}$ is $\D^\times$-equivariant iff $\E$ is. If the inclusion $\E\subset\E_{\rm CD}$ is proper then $\E(v)=\E([v])$ need not imply $\Pi^E(v)=\Pi^E([v])$. That is, $\Pi^E$ might fail to be $\D^\times$-equivariant and hence not equal the pullback of $\varsigma(P_E)$. 
\end{Remark}

The linear space $\E$ is by definition locally over an open subset $\U\subset\B$ the kernel of an $M$-tuple of holomorphic functions for some integer $M\geq 1$. But the Nullstellensatz above shows that if we take $M=+\infty$ then in this algebraic setting we can describe $\E_{\rm CD}$ globally as a zero-set when $\Ei$ is locally free:
\begin{cor}
Suppose that $\Ei_{\rm CD}=\Oi_{\rm CD}\otimes\Ei_{\B\setminus\{0\}}$ where $\Ei_{\B\setminus\{0\}}$ is the pullback of a holomorphic vector bundle $\Ei$ over $\M$ and let $\Theta_E:\B\times\ell^2(\N_0)\to\B\times\C^N$ be any multiplier with $\Ker M_{\Theta_E}^*=\cspan\{k_v\otimes\xi|\ (v,\xi)\in\E\}$. Then 
$$
\E(v)=\Ker\Theta_E^*(\bar{v}).
$$
\end{cor}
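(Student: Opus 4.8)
The plan is to show $\E(v)=\E_{\rm CD}(v)$ for every $v\in\B\setminus\{0\}$, where $\E$ is the affine linear space of $\Ei$ and $\E_{\rm CD}(v)$ denotes the fibre of the linear space underlying the Cowen--Douglas sheaf $\Ei_{\rm CD}$ of the quotient module $\GE:=\cspan\{k_v\otimes\xi\mid (v,\xi)\in\E\}$. The corollary then follows immediately, because for any multiplier $\Theta_E$ with $\Ker M_{\Theta_E}^*=\GE$ we already established, via \eqref{MThetastareq} and the resulting description $\Ker(S_E^*-v\bone)=\C k_{\bar v}\otimes\Ker\Theta_E^*(\bar v)$ of the backward-shift eigenspaces, that $\E_{\rm CD}(v)=\Ker\Theta_E^*(\bar v)$ for all $v\in\B$ (independently of which such $\Theta_E$ is chosen, since all of them have the same kernel).

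The inclusion $\E\subseteq\E_{\rm CD}$ always holds (it is recorded in the proof of Theorem \ref{nullothm}), so it suffices to match fibre dimensions on $\B\setminus\{0\}$. On the one hand, $\Ei$ being a holomorphic vector bundle, its affine linear space $\E$ (which over $\B\setminus\{0\}$ is the linear space of $\pi^*\Ei$) has fibre dimension $\dim\E(v)=\rank\Ei$ at every $v\in\B\setminus\{0\}$. On the other hand, the hypothesis $\Ei_{\rm CD}=\Oi_{\rm CD}\otimes\Ei_{\B\setminus\{0\}}$ with $\Ei_{\B\setminus\{0\}}=\pi^*\Ei$ makes $\Ei_{\rm CD}$ locally free of rank $\rank\Ei$ on $\B\setminus\{0\}$, and Proposition \ref{tensorprop} peels off the trivial line bundle factor $\Oi_{\rm CD}$ to give $\Ei_{\rm CD}(v)=\C k_{\bar v}\otimes\E_{\rm CD}(v)$, whence $\dim\E_{\rm CD}(v)=\rank\Ei$ for every $v\in\B\setminus\{0\}$. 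Two subspaces of $\C^N$, one containing the other and of the same dimension, coincide, so $\E(v)=\E_{\rm CD}(v)$ there, as desired. Equivalently one may route this through the Nullstellensatz: the hypothesis is precisely condition (a), $\Ei_\B\cong\Ei_{\rm CD}$, of Theorem \ref{nullothm}, which by the equivalence (a)$\Longleftrightarrow$(b) together with the identity $\V(J(\E))=\E_{\rm CD}$ proved there yields $\E=\E_{\rm CD}$ on $\B\setminus\{0\}$; and for $\Ei$ locally free this situation is in any case guaranteed by Proposition \ref{SerrevsCDthm}, Theorem \ref{bigCDversuslocallyfree}, and the last assertion of Theorem \ref{nullothm}.

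The only point needing genuine care — as opposed to bookkeeping with $k_v$ versus $k_{\bar v}$ and with the twist by $\Oi_{\rm CD}$ — is that both fibre dimensions stay equal to $\rank\Ei$ \emph{everywhere} on $\B\setminus\{0\}$, not merely generically: for $\E$ this is exactly the local freeness of $\Ei$, and for $\E_{\rm CD}$ it is exactly the local freeness of $\Ei_{\rm CD}$ on $\B\setminus\{0\}$ that is built into the hypothesis. Once both dimensions are pinned to $\rank\Ei$, equality of the two subspaces, and hence the identification of $\E(v)$ with $\Ker\Theta_E^*(\bar v)$, is automatic. Finally I would note that the restriction to $v\ne 0$ is unavoidable and consistent with the hypotheses, since $\E_m(0)=\GE_0$ for all $m$ and $\Ei_{\rm CD}$ generally fails to be locally free at the origin.
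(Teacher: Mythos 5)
Your proof is correct and follows essentially the paper's intended route: the hypothesis is exactly condition (a) of Theorem \ref{nullothm}, so $\E=\E_{\rm CD}$ on $\B\setminus\{0\}$ (your fibre-dimension count merely makes explicit the inclusion-plus-local-freeness argument already used in the proofs of Proposition \ref{SerrevsCDthm} and Theorem \ref{nullothm}), and the identity $\E_{\rm CD}(v)=\Ker\Theta_E^*(\bar{v})$ coming from \eqref{MThetastareq} finishes it. Your remark that the conclusion is to be read on $\B\setminus\{0\}$ is consistent with the paper's hypotheses.
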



\section{The Toeplitz part of a superharmonic projection}

\subsection{Lifts of projections}\label{liftsec}

We are now going to investigate the possibility of quantizing smooth Hermitian vector bundles over $\M=\G/\K$. From the results of \cite{Hawk1} and \cite{Wang1, Wang2} we expect that a Hermitian metric on a vector bundle can be quantized in a stronger sense if the vector bundle is $\G$-equivariant or satisfies some kind of stability condition. 

Recall the Toeplitz short exact sequence
\begin{equation}\label{ToepSES}
0\to\Gamma_0\to\Ti_\GH^{(0)}\overset{\varsigma}{\to}C^0(\M)\to 0
\end{equation}
where $\Ti_\GH^{(0)}$ is the $C^*$-algebra of Toeplitz operators with symbol in $C^0(\M)$ acting on the Fock space $\GH_\N$ and 
$\Gamma_0$ is the ideal in $\Ti_\GH^{(0)}$ consisting of compact operators which preserve the grading on $\GH_\N$. By the Swan theorem, a $C^0$ vector bundle $\Ei$ over $\M$ is the same datum as an idempotent $P^E$ over $C^0(\M)$, which if taken selfadjoint also defines a Hermitian metric on $\Ei$. The symbol map $\varsigma$ was discussed extensively in the last two sections. The Toeplitz map $\breve{\varsigma}:C^0(\M)\to\Ti_\GH^{(0)}$ gives a positive linear splitting of \eqref{ToepSES}. Following ideas of noncommutative geometry, and in particular \cite{Hawk1, Hawk2}, a \textbf{quantization} of $P^E$ is a projection $P_E$ over $\Ti_\GH^{(0)}$ which is equal to $\breve{\varsigma}(P^E)$ modulo $\Gamma_0$, i.e. such that
$$
\varsigma(P_E)=P^E.
$$
As we have seen, there is a special class of projections over $\Ti_\GH^{(0)}$ whose ranges are quotient modules, namely the $\Psi$-superharmonic projections with symbol in $C^0(\M)$. A natural question is thus whether our given $P^E$ admits a $\Psi$-superharmonic quantization. We shall see that the answer is ``no'' in general. Yet for any $P^E$ there is a natural candidate to a $\Psi$-superharmonic quantization. Indeed, if $P^E$ belongs to $C^0(\M)\otimes\Mn_N(\C)$ then the Toeplitz operator $\breve{\varsigma}(P^E)$ acts on $\GH_\N\otimes\C^N$ and the range projection of $\breve{\varsigma}(P^E)$ is $\Psi$-superharmonic since $\breve{\varsigma}(P^E)$ is $\Psi$-superharmonic (indeed $\Psi$-harmonic). If we let $\Ran\breve{\varsigma}(P^E)$ denote the range projection of $\breve{\varsigma}(P^E)$, we would thus like to know if $\Ran\breve{\varsigma}(P^E)$ is a quantization of $P^E$. As mentioned, in general $\Ran\breve{\varsigma}(P^E)$ is not a quantization of $P^E$.

Let us look at the basic operator aspects of this lifting problem. Since $\varsigma(\breve{\varsigma}(P^E))=P^E$, we know that $\breve{\varsigma}(P^E)$ is a projection modulo $\Gamma_0$ or, what is the same since $\breve{\varsigma}(P^E)$ preserves the grading on $\GH_\N\otimes\C^N$, we know that $\breve{\varsigma}(P^E)$ is a projection modulo compact operators. By Brown--Douglas--Fillmore theory \cite[\S IX]{Davi2}, every projection modulo compacts is of the form normal plus compact. But recall that even more is true: every projection modulo compacts is projection plus compact (this is a particular case of \cite{Olse1}; see a quick proof in \cite[Prop. 3.1]{Weav1}). So there is a projection $Q_E$ acting on $\GH_\N\otimes\C^N$ such that
$$
Q_E=\breve{\varsigma}(P^E)+C_E
$$
with $C_E$ compact. Since $\Ti_\GH^{(0)}=\breve{\varsigma}(C^0(\M))+\Gamma_0$, we see that $Q_E$ belongs to $\Ti_\GH^{(0)}$. So there always exists a lift of $P^E$ to a projection over $\Ti_\GH^{(0)}$ of the same matrix size, as can also be shown by operator-algebraic reasoning applied to the sequence \eqref{ToepSES}. 

However, a projection modulo compacts is rarely equal to its range projection modulo compacts. For a simple example, let $K$ be a compact operator. Then $K$ is equal to zero modulo compacts (hence $K$ is a projection modulo compacts) but the range projection $P_K$ of $K$ need not be of finite rank (it is not unless $K$ is of finite rank, by definition) so $P_K$ is not zero modulo compact in general (a projection is compact iff it has finite rank). 

Thus, for the range projection of $\breve{\varsigma}(P^E)$, in general we have
$$
P^E=\varsigma(\breve{\varsigma}(P^E))\ne\varsigma(\Ran\breve{\varsigma}(P^E)).
$$
Still, since $\breve{\varsigma}(P^E)$ has entries in $\Ti_\GH^{(0)}\subset\Ni$ we know that $\Ran\breve{\varsigma}(P^E)$ has entries in $\Ni$, because $\Ni$ is a von Neumann algebra. In fact:
\begin{prop}
Let $P^E$ be a projection over $C^0(\M)$. Then $\Ran\breve{\varsigma}(P^E)$ has entries in $\Ti_\GH^{(0)}\subset\Ni$.
\end{prop}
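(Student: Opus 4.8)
The plan is to show that $\Ran\breve{\varsigma}(P^E)$ differs from the Toeplitz operator $\breve{\varsigma}(P^E)$ by a grading-preserving compact operator, i.e.\ by an element of $\Gamma_0$; since $\breve{\varsigma}(P^E)$ already lies in $\Ti_\GH^{(0)}\otimes\Mn_N(\C)$ and $\Gamma_0\subseteq\Ti_\GH^{(0)}$, this yields the assertion (and membership in $\Ni$ is then automatic from $\Ti_\GH^{(0)}\subseteq\Ni$).

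Write $A:=\breve{\varsigma}(P^E)$, a positive operator on $\GH_\N\otimes\C^N$ with $\|A\|\leq 1$ preserving the $\N_0$-grading, so $A=\bigoplus_{m}A_m$ with $A_m:=\breve{\varsigma}^{(m)}(P^E)\in\Bi(\GH_m)\otimes\Mn_N(\C)$. Since $\varsigma(A)=P^E$ is a projection and $A$ is grading-preserving, $A$ is a projection modulo $\Gamma_0$, i.e.\ $A^2-A\in\Gamma_0$, equivalently $\|A_m^2-A_m\|\to 0$. Put $P_{E,m}:=\Ran A_m$ and $\GE_m:=\Ran A_m=\Ker A_m^\perp$. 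In the decomposition $\GH_m\otimes\C^N=\GE_m\oplus\GE_m^\perp$ one has $A_m=(A_m|_{\GE_m})\oplus 0$ with $A_m|_{\GE_m}$ positive and invertible, so $P_{E,m}-A_m=(\bone_{\GE_m}-A_m|_{\GE_m})\oplus 0$ and hence $\|P_{E,m}-A_m\|=1-\lambda_{\min}(A_m|_{\GE_m})$. Thus the whole problem reduces to the uniform spectral statement $\lambda_{\min}(A_m|_{\GE_m})\to 1$.

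To prove this I would interpret $A_m$ geometrically via \eqref{Toeplexplic}: $A_m=n_m(\omega\otimes\id)(\FS(\GH_m)(P^E\otimes p_m))$ is the positive operator attached to the quadratic form $\psi\mapsto n_m\int_\M\langle P^E(x)\psi(x),\psi(x)\rangle_{\C^N}\,d\omega(x)$; its support $\GE_m$ is, for $m\gg 0$, canonically identified with $H^0(\M;\Ei(m))$ where $\Ei$ is the holomorphic vector bundle defined by $P^E$, and under this identification $A_m|_{\GE_m}$ is a scalar multiple of the Gram operator comparing the $L^2$-inner product of $\omega$ and the metric $\FS(\GH_m)\otimes P^E$ on $\Ei(m)$ with the inner product that $H^0(\M;\Ei(m))$ inherits from $\GH_m\otimes\C^N$; equivalently it is governed by the Bergman/Szeg\"o kernel of $\Ei(m)$ for that metric. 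Because the $\Li^m$-factor $\FS(\GH_m)$ of the metric is real-analytic, the leading term of the Bergman-kernel expansion is scalar, and one obtains $A_m|_{\GE_m}=c_{E,m}\bone_{\GE_m}+R_m$ with $c_{E,m}=n_m\rank\Ei/\chi(\Ei(m))$ and $\|R_m\|\to 0$; here $c_{E,m}\to 1$ since $\chi(\Ei(m))$ and $\rank\Ei\cdot n_m$ have the same leading term in $m$. For real-analytic $P^E$ this is the Bergman-kernel expansion \cite{Catl1, MaMa3, Wang2} recalled in connection with Theorem~\ref{hiddSzegintrothm} (with $R_m=0$ for $m\gg 0$ in the irreducible $\G$-equivariant case of Proposition~\ref{balancedeqprop}; cf.\ also Theorem~\ref{HalfWangthm}), and for a merely continuous $P^E$ its leading-order form suffices. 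Consequently $\lambda_{\min}(A_m|_{\GE_m})\geq c_{E,m}-\|R_m\|\to 1$, so $\Ran A-A=\bigoplus_m(\Ran A_m-A_m)\in\Gamma_0$ and $\Ran\breve{\varsigma}(P^E)=\breve{\varsigma}(P^E)+(\Ran A-A)\in\Ti_\GH^{(0)}\otimes\Mn_N(\C)$.

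The main obstacle is the second step: one must match $A_m|_{\GE_m}$ precisely with the density-of-states (Bergman) operator of the metric $\FS(\GH_m)\otimes P^E$ on $\Ei(m)$ — the same bookkeeping with the frame-theoretic descriptions of $\breve{\varsigma}^{(m)}$ and $\FS(\GH_m)$ that underlies Wang's theorem and Lemma~\ref{Wanglemma} — and then invoke the Bergman asymptotics, in the continuous case in the version valid for metrics with a continuous factor rather than the smooth expansion. No purely soft argument seems available: without a quantitative lower bound the nonzero spectrum of $A_m$ could a priori accumulate at $0$, and then $\chi_{(0,\infty)}$ would fail to be continuous on $\sigma(A)=\overline{\bigcup_m\sigma(A_m)}$, so that $\Ran A=\chi_{(0,\infty)}(A)$ — while always lying in the von Neumann algebra $\Ni$ — need not lie in the $C^*$-algebra $\Ti_\GH^{(0)}$; the proposition is exactly the statement that this does not occur.
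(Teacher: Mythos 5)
Your overall strategy is not a repair of a harder step in the paper's argument; it proves (or would prove) a statement that is strictly stronger than the proposition and is in fact false in general. Membership of $\Ran\breve{\varsigma}(P^E)$ in $\Ti_\GH^{(0)}\otimes\Mn_N(\C)$ means $\Ran\breve{\varsigma}(P^E)=\breve{\varsigma}(Q)+K$ with $K\in\Gamma_0\otimes\Mn_N(\C)$ for \emph{some} continuous projection $Q=\varsigma(\Ran\breve{\varsigma}(P^E))$; it does not require $Q=P^E$, i.e.\ it does not require $\Ran\breve{\varsigma}(P^E)-\breve{\varsigma}(P^E)\in\Gamma_0$, which is exactly what your plan sets out to show. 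The paragraph immediately preceding the proposition stresses that in general $\varsigma(\Ran\breve{\varsigma}(P^E))\ne P^E$; the corollary in \S\ref{liftsec} shows this failure is forced whenever the bundle defined by $P^E$ admits no holomorphic structure, and Theorem \ref{GrvarsigmaPEthm} identifies the symbol of the range projection with the Yang--Mills metric on $\Gr(\Ei)$, which differs from $P^E$ outside the Yang--Mills case. The compact-difference property you aim for is precisely the content of Lemma \ref{coinvYMlemma} and is a theorem \emph{about} Yang--Mills metrics, not a fact about arbitrary continuous projections. Correspondingly, the quantitative core of your argument, $A_m|_{\GE_m}=c_{E,m}\bone+R_m$ with $\|R_m\|\to 0$, is equivalent to asymptotic balancedness of $\FS(\GH_m)\otimes P^E$, which by Wang's theorem (Lemma \ref{Wanglemma}) characterizes Yang--Mills metrics; and your identification of $\Ran\breve{\varsigma}^{(m)}(P^E)$ with $H^0(\M;\Ei(m))$ presupposes a holomorphic structure and the hypotheses of Proposition \ref{propofrealalg} (whether $\dim\Ran\breve{\varsigma}^{(m)}(P^E)=\chi(\Ei(m))$ holds in general is left as an open Question in \S\ref{liftsec}), none of which is available for a merely $C^0$ projection.

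Your closing remark that no soft argument seems available is also the opposite of how the paper proceeds: the paper's proof is entirely soft. Following \cite[Thm.\ 10.4]{DRS1}, which generalizes \cite[Lemma 1.13]{Arv6c}, the Toeplitz algebra $\Ti_\GH$ is the $C^*$-envelope of the operator system generated by the shift tuple $S$, hence an injective $C^*$-algebra, hence an $AW^*$-algebra, and an $AW^*$-algebra contains the range projection of each of its elements \cite[Lemma 2.1.5]{SaWr1}. Since $\breve{\varsigma}(P^E)$ has entries in $\Ti_\GH^{(0)}$ and preserves the grading, its range projection lies in $\Ti_\GH^{(0)}\subset\Ni$. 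Your worry that the nonzero spectrum of $\breve{\varsigma}(P^E)$ might accumulate at $0$ is legitimate, but the $AW^*$ property handles it without any spectral gap estimate; the proposition asserts only that the range projection lies in the $C^*$-algebra, while which continuous symbol it has (generically not $P^E$) is the subject of the later results.
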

\begin{proof}
As observed in \cite[Thm. 10.4]{DRS1}, the proof of \cite[Lemma 1.13]{Arv6c}
generalizes so as to show that the $C^*$-algebra $\Ti_\GH$ is the $C^*$-envelope of the operator system generated by $S$. This gives that $\Ti_\GH$ is an injective $C^*$-algebra, 
hence an $AW^*$-algebra, hence every element of $\Ti_\GH$ has its range projection belonging to $\Ti_\GH$ (see \cite[Lemma 2.1.5]{SaWr1}).
\end{proof}
Thus the range projection of $\breve{\varsigma}(P^E)$ is a coinvariant projection over $\Ti^{(0)}_\GH$, hence with symbol in $C^0(\M)$, but in order to obtain $\Ran\breve{\varsigma}(P^E)$ from $\breve{\varsigma}(P^E)$ one may have to do more than just adding compacts.



Given a coinvariant projection $P_E$, there is a unique projection $P^E$ over $L^\infty(\M)$ such that the Toeplitz operator $\breve{\varsigma}(P^E)$ is the $\Psi$-harmonic part of $P_E$. 
Indeed, it is given by $P^E=\varsigma(P_E)$. Let us now discuss uniqueness of a coinvariant lift:

\begin{prop}[Uniqueness of superharmonic lift]\label{uniqeliftprop}
Suppose that $P^E$ is a projection over $C^0(\M)$ with a coinvariant lift $P_E$. Then $\Ran\breve{\varsigma}(P^E)$ is also a coinvariant lift of $P_E$. In fact, $P_E$ equals $\Ran\breve{\varsigma}(P^E)$ up to finite-rank operators.
\end{prop}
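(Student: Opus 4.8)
The plan is to realise both $P_E$ and $Q:=\Ran\breve{\varsigma}(P^E)$ by means of the Riesz decomposition and then to compare them through the symbol map $\varsigma$.

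First I would record that, $P_E$ being a coinvariant lift, we have $\varsigma(P_E)=P^E$, and Lemma~\ref{PEinNilemma} gives the Riesz decomposition $P_E=\breve{\varsigma}(P^E)+C_E$ with $\breve{\varsigma}(P^E)$ the $\Psi$-harmonic part and $C_E$ a $\Psi$-potential whose charge is $(\id-\Psi)(P_E)\geq 0$. Then $C_E=\mathrm{SOT-}\sum_{m\in\N_0}\Psi^m\big((\id-\Psi)(P_E)\big)$ is a SOT-limit of positive operators, hence $C_E\geq 0$ and $P_E\geq\breve{\varsigma}(P^E)\geq 0$. Consequently $Q\leq P_E$ as projections; moreover $\breve{\varsigma}(P^E)\leq\|P^E\|\,Q=Q$, since a positive operator is dominated by its norm times its range projection.

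Next I would show that $Q$ is itself a coinvariant lift of $P^E$. That $Q$ has entries in $\Ti_\GH^{(0)}$ is the preceding Proposition. For coinvariance, put $X:=\breve{\varsigma}(P^E)$, which is $\Psi$-harmonic; for $\xi\in\Ker X$ the identity $0=\bra X\xi|\xi\ket=\bra\Psi(X)\xi|\xi\ket=\sum_{\alpha=1}^n\bra XT_\alpha\xi|T_\alpha\xi\ket$ together with positivity of the summands forces $X^{1/2}T_\alpha\xi=0$, so $XT_\alpha\xi=0$, for every $\alpha$. Hence $\Ker X$ is $T$-invariant and the range of $Q$, which is $(\Ker X)^\perp$, is $T^*$-invariant; since $X$ preserves the grading this subspace is graded, and a graded subspace is $S^*$-invariant iff it is $T^*$-invariant by Lemma~\ref{squareofSlemma}, so $Q$ is $\Psi$-superharmonic by Lemma~\ref{PEinNilemma} (this is exactly the observation recalled in \S\ref{liftsec}). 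For the symbol, I would use that on $\Ni$ the map $\varsigma$ is the SOT-limit of the positive maps $\Psi^m$ and hence order-preserving; applying it to $\breve{\varsigma}(P^E)\leq Q\leq P_E$ and using $\varsigma\circ\breve{\varsigma}=\id$ gives $P^E\leq\varsigma(Q)\leq\varsigma(P_E)=P^E$, so $\varsigma(Q)=P^E$.

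For the final ``up to finite rank'' assertion I would note that $P_E-Q$ is a genuine projection (by the first paragraph) lying in $\Ti_\GH^{(0)}\otimes\Mn_N(\C)$, with $\varsigma(P_E-Q)=P^E-P^E=0$; exactness of the Toeplitz sequence \eqref{ToepSES}, tensored with $\Mn_N(\C)$, then places $P_E-Q$ in the ideal of grading-preserving compact operators, and a compact projection has finite rank. Of all this, the routine ingredients (positivity of $C_E$, the bound $\breve{\varsigma}(P^E)\leq Q$, the order-preservation of $\varsigma$, finiteness of the rank of a compact projection) give no trouble; the only step carrying real content is the coinvariance of $Q$, and the point to watch is bookkeeping of which algebra --- $\Ti_\GH^{(0)}$, $\Ni$ or $\Li$ --- each operator belongs to, so that the Riesz decomposition, the order-preservation of $\varsigma$, and the short exact sequence are invoked only where they are valid.
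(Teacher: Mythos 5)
Your proof is correct, and it starts from the same ingredients as the paper's — the Riesz decomposition $P_E=\breve{\varsigma}(P^E)+C_E$ with $C_E\geq 0$, hence $\breve{\varsigma}(P^E)\leq P_E$ and $Q:=\Ran\breve{\varsigma}(P^E)\leq P_E$, together with the preceding proposition placing $Q$ over $\Ti_\GH^{(0)}$ — but it finishes by a genuinely different mechanism. The paper restricts $\breve{\varsigma}(P^E)$ to $\Ran P_E$, uses compactness of $C_E$ (available because $P_E$ is over $\Ti_\GH^{(0)}$) to see that this restriction is identity-plus-compact, hence Fredholm, and concludes that $\breve{\varsigma}^{(m)}(P^E)$ is invertible on $\GH^E_m$ for $m\gg 0$, so the range projection agrees with $P_E$ up to finite rank; the coinvariance of $Q$ is simply quoted from the discussion in \S\ref{liftsec}. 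You instead (i) reprove coinvariance directly, via the clean observation that the kernel of the $\Psi$-harmonic positive operator $\breve{\varsigma}(P^E)$ is $T$-invariant so its graded orthocomplement is $S^*$-invariant, (ii) obtain the lift property $\varsigma(Q)=P^E$ by squeezing $\breve{\varsigma}(P^E)\leq Q\leq P_E$ through the positive symbol map, and (iii) replace the Fredholm step by noting that $P_E-Q$ is a projection in $\Ti_\GH^{(0)}\otimes\Mn_N(\C)$ with vanishing symbol, hence lies in $\Gamma_0\otimes\Mn_N(\C)$ by exactness of \eqref{ToepSES}, and a compact projection has finite rank. Your route is more elementary (no Fredholm theory) and delivers $\varsigma(Q)=P^E$ independently of the finite-rank comparison; the paper's route makes explicit the eventual invertibility of $\breve{\varsigma}^{(m)}(P^E)$ on $\GH^E_m$, a form of the conclusion reused later (e.g. for the Fredholmness of $A_{T_E}$), though this also follows from your graded finite-rank statement. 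The one point worth spelling out in your write-up is that monotonicity of $\varsigma$ on matrices over $\Ni$ is exactly the (complete) positivity of the covariant symbol map, which the paper provides, e.g. through the formula $\varsigma^{(m)}(A)(x)=\Tr(\FS(\GH_m)(x)A)$ and the description of $\varsigma$ as the adjoint of the completely positive Toeplitz map.
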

\begin{proof}
The assumption $\varsigma(P_E)=P^E$ gives $P_E=\breve{\varsigma}(P^E)+C_E$ with a pure $\Psi$-superharmonic operator $C_E$. Moreover, $C_E$ is compact since $P_E$ is over $\Ti_\GH^{(0)}$.

We have $\breve{\varsigma}(P^E)=\lim_{q\to\infty}\Psi^q(P_E)\leq\Psi^p(P_E)\leq P_E$ for all $p\in\N$, so $\breve{\varsigma}(P^E)$ preserves the range of $P_E$, as does the pure superharmonic part $C_E$ of $P_E$. Since $\breve{\varsigma}(P^E)$ equals $P_E$ modulo compacts, this gives that $\breve{\varsigma}(P^E)$ restricts to a Fredholm operator on the range $\GH^E_\N$ of $P_E$. Thus $\breve{\varsigma}(P^E)$ is invertible modulo finite-rank operators as an operator on $\GH^E_\N$, so that $\breve{\varsigma}^{(m)}(P^E)$ is invertible as operator on $\GH^E_m$ for large enough $m$. Thus, up to finite-rank operators, $P_E$ equals the range projection of $\breve{\varsigma}^{(m)}(P^E)$. 
\end{proof}
The proof of Proposition \ref{uniqeliftprop} breaks down if $C_E$ is not compact. 
\begin{Question}
Can we drop the continuity assumption in Proposition \ref{uniqeliftprop}? That is, if $P^E$ is a projection over $L^\infty(\M)$ with a coinvariant lift $P_E$, is then $\Ran\breve{\varsigma}(P^E)$ also a coinvariant lift of $P_E$?
\end{Question}
\begin{Remark}[Continuous symbol]\label{uniqueRemark}
As for the uniqueness of $\Ran\breve{\varsigma}(P^E)$ up to finite-rank operators as coinvariant lift of $P^E$, we shall see that the continuity of $P^E$ is a necessary assumption. Here it is very important to distinguish between $P^E$ being continuous and $P^E$ having entries in the embedded subalgebra $C^0(\M)\subset L^\infty(\M)$, since an element of the latter is just the almost everywhere equivalence class of a continuous functions. The map $\varsigma:\Ni\to L^\infty(\M)$ can take values in $C^0(\M)$ even on elements that do not belong to $\Ti_\GH^{(0)}$. Indeed, its values on $\breve{\varsigma}(C^0(\M))+\Gamma_\omega$ are in $C^0(\M)$. Therefore we can have a lift
$$
P_E\in(\breve{\varsigma}(C^0(\M))+\Gamma_\omega)\otimes\Bi(\GE_0),
$$
and coinvariance of $P_E$ just says that $P_E=\breve{\varsigma}(P^E)+C_E$ where $C_E$ is merely pure $\Psi$-superharmonic. So the $\Psi$-superharmonic lift would not be unique.  
\end{Remark}

 \subsubsection{Nonexisting lifts}

If $P^E$ is a projection over $C^\infty(\M)$ defining a smooth vector bundle $\Ei$ which is not holomorphic, what is the geometric meaning of the graded quotient module $\Ran\breve{\varsigma}(P^E)$? What is the coherent $\Oi_\M$-module $\Ei_{\rm CD}$ and how is it related to the Serre sheaf of the graded $\Ai$-module underlying $\Ran\breve{\varsigma}(P^E)$? Note that $\Ei_{\rm CD}$ cannot be the pullback of $\Ei$ even as smooth vector bundle because then $\Ei_{\rm CD}$ would have to be locally free, contradicting the assumption that $\Ei$ does not admit a holomorphic structure. Indeed, if $\Ei$ is a coherent $\Ci^\infty_\M$-module of the form $\Ei=\Ei^o\otimes_{\Oi_\M}\Ci^\infty_\M$ with $\Ei^o$ a coherent $\Oi_\M$-module then $\Ei$ is locally free if and only if $\Ei^o$ is locally free. To see this, note that we can define a $\bar{\pd}$-operator on $\Ei$ by setting $\bar{\pd}^E:=\bone\otimes\bar{\pd}$ on $\Ei^o\otimes_{\Oi_\M}\Ci^\infty_\M$ with $\bar{\pd}$ the operator on $C^\infty(\M)$ defined by the complex-analytic structure on $\M$. Then $\bar{\pd}^E$ is integrable, i.e. a holomorphic structure, since $\bar{\pd}$ is. The kernel of $\bar{\pd}^E$, which is precisely $\Ei^o$, is locally free by the Koszul--Malgrange theorem \cite{KoMa1}. 

This observation and Theorem \ref{bigCDversuslocallyfree} give us: 
\begin{cor}
Let $P^E$ be a projection over $C^\infty(\M)$ and suppose that the vector bundle defined by $P^E$ does not admit a holomorphic structure. Then
$$
\varsigma(\Ran\breve{\varsigma}(P^E))\ne P^E.
$$
and the vector bundles defined by $\varsigma(\Ran\breve{\varsigma}(P^E))$ and $P^E$ are not topologically isomorphic.  
\end{cor}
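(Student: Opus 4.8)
The plan is to reach a contradiction with the hypothesis that the vector bundle $\Ei$ defined by $P^E$ admits no holomorphic structure. I will prove the second, stronger assertion --- that the bundle $\Fi$ defined by $\varsigma(\Ran\breve{\varsigma}(P^E))$ is not topologically isomorphic to $\Ei$ --- from which $\varsigma(\Ran\breve{\varsigma}(P^E))\ne P^E$ follows at once, since equal projections define isomorphic bundles.

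First I would set $P_E:=\Ran\breve{\varsigma}(P^E)$. Because $\breve{\varsigma}(P^E)$ has entries in $\Bi^\infty$, it is $\Psi$-harmonic and preserves the $\N_0$-grading on $\GH_\N\otimes\C^N$, so $P_E$ is a graded $\Psi$-superharmonic projection and $\GH^E_\N:=\Ran P_E$ is a graded quotient module. Moreover $P_E$ has entries in $\Ti_\GH^{(0)}$ (the first Proposition of \S\ref{liftsec}), so its symbol $Q:=\varsigma(P_E)$ lies in $C^0(\M)\otimes\Mn_N(\C)$ by the Toeplitz sequence \eqref{ToepSES}; let $\Fi$ be the continuous vector bundle it defines.

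Since $Q=\varsigma(P_E)$ is $C^0$, Theorem \ref{bigCDversuslocallyfree} applies to $\GH^E_\N$ and its projection $P_E$: the Cowen--Douglas sheaf $\Ei_{\rm CD}$ of $\GH^E_\N$ is locally free on $\B\setminus\{0\}$, $Q$ is real-analytic, and $\Pi^E$ is the pullback of $Q$ to $\B\setminus\{0\}$. The Corollary following Theorem \ref{bigCDversuslocallyfree} then gives the factorization $\Ei_{\rm CD}=\Oi_{\rm CD}\otimes\Fi_{\B\setminus\{0\}}$ with $\Fi_{\B\setminus\{0\}}$ the pullback of $Q$, and --- using that $\varsigma_\B(P_E)$, hence $\Pi^E$, is $\Un(1)$-equivariant, while $\Oi_{\rm CD}\cong\pi^*\Oi_\M|_{\B\setminus\{0\}}$ is holomorphically trivial and $\Un(1)$-equivariant --- it shows via Lemma \ref{vectextlemma} that the smooth vector bundle $\Fi$ over $\M$ carries a holomorphic structure. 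In particular $\Fi$ admits a holomorphic structure.

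Finally I would note that ``admits a holomorphic structure'' depends only on the isomorphism class of the underlying topological (equivalently smooth, over a smooth manifold) vector bundle: a topological isomorphism $\Ei\cong\Fi$ would exhibit $\Ei$ as the underlying bundle of the very holomorphic bundle that realizes $\Fi$. Since $\Ei$ admits no holomorphic structure by hypothesis, $\Ei$ is not topologically isomorphic to $\Fi$; hence also $Q\ne P^E$. The step carrying the real weight --- already executed in the Corollary following Theorem \ref{bigCDversuslocallyfree} --- is the descent: local freeness of $\Ei_{\rm CD}$ on $\B\setminus\{0\}$ together with $\Un(1)$-equivariance must produce a holomorphic structure on the bundle over $\M$ itself and not merely on its pullback to $\B\setminus\{0\}$, and this is exactly what Lemma \ref{vectextlemma} provides, the holomorphic triviality of $\Oi_{\rm CD}$ ensuring that the factorization $\Ei_{\rm CD}=\Oi_{\rm CD}\otimes\Fi_{\B\setminus\{0\}}$ contributes no obstruction.
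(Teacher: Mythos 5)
Your proposal is correct and follows essentially the same route as the paper: the continuity of the symbol of $\Ran\breve{\varsigma}(P^E)$ (via the range-projection proposition of \S\ref{liftsec}), Theorem \ref{bigCDversuslocallyfree} with its corollary, and the $\Un(1)$-equivariant descent of Lemma \ref{vectextlemma} produce a holomorphic structure on the bundle defined by $\varsigma(\Ran\breve{\varsigma}(P^E))$, so it cannot be topologically isomorphic to a bundle admitting no holomorphic structure. The only cosmetic difference is that you invoke the descent corollary directly instead of the paper's Koszul--Malgrange observation, which plays the same role.
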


 \subsubsection{$\dim\Ran\breve{\varsigma}^{(m)}(P^E)$ versus $\chi(\Ei(m))$}\label{dimransec}

The projection onto any quotient module $\GE_\N$ is of the form $P_E=\breve{\varsigma}(P^E)+C_E$ where $P^E$ is a projection over $L^\infty(\M)$ and $\breve{\varsigma}(P^E)\leq P_E$. If we assume that $C_E$ is compact then the restriction of $\breve{\varsigma}(P^E)$ to $\GE_\N$ is invertible modulo compact, which is the same as being invertible modulo finite-rank operators. So when $C_E$ is compact 
we have for $m\gg 0$ that
$$
\dim\Ran\breve{\varsigma}^{(m)}(P^E)=\Tr(P_{E,m})=\chi(\Ei(m)),
$$
where $\Ei$ is the Serre sheaf of the graded $\Ai$-module underlying $\GE_\N$. 
  So $P^E$ has a lift with ``correct dimensions''. But this $P^E$ was special since it was the symbol of a superharmonic projection. In general we can ask:  
 \begin{Question}
Let $\Ei$ be a smooth vector bundle over $\M$ and let $P^E$ be a smooth Hermitian metric on $\Ei$. Does it follow in this generality that $\dim\Ran\breve{\varsigma}^{(m)}(P^E)$ equals $\chi(\Ei(m))$ for $m\gg 0$?
\end{Question}

We have a result in this direction:
\begin{prop}\label{propofrealalg}
 Let $\Ei$ be a holomorphic vector bundle over $\M$ and suppose that $P^E$ is a real-analytic Hermitian metric on $\Ei$ with a Parseval $C^*$-frame given by a basis for $H^0(\M;\Ei)$. Then $\dim\Ran\breve{\varsigma}^{(m)}(P^E)=\chi(\Ei(m))$ for $m\gg 0$. 
\end{prop}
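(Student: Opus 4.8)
The plan is to use the explicit formula for the Toeplitz map, namely
$$
\breve{\varsigma}^{(m)}(P^E)=n_m(\omega\otimes\id)(\FS(\GH_m)(P^E\otimes p_m)),
$$
from \eqref{Toeplexplic}, together with the geometric characterization of $\FS(\GH_m)$ and the hypothesis that $P^E$ has a Parseval $C^*$-frame given by a basis of $H^0(\M;\Ei)$. First I would set up the identification coming from Proposition \ref{geomintprop} and its corollary: for $l\geq m\gg 0$ one has $\FS(\GE_l)=\FS(\GE_m)\otimes\FS(\GH_{l-m})$, where $\GE_m$ denotes $H^0(\M;\Ei(m))$ with the inner product induced from $\GH_m\otimes\GE_0$ and $\GE_0=H^0(\M;\Ei)$ carries the inner product for which the given basis is orthonormal. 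Because $P^E$ is real-analytic and admits a Parseval $C^*$-frame given by a basis for $H^0(\M;\Ei)$, the projection $\FS(\GE_0)$ coincides with $P^E$ (up to the natural identification), so $P^E$ itself is $\FS(\GE_0)$ and, by the corollary, $\FS(\GH_m)\otimes P^E=\FS(\GE_m)$ as a projection over $C^0(\M)$ of rank $\chi(\Ei(m))$ for $m\gg0$.

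Next I would compute $\breve{\varsigma}^{(m)}(P^E)$ under this identification. Substituting $\FS(\GH_m)(P^E\otimes p_m)=\FS(\GE_m)\otimes p_m$ (here I regard $\FS(\GE_m)$ as acting on the $\C^N$-factor and $p_m$ on $\GH_m$) into \eqref{Toeplexplic} gives that $\breve{\varsigma}^{(m)}(P^E)$ is, up to the scalar $n_m$, the operator $(\omega\otimes\id)$ applied to $\FS(\GE_m)$, which is the frame operator of the Parseval $C^*$-frame for $\Gamma^0(\M;\Ei(m),\FS(\GH_m)\otimes P^E)$ regarded as a frame for the Hilbert space $H^0(\M,\omega;\Ei(m),\FS(\GH_m)\otimes P^E)$ — exactly the interpretation of $\breve{\varsigma}^{(m)}(P^E)$ discussed in the introduction. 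Since $\FS(\GE_m)$ is a genuine projection of rank $\chi(\Ei(m))$ (for $m\gg 0$) acting on the $\C^N$-factor over $C^0(\M)$, its $\omega$-integral is a positive operator on $\GE_m\cong H^0(\M;\Ei(m))$ whose range is the image of the frame, i.e. all of $H^0(\M;\Ei(m))$; in particular $(\omega\otimes\id)(\FS(\GE_m))$ is invertible, or at worst invertible modulo a finite-rank operator. Hence $\breve{\varsigma}^{(m)}(P^E)=n_m(\omega\otimes\id)(\FS(\GE_m))$ has rank $\chi(\Ei(m))$ for all $m\gg 0$, which is precisely the claim $\dim\Ran\breve{\varsigma}^{(m)}(P^E)=\chi(\Ei(m))$.

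The main obstacle is controlling the ``for $m\gg 0$'' threshold and the possibility that $(\omega\otimes\id)(\FS(\GE_m))$ degenerates on a nontrivial subspace: one must rule out that the $\omega$-integral of the projection $\FS(\GE_m)$ has nontrivial kernel. The key point is that $\FS(\GE_m)$ evaluated at $x\in\M$ projects onto the evaluations $\{\psi(x):\psi\in H^0(\M;\Ei(m))\}$, and global generation of $\Ei(m)$ (which holds for $m\gg0$, indeed Castelnuovo--Mumford regularity is used in Proposition \ref{geomintprop}) forces $\bra\psi|(\omega\otimes\id)(\FS(\GE_m))\psi\ket=\omega(|\psi(\cdot)|^2_{\FS(\GH_m)\otimes P^E})>0$ for every nonzero $\psi$, since a nonzero holomorphic section cannot vanish on a set of full $\omega$-measure. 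Thus $(\omega\otimes\id)(\FS(\GE_m))$ is strictly positive on $H^0(\M;\Ei(m))$ and the rank computation is exact, not merely modulo finite rank. The remaining bookkeeping — that $\FS(\GE_m)$ is defined and of the stated rank for $m$ past the regularity threshold of $\Ei$, and that the identification with $\breve{\varsigma}^{(m)}(P^E)$ is compatible with the $C^0(\M)$-module structure — is routine given Proposition \ref{geomintprop}, Lemma \ref{Serrelemma}, and the explicit Toeplitz formula \eqref{Toeplexplic}.
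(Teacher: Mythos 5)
Your argument is correct and follows essentially the same route as the paper's own (two-line) proof: identify $\FS(\GH_m)\otimes P^E$ with $\FS(\GE_m)$, i.e.\ observe it has a Parseval $C^*$-frame given by a basis of $H^0(\M;\Ei(m))$ for $m\gg 0$, and then read off the rank of $\breve{\varsigma}^{(m)}(P^E)=n_m(\omega\otimes\id)(\FS(\GH_m)\otimes P^E)$ from formula \eqref{Toeplexplic}. The only difference is that you spell out the details the paper leaves implicit (global generation for $m\gg 0$ and strict positivity of the Gram operator, since a nonzero holomorphic section cannot vanish $\omega$-a.e.), which is a welcome but not substantively different elaboration.
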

\begin{proof}
For each $m\geq 0$ we have that $\FS(\GH_m)\otimes P^E$ has a Parseval $C^*$-frame given by a basis for $H^0(\M;\Ei(m))$.
Since $\breve{\varsigma}^{(m)}(P^E)=n_m(\omega\otimes\id)(\FS(\GH_m)\otimes P^E)$ (see \eqref{Toeplexplic}), this gives the result. 
\end{proof}

We saw in the last section (Theorem \ref{nullothm}) that we can associate a (graded) quotient module $\GE_\N$ to every (globally generated) holomorphic vector bundle $\Ei$ over $\M$ with $\dim\GE_m=\chi(\Ei(m))$ for $m\gg 0$. However, the symbol $\varsigma(P_E)$ of this quotient module $\GE_\N$ need not be continuous. 

Also, in the setting of Proposition \ref{propofrealalg} in general the projection $P_E:=\Ran\breve{\varsigma}(P^E)$ has symbol 
$$
\varsigma(P_E)\ne P^E,
$$
and therefore the metric $P^E$ is of ``less value'' for quantization purposes.

\subsection{Into Hardy space}

Let $P_E$ be a $\Psi$-superharmonic projection acting on $\GH_\N\otimes\C^N$. As before we write $P_E$ uniquely as
$$
P_E=\breve{\varsigma}(P^E)+C_E
$$
with $P^E$ a projection over $L^\infty(\M)$ and $\mathrm{SOT-}\lim_{m\to\infty}\Psi^m(C_E)=0$. If we let $T=(T_1,\dots,T_n)$ be the Kraus operators of $\Psi$, i.e. we have $\Psi(X)=\sum^n_{\alpha=1}T_\alpha^*XT_\alpha$ for all $X\in\Bi(\GH_\N\otimes\C^N)$, then $T$ has the same invariant and coinvariant subspaces as the shift $S$. Therefore, if $\GH^E_\N$ is the range of $P_E$ then the restriction 
$$
T_{E,\alpha}^*:=T_\alpha^*|_{\GE_\N}=(P_ET_\alpha|_{\GH^E_\N})^*,\qquad\forall\alpha=1,\dots,n
$$
preserves $\GE_\N$. 
From $\Psi(P_E)\leq P_E$ we get 
$$
\Psi_E(\bone):=\sum^n_{\alpha=1}T_{E,\alpha}^*T_{E,\alpha}\leq P_E
$$
where $P_E$ is now playing the role of identity operator on $\GH^E_\N$. That is, $T_E$ is a spherical contraction on $\GH^E_\N$. The limit
$$
A_{T_E}:=\mathrm{SOT-}\lim_{m\to\infty}\Psi_E^m(\bone)
$$
therefore exists, and indeed we see that
$$
A_{T_E}=\breve{\varsigma}(P^E)|_{\GE_\N}.
$$
Note that $A_{T_E}$ and $\breve{\varsigma}(P^E)$ are practically the same since $\breve{\varsigma}(P^E)$ acts by zero outside $\GH^E_\N$. If we assume that $C_E$ is compact then $\GH^E_\N$ coincides with $\Ran\breve{\varsigma}(P^E)$ up to finite-dimensional subspaces, and $A_{T_E}$ is a Fredholm operator in the sense that it is a positive operator with finite-dimensional kernel and cokernel. 

The ``asymptotic limit'' $A_{T_E}$ of a contraction $T_E$ has been widely studied in the case ($n=1$) of a single operator \cite{Gehe1, Gehe2, Kerc10, Kubr1} but occasionally also for tuples \cite{Pop7}. Using $A_{T_E}$ one changes the inner product on the Hilbert space to make $T_E$ an isometry, provided that $A_{T_E}$ is invertible. 
Here we shall use $A_{T_E}$ for this purpose and moreover find the geometric meaning of the new inner product.  
It turns out that, if $P^E$ defines a vector bundle, the rather nonstandard quantization $\GH^E_\bullet$ transforms via $A_{T_E}$ to the more familiar quantization taking place on the Hardy space of $P^E$.

\subsubsection{Subnormality with algebraic relations}


Recall that $\Sb\subset\Sb^{2n-1}$ is the principal $\Un(1)$-bundle over $\M\subset\C\Pb^{n-1}$ associated with the hyperplane bundle restricted to $\M$. Let $I_\M$ be the ideal in $\C[z_1,\dots,z_n]$ which defines $\GH_\bullet$. In other words, $I_\M$ is the ideal such that the homogeneous coordinate ring of $\M$ is given by $\Ai=\C[z_1,\dots,z_n]/I_\M$. 
\begin{dfn}
An $n$-tuple $T=(T_1,\dots,T_n)$ of commuting operators on a Hilbert space is $\GH_\bullet$-\textbf{subnormal} if $T$ is jointly subnormal with minimal normal extension $M=(M_1,\dots,M_n)$ satisfying the relations of the ideal $I_\M$ 
in the sense that
$$
f(M)=0,\qquad \forall f\in I_\M.
$$
In this case, $T$ is called an $\Sb$-\textbf{isometry} if 
$$
\sigma(M)\subset\Sb.
$$
\end{dfn}
By Athavale's theorem \cite[Prop. 2]{Atha3}, an $\Sb^{2n-1}$-isometry is the same as a commutative \textbf{spherical isometry}, i.e. an operator tuple $T$ of commuting operators with $\sum^n_{\alpha=1}T_\alpha^*T_\alpha=\bone$. So every $\Sb$-isometry is a spherical isometry. 

The following was inspired by \cite[Thm. 2.1]{Feld1} and \cite[\S2]{Pop7}:
\begin{Lemma}\label{GHsubnormallemma}
Let $T=(T_1,\dots,T_n)$ be a tuple of commuting operators on a Hilbert space $\Hi$. Then $T$ is an $\Sb$-isometry if and only if there exists a unital completely positive map 
$$
\varrho:C^0(\Sb)\to\Bi(\Hi)
$$
with $\varrho(Z_\alpha)=T_\alpha$ and 
\begin{equation}\label{subnormvarrho}
\varrho(Z_\alpha^*Z_\beta)=T_\alpha^*T_\beta,\qquad\forall \alpha,\beta\in\{1,\dots,n\}.
\end{equation}
\end{Lemma}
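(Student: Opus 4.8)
The plan is to prove both implications by a Stinespring dilation argument in the spirit of \cite{Feld1} and \cite{Pop7}. For the ``only if'' direction I would start from the minimal normal extension $M=(M_1,\dots,M_n)$ of the $\Sb$-isometry $T$, acting on a Hilbert space $\Ki\supseteq\Hi$. Since $M$ is a commuting normal tuple with $\sigma(M)\subseteq\Sb$, the continuous functional calculus for $M$ composed with the restriction homomorphism $C^0(\Sb)\to C(\sigma(M))$ produces a unital $*$-representation $\pi\colon C^0(\Sb)\to\Bi(\Ki)$ with $\pi(Z_\alpha)=M_\alpha$. Setting $\varrho:=P_\Hi\,\pi(\cdot)\big|_\Hi$ gives a unital completely positive map; because $\Hi$ is invariant under each $M_\alpha$, one has $\varrho(Z_\alpha)=T_\alpha$, and invariance also forces $(\bone-P_\Hi)M_\beta P_\Hi=0$, whence $P_\Hi M_\alpha^*M_\beta P_\Hi=P_\Hi M_\alpha^*P_\Hi M_\beta P_\Hi=T_\alpha^*T_\beta$, so \eqref{subnormvarrho} holds.

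For the converse I would apply Stinespring's theorem to $\varrho$: write $\varrho=V^*\pi(\cdot)V$ with $\pi\colon C^0(\Sb)\to\Bi(\Ki)$ a $*$-representation and $V\colon\Hi\to\Ki$ an isometry. The decisive computation is of multiplicative-domain type: the hypothesis $\varrho(Z_\alpha^*Z_\alpha)=\varrho(Z_\alpha)^*\varrho(Z_\alpha)$ rewrites as $V^*\pi(Z_\alpha)^*(\bone-VV^*)\pi(Z_\alpha)V=0$, hence $(\bone-VV^*)\pi(Z_\alpha)V=0$ for each $\alpha$, so that $V\Hi$ is a common invariant subspace for $\pi(Z_1),\dots,\pi(Z_n)$ with $\pi(Z_\alpha)V=VT_\alpha$. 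Since $C^0(\Sb)$ is commutative, $N:=(\pi(Z_1),\dots,\pi(Z_n))$ is a commuting normal tuple with $\sigma(N)\subseteq\Sb$ and with $f(N)=\pi(f(Z))=0$ for every $f\in I_\M$; restricting $N$ to the invariant subspace $V\Hi\cong\Hi$ exhibits $T$ as a restriction of $N$, so $T$ is jointly subnormal. Its minimal normal extension $M$ occurs as the restriction of $N$ to the smallest reducing subspace of $N$ containing $\Hi$, whence $\sigma(M)\subseteq\sigma(N)\subseteq\Sb$ and $f(M)=0$ for $f\in I_\M$; therefore $T$ is an $\Sb$-isometry. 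Along the way the spherical-isometry identity $\sum_\alpha T_\alpha^*T_\alpha=\varrho\bigl(\sum_\alpha Z_\alpha^*Z_\alpha\bigr)=\varrho(\bone)=\bone$ is recovered from the relation $\sum_\alpha Z_\alpha^*Z_\alpha=\bone$ in $C^0(\Sb)$ established in Section \ref{multsection}.

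Most of this is routine completely positive / Stinespring bookkeeping. The two points that need care are: (i) setting up the $C^0(\Sb)$-valued functional calculus correctly, using that $\sigma(M)\subseteq\Sb$ lets the continuous functional calculus of $M$ factor through $C^0(\Sb)$; and (ii) identifying the minimal normal extension as a reducing subtuple of the Stinespring normal tuple $N$, so that its joint spectrum and the ideal relations are inherited. I expect (ii) to be the main obstacle, since it relies on the standard but slightly delicate structure theory of minimal normal extensions of commuting subnormal tuples, together with the fact that the joint spectrum of a normal tuple can only shrink under restriction to a reducing subspace.
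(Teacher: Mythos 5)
Your proof is correct and follows essentially the same route as the paper: Stinespring dilation plus the multiplicative-domain computation $V^*\pi(Z_\alpha)^*(\bone-VV^*)\pi(Z_\alpha)V=0$ for the converse, and compression of a $*$-representation built from the minimal normal extension (via functional calculus with $\sigma(M)\subseteq\Sb$, rather than the paper's appeal to the presentation of $C^0(\Sb)$ by generators and relations) for the forward direction. Your extra step of passing to the smallest reducing subspace to identify the minimal normal extension, noting that the joint spectrum and the ideal relations are inherited, only makes explicit a point the paper leaves implicit.
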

\begin{proof}
Suppose that such a map $\varrho$ exists. As for any unital completely positive map, we have a Stinespring representation $\pi_\varrho:C^0(\Sb)\to\Bi(\Hi_\varrho)$ of $\varrho$, i.e. a $*$-homomorphism such that
$$
\varrho(f)=V_\varrho^*\pi_\varrho(f)V_\varrho,\qquad \forall f\in C^0(\Sb)
$$
with an isometry $V_\varrho:\Hi\to\Hi_\varrho$ into some Hilbert space $\Hi_\varrho$. 
Since $\pi_\varrho$ is a $*$-algebra homomorphism, the tuple $M=(M_1,\dots,M_n)$ defined by
$$
M_\alpha:=\pi_\varrho(Z_\alpha),\qquad\forall\alpha\in\{1,\dots,n\}
$$
consists of normal operators on $\Hi_\varrho$ satisfying the relations of the ideal in $\C[z_1,\dots,z_n]$ which defines $\GH_\bullet$. Since $\pi_\varrho$ is a $*$-algebra homomorphism we also have $\sigma(M)\subset\Sb$. If we can show that the subspace $V_\varrho(\Hi)\subset\Hi_\varrho$ is invariant under $M_1,\dots,M_n$ then the tuple $T$ will be an $\Sb$-isometry. For that we use the assumption \eqref{subnormvarrho}. For each $\alpha\in\{1,\dots,n\}$ it says 
$$
T_\alpha^*T_\alpha=V_\varrho^*\pi_\varrho(Z_\alpha^*Z_\alpha)V_\varrho=V_\varrho^*M_\alpha^*M_\alpha V_\varrho,
$$
and if we denote by $P$ the orthogonal projection of $\Hi_\varrho$ onto $V_\varrho(\Hi)$ then writing
\begin{align*}
V_\varrho^*M_\alpha^*M_\alpha V_\varrho&=PM_\alpha^*M_\alpha|_{V_\varrho(\Hi)}
\\&=T_\alpha^*T_\alpha+PM_\alpha^*(\bone-P)M_\alpha|_{V_\varrho(\Hi)}
\end{align*}
we conclude that
$$
T_\alpha^*T_\alpha=T_\alpha^*T_\alpha+PM_\alpha^*(\bone-P)M_\alpha|_{V_\varrho(\Hi)},
$$
so that $(\bone-P)M_\alpha|_{V_\varrho(\Hi)}=0$, i.e. $V_\varrho(\Hi)$ is invariant under $M_\alpha$, as desired. 

To prove the converse, note that $C^0(\Sb)$ is generated by a commuting tuple $Z=(Z_1,\dots,Z_n)$ of normal operators satisfying $\sum^n_{\alpha=1}Z_\alpha^*Z_\alpha=\bone$ (sphere condition) and the relations of $\GH_\bullet$ but no other relation. Therefore, if we suppose that $T$ is an $\Sb$-isometry on $\Hi$, with minimal normal extension $M$ thus satisfying the relations of $\GH_\bullet$ and having spectrum in $\Sb$, then there is a $*$-representation $\pi$ of $C^0(\Sb)$ with $\pi(Z_\alpha)=M_\alpha$. We let $V$ be the isometric embedding of $\Hi$ into the Hilbert space on which $M$ acts, and we define $\varrho(f):=V^*_M\pi(f)V_M$ for all $f\in C^0(\Sb)$. Then, since $\Hi$ is invariant under each $M_\alpha$, we have the property \eqref{subnormvarrho}. 
\end{proof}

\subsubsection{Similarity to a spherical isometry}
In \cite{An6} we showed that $C^0(\Sb)$ can be identified with the ``Cuntz--Pimsner algebra'' of the subproduct system $\GH_\bullet$, namely the $C^*$-algebra $\Oi_\GH$ defined as the quotient of the Toeplitz algebra $\Ti_\GH=C^*(S_1,\dots,S_n)$ by the ideal of compact operators. This fact can be useful as one can often adopt known constructions involving the Cuntz algebra $\Oi_n$ to a more general Cuntz--Pimsner algebra. Here is an example:
\begin{cor}
Let $\GH^E_\N$ be a graded quotient module such that the positive operator $A_{T_E}$ is invertible. 
Define a commutative operator $n$-tuple $V_E$ by 
$$
V_{E,\alpha}:=A^{1/2}_{T_E}T_{E,\alpha}A^{-1/2}_{T_E},\qquad\forall\alpha\in\{1,\dots,n\}.
$$
Then $V_E$ is an $\Sb$-isometry.
\end{cor}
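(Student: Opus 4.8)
The plan is to verify the hypotheses of Lemma \ref{GHsubnormallemma} for the tuple $V_E$, i.e.\ to produce a unital completely positive map $\varrho:C^0(\Sb)\to\Bi(\GH^E_\N)$ with $\varrho(Z_\alpha)=V_{E,\alpha}$ and $\varrho(Z_\alpha^*Z_\beta)=V_{E,\alpha}^*V_{E,\beta}$. First I would recall why $A_{T_E}=\breve{\varsigma}(P^E)|_{\GE_\N}$ is the asymptotic limit of the spherical contraction $T_E$: from $\Psi(P_E)\le P_E$ we have $\Psi_E(\bone)\le\bone$ on $\GH^E_\N$, so the decreasing sequence $\Psi_E^m(\bone)$ converges SOT to a positive operator $A_{T_E}$ satisfying $\Psi_E(A_{T_E})=A_{T_E}$, and one identifies this fixed point with $\breve{\varsigma}(P^E)$ restricted to $\GE_\N$ using the Riesz decomposition $P_E=\breve{\varsigma}(P^E)+C_E$ and $\mathrm{SOT\text{-}}\lim\Psi^m(C_E)=0$. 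The key algebraic identity is then that $V_E^*V_E=\sum_\alpha V_{E,\alpha}^*V_{E,\alpha}=\bone$: indeed
$$
\sum_{\alpha=1}^n V_{E,\alpha}^*V_{E,\alpha}=A_{T_E}^{-1/2}\Big(\sum_{\alpha=1}^n T_{E,\alpha}^*A_{T_E}T_{E,\alpha}\Big)A_{T_E}^{-1/2}=A_{T_E}^{-1/2}\Psi_E(A_{T_E})A_{T_E}^{-1/2}=A_{T_E}^{-1/2}A_{T_E}A_{T_E}^{-1/2}=\bone,
$$
using that $\Psi_E(X)=\sum_\alpha T_{E,\alpha}^*XT_{E,\alpha}$ and that $A_{T_E}$ is $\Psi_E$-harmonic. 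So $V_E$ is a spherical isometry.

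Next I would upgrade from ``spherical isometry'' to ``$\Sb$-isometry''. By Athavale's theorem a spherical isometry is subnormal with normal extension supported on $\Sb^{2n-1}$, but we additionally need the minimal normal extension to satisfy the relations of the ideal $I_\M$ defining $\GH_\bullet$. The cleanest route is to build the map $\varrho$ directly: since $T_E$ is the compression of the spherical isometry $T$ on $\GH_\N\otimes\C^N$ to the coinvariant subspace $\GH^E_\N$, and since $T$ (being the multiplication tuple on the Hardy-type space $H^0(\Sb,\omega)\otimes\C^N$ by Lemma \ref{squareofSlemma}, or equivalently $\tilde T$) is literally a restriction of the $*$-representation of $C^0(\Sb)$ on $L^2(\Sb,\omega)\otimes\C^N$, there is a unital completely positive map $C^0(\Sb)\to\Bi(\GH_\N\otimes\C^N)$, $f\mapsto$ (compression of multiplication by $f$), which sends $Z_\alpha\mapsto T_\alpha$ and $Z_\alpha^*Z_\beta\mapsto T_\alpha^*T_\beta$ because $Z_\alpha$ commutes with its adjoint on $C^0(\Sb)$ so $H^0(\Sb,\omega)$ is a reducing$^*$... — more precisely $H^0$ is coinvariant, so $T^*_\alpha T_\beta = P_{H^0} Z_\alpha^* Z_\beta|_{H^0}$. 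Compressing further to $\GH^E_\N$ and conjugating by $A_{T_E}^{1/2}$ and $A_{T_E}^{-1/2}$ is no longer completely positive in an obvious way, so instead I would appeal to the converse direction of Lemma \ref{GHsubnormallemma}: it suffices to exhibit $V_E$ as a spherical isometry \emph{together with} a minimal normal extension whose joint spectrum lies in $\Sb$. Here the similarity $V_E=A_{T_E}^{1/2}T_EA_{T_E}^{-1/2}$ is the decisive tool — a similarity transform of a subnormal tuple whose normal extension is supported on $\Sb$ remains subnormal, and one checks the normal extension can again be taken supported on $\Sb$ because conjugating the minimal normal extension $M$ of $T_E$ by (an ampliation of) $A_{T_E}^{\pm1/2}$, then taking the minimal normal extension of the result, does not enlarge the joint spectrum beyond $\Sb$; equivalently the functional calculus relations $f(M)=0$ for $f\in I_\M$ are preserved under the similarity since they already hold for the $\Sb$-isometry $T$ and are inherited by compressions to coinvariant subspaces and by the similarity (as they are polynomial identities in the generators, and $V_{E}$ generates the same $*$-algebra modulo the normal extension).

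The step I expect to be the main obstacle is precisely the control of the joint spectrum of the minimal normal extension of $V_E$: spherical isometry gives automatically a normal extension on $\Sb^{2n-1}$ via Athavale, but pinning it down to $\Sb=\V\cap\Sb^{2n-1}$ requires knowing that the polynomial relations of $I_\M$ survive. I would handle this by observing that $V_{E,\alpha}=A_{T_E}^{1/2}T_{E,\alpha}A_{T_E}^{-1/2}$ and $A_{T_E}=\breve{\varsigma}(P^E)|_{\GE_\N}$ has entries in $\Ni$ (by Lemma \ref{PEinNilemma}, since $P_E$ is coinvariant), that the $T_{E,\alpha}$ satisfy the $I_\M$-relations \emph{modulo} the semicommutator ideal, and that under the symbol map $\varsigma$ the relations become exact in $L^\infty(\Sb)$; then the Stinespring dilation of the associated u.c.p.\ map $\varrho(f):=V\!\!$-conjugated functional calculus realizes a $*$-representation of $C^0(\Sb)/I_\M\cdot C^0(\Sb)=C^0(\Sb)$ as in the proof of Lemma \ref{GHsubnormallemma}, forcing $\sigma(M)\subset\Sb$. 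Once the u.c.p.\ map $\varrho$ with the two required properties is in hand, Lemma \ref{GHsubnormallemma} immediately yields that $V_E$ is an $\Sb$-isometry, completing the proof.
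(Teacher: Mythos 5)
Your first step (the computation $\sum_\alpha V_{E,\alpha}^*V_{E,\beta}$-type identity via $\Psi_E(A_{T_E})=A_{T_E}$, giving that $V_E$ is a spherical isometry) is exactly the paper's argument. The second step, however, contains a genuine gap. You ground the passage from ``spherical isometry'' to ``$\Sb$-isometry'' in the claims that $T_E$ is subnormal with normal extension supported on $\Sb$, that a similarity transform of such a tuple remains subnormal, and that one may conjugate the minimal normal extension of $T_E$ by an ampliation of $A_{T_E}^{\pm 1/2}$ without leaving $\Sb$. None of these holds: $T_E$ is the compression of $T$ to a coinvariant subspace and is \emph{not} subnormal in general (the paper stresses precisely this in the introduction -- $T_E^*T_E\neq\bone$ typically), similarity does not preserve subnormality, and conjugating a normal tuple by a positive invertible operator destroys normality; moreover $A_{T_E}$ lives on $\GE_\N$, so ``an ampliation of $A_{T_E}^{\pm1/2}$'' on the dilation space of $T_E$ is not even defined. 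Your closing paragraph retreats to the assertion that a ``$V$-conjugated functional calculus'' yields the required u.c.p.\ map $\varrho$, but that map is never constructed, and you yourself note earlier that conjugation by $A_{T_E}^{\pm1/2}$ is not obviously completely positive -- which is exactly the point that needs an argument. (Also, the $T_{E,\alpha}$ satisfy the $I_\M$-relations \emph{exactly} on $\GE_\N$, not merely modulo the semicommutator ideal, since coinvariance gives $f(T_E)=P_Ef(T)|_{\GE_\N}=0$ for $f\in I_\M$.)

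What is missing is the actual production of the unital completely positive map demanded by Lemma \ref{GHsubnormallemma}: the paper obtains it by adapting Popescu's construction (\cite[Thm. 2.3]{Pop7}) to get $\varrho_E:\Oi_\GH=C^0(\Sb)\to\Bi(\GH^E_\N)$ with $\varrho_E(Z_\alpha^*Z_\beta)=V_{E,\alpha}^*V_{E,\beta}$, and then invokes the lemma. Alternatively, the idea you gesture at can be made into a correct proof without Lemma \ref{GHsubnormallemma}: since $f(V_E)=A_{T_E}^{1/2}f(T_E)A_{T_E}^{-1/2}=0$ for all $f\in I_\M$, and since the spherical isometry $V_E$ is subnormal by Athavale with minimal normal extension $M$ satisfying $\sigma(M)\subset\Sb^{2n-1}$, a Fuglede--Putnam argument shows $f(M)$ commutes with each $M_\beta^*$ and annihilates $\GE_\N$, hence annihilates the span of the vectors $M^{*\mathbf{k}}\xi$, $\xi\in\GE_\N$, i.e.\ $f(M)=0$ on the minimal extension space; this forces $\sigma(M)\subset\V\cap\Sb^{2n-1}=\Sb$. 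But as written, your argument relies on the false subnormality/similarity claims rather than on either of these routes, so it does not establish the corollary.
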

\begin{proof}
The tuple $V_E$ satisfies the same relations as does $T_E$, since they are similar. We have
$$
V_{E,\alpha}^* V_{E,\beta}=A^{-1/2}_{T_E}T_{E,\alpha}^*A_{T_E}T_{E,\beta}A^{-1/2}_{T_E}.
$$
Since $\Psi_E(A_{T_E}):=\sum_{\alpha=1}T_{E,\alpha}^*A_{T_E}T_{E,\alpha}=A_{T_E}$, it is clear that $V_E$ is a spherical isometry. We need to show that the minimal normal extension of $V_E$ also satisfies the relations of $\GH_\bullet$. But we can construct as in \cite[Thm. 2.3]{Pop7} a unital completely positive linear map $\varrho_E:\Oi_\GH\to\Bi(\GH^E_\N)$ with
$$
\varrho_E(Z_\alpha^*Z_\beta)=V_{E,\alpha}^*V_{E,\beta},\qquad\forall \alpha,\beta\in\{1,\dots,n\}.
$$
By Lemma \ref{GHsubnormallemma} this is precisely the statement that $V_E$ is an $\Sb$-isometry.
\end{proof}
Of course, after observing that $V_E$ is a spherical isometry an application of Athavale's theorem immediately gives that $V_E$ is subnormal. 
What is important is however that we obtain a unital completely positive linear map $\varrho_E:\Oi_\GH\to\Bi(\GH^E_\N)$ whose Stinespring dilation gives a $*$-representation $\pi_E$ of $C^0(\Sb)=\Oi_\GH$ on a Hilbert space containing $\GH^E_\N$ as a subspace invariant under $\pi_E(Z_1),\dots,\pi_E(Z_n)$. We shall see next that $V_E$ is unitarily equivalent to the multiplication tuple on the Hardy space associated to $\varsigma(P_E)$ and $\omega$, and that the normal dilation is the multiplication tuple on ambient $L^2$-space.

\begin{Remark}
The commutative spherical isometry $V_E$ given as above by $V_{E,\alpha}:=A^{1/2}_{T_E}T_{E,\alpha}A^{-1/2}_{T_E}$ is not the spherical isometry $W_E$ appearing in the polar decomposition $T_E=W_E|T_E|$ of the column operator $T_E:\GH^E_\N\to(\GH^E_\N)^{\oplus n}$. It is the distinction $|T_E|^2=\Psi_E(\bone)$ versus $A_{T_E}=\lim_m\Psi_E^m(\bone)$. We shall need to use $V_E$ because $A_{T_E}$ is a Toeplitz operator (i.e. $\Psi_E(A_{T_E})=A_{T_E}$) while $|T_E|^2$ is not. Moreover, $A_{T_E}$ is invertible (modulo finite-rank operators) under the natural assumption that $\varsigma(P_E)$ is $C^0$, while $|T_E|^2$ might not be so.
\end{Remark}
If $\GE$ is a Hilbert space and $A$ is a positive invertible operator on $\GE$, we denote by $A^{-1/2}\GE$ the vector space $\GE$ endowed with the inner product 
$$
\bra\phi|\psi\ket_{A^{-1/2}\GE}:=\bra A^{-1/2}\phi|A^{-1/2}\psi\ket_{\GE},\qquad\forall\phi,\psi\in \GE.
$$
We can then view $A^{1/2}$ as a unitary operator from $\GE$ to $A^{-1/2}\GE$,


When $\GH^E_\N$ is a graded quotient module such that $A_{T_E}$ is invertible, we write
$$
\GK^E_\N:=A_{T_E}^{-1/2}\GH^E_\N
$$
and dentote by $A_E^{1/2}:\GH^E_\N\to\GK^E_\N$ the associated unitary operator. The tuple $V_E$ will sometimes be identified with $A_E^{1/2}T_EA_E^{-1/2}$ acting on $\GK^E_\N$.

\subsubsection{Identification of $\GK^E_\N$}

In the following we denote by $\FS(\GH_1)^{\otimes (-m)}$ the transpose of $\FS(\GH_1)^{\otimes m}$ for each $m\in\N$. Thus 
$\FS(\GH_1)^{\otimes (-m)}$ is a projection over $C^\infty(\M)$ which defines the line bundle $\Oi_\M(-m)$. If $P^E$ is a projection over $C^\infty(\M)$ defining a smooth vector bundle $\Ei$ then the vector space $\Gamma^\infty(\Sb;\Ei_\Sb,P^E)$ of global sections of the pullback of $\Ei$ to $\Sb$ splits as $C^\infty(\Sb)$-module into
$$
\Gamma^\infty(\Sb;\Ei_\Sb,P^E)=\bigoplus_{k\in\Z}\Gamma^\infty(\M;\Ei(m),\FS(\GH_1)^{\otimes k}\otimes P^E).
$$

\begin{thm}\label{backtoHardythmgen}
Let $P^E$ be a projection over $C^0(\M)$ defining a smooth vector bundle $\Ei$ and assume that $\varsigma(\Ran\breve{\varsigma}(P^E))=P^E$. 
Then the limit operator $A_{T_E}$ of the spherical contraction $T_E$ on the quotient module $\GH^E_\N:=\Ran\breve{\varsigma}(P^E)$ can be used to map $\GH^E_\N$ into a subspace of the $L^2$-space of $P^E$ and $\omega$,
$$
\GK^E_\N:=A_{T_E}^{-1/2}\GH^E_\N\subset L^2(\Sb,\omega;P^E)=\bigoplus_{k\in\Z}L^2(\omega;\FS(\GH_1)^{\otimes k}\otimes P^E),
$$
and there is a holomorphic structure on $\Ei$ such that $\GK^E_\N$ identifies with the Hardy space of $P^E$ and $\omega$ (up to a finite-dimensional subspace),
$$
\GK^E_\N=H^0(\Sb,\omega;\FS(\GH_\N)\otimes P^E)=\bigoplus_{m\in\N_0}H^0(\M,\omega;\FS(\GH_m)\otimes P^E).
$$
So $\GK^E_\N$ is invariant under action of the generators $Z_1,\dots,Z_n\in C^0(\Sb)$ acting in the multiplication representation on $L^2(\Sb,\omega;\Ei,P^E)$ and the restriction of $Z=(Z_1,\dots,Z_n)$ to $\GK^E_\N$ identifies with the $\Sb$-isometry $V_E:=A_E^{1/2}T_EA_E^{-1/2}$. 
\end{thm}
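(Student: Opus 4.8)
The plan is to prove the statement degree by degree, the key being a frame‑theoretic reinterpretation of the operator $A_{T_E}=\breve{\varsigma}(P^E)|_{\GH^E_\N}$.

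\emph{Step 1 — the holomorphic structure, and graded reduction.} Since by hypothesis $\varsigma(P_E)=P^E$ is continuous (with $P_E:=\Ran\breve{\varsigma}(P^E)$), Theorem~\ref{bigCDversuslocallyfree} applies: $\Ei_{\rm CD}$ is locally free over $\B\setminus\{0\}$, $\Ei_{\rm CD}=\Oi_{\rm CD}\otimes\Ei_{\B\setminus\{0\}}$ with $\Ei_{\B\setminus\{0\}}$ the pullback of $P^E$, and $\Pi^E$ is $\Un(1)$-equivariant. By Lemma~\ref{vectextlemma} the holomorphic structure carried by $\Ei_{\B\setminus\{0\}}$ descends to a unique holomorphic structure on the smooth bundle $\Ei$ defined by $P^E$; this is the holomorphic structure asserted in the theorem. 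With it $\chi(\Ei(m))=\dim H^0(\M;\Ei(m))$ for $m\gg0$, and, since $P_E-\breve{\varsigma}(P^E)$ is compact so that $\dim\GH^E_m=\chi(\Ei(m))$ for $m\gg0$ (\S\ref{dimransec}) and the graded $\Ai$-module underlying $\GH^E_\N$ has Serre sheaf $\Ei$ by the correspondence of \S\ref{alggradmodsec}--\S\ref{versussec} and Theorem~\ref{nullothm}, one obtains for $m\gg0$ a linear isomorphism $\iota_m:\GH^E_m\to H^0(\M;\Ei(m))$ carrying the compressed shift $S_E$ to multiplication by $Z_1,\dots,Z_n$. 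Everything below is read off grade by grade, discarding the finitely many low degrees; this accounts for ``up to a finite-dimensional subspace''.

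\emph{Step 2 — realizing $\GK^E_\N$ inside $L^2(\Sb,\omega;P^E)$.} As in the proof of Proposition~\ref{uniqeliftprop}, the hypothesis forces $\breve{\varsigma}(P^E)$ to be Fredholm on $\GH^E_\N$; being graded, $A_{T_E}|_{\GH^E_m}$ is invertible for $m\gg0$, so $\GK^E_m=A_{T_E}^{-1/2}\GH^E_m$ is defined. Write $\breve{\varsigma}(P^E)=(U^{-1}\otimes\bone_N)\,T_{P^E}\,(U\otimes\bone_N)$, where $U$ is the unitary of Lemma~\ref{LinftyToeplwhole} (see also \S\ref{squareofSlemma}) intertwining the weighted shift with multiplication by $Z$ on $H^0(\Sb,\omega)$ and $T_{P^E}$ is the Toeplitz operator with symbol $P^E$ on $H^0(\Sb,\omega)\otimes\C^N$. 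A direct computation then shows that $\phi\mapsto M_{P^E}(U\otimes\bone_N)A_{T_E}^{-1}\phi$ is an isometry of $\GK^E_\N$ into $L^2(\Sb,\omega)\otimes\C^N$ with image in the range of multiplication by $P^E$, i.e. in $L^2(\Sb,\omega;P^E)=\bigoplus_{k\in\Z}L^2(\omega;\FS(\GH_1)^{\otimes k}\otimes P^E)$: the crucial input is the defining property of $\breve{\varsigma}$ in Lemma~\ref{LinftyToeplwhole}, which gives that the Hardy projection of $M_{P^E}(U\otimes\bone_N)\psi$ equals $(U\otimes\bone_N)\breve{\varsigma}(P^E)\psi$. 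This image is concentrated in non-negative degrees, and its degree-$m$ part lies inside $L^2(\M,\omega;\FS(\GH_m)\otimes P^E)$.

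\emph{Step 3 — identifying the image with the Hardy space.} It remains to see that on each degree $m\gg0$ this image is all of $H^0(\M,\omega;\FS(\GH_m)\otimes P^E)$, i.e., under $\iota_m$, that the $L^2$-inner product \eqref{GKEinnerprod} of $\omega$ and $\FS(\GH_m)\otimes P^E$ coincides with the inner product transported to $H^0(\M;\Ei(m))$ from that of $\GK^E_m$. Here I would use formula \eqref{Toeplexplic}, which gives $\breve{\varsigma}^{(m)}(P^E)=n_m\,(\omega\otimes\id)(\FS(\GH_m)\otimes P^E)$, and recognize this operator, restricted to $\GH^E_m$ and transported by $\iota_m$, as the frame operator of a Parseval $C^*$-frame for the Hilbert $C^0(\M)$-module $\Gamma^0(\M;\Ei(m),\FS(\GH_m)\otimes P^E)$ when that frame is regarded as an ordinary frame for the Hilbert space $H^0(\M,\omega;\FS(\GH_m)\otimes P^E)$ (this is the interpretation of $\breve{\varsigma}^{(m)}(P^E)$ recorded in the introduction, and it follows from \eqref{Toeplexplic} together with Proposition~\ref{geomintprop} read with $\FS(\GH_m)\otimes\FS(\GE_0)$ replaced by $\FS(\GH_m)\otimes P^E$). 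Since $A_{T_E}|_{\GH^E_m}=\breve{\varsigma}^{(m)}(P^E)|_{\GH^E_m}$, this exhibits $A_{T_E}$ as the operator relating the Fock Hilbert structure on $\GH^E_m$ to the $L^2$ one on $H^0(\M,\omega;\FS(\GH_m)\otimes P^E)$, so that the $A_{T_E}^{-1/2}$-rescaled space $\GK^E_m$ is exactly the latter Hilbert space; summing over $m$ gives $\GK^E_\N=H^0(\Sb,\omega;\FS(\GH_\N)\otimes P^E)$. (In the trivial case $P^E=\bone$, unitality of $\breve{\varsigma}$ forces $A_{T_E}=\bone$ and one recovers the known coadjoint-orbit identification $\GH_m=H^0(\M,\omega;\FS(\GH_m))$ — a useful consistency check.)

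\emph{Step 4 — intertwining, and the main difficulty.} Finally, multiplication by $P^E$ commutes with multiplication by each coordinate $Z_\alpha$ on $L^2(\Sb,\omega)\otimes\C^N$ (both are ordinary pointwise multiplications), and $U$ intertwines the weighted shift with multiplication by $Z_\alpha$ on $H^0(\Sb,\omega)$; combined with Step~2 this shows that the isometry there conjugates $V_{E,\alpha}=A_{T_E}^{1/2}T_{E,\alpha}A_{T_E}^{-1/2}$ into multiplication by $Z_\alpha$ on $\GK^E_\N$. Since by Step~3 $\GK^E_\N$ is a space of holomorphic sections, hence invariant under this multiplication, $V_E$ is honestly the restriction of $Z=(Z_1,\dots,Z_n)$ to $\GK^E_\N$, and the multiplication tuple on the ambient $L^2(\Sb,\omega;P^E)$ is its normal dilation. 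I expect the real obstacle to lie in Step~3: making rigorous, via \eqref{Toeplexplic}, \eqref{GKEinnerprod} and Hilbert-$C^*$-module frame theory, that $\breve{\varsigma}^{(m)}(P^E)$ is exactly this frame operator — so that the normalizations $n_m$ and $\chi(\Ei(m))/\rank\Ei$ match up in the correct direction — and, entangled with this, checking that the Cowen--Douglas holomorphic structure of Step~1 is the one for which $\iota_m(\GK^E_m)$ really consists of holomorphic sections and $\GK^E_\N$ is invariant under multiplication by the $Z_\alpha$. This is a compatibility statement between the operator-theoretic (compressed-shift) module structure, the Serre sheaf of \S\ref{alggradmodsec}, and the $\bar{\partial}$-structure on $\Ei$, which I would settle by invoking Theorem~\ref{bigCDversuslocallyfree} and the Nullstellensatz (Theorem~\ref{nullothm}) together with the real-analyticity of $P^E$.
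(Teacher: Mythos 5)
Your proposal is correct in substance and rests on the same two pillars as the paper's proof: Theorem \ref{bigCDversuslocallyfree} to produce the holomorphic structure and the holomorphy of the sections, and the frame-theoretic reading of $\breve{\varsigma}^{(m)}(P^E)$ through \eqref{Toeplexplic} to compare the Fock and $L^2$ inner products. Where you genuinely diverge is in the mechanism of the embedding. The paper never leaves the vector space $\GE_m$ spanned by a Parseval $C^*$-frame of $\FS(\GH_m)\otimes P^E$: it shows $\GH^E_m=\GE_m$, observes that the same frame, viewed in the $L^2$-structure $\tilde{\GE}_m$, has Gram matrix $c_{E,m}^{-1}\breve{\varsigma}^{(m)}(P^E)$, and reads off from this that $A_{T_E}^{\pm 1/2}$ converts one inner product into the other; the unitary onto the Hardy space is then obtained by extending the graded $\Ai$-module isomorphism. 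You instead build an explicit isometry $\phi\mapsto M_{P^E}(U\otimes\bone_N)A_{T_E}^{-1}\phi$ out of the factorization $\breve{\varsigma}(f)=U^{-1}(PM_f|_{H^0(\Sb,\omega)})U$ of Lemma \ref{LinftyToeplwhole}; your isometry computation is right (using $P^{(N)}M_{P^E}(U\otimes\bone_N)=(U\otimes\bone_N)\breve{\varsigma}(P^E)$ and $\breve{\varsigma}(P^E)A_{T_E}^{-1}\psi=\psi$ on $\GH^E_\N$), and it has the advantage of making the invariance of the image under $M_{Z_\alpha}$ and the minimality of the normal extension transparent, since $M_{Z_\alpha}M_{P^E}(U\otimes\bone_N)\phi=M_{P^E}(U\otimes\bone_N)T_\alpha\phi$ and the off-range part $(\bone-P_E)T_\alpha\phi$ lies in $\Ker\breve{\varsigma}(P^E)$, hence dies under the section map. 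The normalization issue you flag in Step 3 is real but benign: the graded constants $\chi(\Ei(m))/\rank\Ei$ in \eqref{GKEinnerprod} versus $n_m$ in \eqref{Toeplexplic} produce exactly the factors $c_{E,m}$, which the paper absorbs into its Gram-matrix identity.

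One concrete slip in Step 4: your isometry $W=M_{P^E}(U\otimes\bone_N)A_{T_E}^{-1}$ intertwines $M_{Z_\alpha}$ with $A_{T_E}T_{E,\alpha}A_{T_E}^{-1}$, not with $V_{E,\alpha}=A_{T_E}^{1/2}T_{E,\alpha}A_{T_E}^{-1/2}$; indeed $M_{Z_\alpha}W\phi=M_{P^E}(U\otimes\bone_N)T_{E,\alpha}A_{T_E}^{-1}\phi$, and $T_{E,\alpha}A_{T_E}^{-1}=A_{T_E}^{-1/2}T_{E,\alpha}A_{T_E}^{-1/2}$ would require $A_{T_E}$ to commute with $T_{E,\alpha}$, which is false in general (only $\Psi_E(A_{T_E})=A_{T_E}$ holds). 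This does not damage the theorem, since $A_{T_E}T_{E}A_{T_E}^{-1}$ on $\GK^E_\N$ is unitarily equivalent to $V_E$ via the unitary $A_{T_E}^{1/2}:\GH^E_\N\to\GK^E_\N$; the cleanest repair is to use $\phi\mapsto M_{P^E}(U\otimes\bone_N)A_{T_E}^{-1/2}\phi$, which is unitary from $\GH^E_\N$ (Fock inner product) onto the Hardy space and intertwines $V_{E,\alpha}$ with $M_{Z_\alpha}$ on the nose, and then transport by $A_{T_E}^{1/2}$ to state the result on $\GK^E_\N$. With that adjustment (and discarding the finitely many degrees where $A_{T_E}$ is not invertible, as you do), your argument is complete and matches the paper's conclusion.
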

\begin{proof}
By assumption $P^E$ is an element of $C^0(\M)\otimes\Bi(\GE_0)$ for some finite-dimensional Hilbert space $\GE_0$. We may identify $\GE_0$ as a vector subspace of $\Gamma^0(\M;\Ei)$ via $\psi(x)=P^E(x)\psi$ for $\psi\in\GE_0$. Then $P^E$ has a Parseval $C^*$-frame consisting of an orthonormal basis for $\GE_0$, symbolically $P^E=\FS(\GE_0)$, but note that so far we have not shown that $\GE_0$ consists of holomorphic sections of $\Ei$ for any holomorphic structure on $\Ei$ (we did not assume that $\Ei$ admits a holomorphic structure). The projection $\FS(\GH_m)\otimes P^E$ has a Parseval $C^*$-frame $\boldsymbol\psi=(\psi_j^{(m)})_{j\in\J}$ given by elements of $\GH_m\otimes\GE_0$, and we denote by $\GE_m$ the $\C$-linear span in $\GH_m\otimes\GE_0$ of that $C^*$-frame $\boldsymbol\psi$. The tensor product of the Parseval $C^*$-frames for $\FS(\GH_m)$ and $P^E$ is a Parseval frame for $\GH_m\otimes\GE_0$. Thus, if $P_{E,m}$ is the projection of $\GH_m\otimes\GE_0$ onto $\GE_m$ then $\boldsymbol\psi$ is the image under $P_{E,m}$ of a Parseval frame for $\GH_m\otimes\GE_0$. Hence $\boldsymbol\psi$ is a Parseval frame for $\GE_m$ \cite[Example A]{HaLa1}. So we have 
\begin{equation}\label{FSGEmPE}
\FS(\GH_m)\otimes P^E=\FS(\GE_m).
\end{equation}
Since $\FS(\GE_m)(x)$ belongs to $\Bi(\GE_m)\subset\Bi(\GH_\N\otimes\GE_0)$ for each $x\in\M$ we have that 
$$
n_m\omega(\FS(\GE_m))=n_m\omega(\FS(\GH_m)\otimes P^E)=\breve{\varsigma}^{(m)}(P^E)
$$
is naturally an operator on $\GE_m$. Moreover, $\omega(\FS(\GE_m))$ is invertible on $\GE_m$ since the elements of $\boldsymbol\psi$ span $\GE_m$ by definition. Thus $\GH^E_m:=\Ran\breve{\varsigma}^{(m)}(P^E)=\GE_m$ for all $m$. 

The multiplication map $\GH_\N\otimes\GE_0\to\GE_\N$ makes $\GE_\N$ a graded quotient module, and this is the same as the action of $\Ai$ on $\GH^E_\N$ coming from compression of the shift on $\GH_\N\otimes\GE_0$.

Let $\tilde{\GE}_\N$ denote the graded vector space underlying $\GE_\N$ but endowed with the inner product of $L^2(\Sb,\omega;P^E)=\bigoplus_{k\in\Z}L^2(\omega;\FS(\GH_1)^{\otimes k}\otimes P^E)$. The actions of the generators $Z_1,\dots,Z_n$ of $C^0(\Sb)$ are the same on $\tilde{\GE}_\N\subset L^2(\Sb,\omega;P^E)$ as on $\GE_\N$, given just by multiplication on sections of $\Ei$ over $\Sb$ (the adjoints of the $Z_\alpha$'s act differently on $\tilde{\GE}_\N$ compared to $\GE_\N$ however). 

Therefore $\tilde{\GE}_\N$ is an $\Ai$-invariant subspace of $L^2(\Sb,\omega;P^E)$ and the multiplication tuple on $L^2(\Sb,\omega;P^E)$ is a normal extension of the shift tuple on $\tilde{\GE}_\N$. From the fact that $\GE_m$ generates $\Gamma^0(\M;\Ei(m))$ a $C^0(\M)$-module for each $m$ we obtain moreover that the normal extension is the minimal one.

The operator $c_{E,m}^{-1}\breve{\varsigma}^{(m)}(P^E)\in\Bi(\GE_m)$ is the Gram matrix of $\boldsymbol\psi$ as frame for $\tilde{\GE}_m$. This means that if we use the analysis operator of the frame $\boldsymbol\psi$ to identify $\GE_m$ and $\tilde{\GE}_m$ as vector spaces, the operator $c_{E,m}^{-1}\breve{\varsigma}^{(m)}(P^E)$ is the frame operator of $\boldsymbol\psi$ as frame for $\tilde{\GE}_m$ (see \cite[Example 3.1.1]{Bala1}). Therefore $c_{E,m}^{-1}\breve{\varsigma}^{(m)}(P^E)^{-1/2}\boldsymbol\psi$ will be a Parseval frame for $\tilde{\GE}_m$. We know that $\boldsymbol\psi$ is a Parseval frame for $\GE_m$. So the inner product on $\tilde{\GE}_m$ is obtained from that of $\GE_m$ by applying $\breve{\varsigma}^{(m)}(P^E)^{-1/2}$. This gives $\tilde{\GE}_m=\GK^E_m$. 

The assumption that $P^E$ is continuous and $\varsigma(\Ran\breve{\varsigma}(P^E))=P^E$ implies that $A_{T_E}$ is Fredholm. So the Hilbert space $\GK^E_\N$ is well-defined up to finite-dimensional subspaces. 

From Theorem \ref{bigCDversuslocallyfree} we have that $\GH^E_\N$ endows $\Ei$ with a canonical holomorphic structure. 
The Cowen--Douglas projection $\CD(\GE_\N)=\CD(\GH_\N)\otimes P^E$ identifies $\GH^E_\N$ with a space of holomorphic sections of the Cowen--Douglas bundle. Since $\GK^E_\N$ is the same vector space as $\GH^E_\N$, the elements of $\GK^E_\N$ are also holomorphic sections. 
The proof is thus complete. 
 \end{proof}

\subsection{Hidden Szegö expansion}

When $P^E$ is a projection over $C^\infty(\M)$ we discussed the condition that $\Ran\breve{\varsigma}(P^E)$ is a lift of $P^E$. That is, the condition that $\Ran\breve{\varsigma}(P^E)$ and $\breve{\varsigma}(P^E)$ differ by a compact operator. Let us now investigate the geometric meaning of this compact operator which is the obstruction to the idempotency of the Toeplitz operator $\breve{\varsigma}(P^E)$.

\subsubsection{Geometric meaning of $[S^*_E,S_E]$}

Let $\GE_\N\subset\GH_\N\otimes\GE_0$ be a quotient module and suppose that the projection $P_E$ onto $\GE_\N$ has entries in $\Ti_\GH^{(0)}$. Then $P^E:=\varsigma(P_E)$ defines a holomorphic vector bundle $\Ei$ over $\M$ with Hilbert polynomial given by $\chi(\Ei(m))=\dim\GE_m$ for large enough $m$ (see \S\ref{dimransec}).


Using $\phi_m\circ\Psi^{l-m}=\phi_l$ we get from Hirzebruch--Riemann--Roch that
\begin{align}\label{phionidminusPsi}
(\Tr\otimes\Tr_{\GE_0})((\id-\Phi_*)(P_{E,m}))&=\chi(\Ei(m))-\chi(\Ei(m+1))\nonumber
\\&=m^{d-1}\int_\M\big(\tr_\omega\Theta^E+s_\omega/2\big)e^\omega+O(m^{d-2}),
\end{align}
where $\Theta^E$ is the curvature 2-form of the Chern connection of the metric $P^E$ and $\tr_\omega\Theta^E$ is its trace against the Kähler 2-form $\omega$. Using the coinvariance of $P_E$ we can write
\begin{equation}\label{idminusPsiPE}
(\id-\Phi_*)(P_E)=\sum^n_{\alpha=1}[P_E,S_\alpha]^*[P_E,S_\alpha]=[S_E^*,S_E]+[S^*,S].
\end{equation}
We saw in \S\ref{spexpsec} that 
$$
\phi_m([S^*,S])=\frac{m^{-1}}{\vol(\M,\Li)}\int_\M(s_\omega/2)e^\omega+O(m^{-2}),
$$
so from \eqref{phionidminusPsi} and \eqref{idminusPsiPE} we obtain
\begin{equation}\label{phionEcomm}
\phi_m([S^*_E,S_E])=\frac{m^{-1}}{\vol(\M,\Li)}\int_\M\tr_\omega\Theta^Ee^\omega+O(m^{-2}),
\end{equation}

When we discussed the spherical expansion $S$ in \S\ref{spexpsec} we observed that $\varsigma^{(m)}(m[S^*,S]p_m)$ approximates the (constant) scalar curvature. Now \eqref{phionEcomm} and the relation $\phi_m=\omega\circ\varsigma^{(m)}$ suggest that $\varsigma^{(m)}(m[S^*_E,S_E]p_m)$ should play the role of $\tr_\omega\Theta^E$. 

Since $\breve{\varsigma}(P^E)$ is the limit of the sequence $(\Psi^p(P_E))_{p\in\N_0}$, and since
$$
\Psi^p=\sum^p_{q=0}{p\choose q}(\id-\Psi)^q,
$$
we see that the operator in \eqref{idminusPsiPE} is a kind of first-order approximation to the compact operator $C_E:=P_E-\breve{\varsigma}(P^E)$. In the next subsection we discuss the geometric meaning of $C_E$, from which one could hint a relation between $[S_E^*,S_E]$ and $\tr_\omega\Theta^E$.


\subsubsection{Interpretation of the hidden Szegö expansion}

Let $\Ei$ be a globally generated holomorphic vector bundle over $\M$ and let $P^E$ be a Hermitian metric on $\Ei$. For each $m\in\N_0$ one can consider the ``Szegö endomorphism'' of the Hilbert space $H^0(\omega,\FS(\GH_m)\otimes P^E)$, which is the $C^0(\M)$-linear endomorphism $\Sigma^{E(m)}$ of $\Gamma^0(\M;\Ei(m),\FS(\GH_m)\otimes P^E)$ defined as follows. If $\boldsymbol\psi=(\psi_j)_{j=1,\dots,n^E_m}$ is an orthonormal basis for the Hilbert space $H^0(\omega,\FS(\GH_m)\otimes P^E)$ then one can view it as a sequence of elements in $\Gamma^0(\M;\Ei(m))$, and this sequence $\boldsymbol\psi$ is a $C^*$-frame for the Hilbert $C^0(\M)$-module $\Gamma^0(\M;\Ei(m),\FS(\GH_m)\otimes P^E)$ since we assume that $\Ei(m)$ is globally generated. The Szegö endomorphism $\Pi_m^E$ is defined to be the frame operator of the $C^*$-frame $\boldsymbol\psi$,
$$
\Sigma^{E(m)}\phi:=\sum^{n^E_m}_{j=1}(\psi_j|\phi)_{\FS(\GH_m)\otimes P^E}\psi_j,\qquad\forall \phi\in\Gamma^0(\M;\Ei(m)),
$$
where $(\psi_j(x)|\phi(x))_{\FS(\GH_m)\otimes P^E}(x)=(\psi_j(x)|\phi(x))_{\FS(\GH_m)\otimes P^E}$ is the inner product on the fiber $\Ei(x)$ obtained from the projection $\FS(\GH_m)\otimes P^E$. 

In \S\ref{balasec} we will discuss the notion of ``balanced'' metrics. By definition, the Hermitian metric $\FS(\GH_m)\otimes P^E$ is $\omega$-balanced if $\Sigma^{E(m)}$ is the identity endomorphism. That is, if an orthonormal basis for the Hilbert space $H^0(\omega,\FS(\GH_m)\otimes P^E)$ is at the same time a Parseval $C^*$-frame for the Hilbert $C^0(\M)$-module $\Gamma^0(\M;\Ei(m),\FS(\GH_m)\otimes P^E)$. 
The \textbf{Szegö expansion} of $P^E$ (and the reference metrics $P^L$ and $\omega$) is a large-$m$ expansion of $\Sigma^{E(m)}$ which shows how the obstruction to $\FS(\GH_m)\otimes P^E$ being balanced disappears as $m$ grows (see e.g. \cite{MaMa3}). 

We can also go in the opposite direction: we can start with a Parseval $C^*$-frame $\boldsymbol\psi$ for the Hilbert module $\Gamma^0(\M;\Ei(m),\FS(\GH_m)\otimes P^E)$ given by a basis for the vector space $H^0(\M;\Ei(m))$ and we can ask whether it is an orthonormal basis for $H^0(\omega,\FS(\GH_m)\otimes P^E)$ or not. The obstruction is the frame operator of $\boldsymbol\psi$ regarded as a sequence of vectors in $H^0(\omega,\FS(\GH_m)\otimes P^E)$. But the same information is contained in the Gram matrix of $\boldsymbol\psi$ regarded as a sequence of vectors in $H^0(\omega,\FS(\GH_m)\otimes P^E)$. More precisely, from \cite{Bala1} we have a $*$-algebra monomorphism from $\Bi(H^0(\omega,\FS(\GH_m)\otimes P^E))$ to $\Bi(\GH_\N\otimes\C^N)$ sending the frame operator of $\boldsymbol\psi$ to the operator $c_{E,m}^{-1}\breve{\varsigma}^{(m)}(P^E)$. 
Now, this Gram matrix is precisely
$$
\frac{\chi(\Ei(m))}{\rank\Ei}\omega((\boldsymbol\psi|\boldsymbol\psi)_{\FS(\GH_m)\otimes P^E})=c_{E,m}^{-1}\breve{\varsigma}^{(m)}(P^E).
$$
Thus a large-$m$ expansion of the operators $c_{E,m}^{-1}\breve{\varsigma}^{(m)}(P^E)$ will go under the name ``hidden Szegö expansion'' (``hidden'', as it has not been studied so far).

\subsubsection{$\varsigma^{E(m)}(A_{E,m}^{-1})$ expansion}
Under our assumption that $P_E$ is a superharmonic projection over $\Ti_\GH^{(0)}$, 
the restriction $A_{T_E}$ of $\breve{\varsigma}(P^E)$ to the range $\GE_\N$ of $P_E$ is Fredholm. For present purposes we may assume $A_{T_E}$ is invertible. If we regard $A_{T_E}^{1/2}$ as a unitary operator
$$
A_E^{1/2}:\GE_\N\to\GK^E_\N
$$
onto the Hardy space $\GK^E_\N:=H^0(\Sb,\omega;\Ei,P^E)$ then the 1-isometry $V_E=A_E^{1/2}T_EA_E^{-1/2}$ is precisely the multiplication tuple on $\GK^E_\N$ (recall Theorem \ref{backtoHardythmgen}). Each endomorphism $f$ of $\Ei$ gives rise to an endomorphism of the pullback of $\Ei$ to $\Sb$, and then to a grading-preserving multiplication operator on $L^2(\Sb,\omega;\Ei,P^E)=\bigoplus_{k\in\Z}L^2(\M,\omega;\Ei(m),\FS(\GH_1)^{\otimes k}\otimes P^E)$. Restricting such multiplication operators to the Hardy space and following them by compression back to $\GK^E_\N$ gives us grading-preserving Toeplitz operators, which we denote by
$$
\breve{\varsigma}_{V_E}(f)=\sum_{m\in\N_0}\breve{\varsigma}^{(m)}_{V_E}(f)\in\Bi(\GK^E_\N)
$$
with $f\in\End\Gamma^0(\M;\Ei,P^E)$. Here $\breve{\varsigma}^{(m)}_{V_E}(f)$ is the component of $\breve{\varsigma}_{V_E}$ acting on the graded piece $\GK^E_m=H^0(\M,\omega;\Ei(m),\FS(\GH_m)\otimes P^E)$. 

\begin{Lemma}
The Toeplitz operators $\breve{\varsigma}_{V_E}(f)$ with $f\in\End\Gamma^0(\M;\Ei,P^E)$ are fixed-points of the unital map $\Psi_{V_E}(X):=\sum^n_{\alpha=1}V_{E,\alpha}^*XV_{E,\alpha}$ acting on $X\in\Bi(\GK_\N^E)$. Moreover, for each $f\in\End\Gamma^0(\M;\Ei,P^E)$ we have
\begin{equation}\label{VEToepl}
\breve{\varsigma}_{V_E}(f)=A_E^{-1/2}\breve{\varsigma}^{(m)}(f)A_E^{-1/2}.
\end{equation}
\end{Lemma}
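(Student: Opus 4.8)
I would carry out everything inside the Hardy-space model furnished by Theorem \ref{backtoHardythmgen}. There the $\Sb$-isometry $V_E=A_E^{1/2}T_EA_E^{-1/2}$ on $\GK^E_\N=A_{T_E}^{-1/2}\GH^E_\N$ is, via the unitary $A_E^{1/2}$, the tuple of multiplications by the coordinate functions $Z_1,\dots,Z_n$ on the Hardy space $\GK^E_\N=H^0(\Sb,\omega;\FS(\GH_\N)\otimes P^E)$, regarded as a subspace of $L^2(\Sb,\omega;P^E)$; on the latter the $Z_\alpha$ act as a normal tuple satisfying the sphere relation $\sum_\alpha Z_\alpha^*Z_\alpha=\bone$ inherited from $\Sb\subset\Sb^{2n-1}$. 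An endomorphism $f$ of $\Ei$ pulls back to an endomorphism of $\Ei_\Sb$, hence to a bounded multiplication operator $M_f$ on $L^2(\Sb,\omega;P^E)$; this $M_f$ commutes with each $M_{Z_\alpha}$ and, being independent of the $\Un(1)$-fibre direction, preserves the grading $L^2(\Sb,\omega;P^E)=\bigoplus_{k\in\Z}L^2(\M,\omega;\FS(\GH_1)^{\otimes k}\otimes P^E)$. By definition $\breve{\varsigma}_{V_E}(f)=PM_f|_{\GK^E_\N}$ with $P$ the orthogonal projection of $L^2(\Sb,\omega;P^E)$ onto $\GK^E_\N$, and its $m$-th graded component $\breve{\varsigma}^{(m)}_{V_E}(f)$ acts on $\GK^E_m=H^0(\M,\omega;\Ei(m),\FS(\GH_m)\otimes P^E)$.

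\textbf{The fixed-point property.} This is the usual reason a Toeplitz operator relative to a subnormal tuple with isometric restriction is a fixed point of the associated unital completely positive map. Since $\GK^E_\N$ is $M_{Z_\alpha}$-invariant, its orthogonal complement is $M_{Z_\alpha}^*$-invariant, so $PM_{Z_\alpha}^*P=PM_{Z_\alpha}^*$, and $V_\alpha=M_{Z_\alpha}|_{\GK^E_\N}$, $V_\alpha^*=PM_{Z_\alpha}^*|_{\GK^E_\N}$. Using this, the commutation $M_fM_{Z_\alpha}=M_{Z_\alpha}M_f$ and the sphere relation, one gets for every $h\in\GK^E_\N$ that $\Psi_{V_E}(\breve{\varsigma}_{V_E}(f))h=\sum_\alpha PM_{Z_\alpha}^*PM_fM_{Z_\alpha}h=\sum_\alpha PM_{Z_\alpha}^*M_{Z_\alpha}M_fh=P\big(\sum_\alpha M_{Z_\alpha}^*M_{Z_\alpha}\big)M_fh=PM_fh=\breve{\varsigma}_{V_E}(f)h$. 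Hence $\Psi_{V_E}(\breve{\varsigma}_{V_E}(f))=\breve{\varsigma}_{V_E}(f)$. One may equivalently invoke Lemma \ref{LinftyToeplwhole} applied to the $\Sb$-isometry $V_E$ in place of $T$.

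\textbf{The conjugation formula.} Both $\breve{\varsigma}^{(m)}_{V_E}(f)$ and $\breve{\varsigma}^{(m)}(f)$ act on the single vector space $\GE_m=\GH^E_m$, which carries two inner products: the Fock one making it $\GH^E_m$, and the $L^2$-one making it $\GK^E_m$. As established in the proof of Theorem \ref{backtoHardythmgen}, these differ (up to the scalar $c_{E,m}$) by the positive operator $A_{E,m}:=\breve{\varsigma}^{(m)}(P^E)|_{\GE_m}$, and $A_E^{1/2}$ is the resulting unitary $\GH^E_m\to\GK^E_m$. I would fix a Parseval $C^*$-frame $\boldsymbol\psi$ for $\FS(\GH_m)\otimes P^E$ which is simultaneously a Parseval frame for the Fock space $\GE_m$ (such a frame exists by Proposition \ref{geomintprop}, which gives $\FS(\GH_m)\otimes P^E=\FS(\GE_m)$), and compute both sides in the Hilbert-module frame calculus of \cite{Bala1}. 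On the Hardy side $\breve{\varsigma}^{(m)}_{V_E}(f)=PM_f|_{\GK^E_m}$ has frame matrix $\big(\langle\psi_j|f\psi_k\rangle_{L^2(\Sb,\omega;P^E)}\big)_{j,k}$, whose value at $f=\bone_E$ is the $L^2$-Gram matrix $c_{E,m}^{-1}A_{E,m}$; on the Fock side the analogue of \eqref{Toeplexplic} together with \eqref{GKEinnerprod} yields frame matrix $c_{E,m}\big(\langle\psi_j|f\psi_k\rangle_{L^2(\Sb,\omega;P^E)}\big)_{j,k}$. Substituting these into the change-of-inner-product relation and eliminating the frame identification identifies $\breve{\varsigma}^{(m)}_{V_E}(f)$ with $A_{E,m}^{-1/2}\breve{\varsigma}^{(m)}(f)A_{E,m}^{-1/2}$, which is precisely \eqref{VEToepl}; the symmetric sandwich, rather than a plain unitary conjugation, appears because the symbol-to-operator map is a weighted conditional expectation onto $\breve{\varsigma}(\End\Gamma^0(\M;\Ei,P^E))$ relative to the reference inner product.

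\textbf{Main obstacle.} The fixed-point half is routine. The work is the bookkeeping in the last paragraph: one must pin down, without fudging the normalizing constants $c_{E,m}$ or the finite-dimensional ambiguity in $A_{T_E}$, that the change of inner product between the Fock and Hardy realizations of $\GE_m$ is implemented \emph{exactly} by $A_{E,m}=\breve{\varsigma}^{(m)}(P^E)|_{\GE_m}$, and that conjugating $\breve{\varsigma}^{(m)}(f)$ on both sides by $A_{E,m}^{-1/2}$ (rather than transporting it by the unitary $A_E^{1/2}$) is what recovers the Hardy Toeplitz operator $\breve{\varsigma}^{(m)}_{V_E}(f)$. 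Both facts are essentially contained in the proof of Theorem \ref{backtoHardythmgen} and in the frame calculus of \cite{Bala1}, but making the normalizations line up is the delicate step.
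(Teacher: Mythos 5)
Your first half is fine: using the Hardy-space realization, the identities $PM_{Z_\alpha}P=M_{Z_\alpha}P$, hence $PM_{Z_\alpha}^*P=PM_{Z_\alpha}^*$, together with commutativity and the sphere relation $\sum_\alpha M_{Z_\alpha}^*M_{Z_\alpha}=\bone$ do give $\Psi_{V_E}(\breve{\varsigma}_{V_E}(f))=\breve{\varsigma}_{V_E}(f)$; the paper simply quotes \cite{Prun1} for this, so the two arguments are interchangeable.

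The second half has a genuine gap, and it sits exactly at the step you call ``eliminating the frame identification''. Write both operators against the same Parseval frame $\boldsymbol\psi$ on the same underlying vector space. Your two matrix identities are $\langle\psi_j|\breve{\varsigma}^{(m)}(f)\psi_k\rangle_{\GE_m}=c_{E,m}\langle\psi_j|f\psi_k\rangle_{L^2}$ and $\langle\psi_j|\breve{\varsigma}^{(m)}_{V_E}(f)\psi_k\rangle_{\mathrm{Hardy}}=\langle\psi_j|f\psi_k\rangle_{L^2}$, while the comparison of inner products coming from \eqref{Toeplexplic} and \eqref{GKEinnerprod} is $\langle\phi|\psi\rangle_{L^2,\mathrm{normalized}}=c_{E,m}^{-1}\langle\phi|A_{E,m}\psi\rangle_{\GE_m}$. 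Eliminating $f$ from these three relations yields $A_{E,m}\breve{\varsigma}^{(m)}_{V_E}(f)=\breve{\varsigma}^{(m)}(f)$, i.e.\ the one-sided expression $A_{E,m}^{-1}\breve{\varsigma}^{(m)}(f)$, not the symmetric sandwich in \eqref{VEToepl}; the two are only conjugate by $A_{E,m}^{1/2}$. Which of the two you actually obtain is governed by the unitary used to identify $\GK^E_\N$ with the concrete Hardy space: the identity map of the underlying vector space intertwines the multiplication tuple with $S_E$, not with $V_E=A_E^{1/2}T_EA_E^{-1/2}$, so it is not the identification through which $\breve{\varsigma}_{V_E}(f)$ is defined. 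Pinning down that unitary and checking that transporting the compression of $M_f$ through it produces the two-sided conjugation by $A_E^{-1/2}$ is not bookkeeping --- it \emph{is} the content of \eqref{VEToepl} --- and your proposal defers precisely this to Theorem \ref{backtoHardythmgen}, which does not supply the operator identity in the required form. The paper's proof avoids frames entirely and runs through a different mechanism: from $V_{E,\alpha}=A_E^{1/2}T_{E,\alpha}A_E^{-1/2}$ one gets the intertwining $\Psi_{V_E}(X)=A_E^{-1/2}\Psi_E(A_E^{1/2}XA_E^{1/2})A_E^{-1/2}$, so the fixed points of $\Psi_{V_E}$ are exactly the operators $A_E^{-1/2}YA_E^{-1/2}$ with $\Psi_E(Y)=Y$, and the latter are the compressed Toeplitz operators $\breve{\varsigma}^{(m)}(f)$; combined with the first half this gives \eqref{VEToepl} with no $c_{E,m}$'s and no choice of frame. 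If you want to keep your computational route, you must make the intertwiner of Theorem \ref{backtoHardythmgen} explicit (it is $A_{E,m}^{-1/2}$ on each graded piece, up to the scalar $c_{E,m}^{1/2}$) and rerun your matrix computation through it; otherwise the argument proves only the similarity, not the equality, asserted in the Lemma.
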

\begin{proof}
Since $V_E$ is a commutative spherical isometry it is deduced as in \cite{Prun1} that the Toeplitz operators $\breve{\varsigma}_{V_E}(f)$ are the fixed points of the map $\Psi_{V_E}$. 

To deduce \eqref{VEToepl}, note that 
$$
\Psi_{V_E}(X):=\sum^n_{\alpha=1}V_{E,\alpha}^*XV_{E,\alpha}=A_E^{-1/2}\sum^n_{\alpha=1}T_{E,\alpha}^*A_E^{1/2}XA_E^{1/2}T_{E,\alpha}A_E^{-1/2}=A_E^{-1/2}\Psi_E(A_E^{1/2}XA_E^{1/2})A_E^{-1/2},
$$
where $\Psi_E(Y):=\sum^n_{\alpha=1}T_{E,\alpha}^*YT_{E,\alpha}$. An operator $Y$ is fixed under $\Psi_E$ if and only if $Y$ i a Toeplitz operator $\breve{\varsigma}^{(m)}(f)$ with symbol $f\in L^\infty(\M)\otimes\Bi(\GE_0)$ such that $Y$ is zero outside $\GE_\N$. So $X=A_E^{-1/2}\breve{\varsigma}^{(m)}(f)A_E^{-1/2}$ is a fixed point of $\Psi_{V_E}$ for each $f\in\End\Gamma^0(\M;\Ei,P^E)$. 
\end{proof}

Define a state $\omega_E:\End\Gamma^0(\M;\Ei,P^E)\to\C$ by specifying it on rank-1 endomorphisms to be
$$
\omega_E(|\psi)(\phi|):=\frac{1}{\rank\Ei}\omega((\phi|\psi)_{P^E}),\qquad\forall \phi,\psi\in\Gamma^0(\M;\Ei),
$$
where $(\cdot|\cdot)_{P^E}$ is the $C^0(\M)$-valued inner product on $\Gamma^0(\M;\Ei,P^E)$ and $|\psi)(\phi|$ is the rank-1 endomorphism acting as $|\psi)(\phi|\psi':=(\phi|\psi')_{P^E}\psi$ for $\psi'\in\Gamma^0(\M;\Ei,P^E)$. 

Also, denote by $\phi_m^E:\Bi(\GK^E_m)\to\C$ the tracial state,
$$
\phi_m^E(X):=\frac{\Tr(X)}{\dim\GE_m}. 
$$
Let $\varsigma_{V_E}^{(m)}$ be the adjoint of $\breve{\varsigma}^{(m)}_{V_E}$ with respect to $\phi_m^E$ and $\omega_E$.
That is, for $X\in\Bi(\GE_m)$ and $f\in\End\Gamma^0(\M;\Ei)$,
$$
\omega(\varsigma_{V_E}^{(m)}(X)f):=\phi_m^E(X\breve{\varsigma}_{V_E}^{(m)}(f)).
$$
From \eqref{VEToepl} we have
\begin{align*}
\phi_m^E(X\breve{\varsigma}_{V_E}^{(m)}(f))&=\phi_m^E(A_E^{-1/2}XA_E^{-1/2}\breve{\varsigma}^{(m)}(f))
\\&=\frac{c_{E,m}}{\rank\Ei}\phi_m(A_E^{-1/2}XA_E^{-1/2}\breve{\varsigma}^{(m)}(f))
\\&=c_{E,m}\omega_E(\varsigma^{(m)}(A_E^{-1/2}XA_E^{-1/2})f)
\end{align*}
so that
\begin{equation}\label{VEvsTEsymb}
\varsigma_{V_E}^{(m)}(X)=c_{E,m}\varsigma^{(m)}(A_E^{-1/2}XA_E^{-1/2}).
\end{equation}

Finally, let $\Theta^E$ be the curvature of the Chern connection of the metric $P^E$ on the holomorphic vector bundle $\Ei$, and let $\Delta^E_\omega$ be the Bochner Laplacian acting on $\End\Gamma^\infty(\M;\Ei)$ (see \cite[Eq. (1.3.19)]{MaMa3}). 
\begin{Lemma}\label{vectorexplemma}
Let $\breve{\varsigma}_{V_E}^{(m)}$ be the $m$th component of the Toeplitz map on $H^0(\Sb,\omega;\Ei,P^E)$ and let  $\varsigma_{V_E}^{(m)}$ be its adjoint with respect to $\phi_m^E$ and $\omega_E$. Then for all $f\in\End\Gamma^0(\M;\Ei)$ we have the Berezin transform
$$
\varsigma_{V_E}^{(m)}(\breve{\varsigma}_{V_E}^{(m)}(f))=f+m^{-1}(\{\tr_\omega\Theta^E,f\}/2-\mu(\Ei)f+\Delta^E_\omega f)+O(m^{-2})
$$
where $\{f,g\}:=fg+gf$. In particular,
$$
\varsigma_{V_E}^{(m)}(\breve{\varsigma}_{V_E}^{(m)}(\bone_E))=\bone_E+m^{-1}(\tr_\omega\Theta^E-\mu(\Ei)\bone_E)+O(m^{-2}).
$$
\end{Lemma}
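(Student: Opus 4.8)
\textbf{Proof plan for Lemma \ref{vectorexplemma} (final statement).}
The plan is to reduce the statement about the Berezin transform $\varsigma_{V_E}^{(m)}\circ\breve{\varsigma}_{V_E}^{(m)}$ on the Hardy space $H^0(\Sb,\omega;\Ei,P^E)$ to the corresponding classical Berezin transform attached to the Fubini--Study-type metric $\FS(\GH_m)\otimes P^E$ on $\Ei(m)$, for which the asymptotic expansion is already available in the literature (e.g. \cite{MaMa3}, \cite{Wang2}) via the Szeg\"o/Bergman kernel expansion. First I would use the identity $A_{T_E}=\breve{\varsigma}(P^E)|_{\GE_\N}$ from the previous subsection, together with $\breve{\varsigma}^{(m)}(P^E)=n_m(\omega\otimes\id)(\FS(\GH_m)\otimes P^E)$ from \eqref{Toeplexplic} and the identification $\FS(\GH_m)\otimes P^E=\FS(\GE_m)$ from \eqref{FSGEmPE}, to rewrite $A_{E,m}$ purely in terms of the Szeg\"o endomorphism $\Sigma^{E(m)}$: concretely $c_{E,m}^{-1}\breve{\varsigma}^{(m)}(P^E)$ is the Gram matrix of the distinguished $C^*$-frame, hence (via the Balan formalism of \cite{Bala1} and Remark \ref{frameomegaremark}) $\varsigma^{(m)}(A_{E,m})$ is the Szeg\"o endomorphism $\Sigma^{E(m)}$ up to the normalization $c_{E,m}$. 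This is the bridge between the operator-theoretic object and the geometric one.

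Next I would plug \eqref{VEvsTEsymb}, which says $\varsigma_{V_E}^{(m)}(X)=c_{E,m}\varsigma^{(m)}(A_E^{-1/2}XA_E^{-1/2})$, and \eqref{VEToepl}, which says $\breve{\varsigma}_{V_E}^{(m)}(f)=A_E^{-1/2}\breve{\varsigma}^{(m)}(f)A_E^{-1/2}$, into the composition to get
$$
\varsigma_{V_E}^{(m)}(\breve{\varsigma}_{V_E}^{(m)}(f))=c_{E,m}\,\varsigma^{(m)}\!\big(A_E^{-1}\breve{\varsigma}^{(m)}(f)A_E^{-1}\big).
$$
Now the standard Berezin-transform asymptotics for the $L^2$-metric $\FS(\GH_m)\otimes P^E$ give $\varsigma^{(m)}(\breve{\varsigma}^{(m)}(f))=f+m^{-1}(\tr_\omega\Theta^E\cdot f\text{-type terms}+\Delta_\omega^E f)+O(m^{-2})$ after the $c_{E,m}$ normalization; the extra conjugation by $A_E^{-1}=\breve{\varsigma}^{(m)}(P^E)^{-1}$ must be expanded. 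Using the Szeg\"o expansion $\tfrac{\chi(\Ei(m))}{\vol(\M,\Li)\rank\Ei}\Sigma^{E(m)}=m^d\bone_E+m^{d-1}(\tr_\omega\Theta^E-s_\omega/2)\bone_E+O(m^{d-2})$ quoted from \cite[Thm. 5.2]{Wang2}, I get $c_{E,m}\varsigma^{(m)}(A_{E,m})=\bone_E+m^{-1}(\tr_\omega\Theta^E-\mu(\Ei)\bone_E)+O(m^{-2})$ after matching $\mu(\Ei)$ with the leading-order behaviour of $c_{E,m}$ and the scalar-curvature contribution $\phi_m([S^*,S])$ from \S\ref{spexpsec}; then a first-order Neumann-series expansion of $A_{E,m}^{-1}$ about $c_{E,m}^{-1}\bone$ produces a correction $-m^{-1}\{\tr_\omega\Theta^E-\mu(\Ei)\bone_E,\cdot\}$ acting by anticommutator, which combines with the geometric term to yield exactly the claimed formula, and specializes at $f=\bone_E$ to $\bone_E+m^{-1}(\tr_\omega\Theta^E-\mu(\Ei)\bone_E)+O(m^{-2})$.

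The main obstacle I anticipate is bookkeeping the two competing $m^{-1}$ contributions consistently: the "intrinsic" Berezin-transform term coming from $\varsigma^{(m)}\circ\breve{\varsigma}^{(m)}$ (which carries the Bochner Laplacian and a $\{\tr_\omega\Theta^E,f\}/2$ piece) versus the "conjugation" term from expanding $A_{E,m}^{-1}$ (which carries another anticommutator with $\tr_\omega\Theta^E-\mu(\Ei)$). One must be careful that the curvature $\Theta^E$ throughout refers to the Chern connection of $P^E$ with respect to the holomorphic structure coming from the graded $\Ai$-module underlying $\GE_\N$ — which by Theorem \ref{backtoHardythmgen}/Theorem \ref{bigCDversuslocallyfree} is the given one when $P^E$ is real-analytic — and that the normalization constants $c_{E,m}=n_m\rank\Ei/\chi(\Ei(m))$ and the identification $\mu(\Ei)=\lim_m m(1-c_{E,m}^{-1}\cdot c_{E,m+1}^{-1}\cdots)$-type limit are handled so that the $\mu(\Ei)$ in the final formula is genuinely the slope. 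Beyond that the proof is a matter of assembling the cited expansions; the $f=\bone_E$ case is immediate once the general $f$ case is in place since $\Delta_\omega^E\bone_E=0$ and $\{\tr_\omega\Theta^E,\bone_E\}/2=\tr_\omega\Theta^E$.
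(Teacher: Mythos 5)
Your algebraic reduction $\varsigma_{V_E}^{(m)}(\breve{\varsigma}_{V_E}^{(m)}(f))=c_{E,m}\,\varsigma^{(m)}\bigl(A_{E,m}^{-1}\breve{\varsigma}^{(m)}(f)A_{E,m}^{-1}\bigr)$ is correct, but the way you then extract the $m^{-1}$ term has a genuine gap. You propose to combine (i) an expansion of the ``intrinsic'' transform $\varsigma^{(m)}\circ\breve{\varsigma}^{(m)}$, taken with respect to the Fock inner product on $\GE_m$, with (ii) a Neumann-series expansion of $A_{E,m}^{-1}$ about $c_{E,m}^{-1}\bone$. For (i), the expansions you cite (\cite[Thm. 5.2]{Wang2}, \cite[Prop. 3.6]{KMS1}) are for the $L^2$-inner product $H^0(\M,\omega;\Ei(m),\FS(\GH_m)\otimes P^E)$, not for the Fock inner product on $\GE_m$; the discrepancy between these two inner products is exactly the operator $A_{E,m}$, so the input you need is precisely the ``hidden Szeg\"o expansion'' that this lemma is supposed to establish --- as written, the argument is circular. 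For (ii), $\varsigma^{(m)}$ is not multiplicative, so even granting an operator expansion of $A_{E,m}^{-1}$, pulling it through $\varsigma^{(m)}$ in the triple product at order $m^{-1}$ requires a symbol-composition (semiclassical) formula on the quotient module that you have not justified; and the claimed correction $-m^{-1}\{\tr_\omega\Theta^E-\mu(\Ei)\bone_E,\,\cdot\,\}$ is asserted rather than derived --- since you have no independent expansion of the Fock-space transform, there is nothing against which to check that the two $m^{-1}$ contributions really recombine into $\{\tr_\omega\Theta^E,f\}/2-\mu(\Ei)f+\Delta^E_\omega f$.

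The paper's proof avoids all asymptotic manipulation of $A_{E,m}$ by an exact identity: since $A_{E,m}^{-1/2}\GE_m=\GK^E_m$, conjugation converts the frame matrix of an orthonormal basis of $\GE_m$ into that of $\GK^E_m$, i.e. $A_{E,m}^{-1/2}\FS^\dagger(\GE_m)A_{E,m}^{-1/2}=\FS^\dagger(\GK^E_m)$, and hence by \eqref{VEvsTEsymb}
$$
\varsigma_{V_E}^{(m)}(X)=c_{E,m}\,(\id\otimes\Tr)\bigl((\bone\otimes X)\FS^\dagger(\GK^E_m)\bigr).
$$
Consequently $c_{E,m}^{-1}\varsigma_{V_E}^{(m)}\circ\breve{\varsigma}_{V_E}^{(m)}$ \emph{is} exactly (not merely asymptotically) the geometric Berezin transform attached to the Hilbert space $H^0(\M,\omega;\Ei(m),\FS(\GH_m)\otimes P^E)$, with $c_{E,m}^{-1}\varsigma_{V_E}^{(m)}(P_{E,m})$ equal to its Szeg\"o endomorphism; the expansions of \cite[Thm. 5.2]{Wang2} (for $f=\bone_E$) and \cite[Prop. 3.6]{KMS1} (general $f$) then apply verbatim after dividing by $\chi(\Ei(m))/(\vol(\M,\Li)\rank\Ei)=m^d+m^{d-1}(\mu(\Ei)-\underline{s}_\omega/2)+\cdots$. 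To repair your argument, replace the Neumann-series step by this exact change-of-inner-product identity; your preliminary identifications (the bridge $A_{T_E}=\breve{\varsigma}(P^E)|_{\GE_\N}$, the use of \eqref{VEToepl} and \eqref{VEvsTEsymb}, and the holomorphic-structure caveat for $\Theta^E$) are fine and coincide with the paper's.
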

If the scalar curvature $s_\omega$ were not constant we would have
$$
\varsigma_{V_E}^{(m)}(\breve{\varsigma}_{V_E}^{(m)}(\bone_E))=\bone_E+m^{-1}(\tr_\omega\Theta^E-\mu(\Ei)-s_\omega/2+\underline{s}_\omega)\bone_E+O(m^{-2})
$$
where $\underline{s}_\omega:=\omega(s_\omega)$.
\begin{proof}
We are going to show that 
$$
\Sigma^{E(m)}:=c_{E,m}^{-1}\varsigma_{V_E}^{(m)}(\breve{\varsigma}_{V_E}^{(m)}(\bone_E))
$$
is the Szegö kernel of $\GK^E_m:=H^0(\M,\omega;\Ei(m),\FS(\GH_m)\otimes P^E)$. Then we will obtain the desired expansion from a rescaling of that in \cite[Thm. 5.2]{Wang2} 

Let $(\psi_j)_{j\in\J}$ be an orthonormal basis for $\GK^E_m$ and let
$$
\FS^\dagger(\GK^E_m)\in\End\Gamma^0(\M;\Ei(m),\FS(\GK^E_m))\otimes\Bi(\GK^E_m)
$$
be the matrix whose $(j,k)$th entry is the rank-1 endomorphism $|\psi_j)(\psi_k|$. Using the isomorphism $\End\Gamma^0(\M;\Ei(m))\cong\End\Gamma^0(\M;\Ei)$ we shall always regard $\FS^\dagger(\GK^E_m)$ as an element of $\End\Gamma^0(\M;\Ei)\otimes\Bi(\GK^E_m)$. We want to prove the formula
\begin{equation}\label{varsigmaVE}
\varsigma_{V_E}^{(m)}(X)=c_{E,m}(\id\otimes\Tr)((\bone\otimes X)\FS^\dagger(\GK^E_m)).
\end{equation}
Then $c_{E,m}^{-1}\varsigma_{V_E}^{(m)}(P_{E,m})$ would be the sum of the diagonal elements of $\FS^\dagger(\GK^E_m)$, which is indeed the Szegö kernel of $\GK^E_m$.

Since $A_{E,m}^{-1/2}\GE_m=\GK^E_m$ we have
$$
A_{E,m}^{-1/2}\FS^\dagger(\GE_m)A_{E,m}^{-1/2}=\sum_j|A_{E,m}^{-1/2}\phi_j)(A_{E,m}^{-1/2}\phi_j|=\sum_j|\psi_j)(\psi_j|
$$
where $(\phi_j)_{j\in\J}$ is an orthonormal basis for $\GE_m$, where $(\psi_j)_{j\in\J}=(A_{E,m}^{-1/2}\phi_j)_{j\in\J}$ is an orthonormal basis for $\GK^E_m$ and where $|\psi_j)(\psi_j|$ is the associated rank-1 operator on $\Gamma^0(\M;\Ei(m),\FS(\GH_m)\otimes P^E)$. That is,
$$
A_{E,m}^{-1/2}\FS^\dagger(\GE_m)A_{E,m}^{-1/2}=\FS^\dagger(\GK^E_m).
$$
From \eqref{VEvsTEsymb} we have
\begin{align*}
\varsigma_{V_E}^{(m)}(X)&=c_{E,m}\varsigma^{(m)}(A_{E,m}^{-1/2}XA_{E,m}^{-1/2})=c_{E,m}(\id\otimes\Tr)((\bone\otimes A_{E,m}^{-1/2}XA_{E,m}^{-1/2})\FS^\dagger(\GE_m))
\\&=c_{E,m}(\id\otimes\Tr)((\bone\otimes X)A_{E,m}^{-1/2}\FS^\dagger(\GE_m)A_{E,m}^{-1/2}))
\end{align*}
and so \eqref{varsigmaVE} holds. 

From \cite[Thm. 5.2]{Wang2} we have 
$$
\frac{\chi(\Ei(m))}{\vol(\M,\Li)\rank\Ei}\Sigma^{E(m)}=m^d\bone_E+m^{d-1}(\tr_\omega\Theta^E-s_\omega/2)\bone_E+O(m^{d-2}),
$$
from which we obtain the result for $f=\bone_E$ using
$$
\frac{\chi(\Ei(m))}{\vol(\M,\Li)\rank\Ei}=m^d+m^{d-1}(\mu(\Ei)-\underline{s}_\omega/2)+\cdots.
$$
The same argument gives that the expansion of $\varsigma_{V_E}^{(m)}(\breve{\varsigma}_{V_E}^{(m)}(f))$ for general $f$ follows from the expansion in \cite[Prop. 3.6]{KMS1}. 

\end{proof}

The following gives a geometric meaning to the compact operator $P_E-A_{T_E}$: 
\begin{thm}\label{VEsymbprop}
$$
\varsigma^{(m)}_{V_E}(P_{E,m}-A_{T_E,m})=m^{-1}(\tr_\omega\Theta^E-\mu(\Ei)\bone_E)+O(m^{-2}).
$$
\end{thm}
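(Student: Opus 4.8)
The plan is to combine Lemma \ref{vectorexplemma} with the identification $A_{T_E}=A_E=\breve{\varsigma}(P^E)|_{\GE_\N}$ together with the fact that $\breve{\varsigma}_{V_E}(\bone_E)$ is essentially $A_E^{-1}$ up to the restriction to $\GE_\N$. Concretely, from formula \eqref{VEToepl} with $f=\bone_E$ we get $\breve{\varsigma}_{V_E}^{(m)}(\bone_E)=A_{E,m}^{-1/2}\breve{\varsigma}^{(m)}(\bone_E)A_{E,m}^{-1/2}=A_{E,m}^{-1/2}A_{E,m}A_{E,m}^{-1/2}=\bone_{\GK^E_m}$, which is consistent, but the point is rather to run the computation backwards: I want to express $\varsigma^{(m)}_{V_E}(A_{T_E,m})$ in closed form. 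Using \eqref{VEvsTEsymb} we have $\varsigma_{V_E}^{(m)}(A_{E,m})=c_{E,m}\varsigma^{(m)}(A_{E,m}^{-1/2}A_{E,m}A_{E,m}^{-1/2})=c_{E,m}\varsigma^{(m)}(P_{E,m}|_{\GE_m})$... wait, more carefully, $A_{E,m}^{-1/2}A_{E,m}A_{E,m}^{-1/2}$ is the identity on $\GE_m$, i.e. $P_{E,m}$ itself, so $\varsigma_{V_E}^{(m)}(A_{T_E,m})=c_{E,m}\varsigma^{(m)}(P_{E,m})=c_{E,m}P^E_m$, and since $\varsigma(P_E)=P^E$ is continuous, Theorem \ref{bigCDversuslocallyfree} gives $P^E_m=P^E=\bone_E$ as endomorphism of $\Ei$ (under the frame identification) for $m\gg 0$. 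Meanwhile $\varsigma_{V_E}^{(m)}(P_{E,m})=c_{E,m}\varsigma^{(m)}(A_{E,m}^{-1/2}P_{E,m}A_{E,m}^{-1/2})=c_{E,m}\varsigma^{(m)}(A_{E,m}^{-1})$, since $A_{E,m}^{-1/2}P_{E,m}A_{E,m}^{-1/2}=A_{E,m}^{-1}$ on $\GE_m$. So the left side of the claimed identity becomes $c_{E,m}\big(\varsigma^{(m)}(A_{E,m}^{-1})-P^E\big)$, and I must show this has the asserted expansion.

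The main step, then, is to identify $c_{E,m}\varsigma^{(m)}(A_{E,m}^{-1})$ with the Berezin transform $\varsigma_{V_E}^{(m)}(\breve{\varsigma}_{V_E}^{(m)}(\bone_E))$ appearing in Lemma \ref{vectorexplemma}, up to the correction $\varsigma_{V_E}^{(m)}(P_{E,m}-A_{E,m})$ we are computing — but that is circular, so I need an independent handle. The cleaner route: recall from the proof of Lemma \ref{vectorexplemma} that $\Sigma^{E(m)}:=c_{E,m}^{-1}\varsigma_{V_E}^{(m)}(\breve{\varsigma}_{V_E}^{(m)}(\bone_E))$ is the Szegö kernel of $\GK^E_m=H^0(\M,\omega;\Ei(m),\FS(\GH_m)\otimes P^E)$, and that $c_{E,m}^{-1}\breve{\varsigma}^{(m)}(P^E)$ is the Gram matrix of the Parseval $C^*$-frame $\boldsymbol\psi$ regarded as a frame for $\GK^E_m$ (this was established in the proof of Theorem \ref{backtoHardythmgen} and in \S\ref{versussec}). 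Standard frame theory for Hilbert $C^*$-modules (cf. \cite{Bala1}, \cite{FrLa3}) says that the Szegö/frame operator on the $C^0(\M)$-module side and the Gram operator on the Hilbert-space side are related by $\Sigma^{E(m)}=\omega$-symbol of the \emph{inverse} Gram matrix, precisely because $\boldsymbol\psi$ is a Parseval $C^*$-frame (so its frame operator on the module side is $\bone$) while its Gram matrix on the Hilbert-space side is $c_{E,m}^{-1}A_{E,m}$. Thus $\Sigma^{E(m)}=\varsigma^{E(m)}$ applied to $(c_{E,m}^{-1}A_{E,m})^{-1}=c_{E,m}A_{E,m}^{-1}$, which up to the normalizing scalars is exactly $c_{E,m}\varsigma^{(m)}(A_{E,m}^{-1})$. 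Hence $\varsigma^{(m)}_{V_E}(P_{E,m})=c_{E,m}\varsigma^{(m)}(A_{E,m}^{-1})=c_{E,m}^{-1}\cdot c_{E,m}^{2}\Sigma^{E(m)}\cdot(\text{scalar bookkeeping})$... I will have to be careful with the precise powers of $c_{E,m}$, matching the normalization $\bra\cdot|\cdot\ket_{\omega,P^{E(m)}}$ in \eqref{GKEinnerprod}, but morally $\varsigma^{(m)}_{V_E}(P_{E,m})=\varsigma_{V_E}^{(m)}(\breve{\varsigma}_{V_E}^{(m)}(\bone_E))$, which is the left-hand side of the displayed expansion in Lemma \ref{vectorexplemma}.

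Once that identification is in place, the proof closes immediately: subtract. We have
\begin{align*}
\varsigma^{(m)}_{V_E}(P_{E,m}-A_{T_E,m})
&=\varsigma^{(m)}_{V_E}(P_{E,m})-\varsigma^{(m)}_{V_E}(A_{T_E,m})\\
&=\varsigma_{V_E}^{(m)}(\breve{\varsigma}_{V_E}^{(m)}(\bone_E))-\bone_E\\
&=m^{-1}(\tr_\omega\Theta^E-\mu(\Ei)\bone_E)+O(m^{-2}),
\end{align*}
where the last equality is the special case $f=\bone_E$ of Lemma \ref{vectorexplemma}, and where I used $\varsigma_{V_E}^{(m)}(A_{T_E,m})=c_{E,m}\varsigma^{(m)}(P_{E,m})=c_{E,m}P^E_m=\bone_E$ for $m\gg 0$, the last step by Theorem \ref{bigCDversuslocallyfree} (which applies because $\varsigma(P_E)=P^E$ is assumed $C^0$ here, and then $P^E_m=P^E$ and $P^E$ defines $\bone_E$ as the identity endomorphism under the frame normalization). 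The complete positivity and injectivity of $\varsigma_{V_E}^{(m)}$ come for free from its being the adjoint of the unital positive Toeplitz map $\breve{\varsigma}_{V_E}^{(m)}$ with a completely positive splitting, exactly as in Corollary \ref{LinftyToepl}.

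The hard part will be pinning down the exact scalar normalizations so that $\varsigma^{(m)}_{V_E}(P_{E,m})$ really equals $\varsigma_{V_E}^{(m)}(\breve{\varsigma}_{V_E}^{(m)}(\bone_E))$ on the nose (rather than off by some power of $c_{E,m}$ that would spoil the leading term), and verifying that the frame-theoretic duality "Parseval frame operator $\bone$ on the module side $\Longleftrightarrow$ Gram matrix $G$ on the Hilbert side $\Longrightarrow$ Szegö endomorphism $=\omega(G^{-1})$" is applied with the inner product \eqref{GKEinnerprod} and not the bare $L^2$ one. I expect no conceptual obstacle beyond this bookkeeping: everything else is a direct concatenation of \eqref{VEvsTEsymb}, \eqref{VEToepl}, Theorem \ref{backtoHardythmgen}, Theorem \ref{bigCDversuslocallyfree}, and the $f=\bone_E$ case of Lemma \ref{vectorexplemma}.
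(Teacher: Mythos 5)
Your skeleton is the same as the paper's: evaluate $\varsigma^{(m)}_{V_E}$ separately on $P_{E,m}$ and on $A_{T_E,m}$ via \eqref{VEvsTEsymb}, recognize $\varsigma^{(m)}_{V_E}(P_{E,m})$ as the Berezin transform $\varsigma^{(m)}_{V_E}(\breve{\varsigma}^{(m)}_{V_E}(\bone_E))$, and invoke the $f=\bone_E$ case of Lemma \ref{vectorexplemma}. But two of your intermediate moves need repair. The lesser one: your worry about circularity is unfounded, and the Szegö-kernel/Gram-matrix detour you erect to avoid it is unnecessary. You already computed $\breve{\varsigma}^{(m)}_{V_E}(\bone_E)=A_{E,m}^{-1/2}A_{E,m}A_{E,m}^{-1/2}=P_{E,m}$ from \eqref{VEToepl}; applying $\varsigma^{(m)}_{V_E}$ to both sides of that identity gives $\varsigma^{(m)}_{V_E}(P_{E,m})=\varsigma^{(m)}_{V_E}(\breve{\varsigma}^{(m)}_{V_E}(\bone_E))$ on the nose, which is exactly how the paper gets it, with no scalar bookkeeping left over. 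Relatedly, $\varsigma^{(m)}(P_{E,m})$ is not $P^E_m$: by the proof of Theorem \ref{bigCDversuslocallyfree} it equals $P^E+C^E_m$ with $P^EC^E_m=0=C^E_mP^E$, and the term $C^E_m$ only disappears after the compression by $P^E$ that is implicit in regarding $\varsigma^{(m)}_{V_E}$ as landing in $\End\Gamma^0(\M;\Ei)$; harmless, but it should be said.

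The genuine gap is the chain $\varsigma^{(m)}_{V_E}(A_{T_E,m})=c_{E,m}\varsigma^{(m)}(P_{E,m})=c_{E,m}P^E_m=\bone_E$. The last equality is false as an exact identity, and the error is precisely of the order you are computing: $c_{E,m}=n_m\rank\Ei/\chi(\Ei(m))$ satisfies $c_{E,m}-1=-m^{-1}\mu(\Ei)+O(m^{-2})$, so the discrepancy between $c_{E,m}P^E$ and $\bone_E$ shifts the coefficient of $m^{-1}$ by exactly $\mu(\Ei)\bone_E$ --- it decides whether the answer is $\tr_\omega\Theta^E-\mu(\Ei)\bone_E$ or just $\tr_\omega\Theta^E$ (and indeed this is the very point at which the paper's displayed statement and the last line of its own proof disagree). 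You cannot absorb a $1+O(m^{-1})$ factor here the way one does in the compactness arguments elsewhere. The paper's route to the needed exact identity $\varsigma^{(m)}_{V_E}(A_{T_E,m})=P^E$ is not your $c_{E,m}\cdot$symbol computation but the frame symbol $\varsigma^{E(m)}$ built from $\FS^\dagger(\GE_m)$ (cf. the proofs of Lemma \ref{vectorexplemma} and Theorem \ref{HalfWangthm}), whose exactness rests on the fact, from the proof of Theorem \ref{backtoHardythmgen}, that the frame $\boldsymbol\psi$ is simultaneously a Parseval $C^*$-frame for $\Gamma^0(\M;\Ei(m),\FS(\GH_m)\otimes P^E)$ and a Parseval frame for the Hilbert space $\GE_m$, so that $\sum_j|\psi_j)(\psi_j|=\bone_E$ with no normalizing constant. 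To close your argument you must either prove that identity with the normalizations of $\omega_E$, $\phi^E_m$ and \eqref{GKEinnerprod} tracked so that no stray $c_{E,m}$ survives, or carry $c_{E,m}$ honestly through the subtraction and account for the resulting $\mu(\Ei)$-term; the "scalar bookkeeping" you defer at the end is therefore not bookkeeping --- it is the entire remaining content of the proof.
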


\begin{proof}

Taking $X=P_{E,m}$ in \eqref{VEvsTEsymb} yields $\varsigma^{(m)}_{V_E}(P_{E,m})=\varsigma^{E(m)}(A_{T_E,m}^{-1})$ and $\varsigma^{(m)}_{V_E}(A_E)=\varsigma^{E(m)}(P_{E,m})=P^E$. We obtain
\begin{align*}
\varsigma^{(m)}_{V_E}(P_{E,m}-A_{T_E,m})&=\varsigma^{(m)}_{V_E}(P_{E,m})-P^E
\\&=\varsigma^{(m)}_{V_E}(\breve{\varsigma}^{(m)}_{V_E}(P^E)-P^E
\\&=m^{-1}\tr_\omega\Theta^E+O(m^{-2}).
\end{align*}
\end{proof}

\section{Lifts of Yang--Mills metrics}

\subsection{Balanced metrics}\label{balasec}
The notion of balanced metrics on holomorphic vector bundles was introduced in \cite{Wang1}. In this section we shall reformulate it in terms of Hilbert modules and frames. 

Reall from \S\ref{versussec} that if $\GE_m$ is an inner product on $H^0(\M;\Ei(m))$ then $\FS(\GE_m)$ denotes the projection over $C^\infty(\M)$ with a Parseval $C^*$-frame consisting of an orthonormal basis for $\GE_m$. 
\begin{dfn}\label{dfnofbalance}
Let $\Ei$ be a holomorphic vector bundle over $\M$ with $\Aut(\Ei)=\C\bone_E$. Let $m\in\N$ be large enough so that $\dim H^0(\M;\Ei)=\chi(\Ei(m))$. A Hermitian metric $B^{E(m)}\in C^0(\M)\otimes\Bi(H^0(\omega,B^{E(m)}))$ on $\Ei(m)$ is $\omega$-\textbf{balanced} if 
$$
B^{E(m)}=\FS(H^0(\omega,B^{E(m)})).
$$
In other words, $B^{E(m)}$ is $\omega$-balanced if
\begin{enumerate}[(i)]
\item{$B^{E(m)}$ has a Parseval $C^*$-frame consisting of a basis for the vector space $H^0(\M;\Ei(m))$ and}
\item{if $P_{E,m}$ denotes the identity operator on the vector space $H^0(\M;\Ei(m))$ then
\begin{equation}\label{balanceeq}
\omega(B^{E(m)})=\frac{\rank\Ei}{\chi(\Ei(m))}P_{E,m}.
\end{equation}}
\end{enumerate}
If $\Aut(\Ei)$ is nontrivial then a Hermitian metric $B^{E(m)}$ on $\Ei(m)$ is \textbf{weakly $\omega$-balanced} if $B^{E(m)}$ is the direct sum of $\omega$-balanced metrics on the summands in some decomposition of $\Ei(m)$ into a direct sum of simple holomorphic vector bundles. If the summands have the same reduced Hilbert polynomials then $B^{E(m)}$ is $\omega$-\textbf{balanced}.
\end{dfn}
In other words, $B^{E(m)}$ is weakly $\omega$-balanced if it has a Parseval $C^*$-frame consisting of an orthonormal basis for the Hilbert space $\GK^E_m:=H^0(\M,\omega;\Ei(m),B^{E(m)})$: in symbols $B^{E(m)}=\FS(H^0(\omega,B^{E(m)}))$. The defining properties (i) and (ii) of an $\omega$-balanced metric are thus the same no matter what $\Aut(\Ei)$ is, since (ii) requires the summands in any decomposition of $\Ei$ to have the same reduced Hilbert polynomial, viz. $\chi(\Ei(m))/\rank\Ei$. 

By Remark \ref{frameomegaremark}, our notion of balanced metric coincides with that of \cite{Wang1}. Indeed, (i) in Definition \ref{dfnofbalance} says that $B^{E(m)}=\FS(\GK^E_m)$ for some inner product $\GK^E_m$ on $H^0(\M;\Ei(m))$ while (ii) says that $\GK^E_m=H^0(\omega,B^{E(m)})$. 

When $B^{E(m)}$ is balanced we also say that the inner product (or Hilbert space) $H^0(\omega,B^{E(m)})$ is balanced. Thus, an inner product $\GE_m$ on $H^0(\M;\Ei(m))$ is balanced iff the $L^2$-inner product of the Hermitian metric $\FS(\GE_m)$ coincides with $\GE_m$. 

\begin{Remark}
A $\G$-equivariant vector bundle need not admit an $\omega$-balanced metric, since the irreducible summands need not have the same reduced Hilbert polynomials, but it always admis a weakly $\omega$-balanced metric in the above sense (see \cite{Moss4}).  
\end{Remark}

\begin{Remark}[The normalization constant]
Since $B^{E(m)}$ must have fiber trace equal to $\rank\Ei$, the only inner product on $H^0(\M;\Ei(m))$ which can support a Parseval $C^*$-frame for $B^{E(m)}$ as orthonormal basis is the one in which the natural $L^2(B^{E(m)},\omega)$ inner product. If this inner product is scaled by $\chi(\Ei(m))/\rank\Ei$ then the frame bound is scaled by $\chi(\Ei(m))/\rank\Ei$ and the balancedness condition becomes to existence of a tight frame with frame constant $\chi(\Ei(m))/\rank\Ei$. 

\end{Remark}
The balance condition reads $B^{E(m)}=\FS(H^0(\omega,B^{E(m)}))$.
Recall from \S\ref{versussec} that this means that there is a basis $(\psi_j)_{j\in\J}$ for the vector space $H^0(\M;\Ei(m))$ such that
$$
B^{E(m)}(x)=\sum_{j,k\in\J}\bra\psi_j(x)|\psi_k(x)\ket_{H^0(\omega,B^{E(m)})}|\psi_j\ket\bra\psi_k|\in\End(H^0(\M;\Ei(m))).
$$
If we have either a surjection $\Ai\otimes\C^N\to E_\N\to 0$ or an embedding $E_\N\hookrightarrow\Ai\otimes\C^N$ of graded $\Ai$-modules then we could regard $B^{E(m)}(x)$ as an element
$$
B^{E(m)}(x)\in\End(\Ai_m\otimes\C^N)=\Bi(\GH_m\otimes\C^N).
$$
The assumption that the surjection (or embedding) is a map of graded $\Ai$-modules ensures that we still have $B^{E(m)}=\FS(H^0(\omega,B^{E(m)}))$ for the same holomorphic vector bundle $\Ei(m)$, up to holomorphic isomorphism. 


Let us now investigate the consequences of this representation of the balanced metrics, first in the case of a surjection $\Ai\otimes\C^N\to E_\N\to 0$. 

Given the holomorphic structure on $\Ei$ there is an $m_0\in\N_0$ such that $\Ei(m)$ is Castelnuovo--Mumford regular for all $m\geq m_0$. Thus, the graded vector space $E_{\geq m_0}:=\bigoplus_{m\geq m_0}H^0(\M;\Ei(m))$ is a finitely generated graded $\Ai$-module and isomorphic to a graded quotient of $\Ai_{\geq m_0}\otimes H^0(\M;\Ei(m_0))$. Idenfity $E_{\geq m_0}$ with a vector subspace of $\Ai_{\geq m_0}\otimes H^0(\M;\Ei(m_0))$. If we let $\GE_{m_0}$ be $H^0(\M;\Ei(m_0))$ endowed with some given inner product then the closure $\GE_{\geq m_0}$ of $E_{\geq m_0}$ in $\GH_{\geq m_0}\otimes\GE_{m_0}$ is a quotient module in the sense of Hilbert modules, if we endow it with the action given by compressing the $\Ai$-action on $\GH_{\geq m_0}\otimes\GE_{m_0}$, 
and the underlying graded $\Ai$-module is isomorphic to $E_{\geq m_0}$. 

Now $B^{E(m)}$ can be viewed as a positive idempotent element of $C^0(\M)\otimes\Bi(\GH_m\otimes\GE_{m_0})$. The operator $P_{E,m}$ in \eqref{balanceeq} is then the projection of $\GH_m\otimes\GE_{m_0}$ onto the Hilbert subspace $\GE_m$. Define a projection $B^E_m\in C^0(\M)\otimes\Bi(\GE_{m_0})$ by writing
$$
B^{E(m)}=\FS(\GH_m)\otimes B^E_m.
$$
Then $B^E_m$ defines $\Ei$ as smooth vector bundle for each $m$. 


Using the explicit formula \eqref{Toeplexplic} for the Toeplitz map $\breve{\varsigma}^{(m)}$ we can rewrite \eqref{balanceeq} as  
%
\begin{equation}\label{balanceeqToepl}
\breve{\varsigma}^{(m)}(B^E_m)=c_{E,m}P_{E,m}
\end{equation}
with the constant 
$$
c_{E,m}:=\frac{n_m\rank\Ei}{\chi(\Ei(m))}. 
$$
If we have a balanced metric $B^{E(m)}=\FS(\GH_m)\otimes B^E_m$ on $\Ei(m)$ for each $m\geq m_0$ then 
$$
\sum_{m\geq m_0}c_{E,m}^{-1}\breve{\varsigma}^{(m)}(B^E_m)=P_E,
$$
where $P_E$ is the orthogonal projection of $\GH_{\geq m_0}\otimes\GE_{m_0}$ onto $\GE_{\geq m_0}$. 


\subsection{Superharmonic lifts}

We have seen hat if $P^E=\FS(\GE_0)$ is a projection in $C^0(\M)\otimes\Mn_N(\C)$ which defines a holomorphic subbundle $\Ei\subset\Oi_\M\subset\C^N$ then there is a natural associated quotient module $\GE_\N$ with $\FS(\GH_m)\otimes P^E=\FS(\GE_m)$ for all $m$ (Proposition \ref{geomintprop}). If the projection $P_E$ onto $\GE_\N$ has continuous symbol $\varsigma(P_E)$ then we have shown that $\varsigma(P_E)=P^E$ (Theorem \ref{bigCDversuslocallyfree}). But we shall see later that $\varsigma(P_E)$ is rarely continuous (Theorem \ref{GiesSOTlimprop}). 

Also, as we discussed, for any $N\times N$-projection $P^E$ over $L^\infty(\M)$ the Toeplitz range $\Ran\breve{\varsigma}(P^E)$ is a projection onto a graded module quotient of $\GH_\N\otimes\C^N$. And $\varsigma(\Ran\breve{\varsigma}(P^E))$ is continuous when $P^E$ is. But even if $P^E$ is over $C^\infty(\M)$ and even if the vector bundle $\Ei$ defined by $P^E$ is holomorphic it cannot hold that $\Ran\breve{\varsigma}(P^E)$ lifts $P^E$ unless possibly if $\Ei$ is globally generated. 
However, there is a special class of Hermitian metrics which do have coinvariant lifts:
\begin{Lemma}\label{coinvYMlemma}
Let $\Ei$ be a slope-stable holomorphic vector bundle over $\M=\G/\K$, and let $P^E$ be the Yang--Mills metric on $\Ei$. Then the $\Psi$-superharmonic projection $\Ran\breve{\varsigma}(P^E)$ is a lift of $P^E$:
$$
\varsigma(\Ran\breve{\varsigma}(P^E))=P^E.  
$$
\end{Lemma}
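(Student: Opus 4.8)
The plan is to combine Wang's theorem (Lemma \ref{Wanglemma}) with the uniqueness statement for superharmonic lifts (Proposition \ref{uniqeliftprop}) and the explicit Toeplitz formula \eqref{balanceeqToepl} for balanced metrics. Since $\Ei$ is slope-stable it is in particular Gieseker-stable, hence simple, so $\Aut(\Ei)=\C\bone_E$ and the balanced-metric machinery of \S\ref{balasec} applies cleanly. By Lemma \ref{Wanglemma} there is a sequence $(B^E_m)_{m\gg 0}$ of metrics on $\Ei$, converging to $P^E$ in $C^\infty(\M)$, such that $\FS(\GH_m)\otimes B^E_m$ is balanced on $\Ei(m)$; by \eqref{balanceeqToepl} this says $\breve{\varsigma}^{(m)}(B^E_m)=c_{E,m}P_{E,m}$, where $P_{E,m}$ is the projection of $\GH_m\otimes\GE_0$ onto the subspace $\GE_m:=H^0(\M;\Ei(m))$ (completed appropriately), and $c_{E,m}=n_m\rank\Ei/\chi(\Ei(m))$.

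First I would assemble these pieces into the statement that $P_E:=\sum_m P_{E,m}$, the projection onto the graded quotient module $\GE_\N$ underlying $\bigoplus_m H^0(\M;\Ei(m))$, is a coinvariant lift of $P^E$. The key observation is that the range projection $P_{E,m}$ of the \emph{balanced} Toeplitz operator $c_{E,m}^{-1}\breve{\varsigma}^{(m)}(B^E_m)$ equals the range projection of $\breve{\varsigma}^{(m)}(B^E_m)$, and since $B^E_m\to P^E$ in $C^\infty(\M)$ (hence in $C^0(\M)$, hence $\|\breve{\varsigma}^{(m)}(B^E_m)-\breve{\varsigma}^{(m)}(P^E)\|\to 0$ because $\breve{\varsigma}$ is contractive and $\varsigma\breve{\varsigma}=\id$ forces the difference to go to zero on the relevant scale) and $c_{E,m}\to 1$, one gets that $\breve{\varsigma}^{(m)}(P^E)$ is, for $m\gg 0$, a small perturbation of $c_{E,m}P_{E,m}$. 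In particular $\breve{\varsigma}^{(m)}(P^E)$ restricted to $\GE_m$ is invertible for $m\gg 0$, so its range projection coincides with $P_{E,m}$ up to finite rank, i.e. $\Ran\breve{\varsigma}(P^E)=P_E$ modulo finite-rank operators. Since $P_E$ is the projection onto a genuine graded quotient module, it is $\Psi$-superharmonic (Lemma \ref{PEinNilemma}), and its symbol is $\varsigma(P_E)=\lim_m\varsigma^{(m)}(P_{E,m})$. Then $\varsigma(P_E)=\lim_m \varsigma^{(m)}(c_{E,m}^{-1}\breve{\varsigma}^{(m)}(B^E_m))=\lim_m c_{E,m}^{-1}B^E_m=P^E$, using $\varsigma\circ\breve{\varsigma}=\id$ at each level, $c_{E,m}\to 1$, and $B^E_m\to P^E$. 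That gives $\varsigma(\Ran\breve{\varsigma}(P^E))=\varsigma(P_E)=P^E$, which is the claim.

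An alternative, perhaps cleaner, route is to invoke Proposition \ref{uniqeliftprop} directly once one knows $P^E$ has \emph{some} coinvariant lift: the balanced-metric quotient module $\GE_\N$ provides a coinvariant projection $P_E$ with $\varsigma(P_E)=P^E$ (the last equality again from $c_{E,m}^{-1}B^E_m\to P^E$ and $\varsigma\breve{\varsigma}=\id$), and then Proposition \ref{uniqeliftprop} immediately yields that $\Ran\breve{\varsigma}(P^E)$ is also a coinvariant lift, equal to $P_E$ up to finite rank, so in particular $\varsigma(\Ran\breve{\varsigma}(P^E))=P^E$. This is the structure I would write out.

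The main obstacle is the passage from the $C^\infty$-convergence $B^E_m\to P^E$ to control of $\varsigma(P_E)=\lim_m\varsigma^{(m)}(P_{E,m})$: one must show that replacing $\breve{\varsigma}^{(m)}(B^E_m)$ (whose range projection is exactly $P_{E,m}$ by balancedness) by $\breve{\varsigma}^{(m)}(P^E)$ does not change the range projection for $m\gg 0$, and that the symbol of the limiting projection is genuinely $P^E$ rather than some other representative. The delicate point is that $\varsigma^{(m)}$ is norm-contractive but the difference $B^E_m-P^E$ is only known to be small in $C^\infty$, not quantitatively in a way directly comparable to the gap in the spectrum of $\breve{\varsigma}^{(m)}(B^E_m)$ near $c_{E,m}$; one needs that $c_{E,m}\to 1$ and that the ``error part'' $C^E_m$ in a decomposition $\varsigma^{(m)}(P_{E,m})=P^E_m\oplus C^E_m$ tends to zero, which is precisely where the hypothesis that $P^E$ is Yang--Mills (so the balanced metrics \emph{converge}, not merely exist) is used — this is exactly the distinction between Gieseker- and slope-stability highlighted in the introduction. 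I would isolate this as a short lemma: if balanced metrics on $\Ei(m)$ exist and converge in $C^\infty$, then for $m\gg 0$ the Toeplitz operator $\breve{\varsigma}^{(m)}(P^E)$ has range projection $P_{E,m}$ and $\lim_m\varsigma^{(m)}(P_{E,m})=P^E$ in $C^0(\M)$.
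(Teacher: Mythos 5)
Your proposal is correct and follows essentially the same route as the paper: Wang's theorem producing balanced metrics $B^E_m\to P^E$, the balance identity $\breve{\varsigma}^{(m)}(B^E_m)=c_{E,m}P_{E,m}$, contractivity of $\breve{\varsigma}^{(m)}$ plus $c_{E,m}\to 1$ to conclude that $P_E$ and $\breve{\varsigma}(P^E)$ differ by a compact, and then the uniqueness of superharmonic lifts (your ``alternative route'' is literally the paper's proof, while your primary route just inlines the Fredholm argument behind Proposition \ref{uniqeliftprop}). The only blemish is the phrase ``$\varsigma\circ\breve{\varsigma}=\id$ at each level'': at finite $m$ the composition $\varsigma^{(m)}\circ\breve{\varsigma}^{(m)}$ is the Berezin transform, not the identity, but this does not affect the conclusion since one either uses the expansion $\varsigma^{(m)}(\breve{\varsigma}^{(m)}(B^E_m))=B^E_m+O(m^{-1})$ or, as the paper does, notes that $P_E-\breve{\varsigma}(P^E)$ is compact and hence killed by $\varsigma$.
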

\begin{proof}
From \cite{Wang2} we know that there exists a sequence of metrics $(B^E_m)_{m\in\N_0}$ on $\Ei$ such that $\FS(\GH_m)\otimes B^E_m$ is an $\omega$-balanced metric on $\Ei(m)$ for large enough $m$ and
$$
\lim_{m\to\infty}B^E_m=P^E
$$
in $C^0$ (actually in $C^\infty$). 

We can represent $E_\N$ as a quotient of $\Ai\otimes\GE_0$ which is a graded quotient up to a finite-dimensional subspace. Therefore we can embed $E_\N$ as a vector subspace of $\GH_\N\otimes\GE_0$ whose closure is coinvariant up to a finite-dimensional subspace. We may choose the dimension of $\GE_0$ large enough so that $P^E$ is an element of $C^0(\M)\otimes\Bi(\GE_0)$. The balanced metric $B^E_m\otimes\FS(\GH_m)$ on $\Ei(m)$ has a Parseval $C^*$-frame given by elements of $E_m$ and hence can be regarded as an element of $C^0(\M)\otimes\Bi(\GH_m\otimes\GE_0)$ as determined by our chosen embedding $E_m\subset\GH_m\otimes\GE_0$. The projection $B^E_m$ is an element of $C^0(\M)\otimes\Bi(\GE_0)$. The balance condition then says that $c_{E,m}^{-1}\breve{\varsigma}^{(m)}(B^E_m)$ equals the projection $P_{E,m}$ of $\GH_m\otimes\GE_0$ onto the embedded copy of $E_m$. 

Now $\|\breve{\varsigma}^{(m)}(f)\|\leq\|f\|$ for all $f\in C^0(\M)$, so
$$
\lim_{m\to\infty}\|\breve{\varsigma}^{(m)}(B_m^E-P^E)\|\leq\lim_{m\to\infty}\|B_m^E-P^E\|=0,
$$
i.e. $\breve{\varsigma}(P^E)$ and $\sum_m\breve{\varsigma}^{(m)}(B^E_m)$ differ by a compact operator. In turn this gives that $\breve{\varsigma}(P^E)$ and $P_E=\sum_mc_{E,m}^{-1}\breve{\varsigma}^{(m)}(B^E_m)$ also differ by a compact, since $c_{E,m}^{-1}=1+O(m^{-1})$. So $P_E$ is a lift of $P^E$, i.e. $\varsigma(P_E)=P^E$. In particular $P_E$ is a projection over $\Ti_\GH^{(0)}$. We want to show that $P_E$ differs from the range projection of $\breve{\varsigma}(P^E)$ only by a finite-rank operator. But $P_E$ is $\Psi$-superharmonic (up to finite-rank operators), so $\varsigma(P_E)=P^E$ gives its $\Psi$-harmonic part as $\breve{\varsigma}(P^E)$, so we are done by the uniquness of superharmonic lifts (Proposition \ref{uniqeliftprop}).

\end{proof}

\begin{thm}\label{YMCDthm}
Let $P^E$ be 
a projection over $C^\infty(\M)$ defining a Hermitian Yang--Mills vector bundle $\Ei$ over $\M$ and set $\GH^E_\N:=\Ran\breve{\varsigma}(P^E)$. Then the Cowen--Douglas sheaf $\Ei_{\rm CD}$ of $\GH^E_\N$ is analytically isomorphic over $\B\setminus\{0\}$ to the pullback $\Ei_{\B\setminus\{0\}}$ of $\Ei$; as Hermitian holomorphic vector bundles we have
\begin{equation}\label{Hermfactor}
\Ei_{\rm CD}=\Oi_{\rm CD}\otimes\Ei_{\B\setminus\{0\}}
\end{equation}
where $\Ei_{\B\setminus\{0\}}$ is endowed with the Hermitian metric given by pullback of $P^E$. Moreover, the subnormal tuple on the space $\GK^E_\N:=\breve{\varsigma}(P^E)^{-1/2}\GH^E_\N$ is unitarily equivalent (up to finite-rank operators) to the multiplication tuple on the Hardy space $H^0(\Sb,\omega,\FS(\GH_\N)\otimes P^E)$. 
\end{thm}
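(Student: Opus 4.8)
\begin{sop}
The plan is to check that a Yang--Mills metric $P^E$ meets the hypotheses of Lemma~\ref{coinvYMlemma}, of Theorem~\ref{bigCDversuslocallyfree} together with the corollary and the discussion following it, and of Theorem~\ref{backtoHardythmgen}, and then to read off the three assertions. If $\Ei$ is slope-stable, Lemma~\ref{coinvYMlemma} applies verbatim. In general a Hermitian Yang--Mills bundle is a direct sum $\Ei\cong\bigoplus_i\Ei_i$ of slope-stable bundles of the same slope with $P^E=\bigoplus_iP^{E_i}$ the direct sum of the Yang--Mills metrics on the $\Ei_i$, and since $\breve{\varsigma}$, the passage to range projections, and the symbol map $\varsigma$ all respect block-diagonal decompositions, applying Lemma~\ref{coinvYMlemma} to each $\Ei_i$ again gives $\varsigma(\Ran\breve{\varsigma}(P^E))=P^E$. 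The proof of Lemma~\ref{coinvYMlemma} further exhibits $\GH^E_\N=\Ran\breve{\varsigma}(P^E)$, up to a finite-dimensional subspace, as the completion in $\GH_\N\otimes\GE_0$ of the graded $\Ai$-module $E_\N=\bigoplus_{m\in\N_0}H^0(\M;\Ei(m))$, the section spaces being those of the given holomorphic structure on $\Ei$; thus the Serre sheaf of the graded $\Ai$-module underlying $\GH^E_\N$ is $\Ei$ with that structure (Lemma~\ref{Serrelemma}). Finally I would note that $P^E$, being the solution of an elliptic equation, is real-analytic.

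Because $P^E=\varsigma(\Ran\breve{\varsigma}(P^E))$ is $C^\infty$, in particular $C^0$, Theorem~\ref{bigCDversuslocallyfree}, the corollary following it, and the discussion after that corollary apply to $P_E:=\Ran\breve{\varsigma}(P^E)$: the Cowen--Douglas sheaf $\Ei_{\rm CD}$ is locally free on $\B\setminus\{0\}$, one has the factorization of Hermitian holomorphic vector bundles $\Ei_{\rm CD}=\Oi_{\rm CD}\otimes\Ei_{\B\setminus\{0\}}$ with $\Ei_{\B\setminus\{0\}}$ carrying the pullback of $P^E$, and the holomorphic structure of $\Ei_{\B\setminus\{0\}}$ is $\Un(1)$-equivariant and so descends, uniquely via Lemma~\ref{vectextlemma}, to a holomorphic structure on the smooth bundle defined by $P^E$. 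Since $\Oi_{\rm CD}\cong\Oi_\B$ holomorphically, this already yields the analytic isomorphism $\Ei_{\rm CD}\cong\Ei_{\B\setminus\{0\}}$ over $\B\setminus\{0\}$. It remains to see that the descended holomorphic structure is the one for which $P^E$ is Yang--Mills; for this I would compare $\Ei_{\rm CD}$ with the Serre sheaf of the graded $\Ai$-module $E_\N$, which by the first paragraph is $\Ei$ with its original structure, via Proposition~\ref{SerrevsCDthm} and the holomorphic-structure identification built into Theorem~\ref{backtoHardythmgen}, where the real-analyticity of $P^E$ is used. Together with the factorization this proves the first two assertions.

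For the last assertion I would invoke Theorem~\ref{backtoHardythmgen} directly; its hypotheses are exactly that $P^E$ is a projection over $C^0(\M)$ defining a smooth bundle $\Ei$ and that $\varsigma(\Ran\breve{\varsigma}(P^E))=P^E$, both now in hand. The limit operator $A_{T_E}$ of the spherical contraction $T_E$ on $\GH^E_\N$ equals the restriction $\breve{\varsigma}(P^E)|_{\GH^E_\N}$, and since $\breve{\varsigma}(P^E)$ annihilates the orthogonal complement of $\GH^E_\N$, the space $\GK^E_\N:=\breve{\varsigma}(P^E)^{-1/2}\GH^E_\N$ of the statement coincides with $A_{T_E}^{-1/2}\GH^E_\N$; it is well-defined modulo a finite-dimensional subspace because the continuity of $P^E=\varsigma(\Ran\breve{\varsigma}(P^E))$ makes $A_{T_E}$ Fredholm. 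Theorem~\ref{backtoHardythmgen} then identifies $\GK^E_\N$ with the Hardy space $H^0(\Sb,\omega;\FS(\GH_\N)\otimes P^E)$ so that the $\Sb$-isometry $V_E=A_{T_E}^{1/2}T_EA_{T_E}^{-1/2}$ on $\GK^E_\N$---the subnormal tuple of the statement, subnormality being automatic by \cite[Prop. 2]{Atha3}---goes to the multiplication tuple on that Hardy space, up to finite-rank operators.

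The step I expect to be the main obstacle is the identification of holomorphic structures in the second paragraph: one must be certain that the holomorphic structure the Cowen--Douglas construction places on the smooth bundle $\Ran P^E$ is precisely the one for which $P^E$ is Yang--Mills, and this genuinely relies on $P^E$ being real-analytic, so that the graded $\Ai$-module $\bigoplus_m H^0(\M;\Ei(m))$ faithfully records that structure. Without real-analyticity the two holomorphic structures---or even the two underlying smooth bundles---can disagree, so the hypothesis that $P^E$ is $C^\infty$ (hence real-analytic, for a Yang--Mills metric) cannot be dropped.
\end{sop}
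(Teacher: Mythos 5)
Your proposal is correct and follows essentially the same route as the paper: Lemma \ref{coinvYMlemma} (extended to the polystable case by a direct-sum argument, as in Proposition \ref{dirsumYMprop}) to get $\varsigma(\Ran\breve{\varsigma}(P^E))=P^E$ and identify the underlying graded $\Ai$-module with $E_\N$, then Theorem \ref{bigCDversuslocallyfree} and its corollary for the factorization \eqref{Hermfactor}, then Theorem \ref{backtoHardythmgen} for the identification of $\GK^E_\N$ with the Hardy space. The paper's own proof only differs in presentation, re-tracing the frame argument of Theorem \ref{backtoHardythmgen} to exhibit the unitary between $\GK^E_\N$ and $H^0(\Sb,\omega;\FS(\GH_\N)\otimes P^E)$ as an extension of the graded $\Ai$-module isomorphism, which is the same point you address via the Serre versus Cowen--Douglas comparison and real-analyticity of $P^E$.
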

\begin{proof}
We saw in the proof of Lemma \ref{coinvYMlemma} that $\GH^E_\N$ is (up to finite-dimensional subspaces) a completion of $E_\N$, and that the symbol $\varsigma(P_E)$ of the projection $P_E:=\Ran\breve{\varsigma}(P^E)$ onto $\GH^E_\N$ coincides with $P^E$. So $P_E$ is a projection over $\Ti_\GH^{(0)}$. Theorem \ref{bigCDversuslocallyfree} gives that the Cowen--Douglas metric on $\Ei_{\rm CD}$ is given by $\CD(\GE_\N)=\CD(\GH_\N)\otimes P^E$, where $P^E$ is pulled back to a $\D^\times$-equivariant function on $\B\times\{0\}$. This gives \eqref{Hermfactor} and in particular the Cowen--Douglas bundle of $\GH^E_\N$ is isomorphic to the pullback of $\Ei$. 

In the proof of Theorem \ref{backtoHardythmgen} we saw that $\GH^E_m$ identifies via $\FS(\GH_m)\otimes P^E$ with the $\C$-linear span $\GE_m$ of a $C^*$-frame $\boldsymbol\psi$ for $\FS(\GH_m)\otimes P^E$. The space $\GE_\N$ is thus a graded $\Ai$-module whose algebraic part is precisely $E_\N$. (We stress that $\FS(\GH_m)\otimes P^E$ acting on $\C\bone\otimes\otimes\GE_m\subset C^0(\M)\otimes\GE_m$ does not give holomorphic sections of $\Ei(m)$ but just some other $C^*$-frame for $\FS(\GH_m)\otimes P^E$. Thus, when $\FS(\GH_\N)\otimes P^E$ acts on $C^0(\Sb)\otimes\GE_0$ it does not projects $\GH^E_\N$ to the copy of $E_\N$ sitting in $\Gamma^0(\Sb;\Ei)$.) 

As in the proof of Theorem \ref{backtoHardythmgen} we denote by $\tilde{\GE}_m$ the space $\GE_m^E\subset\Gamma^0(\M;\Ei(m),\FS(\GH_m)\otimes P^E)$ endowed with the inner product of $L^2(\omega,\FS(\GH_m)\otimes P^E)$.
We saw that the subnormal tuple on the space $\GK^E_\N$ is unitarily equivalent (up to finite-rank operators) to the multiplication tuple on the subspace $\tilde{\GE}_\N$ of $L^2(\Sb,\omega;\FS(\GH_\N)\otimes P^E)$.
So we only need to show that multiplication tuples on $H^0(\Sb,\omega;\FS(\GH_\N)\otimes P^E)$ and $\tilde{\GE}_\N$ are unitarily equivalent. 
But we now that the underlying graded $\Ai$-modules are isomorphic, and since $H^0(\Sb,\omega,\FS(\GH_\N)\otimes P^E)$ and $\tilde{\GE}_\N$ sits as Hilbert subspaces of the same Hilbert space we can extend this isomorphism to a unitary operator from $H^0(\Sb,\omega,\FS(\GH_\N)\otimes P^E)$ to $\tilde{\GE}_\N$.


\end{proof}

\begin{cor}
Let $\Ei$ be a slope-stable vector bundle over $\M$, and denote its associated graded $\Ai$-module by $E_\N:=\bigoplus_{m\in\N_0}H^0(\M;\Ei(m))$. Represent $E_\N$ as a quotient $\Ai$-module where the surjection $\Ai\otimes\GE_0\to E_\N$ is grading-presering up to finite-dimensional vector spaces, and let $[E_\N]$ be its closure of $E_\N$ in $\GH_\N\otimes\GE_0$. Then the Cowen-Douglas sheaf $\Ei_{\rm CD}$ of $[E_\N]$ is locally free boundary limit of the Cowen--Douglas projection $\Pi^E$ for $[E_\N]$ is the Yang--Mills metric on $\Ei$.
\end{cor}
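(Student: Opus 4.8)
The plan is to deduce the corollary by assembling Lemma~\ref{coinvYMlemma}, Theorem~\ref{YMCDthm} and Theorem~\ref{bigCDversuslocallyfree}, the one genuinely new bridge being the identification of the abstract completion $[E_\N]$ with the concrete Toeplitz range $\GH^E_\N:=\Ran\breve{\varsigma}(P^E)$, where $P^E$ is the Yang--Mills metric on $\Ei$. That metric exists and is unique up to scaling because $\Ei$ is slope-stable (Uhlenbeck--Yau), and $P^E$ is real-analytic; since $\Ei$ is locally free, the Serre sheaf of $E_\N$ is $\Ei$ itself, and since $\Ei$ is slope-stable it is simple, so the hypotheses of Lemma~\ref{coinvYMlemma} and Theorem~\ref{YMCDthm} are met.

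First I would invoke Lemma~\ref{coinvYMlemma}: the projection $P_E:=\Ran\breve{\varsigma}(P^E)$ is $\Psi$-superharmonic, lies over $\Ti_\GH^{(0)}$, and satisfies $\varsigma(P_E)=P^E\in C^\infty(\M)\otimes\Mn_N(\C)$; moreover, as shown in the proof of that lemma, $\GH^E_\N$ is, up to a finite-dimensional subspace, the closure in $\GH_\N\otimes\GE_0$ of the module $E_\N$. Because the presentation $\Ai\otimes\GE_0\to E_\N$ is grading-preserving up to finite-dimensional vector spaces, the closure $[E_\N]$ is itself a graded quotient module coinvariant under $S^*$ up to finite-rank operators, with underlying graded $\Ai$-module agreeing with $E_\N$ up to finite dimensions. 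Thus $[E_\N]$ and $\GH^E_\N$ are both $\Psi$-superharmonic lifts of the same continuous projection $P^E$, so by Proposition~\ref{uniqeliftprop} (uniqueness of the superharmonic lift in the continuous case) they coincide up to finite-rank operators; in particular the projection onto $[E_\N]$ lies over $\Ti_\GH^{(0)}$ and has symbol $P^E$. Alternatively, since $\Ei$ is locally free and the presentation is graded, Theorem~\ref{nullothm} already gives $[E_\N]=\GE_\N$, $\Ii_\N=J_\N(\E)$ and $\Ei_\B\cong\Ei_{\rm CD}$, from which local freeness of $\Ei_{\rm CD}$ on $\B\setminus\{0\}$ is immediate.

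Once $\varsigma(P_E)=P^E$ is known to be continuous (indeed real-analytic), Theorem~\ref{bigCDversuslocallyfree} applies verbatim: it yields that $\Ei_{\rm CD}$ is locally free on $\B\setminus\{0\}$, that $P^E_m=P^E$ for all $m\gg 0$, and that the Cowen--Douglas factor is the pullback of $P^E$, i.e. $\Pi^E(r\zeta)=P^E([\zeta])$ for every $\zeta\in\Sb$. Consequently $\lim_{r\to1^-}\Pi^E(r\zeta)=P^E([\zeta])$, which by $\Un(1)$-equivariance descends to the projection $P^E$ over $C^0(\M)$, precisely the Yang--Mills metric on $\Ei$ (consistent with Proposition~\ref{limitPiEcor}). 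The Hermitian factorization $\Ei_{\rm CD}=\Oi_{\rm CD}\otimes\Ei_{\B\setminus\{0\}}$, with $\Ei_{\B\setminus\{0\}}$ carrying the pullback of $P^E$, is then exactly the conclusion of Theorem~\ref{YMCDthm}, and $\Un(1)$-equivariance of $\Pi^E$ together with Lemma~\ref{vectextlemma} identifies the induced holomorphic structure on $\Ei_{\B\setminus\{0\}}$ with that of $\pi^*\Ei$.

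The main obstacle I anticipate is the bookkeeping of the finite-dimensional discrepancies: the presentation of $E_\N$ is graded only ``up to finite-dimensional vector spaces'' and $\GH^E_\N$ is a completion of $E_\N$ only up to a finite-dimensional subspace, so one must check these ambiguities affect neither local freeness of $\Ei_{\rm CD}$ over $\B\setminus\{0\}$ nor the boundary value of $\Pi^E$. This is harmless because $\varsigma$, the SOT-limit of $\Psi^m$, and the Cowen--Douglas data over $\B\setminus\{0\}$ are all insensitive to finite-rank perturbations of $P_E$. A secondary point is to confirm that the holomorphic (Chern) structure induced on $\Ei$ by the quotient-module structure of $[E_\N]$ is the one we started with; this follows, as in the proof of Theorem~\ref{YMCDthm}, from Theorem~\ref{backtoHardythmgen} and the real-analyticity of the Yang--Mills metric $P^E$.
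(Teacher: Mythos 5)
Your proposal is correct and follows essentially the route the paper intends: the corollary carries no separate proof there, being an immediate assembly of Lemma \ref{coinvYMlemma} (which, in its proof, identifies $[E_\N]$ with $\Ran\breve{\varsigma}(P^E)$ up to finite rank via the balanced metrics and Proposition \ref{uniqeliftprop}), Theorem \ref{bigCDversuslocallyfree}, Proposition \ref{limitPiEcor}, and Theorem \ref{YMCDthm}. Just note that your sentence declaring $[E_\N]$ a $\Psi$-superharmonic lift of $P^E$ is not automatic from gradedness alone but is exactly what the cited proof of Lemma \ref{coinvYMlemma} supplies, so it should be read as borrowed from there rather than as an independent application of Proposition \ref{uniqeliftprop}.
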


\begin{Remark}[Balanced metric versus $\CD(\GE_m)$]
Let $\Ei$ be a slope-stable holomorphic vector bundle over $\M=\G/\K$ and let $\GE_\N:=\Ran\breve{\varsigma}(P^E))$ be the quotient module from Lemma \ref{coinvYMlemma}. Recall that $\CD(\GE_m)=\CD(\GH_m)\otimes P^E_m$ is the limit of the powers of the positive operator $\CD(\GH_m)\otimes\varsigma^{(m)}(P_{E,m})$. If we use that same notation $B^E_m$ as in the proof of Lemma \ref{coinvYMlemma} for the metrics on $\Ei$ converging to the Yang--Mills metric $P^E$ then we have $c_{E,m}^{-1}\varsigma^{(m)}(\breve{\varsigma}^{(m)}(B^E_m))=\varsigma^{(m)}(P_{E,m})$ and hence 
\begin{equation}\label{Beretrasfopowlimit}
\lim_{p\to\infty}(c_{E,m}^{-1}\varsigma^{(m)}(\breve{\varsigma}^{(m)}(B^E_m)))^p=P^E_m.
\end{equation}
But $\varsigma^{(m)}(\breve{\varsigma}^{(m)}(B^E_m))$ is the Berezin transform of $B^E_m$, so we have $\varsigma^{(m)}(\breve{\varsigma}^{(m)}(B^E_m))=B^E_m+O(m^{-1})$. The constant $c_{E,m}^{-1}$ is of the form $1+O(m^{-1})$. Therefore
$$
c_{E,m}^{-1}\varsigma^{(m)}(\breve{\varsigma}^{(m)}(B^E_m))=B_m^E+K^E_m
$$
with $\lim_m\|K^E_m\|=0$. We also know that, for all $m\gg 0$,
$$
P^E_m=\lim_p\varsigma^{(m)}(P_{E,m})^p=P^E.
$$
The coinvariance of $P_E$ gives that, for $m\gg 0$,
\begin{equation}\label{varsigmadirectsum}
\varsigma^{(m)}(P_{E,m})=P^E\oplus C^E_m\text{  on }\Ran P^E\oplus\Ker P^E_m
\end{equation}
with $C^E_m\geq 0$. In contrast, $K^E_m$ need not be positive and need not be zero on the range of $B^E_m$ or the range of $P^E$. For large $m$ we know that $\|P^E-B_m^E\|<1$ in the norm of $C^0(\M)$ so that $\varsigma(P_E)(x)$ and $B^E_m(x)$ are unitarily equivalent uniformly in $x$ \cite[Prop. 5.2.6]{Ols}. The analytic embeddings of $\Ei(m)$ into $\Oi_\M\otimes\GE_0$ given by $\FS(\GH_m)\otimes P^E$ and $B^E_m$ are thus arbitrarily close as $m$ gets large but they need not coincide for any finite $m$. 
\end{Remark}

\subsection{Subharmonic lifts}

When the dual $\Ei^*$ of $\Ei$ is globally generated we have an embedding $\Ei\hookrightarrow\Oi_\M\otimes H^0(\M;\Ei^*)$ of holomorphic vector bundles, and hence an inclusion $E_\N\subset\Ai\otimes H^0(\M;\Ei^*)$ of graded $\Ai$-modules. Still there is an $m_0\in\N$ such that $\Ei(m)$ is regular for all $m\geq m_0$. So we can represent $E_{\geq m_0}$ as a graded quotient of $\Ai\otimes H^0(\M;\Ei(m_0))$ as before and we obtain a graded quotient $\GE_{\geq m_0}$ of $\GH_\N\otimes H^0(\M;\Ei(m_0))$ by identifying $E_{\geq m_0}$ with a vector subspace of $\Ai\otimes H^0(\M;\Ei(m_0))$ and completing it in the inner product of the Fock space. The coinvariant subspace $\GE_{\geq m_0}$ of $\GH_\N\otimes H^0(\M;\Ei(m_0))$ will have the same $\Ai$-action as the completion of $E_{\geq m_0}$ in $\Ai\otimes H^0(\M;\Ei^*)$ up to graded $\Ai$-module isomorphism but as Hilbert $\Ai$-modules they are very different: one is a quotient module and one is a submodule and this is a very important difference for Hilbert modules. For instance, if $P_E$ is the projection onto a quotient module then $(\id-\Phi)(P_E)$ is a finite-rank operator, while if $P_E$ projects out a submodule $(\id-\Phi)(P_E)$ is not of finite rank except in trivial cases \cite{Guo3}.

Suppose now that we have a balanced metric $B^{E(m)}=B^E_m\otimes\FS(\GH_{m-m_0})$ on $\Ei(m)$ for each $m\geq m_0$. 
Since $[E_\N]\subset\GH_\N\otimes H^0(\M;\Ei^*)$ is an embedding of graded $\Ai$-modules we can, up to graded $\Ai$-module isomorphism (or equivalently without changing the isomorphism class of the holomorphic vector bundle $\Ei$) regard $B^{E(m)}$ as an element of $C^0(\M)\otimes\Bi(\GH_m\otimes H^0(\M;\Ei^*))$. The balance condition then becomes  
\begin{equation}\label{balanceeqToeplsubmod}
\breve{\varsigma}^{(m)}(B^E_m)=c_{E,m}I_{E,m},
\end{equation}
where $I_E=\sum_mI_{E,m}$ is the projection onto the graded submodule $[E_\N]\subset\GH_\N\otimes H^0(\M;\Ei^*)$. Note that $I_E$ is $\Psi$-subharmonic, in contrast to the $\Psi$-superharmonic $P_E$ obtained from the presentation of $E_\N$ as quotient module. 

We shall see that, in case the sequence $(B^E_m)_{m\geq m_0}$ converge, both $\varsigma(I_E)$ and $\varsigma(P_E)$ are Yang--Mills metrics on holomorphic vector bundles isomorphic to $\Ei(m_0)$, and for all practical purposes they coincide if we allow an analytic isomorphism to act on $\Ei$ so that they are metrics on the same vector bundle. 



\begin{Lemma}\label{invYMlemma}
Let $\Gi$ be a smooth vector bundle over $\M=\G/\K$ admitting a slope-stable holomorphic structure $G_\N=\bigoplus_{m\in\N_0}H^0(\M;\Gi(m))$. Suppose that $\Gi$ is a subbundle $\Gi\subset\Ei$ of some Castelnuovo--Mumford regular holomorphic vector bundle $\Ei$ and choose an embedding $G_\N\subset E_\N$ as graded $\Ai$-module. Let $I_G$ be the projection onto the $S_E$-invariant subspace $[G_\N]\subset [E_\N]$. Then the symbol of the $\Psi_E$-subharmonic projection $I_G$ defines $\Gi$ as smooth vector bundle as well as a Hermitian metric on $\Gi$ which is Yang--Mills with respect to a holomorphic structure isomorphic to $G_\N$.   
\end{Lemma}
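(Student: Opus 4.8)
The strategy parallels Lemma \ref{coinvYMlemma} and Theorem \ref{YMCDthm}, but now working on the \emph{submodule} side rather than the quotient side. The point is that a balanced metric $B^{G(m)}=B^G_m\otimes\FS(\GH_{m-m_0})$ on $\Gi(m)$, when $\Gi$ is presented via the embedding $G_\N\subset E_\N\subset\GH_\N\otimes H^0(\M;\Ei(m_0))$, translates by \eqref{balanceeqToeplsubmod} into $\breve{\varsigma}^{(m)}(B^G_m)=c_{G,m}I_{G,m}$, where $I_G$ is the projection onto $[G_\N]$ and is $\Psi_E$-subharmonic. The plan is: (1) invoke Wang's theorem (Lemma \ref{Wanglemma}) to get a sequence of metrics $(B^G_m)_{m\ge m_0}$ on $\Gi$ with $\FS(\GH_{m-m_0})\otimes B^G_m$ balanced and $\lim_{m\to\infty}B^G_m=P^G$ in $C^\infty(\M)$, where $P^G$ is the Yang--Mills metric coming from the given slope-stable holomorphic structure $G_\N$; (2) use the norm bound $\|\breve{\varsigma}^{(m)}(f)\|\le\|f\|$ together with $c_{G,m}^{-1}=1+O(m^{-1})$ to conclude that $I_G$ differs from $\breve{\varsigma}(P^G)$ by a compact operator, hence $\varsigma(I_G)=P^G$ and $I_G$ has entries in $\Ti_\GH^{(0)}$; (3) identify the smooth vector bundle defined by $\varsigma(I_G)=P^G$ with $\Gi$, and the holomorphic structure reconstructed from the submodule $[G_\N]$ (via its underlying graded $\Ai$-module, which is $G_\N$ up to finite-dimensional pieces by Lemma \ref{Serrelemma}) with the one isomorphic to $G_\N$.

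\textbf{Key steps in order.} First I would set up the presentation: fix $m_0$ with $\Gi(m)$ and $\Ei(m)$ Castelnuovo--Mumford regular for $m\ge m_0$, choose the graded $\Ai$-module embedding $G_\N\subset E_\N$, and then $E_{\ge m_0}$ as a graded quotient of $\Ai\otimes H^0(\M;\Ei(m_0))$, so that $[G_{\ge m_0}]\subset[E_{\ge m_0}]\subset\GH_{\ge m_0}\otimes H^0(\M;\Ei(m_0))$ are $S_E$-invariant subspaces. Second, apply Lemma \ref{Wanglemma} to the slope-stable bundle with holomorphic structure $G_\N$: since slope-stable implies it admits a Yang--Mills metric $P^G$ (Uhlenbeck--Yau), there is a sequence of metrics $B^G_m\to P^G$ in $C^\infty$ with $\FS(\GH_{m-m_0})\otimes B^G_m$ balanced on $\Gi(m)$; by \eqref{balanceeqToeplsubmod} this reads $\breve{\varsigma}^{(m)}(B^G_m)=c_{G,m}I_{G,m}$. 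Third, estimate: $\sum_m\|\breve{\varsigma}^{(m)}(B^G_m-P^G)\|\le\sum_m\|B^G_m-P^G\|$ and, using that $c_{G,m}^{-1}=1+O(m^{-1})$ and $\|B^G_m-P^G\|\to 0$, conclude that $\sum_m c_{G,m}^{-1}\breve{\varsigma}^{(m)}(B^G_m)=I_G$ and $\breve{\varsigma}(P^G)$ differ by a compact operator, so $\varsigma(I_G)=P^G$ and $I_G$ is a projection over $\Ti_\GH^{(0)}$. Fourth, interpret $\varsigma(I_G)$ geometrically: it is a projection over $C^\infty(\M)$ whose rank equals $\rank\Gi$ (as in \eqref{Arvcurvlim}), hence defines a smooth vector bundle isomorphic to $\Gi$; and since $\varsigma(I_G)=P^G$ is the Yang--Mills metric of the holomorphic structure $G_\N$, the conclusion follows. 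I would also remark, following the proof of Theorem \ref{backtoHardythmgen}, that $\breve{\varsigma}(P^G)^{-1/2}[G_\N]$ is identified with the Hardy space $H^0(\Sb,\omega;\FS(\GH_\N)\otimes P^G)$, though this is not strictly needed for the stated conclusion.

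\textbf{Main obstacle.} The delicate point is the bookkeeping between the submodule and quotient-module presentations. Unlike the quotient case, where $(\id-\Phi)(P_E)$ is finite-rank, for the submodule projection $I_E$ (and $I_G$) the operator $(\id-\Phi)(I_E)$ is \emph{not} of finite rank except trivially (this is exactly the phenomenon recorded in \cite{Guo3} and in the text preceding Lemma \ref{invYMlemma}). So one cannot directly quote the uniqueness-of-superharmonic-lifts argument (Proposition \ref{uniqeliftprop}); instead the identification $\varsigma(I_G)=P^G$ must be extracted purely from the norm convergence $\breve{\varsigma}^{(m)}(B^G_m)\to\breve{\varsigma}^{(m)}(P^G)$ modulo compacts, which is fine since $\varsigma$ is well-defined on all of $\Li\supset\Ti_\GH^{(0)}$ and kills the compacts. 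The other subtlety is that the balanced-metric data $B^G_m$ must be viewed, via the \emph{graded} embedding $G_\N\subset E_\N$, as elements of $C^0(\M)\otimes\Bi(\GH_m\otimes H^0(\M;\Ei(m_0)))$ without changing the isomorphism class of $\Gi(m)$ — this is exactly what was arranged in the paragraph preceding the lemma and is the reason the embedding was required to be a map of graded $\Ai$-modules. With these two points handled, the rest is routine and mirrors Lemma \ref{coinvYMlemma} verbatim with $\Psi$ replaced by $\Psi_E$ and ``superharmonic'' replaced by ``subharmonic''.
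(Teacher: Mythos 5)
Your proposal is correct and follows essentially the same route as the paper's proof: present $\Gi$ via the graded embedding $G_\N\subset E_\N$, use the balanced metrics $B^G_m$ (whose balance condition reads $\breve{\varsigma}^{(m)}(B^G_m)=c_{G,m}I_{G,m}$) converging to the Yang--Mills metric $P^G$ by Wang's theorem, and conclude via the norm estimate $\|\breve{\varsigma}^{(m)}(B^G_m-P^G)\|\le\|B^G_m-P^G\|\to 0$ and $c_{G,m}-1=O(m^{-1})$ that $I_G$ and $\breve{\varsigma}(P^G)$ differ by a compact, hence $\varsigma(I_G)=P^G$. Your observation that the superharmonic-lift uniqueness argument is unavailable on the submodule side and that one must extract $\varsigma(I_G)=P^G$ purely from the compact-perturbation argument is exactly how the paper's proof is structured (it stops at this point, since $I_G$ is defined directly as the projection onto $[G_\N]$).
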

\begin{proof}
Given our embedding $G_\N\subset E_\N$ we obtain the projection $I_G$ onto $[G_\N]\subset[E_\N]\subset\GH_\N\otimes\GE_0$ as
\begin{equation}\label{balanceeqToeplsubmodG}
\breve{\varsigma}^{(m)}(B^G_m)=c_{E,m}I_{G,m},\qquad\forall m\gg 0
\end{equation}
for the unique $\omega$-balancing metrics $\FS(\GH_m)\otimes B^G_m$ on $\Gi(m)$ with respect to the holomorphic structure $G_\N\subset E_\N$. Let $P^G:=\lim_{m\to\infty}B^G_m$ be the associated Yang--Mills metric on $\Gi$. In the same way as in the proof of Lemma \ref{coinvYMlemma} we see that 
$$
\lim_{m\to\infty}\|\breve{\varsigma}^{(m)}(B_m^G-P^G)\|\leq\lim_{m\to\infty}\|B_m^G-P^G\|=0,
$$
i.e. $\breve{\varsigma}(P^G)$ and $\sum_m\breve{\varsigma}^{(m)}(B^G_m)$ differ by a compact operator. In turn this gives that $\breve{\varsigma}(P^G)$ and $I_G=\sum_mc_{G,m}^{-1}\breve{\varsigma}^{(m)}(B^G_m)$ differ by a compact, since $c_{G,m}-1=O(m^{-1})$. So $I_G$ is a lift of $P^G$, i.e. $\varsigma(I_G)=P^G$. In particular $I_G$ is a projection over $\Ti_\GH^{(0)}$.

\end{proof}
Note that the subspace $[G_\N]\subset[E_\N]\subset\GH_\N\otimes\GE_0$ is merely semi-invariant under the shift $S$ on $\GH_\N\otimes\GE_0$. 
\begin{cor}
Let $\Gi$ be a smooth vector bundle over $\M$ with a slope-stable holomorphic structure $G_\N=\bigoplus_{m\in\N_0}H^0(\M;\Gi(m))$. Suppose that $\Gi^*$ is globally generated. Then the Yang--Mills metric $P^G$ admits a $\Psi$-subharmonic lift.
\end{cor}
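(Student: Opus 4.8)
The plan is to deduce this from Lemma~\ref{invYMlemma} by realizing $\Gi$ inside a \emph{trivial} ambient bundle. Since $\Gi^*$ is globally generated, the evaluation map gives a surjection of holomorphic vector bundles $\Oi_\M\otimes H^0(\M;\Gi^*)\to\Gi^*\to 0$; dualizing a surjection of vector bundles yields a subbundle inclusion, so we obtain
$$
\Gi\hookrightarrow\Oi_\M\otimes\C^N=:\Ei,\qquad N:=\dim H^0(\M;\Gi^*).
$$
Twisting by $\Li^m$ and applying the left-exact functor $H^0(\M;-)$ turns this into an inclusion of graded $\Ai$-modules $G_\N\subset E_\N$, where here $E_\N=\Ai\otimes\C^N$ is already free, so that $[E_\N]=\GH_\N\otimes\C^N$ with no quotient presentation needed. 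I would then note that $\Ei=\Oi_\M\otimes\C^N$ is Castelnuovo--Mumford regular in the sense of \eqref{zeroregular}: after cancelling the trivial factor $\C^N$, the required embeddings $H^0(\M;\Ei(l))\hookrightarrow H^0(\M;\Ei(m))\otimes H^0(\M;\Li^{l-m})$ are exactly the subproduct embeddings $\GH_l\hookrightarrow\GH_m\otimes\GH_{l-m}$, valid for every projective variety $\M\subset\C\Pb^{n-1}$.

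With these two observations in place, the hypotheses of Lemma~\ref{invYMlemma} are met for $\Ei=\Oi_\M\otimes\C^N$ and the given $\Gi\subset\Ei$. Crucially, since $[E_\N]$ is all of $\GH_\N\otimes\C^N$, the compressed shift $S_E$ is the full shift $S$ and the associated map $\Psi_E$ coincides with $\Psi$. Lemma~\ref{invYMlemma} then produces a projection $I_G$ onto the $S$-invariant subspace $[G_\N]\subset\GH_\N\otimes\C^N$ which is $\Psi_E$-subharmonic, hence $\Psi$-subharmonic, which lies over $\Ti_\GH^{(0)}$, and whose symbol $\varsigma(I_G)$ is a Yang--Mills metric on $\Gi$ for some holomorphic structure isomorphic to $G_\N$. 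Conjugating $I_G$ by the unitary between completed modules induced by such an isomorphism of holomorphic structures gives a $\Psi$-subharmonic projection over $\Ti_\GH^{(0)}$ whose symbol is the Yang--Mills metric $P^G$ on $\Gi$, i.e.\ the desired lift.

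The only point needing care is this last identification: Lemma~\ref{invYMlemma} only yields the Yang--Mills metric up to an isomorphism of holomorphic structure, so one must check this ambiguity can be absorbed. Because $\Gi$ is slope-stable, $\Aut(\Gi)=\C\bone$ and the Yang--Mills metric is unique up to a positive scalar, so the isomorphism is harmless; concretely, all identifications in Lemma~\ref{invYMlemma} and in the balanced-metric machinery it rests on are up to graded $\Ai$-module isomorphism, and the corresponding unitary conjugates $I_G$ to a lift of $P^G$ itself. Beyond this bookkeeping there is no substantive obstacle — the real content is simply the choice of the trivial ambient bundle, which forces $[E_\N]=\GH_\N\otimes\C^N$ and hence $\Psi_E=\Psi$, turning the $\Psi_E$-subharmonic conclusion of Lemma~\ref{invYMlemma} into an honest $\Psi$-subharmonic lift.
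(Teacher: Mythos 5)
Your proposal is correct and takes essentially the same route as the paper: exploit global generation of $\Gi^*$ to embed $\Gi$ in a trivial bundle, so that $[G_\N]$ becomes a submodule of the free module $\GH_\N\otimes\C^N$ (hence $\Psi_E=\Psi$) and Lemma \ref{invYMlemma} supplies the $\Psi$-subharmonic projection $I_G$ lifting the Yang--Mills metric. Your additional bookkeeping — the dualization argument for the subbundle inclusion, regularity of the trivial bundle, and absorbing the holomorphic-structure ambiguity via slope-stability — only fleshes out steps the paper leaves implicit.
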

\begin{proof}
Since $\Gi^*$ is globally generated is globally generated we can take $\Ei$ in Lemma \ref{invYMlemma} to be a trivial holomorphic vector bundle. This means that $[G_\N]$ is a submodule of $\GH_\N\otimes\GG_0$ for some Hilbert space $\GG_0$ and hence $I_G$ will be $\Psi$-subharmonic. 
\end{proof}


\subsection{Direct sums} 

If $\GE=\GF\oplus\mathfrak{G}$ is a direct sum of Hilbert $\Ai$-modules then for the Serre sheaves we have \cite[Prop. 7.14(3)]{GoWe1}
$$
\Ei\cong\Fi\oplus\Gi
$$
as $\Oi_\M$-modules. Also, as in Remark \ref{reducrem}, $\GF_\N\subset\GE_\N$ is a reducing submodule if and only if for the Cowen--Douglas sheaves we have
$$
\Ei_{\rm CD}\cong\Fi_{\rm CD}\oplus\Gi_{\rm CD}
$$
as Hermitian holomorphic vector bundles. In other words, iff
$$
\CD(\GF\oplus\mathfrak{G})\cong\CD(\GF)\oplus\CD(\mathfrak{G}).
$$
\begin{prop}[Direct sums of Yang--Mills metrics]\label{dirsumYMprop}
Suppose that $P^E$ is a projection over $C^\infty(\M)$ defining a holomorphic vector bundle $\Ei=\Fi\oplus\Gi$ with slope-stable summands $\Fi$ and $\Gi$, and that $P^E=P^F+P^G$ with $P^F$ and $P^G$ Yang--Mills metrics on $\Fi$ and $\Gi$. Then 
$$
\varsigma(\Ran\breve{\varsigma}(P^E))=P^E.  
$$
\end{prop}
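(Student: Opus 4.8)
The statement to prove is Proposition~\ref{dirsumYMprop}: if $P^E = P^F + P^G$ decomposes a holomorphic vector bundle $\Ei = \Fi \oplus \Gi$ with slope-stable summands into a direct sum of Yang--Mills metrics, then $\varsigma(\Ran\breve{\varsigma}(P^E)) = P^E$.
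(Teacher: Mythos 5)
Your proposal contains no proof at all: it only restates the proposition. Nothing in it establishes why $\varsigma(\Ran\breve{\varsigma}(P^E))=P^E$, so as it stands there is a complete gap rather than an alternative argument.

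To close it you need the two ingredients the paper uses. First, apply Lemma \ref{coinvYMlemma} to each slope-stable summand separately: since $P^F$ and $P^G$ are Yang--Mills metrics on $\Fi$ and $\Gi$, the superharmonic projections $P_F:=\Ran\breve{\varsigma}(P^F)$ and $P_G:=\Ran\breve{\varsigma}(P^G)$ satisfy $\varsigma(P_F)=P^F$ and $\varsigma(P_G)=P^G$, and they project onto quotient modules whose Serre sheaves are $\Fi$ and $\Gi$. Second, and this is the step one cannot skip, you must justify that $\Ran\breve{\varsigma}(P^E)=P_F+P_G$. For general positive operators the range projection of a sum is not the sum of the range projections, so additivity of $\breve{\varsigma}$ alone does not suffice; what saves you here is that $P^FP^G=0$ forces $\breve{\varsigma}(P^F)$ and $\breve{\varsigma}(P^G)$ to have orthogonal ranges inside $\GH_\N\otimes(\GF_0\oplus\GG_0)$, so that $\Ran\breve{\varsigma}(P^F+P^G)=P_F+P_G$. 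Once that is in place, linearity of the symbol map gives $\varsigma(\Ran\breve{\varsigma}(P^E))=\varsigma(P_F)+\varsigma(P_G)=P^F+P^G=P^E$, which is exactly the paper's route. Without stating and using the orthogonality of the two Toeplitz ranges (or some substitute for it), the conclusion simply does not follow.
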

\begin{proof}
From Lemma \ref{coinvYMlemma} we get $\varsigma(P_F)=P^F$ and $\varsigma(P_G)=P^G$ for superharmonic projections $P_F$ and $P_G$ onto quotients $\GF_\N$ and $\mathfrak{G}_\N$, and the Serre sheaves of these quotients are isomorphic to $\Fi$ and $\Gi$ as holomorphic vector bundles. Setting $\GE_\N:=\GF_\N\oplus\mathfrak{G}_\N$ we obtain $\Ei$ as the Serre sheaf of $\GE_\N$. 

We can present $\GE_\N$ as a quotient of $\GH_\N\otimes\GE_0$ where $\GE_0=\GF_0\oplus\GG_0$. 
The ranges of $\breve{\varsigma}(P^F)$ and $\breve{\varsigma}(P^G)$ are orthogonal so, $\Ran\breve{\varsigma}(P^E)=\Ran\breve{\varsigma}(P^F+P^G)$ is the same as the projection $P_E:=P_F+P_G$ onto $\GE_\N$. 

Thus $P^E=\varsigma(P_F)+\varsigma(P_G)=\varsigma(P_F+P_G)=\varsigma(P_E)$ and the proposition holds. 
\end{proof}

\subsection{The nature of $\varsigma(P_E)$}

If $\Ei$ is a torsionfree $\Oi_\M$-module, denote by $\Gr(\Ei)$ the torsionfree $\Oi_\M$-module obtained by summing the successive quotients in the Harder--Narasimhan--Seshadri filtration of $\Ei$ (see \cite[\S2.1]{Jaco2} for details). Thus $\Gr(\Ei)$ is a direct sum of slope-stable torsionfree sheaves on $\M$, and the summands have the same slope iff $\Ei$ is slope-semistable.

If $\Ei$ is a holomorphic vector bundle over $\M$, a locally free subsheaf $\Gi\subset\Ei$ is a subbundle iff the quotient $\Ei/\Gi$ is locally free. So if $\Gr(\Ei)$ is locally free then all the subsheaves in the Harder--Narasimhan--Seshadri filtration are by subbundles and this splits smoothly. Hence for an arbitrary holomorphic vector bundle $\Ei$ we have that $\Gr(\Ei)\cong\Ei$ smoothly if and only if $\Gr(\Ei)$ is locally free.


\begin{thm}\label{GrvarsigmaPEthm}
Let $\Ei$ be a holomorphic vector bundle over $\M=\G/\K$, represent $E_\N:=\bigoplus_{m\in\N_0}H^0(\M;\Ei(m))$ as a quotient of $\Ai\otimes\GE_0$ for some Hilbert space $\GE_0$, and let $\GE_\N$ be the completion of $E_\N$ in $\GH_\N\otimes\GE_0$. Let $P_E$ be the projection of $\GH_\N\otimes\GE_0$ onto a quotient module $\GE_\N$. Suppose that $\Gr(\Ei)$ is locally free and let $Q^E$ be the direct sum of the Yang--Mills metrics on the stable summands of $\Gr(\Ei)$.
Then $\varsigma(P_E)=Q^E$. In particular $\varsigma(P_E)$ is a matrix over $C^0(\M)\subset L^\infty(\M)$.
\end{thm}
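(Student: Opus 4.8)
The plan is to reduce the general case to the slope-stable case handled in Lemma \ref{coinvYMlemma} and the direct-sum case handled in Proposition \ref{dirsumYMprop}, using the Harder--Narasimhan--Seshadri filtration of $\Ei$. First I would recall that since $\Gr(\Ei)$ is locally free, every subsheaf in the HNS filtration is a subbundle and the filtration splits smoothly, so $\Gr(\Ei)\cong\Ei$ as a $C^\infty$ vector bundle; write $\Gr(\Ei)=\bigoplus_i\Gi_i$ with each $\Gi_i$ slope-stable and locally free, and let $Q^E=\bigoplus_iP^{G_i}$ be the direct sum of the Yang--Mills metrics on the summands. The key point is that the graded module $E_\N=\bigoplus_mH^0(\M;\Ei(m))$ and the graded module $\Gr(E_\N):=\bigoplus_mH^0(\M;\Gr(\Ei)(m))$ become isomorphic after truncating to $m\gg 0$ (since $\chi(\Ei(m))=\chi(\Gr(\Ei)(m))$ for all $m$ and, for $m$ large, $\dim H^0(\M;\Ei(m))=\chi(\Ei(m))$; more precisely the associated graded of the filtration of $E_{\geq m_0}$ recovers $\bigoplus_{m\geq m_0}H^0(\M;\Gr(\Ei)(m))$). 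Hence the Hilbert-module completions $\GE_\N$ and $\GG_\N:=\bigoplus_i\GG_{i,\N}$ have the same underlying graded $\Ai$-module up to a finite-dimensional vector space, so their orthogonal projections $P_E$ and $P_G=\sum_iP_{G_i}$ differ by a finite-rank operator and therefore have the same symbol $\varsigma(P_E)=\varsigma(P_G)$.

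Next I would apply the direct-sum argument: by Lemma \ref{coinvYMlemma} each slope-stable locally free $\Gi_i$ has $\varsigma(\Ran\breve{\varsigma}(P^{G_i}))=P^{G_i}$, with $\Ran\breve{\varsigma}(P^{G_i})=P_{G_i}$ the projection onto the quotient module $\GG_{i,\N}$ up to finite rank. Since the Toeplitz operators $\breve{\varsigma}(P^{G_i})$ have mutually orthogonal ranges (the $\GG_{i,\N}$ are mutually orthogonal inside $\GH_\N\otimes\GE_0$ once we present $\Gr(\Ei)$ in the obvious block form), exactly as in the proof of Proposition \ref{dirsumYMprop} we get $\Ran\breve{\varsigma}(Q^E)=\sum_iP_{G_i}=P_G$ and
$$
\varsigma(P_G)=\sum_i\varsigma(P_{G_i})=\sum_iP^{G_i}=Q^E.
$$
Combining with $\varsigma(P_E)=\varsigma(P_G)$ gives $\varsigma(P_E)=Q^E$. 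Since each $P^{G_i}$ is the Yang--Mills metric on a locally free sheaf, it is a (real-analytic, hence continuous) projection over $C^0(\M)$, so $Q^E$ and therefore $\varsigma(P_E)$ is a matrix over $C^0(\M)\subset L^\infty(\M)$, which is the asserted conclusion.

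The main obstacle I anticipate is the claim that $P_E$ and $P_G$ differ only by a finite-rank (or at least compact) operator, i.e. that the completion of $E_\N$ and the completion of $\Gr(E_\N)$ inside $\GH_\N\otimes\GE_0$ agree modulo a finite-dimensional subspace. Module-isomorphism of the graded pieces for $m\gg 0$ is purely algebraic, but the two completions are taken with respect to possibly different embeddings into $\GH_\N\otimes\GE_0$, so one must argue that the associated graded modules agree \emph{as graded Hilbert modules up to finite rank}, not merely as abstract $\Ai$-modules; this uses that the filtration of $E_{\geq m_0}$ by the (saturated, hence subbundle) subsheaves induces an orthogonal-up-to-finite-rank decomposition of $\GE_{\geq m_0}$, which in turn relies on the smooth splitting coming from local freeness of $\Gr(\Ei)$ together with the uniqueness of superharmonic lifts (Proposition \ref{uniqeliftprop}) to pin down each $\Ran\breve{\varsigma}(P^{G_i})$. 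Once that identification is in place, everything else is a bookkeeping consequence of Lemma \ref{coinvYMlemma} and Proposition \ref{dirsumYMprop}.
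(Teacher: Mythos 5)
There is a genuine gap at the very step you flag as the ``main obstacle'': the claim that $P_E$ and $P_G=\sum_iP_{G_i}$ differ by a finite-rank (or compact) operator because $E_\N$ and $\bigoplus_m H^0(\M;\Gr(\Ei)(m))$ agree as graded $\Ai$-modules for $m\gg 0$. They do not. Local freeness of $\Gr(\Ei)$ gives $\Gr(\Ei)\cong\Ei$ only as \emph{smooth} bundles; holomorphically the Harder--Narasimhan--Seshadri extensions need not split (already a non-split extension of two line bundles of equal slope gives $\Gr(\Ei)$ locally free but $\Ei\not\cong\Gr(\Ei)$ analytically), so the exact sequence $0\to G_\N\to E_\N\to F_\N\to 0$ need not split as graded $\Ai$-modules and $\Gr(E_\N)\not\cong E_\N$ in general. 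Consequently $\GE_\N$ and $\GG_\N$ are completions of non-isomorphic graded modules; their graded pieces are different subspaces of $\GH_m\otimes\GE_0$ for essentially every $m$, so $P_E-P_G$ is not finite rank, and the appeal to Proposition \ref{uniqeliftprop} cannot rescue this, since that proposition presupposes one already knows $P_E$ is a lift of a \emph{continuous} projection --- which is precisely what Theorem \ref{GrvarsigmaPEthm} is trying to establish. The dimension count $\chi(\Ei(m))=\chi(\Gr(\Ei)(m))$ only matches traces, not subspaces.

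The paper circumvents this by never comparing $\GE_\N$ with the completion of $\Gr(E_\N)$. Instead it decomposes $P_E$ \emph{internally}: writing $0\to\Gi\to\Ei$ for a stable step of the filtration, it sets $I_G$ to be the projection onto the closure $[G_\N]\subset\GE_\N$ of the submodule $G_\N$ and $P_F$ the projection onto the quotient module $\GE_\N\ominus[G_\N]$, so that $P_E=I_G+P_F$ and $\varsigma(P_E)=\varsigma(I_G)+\varsigma(P_F)$. The quotient part is handled by Lemma \ref{coinvYMlemma}, but the submodule part requires the \emph{subharmonic} lift result, Lemma \ref{invYMlemma}, which shows that $\varsigma(I_G)$ is the Yang--Mills metric on $\Gi$ even though $[G_\N]$ is only semi-invariant (a submodule of a quotient module) and not itself of the form $\Ran\breve{\varsigma}(P^{G})$ for a quotient presentation. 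This use of Lemma \ref{invYMlemma} is the key ingredient missing from your argument; without it, or without some substitute argument showing that the non-split holomorphic extension does not affect the symbol, the reduction to Lemma \ref{coinvYMlemma} and Proposition \ref{dirsumYMprop} does not go through.
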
  
\begin{proof}
We consider first the case when $\Ei$ is slope-semistable. Assume that $\Ei$ has a filtration 
$0\to\Gi\to \Ei$ with $\Gi$ a stable subbundle (a filtration with several subbundles can be treated by the same argument). The direct sum $\Gr(\Ei)$ of stable quotients is then 
$$
\Gr(\Ei)=(\Ei/\Gi)\oplus\Gi. 
$$
The stable vector bundles $\Gi$ and $\Fi:=\Ei/\Gi$ admit Yang--Mills metrics $\varsigma(P_G)$ and $\varsigma(P_F)$ obtained from $\Psi$-superharmonic projections $P_G$ and $P_F$ over $\Ti_\GH^{(0)}$. These are the projections onto the completions of $G_\N:=\bigoplus_mH^0(\M;\Gi(m))$ and $F_\N:=\bigoplus_mH^0(\M;\Fi(m))$ realized as quotient modules (recall Lemma \ref{coinvYMlemma}).

The image of the projection $Q:=P_G+P_F$ is the completion of the direct sum $Q_\N=G_\N\oplus F_\N$ in a Fock inner product and $\varsigma(Q)$ is a Yang--Mills metric on $\Gr(\Ei)$ as we saw in Proposition \ref{dirsumYMprop}. Here $Q_\N=G_\N\oplus F_\N$ is a direct sum of graded $\Ai$-modules by definition.


We note that $G_\N\oplus F_\N$ equals $E_\N$ as graded vector space (up to finite-dimensional subspaces), but $G_\N$ is merely a submodule of $E_\N$ and need not have an $\Ai$-module complement. Therefore $Q_\N=(E_\N/G_\N)\oplus G_\N$ need not be $\Ai$-isomorphic to $E_\N$ as graded $\Ai$-module (under the present local freness assumption $\Gr(\Ei)\cong\Ei$ we know that the pullbacks of $\Gr(\Ei)$ and $\Ei$ are analytically isomorphic as vector bundles over $\B\setminus\{0\}$ but this need not be $\D^\times$-equivariantly). 

We realize $E_\N$ as a quotient module $\GE_\N$ and denote by $I_G$ the projection of $\GE_\N$ onto the submodule $\GG_\N:=[G_\N]$. Let also $P_F$ denote the projection of $\GE_\N$ onto the quotient module $\GE_\N\ominus\GG_\N=[F_\N]$.
Then
$$
\varsigma(P_E)=\varsigma(I_G+P_F)=\varsigma(I_G)+\varsigma(P_F)
$$
is a direct sum of Yang--Mills metrics on the holomorphic direct sum $\Gi\oplus\Fi$. Indeed, $\varsigma(I_G)$ can be identified with $\varsigma(P_G)$ by Lemma \ref{invYMlemma}.


Now if $\Ei$ is not semistable we have a Harder--Narasimhan filtration by semistable subsheaves (assumed to be subbundles here). For the proof of the theorem we may assume it is given by $0\to\Gi\to\Ei$. Set $\Fi:=\Ei/\Gi$. We have $P_E=P_G+P_F$ and since $\Fi$ is semistable we know that $\varsigma(P_F)$ is the direct sum of Yang--Mills metrics on $\Gr(\Fi)$, and similarly for $\Gi$. This gives the result.
\end{proof}
\begin{cor}
Let $\GE_\N$ be a range of a $\Psi$-superharmonic projection $P_E$ over $\Ti_\GH^{(0)}$ 
and let $\Ei_{\rm CD,\M}$ denote the holomorphic vector bundle over $\M$ such that $\Ei_{\rm CD}=\Oi_{\rm CD}\otimes\Ei_{\rm CD,\M}$. Then $\varsigma(P_E)$ is a Yang--Mills metric on $\Ei_{\rm CD,\M}$.
\end{cor}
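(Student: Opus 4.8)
The plan is to assemble the structural facts about $\varsigma(P_E)$ that have already been established and to feed them into Theorem \ref{GrvarsigmaPEthm}. First I would record the consequences of $P_E$ having entries in $\Ti_\GH^{(0)}=\breve{\varsigma}(C^0(\M))+\Gamma_0$: the Riesz decomposition $P_E=\breve{\varsigma}(P^E)+C_E$ then has a \emph{compact} $\Psi$-potential $C_E$, so $P^E:=\varsigma(P_E)$ is a projection over $C^0(\M)$; by Theorem \ref{bigCDversuslocallyfree} it is moreover real-analytic, $P^E_m=P^E$ for all $m\gg 0$, and $\dim\GH^E_m=\chi(\Ei(m))$ for $m\gg 0$, where $\Ei$ is the Serre sheaf of the graded $\Ai$-module $E_\N$ underlying $\GE_\N$ (\S\ref{dimransec}). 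In particular $\Ei_{\rm CD}$ is locally free over $\B\setminus\{0\}$, $\Un(1)$-equivariant, and $\Ei_{\rm CD}=\Oi_{\rm CD}\otimes\Ei_{\B\setminus\{0\}}$ with $\Ei_{\B\setminus\{0\}}$ the pullback of $P^E$; by Lemma \ref{vectextlemma} this descends to $\M$ and defines the holomorphic vector bundle $\Ei_{\rm CD,\M}$ of the statement, whose underlying smooth bundle is the one cut out by $P^E$ and whose holomorphic structure is the one inherited from $\Ei_{\rm CD}$. Using Theorem \ref{backtoHardythmgen} I would further identify $\GK^E_\N$ with the Hardy space $\bigoplus_m H^0(\M,\omega;\FS(\GH_m)\otimes P^E)$, whose underlying graded $\Ai$-module is $\bigoplus_m H^0(\M;\Ei_{\rm CD,\M}(m))$ and coincides with $E_\N$ up to a finite-dimensional subspace; by Lemma \ref{Serrelemma} this yields $\Ei_{\rm CD,\M}\cong\Ei$ as holomorphic vector bundles, so in particular $\Ei$ is locally free.

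The second step is to check that $\Gr(\Ei)$ is locally free, which is exactly the hypothesis under which Theorem \ref{GrvarsigmaPEthm} applies to $\GE_\N$, and this is the step I expect to be the main obstacle: local freeness of $\Ei$ alone does not force the Harder--Narasimhan--Seshadri filtration to be by subbundles. My approach would be a proof by contradiction using Lemma \ref{Wanglemma} together with \S\ref{balasec}: if some slope-stable summand of $\Gr(\Ei)$ were genuinely torsionfree (not locally free), its Yang--Mills metric would be singular, the $\omega$-balanced metrics on its twists could not converge in $C^\infty(\M)$, and tracing this through the Toeplitz reformulation of balance in \S\ref{balasec} and the argument of Lemma \ref{coinvYMlemma} would force $P_E-\breve{\varsigma}(P^E)$ to fail to be compact, contradicting $P_E\in\Ti_\GH^{(0)}\otimes\Mn_N(\C)$. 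Granting that $\Gr(\Ei)$ is locally free, Theorem \ref{GrvarsigmaPEthm} gives directly that $\varsigma(P_E)=Q^E$, the direct sum of the Yang--Mills metrics on the stable summands of $\Gr(\Ei)$; hence $\varsigma(P_E)$ \emph{is} a Yang--Mills metric, a priori with respect to the split stable holomorphic structure of $\Gr(\Ei)$.

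It then remains to identify that holomorphic structure with the one on $\Ei_{\rm CD,\M}$. Here I would unwind the proof of Theorem \ref{GrvarsigmaPEthm}: it realizes $\GE_\N$, up to the finite-dimensional ambiguity and an analytic isomorphism, as an orthogonal direct sum of quotient modules $\GG^{(i)}_\N$, each arising via Lemma \ref{coinvYMlemma} (resp. Lemma \ref{invYMlemma}) from a slope-stable summand $\Gi^{(i)}$ of $\Gr(\Ei)$ carrying its Yang--Mills metric. By Theorem \ref{YMCDthm} the Cowen--Douglas bundle of each $\GG^{(i)}_\N$ is $\Oi_{\rm CD}\otimes\Gi^{(i)}_{\B\setminus\{0\}}$ with $\Gi^{(i)}_{\B\setminus\{0\}}$ the pullback of that Yang--Mills metric, and since the summands are orthogonal and reducing, the Cowen--Douglas functor is additive on them (Remark \ref{reducrem}, Proposition \ref{dirsumYMprop}); therefore $\Ei_{\rm CD}=\bigoplus_i\Oi_{\rm CD}\otimes\Gi^{(i)}_{\B\setminus\{0\}}$ and $\Ei_{\rm CD,\M}=\bigoplus_i\Gi^{(i)}=\Gr(\Ei)$ as Hermitian holomorphic vector bundles, with total metric $Q^E=\varsigma(P_E)$. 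Thus the holomorphic structure inherited by $\Ei_{\rm CD,\M}$ is precisely the split stable structure of $\Gr(\Ei)$, and $\varsigma(P_E)$ is Yang--Mills on it, which is the assertion. Apart from the interchange ``$P_E$ over $\Ti_\GH^{(0)}$'' $\Rightarrow$ ``$\Gr(\Ei)$ locally free'' flagged above, the proof is a bookkeeping of isomorphisms already proven in Sections \ref{multsection}--\ref{balasec}.
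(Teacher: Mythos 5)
Your first and last steps track the paper's own argument closely: the paper's proof is exactly the short chain ``$P_E$ over $\Ti_\GH^{(0)}$ $\Rightarrow$ (Theorem \ref{bigCDversuslocallyfree} and the factorization $\CD(\GE_\N)=\CD(\GH_\N)\otimes\varsigma(P_E)$) $\varsigma(P_E)$ is a real-analytic metric defining $\Ei_{\rm CD,\M}$ together with its holomorphic structure; by Theorem \ref{GrvarsigmaPEthm}, $\varsigma(P_E)$ is the Yang--Mills metric on $\Gr(\Ei)$; hence $\Gr(\Ei)\cong\Ei_{\rm CD,\M}$ analytically and $\varsigma(P_E)$ is Yang--Mills on $\Ei_{\rm CD,\M}$.'' Your more elaborate unwinding via Theorem \ref{YMCDthm}, Proposition \ref{dirsumYMprop} and Remark \ref{reducrem} is a fuller version of the same bookkeeping and is fine. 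The divergence is your second step, where you insert a verification that $\Gr(\Ei)$ is locally free; the paper's proof does not attempt this and simply applies Theorem \ref{GrvarsigmaPEthm}, so you have correctly located the delicate point --- but your proposed way of closing it does not work.

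Concretely, the contradiction argument you sketch is not available with the tools in the paper. Lemma \ref{Wanglemma} and the balanced-metric formalism of \S\ref{balasec} are formulated only for holomorphic vector bundles: for a torsionfree, non-locally-free stable summand of $\Gr(\Ei)$ there is no notion of balanced metric and no singular Yang--Mills metric in the paper's framework, so there is nothing to ``trace through the Toeplitz reformulation of balance.'' The implication you want --- $\varsigma(P_E)$ continuous (equivalently $C_E=P_E-\breve{\varsigma}(\varsigma(P_E))$ compact) forces $\Gr(\Ei)$ locally free --- is precisely the conditional corollary that the paper derives only \emph{assuming} its Conjecture, i.e.\ assuming the missing generalization of Wang's theorem to singular Yang--Mills and balanced metrics on torsionfree sheaves; so your argument presupposes exactly what is open. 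Moreover, even in the locally free (Gieseker-stable) case the quantitative link you invoke is misstated: the discussion around Proposition \ref{GiesSOTlimprop} only records that failure of the balanced metrics $B^E_m$ to converge in $C^0$ forces $C_E$ to be noncompact; failure of convergence in $C^\infty(\M)$ alone does not, so the step ``no $C^\infty$ limit $\Rightarrow$ $C_E$ noncompact'' would fail even where balanced metrics exist. This is a genuine gap in the proposal as written; as the paper stands, local freeness of $\Gr(\Ei)$ at this point is either taken as given or rests on the stated Conjecture, not on an argument you can reconstruct from Lemma \ref{coinvYMlemma} and \S\ref{balasec}.
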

\begin{proof}
Since $\Ei_{\rm CD}$ is locally free (see Theorem \ref{bigCDversuslocallyfree}), the Serre sheaf $\Ei$ of $\GE_\N$ is locally free and $\varsigma(P_E)$ is a real-analytic metric on $\Ei_{\rm CD,\M}$. We have a factorization $\CD(\GE_\N)=\CD(\GH_\N)\otimes\varsigma(P_E)$ so $\varsigma(P_E)$ defines $\Ei_{\rm CD,\M}$ and its holomorphic structure. By Theorem \ref{GrvarsigmaPEthm} we have that $\varsigma(P_E)$ is a Yang--Mills metric on $\Gr(\Ei)$, so $\Gr(\Ei)$ is analytically isomorphic to $\Ei_{\rm CD,\M}$ and $\varsigma(P_E)$ is a Yang--Mills metric on $\Ei_{\rm CD,\M}$. 
\end{proof}

The next step would be to obtain a generalization Wang's theorem (Lemma \ref{Wanglemma}), namely that every torsionfree slope-stable sheaf has a ``singular Yang--Mills metric'' coming from a sequence of ``singular balanced metrics''. That would prove:
\begin{Conjecture}
Theorem \ref{GrvarsigmaPEthm} is true without the assumption that $\Gr(\Ei)$ is locally free, i.e. $\varsigma(P_E)$ is always a metric on $\Gr(\Ei)$ which is the direct sum of singular Yang--Mills metrics on the simple summands of $\Gr(\Ei)$. 
\end{Conjecture}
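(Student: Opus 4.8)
The plan is to keep the reduction in the proof of Theorem~\ref{GrvarsigmaPEthm} intact and to replace its only genuinely analytic ingredient --- the existence and convergence of balanced metrics on the \emph{locally free} stable summands of $\Gr(\Ei)$, i.e. Lemma~\ref{Wanglemma} --- by a ``singular'' counterpart valid for torsionfree slope-stable sheaves. Thus the whole difficulty is concentrated in the following statement: if $\Fi$ is a torsionfree slope-stable coherent sheaf on $\M=\G/\K$, then for $m\gg 0$ there is a weakly $\omega$-balanced metric $\FS(\GH_m)\otimes B^F_m$ on $\Fi(m)$ in the sense of \S\ref{balasec} (with $P_{F,m}$ now the projection of $\GH_m\otimes\GF_0$ onto the image of the multiplication map into $\GF_m$), and the $B^F_m$ converge, in $C^\infty_{\mathrm{loc}}$ on $\M\setminus\sing(\Fi)$ together with uniform $L^1$-bounds on curvature across $\sing(\Fi)$, to the admissible Hermitian--Einstein metric $P^F$ on $\Fi$ provided by Bando--Siu theory. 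Granting this, $P^F$ is a projection over $L^\infty(\M)$ of rank $\rank\Fi$ almost everywhere, real-analytic off $\sing(\Fi)$, and the assertion becomes $\varsigma(P_F)=P^F$ for $P_F:=\sum_m c_{F,m}^{-1}\breve{\varsigma}^{(m)}(B^F_m)$.

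First I would establish existence of the singular balanced metrics. For $m\gg 0$ the locally free twist $\Fi(m)$ has vanishing higher cohomology and is globally generated, so $\GH_m\otimes\GF_0\to\GF_m$ is onto and $P_{F,m}$ is indeed a projection; slope-stability of $\Fi$ implies Gieseker-stability of $\Fi(m)$, and the GIT characterization underlying \cite{Wang1} then yields a metric with $\breve{\varsigma}^{(m)}(B^F_m)=c_{F,m}P_{F,m}$ through the explicit Toeplitz formula \eqref{Toeplexplic}. Second --- the hard analytic step --- I would prove convergence. In the smooth case this is Donaldson's balancing iteration as used by Wang; in the torsionfree case I would run the same iteration on the reflexive hull $\Fi^{**}$, which is locally free in complex codimension $\le 2$, and invoke Bando--Siu's existence and removable-singularity results for the admissible Hermitian--Einstein equation to control the limit. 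Away from $\sing(\Fi)$ Wang's Donaldson-functional estimates should apply verbatim and give $C^\infty_{\mathrm{loc}}$ subconvergence, while a uniform $L^2$ curvature bound (coming from slope-stability together with the $\chi(\Fi(m))$-normalization built into the balance equation, via an Uhlenbeck-type compactness argument) lets one pass to a global limit and identify it with $P^F$ by uniqueness of the admissible Hermitian--Einstein metric on a stable sheaf.

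Third, I would feed $P^F$ into the operator-theoretic machinery exactly as in Lemma~\ref{coinvYMlemma}. The only point requiring care is that $\|B^F_m-P^F\|_{C^0}$ need not tend to $0$, since the convergence is merely local; instead I would bound $\|\breve{\varsigma}^{(m)}(B^F_m-P^F)\|$ by using that $\breve{\varsigma}^{(m)}$ is an $\omega$-average against $\FS(\GH_m)$ (Remark~\ref{frameomegaremark} and \eqref{Toeplexplic}), so that the contribution of a shrinking tube around $\sing(\Fi)$ is $o(1)$ by dominated convergence once one has the uniform $L^p$ bound ($p>1$) from the previous step. This gives that $\breve{\varsigma}(P^F)$ and $P_F$ differ by a compact operator, hence $\varsigma(P_F)=P^F$, and then Proposition~\ref{uniqeliftprop} identifies $P_F$ with $\Ran\breve{\varsigma}(P^F)$ up to finite rank. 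Reassembling along the Harder--Narasimhan--Seshadri filtration of $\Ei$ then proceeds verbatim as in Theorem~\ref{GrvarsigmaPEthm}: $\varsigma$ is linear, $\varsigma(I_G)$ is identified with $\varsigma(P_G)$ on the submodule pieces by Lemma~\ref{invYMlemma} (and the direct-sum calculus of Proposition~\ref{dirsumYMprop} still applies), and one concludes $\varsigma(P_E)=Q^E$, where $Q^E$ is now the direct sum of the singular Yang--Mills metrics on the stable summands of $\Gr(\Ei)$ --- a projection over $L^\infty(\M)$ which off $\sing(\Gr(\Ei))$ is real-analytic and Hermitian--Einstein.

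I expect the main obstacle to be precisely the singular Wang theorem of the first two steps. On the one hand one must make the notion of balanced metric on a torsionfree sheaf canonical: which $L^2$-inner product to use when the metric degenerates on $\sing(\Fi)$, and whether to work on $\Fi$, on its reflexive hull $\Fi^{**}$, or on a resolution of singularities. On the other hand, and more seriously, one must upgrade Donaldson's balancing-flow convergence to a version with estimates uniform up to $\sing(\Fi)$; this seems to demand a quantitative merger of the GIT/balancing machinery with Bando--Siu's removable-singularity analysis that is not presently in the literature. A secondary but genuine issue is the presentation-independence of $\varsigma(P_E)$ once its symbol lies only in $L^\infty(\M)$ rather than $C^0(\M)$: the compact-perturbation arguments above must be checked not to depend on the chosen surjection $\Ai\otimes\GE_0\to E_\N$, which in the locally free case was automatic from continuity of the symbol but here requires a separate argument, presumably via the uniqueness part of Proposition~\ref{uniqeliftprop} applied on $\M\setminus\sing(\Gr(\Ei))$.
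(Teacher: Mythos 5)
This statement is left as an open conjecture in the paper: there is no proof to compare against. The paper itself records exactly the reduction you describe, saying that what is needed is a generalization of Lemma \ref{Wanglemma} (Wang's theorem) to singular Yang--Mills and balanced metrics on torsionfree sheaves, after which the argument of Theorem \ref{GrvarsigmaPEthm} would go through. Your proposal follows the same route, and you are candid that the singular Wang theorem is not in the literature; so what you have written is a plan that restates the conjecture's known missing ingredient rather than a proof of it. The genuine gap is therefore your first two steps: existence of weakly $\omega$-balanced metrics $\FS(\GH_m)\otimes B^F_m$ on $\Fi(m)$ for a torsionfree stable $\Fi$, and their convergence (in a sense strong enough to be usable) to a Bando--Siu admissible Hermitian--Einstein metric. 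Until that is established, nothing downstream is proved.

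A second, more concrete problem is your third step. The compact-perturbation argument of Lemma \ref{coinvYMlemma} and the uniqueness statement of Proposition \ref{uniqeliftprop} are used in the paper only under the hypothesis that the symbol is continuous and $C_E$ is compact; the paper explicitly warns (after Proposition \ref{uniqeliftprop} and in Remark \ref{uniqueRemark}) that both break down when $C_E$ is merely a $\Psi$-potential, and Proposition \ref{GiesSOTlimprop} together with the surrounding discussion indicates that when $\Gr(\Ei)$ fails to be locally free one expects only SOT-convergence of the balanced metrics and a noncompact $C_E$, i.e. $P_E\notin\Ti_\GH^{(0)}\otimes\Mn_N(\C)$. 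Your dominated-convergence estimate for $\|\breve{\varsigma}^{(m)}(B^F_m-P^F)\|$ would, if correct, force compactness of $C_F$ and hence continuity of $\varsigma(P_F)$, contradicting the expected behaviour in the singular case (the conjectural picture is precisely that $\varsigma(P_E)$ is only an $L^\infty$ projection, continuous off $\sing(\Gr(\Ei))$). So even granting a singular Wang theorem, the identification $\varsigma(P_E)=Q^E$ cannot be run through compact perturbations and uniqueness of superharmonic lifts as you propose; one would need a genuinely new argument (e.g. a local or SOT version of the symbol computation, together with a replacement for Lemma \ref{invYMlemma} and Proposition \ref{dirsumYMprop}, whose proofs use that the HNS pieces are subbundles of locally free sheaves).
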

If this conjecture is true then we have:
\begin{cor}
For arbitrary holomorphic vector bundle $\Ei$, letting $P_E$ be the projection onto a Fock completion of $E_\N:=\bigoplus_{m\in\N_0}H^0(\M;\Ei(m))$ the symbol $\varsigma(P_E)$ has entries $C^0(\M)\subset L^\infty(\M)$ if and only if $\Gr(\Ei)$ is locally free. 
\end{cor}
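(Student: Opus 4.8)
The plan is to deduce this corollary from the conjecture above, exactly as the phrasing ``if this conjecture is true then we have'' suggests, splitting into the two implications and concentrating essentially all the effort on the converse. Throughout one should also note that $\varsigma(P_E)$ does not depend on the chosen presentation of $E_\N$ as a quotient of $\Ai\otimes\C^N$: this is already guaranteed by Theorem \ref{GrvarsigmaPEthm} in the locally free case, and by the conjecture in general.

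One implication is already in hand. If $\Gr(\Ei)$ is locally free, then Theorem \ref{GrvarsigmaPEthm} identifies $\varsigma(P_E)$ with the direct sum $Q^E$ of the Yang--Mills metrics on the slope-stable summands of $\Gr(\Ei)$; since each such summand is a genuine holomorphic vector bundle, its Yang--Mills metric is smooth by the Uhlenbeck--Yau theorem, so $\varsigma(P_E)$ is a matrix over $C^0(\M)$ --- in fact over the real-analytic functions. No appeal to the conjecture is needed for this half.

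For the converse I would argue by contraposition, assuming $\Gr(\Ei)$ is not locally free. Granting the conjecture, $\varsigma(P_E)$ is the direct sum of singular Yang--Mills metrics on the simple summands $\Fi_1,\dots,\Fi_k$ of $\Gr(\Ei)$, and by hypothesis at least one $\Fi_i$ has nonempty singular locus $\sing(\Fi_i)$, which --- being the non-locally-free locus of a torsionfree quotient of a vector bundle by a saturated subsheaf --- is a proper analytic subset of $\M$ of codimension $\geq 2$. The geometric input I would then invoke is that the Cowen--Douglas sheaf $\Ei_{\rm CD,\M}$ of $\GE_\N$ (the $\Oi_\M$-module obtained by descending $\Ei_{\rm CD}$ along $\pi$ via Lemma \ref{vectextlemma}, whose fibre over $x$ is identified with $\Ker(S_E^*-x\bone)$) is isomorphic to $\Gr(\Ei)$ itself: over the free locus $\M\setminus\bigcup_i\sing(\Fi_i)$ this follows summand by summand from the argument of Theorem \ref{GrvarsigmaPEthm} and Proposition \ref{SerrevsCDthm}, while the balanced approximants supplied by the conjectured singular Wang theorem are what pin $\Ei_{\rm CD,\M}$ down globally. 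Hence the fibre dimension $\dim\Ker(S_E^*-x\bone)$ equals $\rank\Gr(\Ei)$ on a dense open set but jumps strictly above it along $\sing(\Fi_i)$. If now $\varsigma(P_E)$ belonged to $C^0(\M)\otimes\Mn_N(\C)$, then Theorem \ref{bigCDversuslocallyfree} would force $\Ei_{\rm CD}$ to be locally free on $\B\setminus\{0\}$, hence $\Ei_{\rm CD,\M}$ to be locally free on $\M$ (Lemma \ref{vectextlemma}), hence $\dim\Ker(S_E^*-x\bone)$ to be the constant $\rank\Gr(\Ei)$ on all of $\M$ --- contradicting the jump at $\sing(\Fi_i)$. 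Therefore $\varsigma(P_E)$ fails to be continuous, which is the contrapositive we wanted.

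The hard part is of course the conjecture itself: it requires an extension of Wang's theorem (Lemma \ref{Wanglemma}) to singular Yang--Mills metrics and singular balanced metrics on slope-stable torsionfree sheaves, which in turn means pushing the frame-theoretic and Bergman/Toeplitz-kernel asymptotics underlying Lemma \ref{Wanglemma}, Theorem \ref{bigCDversuslocallyfree} and Theorem \ref{VEsymbprop} across the singular locus. A secondary but equally essential point, to be settled once the conjecture is available, is the sharp identification of $\Ei_{\rm CD,\M}$ with $\Gr(\Ei)$ rather than with its reflexive hull $\Gr(\Ei)^{**}$: it is precisely the sensitivity of the Toeplitz symbol $\varsigma(P_E)$ to the fibre-dimension jumps of $\Gr(\Ei)$, and not merely to the restriction of $\Gr(\Ei)$ to its locally free locus, that makes continuity of $\varsigma(P_E)$ equivalent to local freeness of $\Gr(\Ei)$ itself and not just of $\Gr(\Ei)^{**}$.
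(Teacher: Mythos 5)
Your ``if'' direction is fine and is exactly the paper's: with $\Gr(\Ei)$ locally free, Theorem \ref{GrvarsigmaPEthm} gives $\varsigma(P_E)=Q^E$, a direct sum of smooth Yang--Mills metrics, hence a matrix over $C^0(\M)$, and you are right that this half never touches the conjecture. Bear in mind, though, that the paper offers no proof of the corollary at all --- it is stated purely as a consequence of the (unproven) conjecture --- so the only substantive question is whether your conditional derivation of the ``only if'' direction is sound given that conjecture.

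There the weak link is the step you yourself call ``the geometric input'': the identification $\Ei_{\rm CD,\M}\cong\Gr(\Ei)$ across the singular locus, equivalently the claim that $\dim\Ker(S_E^*-x\bone)$ jumps over $\sing\Gr(\Ei)$. The conjecture does not give you this: it is a statement about the symbol $\varsigma(P_E)$ only, and says nothing about the eigenspaces of $S_E^*$ at points lying over the non-locally-free locus. Nor does the paper's machinery deliver it: Proposition \ref{SerrevsCDthm} identifies the Cowen--Douglas sheaf with (the pullback of) the Serre sheaf only under a local-freeness hypothesis, and the Gleason-solvability discussion shows precisely that the Hilbert-module fibres and the algebraic fibres can disagree where local freeness fails, so a priori the analytic fibre dimension could remain constant even though $\Gr(\Ei)$ is singular. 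Saying that ``the balanced approximants supplied by the conjectured singular Wang theorem pin $\Ei_{\rm CD,\M}$ down globally'' is an unproven strengthening of the conjecture, not a consequence of it. It is also an unnecessary detour: granting the conjecture, the direct route is that continuity of $\varsigma(P_E)$ makes it a real-analytic projection (Theorem \ref{bigCDversuslocallyfree}) whose range is a genuine $C^0$ (indeed holomorphic, by the corollary to Theorem \ref{GrvarsigmaPEthm}) vector bundle, while the conjecture says this projection is a metric \emph{on} $\Gr(\Ei)$, forcing $\Gr(\Ei)$ to be locally free. Your closing caveat --- that one must know the conjecture really pins down $\Gr(\Ei)$ and not its reflexive hull or some locally free extension of its restriction to the free locus --- is exactly the right worry, but it is an imprecision inherited from the conjecture's formulation and it affects your fibre-jump argument just as much as the short argument; routing through the Cowen--Douglas fibres relocates that ambiguity rather than resolving it.
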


\subsection{Gieseker-stability and superharmonic lifts}

With $L^\infty(\M)$ acting as multiplication operators on $L^2(\M,\omega)$ one can consider limits of sequences in $L^\infty(\M)\otimes\Mn_N(\C)$ in the strong operator topology. 
\begin{prop}\label{GiesSOTlimprop}
Let $\Ei$ be a Gieseker-stable vector bundle and let $(B^E_m)_{m\gg 0}$ be its sequence of balanced metrics. Then the projection
$$
P^E:=\mathrm{SOT-}\lim_{m\to\infty}B^E_m
$$
exists as a matrix over $L^\infty(\M)$ and it coincides with $\varsigma(P_E)$, where $P_E$ is the projection onto the completion of $E_\N:=\bigoplus_mH^0(\M;\Ei(m))$ in Fock space.
 \end{prop}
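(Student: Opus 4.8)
The plan is to reduce the statement to a convergence argument on the level of the Toeplitz operators, using the balance equation \eqref{balanceeqToepl} as the bridge between the finite-level data $B^E_m$ and the projection $P_E$ onto the Fock completion of $E_\N$. First I would fix a presentation of $E_\N$ as a graded quotient $\Ai\otimes\GE_0\to E_\N\to 0$ (valid up to finite-dimensional vector spaces by regularity of $\Ei(m)$ for $m\gg 0$), so that each $\FS(\GH_m)\otimes B^E_m$ is a balanced metric on $\Ei(m)$ and the balance condition reads $\breve{\varsigma}^{(m)}(B^E_m)=c_{E,m}P_{E,m}$, with $P_{E,m}$ the orthogonal projection of $\GH_m\otimes\GE_0$ onto the embedded copy of $E_m$. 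Summing over $m$ gives $P_E=\sum_m c_{E,m}^{-1}\breve{\varsigma}^{(m)}(B^E_m)$ exactly (again up to a finite-rank correction), which is the key identity I would carry through.

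Next I would identify $\varsigma(P_E)$. Since $P_E$ is the projection onto a graded quotient module, Lemma \ref{PEinNilemma} gives that $P_E$ has entries in $\Ni$ and $\varsigma(P_E)$ is the well-defined projection $\mathrm{SOT-}\lim_m\varsigma^{(m)}(P_{E,m})$ over $L^\infty(\M)$. From $\varsigma^{(m)}\circ\breve{\varsigma}^{(m)}=\id$ and the balance equation we get $\varsigma^{(m)}(P_{E,m})=c_{E,m}^{-1}\varsigma^{(m)}(\breve{\varsigma}^{(m)}(B^E_m))$, i.e.\ $\varsigma^{(m)}(P_{E,m})$ is (up to the scalar $c_{E,m}^{-1}=1+O(m^{-1})$) the Berezin transform $\varsigma^{(m)}(\breve{\varsigma}^{(m)}(B^E_m))$ of the metric $B^E_m$. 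So I would need two facts: (1) the Berezin transform of a bounded family of metrics differs from the metric itself by something tending to $0$ in an appropriate sense — here the subtlety is that $B^E_m$ varies with $m$, so the standard asymptotic expansion of the Berezin transform does not apply term-by-term, and this is where the argument has to be handled carefully; and (2) the SOT limit of $B^E_m$ exists. For (2), Gieseker-stability guarantees the balanced metrics exist for all $m\gg 0$, and although they need not converge in $C^\infty$ (that failure is exactly what distinguishes Gieseker- from slope-stability), one still has uniform $L^\infty$ bounds on $B^E_m$ (each is a projection-valued function), so a subsequence converges weakly-$*$, hence in SOT on $L^2(\M,\omega)$; uniqueness of the limit — forcing the full sequence to converge — would come from identifying every subsequential limit with $\varsigma(P_E)$.

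Thus the core of the argument is: show $\mathrm{SOT-}\lim_m\big(B^E_m-\varsigma^{(m)}(\breve{\varsigma}^{(m)}(B^E_m))\big)=0$ and $\mathrm{SOT-}\lim_m(c_{E,m}^{-1}-1)\varsigma^{(m)}(\breve{\varsigma}^{(m)}(B^E_m))=0$, whence $\mathrm{SOT-}\lim_m\varsigma^{(m)}(P_{E,m})$ exists and equals $\mathrm{SOT-}\lim_m B^E_m$, and the former is by definition $\varsigma(P_E)$ (the SOT and the harmonic/Toeplitz limits agree on $\Ni$ by Lemma \ref{LinftyToeplwhole} and Proposition \ref{SOTvsprop}). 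The first of these two limits is the main obstacle: one needs an estimate on the Berezin transform that is uniform over a family of metrics lying in a fixed bounded set, of the form $\|B-\varsigma^{(m)}(\breve{\varsigma}^{(m)}(B))\|_{L^2(\M,\omega)}=O(m^{-1}\|B\|_{C^2})$ or a weaker SOT-version not requiring $C^2$-bounds; I would extract this from the off-diagonal estimates for the Szegő/Bergman kernel of $\Ei(m)$ (as in \cite{MaMa3, Wang2}), which are uniform in the metric as long as it ranges over a compact family, combined with a compactness/diagonal argument to pass from uniform bounds on $B^E_m$ to the SOT convergence of the Berezin transforms. The second limit is routine since $c_{E,m}^{-1}=1+O(m^{-1})$ and $\|\varsigma^{(m)}(\breve{\varsigma}^{(m)}(B^E_m))\|\le\|B^E_m\|\le 1$. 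Finally, assembling these gives that $P^E:=\mathrm{SOT-}\lim_m B^E_m$ exists and equals $\varsigma(P_E)$, which is the assertion of Proposition \ref{GiesSOTlimprop}.
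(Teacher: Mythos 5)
Your skeleton is exactly the paper's: by balance, $c_{E,m}^{-1}\varsigma^{(m)}(\breve{\varsigma}^{(m)}(B^E_m))=\varsigma^{(m)}(P_{E,m})$, whose SOT limit is $\varsigma(P_E)$, and everything reduces to comparing $B^E_m$ with its Berezin transform and producing the SOT limit of $B^E_m$ itself (the paper's proof is just this chain of equalities, stated without justification of that first step). Two small remarks before the main point: your appeal to ``$\varsigma^{(m)}\circ\breve{\varsigma}^{(m)}=\id$'' is a slip --- that composition is the Berezin transform, not the identity; what you actually need, and effectively do, is to apply $\varsigma^{(m)}$ to the balance equation.

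The genuine problem is with the two devices you propose for the analytic core. First, weak-$*$ convergence of a bounded sequence in $L^\infty(\M)\otimes\Mn_N(\C)$ does \emph{not} imply SOT convergence of the corresponding multiplication operators on $L^2(\M,\omega)$ (on the circle, $\sin(m\theta)\to 0$ weak-$*$, but applied to the constant function its $L^2$-norm stays bounded away from $0$), so ``a subsequence converges weakly-$*$, hence in SOT'' fails as stated. Second, the uniform off-diagonal Szeg\"o-kernel estimates you want to quote require the metrics to range over a family precompact in a $C^2$-type topology; for a Gieseker- but not slope-stable bundle the sequence $(B^E_m)$ is precisely expected \emph{not} to be precompact even in $C^0$ (that failure is the raison d'\^etre of the proposition), so this route to $\mathrm{SOT}\text{-}\lim_m\big(B^E_m-\varsigma^{(m)}(\breve{\varsigma}^{(m)}(B^E_m))\big)=0$ is not available, and your hedge ``a weaker SOT-version not requiring $C^2$-bounds'' is left unsupplied. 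What closes the gap without any uniform control on $B^E_m$ is the projection structure, which your plan (and the paper's terse proof) does not exploit: $B^E_m$ and $Q:=\varsigma(P_E)$ are projections with the same fiberwise trace $\rank\Ei$, the Berezin transform $\beta_m:=\varsigma^{(m)}\circ\breve{\varsigma}^{(m)}$ is unital, $\omega$-preserving and self-adjoint for $\omega\otimes\tr$ (hence an $L^2$-contraction), and for the \emph{fixed} symbol $Q$ one has $\beta_m(Q)\to Q$ in $L^2$ (uniform convergence on continuous symbols plus density). Then
$$
(\omega\otimes\tr)(B^E_mQ)=(\omega\otimes\tr)\big(\beta_m(B^E_m)Q\big)+o(1)=c_{E,m}\,(\omega\otimes\tr)\big(\varsigma^{(m)}(P_{E,m})Q\big)+o(1)\longrightarrow(\omega\otimes\tr)(Q^2)=\rank\Ei,
$$
so $\|B^E_m-Q\|_{L^2(\omega\otimes\tr)}^2=2\rank\Ei-2\operatorname{Re}(\omega\otimes\tr)(B^E_mQ)\to 0$, which gives in one stroke both the existence of the SOT limit of $B^E_m$ and its identification with $\varsigma(P_E)$. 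So: same approach as the paper, but the justification of the decisive SOT step needs to be replaced along these lines.
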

 \begin{proof}
We have
\begin{align*}
 \mathrm{SOT-}\lim_{m\to\infty}B^E_m&
= \mathrm{SOT-}\lim_{m\to\infty}\varsigma^{(m)}(\breve{\varsigma}^{(m)}(B^E_m))
 \\&=\mathrm{SOT-}\lim_{m\to\infty}c_{E,m}^{-1}\varsigma^{(m)}(\breve{\varsigma}^{(m)}(B^E_m))
 \\&=\mathrm{SOT-}\lim_{m\to\infty}\varsigma^{(m)}(P_{E,m})
  \\&=\varsigma(P_E),
 \end{align*}
where we used the balance of each $B^E_m$ for $m\gg 0$ in the penultimate line.
 \end{proof}
 For a vector bundle $\Ei$ which is Gieseker-stable but not slope-stable the limit $\lim_{m\to\infty}B^E_m$ does not exist in $C^\infty$ by \cite{Wang2}. If it also fails to exist in $C^0$ then the projection $P_E$ onto $[E_\N]$ must be of the form $P_E=\breve{\varsigma}(P^E)+C_E$ with $C_E$ noncompact. 
\begin{Conjecture}
Let $\Ei$ be a Gieseker-stable vector bundle and let $(P^E_m)_{m\gg 0}$ be its sequence of balanced metrics. Then the projection
$$
P^E:=\mathrm{SOT-}\lim_{m\to\infty}P^E_m
$$
defines $\Gr(\Ei)$ and a singular Yang--Mills metric on $\Gr(\Ei)$.
 \end{Conjecture}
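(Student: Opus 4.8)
The existence of the limit is already supplied by Proposition~\ref{GiesSOTlimprop}: if $\Ei$ is Gieseker-stable with balanced metrics $\FS(\GH_m)\otimes B^E_m$ on $\Ei(m)$ for $m\gg0$, then $P^E:=\mathrm{SOT-}\lim_{m\to\infty}B^E_m$ exists as a projection over $L^\infty(\M)$ and equals $\varsigma(P_E)$, where $P_E$ is the projection of $\GH_\N\otimes\GE_0$ onto the Fock completion $\GE_\N$ of $E_\N:=\bigoplus_{m\in\N_0}H^0(\M;\Ei(m))$. Thus the whole task is to identify the symbol $\varsigma(P_E)$: one must show that away from the analytic set $Z$ where $\Gr(\Ei)$ fails to be locally free the projection $\varsigma(P_E)$ is a real-analytic metric defining $\Gr(\Ei)|_{\M\setminus Z}$, and that this metric is the direct sum of the Bando--Siu admissible Hermitian--Einstein metrics on the slope-stable torsionfree summands of $\Gr(\Ei)$. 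When $\Gr(\Ei)$ is locally free this is exactly Theorem~\ref{GrvarsigmaPEthm}, so the content of the conjecture lies entirely in the singular case, where $\varsigma(P_E)$ is a genuinely non-continuous $L^\infty$ projection (by the corollary preceding the conjecture).

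The decisive input is a singular analogue of Wang's theorem (Lemma~\ref{Wanglemma}) for a slope-stable torsionfree sheaf $\Gi$ on $\M$. The balance equation $\breve{\varsigma}^{(m)}(B^G_m)=c_{G,m}I_{G,m}$ of \S\ref{balasec} makes sense verbatim with $E_\N$ replaced by $G_\N:=\bigoplus_mH^0(\M;\Gi(m))$ and $I_{G,m}$ the projection onto the $m$th graded piece of the Fock completion of $G_\N$; the constants $\chi(\Gi(m))$ and $\rank\Gi$ are unaffected by the singularities. Slope-stability implies Gieseker-stability, so finite-level balanced metrics $B^G_m$ exist for $m\gg0$ by Wang's criterion~\cite{Wang1}. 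The core claim is that $\FS(\GH_m)\otimes B^G_m$ converges, as $m\to\infty$, in $C^\infty$ on compact subsets of $\M\setminus Z$ and weakly in $L^2$ on $\M$, with limit the restriction to $\Gi$ of the admissible Hermitian--Einstein metric on the reflexive hull $\Gi^{**}$ provided by Bando--Siu theory. In operator language this says precisely that $\varsigma(P_G)=\mathrm{SOT-}\lim_m B^G_m$ is this admissible metric; its uniqueness off $Z$, together with the $L^2$ control at $Z$, pins it down among the $L^\infty$ projections of rank $\rank\Gi$ that are Yang--Mills on $\M\setminus Z$. The analogous statement for the presentation of $G_\N$ as a submodule inside the completion of a Castelnuovo--Mumford regular bundle --- the singular form of Lemma~\ref{invYMlemma} --- follows from the same convergence theorem, the norm estimate in the proofs of Lemmas~\ref{coinvYMlemma} and \ref{invYMlemma} being replaced by the identity $\varsigma(I_G)=\mathrm{SOT-}\lim_m c_{G,m}^{-1}\breve{\varsigma}^{(m)}(B^G_m)$ from the argument of Proposition~\ref{GiesSOTlimprop}.

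Granting this, the assembly mirrors the proof of Theorem~\ref{GrvarsigmaPEthm}. Take the Harder--Narasimhan--Seshadri filtration $0=\Ei_0\subset\Ei_1\subset\cdots\subset\Ei_r=\Ei$ whose successive quotients $\Gi_i:=\Ei_i/\Ei_{i-1}$ are slope-stable torsionfree sheaves, so that $\Gr(\Ei)=\bigoplus_{i=1}^r\Gi_i$. This induces a filtration of $E_\N$ by graded submodules, hence a filtration $0\subset[E_{1,\N}]\subset\cdots\subset[E_{r,\N}]=\GE_\N$ by semi-invariant subspaces of $\GH_\N\otimes\GE_0$; with $W_i$ the orthogonal complement of $[E_{i-1,\N}]$ in $[E_{i,\N}]$ --- a quotient module of $[E_{i,\N}]$ whose underlying $\Ai$-module is isomorphic up to finite dimension to $G_{i,\N}$ --- one has $\GE_\N=\bigoplus_i W_i$ orthogonally and $P_E=\sum_i P_{W_i}$. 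Since $P_E$ has entries in $\Ni$ (Lemma~\ref{PEinNilemma}) and the projections onto the $[E_{i,\N}]$ do too (by the balance equations, as in Lemma~\ref{invYMlemma}), each $P_{W_i}$ lies in $\Ni$, so the linearity of $\varsigma$ gives $\varsigma(P_E)=\sum_{i=1}^r\varsigma(P_{W_i})$. By the singular forms of Lemmas~\ref{coinvYMlemma} and \ref{invYMlemma} each $\varsigma(P_{W_i})$ equals the singular Yang--Mills metric $\varsigma(P_{G_i})$ on $\Gi_i$; summing these as in Proposition~\ref{dirsumYMprop}, and using $(\omega\otimes\Tr_N)(\varsigma(P_E))=\rank\Ei=\rank\Gr(\Ei)$, we conclude that $\varsigma(P_E)$ defines $\Gr(\Ei)$ and is the direct sum of the singular Yang--Mills metrics on its slope-stable torsionfree summands, which is the statement of the conjecture.

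The hard part is the singular Wang theorem of the second paragraph: making the notion of a singular balanced metric precise and proving the convergence of the $B^G_m$ to the Bando--Siu admissible metric. This demands a priori estimates for the balanced-metric equation that are uniform in $m$ near the singular locus $Z$, where the Szegö and Bergman kernels of $\Gi(m)$ degenerate; a natural route is to combine a Kempf--Ness/moment-map description of balanced metrics, in the spirit of the work of Wang and Donaldson, with cut-off arguments on $\M\setminus Z$ following Bando--Siu, producing an $L^2_1$-bounded family whose weak limit is identified through the uniqueness of admissible Hermitian--Einstein metrics and a removable-singularity argument off $Z$. A secondary, bookkeeping-level difficulty is to confirm that the decomposition $P_E=\sum_i P_{W_i}$ along the Hilbert-module filtration is genuinely compatible with $\varsigma$ when the pieces $W_i$ are only semi-invariant rather than coinvariant --- that is, that the relevant off-diagonal extension blocks lie in $\Ker\varsigma$ --- which should follow from the mechanism already used for Lemma~\ref{invYMlemma} in the locally free case but must be checked in the singular setting.
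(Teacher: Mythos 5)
The statement you are addressing is stated in the paper as a \emph{Conjecture}, and the paper offers no proof of it: the author explicitly remarks that establishing it ``would need a generalization of Lemma \ref{Wanglemma} to singular Yang--Mills metrics and balanced metrics on torsionfree sheaves.'' Your proposal is therefore not comparable to a proof in the paper, and on its own terms it is not a proof either. What you have done correctly is assemble the operator-theoretic scaffolding that the paper already provides --- Proposition \ref{GiesSOTlimprop} for the existence of the SOT-limit and its identification with $\varsigma(P_E)$, the filtration/decomposition mechanism of Theorem \ref{GrvarsigmaPEthm}, and the lifting Lemmas \ref{coinvYMlemma} and \ref{invYMlemma} --- and you have located the missing ingredient exactly where the author locates it. But that ingredient, the ``singular Wang theorem'' (convergence of the finite-level balanced metrics $B^G_m$ on a slope-stable torsionfree sheaf to the Bando--Siu admissible Hermitian--Einstein metric, in $C^\infty_{\mathrm{loc}}$ off the non-locally-free locus and with enough control near it to identify the $L^\infty$ limit), is precisely the mathematical content of the conjecture; in your write-up it is asserted as ``the core claim'' and sketched by analogy with Kempf--Ness/Bando--Siu arguments, not proved. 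So the proposal is a reduction of the conjecture to an open analytic problem, which is essentially the paper's own assessment, not a resolution of it.

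A further point you flag but do not settle is also genuinely problematic in the singular case: the compatibility of $\varsigma$ with the decomposition $P_E=\sum_iP_{W_i}$ along merely semi-invariant pieces. In the locally free case this works because the norm estimate $\lim_m\|\breve{\varsigma}^{(m)}(B^G_m-P^G)\|=0$ makes the relevant differences compact, so the uniqueness of superharmonic lifts (Proposition \ref{uniqeliftprop}) applies. When $\Gr(\Ei)$ is not locally free the paper itself notes (after Proposition \ref{GiesSOTlimprop}) that the potential $C_E$ is expected to be noncompact and only SOT-convergence of the balanced metrics is available, so the compactness mechanism underlying Lemmas \ref{coinvYMlemma}, \ref{invYMlemma} and Proposition \ref{uniqeliftprop} is unavailable; showing that the off-diagonal blocks lie in $\Ker\varsigma$ and that $\varsigma(P_{W_i})$ really coincides with the limit metric on $\Gi_i$ requires a new argument, not a routine transcription of the locally free case.
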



\subsection{From balance to Gieseker-stability}
We now give a more direct proof of one implication in the main theorem of \cite{Wang1}:
\begin{thm}\label{HalfWangthm}
Let $\Ei$ be a holomorphic vector bundle over $\M$ and suppose that $\Ei(m)$ admits an $\omega$-balanced Hermitian metric for each $m\gg 0$. Then $\Ei$ is Gieseker-polystable.
\end{thm}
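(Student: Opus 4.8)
The plan is to exploit the reformulation of balance from \S\ref{balasec}: for $m\gg 0$ the $\omega$-balanced metric $B^{E(m)}=\FS(\GH_m)\otimes B^E_m=\FS(\GE_m)$ on $\Ei(m)$ is characterised by the fact that the Fock inner product on $\GE_m=H^0(\M;\Ei(m))$ coincides with the $L^2$ inner product $\langle\cdot\,|\cdot\rangle_{\omega,B^{E(m)}}$ of \eqref{GKEinnerprod}, equivalently
\begin{equation*}
\omega\big(\FS(\GE_m)\big)=\frac{\rank\Ei}{\chi(\Ei(m))}\,\bone_{\GE_m}.
\end{equation*}
Recall also that by \cite{ACKi1} the Gieseker inequality \eqref{usualGieseker} for a proper analytic quotient $\Ei\to\Fi\to 0$ with kernel subsheaf $\Gi$ is equivalent, for $m\gg 0$, to $\chi(\Gi(m))/\rank\Gi\le\chi(\Ei(m))/\rank\Ei$, with strictness on one side corresponding to strictness on the other; since a non-saturated subsheaf has at most the Hilbert polynomial of its saturation and torsion quotients satisfy \eqref{usualGieseker} trivially, it suffices to prove this inequality for every proper saturated subsheaf $\Gi\subset\Ei$, and to show that equality for infinitely many $m$ forces $\Gi$ to be a holomorphic direct summand carrying balanced metrics.

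First I would prove the inequality. Put $K_m:=H^0(\M;\Gi(m))\subseteq\GE_m$; for $m\gg 0$, Serre vanishing gives $\dim K_m=\chi(\Gi(m))$, the sequence $0\to K_m\to\GE_m\to H^0(\M;\Fi(m))\to 0$ is exact, and $\Gi(m)$ is generated by $K_m$. Let $U\subseteq\M$ be the dense, $\omega$-conull locus where $\Fi$ is locally free; there $\Gi|_U$ is a subbundle of $\Ei|_U$, and I let $\FS^G_m\in C^0(U)\otimes\Bi(\GE_m)$ be the real-analytic projection onto the fibre $\Gi(m)_x=\langle\psi(x):\psi\in K_m\rangle\subseteq\Ei(m)_x\subseteq\GE_m$. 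Then $\FS^G_m(x)\le\FS(\GE_m)(x)$ and $\Tr_{\GE_m}\FS^G_m(x)=\rank\Gi$, so $\omega(\FS^G_m)$ is a well-defined positive operator with
\begin{equation*}
\omega(\FS^G_m)\le\omega(\FS(\GE_m))=\frac{\rank\Ei}{\chi(\Ei(m))}\bone_{\GE_m},\qquad \Tr_{\GE_m}\omega(\FS^G_m)=\rank\Gi ,
\end{equation*}
while for $\psi\in K_m$ one has $\FS^G_m(x)\psi=\psi(x)$ (as $\psi(x)\in\Gi(m)_x=\Ran\FS^G_m(x)$), hence, using the balance identity $\langle\psi\,|\psi\rangle_{\GE_m}=\tfrac{\chi(\Ei(m))}{\rank\Ei}\omega((\psi|\psi)_{B^{E(m)}})$,
\begin{equation*}
\langle\psi\,|\,\omega(\FS^G_m)\psi\rangle_{\GE_m}=\omega\big((\psi|\psi)_{B^{E(m)}}\big)=\frac{\rank\Ei}{\chi(\Ei(m))}\langle\psi\,|\psi\rangle_{\GE_m},\qquad\forall\psi\in K_m .
\end{equation*}
Comparing the partial trace of $\omega(\FS^G_m)$ over $K_m$ with its full trace and using positivity gives
\begin{equation*}
\rank\Gi=\Tr_{\GE_m}\omega(\FS^G_m)\ \ge\ \frac{\rank\Ei}{\chi(\Ei(m))}\dim K_m=\frac{\rank\Ei}{\chi(\Ei(m))}\chi(\Gi(m)),
\end{equation*}
which is the required inequality; thus $\Ei$ is Gieseker-semistable.

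For polystability I would analyse the equality case. If $\chi(\Gi(m))/\rank\Gi=\chi(\Ei(m))/\rank\Ei$ for infinitely many $m$, then the reduced Hilbert polynomials of $\Gi$ and $\Ei$ coincide, so equality holds for all $m\gg 0$; in the last display this forces the partial trace of $\omega(\FS^G_m)$ over $K_m^\perp$ to vanish, hence by positivity and real-analyticity $\FS^G_m(x)$ annihilates $K_m^\perp$ for every $x\in U$, i.e. $\FS(\GE_m)(x)K_m\subseteq K_m$ on $U$, and then on all of $\M$ since $\FS(\GE_m)$ is real-analytic (its entries are the $\langle\psi_j(x)|\psi_k(x)\rangle$ for a basis $\psi_j$). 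As $\FS(\GE_m)$ is a selfadjoint projection it now splits as the orthogonal direct sum of its restrictions to the \emph{constant} subspaces $C^0(\M)\otimes K_m$ and $C^0(\M)\otimes K_m^\perp$; constancy makes the $(0,1)$ cross terms of the induced $\bar\partial$-operator vanish, so this is a holomorphic splitting $\Ei(m)=\Gi(m)\oplus\Fi(m)$ (the second summand agreeing with $\Ei(m)/\Gi(m)$, hence with $\Fi(m)$), and untwisting gives $\Ei\cong\Gi\oplus\Fi$ — here the codimension-$\ge 2$ control on $\M\setminus U$, coming from $\Fi$ being torsionfree, identifies the first summand with the reflexive sheaf $\Gi$. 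Equality of reduced Hilbert polynomials makes the two blocks of $\FS(\GE_m)$ satisfy the balance normalisation, so $\Gi(m)$ and $\Fi(m)$ are $\omega$-balanced for all $m\gg 0$; induction on the rank (at each stage $\Ei$ is either Gieseker-stable or splits off a proper summand with balanced metrics of the same reduced Hilbert polynomial) then shows $\Gi$ and $\Fi$ are Gieseker-polystable, whence so is $\Ei$.

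I expect the polystability step to be the main obstacle. The delicate points are the passage from the $\omega$-almost-everywhere vanishing $\FS^G_m(x)K_m^\perp=0$ on $U$ to an everywhere statement on $\M$ (which rests on the real-analyticity of $\FS(\GE_m)$ and on $\M\setminus U$ having codimension $\ge 2$, valid because $\Fi$ is torsionfree), and the verification that the resulting reduction of structure group is holomorphic — precisely the vanishing of the second fundamental form, which is where the constancy of $K_m$ and $K_m^\perp$ enters. By contrast the semistability inequality is the clean operator-theoretic core and should be routine once the balance identity and the operator estimate $\omega(\FS^G_m)\le\omega(\FS(\GE_m))$ are in hand.
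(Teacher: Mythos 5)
Your argument is correct and is essentially the paper's own proof in dual form: your operator $\omega(\FS^G_m)$ is, up to the normalisation $c_{E,m}$, exactly the Toeplitz-type operator $\breve{\varsigma}^{E(m)}(P^G)$ that the paper compares with $P_{G,m}$, and your positivity/partial-trace estimate reproduces the paper's chain $\rank\Gi/\rank\Ei=\omega_E(P^G)=\phi^E_m(\breve{\varsigma}^{E(m)}(P^G))\geq\phi^E_m(P_{G,m})=\chi(\Gi(m))/\chi(\Ei(m))$. Your equality-case analysis (commutation of $\FS(\GE_m)$ with the constant projection onto $K_m$, the resulting holomorphic splitting, the inherited balanced metrics on the summands, and the induction on rank) is just a more explicit rendering of the splitting the paper asserts tersely, so the two proofs coincide in substance.
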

\begin{proof}
Fix $m\gg 0$. By assumption we have a balanced inner product $\GH^E_m$ on $H^0(\M;\Ei(m))$. Let $(\psi_j)_{j\in\J}$ be an orthonormal basis for $\GH^E_m$. Let $(\cdot|\cdot)$ be the Hilbert $C^*$-structure on $\Gamma^\infty(\M;\Ei(m))$ obtained from the projection $\FS(\GH^E_m)$. Recall that $\FS(\GH^E_m)\in C^\infty(\M)\otimes\Bi(\GH^E_m)$ can be represented by the matrix whose $(j,k)$th entry is the function $(\psi_j|\psi_k)\in C^\infty(\M)$. Let
$$
\FS^\dagger(\GH^E_m)\in\End\Gamma^\infty(\M;\Ei(m),\FS(\GH^E_m))\otimes\Bi(\GH^E_m)
$$
be the matrix of rank-1 operators $|\psi_j)(\psi_k|$ on $\Gamma^\infty(\M;\Ei(m),\FS(\GH^E_m))$. Using $\End\Gamma^\infty(\M;\Li^m)\cong C^\infty(\M)$ we have the isomorphism 
\begin{equation}\label{Endosiso}
\End\Gamma^\infty(\M;\Ei(m))\cong\End\Gamma^\infty(\M;\Ei),
\end{equation}
and we shall identify $\FS^\dagger(\GH^E_m)$ with an endomorphism of $\Gamma^\infty(\M;\Ei)$. 

Recall that the state $\omega_E:\End\Gamma^\infty(\M;\Ei)\to\C$ is defined on rank-1 endomorphisms by
$$
\omega_E(|\psi)(\phi|):=\frac{1}{\rank\Ei}\omega((\phi|\psi)).
$$
So balance of $\GH^E_m$ says that
\begin{equation}\label{omegaEbalance}
\omega_E(|\psi_j)(\psi_k|)=\frac{1}{\chi(\Ei(m))}
\end{equation}
for all $j,k=1,\dots,\dim\GH^E_m=\chi(\Ei(m))$. Recall also that balance says that $(\psi_j)_{j\in\J}$ is a Parseval frame for $\Gamma^\infty(\M;\Ei(m),\FS(\GH^E_m))$, which under the isomorphism \eqref{Endosiso} means that
\begin{equation}\label{framebalance}
\sum_j|\psi_j)(\psi_j|=\bone_E,
\end{equation}
where $\bone_E$ is the identity in $\End\Gamma^\infty(\M;\Ei)$. 

For $X\in\Bi(\GH^E_m)$, define an endomorphism of $\Gamma^\infty(\M;\Ei)$ by
\begin{align*}
\varsigma_m^E(X)&:=(\id\otimes\Tr_{\GH^E_m})((\bone\otimes X)\FS^\dagger(\GH^E_m))
\\&=\chi(\Ei(m))(\id\otimes\phi_m^E)((\bone\otimes X)\FS^\dagger(\GH^E_m)).
\end{align*}
Then $\varsigma_m^E(\bone)$ is the frame operator of $(\psi_j)_j$ regarded as an element of $\End\Gamma^\infty(\M;\Ei)$, which equals $\bone_E$ by balance \eqref{framebalance}.  

We also define, for each $f\in\End\Gamma^0(\M;\Ei)$, a Toeplitz-type operator on $\GH^E_m$ by
\begin{align*}
\breve{\varsigma}^{E(m)}(f)
&=\chi(\Ei(m))(\omega_E\otimes\id)((f\otimes\bone_{\GH^E_m})\FS^\dagger(\GH^E_m)).
\end{align*}
The resulting map $\breve{\varsigma}^{E(m)}:\End\Gamma^0(\M;\Ei)\to\Bi(\GH^E_m)$ is then unital by balance \eqref{omegaEbalance}. 

The maps $\varsigma^{E(m)}$ and $\breve{\varsigma}^{E(m)}$ are adjoints with respect to $\omega_E$ and $\phi^E_m$,
\begin{align*}
\phi^E_m(\breve{\varsigma}^{E(m)}(f)X)&=(\omega_E\otimes\Tr_{\GH^E_m})((f\otimes X)\FS^\dagger(\GH^E_m))
\\&=\omega_E(f(\id\otimes\Tr_{\GH^E_m})((\bone\otimes X)\FS^\dagger(\GH^E_m))),
\end{align*}
and since they are both unital they therefore both intertwine the states $\omega_E$ and $\phi^E_m$,
$$
\phi^E_m(\breve{\varsigma}^{E(m)}(f))=\omega_E(f)\text{ and }\omega_E(\varsigma_m^E(X))=\phi_m^E(X).
$$  

Let $\Gi\subset\Ei$ be an analytic subsheaf. Then $G_\N:=\bigoplus_mH^0(\M;\Gi(m))$ is a graded submodule of $E_\N:=\bigoplus_mH^0(\M;\Ei(m))$, and we denote by $P_G$ the projection of $[E_\N]$ onto $[G_\N]$. 

If $(\psi_j)_{j\in\J}$ is our balanced frame as before then there is a subset $\J_G\subset\J$ such that each $\psi_j^G:=\psi_j$ with $j\in\J_G$ will be in $G_m:=H^0(\M;\Gi(m))$ and these $\psi_j^G$'s form a basis for $G_m$. So if $P^G$ is the projection acting on $\Gamma^\infty(\M;\Ei)$ with image $\Gamma^\infty(\M;\Gi)$ then $P^G\psi^G_j=\psi^G_j$ for all $j\in\J_G$. So for $j,k\in\J_G$ we obtain 
$$
\chi(\Ei(m))\omega_E(|P^G\psi_j)(P^G\psi_k|)=\frac{\chi(\Ei(m))}{\rank\Ei}\omega((\psi_j|\psi_k))=\delta_{j,k}
$$
by balance. Let $F_\N$ be the quotient $E_\N/G_\N$. Since $(\psi_j^G|\psi_k)$ can be nonzero even for $k\notin\J_G$ but $\breve{\varsigma}^{E(m)}(P^G)$ is always positive there is a positive operator $C_{G,m}$ on $F_m$ such that
$$
\breve{\varsigma}^{E(m)}(P^G)=P_{G,m}\oplus C_{G,m}\text{ on }E_m=G_m\oplus F_m.
$$
So we obtain
$$
\breve{\varsigma}^{E(m)}(P^G)\geq P_{G,m}.
$$
This gives, for $m$ large enough so that $\dim E_m=\chi(\Ei(m))$ and $\dim G_m=\chi(\Gi(m))$, that
\begin{align*}
\frac{\rank\Gi}{\rank\Ei}&=\omega_E(P^G)
\\&=\phi^E_m(\breve{\varsigma}^{E(m)}(P^G))
\\&\geq\phi^E_m(P_{G,m})
\\&=\frac{\chi(\Gi(m))}{\chi(\Ei(m))}.
\end{align*}
Equality $\chi(\Ei(m))/\rank\Ei=\chi(\Gi(m))/\rank\Gi$ occurs iff $\breve{\varsigma}^{E(m)}(P^G)=P_{G,m}$ and this happens iff $\FS(\GH^E_m)$ splits into $P^F\oplus P^G$ with $P^G$ having $(\psi^G_j)_{j\in\J_G}$ as Parseval $C^*$-frame, so that $\Ei$ splits holomorphically as $\Ei=\Fi\oplus\Gi$. This gives the statement. 
\end{proof}

\section{Equivariant vector bundles}
In this section we specialize to the case of equivariant vector bundles over $\M=\G/\K$. The quantization of these were initiated in \cite{Hawk1}, which we now build on. A new thing here is that we add new data: inner products on the quantum side and Hermitian metrics on the classical side. It turns out that these go very well with the quantization because of the equivariance, and in fact the existence of a Hermitian metric' which``quantizes perfectly ' is likely to chararacterize equivariant vector bundles, in a sense made more precise below. 

Given any $\K$-representation $\GK$ we can form the vector bundle 
$$
\Ei=\G\times_\K\GK
$$
associated with the principal $\K$-bundle $\G\to\M\to 0$ and the represetation $\GK$. Such a vector bundle $\Ei$ is called $\G$-\textbf{equivariant} \cite{Snow1}. Every $\G$-equivariant vector bundle comes with a natural holomorphic structure \cite[\S3.2]{Rama1}. The space $H^0(\M;\Ei)$ of global holomorphic sections then carries a representation of $\G$ which is said to be \textbf{induced} from the representation $\GK$ of the subgroup $\K$ \cite{Bott1}. A $\G$-equivariant vector bundle $\Ei=\G\times_\K\GK$ is \textbf{irreducible} if $\GK$ is an irreducible $\K$-representation. Then $H^0(\M;\Ei(m))$ is an irreducible $\G$-representation for all $m\geq 0$. 



\subsection{Quotient modules with equivariant Cowen--Douglas sheaves}

\subsubsection{Equivariant Cowen--Douglas metrics are balanced}
If $\Ei$ is an irreducible $\G$-equivariant vector bundle over $\M=\G/\K$ then each fiber $\Ei(x)$ carries an irreducible $\K$-representation and a unique $\K$-invariant inner product. This gives us a unique $\G$-equivariant Hermitian metric $P^E$ on $\Ei$. We also have a unique $\G$-invariant inner product $\GE_0$ on the $\G$-representation $H^0(\M;\Ei)$. The evaluation $H^0(\M;\Ei)\ni\psi\to\psi(x)\in\Ei(x)$ is $\G$-equivariant so if it is surjective then $\Ei(x)$ sits as a Hilbert subspace of $\GE_0$. The $\G$-invariant Hermitian metric on $\Ei$ is then given by $P^E=\FS(\GE_0)$. It is known that $P^E$ is $\omega$-Yang--Mills \cite{Koba4}.

\begin{prop}\label{balancedeqprop}
Let $P^E$ be a projection over $C^0(\M)$ defining a globally generated irreducible $\G$-equivariant Hermitian vector bundle $\Ei$, so that $P^E=\FS(\GE_0)$ for the $\G$-invariant inner product $\GE_0$ on $H^0(\M;\Ei)$. Then $\FS(\GE_m)=\FS(\GH_m)\otimes P^E$ is balanced on $\Ei(m)$ for each $m\in\N_0$. Moreover, the Cowen--Douglas metric of the quotient module $\GE_\N:=\Ran\breve{\varsigma}(P^E)$ is of the form
\begin{equation}\label{equivCD}
\CD(\GE_\N)(v)=\CD(\GH_\N)(v)\otimes P^E([v]),\qquad\forall v\in\B\setminus\{0\},
\end{equation}
and the graded $\Ai$-module underlying $\GE_\N$ is precisely $E_\N:=\bigoplus_mH^0(\M;\Ei(m))$. 
\end{prop}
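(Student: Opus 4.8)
\textbf{Proof plan for Proposition \ref{balancedeqprop}.}

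The plan is to exploit Schur's lemma repeatedly, using that $\G$ acts irreducibly on each $H^0(\M;\Ei(m))$ for an irreducible $\G$-equivariant bundle $\Ei$. First I would recall the explicit Toeplitz formula \eqref{Toeplexplic}, namely $\breve{\varsigma}^{(m)}(f)=n_m(\omega\otimes\id)(\FS(\GH_m)(f\otimes p_m))$, applied to $f=P^E=\FS(\GE_0)$. Using Proposition \ref{geomintprop} (with the Castelnuovo--Mumford regularity of $\Ei(m)$ for the relevant range of $m$; for the full range $m\in\N_0$ one uses that $\G$-equivariant bundles with global generation are automatically regular, or simply argues directly on the induced representations) we have $\FS(\GH_m)\otimes P^E=\FS(\GE_m)$ where $\GE_m$ is $H^0(\M;\Ei(m))$ with the inner product of $\GH_m\otimes\GE_0$. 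Hence $\breve{\varsigma}^{(m)}(P^E)=n_m(\omega\otimes\id)(\FS(\GE_m))$ is an operator on $\GE_m$. The key observation is that $\FS(\GE_m)$ is a $\G$-equivariant projection-valued function on $\M$, i.e. $g\cdot\FS(\GE_m)(x)=\FS(\GE_m)(gx)$ intertwined with the $\G$-action on $\GE_m$; integrating against the $\G$-invariant state $\omega$ produces a $\G$-intertwiner of the irreducible representation $\GE_m$ with itself, so by Schur's lemma $(\omega\otimes\id)(\FS(\GE_m))$ is a scalar multiple of the identity $P_{E,m}$ on $\GE_m$. Taking the trace and using $(\phi_m\otimes\Tr)(\FS(\GE_m))=\dim\GE_m/n_m=\chi(\Ei(m))/n_m$ (the Arveson-curvature/rank computation, cf. \eqref{Arvcurvlim} specialized, or directly $\omega(\Tr\FS(\GE_m))=\rank\Gamma^0(\M;\Ei(m))=\chi(\Ei(m))$ for $m\gg0$) pins down the scalar: $\breve{\varsigma}^{(m)}(P^E)=c_{E,m}P_{E,m}$ with $c_{E,m}=n_m\rank\Ei/\chi(\Ei(m))$. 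This is exactly the balance condition \eqref{balanceeqToepl}, so $\FS(\GH_m)\otimes P^E=\FS(\GE_m)$ is $\omega$-balanced on $\Ei(m)$, giving the first assertion.

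Next, since $\breve{\varsigma}^{(m)}(P^E)=c_{E,m}P_{E,m}$ is (a scalar times) a projection for every $m$, the range projection $P_{E,m}$ of $\breve{\varsigma}^{(m)}(P^E)$ equals $c_{E,m}^{-1}\breve{\varsigma}^{(m)}(P^E)$, and summing over $m$ the projection $P_E$ onto $\GE_\N:=\Ran\breve{\varsigma}(P^E)$ is $\sum_m c_{E,m}^{-1}\breve{\varsigma}^{(m)}(P^E)$. In particular $P_E=\breve{\varsigma}(P^E)+C_E$ where $C_E=\sum_m(c_{E,m}^{-1}-1)\breve{\varsigma}^{(m)}(P^E)$; since $c_{E,m}^{-1}-1=O(m^{-1})\to0$ and $\|\breve{\varsigma}^{(m)}(P^E)\|\le1$, the operator $C_E$ is compact. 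Hence $\varsigma(P_E)=\varsigma(\breve{\varsigma}(P^E))=P^E$ and $P_E$ is a projection over $\Ti_\GH^{(0)}$ with continuous symbol $P^E$. Now Theorem \ref{bigCDversuslocallyfree} applies: since $\varsigma(P_E)=P^E$ is $C^0$, we get $P^E_m=P^E$ for $m\gg0$ and $\Pi^E(r\zeta)=P^E([\zeta])$, i.e. $\CD(\GE_\N)(v)=\CD(\GH_\N)(v)\otimes\Pi^E(v)=\CD(\GH_\N)(v)\otimes P^E([v])$, which is \eqref{equivCD}. (Alternatively, one can verify \eqref{equivCD} directly: from $\CD(\GE_m)=\CD(\GH_m\otimes\GE_0)\wedge P_{E,m}$ and Proposition \ref{altprojprop}, $P^E_m(x)=\lim_p\varsigma^{(m)}(P_{E,m})(x)^p$; but $\varsigma^{(m)}(P_{E,m})=c_{E,m}^{-1}\varsigma^{(m)}(\breve{\varsigma}^{(m)}(P^E))$ is the Berezin transform of $P^E$ up to the constant $c_{E,m}^{-1}\to1$, and for $m\gg0$ this is within norm distance $<1$ of the projection $P^E$, forcing its power-limit to be $P^E$ itself.)

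Finally, for the statement that the graded $\Ai$-module underlying $\GE_\N$ is precisely $E_\N=\bigoplus_mH^0(\M;\Ei(m))$: by definition the underlying module is $\GE_\N\cap(\Ai\otimes\GE_0)$, and $\GE_m=\Ran\breve{\varsigma}^{(m)}(P^E)=\Ran(\omega\otimes\id)(\FS(\GE_m))$, which by the Schur argument above is all of $H^0(\M;\Ei(m))$ (the image of the multiplication map $\GH_m\otimes\GE_0\to H^0(\M;\Ei(m))$, which is onto once $\Ei(m)$ is regular/globally generated). The $\Ai$-action is by compression of the shift, which is exactly the module multiplication $H^0(\M;\Li^{l-m})\otimes H^0(\M;\Ei(m))\to H^0(\M;\Ei(l))$, so as graded $\Ai$-modules $\GE_\N\cong E_\N$. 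I expect the main obstacle to be the careful bookkeeping around low values of $m$ where $\Ei(m)$ may fail to be globally generated or regular: there the identifications $\FS(\GH_m)\otimes P^E=\FS(\GE_m)$ and $\GE_m=H^0(\M;\Ei(m))$ can fail, and one must either restrict to $m\gg0$ throughout (which suffices for all the geometric conclusions and for "$\GE_\N\cong E_\N$ up to finite-dimensional pieces", as elsewhere in the paper) or invoke that for induced representations of compact groups the evaluation and multiplication maps are surjective in the relevant degrees. The Schur-lemma core of the argument is robust; it is only this regularity threshold that requires attention.
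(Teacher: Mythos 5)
Your proposal is correct and follows essentially the same route as the paper: the Schur-lemma averaging you use to show $(\omega\otimes\id)(\FS(\GE_m))$ is scalar is precisely the paper's invocation of the Haar orthogonality relations for the irreducible representation $\GE_m$, and the remaining steps (compactness of $\sum_m(c_{E,m}^{-1}-1)\breve{\varsigma}^{(m)}(P^E)$ giving $\varsigma(P_E)=P^E$, Theorem \ref{bigCDversuslocallyfree} for \eqref{equivCD}, and the Parseval-frame identification of $\GE_m$ with $H^0(\M;\Ei(m))$) are exactly the paper's. The only blemishes are cosmetic: the parenthetical trace bookkeeping should read $\omega(\Tr\FS(\GE_m))=\rank\Ei$ rather than $\chi(\Ei(m))$ (the final constant $c_{E,m}$ you obtain is nonetheless correct), and your caution about small $m$ is reasonable but harmless since the geometric conclusions only need $m\gg 0$.
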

\begin{proof}
As noticed in Proposition \ref{geomintprop}, we have $\FS(\GE_m)=\FS(\GH_m)\otimes P^E$ for all $m$. 

The Haar orthogonality relations (see \S\ref{Harrelsec}) say precisely that
$$
\omega(P^E_{\mu,\nu})=\frac{1}{\dim\GE_0}\delta_{\mu,\nu},\qquad\forall \mu,\nu=1,\dots,\dim\GE_0
$$
i.e. the Hilbert space $H^0(\omega,\FS(\GE_0))$ coincides with $\GE_0$. In other words, the metric $P^E=\FS(\GE_0)$ is $\omega$-balanced. 

More generally, for each $m\in\N_0$, the metric $\FS(\GE_m)=\FS(\GH_m)\otimes P^E$ on $\Ei(m)$ is $\G$-equivariant and hence balanced by the above argument. We thus have
$$
\breve{\varsigma}^{(m)}(P^E)=c_{E,m}P_{E,m},\qquad\forall m\in\N_0.
$$
It follows that $P_E$ is the range projection of $\breve{\varsigma}(P^E)$ and that $\breve{\varsigma}(P^E)$ equals $P_E$ modulo compacts. Therefore \eqref{equivCD} holds by Theorem \ref{bigCDversuslocallyfree}. 

As in the proof of Theorem \ref{backtoHardythmgen} the vector space $\GE_m:=\Ran\breve{\varsigma}^{(m)}(P^E)$ coincides with the $\C$-linear span $E_m$ of a Parseval frame for $\FS(\GH_m)\otimes P^E$. 

\end{proof}

\subsubsection{$\jmath_{l,m}^E$ is unital}

Recall our unital completely positive maps $\iota_{m,l}:\Bi(\GH_m)\to\Bi(\GH_l)$ defined by
\begin{equation}\label{iota}
\iota_{m,l}(A):=V^*_{m,l}(A\otimes\bone_{l-m})V_{m,l},\qquad\forall A\in\Bi(\GH_m)
\end{equation} 
where $V_{m,l}$ is the isometric embedding of $\GH_l$ into $\GH_m\otimes\GH_{l-m}$. These can be generalized to vector-valued quotient modules. To simplify the notation we write $\iota_{m,l}$ also for the induced map from $\Bi(\GH_m\otimes\C^N)$ to $\Bi(\GH_l\otimes\C^N)$, where we identify operators on $\GH_\N\otimes\C^N$ with $N\times N$-matrices of operators on $\GH_\N$ as before.

Let now $P_E$ be the projection of $\GH_\N\otimes\C^N$ onto a coinvariant subspace $\GE_\N$. Coinvariance gives $\iota_{m,l}^E(P_{E,m})\geq P_{E,l}$. The operator $P_{E,l}\iota_{m,l}^E(P_{E,m})P_{E,l}$ is thus an operator on $\GE_l$ which is $\geq P_{E,l}$. Since $\iota_{m,l}^E$ is unital it is contractive, so that we must have
$$
\iota_{m,l}^E(P_{E,m})P_{E,l}=P_{E,l}=P_{E,l}\iota_{m,l}^E(P_{E,m}).
$$
The map $\iota_{m,l}^E:\Bi(\GE_m)\to\Bi(\GE_l)$ defined by
$$
\iota_{m,l}^E(A):=P_{E,l}\iota_{m,l}(A)|_{\GE_l},\qquad\forall A\in\Bi(\GE_m)
$$
is therefore unital. From \eqref{iota} we obtain the formula
\begin{equation}\label{iotaE}
\iota_{m,l}^E(A)=V^{E*}_{m,l}(A\otimes\bone_{l-m})V_{m,l}^E,\qquad\forall A\in\Bi(\GE_m),
\end{equation} 
where $V_{m,l}^E:=V_{m,l}|_{\GE_l}$. Since $\iota_{m,l}^E$ is unital, $V_{m,l}^E$ is an isometric embedding of $\GE_l$ into $\GE_m\otimes\GH_{l-m}$. The adjoint is given by $V^{E*}_{m,l}=P_{E,l}V_{m,l}^*|_{\GE_m\otimes\GH_{l-m}}$. 


The Hilbert space $\GH_m$ carries an irreducible unitary representation of $\G$. Endow $\GH_m\otimes\C^N$ with the unitary $\G$-representation where $\G$ trivially on the factor $\C^N$.
\begin{prop}
Suppose that each Hilbert space $\GE_m\subset\GH_m\otimes\GE_0$ is invariant under the $\G$-action on $\GH_m\otimes\GE_0$. Then the isometries $V^E_{m,l}:\GE_l\to\GE_m\otimes\GH_l$ are intertwiners of $\G$-representations.  
\end{prop}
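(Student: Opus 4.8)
The plan is to obtain the equivariance of $V^E_{m,l}$ by restricting the ambient subproduct isometry $V_{m,l}\otimes\bone_N$ to $\G$-invariant subspaces. First I would make the $\G$-action on the reference spaces explicit. The right-translation action of $\G$ on $C^0(\Sb)$ from Lemma \ref{ergonfirstrow} sends each generator $Z_\alpha=u_{1,\alpha}$ to $\sum_\beta g_{\beta\alpha}Z_\beta$, so $\G$ acts by $*$-algebra automorphisms on $\Ai=\mathrm{Alg}(Z_1,\dots,Z_n)$; restricted to the graded piece $\Ai_m=\GH_m$ this is precisely the irreducible unitary representation carried by $\GH_m$. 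Because $\G$ acts by algebra automorphisms, the coordinate-ring multiplication $\mu_{m,l-m}\colon\GH_m\otimes\GH_{l-m}\to\GH_l$ is a morphism of $\G$-representations.

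Next I would recall from \cite{An6} that $V_{m,l}$ is the subproduct isometry canonically attached to $\mu_{m,l-m}$ (it is, up to the relevant normalization, the adjoint of $\mu_{m,l-m}$, and in any case it is produced from $\mu_{m,l-m}$ by a construction that commutes with any pair of unitaries intertwining the domain and codomain). Since the adjoint of a morphism of unitary $\G$-representations is again such a morphism, $V_{m,l}$ intertwines the $\G$-actions on $\GH_l$ and on $\GH_m\otimes\GH_{l-m}$. Tensoring with $\GE_0=\C^N$ carrying the trivial $\G$-representation and reordering the tensor factors, the map $V_{m,l}\otimes\bone_N\colon\GH_l\otimes\GE_0\to\GH_m\otimes\GE_0\otimes\GH_{l-m}$ is $\G$-equivariant.

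Finally, by hypothesis the subspaces $\GE_l\subset\GH_l\otimes\GE_0$ and $\GE_m\subset\GH_m\otimes\GE_0$ are $\G$-invariant, so $\GE_m\otimes\GH_{l-m}$ is a $\G$-invariant subspace of $\GH_m\otimes\GE_0\otimes\GH_{l-m}$. By the coinvariance-plus-unitality computation established just before the proposition, $V^E_{m,l}$ is exactly the corestriction of $V_{m,l}\otimes\bone_N$ to the pair $\GE_l$, $\GE_m\otimes\GH_{l-m}$; and the restriction of a $\G$-equivariant map to a $\G$-invariant domain with $\G$-invariant codomain is $\G$-equivariant. This gives the claim.

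I expect the only nontrivial point to be the second step: pinning down from \cite{An6} the precise relation between $V_{m,l}$ and the multiplication map $\mu_{m,l-m}$ — e.g. that $V_{m,l}$ is $\mu_{m,l-m}^\ast$ when the subproduct structure maps are coisometric, or the partial isometry in its polar decomposition otherwise — and checking that this relation transports equivariance. Once that bookkeeping is in place the argument is purely formal, and indeed the same reasoning simultaneously re-proves that the adjoints $V^{E\ast}_{m,l}$ and the maps $\iota^E_{m,l}$ are $\G$-equivariant as well.
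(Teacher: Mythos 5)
Your proof is correct and follows essentially the same route as the paper: coinvariance identifies $V^E_{m,l}$ with the restriction of $V_{m,l}\otimes\bone_N$ to the $\G$-invariant subspaces $\GE_l$ and $\GE_m\otimes\GH_{l-m}$, and restriction of an equivariant isometry is equivariant. The only difference is that you also spell out why $V_{m,l}$ itself intertwines (via equivariance of the coordinate-ring multiplication, of which $V_{m,l}$ is the adjoint), a fact the paper simply invokes without proof.
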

\begin{proof}
As already noticed above, coinvariance ensures that $V^E_{m,l}$ is the restriction of $V_{m,l}$ to $\GE_m$. Since $\GE_m$ and $\GE_l$ are $\G$-invariant they carry $\G$-representations by restriction of the $\G$-representations on $\GH_m\otimes\C^N$ and $\GH_l\otimes\C^N$ respectively. So we obtain the proposition from the fact that $V_{m,l}$ intertwines the $\G$-representation on $\GH_m\otimes\C^N$ with that on $\GH_l\otimes\C^N$. 
\end{proof}
Let $\phi^E_m$ be the normalized trace on $\Bi(\GE_m)$,
$$
\phi^E_m(A):=(\dim\GE_m)^{-1}\Tr(A),\qquad\forall A\in\Bi(\GE_m),
$$
and define the completely positive map
$$
\jmath_{l,m}^E:\Bi(\GE_l)\to\Bi(\GE_m)
$$
to be the adjoint of $\iota_{m,l}^E$ with respect to $\phi_m^E$ and $\phi_l^E$. Basically by definition this means that
$$
\jmath_{l,m}^E(B):=(\phi_{l-m}^E\otimes\id)(V_{m,l}BV_{m,l}^*),\qquad\forall B\in\Bi(\GE_l).
$$
\begin{prop}\label{unitajmathEprop}
Suppose that the Cowen--Douglas sheaf of $\GE_\N$ is of the form $\Ei_{\rm CD}=\Oi_{\rm CD}\otimes\Ei_{\B\setminus\{0\}}$ where $\Ei_{\B\setminus\{0\}}$ is the pullback to $\B\setminus\{0\}$ of a $\G$-equivariant vector bundle $\Ei$ over $\M$. 
Then $\jmath_{l,m}^E$ is unital.
\end{prop}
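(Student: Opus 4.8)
The plan is to reduce the unitality of $\jmath_{l,m}^E$ to the statement that $\iota_{m,l}^E$ sends the identity to the identity (already known) together with a computation of $\jmath^E_{l,m}(\bone_{\GE_l})$ using the equivariant structure. Recall that $\jmath_{l,m}^E$ is by construction the $(\phi^E_\bullet)$-adjoint of $\iota^E_{m,l}$, so $\phi^E_m(\jmath^E_{l,m}(B)A) = \phi^E_l(B\,\iota^E_{m,l}(A))$ for all $A\in\Bi(\GE_m)$, $B\in\Bi(\GE_l)$. Taking $A=\bone_{\GE_m}$ and using $\iota^E_{m,l}(\bone_{\GE_m})=\bone_{\GE_l}$ gives $\phi^E_m(\jmath^E_{l,m}(B)) = \phi^E_l(B)$ for every $B$; in particular the positive operator $\jmath^E_{l,m}(\bone_{\GE_l})$ has $\phi^E_m$-trace $1$. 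So the content is to upgrade this trace identity to the operator identity $\jmath^E_{l,m}(\bone_{\GE_l})=\bone_{\GE_m}$.

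The key step is to exploit that, under the hypothesis $\Ei_{\rm CD}=\Oi_{\rm CD}\otimes\Ei_{\B\setminus\{0\}}$ with $\Ei$ a $\G$-equivariant bundle, the graded $\Ai$-module underlying $\GE_\N$ is $E_\N=\bigoplus_m H^0(\M;\Ei(m))$ with $\G$ acting irreducibly (or, in the reducible-fibre case, as a direct sum to which the same argument applies componentwise). First I would show each $\GE_m\subset\GH_m\otimes\GE_0$ is a $\G$-invariant subspace: this follows because $\GE_m$ is (up to the finite-dimensional discrepancy, which is irrelevant for $m\gg0$) the image of the $\G$-equivariant multiplication map $\GH_m\otimes\GE_0\to H^0(\M;\Ei(m))$, and the $\G$-invariant inner product makes the orthogonal projection $P_{E,m}$ a $\G$-intertwiner. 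By the preceding Proposition in the excerpt, $V^E_{m,l}$ is then an intertwiner of $\G$-representations, hence so is $\jmath^E_{l,m}$, viewed as a map between the $\G$-module structures on $\Bi(\GE_m)$ and $\Bi(\GE_l)$ (conjugation action). Since $\jmath^E_{l,m}(\bone_{\GE_l})$ commutes with the $\G$-action on $\GE_m$, and $\GE_m$ is an irreducible $\G$-module, Schur's lemma forces $\jmath^E_{l,m}(\bone_{\GE_l})$ to be a scalar multiple of $\bone_{\GE_m}$. Combined with the normalization $\phi^E_m(\jmath^E_{l,m}(\bone_{\GE_l}))=1$ already derived, the scalar is $1$, so $\jmath^E_{l,m}$ is unital.

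For the general case where $\Ei$ is $\G$-equivariant but the fibre $\K$-representation is reducible, I would decompose $\Ei=\bigoplus_i\Ei_i$ into isotypic components; each $\GE_{i,m}:=H^0(\M;\Ei_i(m))$ is a $\G$-invariant subspace, the embeddings $V^E_{m,l}$ respect this decomposition, and $\jmath^E_{l,m}(\bone)$ is block-diagonal with each block a $\G$-intertwiner on the (possibly reducible, but multiplicity-finite) module $\GE_{i,m}$; one applies Schur componentwise and the trace normalization on each block. The main obstacle I expect is the bookkeeping around the finite-dimensional discrepancy between $E_\N$ and a genuinely graded quotient of $\Ai\otimes\GE_0$ (Castelnuovo--Mumford regularity kicks in only for $m\gg0$): I would handle this by noting that $\jmath^E_{l,m}$ and $\iota^E_{m,l}$ for $l\geq m\gg0$ only see the regular range, so $\GE_m$ is genuinely the $\G$-module $H^0(\M;\Ei(m))$ there; alternatively one checks that modifying $\GE_\N$ in finitely many degrees changes neither unitality nor the limiting statements, since unitality of $\jmath^E_{l,m}$ for all $l\geq m\gg0$ is what is needed in the sequel. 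A secondary subtlety is making sure the $\G$-invariant inner product on $\GE_0$ (hence the Fock-type inner products on all $\GE_m$) is the one for which $P^E=\FS(\GE_0)$ and for which the Cowen--Douglas hypothesis $\Ei_{\rm CD}=\Oi_{\rm CD}\otimes\Ei_{\B\setminus\{0\}}$ actually pins down $\GE_m$; this is exactly the content of Proposition \ref{balancedeqprop} and Theorem \ref{bigCDversuslocallyfree}, which I would cite rather than reprove.
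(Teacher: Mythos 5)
Your proof is correct for the case the paper actually treats, but it takes a genuinely different route. The paper constructs, via the conjugate representation (following Hawkins), an isometric $\G$-intertwiner $W_{l,m}^E:\GE_m\to\GE_l\otimes\GH_{l-m}^*$ and then invokes the identity $\jmath_{l,m}^E(B)=W_{l,m}^{E*}(B\otimes\bone)W_{l,m}^E$, so unitality is immediate from $W_{l,m}^{E*}W_{l,m}^E=\bone$; irreducibility of the $\GE_m$'s is used there to make $W_{l,m}^E$ and $V_{m,l}^E$ unique so that Hawkins' formula applies. You avoid constructing $W_{l,m}^E$ altogether: from unitality of $\iota_{m,l}^E$ you get $\phi_m^E\circ\jmath_{l,m}^E=\phi_l^E$, and from the intertwining property of $V_{m,l}^E$ (the preceding proposition) you get $\G$-covariance of $\jmath_{l,m}^E$, so Schur's lemma on the irreducible module $\GE_m$ forces $\jmath_{l,m}^E(\bone_{\GE_l})$ to be a scalar, which the trace identity pins to $1$. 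Your argument is more elementary and self-contained (no conjugate-representation embedding, no appeal to Hawkins' Eq.\ (5.9)); what the paper's route buys is the explicit Stinespring-type formula for $\jmath_{l,m}^E$, which is exactly what underlies the later remark interpreting the backward maps as a quantum-group symmetry. Note that both arguments rely on irreducibility of the $\GE_m$'s, which is implicit in the paper's proof though absent from the statement. One caveat on your reducible case: with the globally normalized trace $\phi_m^E=\Tr/\dim\GE_m$, the block of $\jmath_{l,m}^E(\bone)$ on the $i$th summand is the scalar $(\dim\GE_{i,l}\,\dim\GE_m)/(\dim\GE_l\,\dim\GE_{i,m})$, which equals $1$ only when the summands share the same reduced Hilbert polynomial; so ``trace normalization on each block'' really means replacing $\phi_m^E$ by the direct sum of the per-summand normalized traces, which is precisely how the paper resolves this in the remark following the proposition.
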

\begin{proof}
By assumption \eqref{equivCD} holds and then, as before, the graded $\Ai$-module underlying $\GE_\N$ is precisely $E_\N:=\bigoplus_mH^0(\M;\Ei(m))$. Thus each vector space $\GE_m$ is a $\G$-representation. 

As explained in \cite[\S C]{Hawk1}, for all $l\geq m\geq 0$ the representation $\GE_m$ is contained as a vector space in the tensor product $\GE_l\otimes\GH_{l-m}^*$. The $\G$-invariant inner product on $\GE_l\otimes\GH_{l-m}^*$ is obtained by averaging over the group $\G$ using the Haar state $\omega$, and equals the Fock inner product on $\GE_l\otimes\GH_{l-m}^*$ up to a scalar factor on each irreducible direct summand. Therefore there is an isometric embedding of $\GE_m$ into $\GE_l\otimes\GH_{l-m}^*$,$$
W_{l,m}^E:\GE_m\to\GE_l\otimes\GH_{l-m}^*.
$$  
By assumption the $\G$-representation on $\GE_0$ is irreducible and this gives that the $\G$-representation on $\GE_m=H^0(\M;\Ei(m))$ is irreducible for each $m$. 
Then $W_{l,m}^E$ and $V_{m,l}^E$ are unique and from \cite[Eq. (5.9)]{Hawk1} we have
$$
\jmath_{l,m}^E(B)=(\phi_{l-m}^E\otimes\id)(V_{m,l}BV_{m,l}^*)=W_{l,m}^{E*}(B\otimes\bone_{\GH_{l-m}^*})W_{l,m}^E,.
$$ 
The rightmost formula shows that $\jmath_{l,m}^E$ is unital, as asserted. 
\end{proof}
\begin{Remark}[Reversed time evolution]\label{timeremark}
Recall that the \textbf{conjugate} of an irreducible representation $u$ of a compact group (or more generally, compact quantum group) $\G$ is an irreducible representation $\bar{u}$ such that $u\otimes\bar{u}$ contains the trivial representation. The existence of a conjugate to each irreducible representation is one of the structural properties that characterize representation categories of compact quantum groups. Therefore we do not expect isometries $W_{l,m}^E:\GE_m\to\GE_l\otimes\GH_{l-m}^*$ to exist unless the $\GE_m$'s are representations of some compact quantum group. In \cite{An4} we interpreted the existence of the backward maps $\jmath_{l,m}^E$ as a ``quantum symmetry'' in case the Hilbert spaces $\GE_m$ models the environment of some physical quantum system.
\end{Remark}
If $\GE_m$ is not irreducible then we again let $\phi_m^E$ be the trace on $\Bi(\GE_m)$ corresponding to the Haar state, i.e. the direct sum of the normalized traces on the irreducible direct summands. Then the adjoint $\jmath_{l,m}^E$ of $\iota_{m,l}^E$ with respect to $\phi_m^E$ and $\phi_l^E$ is again unital. If all irreducible summands of $\Ei$ have the same reduced Hilbert polynomial $\chi(\Ei(m))/\rank\Ei$ then these two definitions of $\jmath_{l,m}^E$ coincide.



\subsubsection{The $(d+1)$-isometry $S_E$}
Recall that if $\GE_\N\subset\GH_\N\otimes\GE_0$ is a graded subspace then we denote by $S_E=(S_{E,1},\dots,S_{E,n})$ the shift compressed to $\GE_\N$ and consider the grading-preserving positive operator $S_E^*S_E:=\sum_{\alpha=1}^nS_{E,\alpha}^*S_{E,\alpha}$. We let $|S_E|$ be the positive square root of $S_E^*S_E$. In this section we obtain a generalization of some results in \S\ref{multsection} (which are recovered by taking $\Ei$ to be the trivial line bundle). First of all, Proposition \ref{unitajmathEprop} gives:
\begin{cor}\label{SEsquarecor}
Let $S_E$ be the shift on $\GE_\N$. Suppose that the Cowen--Douglas sheaf of $\GE_\N$ descends to a $\G$-equivariant vector bundle $\Ei$ on $\M=\G/\K$ and that all irreducible summands of $\Ei$ have the same reduced Hilbert polynomial $\chi(\Ei(m))/\rank\Ei$. Then $|S_E|P_{E,m}$ is a scalar for each $m\in\N_0$, viz. 
$$
|S_E|P_{E,m}=\sqrt{\frac{\chi(\Ei(m+1))}{\chi(\Ei(m))}}P_{E,m}.
$$
\end{cor}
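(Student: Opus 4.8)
The plan is to derive the formula for $|S_E|P_{E,m}$ by computing $S_E^*S_E$ restricted to the graded piece $\GE_m$ directly from the definitions of the compressed shift and the isometry $V_{m,m+1}^E$. First I would recall that for the full shift, Lemma \ref{squareofSlemma} gives $\sum_\alpha S_\alpha^*S_\alpha = \sum_m \frac{\dim\GH_{m+1}}{\dim\GH_m}p_m$, and that this identity ultimately comes from the isometric nature of the subproduct embeddings $V_{m,m+1}:\GH_{m+1}\to\GH_m\otimes\GH_1$ together with the fact that $\sum_\alpha S_\alpha S_\alpha^*$ is essentially the identity. The same structure is available for $\GE_\N$: since $P_E$ is coinvariant (being the range of the $\Psi$-harmonic operator $\breve{\varsigma}(P^E)$, by Proposition \ref{balancedeqprop}), we have the isometric embeddings $V_{m,l}^E:\GE_l\hookrightarrow\GE_m\otimes\GH_{l-m}$ from \eqref{iotaE}, and $S_E S_E^*$ restricted to $\GE_m$ is the identity for $m\neq 0$ as noted in \S\ref{alggradmodsec}.

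The key computation is as follows. For $\psi\in\GE_m$, write $S_{E,\alpha}\psi = P_E(S_\alpha\psi) = P_E p_{m+1}(e_\alpha\otimes\psi)$, so $\sum_\alpha \|S_{E,\alpha}\psi\|^2 = \sum_\alpha \langle P_E(e_\alpha\otimes\psi)|P_E(e_\alpha\otimes\psi)\rangle$. The span of the vectors $e_\alpha\otimes\psi$ over $\alpha$ and $\psi\in\GE_m$ sits inside $\GH_1\otimes\GE_m \cong \GE_m\otimes\GH_1$, and $\GE_{m+1}$ embeds isometrically into $\GE_m\otimes\GH_1$ via $V_{m,m+1}^E$ with $P_{E,m+1}$ the corresponding projection onto the image. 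Then $\sum_\alpha S_{E,\alpha}^*S_{E,\alpha}$ on $\GE_m$ equals the partial trace over the $\GH_1$-factor of $P_{E,m+1}$ regarded as a projection in $\Bi(\GE_m\otimes\GH_1)$, up to the normalization inherited from the Fock inner product versus the $\G$-invariant inner product. This is precisely where Corollary \ref{SEsquarecor}'s hypothesis enters: when $\Ei$ is $\G$-equivariant with all irreducible summands sharing the reduced Hilbert polynomial $\chi(\Ei(m))/\rank\Ei$, Proposition \ref{unitajmathEprop} tells us that $\jmath_{m+1,m}^E$ is unital, i.e. $(\phi_1^E\otimes\id)(V_{m,m+1}^E(\cdot)V_{m,m+1}^{E*}) $ is unital, which forces the partial trace of $P_{E,m+1}$ to be a \emph{scalar} multiple of $P_{E,m}$ on $\GE_m$. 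Taking the trace of both sides of $\sum_\alpha S_{E,\alpha}^*S_{E,\alpha}\,p_m = c\, P_{E,m}$ and using $\Tr(\sum_\alpha S_{E,\alpha}^*S_{E,\alpha}p_m) = \Tr(\sum_\alpha S_{E,\alpha}S_{E,\alpha}^*p_{m+1}) = \Tr(P_{E,m+1}) = \chi(\Ei(m+1))$ (since $S_ES_E^*$ is the identity on $\GE_{m+1}$ for $m+1\neq 0$), while $\Tr(P_{E,m}) = \chi(\Ei(m))$, pins down $c = \chi(\Ei(m+1))/\chi(\Ei(m))$. Hence $|S_E|P_{E,m} = \sqrt{c}\,P_{E,m}$.

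I expect the main obstacle to be making precise the bookkeeping of normalization constants: the Fock inner product on $\GE_m\otimes\GH_1$ differs from the $\G$-invariant one by scalars on each irreducible summand, and one must check that the hypothesis on equal reduced Hilbert polynomials is exactly what makes these scalars cancel so that the partial trace of $P_{E,m+1}$ comes out proportional to $P_{E,m}$ rather than merely being some positive operator dominating $P_{E,m}$. The cleanest route is probably to invoke Proposition \ref{unitajmathEprop} as a black box: unitality of $\jmath_{m+1,m}^E$ directly says $(\phi_1^E\otimes\id)(V_{m,m+1}^E B V_{m,m+1}^{E*})$ evaluated at $B=\bone_{\GE_{m+1}}$ gives $\bone_{\GE_m}$, and a short argument relating $\sum_\alpha S_{E,\alpha}^*S_{E,\alpha}$ to this expression (up to the dimension ratio $\dim\GH_{m+1}/\dim\GH_m$ absorbed into $\phi_1^E$ versus the unnormalized trace) yields the claim. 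Then the trace computation is routine and uses only Hirzebruch--Riemann--Roch implicitly through $\chi(\Ei(m)) = \dim\GE_m$ for $m\gg 0$, which was already established; for small $m$ one checks the statement holds on the nose once $\GE_m = \Ran\breve{\varsigma}^{(m)}(P^E)$ and $\breve{\varsigma}^{(m)}(P^E) = c_{E,m}P_{E,m}$, both of which are given by Proposition \ref{balancedeqprop}.
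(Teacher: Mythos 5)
Your proposal is correct and takes essentially the same route as the paper: both rest on Proposition \ref{unitajmathEprop}, identifying $S_E^*S_E|_{\GE_m}$ (the partial trace of $P_{E,m+1}$ over the $\GH_1$-factor, via $V^{E*}_{m,m+1}(\psi\otimes e_\alpha)=S_{E,\alpha}\psi$) with $\jmath^E_{m+1,m}(P_{E,m+1})$ up to normalization, so that unitality forces scalarity. The only cosmetic difference is that you determine the scalar from the trace identity $\Tr(S_E^*S_E\,p_m)=\Tr(S_ES_E^*\,p_{m+1})=\dim\GE_{m+1}=\chi(\Ei(m+1))$ (using $S_ES_E^*=\bone$ on $\GE_{m+1}$), whereas the paper reads it off directly from the constant $\chi(\Ei(m))/\chi(\Ei(m+1))$ built into $\jmath^E_{m+1,m}$ — a harmless variation that sidesteps the normalization bookkeeping you flagged.
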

\begin{proof}
The operator
$$
\jmath_{m,m+1}^E(P_{E,m+1})=\frac{\chi(\Ei(m))}{\chi(\Ei(m+1))}S_E^*S_EP_{E,m}
$$ 
equals the identify $P_{E,m}$ by Proposition \ref{unitajmathEprop}. 
\end{proof}

Recall the maps $\Psi$ and $\Phi_*$ on $\Bi(\GH_\N)$ defined by $\Psi(X):=\sum_{\alpha=1}^nT_\alpha^*XT_\alpha$ and $\Phi_*(X):=\sum_{\alpha=1}^nS_\alpha^*XS_\alpha$, where $S=|S|T$ is the shift on $\GH_\N$. 
\begin{prop}\label{diffoptraceprop}
For any $X= \sum_mX_m\in\Gamma_b=\prod_m\Bi(\GH_m)$ we have
$$
\phi_m((\id-\Psi)^p(X))=\sum^p_{r=0}(-1)^r{p\choose r}\phi_{m+r}(X)
$$
and
$$
\Tr((\id-\Phi_*)^p(X_m))=\sum^p_{r=0}(-1)^r{p\choose r}\Tr(X_{m+r})
$$
for all $m,p\in\N_0$. 
\end{prop}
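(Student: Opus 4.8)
The plan is to reduce the proposition to two elementary identities on the von Neumann algebra $\Gamma_b=\prod_{m}\Bi(\GH_m)$ of grading-preserving operators, namely
$$\phi_m\circ\Psi^{r}=\phi_{m+r}\qquad\text{and}\qquad \Tr\bigl(p_m\,\Phi_*^{r}(X)\bigr)=\Tr\bigl(p_{m+r}X\bigr)=\Tr(X_{m+r}),\qquad X\in\Gamma_b,$$
valid for all $m,r\in\N_0$. (The first is already invoked in Section~4 as $\phi_m\circ\Psi^{l-m}=\phi_l$, but since the second needs the same computation I would prove both together for completeness.) Both follow from the single observation that, by the trace property, for $Y\in\Gamma_b$ one has $\Tr\bigl(p_m\Psi(Y)\bigr)=\Tr\bigl((\sum_\alpha T_\alpha p_m T_\alpha^*)Y\bigr)$ and $\Tr\bigl(p_m\Phi_*(Y)\bigr)=\Tr\bigl((\sum_\alpha S_\alpha p_m S_\alpha^*)Y\bigr)$, so everything comes down to evaluating $\sum_\alpha S_\alpha p_m S_\alpha^*$ and $\sum_\alpha T_\alpha p_m T_\alpha^*$. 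Since $S_\alpha p_m$ has range in $\GH_{m+1}$, the operators $\sum_\alpha S_\alpha p_m S_\alpha^*$ for distinct $m$ have pairwise orthogonal ranges and sum over $m\in\N_0$ to $\sum_\alpha S_\alpha S_\alpha^*=\bone-p_0=\sum_{k\ge1}p_k$; comparing supports forces $\sum_\alpha S_\alpha p_m S_\alpha^*=p_{m+1}$ for every $m\ge0$. Using $T_\alpha|_{\GH_m}=(n_m/n_{m+1})^{1/2}S_\alpha|_{\GH_m}$ from Lemma~\ref{squareofSlemma} this gives $\sum_\alpha T_\alpha p_m T_\alpha^*=(n_m/n_{m+1})p_{m+1}$. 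Iterating, $\Tr(p_m\Phi_*^{r}(X))=\Tr(p_{m+r}X)$, and $\phi_m(\Psi^{r}(X))=\Tr(p_m\Psi^{r}(X))/n_m=(n_m/n_{m+r})\Tr(p_{m+r}X)/n_m=\phi_{m+r}(X)$ (the weight ratios telescope), which are the two identities.

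With these in hand the proposition is immediate. Both $\Psi$ and $\Phi_*$ map $\Gamma_b$ into $\Gamma_b$ (they are built from the grading-shifting $S_\alpha$ and are bounded), and $\id$ commutes with each, so the binomial theorem gives $(\id-\Psi)^{p}=\sum_{r=0}^{p}(-1)^{r}\binom{p}{r}\Psi^{r}$ and $(\id-\Phi_*)^{p}=\sum_{r=0}^{p}(-1)^{r}\binom{p}{r}\Phi_*^{r}$ as maps $\Gamma_b\to\Gamma_b$; since these are finite sums there is no convergence issue. Applying the linear functional $\phi_m$ to $(\id-\Psi)^{p}(X)$ and the first identity yields $\phi_m((\id-\Psi)^p(X))=\sum_{r=0}^{p}(-1)^r\binom{p}{r}\phi_{m+r}(X)$; applying $\Tr(p_m\,\cdot\,)$ to $(\id-\Phi_*)^{p}(X)$ and the second identity yields $\Tr\bigl(p_m(\id-\Phi_*)^p(X)\bigr)=\sum_{r=0}^{p}(-1)^r\binom{p}{r}\Tr(X_{m+r})$. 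Here, consistently with the notation $\Tr(B_p(S)p_m)$ used earlier in the paper, the expression $(\id-\Phi_*)^{p}(X_m)$ in the statement is to be read as the $m$-th graded block $p_m(\id-\Phi_*)^{p}(X)$ of $(\id-\Phi_*)^{p}(X)\in\Gamma_b$: one first forms $(\id-\Phi_*)^{p}(X)$ and then restricts to $\GH_m$, since $\Phi_*$ shifts gradings and does not commute with restriction to a single block.

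\textbf{Main obstacle.} There is no real obstacle; the argument is bookkeeping. The two points that need genuine care are (i) the $p_0$-correction in the row-contraction relation $\sum_\alpha S_\alpha S_\alpha^*=\bone-p_0$, which one must verify does not disturb $\sum_\alpha S_\alpha p_m S_\alpha^*=p_{m+1}$ for the blocks $m\ge0$ that occur (it does not, since $p_0$ lives only on $\GH_0$ while $m+1\ge1$), and (ii) tracking the weights $n_m/n_{m+1}$ of Lemma~\ref{squareofSlemma} when passing between $\Phi_*$ and $\Psi$ — these telescope under iteration, which is precisely why the normalized $\phi_{m+r}$ appears in the first formula while the bare trace $\Tr(X_{m+r})$ appears in the second. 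One should also record the routine facts that $\Psi,\Phi_*$ preserve $\Gamma_b$ and that the notational convention for $(\id-\Phi_*)^p(X_m)$ is as above, so that all blockwise expressions are meaningful.
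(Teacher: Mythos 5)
Your proof is correct and takes essentially the same route as the paper: binomial expansion of $(\id-\Psi)^p$ and $(\id-\Phi_*)^p$ combined with the identities $\phi_m\circ\Psi^r=\phi_{m+r}$ and $\Tr(\Phi_*^r(X)p_m)=\Tr(X\Phi^r(p_m))=\Tr(Xp_{m+r})$, which you moreover derive explicitly by checking $\sum_\alpha S_\alpha p_m S_\alpha^*=p_{m+1}$ and the weight $n_m/n_{m+1}$ for $T$. Your reading of $(\id-\Phi_*)^p(X_m)$ as the $m$-th graded block of $(\id-\Phi_*)^p(X)$ is precisely the interpretation used in the paper's own proof and in its application in Proposition \ref{zerotraceprop}, so that clarification is apt rather than a deviation.
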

\begin{proof}
We have
$$
(\id-\Psi)^p=\sum^p_{r=0}(-1)^r{p\choose r}\Psi^r
$$
and $\phi_m\circ\Psi^r=\phi_{m+r}$. Hence the first result. The second follows from $(\id-\Phi_*)^p=\sum^p_{r=0}(-1)^r{p\choose r}\Phi^r_*$ and $\Tr(\Phi^r_*(X)p_m)=\Tr(X\Phi^r(p_m))=\Tr(Xp_{m+r})$. 
\end{proof}
If $\GE_\N$ is a quotient module then we have the shift tuple $S_E$ acting on $\GE_\N$, and as in \S\ref{disombacksec} we can consider the operators 
$$
B_p(S_E):=(\id-\Phi_{E,*})^p(\bone),
$$
where $\Phi_{E,*}(X):=\sum_{\alpha=1}^nS_{E,\alpha}^*XS_{E,\alpha}$ for all $X\in\Bi(\GE_\N)$. Since the backward shift $S^*$ on $\GH_\N\otimes\GE_0$ preserves the subspace $\GE_\N$ we have 
$$
B_p(S_E):=(\id-\Phi_*)^p(P_E),
$$
where as usual $\Phi_*(X):=\sum_{\alpha=1}^nS_\alpha^*XS_\alpha$. 
\begin{prop}\label{zerotraceprop}
For any graded quotient module $\GE_\N$ and each $p\in\N_0$ the operator $B_p(S_E)$ satisfies
$$
\Tr(B_p(S_E)p_m)
=\sum^p_{r=0}(-1)^r{p\choose r}\Tr(P_{E,m+r})
$$
for all $m\in\N_0$. For $p\geq d+1$ we have
$$
\Tr(B_p(S_E))=0.
$$
\end{prop}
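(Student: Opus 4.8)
The plan is to compute the trace $\Tr(B_p(S_E)p_m)$ by unwinding the definition $B_p(S_E)=(\id-\Phi_*)^p(P_E)$ and using the binomial expansion of $(\id-\Phi_*)^p$ together with the cyclicity of the trace, exactly as in Proposition \ref{diffoptraceprop}. First I would write
$$
(\id-\Phi_*)^p=\sum_{r=0}^p(-1)^r\binom{p}{r}\Phi_*^r,
$$
so that
$$
\Tr(B_p(S_E)p_m)=\sum_{r=0}^p(-1)^r\binom{p}{r}\Tr(\Phi_*^r(P_E)p_m).
$$
Now $\Phi_*^r(X)=\sum_{|\mathbf{k}|=r}S_{\mathbf{k}}^*XS_{\mathbf{k}}$, and by the cyclicity of the trace $\Tr(\Phi_*^r(P_E)p_m)=\Tr(P_E\,\Phi^r(p_m))$, where $\Phi(Y)=\sum_{\alpha}S_\alpha YS_\alpha^*$. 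Since $S$ is the shift on $\GH_\N\otimes\GE_0$ restricted in each coordinate to act as $S_\alpha\otimes\bone$, and $p_m$ is the projection onto $\GH_m\otimes\GE_0$, one has $\Phi^r(p_m)=p_{m+r}$ — this is the standard identity $\Tr(\Phi^r(p_m))$ behaving like a grading shift that is already used in the proof of Proposition \ref{diffoptraceprop} and in Theorem \ref{Cauchydualthm}. Hence $\Tr(P_E\,\Phi^r(p_m))=\Tr(P_E p_{m+r})=\Tr(P_{E,m+r})$ because $P_E$ preserves the grading (it is a graded projection), and $P_{E,m+r}$ is precisely the component $P_E p_{m+r}$. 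This yields the first displayed formula
$$
\Tr(B_p(S_E)p_m)=\sum_{r=0}^p(-1)^r\binom{p}{r}\Tr(P_{E,m+r}).
$$

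For the vanishing statement when $p\geq d+1$, I would sum the above over all $m\in\N_0$ — or, more cleanly, observe that $\N_0\ni m\to\Tr(P_{E,m})=\chi(\Ei(m))$ is the Hilbert polynomial of the Serre sheaf $\Ei$ of the graded $\Ai$-module underlying $\GE_\N$, hence a polynomial in $m$ of degree at most $d=\dim_\C\M$. The key elementary fact is that the $p$-th finite difference $\sum_{r=0}^p(-1)^r\binom{p}{r}q(m+r)$ of any polynomial $q$ of degree $\leq d$ vanishes identically once $p\geq d+1$. Applying this with $q(m)=\chi(\Ei(m))$ gives $\Tr(B_p(S_E)p_m)=0$ for every $m$, and therefore $\Tr(B_p(S_E))=\sum_m\Tr(B_p(S_E)p_m)=0$. (One should note that the sum over $m$ is actually finite in the relevant sense: $B_p(S_E)p_m$ is a scalar multiple of a finite-rank projection piece and the individual traces already vanish, so no convergence subtlety arises — each term is zero.) Here I would cite the degree bound on the Hilbert polynomial, which is standard (Hirzebruch--Riemann--Roch, as recalled in the Notation section), and the fact that $\dim\GE_m=\chi(\Ei(m))$ for $m\gg 0$; for the small values of $m$ one uses directly that $\Tr(P_{E,m})$ agrees with the Hilbert polynomial, or absorbs the finitely many exceptional $m$ into the argument since finite differences of order $>d$ still annihilate any function that is eventually polynomial of degree $\leq d$, provided one is careful — but in fact the identity $\Tr(B_p(S_E)p_m)=\sum_r(-1)^r\binom{p}{r}\Tr(P_{E,m+r})$ holds for all $m$ and the right side is the $p$-th difference of a sequence that is polynomial for large argument, so for $p\geq d+1$ and $m$ large it is zero; the total trace being a sum of finitely many nonzero terms would then need separate care.

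The main obstacle I anticipate is precisely this last point: reconciling the behavior at small $m$, where $\dim\GE_m$ may differ from the Hilbert polynomial value $\chi(\Ei(m))$, with the global vanishing of $\Tr(B_p(S_E))$. The cleanest route is to argue that $B_p(S_E)=(\id-\Phi_*)(B_{p-1}(S_E))$ together with the already-established fact (used repeatedly in the excerpt, e.g.\ around Theorem \ref{Cauchydualthm}) that $B_1(S_E)p_m$ is a scalar, so inductively every $B_p(S_E)p_m$ is a scalar operator on $\GE_m$, and its trace is exactly the finite difference $\sum_r(-1)^r\binom{p}{r}\dim\GE_{m+r}$; since $m\to\dim\GE_m$ is genuinely a polynomial of degree $\leq d$ for $m\gg 0$ and the operator $B_p(S_E)$ for $p\geq d+1$ equals the $\Phi_*$-fixed-point combination that already appears in the $(d+1)$-isometry discussion, the vanishing $B_p(S_E)=0$ for $p\geq d+1$ — hence trivially $\Tr(B_p(S_E))=0$ — follows from Theorem \ref{Cauchydualthm} applied to $S_E$ once one knows $S_E$ is itself a $(d+1)$-isometry, which is Theorem \ref{bigCDversuslocallyfree}'s setting. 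So the honest shortest proof is: invoke that $S_E$ is a $(d+1)$-isometry, whence $B_p(S_E)=0$ for $p\geq d+1$ outright; the trace formula for general $p$ is the binomial-plus-cyclicity computation above. I would present both the explicit trace identity and this structural remark.
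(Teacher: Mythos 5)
Your derivation of the displayed trace identity is correct and is exactly the paper's route: the paper simply cites Proposition \ref{diffoptraceprop}, whose proof is precisely your binomial expansion of $(\id-\Phi_*)^p$ together with $\Tr(\Phi_*^r(X)p_m)=\Tr(X\,\Phi^r(p_m))=\Tr(Xp_{m+r})$, and gradedness of $P_E$ gives $\Tr(P_Ep_{m+r})=\Tr(P_{E,m+r})$. No issue there.

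The gap is in the vanishing statement, and it is exactly the point you flagged and then argued around rather than closed. You are right that knowing $\dim\GE_m=\chi(\Ei(m))$ only for $m\gg 0$ is not enough: for $p\geq d+1$ the graded traces $\Tr(B_p(S_E)p_m)$ could then be nonzero for finitely many small $m$, and their finite sum has no reason to vanish. The paper closes this by asserting that the Serre sheaf of the graded quotient module is Castelnuovo--Mumford regular, so that $\Tr(P_{E,m})=\dim H^0(\M;\Ei(m))=\chi(\Ei(m))$ holds for \emph{every} $m\in\N_0$; then $m\mapsto\Tr(P_{E,m})$ is an honest polynomial of degree $d$ on all of $\N_0$ and the $(d+1)$st finite difference kills every graded trace, whence the total trace vanishes term by term. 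Your fallback patch does not work at the stated level of generality: $B_1(S_E)p_m$ is a scalar only in the irreducible equivariant situation (that is Proposition \ref{SEisomprop}, not part of the general setup); Theorem \ref{Cauchydualthm} concerns the shift $S$ on $\GH_\N$, and Theorem \ref{bigCDversuslocallyfree} says nothing about $S_E$ being a $(d+1)$-isometry; and invoking Proposition \ref{SEisomprop} here would be circular, since its proof uses Proposition \ref{zerotraceprop}. Moreover the operator statement $B_p(S_E)=0$ for $p\geq d+1$ is simply false for general graded quotient modules: for $\M=\C\Pb^1$ and the graded quotient $\GE_\N=\GH_0\oplus\GH_1$ (the orthocomplement of the closure of the ideal $\Ai_{\geq 2}$) one computes $B_2(S_E)=-3p_0+p_1\neq 0$, with nonzero trace, which shows both that the $(d+1)$-isometry shortcut is unavailable and that the degreewise agreement $\dim\GE_m=\chi(\Ei(m))$ used by the paper is doing real work and cannot be absorbed or waved away. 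So the missing ingredient in your write-up is precisely that degreewise identity (the paper's regularity claim, or any substitute guaranteeing the dimension function is exactly polynomial in every degree); with it your finite-difference argument is complete, without it the second assertion is not proved.
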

\begin{proof}
From Proposition \ref{diffoptraceprop} we have $\Tr((\id-\Phi_*)^p(P_{E,m}))=\sum^p_{r=0}(-1)^r{p\choose r}\Tr(P_{E,m+r})$. Now the Serre sheaf $\Ei$ of $\GE_\N$ is Castelnuovo--Mumford regular, so 
$$
\Tr(P_{E,m+r})=\dim\GE_{m+r}=\dim H^0(\M;\Ei(m+r))=\chi(\Ei(m+r))
$$
for all $m+r\in\N_0$. Let $\delta^p$ is the $p$th iterate of the difference operator $\delta$ acting on sequences $a=(a(m))_{m\in\N_0}$ as
$$
(\delta a)(m):=a(m)-a(m+1).
$$
We have
$$
\sum^p_{r=0}(-1)^r{p\choose r}\Tr(P_{E,m+r})=(\delta^p\chi_E)(m)
$$
where $\chi_E(m):=\chi(\Ei(m))$. The Euler characteristic $\chi(\Ei(m))$ is a polynomial in $m$ of degree $d$, which is the same as saying that $\delta^p\chi_E=0$ for $p\geq d+1$. So for $p\geq d+1$ we obtain
$$
\Tr(B_p(S_E)p_m)=0,\qquad\forall m\in\N_0.
$$
\end{proof}
\begin{Remark}
We could also consider the operators $B_p(T_E)$ of the rescaled tuple $T_E$; these are given by $B_p(T_E)=(\id-\Psi)^p(P_E)$. 
The normalized trace $\phi_m(P_E)=\chi(\Ei(m))/n_m$ is not a polynomial in $m$. Therefore Proposition \ref{diffoptraceprop} does not imply that $\Tr(B_p(T_E)p_m)$ is zero for any finite $p$. Therefore $T_E$ is not a $p$-isometry for any $p$ and $\breve{\varsigma}(P^E)=\Psi^\infty(P_E)$ cannot be expressed as a finite sum $\sum^p_{q=0}{p\choose q}(\id-\Psi)^q(P_E)$ for any $p$. 
\end{Remark}

\begin{prop}\label{SEisomprop}
Let $\GE_\N$ be a quotient module such that $\Ei_{\rm CD}=\Oi_{\rm CD}\otimes\Ei_{\B\setminus\{0\}}$ where $\Ei_{\B\setminus\{0\}}$ is the pullback to $\B\setminus\{0\}$ of a $\G$-equivariant vector bundle $\Ei$ over $\M$. Then $S_E$ is a strict $(d+1)$-isometry: $B_d(S_E)\ne 0$ and
$$
B_p(S_E)=0,\qquad\forall p\geq d+1.
$$
Moreover, if $\Ei$ is irreducible then $B_p(S_E)$ acts as a scalar on each graded piece $\GE_m\subset\GE_\N$ for each $p=0,\dots,d$,
$$
B_p(S_E)=\sum_{m\in\N_0}\phi_m^E(B_p(S_E))P_{E,m},
$$
and we have the Scatten-class estimate
$$
B_p(S_E)\in\Li^q\iff q>d+2-p.
$$
\end{prop}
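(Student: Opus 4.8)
The plan is to use that the whole construction is $\G$-invariant: this forces $B_p(S_E)$ to be a grading-preserving, $\G$-equivariant operator, after which Schur's lemma together with the trace identity of Proposition~\ref{zerotraceprop} does almost all of the work. First I would record the equivariance. Since the Serre sheaf $\Ei$ is $\G$-equivariant, the graded module $E_\N=\bigoplus_m H^0(\M;\Ei(m))$ underlying $\GE_\N$ carries a natural $\G$-action (Proposition~\ref{balancedeqprop}), and one may realize $\GE_\N$ inside $\GH_\N\otimes\GE_0$ by a $\G$-equivariant embedding; then $P_E$ commutes with the unitary $\G$-action on $\GH_\N\otimes\GE_0$ and preserves the grading. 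The map $\Phi_*(X)=\sum_\alpha S_\alpha^*XS_\alpha$ is grading-preserving, and since the $\G$-action on the span of $S_1,\dots,S_n$ is by unitaries, $\Phi_*$ commutes with the conjugation $\G$-action on $\Bi(\GH_\N\otimes\GE_0)$. Hence $B_p(S_E)=(\id-\Phi_*)^p(P_E)$ is grading-preserving and $\G$-equivariant, so $B_p(S_E)=\sum_m B_p(S_E)p_m$ with each $B_p(S_E)p_m$ an intertwiner of the $\G$-representation $\GE_m$.

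Suppose $\Ei$ is irreducible, so each $\GE_m=H^0(\M;\Ei(m))$ is an irreducible $\G$-representation. By Schur's lemma $B_p(S_E)p_m=\lambda_{p,m}p_m$ with $\lambda_{p,m}=\phi_m^E(B_p(S_E))$, which is the scalar assertion (the range $0\le p\le d$ being the only nontrivial one). Proposition~\ref{zerotraceprop} gives $\Tr(B_p(S_E)p_m)=(\delta^p\chi_E)(m)$ with $\chi_E(m):=\chi(\Ei(m))=\dim\GE_m$ and $\delta$ the backward difference, whence $\lambda_{p,m}=(\delta^p\chi_E)(m)/\chi_E(m)$; equivalently one computes $\Phi_{E,*}^r(\bone)p_m=(\chi_E(m+r)/\chi_E(m))p_m$ from Corollary~\ref{SEsquarecor}. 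Since $\chi_E$ is a polynomial in $m$ of degree exactly $d$ (leading coefficient $\rank\Ei\cdot\vol(\M,\Li)\ne0$), $\delta^p\chi_E$ vanishes identically for $p\ge d+1$, so $B_p(S_E)=0$; and $\delta^d\chi_E$ is a nonzero constant, giving $\lambda_{d,m}\ne0$ and $B_d(S_E)\ne0$. (Strictness in fact needs no equivariance, since $\Tr(B_d(S_E)p_m)$ is this nonzero constant for any quotient module whose Serre sheaf has positive rank.) Thus $S_E$ is a strict $(d+1)$-isometry.

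For a general $\G$-equivariant $\Ei$ I would reduce to this case. Decompose $\Ei=\bigoplus_i\Ei^{(i)}$ into irreducible equivariant summands; then $E_\N=\bigoplus_iE_\N^{(i)}$ as graded $\Ai$-modules and, choosing $\GE_0=\bigoplus_i\GE_0^{(i)}$ with a compatible embedding, $\GE_\N=\bigoplus_i\GE_\N^{(i)}$ as an orthogonal direct sum in which each $\GE_\N^{(i)}$ is the orthogonal complement of the invariant submodule $\bigoplus_{j\ne i}\GE_\N^{(j)}$, hence coinvariant. Then $P_E=\sum_iP_{E^{(i)}}$ and, by linearity of $(\id-\Phi_*)^p$, $B_p(S_E)=\bigoplus_iB_p(S_{E^{(i)}})$, so the strict $(d+1)$-isometry property follows from the irreducible case applied to each summand (each $\chi_{E^{(i)}}$ still has degree $d$).

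Finally, for the Schatten-class estimate (again with $\Ei$ irreducible) $B_p(S_E)$ is the selfadjoint operator $\bigoplus_m\lambda_{p,m}\bone_{\GE_m}$, so its singular values are the numbers $|\lambda_{p,m}|$ with multiplicity $\dim\GE_m$, and $\Tr|B_p(S_E)|^q=\sum_m(\dim\GE_m)\,|\lambda_{p,m}|^q$. Here $\dim\GE_m=\chi_E(m)$ grows like $m^d$, while $|\lambda_{p,m}|=|(\delta^p\chi_E)(m)|/\chi_E(m)$ decays like $m^{-p}$ for $1\le p\le d$ (a quotient of polynomials of degrees $d-p$ and $d$), so the series behaves like $\sum_{m\ge1}m^{d-pq}$, which is summable exactly when $pq>d+1$; this pins down the range of $q$ for which $B_p(S_E)\in\Li^q$. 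The step requiring the most care is the bookkeeping in the non-irreducible case: one must check that the decomposition $\GE_\N=\bigoplus_i\GE_\N^{(i)}$ is simultaneously orthogonal and into coinvariant submodules, so that $(\id-\Phi_*)^p$ genuinely distributes over it, and that the Hilbert-module embedding can be taken $\G$-equivariant so that $P_E$ is $\G$-invariant; the remaining ingredients (Schur's lemma, polynomial-degree counting, summing a $p$-series) are routine.
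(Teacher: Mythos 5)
Your route is essentially the paper's: equivariance forces $B_p(S_E)$ to act as a scalar on each graded piece (the paper gets this from Corollary \ref{SEsquarecor}, i.e.\ from the unitality of the maps $\jmath^E_{l,m}$, rather than by invoking Schur's lemma directly, but these are two dressings of the same fact), the vanishing for $p\geq d+1$ then follows from the trace identity of Proposition \ref{zerotraceprop} exactly as you argue, and the general equivariant case is handled by the same orthogonal decomposition into $S_E$-reducing pieces. Your observation that strictness ($B_d(S_E)\neq 0$) needs no equivariance at all is a nice addition the paper's proof does not spell out, and your care about choosing a $\G$-equivariant realization so that $P_E$ commutes with the group action is exactly the point the paper leaves implicit via Proposition \ref{balancedeqprop}.

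There is, however, one substantive discrepancy you should not pass over silently: your Schatten computation does not prove the inequality asserted in the statement. You correctly find eigenvalue $\phi^E_m(B_p(S_E))\asymp m^{-p}$ with multiplicity $\dim\GE_m=\chi(\Ei(m))\asymp m^{d}$, so $\Tr|B_p(S_E)|^{q}\asymp\sum_m m^{d-pq}$, which converges iff $pq>d+1$, i.e.\ iff $q>(d+1)/p$. The proposition claims $B_p(S_E)\in\Li^q\iff q>d+2-p$, and the two thresholds coincide only for $p=1$. A concrete check confirms your asymptotics: for $\GH_\N=H^2_3$ (so $d=2$) one has $B_2(S)p_m=n_m^{-1}p_m$ with $n_m=\binom{m+2}{2}$, hence $\Tr|B_2(S)|^q=\sum_m n_m^{1-q}$, finite iff $q>3/2$, not $q>2$ as the stated formula $d+2-p$ would require. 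So your analysis is internally correct (and is in fact what the paper's own proof sketch, eigenvalue $O(m^{-p})$ against multiplicity $O(m^d)$, actually yields), but as written your proposal proves a different Schatten statement from the one you were asked to prove, without flagging the conflict. You should either exhibit where the exponent $d+2-p$ could come from (it cannot, given your asymptotics) or state explicitly that the correct threshold is $q>(d+1)/p$ and that the exponent in the proposition must be amended; as it stands, the silent substitution of one iff for another is the gap in the write-up.
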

\begin{proof}
Suppose first that $\Ei$ is irreducible. Then 
$\Phi_{E,*}(P_E)=|S_E|^2$ acts as a scalar on each graded piece $\GE_m$. It follows that $B_1(S_E)P_{E,m}=(\id-\Phi_{E,*})(P_E)P_{E,m}$ is a scalar, viz. $\phi_m^E(B_1(S_E))P_{E,m}$. Hence also 
$$
B_2(S_E)P_{E,m}=(\id-\Phi_{E,*})(B_1(S_E))P_{E,m}
$$
is a scalar, and so on. From this and Proposition \ref{zerotraceprop} we see directly that $B_p(S_E)=0$ for $p\geq d+1$ when $\Ei$ is irreducible. For arbitrary equivariant $\Ei$ we have that $\GE_\N$ is a direct sum of $S_E$-reducing subspaces corresponding to the irreducible summands of $\Ei$, so $S_E$ is a $(d+1)$-isometry for all equivariant vector bundles $\Ei$.  

From Proposition \ref{zerotraceprop} we have the estimate
$$
\phi_m^E(B_p(S_E))=O(m^{-p})
$$
from which the stated Schatten-class estimate on $B_p(S_E)$ follows. 


\end{proof}
We see that $B_p(S_E)$ is zero precisely when $B_p(S_E)$ is trace-class.


\subsubsection{Characterization of equivariance}

\begin{prop}\label{chareqprop}
Let $\GE_\N$ be a quotient module with continuous symbol $P^E=\varsigma(P_E)$. Assume that the shift $S_E$ on $\GE_\N$ has no reducing subspaces. Then the following are equivalent:
\begin{enumerate}[(a)]
\item{$\breve{\varsigma}^{(m)}(P^E)=c_{E,m}P_{E,m}$ for all $m\gg 0$.}
\item{$\GE_m=c_{E,m}H^0(\omega,\FS(\GH_m)\otimes P^E)$ for all $m\gg 0$.}
\item{$S_E^*S_Ep_m=\dim\GE_{m+1}/\dim\GE_m$ for all $m\gg 0$.}
\item{$\FS(H^0(\omega,\FS(\GH_m)\otimes P^E))=\FS(\GH_m)\otimes P^E$ is $\omega$-balanced for all $m\gg 0$.}
\end{enumerate}
\end{prop}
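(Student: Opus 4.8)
The plan is to establish the equivalence of (a)--(d) as a cycle of implications, using the explicit Toeplitz formula \eqref{Toeplexplic} and the frame-theoretic reformulation of balance from \S\ref{balasec}. The equivalence of (a), (b), and (d) is essentially a restatement of the definitions once we know that $\breve{\varsigma}^{(m)}(P^E)$ is the Gram matrix of a Parseval $C^*$-frame for $\FS(\GH_m)\otimes P^E$ regarded as a frame for the Hilbert space $H^0(\omega,\FS(\GH_m)\otimes P^E)$, which is exactly the content of Proposition \ref{geomintprop} together with \eqref{balanceeqToepl}. Indeed, by Theorem \ref{bigCDversuslocallyfree} the continuity of $P^E=\varsigma(P_E)$ forces $P^E_m=P^E$ for $m\gg 0$ and $\GE_m=\Ran\breve{\varsigma}^{(m)}(P^E)$ coincides with the span of a $C^*$-frame $\boldsymbol\psi$ for $\FS(\GH_m)\otimes P^E=\FS(\GE_m)$; then $\breve{\varsigma}^{(m)}(P^E)=c_{E,m}P_{E,m}$ says precisely that this frame is tight with constant $c_{E,m}$, i.e. $c_{E,m}^{-1/2}\boldsymbol\psi$ is an orthonormal basis for $H^0(\omega,\FS(\GH_m)\otimes P^E)$, which is the balance condition (d), and the identification of inner products is (b).

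First I would prove (a)$\Rightarrow$(c). Assuming $\breve{\varsigma}^{(m)}(P^E)=c_{E,m}P_{E,m}$, I restrict $\breve{\varsigma}(P^E)$ to $\GE_\N$ and use that, since $P_E$ is $\Psi$-superharmonic with continuous symbol, we have $A_{T_E}=\breve{\varsigma}(P^E)|_{\GE_\N}=\sum_m c_{E,m}P_{E,m}$. Because $A_{T_E}=\mathrm{SOT\text{-}}\lim_p\Psi_E^p(\bone)$ is the asymptotic limit of the spherical contraction $T_E$ and it is now a \emph{diagonal} (central) operator, one computes $\Psi_E(A_{T_E})=A_{T_E}$ directly in terms of $|S_E|$: writing $S_{E,\alpha}=|S_E|\,T_{E,\alpha}$ as in Lemma \ref{squareofSlemma}, the fixed-point equation for the central weight sequence $(c_{E,m})$ under $\Psi_E$ reduces to the recursion that pins down $c_{E,m}/c_{E,m+1}$, and combining this with $c_{E,m}=n_m\rank\Ei/\chi(\Ei(m))$ gives $S_E^*S_E p_m=(\dim\GE_{m+1}/\dim\GE_m)p_m$, which is (c). Alternatively, and more cleanly, (a) says $\FS(\GH_m)\otimes P^E$ is balanced for all $m\gg 0$, so by Lemma \ref{Wanglemma}'s finite-level mechanism and Proposition \ref{geomintprop} the maps $\jmath^E_{m,m+1}$ are unital, and then Corollary \ref{SEsquarecor} yields exactly (c).

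Conversely I would prove (c)$\Rightarrow$(a). If $S_E^*S_E p_m$ is the scalar $\dim\GE_{m+1}/\dim\GE_m$ for all $m\gg 0$, then $|S_E|$ is central and the asymptotic-limit operator $A_{T_E}$, obtained by iterating $\Psi_E$, is central as well; its $m$th weight is forced by the telescoping product of the ratios $\dim\GE_{m+1}/\dim\GE_m$ against the reference weights $n_{m+1}/n_m$ to be a scalar multiple of $P_{E,m}$, and matching leading behaviour identifies the scalar as $c_{E,m}$. Since $A_{T_E}=\breve{\varsigma}^{(m)}(P^E)$ restricted to $\GE_m$ and $\breve{\varsigma}^{(m)}(P^E)$ vanishes off $\GE_m$ (it has range in $\GE_\N$ because $\breve{\varsigma}(P^E)\le P_E$), this gives (a). Finally (d)$\Rightarrow$(a) is immediate from \eqref{balanceeqToepl}, and (a)$\Rightarrow$(d) was noted above; this closes the cycle.

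The main obstacle I anticipate is the bookkeeping in passing between $S_E^*S_E$ (which is $\Psi_E(\bone)$ up to the central rescaling $\Lambda$) and $A_{T_E}$ (which is $\lim_p\Psi_E^p(\bone)$): one must be careful that centrality of $S_E^*S_E p_m$ for all $m\gg 0$ really propagates to centrality of all the iterates $\Psi_E^p(\bone)p_m$, and that the finitely many low-degree pieces where the hypotheses may fail do not obstruct the identification — this is exactly the ``up to finite-rank / finite-dimensional'' caveat that pervades \S\ref{liftsec}, and it is legitimate here because $A_{T_E}$ is Fredholm once $P^E$ is continuous. The hypothesis that $S_E$ has no reducing subspaces is used to guarantee that the equivariant structure, when it exists, is ``irreducible enough'' for Corollary \ref{SEsquarecor} to apply fibrewise; without it one would only get the weaker statement with $\jmath^E$ unital but the scalars possibly differing across summands. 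I expect the whole argument to run in about a page once these central-operator manipulations are set up.
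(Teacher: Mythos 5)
Your proof is correct, and on the core equivalences (a)$\iff$(b)$\iff$(d) it is the same argument as the paper's: interpret $c_{E,m}^{-1}\breve{\varsigma}^{(m)}(P^E)$ as the Gram matrix of a Parseval frame for $\GE_m$ viewed inside $H^0(\omega,\FS(\GH_m)\otimes P^E)$ (Theorem \ref{backtoHardythmgen} and \eqref{balanceeqToepl}), so that (a) says exactly that the two inner products agree up to the prescribed constant, which is (b) and the balance condition (d). Where you diverge is the link with (c): the paper proves (c)$\Rightarrow$(a) by observing that (c) makes the one-step backward maps $\jmath^E_{l,m}$ unital and that their limit as $l\to\infty$ is $c_{E,m}^{-1}\breve{\varsigma}^{(m)}$, and proves (b)$\Rightarrow$(c) from the fact that the multiplication tuple on $H^0(\Sb,\omega;P^E)$ is a spherical isometry (as in the proof of Lemma \ref{squareofSlemma}); you instead work directly with the asymptotic limit $A_{T_E}=\breve{\varsigma}(P^E)|_{\GE_\N}$, using the fixed-point identity $\Psi_E(A_{T_E})=A_{T_E}$ to get (a)$\Rightarrow$(c) and a telescoping product of the weights $\tfrac{n_m}{n_{m+1}}\tfrac{\dim\GE_{m+1}}{\dim\GE_m}$, together with $\lim_l\dim\GE_l/n_l=\rank\Ei$, to get (c)$\Rightarrow$(a). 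This is a legitimate and in fact slightly more self-contained variant — it makes the scalar $c_{E,m}$ emerge from an explicit limit rather than from unitality of the $\jmath^E_{l,m}$'s — though it is at bottom the same mechanism (iterating $\Psi_E$ versus composing the $\jmath$'s). One caveat: your ``cleaner'' alternative for (a)$\Rightarrow$(c), invoking Corollary \ref{SEsquarecor}, is not available here, since that corollary assumes the Cowen--Douglas sheaf descends to a $\G$-equivariant bundle, which is not a hypothesis of the proposition; discard that remark and keep your primary fixed-point argument, which needs no such assumption. Your handling of the ``for $m\gg0$'' bookkeeping and of the fact that $\breve{\varsigma}(P^E)\leq P_E$ (so that $\breve{\varsigma}^{(m)}(P^E)$ vanishes off $\GE_m$) is correct, and, like the paper's proof, your argument never actually uses the no-reducing-subspaces hypothesis beyond ensuring a single scalar across summands.
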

\begin{proof}
The operator $\breve{\varsigma}^{(m)}(P^E)$ compares the inner products $\GE_m$ and $H^0(\omega,\FS(\GH_m)\otimes P^E)$ (cf. Theorem \ref{backtoHardythmgen} and its proof). So (a) is equivalent to (b). Clearly (a)$\iff$(d). 

If the $\jmath_{l,m}^E$'s are unital for all $l\geq m$ then so is their limit $c_{E,m}^{-1}\breve{\varsigma}^{(m)}:\Gamma^\infty(\M;\Ei)\to\Bi(\GE_m)$ as $l$ goes to infinity. So (c) implies (a). 

If (b) holds then the fact that the multiplication tuple on $H^0(\Sb,\omega;P^E)$ is a spherical isometry ensures that (c) holds (cf. the proof of Lemma \ref{squareofSlemma}). 
This gives the proposition. 


\end{proof}
We know that all conditions in Proposition \ref{chareqprop} hold when $\GE_\N$ comes from an equivariant vector bundle $\Ei$. 
We expect (c) in Proposition \ref{chareqprop} to hold only if the vector bundle defined by $P^E$ is $\G$-equivariant (cf. Remark \ref{timeremark}). Therefore the equivalent conditions in Proposition \ref{chareqprop} are likely to characterize the quotient modules which give rise to $\G$-equivariant vector bundles over $\M$.

\subsection{Guo-stability}
In this section we show, as asserted in the Introduction, that every $\G$-equivariant vector bundle over $\G/\K$ is a direct sum of vector bundles satisfying a stability condition (which we call Guo-stability) that is stronger than Gieseker-stability. This seems to be a new result. 

\begin{thm}
Let $\Ei$ be a $\G$-equivariant vector bundles over $\M$ and suppose that all irreducible summands of $\Ei$ have the same reduced Hilbert polynomial $\chi(\Ei(m))/\rank\Ei$. Then for every quotient sheaf $\Ei\to\Fi\to 0$ and all $m\gg 0$ we have 
\begin{equation}\label{finitelevelGies}
\frac{\chi(\Fi(m))}{\chi(\Fi(l))}\geq\frac{\chi(\Ei(m))}{\chi(\Ei(l))},\qquad\forall l\geq m
\end{equation}
with equality iff $\Fi$ is a subbundle (in which case $\Fi$ is a direct summand of $\Ei$). In particular, $\Ei$ is Gieseker-polystable.
\end{thm}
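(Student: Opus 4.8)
The plan is to re-express \eqref{finitelevelGies} as a monotonicity statement for the backward maps $\jmath^E_{l,m}$ and to exploit their unitality, which is exactly what Proposition \ref{unitajmathEprop} (and the remark following it, for the non-irreducible case under the equal-reduced-Hilbert-polynomial hypothesis) supplies in the equivariant setting. Write $\Ei\to\Fi\to0$ with kernel $\Gi$, so $0\to\Gi\to\Ei\to\Fi\to0$ and $\chi(\Ei(m))=\chi(\Gi(m))+\chi(\Fi(m))$ for all $m$. Realize $E_\N:=\bigoplus_mH^0(\M;\Ei(m))$ as a graded quotient of $\Ai\otimes\C^N$ and let $\GE_\N$ be its Fock completion; by Proposition \ref{balancedeqprop} its Cowen--Douglas sheaf is $\Oi_{\rm CD}$ tensored with the pullback of the equivariant bundle $\Ei$, so $\jmath^E_{l,m}\colon\Bi(\GE_l)\to\Bi(\GE_m)$ is unital and completely positive, and the associated trace $\phi^E_m$ coincides with the normalized trace $\Tr(\cdot)/\dim\GE_m$. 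Inside $\GE_\N$ lies the submodule $\GG_\N$, the completion of $G_\N:=\bigoplus_mH^0(\M;\Gi(m))$, which is $S_E$-invariant; let $\GF_m:=\GE_m\ominus\GG_m$ and let $I_{G,m}$, $P_{F,m}=P_{E,m}-I_{G,m}$ be the projections onto $\GG_m$, $\GF_m$. Since each $\GE_m$ is finite-dimensional we have $\GE_m=E_m$, $\GG_m=G_m$ as vector spaces, so for $m\gg0$ (Castelnuovo--Mumford regularity of $\Ei$ and $\Gi$, Serre vanishing) $\dim\GE_m=\chi(\Ei(m))$, $\dim\GG_m=\chi(\Gi(m))$, hence $\dim\GF_m=\chi(\Fi(m))$.

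The central estimate I would prove is $\jmath^E_{l,m}(P_{F,l})\le P_{F,m}$ for $l\ge m\gg0$. The one input is that $\GG_\N$ is $S_E$-invariant: since $V^{E*}_{m,l}$ is, up to the normalization built into $\iota^E_{m,l}$, the module multiplication $\gamma\otimes e_\mathbf{k}\mapsto S_{E,\mathbf{k}}\gamma$, invariance gives $V^{E*}_{m,l}(\GG_m\otimes\GH_{l-m})\subset\GG_l$; because $P_{F,l}$ annihilates $\GG_l$, the quadratic form of $\jmath^E_{l,m}(P_{F,l})=(\phi_{l-m}\otimes\id)(V^E_{m,l}P_{F,l}V^{E*}_{m,l})$ vanishes on $\GG_m$, so $\Ran\jmath^E_{l,m}(P_{F,l})\subset\GF_m$; compressing the trivial bound $\jmath^E_{l,m}(P_{F,l})\le\jmath^E_{l,m}(\bone)=P_{E,m}=I_{G,m}+P_{F,m}$ by $P_{F,m}$ then gives $\jmath^E_{l,m}(P_{F,l})\le P_{F,m}$. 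Applying $\phi^E_m$ and using $\phi^E_m\circ\jmath^E_{l,m}=\phi^E_l$ (because $\iota^E_{m,l}$ is unital) yields $\phi^E_l(P_{F,l})\le\phi^E_m(P_{F,m})$, i.e.
\begin{equation*}
\frac{\chi(\Fi(l))}{\chi(\Ei(l))}\le\frac{\chi(\Fi(m))}{\chi(\Ei(m))},\qquad l\ge m\gg0,
\end{equation*}
which is \eqref{finitelevelGies} after clearing denominators. Letting $l\to\infty$ gives $\rank\Fi/\rank\Ei\le\chi(\Fi(m))/\chi(\Ei(m))$, the Gieseker-semistability inequality $\chi(\Fi(m))/\rank\Fi\ge\chi(\Ei(m))/\rank\Ei$.

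For the equality clause I would argue as follows. If equality holds in \eqref{finitelevelGies} for some $l>m$, then by the composition law $\jmath^E_{l,m}=\jmath^E_{m+1,m}\circ\cdots\circ\jmath^E_{l,l-1}$ (from the subproduct structure), positivity, and the step bounds $\jmath^E_{k+1,k}(P_{F,k+1})\le P_{F,k}$, equality propagates down, so $\jmath^E_{k+1,k}(P_{F,k+1})=P_{F,k}$ for all $m\le k<l$. A unital completely positive map sending a projection to a projection of equal trace has that projection in its multiplicative domain; written out for the channel form of $\jmath^E_{k+1,k}$ built from the $S_{E,\alpha}$, the multiplicative-domain relations force $\GF_\N$ to be invariant under $S_E$ as well as under $S_E^*$, hence reducing. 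Then $\GE_\N=\GG_\N\oplus\GF_\N$ splits as Hilbert modules, so $E_\N\cong G_\N\oplus(E_\N/G_\N)$ in high degrees and therefore $\Ei\cong\Gi\oplus\Fi$; in particular $\Fi$ is a direct summand and a subbundle, and conversely a subbundle gives such a splitting and equality. Finally, equality of reduced Hilbert polynomials forces the non-increasing sequence $(\phi^E_k(P_{F,k}))_{k\ge m}$, whose limit is $\rank\Fi/\rank\Ei$, to be constant, so equality in \eqref{finitelevelGies} then holds for all $l$ and $\Fi$ splits off; this yields Gieseker-polystability.

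The main obstacle will be the equality clause — concretely, converting the multiplicative-domain condition at finitely many consecutive levels into the genuinely holomorphic statement that $\GF_\N$ is $S_E$-reducing (the analytic rigidity behind "equality $\Rightarrow$ holomorphic splitting"). By contrast the positivity estimate $\jmath^E_{l,m}(P_{F,l})\le P_{F,m}$, and hence \eqref{finitelevelGies} and semistability, are essentially formal once Proposition \ref{unitajmathEprop} is in hand; one should only take care that "$m\gg0$" can be chosen uniformly so that the dimension identities $\dim\GE_m=\chi(\Ei(m))$, $\dim\GG_m=\chi(\Gi(m))$ hold for all $l\ge m$ simultaneously.
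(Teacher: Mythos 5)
Your proposal is correct and takes essentially the same route as the paper: encode the quotient $\Ei\to\Fi\to 0$ through the $S_E$-invariant submodule $\GG_\N\subset\GE_\N$ (so $P_{F}$ is coinvariant), use unitality of the backward maps $\jmath^E_{l,m}$ (Proposition \ref{unitajmathEprop}, which is exactly where the equal-reduced-Hilbert-polynomial hypothesis enters) together with the trace intertwining $\phi^E_m\circ\jmath^E_{l,m}=\phi^E_l$ to get monotonicity of $\phi^E_k(P_{F,k})=\chi(\Fi(k))/\chi(\Ei(k))$, and identify the equality case with $\GF_\N$ being reducing, hence a holomorphic splitting. The only packaging differences are that the paper expresses the key estimate as superharmonicity $\Psi_E(P_F)\le P_F$ for the single unital map $\Psi_E(X)=(S_E^*S_E)^{-1/2}\sum_\alpha S_{E,\alpha}^*XS_{E,\alpha}(S_E^*S_E)^{-1/2}$ (via Corollary \ref{SEsquarecor}) rather than your direct range argument for $\jmath^E_{l,m}(P_{F,l})\le P_{F,m}$, and handles equality by the same Kraus-operator computation that you phrase through multiplicative domains.
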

\begin{proof}
By replacing $\Ei$ with $\Ei(m_0)$ for large enough $m_0$ if necessary we may assume that $E_\N:=\bigoplus_{m\in\N_0}H^0(\M;\Ei(m))$ is a graded quotient of $\Ai\otimes\C^N$. Similarly, whenever $\Ei\to\Fi\to 0$ is a quotient sheaf we may assume that $F_\N:=\bigoplus_{m\in\N_0}H^0(\M;\Fi(m))$ is a graded quotient of $E_\N$. Let $\GE_\N$ and $\GF_\N$ be the completions of $E_\N$ and $F_\N$ in the inner product of $\GH_\N\otimes\C^N$. Then $\GF_\N$ is an invariant subspace for the backward shift $S_E^*$ on $\GE_\N$. We have thus encoded the quotient sheaf $\Fi$ as an $S_E^*$-invariant subspace $\GF_\N$ of $\GE_\N$, and conversely every $S_E^*$-invariant subspace gives rise to a quotient of $\Ei$ (see \S\ref{alggradmodsec}). 

Consider the unital completely positive map map
$$
\Psi_E(X):=(S_E^*S_E)^{-1/2}\sum^n_{\alpha=1}S_{E,\alpha}^*XS_{E,\alpha}(S_E^*S_E)^{-1/2},\qquad\forall X\in\Gamma_b.
$$
By Corollary \ref{SEsquarecor} $\Psi_E$ restricts to $\jmath_{l,m}^E:\Bi(\GE_{m+1})\to\Bi(\GE_m)$: For $B\in\Bi(\GE_{m+1})$ we have
$$
\Psi_E(B):=\frac{\chi(\Ei(m+1))}{\chi(\Ei(m))}\sum^n_{\alpha=1}S_{E,\alpha}^*BS_{E,\alpha}.
$$
We have seen that $S_E^*S_E$ commutes with every grading-preserving operator on $\GE_\N$, and this ensures that $S_{E,\alpha}^*$ and $S_{E,\alpha}^*(S_E^*S_E)^{-1/2}$ have the same invariant graded subspaces for each $\alpha\in\{1,\dots,n\}$. 
Since $\Psi_E$ is unital, the assumption that $\GF_\N$ is invariant under $S_E^*$ is thus equivalent to $\Psi_E(P_F)\leq P_F$ with equality iff $\GF_\N$ is reducing. For $X\in\Bi(\GE_l)$ we have $\Psi^{l-m}_E(X)=\jmath_{l,m}(X)$ for all $m\leq l$. 
In terms of the maps $\jmath_{l,m}^E$ the inequality $\Psi_E(P_F)\leq P_F$ reads $\jmath_{l,m}^E(P_{F,l})\leq P_{F,m}$ for all $l\geq m$. Using $\phi_m^E\circ\jmath^E_{l,m}=\phi_l^E$ we then obtain
$$
\frac{\chi(\Fi(l))}{\chi(\Ei(l))}=\phi_l^E(P_{F,l})=\phi_m^E(\jmath_{l,m}^E(P_{F,l}))\leq\phi_m^E(P_{F,m})=\frac{\chi(\Fi(m))}{\chi(\Ei(m))}
$$
for all $l\geq m$, with equality iff $\GF_{\geq m}$ is reducing. 
\end{proof}

Most likely the Guo-stability condition \eqref{finitelevelGies} 
is stronger than even slope-stability, and is a characteristic of $\G$-equivariance (cf. Proposition \ref{chareqprop}). 

\end{document}